\numberwithin{equation}{section}
\let\OLDthebibliography\thebibliography
\renewcommand\thebibliography[1]{
  \OLDthebibliography{#1}
  \setlength{\parskip}{0pt}
  \setlength{\itemsep}{0pt plus 0.3ex} }
\newenvironment{psmallmatrix}
  {\left(\begin{smallmatrix}}
  {\end{smallmatrix}\right)}
\newcommand{\Om}{\Omega}
\newcommand{\la}{\langle}
\newcommand{\ra}{\rangle}
\newcommand{\Op}{\mathrm{Op}\,}
\newcommand{\Lm}{\Lambda}
\newcommand{\h}{\mathtt{h}}  
\newcommand{\Lip}{\mathrm{Lip}}
\newcommand{\tLm}{{\mathtt \Lambda} }
\newtheorem{theorem}{Theorem}[section]
\newtheorem{proposition}[theorem]{Proposition}
\newtheorem{lemma}[theorem]{Lemma}
\newtheorem{corollary}[theorem]{Corollary}
\newtheorem{definition}[theorem]{Definition}
\theoremstyle{definition}
\newenvironment{remark}{\pushQED{\qed} \remarkbase}{\popQED\endremarkbase}
\newcommand{\be}{\begin{equation}}
\newcommand{\ee}{\end{equation}}
\newcommand{\teta}{\theta}
\newcommand{\om}{\omega}
\newcommand{\e}{\varepsilon}
\newcommand{\ep}{\epsilon}
\newcommand{\ov}{\overline}
\newcommand{\wtilde}{\widetilde}
\newcommand{\R}{\mathbb R}
\newcommand{\C}{\mathbb C}
\newcommand{\Z}{\mathbb Z}
\newcommand{\N}{\mathbb N}
\newcommand{\T}{\mathbb T}
\newcommand{\wl}{\varsigma}
\newcommand{\sign}{{\rm sign}}
\renewcommand{\a }{\alpha }
\renewcommand{\b }{\beta }
\newcommand{\s }{\sigma }
\newcommand{\ii }{{\rm i} }
\renewcommand{\d }{\delta }
\newcommand{\D }{\Delta}
\newcommand{\g }{\gamma}
\renewcommand{\l }{\lambda }
\newcommand{\vphi}{\varphi }
\renewcommand{\t }{\tau }
\newcommand{\mN}{\mathcal{N}}
\newcommand{\norma}{| \hspace{-2.2pt} |} 
\newcommand{\ppp}{\langle p_7^{(m)} \rangle_{\ph,x}}
\newcommand{\pppuno}{\langle p_7^{(1)} \rangle_{\ph,x}}
\newcommand{\Dom}{\om \cdot \pa_\vphi}
\newcommand{\mG}{\mathcal G}
\newcommand{\tOm}{\mathtt \Omega}
\newcommand{\ph}{\varphi}
\newcommand{\mH}{\mathcal{H}}
\newcommand{\mZ}{\mathcal{Z}}
\newcommand{\mL}{\mathcal{L}}
\newcommand{\lm}{\lambda}
\newcommand{\mM}{\mathcal{M}}
\newcommand{\fm}{\mathfrak{m}}
\newcommand{\mA}{\mathcal{A}}
\newcommand{\mR}{\mathcal{R}}
\newcommand{\mF}{\mathcal{F}}
\newcommand{\mV}{\mathcal{V}}
\newcommand{\mE}{\mathcal{E}}
\newcommand{\mQ}{\mathcal{Q}}
\newcommand{\mP}{\mathcal{P}}
\newcommand{\mU}{\mathcal{U}}
\newcommand{\pa}{\partial}
\newcommand{\ompaph}{\om \cdot \partial_\ph}
\newcommand{\fracchi}{{\mathfrak{I}}}
\newcommand{\mD}{\mathcal{D}}
\newcommand{\odd}{\text{odd}}
\newcommand{\even}{\text{even}}
\def\ba{\begin{aligned}}
\def\ea{\end{aligned}}
\def\beginm{\begin{multline}}
\def\endm{\end{multline}}
\newcommand{\B}{B}
\newcommand{\mB}{\mathcal{B}}
\newcommand{\mC}{\mathcal{C}}
\newcommand{\kug}{{k+1,\gamma}} 
\newcommand{\ttb}{\mathtt{b}}
\newcommand{\perd}{\mathtt{d}}
\newcommand{\sym}{{sym}}
\newcommand{\barka}{k_0^*}
\newcommand{\Id}{\mathrm{Id}}
\renewcommand{\Re}{\mathrm{Re}}
\renewcommand{\Im}{\mathrm{Im}}
\newcommand{\DC}{\mathtt{DC}}
\newcommand{\Bignorma}{\Big| \hspace{-2.7pt} \Big|}
\newcommand{\Splus}{\mathbb{S}^+} 
\DeclareMathOperator{\diag}{diag} 
\newcommand{\vnu}{\mathtt{n}} 
\newcommand{\axi}{a} 
\newcommand{\rin}{\mathfrak{r}} 
\begin{document}

\title{{\bf Time quasi-periodic  gravity water waves \\
in  finite depth}}

\date{}

 \author{Pietro Baldi, Massimiliano Berti, Emanuele Haus, Riccardo Montalto}

\maketitle

\noindent
{\bf Abstract:}
We prove the existence and the linear stability of Cantor families of small amplitude time {\it quasi-periodic} 
standing water wave solutions -- namely periodic and even in the space variable $ x $ -- 
of a bi-dimensional ocean with finite depth under the action of pure gravity. 
Such a result holds for all the values of the depth parameter
in a Borel set of asymptotically full measure. 
This is a small divisor problem. The main difficulties are the fully nonlinear
nature of the  gravity water waves equations -- the highest order $ x $-derivative appears in the nonlinear term
but not in the linearization at the origin -- and the fact that 
the linear frequencies  grow just in a sublinear way at infinity. 
We overcome these problems by first reducing 
the linearized operators, obtained at each approximate quasi-periodic  solution  along a Nash-Moser iterative scheme, 
to constant coefficients up to smoothing operators,
using pseudo-differential  changes of variables that are quasi-periodic in time. 
Then we apply a KAM reducibility scheme which requires very weak Melnikov non-resonance conditions
which lose derivatives both in time and space. Despite the fact that the 
depth parameter moves  the linear frequencies by just exponentially small quantities, 
we are able to verify such non-resonance conditions 
for most values of the depth, extending degenerate KAM theory. 
 \\[2mm]
{\it Keywords:}  Water waves, KAM for PDEs, quasi-periodic solutions, standing waves.
\\[1mm]
{\it MSC 2010:} 76B15,  37K55 (37C55, 35S05).

\tableofcontents

\section{Introduction}

We consider the Euler equations of hydrodynamics for a 2-dimensional perfect, incompressible, inviscid, irrotational fluid
under the action of gravity, filling an ocean with finite depth $ h $  and with space periodic boundary conditions, 
namely the fluid occupies the region
\be\label{domain-fluid}
{\cal D}_\eta := \big\{ (x, y) \in \T \times \R \, : \, - h < y < \eta (t,x) \big\} \, , \quad \T := \T_x := \R / 2 \pi \Z \, . 
\ee
In this paper we prove the existence and the linear stability of small amplitude quasi-periodic in time solutions of 
the {\it pure gravity} water waves system
\begin{equation}\label{water}
\begin{cases}
\partial_t \Phi + \frac12 | \nabla \Phi |^2 + g \eta = 0  \ \ 
\qquad \qquad \qquad    {\rm at} \ y = \eta (t,x)  \cr
\Delta \Phi =0 \qquad \qquad \qquad \  \  \    \qquad  \qquad \ \qquad {\rm in} \ {\cal D}_{\eta}  \cr
\pa_y \Phi = 0  \qquad  \qquad \qquad \quad \qquad \qquad \qquad \,  {\rm at} \  
y = - h  \cr
\partial_t \eta = \partial_y \Phi - \partial_x \eta \cdot
\partial_x \Phi \qquad \qquad \qquad \quad  {\rm at} \  y = \eta (t,x) 
\end{cases}
\end{equation}
where $ g >  0 $ is the acceleration of gravity. 
The unknowns of the problem 
are the free surface $ y = \eta (t,x) $
and the  velocity potential $ \Phi : {\cal D}_\eta \to \R $, 
i.e.\ the irrotational velocity field   
$ v =\nabla_{x,y} \Phi   $ 
of the fluid.
The first equation in \eqref{water}  is the Bernoulli condition stating  the continuity 
of the pressure at the free surface. 
The last equation in \eqref{water} expresses the fact that 
the fluid particles on the free surface always remain part of it. 

\smallskip

Following Zakharov \cite{Zakharov1968} and Craig-Sulem \cite{CrSu}, the evolution problem \eqref{water} may be written as an infinite-dimensional Hamiltonian system
in the  unknowns $ (\eta(t,x), \psi(t,x) ) $ where
$ \psi(t,x)=\Phi(t,x,\eta(t,x))  $
is, at each instant $ t $,   the trace at the free boundary of the velocity potential. 
Given the shape $\eta(t,x)$ of the domain top boundary 
and  the Dirichlet value $\psi(t,x)$ of the velocity potential at the top boundary, there is a unique solution 
$\Phi(t,x,y; h)$ of the elliptic problem 
\begin{equation} \label{BoundaryPr}
\begin{cases}
\Delta \Phi = 0 & \text{in } 
\{-  h < y < \eta(t,x)\} \\
\partial_y \Phi = 0 & \text{on } y = - h \\
\Phi = \psi & \text{on } \{y = \eta(t,x)\} \, . 
\end{cases}
\end{equation}
As proved in \cite{CrSu}, 
system \eqref{water} is then equivalent to the Craig-Sulem-Zakharov system
\begin{equation}\label{WW}
\begin{cases}
\pa_t \eta =  G(\eta, h) \psi \\
\pa_t \psi = - g \eta - \dfrac{\psi_x^2}{2} + \dfrac{1}{2(1+\eta_x^2)} \big( G(\eta, h) \psi + \eta_x \psi_x \big)^2 
\end{cases}
\end{equation}
where  $ G(\eta, h)$ is the Dirichlet-Neumann operator defined as
\be\label{DN}
G(\eta, h) \psi := \big( \Phi_y - \eta_x \Phi_x \big)_{|y = \eta (t,x)} 
\ee
(we denote by $ \eta_x $ the space derivative $ \partial_x \eta $).
The reason of the name ``Dirichlet-Neumann'' is that 
$G(\eta,h)$ maps the Dirichlet datum $\psi$ 
into the (normalized) normal derivative $G(\eta,h)\psi$ at the top boundary (Neumann datum). 
The operator $ G(\eta, h) $ is linear in $ \psi $, 
self-adjoint with respect to the $ L^2 $ scalar product and positive-semidefinite, 
and its kernel contains only the constant functions. 
The Dirichlet-Neumann operator is a {\it pseudo-differential} operator with principal symbol 
$ D \tanh (hD) $, with the property that 
$ G(\eta, h ) - D \tanh (hD) $ is in $ OPS^{-\infty}  $
when $ \eta (x) \in {\cal C}^\infty $. This operator has been introduced in Craig-Sulem \cite{CrSu} and its properties are nowdays well-understood thanks to the works of Lannes \cite{Lannes}-\cite{LannesLivre}, Alazard-M\'etivier \cite{AlM}, Alazard-Burq-Zuily \cite{ABZ2}, Alazard-Delort \cite{AlDe1}. In Appendix \ref{subDN} we provide a self-contained analysis of the Dirichlet-Neumann operator adapted to our purposes.   

Furthermore, equations \eqref{WW} are the Hamiltonian system (see \cite{Zakharov1968}, \cite{CrSu})
\be\label{HS}
\begin{aligned}
& \qquad \pa_t \eta = \nabla_\psi H (\eta, \psi) \, , \quad  \pa_t \psi = - \nabla_\eta H (\eta, \psi)  \\
& \pa_t u  = J \nabla_u H (u)  \, , \quad u := \begin{pmatrix} 
\eta \\
\psi \\
\end{pmatrix} \, , \quad J := 
\begin{pmatrix} 
0 & {\rm Id} \\
- {\rm Id}  & 0 \\
\end{pmatrix} \, , 
\end{aligned}
\ee
where $ \nabla $ denotes the $ L^2 $-gradient,  
and  the Hamiltonian 
\be\label{Hamiltonian}
H(\eta, \psi) := H(\eta, \psi, h ) 
:= \frac12 \int_\T \psi \, G(\eta, h ) \psi \, dx + \frac{g}{2} \int_{\T} \eta^2  \, dx  
\ee
is the sum of the kinetic and  potential energies 
expressed in terms of the variables  $ (\eta, \psi ) $. 
The symplectic structure induced by  \eqref{HS} is the standard Darboux $ 2 $-form 
\be\label{2form tutto}
{\cal W}(u_1, u_2):= ( u_1, J u_2 )_{L^2(\T_x)} = ( \eta_1, \psi_2 )_{L^2 (\T_x)} - ( \psi_1, \eta_2 )_{L^2 (\T_x)}   
\ee
for all $ u_1 = (\eta_1, \psi_1) $, $ u_2 = (\eta_2, \psi_2) $.
In the paper we will often write $G(\eta), H(\eta,\psi)$ 
instead of $G(\eta,h), H(\eta,\psi, h)$, 
omitting for simplicity to denote the dependence on the depth parameter $ h$.  

The phase space of \eqref{WW} is 
\be\label{phase-space}
(\eta, \psi) \in H^1_0 (\T) \times {\dot H}^1 (\T)  \qquad {\rm where} \qquad  {\dot H}^1 (\T) := H^1 (\T) \slash_{ \sim}  
\ee
is the homogeneous space obtained by the equivalence relation $ \psi_1 (x) \sim \psi_2 (x) $ if and only if $ \psi_1 (x) - \psi_2 (x) = c $ is a constant, 
and $H^1_0(\T)$ is the subspace of $H^1(\T)$ of zero average functions. 
For simplicity of notation we denote the equivalence class $ [\psi] $ by $ \psi $. Note that the second equation in \eqref{WW}
is in $  {\dot H}^1 (\T) $, as it is natural because only the gradient of the velocity potential has a physical meaning. 
Since the quotient map induces an isometry of $ {\dot H}^1 (\T) $ onto $ H^1_0 (\T) $, 
it is often convenient to identify $ \psi $ with a function with zero average. 

\smallskip

The water waves system \eqref{WW}-\eqref{HS} exhibits several symmetries. 
First of all, the mass $ \int_\T \eta \, dx $ is a first integral of \eqref{WW}. 
In addition, the subspace of functions that are even in $ x $, 
\be\label{even in x}
\eta (x) = \eta (-x) \, , \quad \psi (x) = \psi (- x )  \, , 
\ee
is invariant under \eqref{WW}. 
In this case also the velocity potential $ \Phi(x,y) $ is even  and $ 2 \pi $-periodic in $ x $ and so the 
$ x$-component  of the velocity field $ v = (\Phi_x, \Phi_y) $ vanishes  at $ x = k \pi $, for all $ k \in \Z $. 
Hence   there is no flow of fluid through the lines $ x = k \pi $, $ k \in \Z $, and 
a solution of \eqref{WW} satisfying \eqref{even in x} describes 
the motion of a liquid confined  between two vertical walls.

Another important symmetry 
of the water waves system  is reversibility, namely equations
 \eqref{WW}-\eqref{HS} are reversible with respect to the involution  $ \rho : (\eta, \psi) \mapsto (\eta, - \psi) $,
 or, equivalently, the Hamiltonian $H$ in \eqref{Hamiltonian} 
 is even in $ \psi $: 
\be\label{defS}
H \circ \rho = H \, , \quad H( \eta, \psi) = H ( \eta, - \psi ) \, , \quad \rho : (\eta, \psi) \mapsto (\eta, - \psi) \, .
\ee
As a consequence it is natural to look for solutions of \eqref{WW} satisfying 
\be\label{odd-even}
u(-t ) = \rho u(t) \, , \quad i.e. \quad \eta (-t, x) = \eta(t, x) \, , \ 
\psi(-t,x ) = - \psi (t,x ) \quad \forall t, x \in \R \, , 
\ee
namely $ \eta  $ is even  in time and $ \psi $ is odd in time. 
Solutions of the water waves equations \eqref{WW} satisfying 
\eqref{even in x} and \eqref{odd-even} are called gravity {\it standing water waves}. 

\smallskip

In this paper we prove the first existence result 
of small amplitude time \emph{quasi-periodic} standing  waves solutions of the pure gravity water waves equations \eqref{WW},
  for most values of the depth $ h $, see Theorem \ref{thm:main0}.

The existence of standing water waves is a small divisor problem, 
which is particularly difficult because
\eqref{WW} is a fully nonlinear system of PDEs, the nonlinearity 
contains derivatives of order higher than those present in the  linearized system at the origin, and the linear frequencies 
grow as $ \sim j^{1/2} $. 
The existence of small amplitude time-periodic gravity standing wave solutions 
for bi-dimensional fluids has been first proved by Plotinkov and Toland \cite{PlTo}
in finite depth and by Iooss, Plotnikov and Toland in \cite{IPT}  
in infinite depth, see also \cite{IP-SW2}, \cite{IP-SW1}. 
More recently, the existence of time periodic gravity-capillary standing wave solutions in infinite depth 
has been proved by Alazard and Baldi \cite{Alaz-Bal}. 
Next, both the existence and the linear stability of time quasi-periodic gravity-capillary 
standing wave solutions,  in infinite depth,  
have been proved by Berti and Montalto in \cite{BertiMontalto}, 
see also the expository paper \cite{BM}.

We also mention that the bifurcation of small amplitude one-dimensional traveling
gravity water wave solutions (namely traveling waves in bi-dimensional fluids like \eqref{WW}) 
dates back to Levi-Civita \cite{LC}; note that standing waves are not traveling 
because they are even in space, see \eqref{even in x}. 
For three-dimensional fluids, the existence of small amplitude traveling 
water wave solutions with space periodic boundary conditions has been proved by Craig and Nicholls 
\cite{CN} for the gravity-capillary case (which is not a small divisor problem) 
and by Iooss and Plotinikov \cite{IP-Mem-2009}-\cite{IP2} 
in the pure gravity case 
(which is a small divisor problem). 

\smallskip

From a physical point of view,   
it is  natural to consider the depth $ h $ of the ocean as a fixed physical quantity 
and to introduce the space wavelength $ 2  \pi \wl $ as an {\it internal} parameter.  
Rescaling  time, space and amplitude of the solution $(\eta(t,x), \psi(t,x))$ 
of \eqref{WW} as 
\[
\t := \mu t, \quad \tilde x:= \wl x \, ,  \quad 
\tilde\eta(\t,\tilde x) :=\wl \eta( \mu^{-1} \t, \wl^{-1} \tilde x) = \wl \eta(t,x) \, , \quad 
\tilde\psi(\t,\tilde x) := \alpha  \psi(\mu^{-1} \t , \wl^{-1} \tilde x) = \alpha \psi(t,x)\, , 
\]
we get that $(\tilde\eta(\t,\tilde x), \tilde\psi(\t,\tilde x))$ satisfies
$$
\begin{cases}
\pa_\t \tilde \eta =  \dfrac{\wl^2}{\alpha  \mu} G(\tilde\eta,\wl h) \tilde\psi \\
\pa_\t \tilde\psi  = - \dfrac{g \alpha}{\wl \mu} \tilde\eta - \dfrac{ \wl^2 \tilde\psi_{\tilde x}^2}{\alpha  \mu 2} + 
\dfrac{\wl^2}{\alpha  \mu 2(1+\tilde\eta_{\tilde x}^2)} \Big(G(\tilde\eta,\wl h)\tilde\psi + \tilde\eta_{\tilde x} \tilde\psi_{\tilde x} \Big)^2 \, . 
\end{cases}
$$
Choosing the scaling parameters $ \wl, \mu, \alpha $ such that 
$ \frac{\wl^2}{ \alpha \mu} = 1 $, $ \frac{g \alpha }{\wl \mu} = 1 $ 
we obtain   
system \eqref{WW} where the gravity constant $ g $ has been replaced by 1 and the depth parameter $ h$ 
by  
\begin{equation} \label{def rescaled h}
\h := \wl h \, .  
\end{equation}
Changing the parameter $\h$ can be interpreted as changing the space period $ 2 \pi \wl $ of the 
solutions  
and not the depth $ h $ of the water, giving results for a {\it fixed} equation \eqref{WW}. 

In the sequel we shall look for time quasi-periodic solutions of the water waves system 
\begin{equation}\label{WW0}
\begin{cases}
\pa_t \eta =  G(\eta,\h) \psi \\
\pa_t \psi = - \eta  - \dfrac{\psi_x^2}{2} 
+ \dfrac{1}{2(1+\eta_x^2)} \big( G(\eta,\h) \psi + \eta_x \psi_x \big)^2 
\end{cases}
\end{equation}
with $\eta(t) \in H^1_0(\T_x) $ and $\psi(t) \in {\dot H}^1 (\T_x) $, 
actually belonging to more regular  Sobolev spaces. 
 
 \subsection{Main result}

%
We look for small amplitude solutions of \eqref{WW0}. 
Hence a fundamental r\^ole is played by the dynamics of the system 
obtained linearizing \eqref{WW0} at the equilibrium $(\eta, \psi) = (0,0)$, namely 
\begin{equation} \label{Lom}
\left\{
\begin{aligned}
&\partial_t \eta = G(0, {\mathtt h})\psi \\
&\partial_t\psi = - \eta 
\end{aligned}
\right.
\end{equation}
where $ G(0, {\mathtt h}) = D \tanh ({\mathtt h} D) $ is the Dirichlet-Neumann operator at the flat surface $ \eta = 0  $. 
In the compact Hamiltonian form as in \eqref{HS}, system \eqref{Lom} reads 
\begin{equation}\label{definizione Omega}
\pa_t u  = J \Omega u  \, , \quad 
\Om  :=  \begin{pmatrix} 
1    & 0 \\
0 &  G(0, {\mathtt h}) \\
\end{pmatrix},  
\end{equation}
which is  the Hamiltonian system generated by the quadratic Hamiltonian (see \eqref{Hamiltonian})
\be\label{Hamiltonian linear}
H_{L}  := \frac12 ( u, \Omega u )_{L^2}
= \frac12 \int_\T \psi\, G(0, {\mathtt h}) \psi \, dx 
+ \frac12 \int_{\T}  \eta^2   \, dx \, . 
\ee
The solutions of the linear system \eqref{Lom}, i.e.\ \eqref{definizione Omega}, even in $x$, satisfying 
\eqref{odd-even} and \eqref{phase-space}, are 
\begin{equation}  \label{eta psi sin cos series}
\eta(t,x) = \sum_{ j \geq 1}  a_j  \cos (\om_j t)  \cos(jx), \quad 
\psi(t,x) = - \sum_{ j \geq 1}  a_j  \om_j^{-1}   \sin (\omega_j t)  \cos(jx) \, , 
\end{equation} 
with linear frequencies of oscillation
\be\label{LIN:fre}
\om_j := \om_j ({\mathtt h}) :=  \sqrt{j \tanh({\mathtt h} j)} \, , \quad j \geq 1 \, . 
\ee
Note that, since $ j \mapsto j \tanh({\mathtt h} j) $ is monotone increasing, all the linear frequencies are simple.

The main result of the paper proves that most solutions \eqref{eta psi sin cos series} 
of the linear system \eqref{Lom}
can be continued to solutions of the nonlinear water waves equations \eqref{WW0} for most 
values of the parameter $ {\mathtt h} \in [{\mathtt h}_1, {\mathtt h}_2] $. 
More precisely we look for quasi-periodic solutions  $ u (\wtilde \om t) = (\eta, \psi)( \wtilde \om t) $ of  \eqref{WW0}, 
with frequency $ \wtilde \om \in \R^\nu$ (to be determined),  
close to solutions \eqref{eta psi sin cos series} of \eqref{Lom}, 
in the Sobolev spaces of functions  
$$
H^s(\T^{\nu+1}, \R^2) := \big\{ u = (\eta, \psi) : \eta, \psi \in H^s \big\}
$$
\begin{equation} \label{unified norm}
H^s := H^s(\T^{\nu+1}, \R)
= \Big\{ f = \sum_{(\ell,j) \in \Z^{\nu+1}} f_{\ell j} \, e^{\ii(\ell \cdot \ph + jx)} : \ 
\| f \|_s^2 := \sum_{(\ell,j) \in \Z^{\nu+1}} | f_{\ell j}|^2 \langle \ell,j \rangle^{2s} < \infty \Big\},
\end{equation}
where  $\langle \ell,j \rangle := \max \{ 1, |\ell|, |j| \} $. For 
\be\label{def:s0}
s \geq s_0 := \Big[ \frac{\nu +1}{2} \Big] +1 \in \N  
\ee 
one has $ H^s ( \T^{\nu+1},\R) \subset L^\infty ( \T^{\nu+1},\R)$, and $H^s(\T^{\nu+1},\R)$ is an algebra. 

Fix an arbitrary finite subset $ \Splus \subset \N^+ := \{1,2, \ldots \} $ (tangential sites) and consider the solutions of the linear equation \eqref{Lom}
\begin{equation}  \label{eta psi sin cos series-QP}
\eta(t,x) = \sum_{ j \in \Splus}  \axi_j \cos \big( \om_j(\h) t \big) \cos(jx), \quad 
\psi(t,x) = - \sum_{ j \in \Splus} \frac{\axi_j}{\om_j(\h)} \sin \big( \om_j(\h) t \big) \cos(jx), \quad \axi_j >  0 \, , 
\end{equation}
which are Fourier supported on $ \Splus $. 
We denote by $ \nu := | \Splus| $ the cardinality of $ \Splus $.

\begin{theorem} \label{thm:main0}  {\bf (KAM for gravity water waves in finite depth)}
For every choice of the tangential sites $ \Splus \subset \N  \setminus \{0\} $,
there exists $ \bar s >  \frac{|\Splus| + 1}{2} $,  
$ \e_0 \in (0,1) $ such that 
for every vector $ \vec{\axi} := (\axi_j)_{j \in \Splus} $, 
with $\axi_j > 0$ for all $j \in \Splus$
and $ |\vec{\axi}| \leq \e_0 $, 
there exists a Cantor-like set $ \mG \subset [{\mathtt h}_1, {\mathtt h}_2] $ 
with asymptotically full measure as $ \vec{\axi} \to 0 $, i.e. 
$$
\lim_{\vec{\axi} \to 0} | \mG |  = {\mathtt h}_2- {\mathtt h}_1 \, ,  
$$
such that, for any  $ {\mathtt h} \in \mG $,  the gravity water waves system \eqref{WW0}
has a time quasi-periodic solution 
$ u( \widetilde \om t, x ) = (\eta ( \widetilde \om t, x), \psi ( \widetilde \om t, x) ) $,
with Sobolev regularity $ (\eta, \psi)   \in H^{\bar s} ( \T^\nu \times \T, \R^2) $, with a Diophantine frequency vector $ \widetilde \omega := \widetilde \omega(\mathtt h, \vec{\axi}) := 
(\widetilde \omega_j )_{j \in \Splus} \in \R^\nu $,
of the form
\be\label{QP:soluz}
\begin{aligned}
& \eta(\widetilde \omega t,x) = \sum_{ j \in \Splus}  \axi_j \cos ({\widetilde \om}_j t)  \cos(jx) + r_1 ( \widetilde \om t,x ), \\
& \psi(\widetilde \omega t,x) = - \sum_{ j \in \Splus} \frac{\axi_j}{\om_j(\h)} \sin ({\widetilde \omega}_j t)  \cos(jx)+ r_2 ( \widetilde \om t,x ) 
\end{aligned}
\ee
with 
$ {\widetilde \omega}(\mathtt h, \vec{\axi}) 
\to \vec \omega ({\mathtt h}) := (\omega_j ({\mathtt h}))_{j \in {\mathbb S}^+} $ 
as $ \vec{\axi} \to 0 $,
and the functions 
$ r_1 ( \vphi ,x ), r_2(\vphi, x)$ are $o( |\vec{\axi}| )$-small in $ H^{\bar s} ( \T^\nu \times \T, \R) $, i.e.\ 
$\| r_i \|_{\bar s}/ |\vec{\axi}| \to 0$ as $|\vec{\axi}| \to 0$ for $i = 1, 2$. 
The solution  $ (\eta(\widetilde \omega t, x), \psi(\widetilde \omega t, x)) $ 
is even in $ x $, $ \eta $ is even  in $ t $ and $ \psi $ is odd in $ t $. 
In addition these quasi-periodic solutions are  linearly stable, see Theorem \ref{thm:lin stab}.  
\end{theorem}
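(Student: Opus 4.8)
The plan is to set up a Nash--Moser iteration in the scale of Sobolev spaces $H^s(\T^{\nu+1},\R^2)$, looking for the unknown quasi-periodic solution $u(\ph,x)$ as a fixed point of a nonlinear functional $\mathcal{F}(u,\widetilde\om,\h)=\widetilde\om\cdot\pa_\ph u - J\Om u - J\big(\text{nonlinear part}\big)$, where the frequency $\widetilde\om$ is a free parameter forced to lie on the Diophantine-like Cantor set $\mG$ while $\vec\axi$ parametrizes the approximate solution \eqref{eta psi sin cos series-QP}. First I would introduce action-angle variables on the tangential sites $\Splus$ and split $u$ into its tangential part (carrying the $\nu$ angles) and a normal component in $\Pi_S^\bot$; this recasts the bifurcation as continuing the linear torus \eqref{eta psi sin cos series-QP}, with the amplitudes $\axi_j$ tied to the actions. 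The reversibility \eqref{defS}--\eqref{odd-even} and the evenness in $x$ \eqref{even in x} are preserved throughout, which kills the would-be zero modes and halves the resonance conditions. Each step of the iteration requires an approximate right inverse of the linearized operator $d_u\mathcal{F}(u_n)$ at the current approximate solution $u_n$; constructing this inverse is the heart of the matter and is where essentially all the work lies.

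To invert the linearized operator I would follow the strategy already described in the introduction. The linearization of \eqref{WW0} at a quasi-periodic approximate solution is a fully nonlinear (in fact quasi-linear, after the usual Alinhac/para-differential good-unknown change of variables) transport-plus-Dirichlet--Neumann system with $\ph$- and $x$-dependent coefficients. The first block of work is a \emph{reduction to constant coefficients up to smoothing}: via a finite sequence of changes of variables that are pseudo-differential and quasi-periodic in time (a transport/flow map straightening the first-order transport term, then conjugations by Fourier multipliers and lower-order $\OPS$ operators) one conjugates $d_u\mathcal{F}(u_n)$ to $\om\cdot\pa_\ph + \ii\, \mM(D) + \mathcal{R}$, where $\mM(D)$ is a diagonal constant-coefficient operator whose symbol behaves like $\sqrt{|j|\tanh(\h|j|)}$ plus lower-order corrections and $\mathcal{R}$ is a regularizing remainder, small in the appropriate tame norms. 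The sublinear growth $\om_j\sim j^{1/2}$ means one only has a half-derivative gain from the unperturbed part, so this normalization must be done with care, tracking the dependence on $\h$ and on the Lipschitz constants in the parameters. The second block is a \emph{KAM reducibility scheme} for $\om\cdot\pa_\ph + \ii\,\mM(D) + \mathcal{R}$: an iterative diagonalization killing the off-diagonal blocks of $\mathcal{R}$, which converges provided the very weak second-order Melnikov conditions $|\,\om\cdot\ell + \mu_j - \mu_k\,|\ge \g\,\langle\ell\rangle^{-\tau}/(\,j^{?}+k^{?}\,)$ hold (losing derivatives in both $\ell$ and in $j,k$), where the $\mu_j=\mu_j(\h,u_n)$ are the perturbed normal frequencies close to $\om_j(\h)$. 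Once reducibility is achieved, invertibility on the Cantor set is immediate and yields the tame estimates needed to feed the Nash--Moser loop, as well as the linear stability claim (the reduced equation has purely imaginary spectrum, so the Sobolev norms of solutions of the linearized equation stay bounded).

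The measure estimate is the final and most delicate arithmetic point. One must show that the set of $\h\in[\h_1,\h_2]$ violating any of the imposed non-resonance conditions (the Diophantine condition on $\widetilde\om(\h,\vec\axi)$, the first-order Melnikov conditions, and the second-order ones along the reducibility scheme) has measure $\to 0$ as $\vec\axi\to0$. The obstacle flagged in the introduction is that $\h$ moves the linear frequencies $\om_j(\h)=\sqrt{j\tanh(\h j)}$ only by exponentially small amounts for large $j$, so the naive transversality/non-degeneracy argument in $\h$ degenerates. I would resolve this by extending degenerate KAM theory: show that the map $\h\mapsto(\om_j(\h))_{j\in\Splus}$, together with finitely many of its $\h$-derivatives, is non-degenerate in the sense that no nontrivial linear combination $\om\cdot\ell+\mu_j-\mu_k$ (with the perturbed frequencies, which are $\e$-close and $\Lip$-close to the unperturbed ones) can vanish identically on any subinterval; a quantitative version of this, using the analyticity of $\tanh$ and a lower bound on some derivative $\pa_\h^n(\om\cdot\ell+\om_j-\om_k)$ uniform for $|\ell|$ in a dyadic range, then gives the required polynomial-in-$\langle\ell\rangle$ measure bounds for the resonant set, and the exponential smallness only costs an extra (harmless) logarithmic factor in the cutoffs. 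Summing over $\ell,j,k$ and over the iteration steps, and choosing $\g=\g(\vec\axi)\to0$ slowly, yields $|\mG|\to\h_2-\h_1$.

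I expect the main obstacle to be precisely the interplay, in the reduction step, between the fully nonlinear/quasi-linear nature of the equations and the weakness of the unperturbed dispersion ($\om_j\sim j^{1/2}$): one has very little smoothing to spend, so the pseudo-differential conjugations and the remainder estimates must be done with tight control of both the order and the tame norms, uniformly in $\h$ and in the Lipschitz-in-parameter seminorms, so that the subsequent KAM reducibility — run with the very weak, derivative-losing Melnikov conditions that the degenerate measure estimate can actually afford — still converges.
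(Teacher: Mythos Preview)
Your proposal is correct and follows essentially the same strategy as the paper: action-angle variables on the tangential sites, Nash--Moser iteration, reduction of the linearized operator in the normal directions to constant coefficients up to smoothing via Alinhac's good unknown followed by a chain of pseudo-differential conjugations (straightening the transport, symmetrization, semi-FIO for the order $1/2$, descent for lower orders), then a KAM reducibility scheme under weak second-Melnikov conditions, and finally degenerate-KAM measure estimates in the depth parameter $\h$ via a quantitative transversality bound on finitely many $\h$-derivatives of the combinations $\vec\om(\h)\cdot\ell\pm\Omega_j(\h)\pm\Omega_{j'}(\h)$.

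Two minor points where the paper is more specific than your outline: (i) rather than using $\widetilde\om$ directly as a free parameter, the paper introduces a counterterm $\alpha$ \`a la Herman--F\'ejoz in the family $H_\alpha=\alpha\cdot I+\tfrac12(z,\Omega z)+\e P$, solves $\mathcal F(i,\alpha)=0$ for $(i,\alpha)$ via Nash--Moser, and only afterwards inverts $\alpha_\infty$ to recover the frequency; this cleanly decouples the iteration from the measure analysis. (ii) Your remark that the exponential smallness of $\pa_\h\om_j(\h)$ ``only costs a logarithmic factor'' is not quite the mechanism: the paper proves (via analyticity in $\h^4$ and a generalized Vandermonde determinant) that the linear frequencies are non-degenerate, deduces a uniform lower bound on $\max_{k\le k_0^*}|\pa_\h^k(\cdot)|$, and then applies R\"ussmann's lemma to get measure bounds $\sim\gamma^{1/k_0^*}$ per resonance; no logarithmic corrections enter.
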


Let us make some comments on the result.

No global wellposedness results concerning the initial value problem of the water waves equations 
\eqref{WW} under {\it periodic} boundary conditions are known so far. 
Global existence results have been proved 
for smooth Cauchy data rapidly decaying at infinity in $ \R^d $, $ d = 1, 2 $, exploiting the dispersive properties
of the flow. For three dimensional fluids (i.e.\  $ d = 2 $) it has been proved independently by 
Germain-Masmoudi-Shatah \cite{GMS} and Wu \cite{Wu2}.  
In the more difficult case of bi-dimensional fluids (i.e.\  $ d = 1 $) it has been proved
by  Alazard-Delort \cite{AlDe1} and Ionescu-Pusateri \cite{IP}. 

In the case of periodic boundary conditions, Ifrim-Tataru \cite{IT} proved for small initial data 
a cubic life span time of existence, which is longer than the one just provided 
by the local existence theory, see for example \cite{ABZ3}.
For longer times, we mention the almost global existence result in Berti-Delort \cite{BD}
for gravity-capillary space periodic water waves. 

The Nash-Moser-KAM iterative procedure used to prove Theorem \ref{thm:main0}
selects many values of the parameter
$ {\mathtt h} \in [{\mathtt h}_1, {\mathtt h}_2] $ that give rise
to the quasi-periodic solutions  \eqref{QP:soluz}, which are defined for all times.
By a Fubini-type argument it also results that, for most values of 
${\mathtt h} \in [{\mathtt h}_1, {\mathtt h}_2] $,
there exist quasi-periodic solutions of \eqref{WW0} for most values 
of the amplitudes $ | \vec{\axi} | \leq \e_0 $. 
The fact that we find quasi-periodic solutions only restricting to a proper subset of 
parameters is not a technical issue, because 
the gravity water waves equations \eqref{WW} are expected to be not
integrable, see \cite{Craig-Worfolk}, \cite{DLZ} in the case of infinite depth. 

The dynamics of the pure gravity and gravity-capillary water waves equations is very different:
\begin{itemize}

\item[$(i)$]\label{singul} the pure gravity water waves vector field in  \eqref{WW0}  
 is a \emph{singular perturbation} of the linearized vector field 
at the origin in  \eqref{Lom}, which, after symmetrization, is $ |D_x|^{\frac12}{\tanh^{\frac12}(\h |D_x|)}$; in fact, the linearization of the nonlinearity gives rise to a transport vector field $V\pa_x$, see \eqref{def:L0-intro}.
On the other hand, the gravity capillary vector field is quasi-linear and contains derivatives of the same order as the linearized vector field at the origin, which
is $\sim |D_x|^{\frac32}$. This difference, which is well known in the water waves literature, 
requires a very different analysis of the linearized operator
(Sections  \ref{linearizzato siti normali}-\ref{sezione descent method})  with respect to the gravity capillary case in 
\cite{Alaz-Bal}, \cite{BertiMontalto}, see Remark \ref{rem:GC}.

\item[$(ii)$]\label{growth}  
The linear frequencies $ \omega_j $ in \eqref{LIN:fre} 
of the pure gravity water waves grow like $\sim j^{\frac12}$ as $ j \to + \infty $, 
while, in presence of surface tension $ \kappa $, 
the  linear frequencies are $ \sqrt{(1+ \kappa j^2)j \tanh ({\mathtt h}j) }  
\sim j^{\frac32}$. This makes a substantial difference for the development of KAM theory. In presence of a sublinear growth of the linear frequencies $\sim j^\a$, $\a<1$, 
one may impose only very weak second order Melnikov non-resonance conditions, see e.g.\  
\eqref{2nd melnikov perd}, which lose also space (and not only  time) derivatives along the KAM reducibility scheme.
This is not the case of the abstract infinite-dimensional KAM theorems \cite{K1}, \cite{Kuksin-Oxford}, \cite{Po2}
where the linear frequencies 
grow as 
$ j^\a $, $ \a \geq 1 $, and  the perturbation is bounded.
In this paper we overcome the difficulties posed by the sublinear growth $\sim j^{\frac12}$ and by the unboundedness of the water waves vector field thanks to a regularization procedure performed on the linearized PDE at each approximate quasi-periodic solution obtained along a Nash-Moser iterative scheme, see the regularized system \eqref{op:redu1}. 
This 
regularization strategy is in principle applicable to a broad class of PDEs
where the second order Melnikov non-resonance conditions lose space derivatives.

\item[$(iii)$]
The linear frequencies \eqref{LIN:fre} vary with $\mathtt h$ only by exponentially small quantities: they admit the 
asymptotic expansion 
\begin{equation}\label{espansione asintotica degli autovalori}
\sqrt{j \tanh ({\mathtt h} j)} = \sqrt{j} + r(j,\h) \quad {\rm where} \quad 
\big| \partial_{\mathtt h}^k r(j,\h) \big| \leq C_k e^{ - {\mathtt h}  j} 
\quad \forall k \in \N \, , \ \forall j \geq 1,
\end{equation}
uniformly in $ {\mathtt h} \in [{\mathtt h}_1, {\mathtt h}_2] $, 
where the constant $C_k$ depends only on $k$ and $\h_1$. Nevertheless
we shall be able, 
extending the degenerate KAM theory approach in \cite{BaBM}, \cite{BertiMontalto},  
to use the finite depth parameter   $ \h $  to impose the required 
Melnikov non-resonance conditions, see \eqref{2nd melnikov perd}  
and Sections \ref{sec:degenerate KAM} and \ref{sec:measure}. 
On the other hand, for the gravity capillary water waves considered in \cite{BertiMontalto},
 the surface tension parameter $ \kappa $ moves the  linear frequencies
 $ \sqrt{(1+ \kappa j^2)j \tanh ({\mathtt h}j) }  $  of polynomial  quantities $ O( j^{3/2})$.
\end{itemize}

\noindent
{\bf Linear stability.} The quasi-periodic solutions $ u( \widetilde \om t) 
= (\eta ( \widetilde \om t), \psi ( \widetilde \om t) ) $ found in Theorem \ref{thm:main0}  
are linearly stable.
Since this  is not only a dynamically relevant information, 
but also an essential ingredient of the existence proof 
(it is not necessary 
for time periodic  solutions as in \cite{PlTo}, \cite{IPT}, \cite{IP-SW2}, \cite{IP-SW1}, \cite{Alaz-Bal}),
we state precisely the result.

The quasi-periodic solutions  \eqref{QP:soluz} are mainly supported in Fourier space on the tangential sites $ \Splus $.
As a consequence, the dynamics of the water waves equations \eqref{WW} on  the
symplectic  subspaces 
\begin{equation}\label{splitting S-S-bot}
H_{\Splus} := \Big\{ v = \sum_{j \in \Splus} \begin{pmatrix} 
 \eta_j  \\
\psi_j \\
\end{pmatrix} \cos (jx)   \Big\} \, , 
\quad 
H_{\Splus}^\bot := \Big\{ 
z = \sum_{j \in \N \setminus \Splus} \begin{pmatrix} 
 \eta_j  \\
\psi_j \\
\end{pmatrix} \cos (jx) \in H^1_0(\T_x)  \Big\} ,
\end{equation}
is quite different. We shall call $ v \in H_{\Splus} $  the {\it tangential} variable and $ z \in H_{\Splus}^\bot $ the {\it normal} one. 
In the finite dimensional subspace $ H_{\Splus} $
we shall describe the dynamics  by introducing  the action-angle variables $ (\theta, I) \in  \T^\nu \times \R^\nu $ in Section \ref{sec:functional}. 

The classical normal form formulation of KAM theory for lower dimensional tori, see for instance \cite{K1}-\cite{Kuksin-Oxford}, \cite{Po2}, \cite{KaP}, \cite{EK}, \cite{PP}, \cite{Berti-Biasco-Procesi-Ham-DNLW}-\cite{Berti-Biasco-Procesi-rev-DNLW}, \cite{Zhang-Gao-Yuan}, \cite{Liu-Yuan}, provides, when applicable, existence and linear stability of quasi-periodic solutions at the same time. On the other hand, existence (without linear stability) of periodic and quasi-periodic solutions of PDEs has been proved by using the Lyapunov-Schmidt decomposition combined with Nash-Moser implicit function theorems, see e.g. \cite{B5}, \cite{CraigPanorama}, 
\cite{PlTo}, \cite{IPT}, \cite{IP-SW2}, \cite{IP-SW1}, \cite{CN}, \cite{Baldi Benj-Ono}, \cite{Alaz-Bal} 
 and references therein. 
In this paper we follow the Nash Moser approach to KAM theory outlined in \cite{BB13} and implemented in \cite{BBM-auto}, \cite{BertiMontalto}, which combines ideas of both formulations, 
see Section \ref{ideas of the proof} ``Analysis of the linearized operators'' 
and Section \ref{sezione approximate inverse}. 

We prove that around each torus filled by the 
quasi-periodic solutions \eqref{QP:soluz}  
of the Hamiltonian system \eqref{WW0} 
constructed in Theorem \ref{thm:main0}
there exist symplectic coordinates 
$ (\phi, y, w) = (\phi, y, \eta, \psi) \in \T^\nu \times \R^\nu \times  H_{\Splus}^\bot  $ 
(see \eqref{trasformazione modificata simplettica} and \cite{BB13}) 
in which the water waves Hamiltonian  reads
\begin{align}\label{weak-KAM-normal-form}
\widetilde\om \cdot y  + \frac12 K_{2 0}(\phi) y \cdot y +  \big( K_{11}(\phi) y , w \big)_{L^2(\T_x)} 
+ \frac12 \big(K_{02}(\phi) w , w \big)_{L^2(\T_x)} + K_{\geq 3}(\phi, y, w) 
\end{align}
where $ K_{\geq 3} $ collects the terms at least cubic in the variables $ (y, w )$
(see \eqref{KHG} and note that, at a solution, 
one has $ \partial_\phi K_{00} = 0 $, $ K_{10} = \widetilde \omega $, 
$ K_{01} = 0 $ by Lemma \ref{coefficienti nuovi}). 
The $ (\phi, y) $ coordinates are modifications of the action-angle variables and 
$ w $ is a translation of the cartesian variable $ z$ in the normal subspace, see \eqref{trasformazione modificata simplettica}.
In these coordinates the quasi-periodic solution reads 
$ t \mapsto (\widetilde\om t , 0, 0 ) $ 
and  the corresponding  linearized water waves equations are
\begin{equation}\label{linear-torus-new coordinates}
\begin{cases}
\dot{ \phi} = K_{20}(\widetilde\omega t)[ y] + K_{11}^T(\widetilde\omega t)[ w] \\
\dot{ y} = 0 \\
\dot{ w} = J K_{02}(\widetilde\omega t)[ w] + J K_{11}(\widetilde\omega t)[ y]\, .
\end{cases}
\end{equation}
The self-adjoint operator $  K_{02} (\widetilde\om t) $ is defined in \eqref{KHG}
and $ J K_{02} (\widetilde\om t) $ is the restriction to $ H_{\Splus}^\bot $ of the linearized
water waves vector field  $ J \pa_u \nabla_u H (u (\widetilde\om t ))$ (computed  explicitly in \eqref{linearized vero})
up to a finite dimensional remainder, see Lemma \ref{thm:Lin+FBR}.

We have the following result of linear stability for the quasi-periodic solutions found in Theorem \ref{thm:main0}. 
\begin{theorem}{ \bf (Linear stability)} \label{thm:lin stab}
The quasi-periodic solutions \eqref{QP:soluz} of the pure gravity water waves system are linearly stable, meaning that for all $s$ belonging to a suitable interval $[s_1,s_2]$, for any initial datum $y(0) \in \R^\nu$, $w(0) \in H^{s - \frac14}_x \times H^{s + \frac14}_x$, the solutions $y(t)$, $w(t)$ of system \eqref{linear-torus-new coordinates} satisfy 
$$
y(t) = y(0), \quad  \| w (t)\|_{H^{s - \frac14}_x \times H^{s + \frac14}_x} \leq C \big( \| w(0)\|_{H^{s - \frac14}_x \times H^{s + \frac14}_x} + |y(0)| \big) \quad \forall t\in\R. 
$$
\end{theorem}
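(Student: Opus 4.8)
The plan is to reduce everything to the reducibility of the linearized water waves operator in the normal directions, already established in the body of the paper, plus one homological equation for the forcing term. First, the middle equation in \eqref{linear-torus-new coordinates} gives at once $y(t)=y(0)$ for all $t$. Substituting into the third equation, the component $w$ solves the inhomogeneous linear equation
\[
\dot w = J K_{02}(\widetilde\om t)\, w + J K_{11}(\widetilde\om t)\, y(0)\,,
\]
whose forcing term is quasi-periodic in $t$ with frequency $\widetilde\om$ and, since $K_{11}(\ph)$ maps $\R^\nu$ into functions as regular as the torus embedding, is smooth in $x$. The first equation then decouples: its right-hand side is an explicit linear function of the (already controlled) quantities $y(t),w(t)$, so $\phi(t)$ plays no role in the stability of the pair $(y,w)$ and can be ignored.

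Second, I would invoke the outcome of Sections \ref{linearizzato siti normali}--\ref{sezione descent method} and of the subsequent KAM reducibility: up to the finite-dimensional remainder of Lemma \ref{thm:Lin+FBR}, $J K_{02}(\widetilde\om t)$ is the restriction to $H_{\Splus}^\bot$ of the linearized vector field, and the construction produces a $\ph$-dependent family $W(\ph)$ of invertible bounded linear maps --- the composition of the symplectic/symmetrizing change of variables, the pseudo-differential reductions to constant coefficients up to smoothing, and the KAM reducibility transformations --- with $W(\ph)^{\pm1}$ bounded uniformly in $\ph$ between $H^{s-\frac14}_x\times H^{s+\frac14}_x$ and a fixed diagonal space, for $s$ in a suitable interval. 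Setting $w=W(\widetilde\om t)v$ conjugates the homogeneous equation $\dot w=J K_{02}(\widetilde\om t)\,w$ to the diagonal constant-coefficient system $\dot v=\mathrm{i}\,\mathcal D_\infty v$, $\mathcal D_\infty=\diag_{j\in\N\setminus\Splus}(\mu_j^\infty)$, with real Floquet exponents $\mu_j^\infty$. Since $\mathcal D_\infty$ is real and diagonal, $e^{\mathrm{i} t\mathcal D_\infty}$ is an isometry of every $x$-Sobolev space; hence the propagator of the homogeneous equation is bounded on $H^{s-\frac14}_x\times H^{s+\frac14}_x$ uniformly in time, with bound $\sup_\ph\|W(\ph)\|\,\sup_\ph\|W(\ph)^{-1}\|$.

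Third, to handle the forcing I would write $J K_{11}(\widetilde\om t)\,y(0)=W(\widetilde\om t)\,g(\widetilde\om t)$ with $g(\ph)$ quasi-periodic, so that in the $v$ variables the equation reads $\dot v=\mathrm{i}\,\mathcal D_\infty v+g(\widetilde\om t)$. Using the second Melnikov conditions $|\widetilde\om\cdot\ell-\mu_j^\infty|\ge\gamma\langle\ell\rangle^{-\tau}$, valid precisely for $\h\in\mG$ (cf.\ \eqref{2nd melnikov perd}), one solves the homological equation $\widetilde\om\cdot\pa_\ph v_{qp}=\mathrm{i}\,\mathcal D_\infty v_{qp}+g$ in Fourier coefficients, $(v_{qp})_{\ell j}=g_{\ell j}/\big(\mathrm{i}(\widetilde\om\cdot\ell-\mu_j^\infty)\big)$, obtaining $v_{qp}$ in a Sobolev space $H^{s'}(\T^\nu\times\T)$ with $s'$ still large (the loss being $\tau$ derivatives in the time variables only), which is harmless since $y(0)$ enters multiplied by the smooth coefficients $K_{11}$. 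The general solution is then $v(t)=e^{\mathrm{i} t\mathcal D_\infty}\big(v(0)-v_{qp}(0)\big)+v_{qp}(\widetilde\om t)$, and both summands are bounded uniformly in $t$ in the $x$-Sobolev norm --- the first by the isometry property, the second by the standard Sobolev estimate bounding $\|v_{qp}(\ph,\cdot)\|_{H^s_x}$ by a higher $H^{s'}(\T^{\nu+1})$-norm. Conjugating back by $W(\widetilde\om t)$ gives $\|w(t)\|_{H^{s-\frac14}_x\times H^{s+\frac14}_x}\le C\big(\|w(0)\|_{H^{s-\frac14}_x\times H^{s+\frac14}_x}+|y(0)|\big)$ for all $t\in\R$, which is the assertion; the admissible range $[s_1,s_2]$ is fixed from below by $s_0$ plus the cumulative derivative loss of the reductions and of the Melnikov conditions, and from above by the Sobolev regularity $\bar s$ of the quasi-periodic solution.

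The only genuine work is bookkeeping: checking that the composed transformation $W(\ph)$ is a bounded isomorphism, uniformly in $\ph$, between exactly $H^{s-\frac14}_x\times H^{s+\frac14}_x$ and the space on which the reduced operator acts diagonally --- the $\pm\frac14$ shift being the signature of the symmetrized dispersion relation $|D|^{\frac14}\tanh^{\frac14}(\h|D|)$ --- and that after conjugation the forcing term lands in a Sobolev space on which the Melnikov conditions apply. The conceptual content, namely that the homogeneous flow is an isometric evolution in the diagonal variables because the reduced operator has purely imaginary spectrum, is already furnished by the reducibility theorem; the forcing term is the one new ingredient, and it is tamed by the same non-resonance conditions that run the whole KAM scheme.
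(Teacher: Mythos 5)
Your proof is correct and follows the same route the paper sketches after the statement of Theorem \ref{thm:lin stab}: from \eqref{linear-torus-new coordinates} one gets $y(t)=y(0)$ at once, the $w$-component satisfies \eqref{San pietroburgo modi normali}, and the reducibility transformation of Sections \ref{linearizzato siti normali}--\ref{sec: reducibility} (a $\ph$-dependent bounded isomorphism $H^s_x\times H^s_x \to H^{s-\frac14}_x\times H^{s+\frac14}_x$, the $\pm\frac14$ shift coming from the symmetrizer $\Lambda_\h=\pi_0+|D|^{\frac14}T_\h^{\frac14}$ of \eqref{formula compatta Lambda h inverso}) conjugates the homogeneous part to the diagonal constant-coefficient system \eqref{san pietroburgo siti normali ridotta} with real Floquet exponents, while the quasi-periodic forcing is absorbed by a quasi-periodic particular solution obtained from a homological equation. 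One small slip in the bookkeeping: the non-resonance conditions you actually use for that homological equation are the \emph{first} Melnikov conditions $|\widetilde\omega\cdot\ell+\mu_j^\infty|\geq 4\gamma j^{\frac12}\langle\ell\rangle^{-\tau}$ from the second line of \eqref{Cantor set infinito riccardo}, not the second-order conditions of \eqref{2nd melnikov perd} (which involve a difference $\mu_j^\infty-\mu_{j'}^\infty$ of two eigenvalues); the inequality you wrote is in fact the first-order one, only the label and citation are off.
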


In fact, by \eqref{linear-torus-new coordinates},  
the actions $ y (t) = y(0) $ do not evolve in time and 
the third equation reduces to  the linear PDE
\begin{equation}\label{San pietroburgo modi normali}
\dot{w} = J K_{02}(\widetilde\omega t)[ w] + J K_{11}(\widetilde\omega t)[ y (0)] \, .
\end{equation}
Sections \ref{linearizzato siti normali}-\ref{sec: reducibility} 
imply the existence of a transformation $(H^s_x \times H^s_x) \cap H_{\Splus}^\bot  \to (H^{s - \frac14}_x \times H^{s + \frac14}_x) \cap H_{\Splus}^\bot $, 
bounded and invertible for all 
$s \in [s_1, s_2]$,
such that, in the new variables ${\mathtt w}_\infty$, 
the homogeneous equation $\dot{w} = J K_{02}(\widetilde\omega t)[ w]$ transforms into 
a system of infinitely many uncoupled 
scalar and time independent ODEs of the form
\begin{equation}\label{san pietroburgo siti normali ridotta}
\partial_t {\mathtt w}_{\infty, j} = - \ii \mu_j^\infty {\mathtt w}_{\infty, j} \, , 
\quad \forall j \in {\mathbb S}_0^c \, , 
\end{equation}
where $\ii$ is the imaginary unit, 
$\mathbb S_0^c := \Z \setminus \mathbb S_0$,
${\mathbb S}_0 := \Splus \cup (- \Splus) \cup \{ 0 \} \subseteq \Z$, 
the eigenvalues $\mu_j^{\infty}$ are (see \eqref{mu j infty kappa}, \eqref{autovalori in kappa})
\be\label{Floquet-exp}
\mu_j^{\infty} 
:= \mathtt m_{\frac12}^{\infty} |j|^{\frac12} \tanh^{\frac12}(\mathtt h |j|) 
+ \rin_j^\infty \in \R , 
\quad j  \in {\mathbb S}_0^c \, , 
\quad \rin_j^\infty = \rin_{-j}^\infty\,, 
\ee
and $ \mathtt m_{\frac12}^{\infty} = 1 + O( | \vec{\axi} |^{c}) $, 
$\sup_{j \in {\mathbb S}_0^c} |j|^{\frac12} |\rin_j^{\infty}| = O( |\vec{\axi} |^{c} ) $ 
for some $ c  >  0 $, see \eqref{stime coefficienti autovalori in kappa}. Since $\mu_j^\infty$ are even in $j$, equations \eqref{san pietroburgo siti normali ridotta} can be 
equivalently written in the basis $(\cos(jx))_{j\in\N\setminus\Splus}$ of functions even in $x$; in Section \ref{sec: reducibility}, for convenience, we represent even operators in the exponential basis $(e^{\ii j x})_{j\in{\mathbb S}_0^c}$.
The above result is the {\it reducibility} of the linearized quasi-periodically time dependent equation 
$\dot{w} = J K_{02}(\widetilde\omega t)[w]$. 
The {\it Floquet exponents} of the quasi-periodic solution are the purely imaginary numbers 
$ \{ 0,  \ii \mu_j^{\infty}, j \in {\mathbb S}_0^c  \}$
(the null Floquet exponent comes from the action component $\dot y = 0$).
Since $\mu_j^\infty$ are real, the Sobolev norms of the solutions of 
\eqref{san pietroburgo siti normali ridotta} are constant.

The reducibility of the linear equation 
$\dot{w} = J K_{02}(\widetilde\omega t)[w]$ 
is obtained by 
two well-separated procedures: 

\begin{enumerate}
\item
First, we perform a reduction of the linearized operator 
into a constant coefficient pseudo-differential operator, up to smoothing remainders, via 
changes of variables that are quasi-periodic transformations of the phase space, see \eqref{op:redu1}.  
We perform such a reduction in Sections \ref{linearizzato siti normali}-\ref{coniugio cal L omega}.
\item
Then, 
we  implement in Section \ref{sec: reducibility} a KAM iterative scheme 
which completes the diagonalization of  the linearized operator. 
This scheme uses very weak second order Melnikov non-resonance conditions 
which lose derivatives both in time and in space. 
This loss is compensated along the KAM scheme 
by the smoothing nature of the variable coefficients remainders. 
Actually, in Section \ref{sec: reducibility} we explicitly state only a result of almost-reducibility 
(in Theorems \ref{iterazione riducibilita}-\ref{Teorema di riducibilita} we impose only finitely many Melnikov non-resonance conditions and there appears a remainder $\mR_n$ of size $O(N_n^{-\mathtt a})$, where $\mathtt a > 0$ is the large parameter fixed in \eqref{alpha beta}), because this is sufficient for the construction of the quasi-periodic solutions. 
However the frequencies of the quasi-periodic solutions 
that we construct in Theorem \ref{thm:main0} 
satisfy all the infinitely many Melnikov non-resonance conditions in 
\eqref{Cantor set infinito riccardo1} 
and Theorems \ref{iterazione riducibilita}-\ref{Teorema di riducibilita} pass to the limit as $n\to\infty$, 
leading to \eqref{san pietroburgo siti normali ridotta}.
\end{enumerate}

We shall explain these steps in detail in Section \ref{ideas of the proof}.
In the pioneering works of Plotnikov-Toland \cite{PlTo} and Iooss-Plotnikov-Toland \cite{IPT} dealing with time-periodic solutions of the water waves equations, 
as well as in \cite{Alaz-Bal}, 
the latter diagonalization is not required. 
The key difference is that,
in the periodic problem, 
a sufficiently regularizing operator in the space variable is also regularizing in the  time  variable,  
on the ``characteristic'' Fourier indices which correspond to 
the small divisors. This is definitely not true for quasi-periodic solutions.  

\medskip

\noindent
{\bf Literature about KAM for quasilinear PDEs.} 
KAM theory for PDEs has been developed to a large extent for bounded perturbations 
and for linear frequencies growing in a superlinear way, as $ j^\a $, $ \a \geq 1 $.  
The case $ \a = 1 $, which corresponds to 1d wave and Klein-Gordon equations, is more delicate. 
In the sublinear case $ \a < 1$, as far as we know, there are no general KAM results, 
since the second order Melnikov conditions lose space derivatives. 
Since the eigenvalues of $- \Delta$ on $\T^d$ 
grow, 
according to the Weyl law, like $ \sim j^{2/d}$, $j \in \N$, 
one could regard the KAM results for multidimensional Schr\"odinger and wave equations
 in
\cite{B5}, \cite{EK}, \cite{BB13JEMS}, \cite{BCP}, \cite{PP},
 under this perspective. 
Actually the proof of  these results 
 exploits specific properties of clustering of the eigenvalues of the Laplacian.
 
The existence of quasi-periodic solutions of PDEs with \emph{unbounded}  perturbations (i.e.\ the nonlinearity contains derivatives) has been first proved by Kuksin \cite{Kuksin-Oxford} and Kappeler-P\"oschel \cite{KaP} for KdV, 
then by Liu-Yuan \cite{Liu-Yuan}, Zhang-Gao-Yuan \cite{Zhang-Gao-Yuan} for
derivative NLS, and by Berti-Biasco-Procesi 
\cite{Berti-Biasco-Procesi-Ham-DNLW}-\cite{Berti-Biasco-Procesi-rev-DNLW} for derivative wave equation.
All these previous results still refer to semilinear perturbations, i.e.\ where the order of the derivatives in the nonlinearity is strictly lower than the order of the constant coefficient (integrable) linear differential operator. 

For quasi-linear or fully nonlinear PDEs 
the first KAM results have been recently proved by Baldi-Berti-Montalto  in \cite{BBM-Airy}, 
\cite{BBM-auto}, \cite{BBM-mKdV} 
for perturbations of Airy, KdV and mKdV equations, introducing tools of pseudo-differential calculus for the
spectral analysis of the linearized equations.
In particular, \cite{BBM-Airy} concerns quasi-periodically forced perturbations of the Airy equation
\begin{equation}\label{Airy intro}
u_t + u_{xxx} + \e f(\om t, x,u,u_x,u_{xx},u_{xxx}) = 0
\end{equation}
where the forcing frequency $\om$ is an external parameter. 
The key step is the reduction of the linearized operator at an approximate solution to constant coefficients up to a sufficiently \emph{smoothing} remainder, followed by a KAM reducibility scheme leading to its complete diagonalization. 
Once such a reduction has been achieved, the second order Melnikov nonresonance conditions required for the diagonalization are easily imposed since the frequencies are $\sim j^3$ and using $\om$ as external parameters.
Because of the purely differential structure of \eqref{Airy intro}, the required tools of pseudo-differential calculus are mainly multiplication operators and Fourier multipliers.
These techniques have been extended by Feola-Procesi \cite{FP} 
for quasi-linear forced perturbations of Schr\"odinger equations and by Montalto \cite{riccardo-kirchhoff} for the forced Kirchhoff equation.  

The paper \cite{BBM-auto} deals with the more difficult case of completely resonant autonomous Hamiltonian perturbed KdV equations of the form
\begin{equation}\label{kdv autonoma}
u_t + u_{xxx} - 6 u u_x + f(x,u,u_x,u_{xx},u_{xxx}) = 0\,.
\end{equation}
Since the Airy equation $u_t + u_{xxx}=0$ possesses only $2\pi$-periodic solutions, the existence of quasi-periodic solutions of \eqref{kdv autonoma} is entirely due to the nonlinearity, 
which determines the modulation of the tangential frequencies of the solutions with respect to its amplitudes. This is achieved via ``weak'' Birkhoff normal form transformations that are close to the identity up to finite rank operators.
The paper \cite{BBM-auto} implements the general symplectic procedure proposed in \cite{BB13} for autonomous PDEs, which reduces the construction of an approximate inverse of the linearized operator to the construction of an approximate inverse of its restriction to the normal directions. This is obtained along the lines of \cite{BBM-Airy}, but with more careful size estimates because 
\eqref{kdv autonoma} is a completely resonant PDE.
The symplectic procedure of \cite{BB13} is also applied in \cite{BertiMontalto} and in Section \ref{sezione approximate inverse} of the present paper. We refer to \cite{CorsiFeolaProcesi} and \cite{Feola-Procesi-arxiv} 
for a similar reduction which applies also to PDEs which are not Hamiltonian, 
but for example reversible. 

By further extending these techniques,  
the existence of quasi-periodic solutions of gravity capillary water waves has been recently proved 
in \cite{BertiMontalto}. In items ($i$)-($iii$) after  Theorem \ref{thm:main0} we have described 
the major differences between the pure gravity  and gravity-capillary water waves equations
and we postpone to Remark \ref{rem:GC} more  comments about the differences regarding 
the reducibility  of the linearized equations.

\medskip

\noindent
{\it Acknowledgements}. 
This research was supported by 
PRIN 2015 ``Variational methods, with applications to problems in mathematical physics and geometry",
by the European Research Council under FP7, project no.\,306414 ``Hamiltonian PDEs and small divisor problem: a dynamical systems approach'' (HamPDEs),
partially by the Swiss National Science Foundation,
and partially by the Programme STAR, funded by Compagnia di San Paolo and UniNA.

\subsection{Ideas of the proof}
\label{ideas of the proof}

The three major difficulties in proving the existence of time quasi-periodic solutions of the gravity water waves equations \eqref{WW0} are: 
\begin{itemize}
\item[$(i)$] The nonlinear water waves system \eqref{WW0} is a singular perturbation
of \eqref{Lom}.

\item[$(ii)$] The dispersion relation \eqref{LIN:fre} is sublinear, i.e.\  $ \om_j  \sim \sqrt{j} $ for $ j \to \infty $. 

\item[$(iii)$] 
The linear frequencies $ \omega_j ( \mathtt h) = j^{\frac12}\tanh^{\frac12}(\h j)$ vary with $ \mathtt h$ of just exponentially small quantities.
\end{itemize}

We present below the key ideas to solve these three major problems.

\medskip

\noindent
\textbf{Nash-Moser Theorem \ref{main theorem} of hypothetical conjugation.}
In Section \ref{sec:functional} we rescale $u \mapsto \e u$ and introduce the action angle variables $(\theta, I) \in \T^\nu \times \R^\nu$ on the tangential sites (see \eqref{splitting S-S-bot})
\be\label{AA}
\eta_j := \sqrt{\frac{2}{\pi}} \, \om_j^{\frac12} \sqrt{\xi_j + I_j}  \cos (\theta_j ) , \  \
\psi_j := - \sqrt{\frac{2}{\pi}} \, \om_j^{- \frac12} \sqrt{\xi_j + I_j} \, \sin (\theta_j ) \, ,  \ \  j \in \Splus \, ,
\ee
where $ \xi_j > 0 $, $ j = 1, \ldots, \nu $, the variables $I_j$ satisfy $ | I_j | < \xi_j $, 
so that system \eqref{WW0} becomes the Hamiltonian system generated by 
\begin{equation}\label{acca epsilon}
H_\e = {\vec \om} (\h) \cdot  I  + 
\frac12 (z, \Om z)_{L^2} + \e P \,, 
 \quad {\vec \om} (\h):= (j^{\frac12}\tanh^{\frac12}(\h j))_{j\in\Splus}, 
\end{equation}
where $P$ is given in \eqref{definizione cal N P}.
The 
unperturbed actions
$\xi_j$ in \eqref{AA} and the  unperturbed amplitudes  
$\axi_j$ in \eqref{eta psi sin cos series-QP} and Theorem \ref{thm:main0} 
are related by the identity 
$a_j = \e \sqrt{(2/\pi)}\,\om_j^{\frac12} \sqrt{\xi_j}$ for all $j \in \Splus$.

The  expected quasi-periodic solutions of the autonomous Hamiltonian system generated by  $H_\e$
will have shifted frequencies $ \widetilde \om_j $ -- to be found -- close to the linear frequencies $  \om_j ({\mathtt h}) $ in \eqref{LIN:fre}. 
The perturbed frequencies depend on the nonlinearity and on the amplitudes $ \xi_j $.
Since the Melnikov non-resonance conditions are naturally imposed on $ \om $, 
it is convenient to use the frequencies $\om \in \R^\nu$ as parameters, introducing ``counter-terms'' $ \a \in \R^\nu $ (as in \cite{BertiMontalto}, in the spirit of Herman-F\'ejoz \cite{HF}) in the family of Hamiltonians (see \eqref{H alpha})
$$
H_\a := \a \cdot I + \frac12 (z, \Om z)_{L^2} + \e P\, .
$$
Then the first goal (Theorem \ref{main theorem}) is to prove that,  for $ \e $ small enough,
there exist 
$ \a_\infty (\om, {\mathtt h}, \e)$, close to $\omega$, and a $ \nu $-dimensional embedded  torus $i_\infty(\ph; \om,{\mathtt h}, \e)$ of the form
$$
i : \T^\nu \to \T^\nu \times \R^\nu \times H_{\Splus}^\bot, \quad  
\vphi \mapsto i (\vphi) := ( \theta (\vphi), I (\vphi), z (\vphi)), 
$$
close to $ (\vphi, 0, 0 ) $,   
defined for all $(\om,\h) \in \R^\nu \times  [{\mathtt h}_1, {\mathtt h}_2]$,
such that, for all $(\om,\h)$ belonging to the set ${\cal C}^\gamma_\infty$ defined in \eqref{Cantor set infinito riccardo},  $(i_\infty, \a_\infty )(\om,{\mathtt h}, \e)$ is a zero of the nonlinear operator (see \eqref{operatorF})
\begin{align}\label{calF-intr}
{\cal F} (i, \a, \om, {\mathtt h}, \e )  
&   :=  \left(
\begin{array}{c}
\Dom \theta (\vphi) -  \a - \e \partial_I P ( i(\vphi)   )   \\
\Dom I (\vphi)  +  \e \partial_\teta P( i(\vphi)  )  \\
\Dom z (\vphi) -  J (\Om z(\vphi) + \e \nabla_z P ( i(\vphi) ))  
\end{array}
\right).
\end{align}
The explicit set ${\cal C}^\gamma_\infty $ requires $ \omega $ to satisfy, in addition to the Diophantine property  
$$
| \om \cdot \ell | \geq \gamma \langle \ell \rangle^{- \tau} 
\quad \forall \ell \in \Z^\nu \setminus \{0\} \, , 
\qquad \langle \ell \rangle := \max \{ 1, |\ell| \}, 
\quad |\ell| := \max_{i = 1, \ldots, \nu} |\ell_i| \,,
$$
the  first and second Melnikov non-resonance conditions stated 
in \eqref{Cantor set infinito riccardo}, in particular
\begin{equation}\label{2nd melnikov perd}
 |\omega \cdot \ell  + 
 \mu_j^\infty (\om, {\mathtt h}) -  \mu_{j'}^\infty (\om, {\mathtt h}) | \geq 
 4 \gamma j^{-\perd} j'^{-\perd} \langle \ell  \rangle^{-\tau}, \,\,
 \forall \ell   \in \Z^\nu ,\,\,j, j' \in \N^+ \setminus \Splus , \, (\ell,j,j') \neq (0,j,j) \, , 
\end{equation}
where $ \mu_{j}^\infty (\om, {\mathtt h})  $ are the
``final eigenvalues" in \eqref{autovalori infiniti}, defined for all $ (\om,\h) \in \R^\nu \times  [{\mathtt h}_1, {\mathtt h}_2]$ 
(we use the abstract  Whitney  extension theorem in 
Appendix \ref{sec:U}).
The torus $ i_\infty $, the conter-term $ \a_\infty $ and  the final eigenvalues $ \mu_{j}^\infty (\om, {\mathtt h})  $ are
$ {\cal C}^{k_0} $ differentiable with respect to the parameters $ (\omega, {\mathtt h }) $.
The value of $ k_0 $ is fixed in
Section \ref{sec:degenerate KAM}, depending only on  the unperturbed linear frequencies, so that 
transversality conditions 
like \eqref{unperturbed measure} hold, see  Proposition \ref{Lemma: degenerate KAM}.  
The value of the counterterm $ \a := \a_\infty (\om, {\mathtt h}, \e) $ is adjusted along the Nash-Moser iteration 
in order to control the average of the first component 
of the Hamilton equation \eqref{operatorF}, 
especially for solving the linearized 
equation \eqref{operatore inverso approssimato proiettato}, 
in particular \eqref{equazione psi hat}. 

Theorem \ref{main theorem} follows by the Nash-Moser Theorem \ref{iterazione-non-lineare}  
which relies on the analysis 
of the linearized operators $ d_{i,\alpha} {\cal F} $ at an approximate  solution,
performed in  Sections \ref{sezione approximate inverse}-\ref{sec: reducibility}. 
The formulation of  Theorem \ref{main theorem} is convenient as it completely decouples the Nash-Moser 
iteration required to prove Theorem \ref{thm:main0}
and the discussion about the measure of the set of parameters $ {\cal C}^\gamma_\infty $ 
where all the Melnikov non-resonance conditions are verified. 
In Section \ref{sec:measure} we are able to prove positive measure estimates, 
if  the exponent $ {\mathtt d} $ in \eqref{2nd melnikov perd}
is large enough and $\g = o(1)$ as $\e \to 0$. 
Since such a value of $ {\mathtt d} $
determines  the number of regularization steps  to be performed on the linearized operator,  
we prefer to first discuss how we fix it, applying degenerate KAM theory.

\medskip

\noindent
\textbf{Proof of Theorem \ref{thm:main0}: degenerate KAM theory and measure estimates.} 
In order to prove the existence of quasi-periodic solutions of the system with Hamiltonian $H_\e$ in \eqref{acca epsilon},
thus \eqref{WW0}, 
and not only
of the system with modified Hamiltonian $ H_\a $ with $ \a := \a_\infty (\om, {\mathtt h} , \e) $, 
we have to prove that the curve of the unperturbed linear tangential frequencies 
\be\label{tang-vec}
[{\mathtt h} _1, {\mathtt h} _2] \ni {\mathtt h}  \mapsto \vec \om ({\mathtt h}  ) := ( \sqrt{j \tanh({\mathtt h} j)} )_{j \in \Splus} \in \R^\nu 
\ee
intersects the image $ \a_\infty ({\cal C}^\gamma_\infty ) $
of the set $ {\cal C}^\gamma_\infty  $ 
under the map $ \a_\infty $,
for ``most" values of $ {\mathtt h}  \in [{\mathtt h} _1, {\mathtt h} _2] $.  
Setting \begin{equation}\label{omega epsilon}
 \om_\e ({\mathtt h} ) := \a_\infty^{-1} ( \vec \om ({\mathtt h}  ), {\mathtt h}  )\,, 
\end{equation}
where $\alpha_\infty^{- 1}(\cdot, {\mathtt h} )$ is the inverse of the function $\alpha_\infty(\cdot, {\mathtt h} )$ at a fixed ${\mathtt h}  \in [{\mathtt h}_1, {\mathtt h}_2]$,
if the vector $( \om_\e ({\mathtt h} ), {\mathtt h} )$ belongs to ${\cal C}^\gamma_\infty$, then Theorem  \ref{main theorem} implies the existence of a quasi-periodic solution of the system with Hamiltonian $H_\e$ with 
Diophantine frequency $\om_\e ({\mathtt h} )$.

In Theorem \ref{Teorema stima in misura} we prove that for all the values of $\h\in[\h_1,\h_2]$ except a set of small measure $O(\g^{1/k_0^*})$ (where $k_0^*$ is the index of non-degeneracy appearing below in \eqref{unperturbed measure}), the vector $( \om_\e ({\mathtt h} ), {\mathtt h} )$ belongs to ${\cal C}^\gamma_\infty$.
Since the parameter interval $ [{\mathtt h}_1, {\mathtt h}_2 ] $ is fixed, independently of the 
$ O(\e ) $-neighborhood of  the origin where we look for the solutions,  
the  small divisor constant $ \g $ in the definition of $\mC^\g_\infty$ 
(see e.g.\ \eqref{2nd melnikov perd})
can be taken as 
$ \gamma = \e^a $ with $ a > 0 $ 
 as small as needed, see \eqref{relazione tau k0}, 
so that all the quantities $\e \g^{-\kappa}$ that we encounter along the proof 
are $\ll 1$.

The first task is to prove a transversality property for the unperturbed tangential 
frequencies $\vec\om ({\mathtt h})$ in \eqref{tang-vec} and the normal ones 
$\vec\Omega(\h):= (\Omega_j(\h))_{j\in\N^+\setminus\Splus} :=(\om_j(\h))_{j\in\N^+\setminus\Splus}$. Exploiting the fact that 
the maps $  {\mathtt h} \mapsto \om_j ({\mathtt h}^4) $ 
are {\it analytic},
simple -- namely injective in $j$ -- in the subspace of functions even in $ x $,  and
they grow asymptotically like $ \sqrt{j} $
for $ j \to \infty $,  we first prove that the linear frequencies $\om_j(\h)$ are {\it non-degenerate} in the sense of 
Bambusi-Berti-Magistrelli \cite{BaBM} (i.e.\ they are not contained in any hyperplane). 
This is  verified in Lemma \ref{non degenerazione frequenze imperturbate} 
using a generalized Vandermonde determinant (see Lemma \ref{lemma-VDM}).
Then in Proposition \ref{Lemma: degenerate KAM} we translate this qualitative non-degeneracy condition into quantitative transversality information: 
there exist $k_0^*>0, \rho_0>0$ such that, for all $\h \in [\h_1,\h_2]$,
\be\label{unperturbed measure}
\max_{0 \leq k \leq k_0^*} \big| \pa_{\mathtt h}^k  \big( {\vec \om} ({\mathtt h}) \cdot \ell + { \Om}_j ({\mathtt h}) 
- { \Om}_{j'} ({\mathtt h})  \big) \big| 
\geq \rho_0 \langle \ell \rangle  \, ,   \quad 
\forall \ell \neq 0\, , \  j, j' \in \N^+ \setminus \Splus \, , 
\ee
and similarly for the $ 0 $th,  
$ 1$st and $ 2$nd order Melnikov non-resonance condition with the $ +$ sign.
We call (following \cite{Ru1}) $ \barka $ the index of non-degeneracy 
and $ \rho_0 $  the amount of non-degeneracy.
Note that 
the restriction to the subspace of functions with zero average 
in $ x $  eliminates the zero frequency $ \om_0 = 0 $, which is trivially resonant
(this is used also in  \cite{Craig-Worfolk}).

The transversality condition  \eqref{unperturbed measure} is stable under
perturbations that are small in $ {\cal C}^{k_0}$-norm, where $ k_0 := k_0^* + 2 $,
see Lemma \ref{Lemma: degenerate KAM perturbato}.
Since $\om_\e ({\mathtt h})$ in \eqref{omega epsilon} and the perturbed Floquet exponents $\mu_j^\infty(\h) = \mu_j^\infty(\om_\e ({\mathtt h}), \h)$ in \eqref{mu j infty kappa} 
are  small perturbations of the unperturbed linear frequencies
$ \sqrt{j \tanh({\mathtt h} j)} $ in $ {\cal C}^{k_0}$-norm,
the transversality property \eqref{unperturbed measure} still holds   
for the  perturbed frequencies.
As a consequence, 
by applying the classical R\"ussmann lemma (Theorem 17.1 in \cite{Ru1}) 
we prove that, for most $\h \in [\h_1,\h_2]$, the 0th, 1st and 2nd Melnikov conditions on the perturbed frequencies hold if $ \perd > \frac34 \, k_0^*$, see Lemma \ref{stima risonanti Russman} and \eqref{stima-complementary-set}.

The larger is $ \perd $, the weaker are the Melnikov conditions \eqref{2nd melnikov perd}, 
and the stronger will be the loss of space derivatives 
due to the small divisors in the reducibility scheme of Section \ref{sec: reducibility}. 
In order to guarantee the convergence of such a KAM reducibility scheme,
these losses of derivatives will be compensated 
by the regularization procedure of 
Sections \ref{linearizzato siti normali}-\ref{coniugio cal L omega}, 
where we reduce the linearized operator to constant coefficients 
up to very regularizing terms $ O( |D_x|^{-M}) $ for some $ M := M(\perd, \tau)$ large enough,  
fixed in \eqref{relazione mathtt b N}, 
which is large with respect to $ \perd $ and $ \tau $ by \eqref{alpha beta}. 
We will explain in detail this procedure below.

\medskip

\noindent
\textbf{Analysis of the linearized operators.} \label{par-ai} In order to prove the existence of 
a solution of $ {\cal F} (i, \alpha ) = 0 $ in \eqref{calF-intr}, proving the Nash-Moser Theorem \ref{main theorem},  
the key step  is to show that the 
linearized operator $ d_{i, \alpha} {\cal F} $ obtained at any approximate solution along the  iterative scheme 
admits
an {\it almost approximate inverse} satisfying  tame estimates in Sobolev spaces
with loss of derivatives, 
see Theorem \ref{thm:stima inverso approssimato}.
Following the terminology of Zehnder \cite{Zehnder}, an \emph{approximate inverse} is an operator which is an exact inverse at a solution
(note that the operator $\mP$ in \eqref{stima inverso approssimato 2} is zero when  ${\cal F}(i, \a) = 0 $).
The adjective \emph{almost} refers to the fact that at the $n$-th step of the Nash-Moser iteration we shall require
only finitely many non-resonance conditions of Diophantine type, therefore there remain
operators (like \eqref{stima cal G omega})
that are Fourier supported on high frequencies of magnitude larger than $O(N_n)$ and thus they 
can be estimated as $O(N_n^{-a})$ for some $a > 0$  (in suitable norms). 
The tame estimates \eqref{stima inverso approssimato 2}-\eqref{stima cal G omega bot alta} are sufficient for the convergence of a differentiable Nash-Moser scheme: the remainder \eqref{stima inverso approssimato 2} produces a quadratic error since it is of order  $O(\mF(i_n,\a_n))$; the remainder \eqref{stima cal G omega} arising from the almost-reducibility is small enough by taking $\mathtt a>0$ sufficiently large, as in \eqref{alpha beta}; the remainder \eqref{stima cal G omega bot bassa} arises by ultraviolet cut-off truncations and its contribution is small by usual differentiable Nash-Moser mechanisms, see for instance \cite{BBP}. These abstract tame estimates 
imply the Nash-Moser Theorem \ref{iterazione-non-lineare}.

In order to find an almost approximate inverse of $ d_{i, \alpha} {\cal F} $ we  first implement the strategy of Section 
\ref{sezione approximate inverse} 
introduced in Berti-Bolle \cite{BB13}, which is based on the following simple observation: 
around an invariant torus there are symplectic coordinates $(\phi, y, w)$ in which  the Hamiltonian 
assumes the normal form  \eqref{weak-KAM-normal-form} 
and therefore 
the linearized equations  at the quasi-periodic solution assume the {\it triangular}  form as in 
\eqref{linear-torus-new coordinates}. 
In these new coordinates 
it  is immediate to solve the equations in the variables  $ \phi, y $, 
and it remains to invert an operator acting on the $ w $ component, 
which is precisely ${\cal L}_\omega$ defined in \eqref{Lomega def}. 
By Lemma \ref{thm:Lin+FBR}
the operator 
${\cal L}_\omega$ is a finite rank perturbation (see \eqref{representation Lom})  of 
the restriction  to the normal subspace $ H_{\Splus}^\bot $ in \eqref{splitting S-S-bot} of 
\be\label{LINOP}
\mL = \om \cdot \pa_\vphi + \begin{pmatrix} 
\partial_x V + G(\eta) B & - G(\eta) \\
(1 + B V_x) + B G(\eta) B \  &  V \partial_x - B G(\eta) \end{pmatrix} 
\ee
where the functions $B, V$ are given in \eqref{def B V}, which is obtained 
linearizing  the water waves equations \eqref{WW0} 
at a quasi-periodic approximate solution $ (\eta, \psi) (\omega t, x)  $ 
and changing $ \pa_t $ into the directional derivative $ \om \cdot \pa_\vphi $.

If $ {\cal F}(i, \alpha ) $ is not zero but it is small,  
we say that $ i $ is  approximately invariant for $ X_{H_\a} $, 
and, following \cite{BB13}, 
in  Section \ref{sezione approximate inverse} we transform 
 $ d_{i, \alpha} {\cal F} $  into  an  \emph{approximately} triangular operator, with an error of size $ O({\cal F}(i, \a))$.  
In this way, we have reduced the  problem of almost approximately inverting
 $ d_{i, \alpha} {\cal F} $  
to the task of almost inverting the operator
${\cal L}_\omega$. 
The precise invertibility properties of $ {\cal L}_\om $ are stated in \eqref{inversion assumption}-\eqref{tame inverse}.

\begin{remark}  \label{rem:LS}
The main advantage of this approach is that 
the problem of inverting $ d_{i, \a} {\cal F} $ on the whole space (i.e.\ both tangential and normal modes)
is  reduced to invert a  PDE on the normal subspace $ H_{\Splus}^\bot $ only.
In this sense this  
is reminiscent of the Lyapunov-Schmidt decomposition, 
where
the complete nonlinear problem is split into  a bifurcation and a range equation on the orthogonal of the kernel. 
However, the Lyapunov-Schmidt approach is based on a splitting of the space 
$H^s(\T^{\nu+1})$ of functions $u(\ph,x)$ of time and space, 
whereas the approach of \cite{BB13} splits the phase space (of functions of $ x $ only) 
into $ H_{\Splus} \oplus H_{\Splus}^\bot $ more similarly to a classical  KAM theory formulation.
\end{remark}

The procedure of Section \ref{sezione approximate inverse}  is a preparation for the
reducibility of the linearized water waves equations in the normal subspace
developed in Sections \ref{linearizzato siti normali}-\ref{sec: reducibility}, 
where we conjugate the operator $ {\cal L }_\omega  $ to a diagonal system of infinitely many decoupled, constant coefficients, scalar linear equations, see \eqref{final-lin} below. 
First, in Sections \ref{linearizzato siti normali}-\ref{sezione descent method}, in order to use the tools of pseudo-differential calculus, it is convenient to ignore the projection on the normal subspace $ H_{\Splus}^\bot $ and to perform a regularization procedure on the operator ${\cal L}$ acting on the whole space, see Remark \ref{remark riccardo inverse}. Then, in Section \ref{coniugio cal L omega}, we project back on $ H_{\Splus}^\bot $.
Our approach involves two  well separated procedures that we describe in detail: 
\begin{enumerate} 
\item { \bf Symmetrization and  diagonalization of $ {\cal L } $ up to smoothing operators}.
 The goal of Sections \ref{linearizzato siti normali}-\ref{sezione descent method} is to conjugate $ {\cal L }$
 to an operator of the form 
\be\label{op:redu1}
 \omega \cdot \partial_\vphi \mathbb + \ii {\mathtt m}_{\frac12} |D|^{\frac12} \tanh^{\frac12}(\h |D|) + 
 \ii r (D) +    {\cal T}_8 (\vphi) 
\ee
where $ {\mathtt m}_{\frac12} \approx 1 $ is a real constant, 
independent of $ \vphi $, 
the symbol  $ r (\xi) $ is real and 
 independent of $ (\vphi, x) $, 
 of order $ S^{-1/2} $,   
and the remainder $  {\cal T}_8  (\vphi) $,
as well as $ \pa_\vphi^\beta {\cal T}_8  $ for all $ | \beta | \leq \b_0 $ large enough, 
 is a small, still variable  coefficient operator,  
which is regularizing at a sufficiently high order, and  satisfies tame estimates 
in Sobolev spaces.

\item {\bf KAM reducibility}. In Section \ref{coniugio cal L omega} we restrict the operator in \eqref{op:redu1} to $H_{\mathbb S_+}^\bot$ and in Section \ref{sec: reducibility} we implement an iterative
diagonalization scheme 
to reduce quadratically the  size of the perturbation, completing the conjugation
of  $ {\cal L }_\omega $ to  a diagonal, constant coefficient system of the form 
\be\label{final-lin}
\Dom  + \ii {\rm Op} (\mu_j  )
\ee
where
$ \mu_j =  {\mathtt m}_{\frac12} |j|^{\frac12} \tanh^{\frac12}(\h |j|) +  r (j) + \tilde r(j) $ are real and 
$ \tilde r(j)  $ are small.
\end{enumerate}
We underline that all the  transformations performed in Sections 
\ref{linearizzato siti normali}-\ref{sec: reducibility}
are quasi-periodically-time-dependent changes of variables acting in phase spaces
of functions of $ x $ (quasi-periodic Floquet operators).
Therefore, they preserve 
the dynamical system structure of the conjugated  linear operators.

All these changes of variables are bounded 
and satisfy tame  estimates between Sobolev spaces. As a consequence, the estimates that we 
shall obtain inverting the final operator \eqref{final-lin} directly provide good tame estimates for the inverse of 
the operator ${\cal L}_\om$ in \eqref{representation Lom}. 

We also note that the original system $ {\cal L} $ is reversible and even 
and that all the transformations that we perform are 
reversibility preserving and even. The preservation of these properties ensures 
that  in the final system  \eqref{final-lin} the $ \mu_j $ are real valued. Under this respect, 
the linear stability of the quasi-periodic standing wave solutions proved in Theorem \ref{thm:main0}
is obtained as a consequence of the reversible nature of the water waves equations. 
We could also preserve the Hamiltonian nature of $ {\cal L} $ performing symplectic transformations, but it would be more complicated.

\begin{remark}\label{rem:GC} {\bf (Comparison with the gravity-capillary linearized PDE)}
With respect to the gravity capillary water waves in infinite depth 
in \cite{Alaz-Bal}, \cite{BertiMontalto}, 
the reduction in decreasing orders of the linearized operator is completely different. 
The linearized operator in the gravity-capillary case is like 
$$ 
\om \cdot \pa_\vphi + \ii |D_x|^\frac32 + V(\vphi, x) \pa_x  \, , 
$$
the term $V \pa_x$ is a lower order perturbation of $|D_x|^{\frac32}$, 
and it can be reduced to constant coefficients by conjugating the operator 
with a ``semi-Fourier Integral Operator'' $A$ of type $(\frac12, \frac12)$ 
(like in \cite{Alaz-Bal} and \cite{BertiMontalto}):
the commutator of $|D_x|^{\frac32}$ and $A$ produces a new operator of order $1$, 
and one chooses appropriately the symbol of $A$  for the reduction of $V \pa_x$. 
Instead, in the pure gravity case we have a linearized operator of the type
\[
\om \cdot \pa_\ph + \ii |D_x|^{\frac12} + V(\ph,x) \pa_x
\]
where the term $V \pa_x$ is a  \emph{singular} perturbation of $  \ii |D_x|^{\frac12} $. 
The commutator between $|D_x|^{\frac12}$ and 
any bounded pseudo-differential operator
produces operators of order $\leq 1/2$, which do not interact with $V \pa_x$. 
Hence one uses the commutator with $\om \cdot \pa_\ph$ 
(which is the leading term of the unperturbed operator)
to produce operators of order 1 that cancel out $V \pa_x$. 
This is why 
our first task is to straighten the first order vector field \eqref{highest order trasporto}, which corresponds to a time quasi-periodic transport operator. 
Furthermore, the fact that the unperturbed linear operator is $\sim |D|^{\frac12}$, unlike $\sim |D|^{\frac32}$, also affects the conjugation analysis of the lower order operators, where the contribution of the commutator with $\ompaph$ is always of order higher than the commutator with $|D_x|^{\frac12}$.
As a consequence, in the procedure of reduction of the symbols to constant coefficients in Sections \ref{sec:primo semi-FIO}-\ref{sezione descent method}, we remove first their dependence on $\ph$, and then their dependence on $x$. 
We also note that in \cite{BertiMontalto}, since the second order Melnikov conditions do not lose space derivatives, there is no need to perform such reduction steps at negative orders before starting with the KAM reducibility algorithm.
\end{remark}

We now explain in details the steps of the conjugation of the quasi-periodic linear operator \eqref{LINOP} described in the items 1 and 2 above.
We underline that all the coefficients of the linearized operator 
$ {\cal L } $ in \eqref{LINOP} are $ {\cal C}^\infty $ in $ (\vphi, x) $ because 
each approximate solution $  (\eta (\vphi, x), \psi (\vphi, x)) $ 
at which we linearize along the Nash-Moser iteration 
is a  trigonometric  polynomial in $ (\vphi, x) $ (at each step we apply the projector
$ \Pi_n $ defined in \eqref{truncation NM}) and the water waves vector field is analytic. 
This allows us to work in the usual framework of  $ {\cal C}^\infty $ pseudo-differential symbols, as recalled in Section \ref{sec:pseudo}. 

\medskip 

\noindent
\textbf{1. Linearized good unknown of Alinhac.}
The first step is to introduce in Section \ref{sez:Alinhac} the linearized 
good unknown of Alinhac, as in \cite{Alaz-Bal} and \cite{BertiMontalto}. 
This is indeed the same change of variable introduced by Lannes \cite{Lannes} (see also
\cite{LannesLivre}) for proving energy estimates for the local existence theory. 
Subsequently, the nonlinear good unknown of Alinhac has been introduced by Alazard-M\'etivier \cite{AlM},
see also \cite{ABZ2}-\cite{AlDe1} to perform the paralinearization of the Dirichlet-Neumann operator.  
In these new variables, the linearized operator \eqref{LINOP} becomes the more symmetric operator 
(see \eqref{mZ mL0})
\be\label{def:L0-intro}
\mL_0 = \om \cdot \pa_\vphi + \begin{pmatrix}
\partial_x V   &  - G(\eta) \\ 
a  &  V \partial_x 
\end{pmatrix}
 = \om \cdot \pa_\vphi + \begin{pmatrix}
V \partial_x    &  0 \\ 
0 &  V \partial_x 
\end{pmatrix} +
\begin{pmatrix}
 V_x   & - G(\eta) \\ 
a  &  0
\end{pmatrix},
\ee
where the Dirichlet-Neumann operator admits the expansion 
$$
G(\eta) = |D| \tanh(\h |D|) +  \mR_G
$$ 
and $\mR_G $ is an $ OPS^{-\infty} $ smoothing operator. 
In Appendix \ref{subDN} we provide a self-contained 
proof of such  a representation. 
We cannot directly use a result already existing in the literature (for the Cauchy problem) because we have to provide tame estimates for the action of $G(\eta)$ on Sobolev spaces of  time-space variables $(\vphi, x)$ and to control its smooth dependence with respect to the parameters $(\omega, \mathtt h)$.  We can neither directly apply the corresponding result of \cite{BertiMontalto}, which is given in the case $\mathtt h = + \infty$. 

Notice that the first order transport operator $ V \pa_x $ in \eqref{def:L0-intro} is a singular perturbation of $\mL_0$ evaluated at $(\eta,\psi) = 0$, i.e. 
$\om \cdot \pa_\vphi 
+ \big( \begin{smallmatrix} 0   & - G(0) \\ 1  &  0 \end{smallmatrix} \big)$.

\medskip

\noindent
\textbf{2. Straightening the first order vector field $\ompaph + V(\ph,x) \pa_x $.} 
The next step is  to conjugate the variable coefficients vector field 
(we regard equivalently a vector field 
as a differential operator)
\begin{equation}\label{highest order trasporto}
\ompaph + V(\ph,x) \pa_x 
\end{equation}
 to the constant coefficient vector field $\ompaph $ on the torus $ 
 \T^\nu_{\vphi} \times \T_x $
 for $ V (\vphi, x) $ small. 
This a perturbative problem of rectification of a close to constant vector field on a torus, 
which  is a classical small divisor problem.
For perturbation of a Diophantine vector field this problem was solved at the beginning of KAM theory, we refer e.g.\ to  
\cite{Zehnder} and references therein.
Notice  that, despite the fact that $  \om \in \R^\nu $ is Diophantine,  the constant vector field $ \om \cdot \pa_\vphi  $
is resonant on the higher dimensional torus $ \T^{\nu}_{\vphi} \times \T_x $. 
We exploit in a crucial way the {\it symmetry} induced by the {\it reversible} structure of the water waves equations, i.e.  $ V(\vphi, x ) $ is odd in $ \vphi $, 
to prove that
 it is possible to conjugate $ \ompaph + V(\ph,x) \pa_x  $ to the constant vector field $ \ompaph  $ 
 without changing the frequency $ \omega $.
 
From a functional point of view we have to solve a linear transport equation which depends on time in quasi-periodic way, 
see equation \eqref{transport}. Actually we solve equation \eqref{1202.11}  
for the inverse diffeomorphism.
This problem  amounts to prove that all the solutions 
of the quasi periodically time-dependent scalar characteristic 
equation $ \dot x = V(\om t, x )$ are quasi-periodic in time with frequency $ \omega $, 
see Remark \ref{rem:geo}, \cite{PlTo}, \cite{IPT} and \cite{Mo67}. 
We solve this problem in Section \ref{sec: change-transport equation} 
using a Nash-Moser implicit function theorem.
Actually, after having inverted the linearized operator at an approximate solution (Lemma \ref{lemma:inv transport}),  we apply 
the Nash-Moser-H\"ormander Theorem  \ref{thm:NMH}, proved in Baldi-Haus \cite{Baldi-Haus}.
We cannot directly use already existing results for equation \eqref{1202.11}
because we have to prove tame estimates and Lipschitz dependence of the solution with respect to the approximate torus, as well as its smooth dependence with respect to the parameters $(\om,\h)$,  see Lemmata \ref{lemma:uV Lip}-\ref{stime trasformazione cal B}.

We remark that, when 
searching for time periodic solutions as in \cite{IPT}, \cite{PlTo}, the corresponding transport equation  
is not a small-divisor problem and has been solved
in \cite{PlTo} by a direct ODE analysis. 

In Lemma \ref{lem:stime coefficienti cal L1} we apply this change of variable to the whole operator $ \mL_0 $ in \eqref{def:L0-intro}, obtaining
the new conjugated system (see \eqref{mL1})
$$
\mL_1 
= \ompaph + \begin{pmatrix} 
  a_1 &  - a_2 |D| T_\h + {\cal R}_1 \\ 
a_3 & 0 
\end{pmatrix}, 
\qquad  T_\h := \tanh(\h |D|),  
$$
where the remainder $ {\cal R}_1 $ is  in $ OPS^{-\infty} $. 

\medskip

\noindent
\textbf{3. Change of the space variable.}
In Section \ref{sezione nuova cambio di variabile} 
we introduce a change of variable induced by a diffeomorphism of $ \T_x $ of the form
(independent of $ 	\vphi $)
\be\label{diffeo:intro}
y = x + \alpha (x)  \qquad \Leftrightarrow \qquad x = y + \breve \alpha (y) \, . 
\ee
Conjugating $ {\cal L}_1$  by the change of variable 
$ u (x) \mapsto u (x + \a(x) )$, we obtain an operator of the same form 
$$
{\cal L}_2 = \omega \cdot \partial_\vphi + \begin{pmatrix}
a_4 & - a_5 |D| T_{\mathtt h} + {\cal R}_2 \\
a_6 & 0 
\end{pmatrix}\,,
$$
see \eqref{nuovo cal L2 prima}, where $ {\cal R}_2 $ is  in $ OPS^{-\infty} $, and  
the functions  $ a_5, a_6 $ are given by
$$ 
a_5 =\big[a_2(\vphi, x) (1 + \alpha_x(x)) \big]_{| x = y + \breve \alpha(y)} \, , \quad
 a_6 = a_3(\vphi, y + \breve \alpha(y))\, . 
 $$ 
We shall choose in Section \ref{sec:primo semi-FIO}  the function
$ \alpha (x) $ (see \eqref{equazione omologica per alpha}) in order to eliminate the  dependence on $  x $ from the 
time average $\langle a_7 \rangle_\ph (x)$ in  \eqref{eq:passaggio}-\eqref{def:a51}
of the coefficient of $|D_x|^{\frac12}$.
The advantage of 
introducing the diffeomorphism \eqref{diffeo:intro} at this step, rather than in Section \ref{sec:primo semi-FIO} where it is used, is that it is easier to study the conjugation under this change of variable of differentiation and multiplication operators, Hilbert transform, and integral operators in $ OPS^{-\infty }$, see Section \ref{sub:integral-op} 
(on the other hand, performing this transformation in Section \ref{sec:primo semi-FIO} would require delicate estimates of the symbols obtained after an Egorov-type analysis). 

\medskip

\noindent
\textbf{4. Symmetrization of the order $1/2$}. 
In Section \ref{sec:Symm}
we apply two simple conjugations with a Fourier multiplier and a multiplication operator, whose goal is to 
obtain a new operator of the form
$$
\mL_3 = \om \cdot \pa_\vphi + \begin{pmatrix} \breve a_4 &  - a_7 |D|^{\frac12} T_\h^{\frac12} \\ 
a_7  |D|^{\frac12} T_\h^{\frac12} & 0 \end{pmatrix} + \ldots \, ,
$$
see \eqref{0104.2}-\eqref{0104.14},  up  to 
lower order operators.
The function $ a_7 $ is close to $ 1 $ and $ \breve a_4 $ is small in $ \e $, see
\eqref{stime an bn cn (3)}. 
Notice that the off-diagonal operators in $\mL_3$ are opposite to each other, unlike in $\mL_2$.
Then, in the complex unknown $ h = \eta + \ii \psi $, the first component of such an operator reads
\begin{equation}\label{equaz h bar h}
(h, \bar h) \mapsto
\ompaph h + \ii a_7 |D|^{\frac12} T_\h^{\frac12} h 
+ a_8 h +  P_5 h  + Q_5 {\bar h} 
\end{equation}
(which corresponds to \eqref{0104.21} neglecting the projector $ \ii \Pi_0 $)
where $ P_5 (\vphi)  $ is a  $ \vphi$-dependent 
families of pseudo-differential operators of order $ - 1/ 2 $,  
and $ Q_5 (\vphi) $  of order $ 0 $. 
We shall call the former operator
``diagonal",   and the latter  ``off-diagonal", 
with respect to the variables $ (h, \bar h ) $.

In Sections \ref{sec:10}-\ref{sezione descent method} we perform the reduction to constant coefficients of \eqref{equaz h bar h} up to smoothing operators, dealing separately with the diagonal and off-diagonal operators. 

\medskip

\noindent
\textbf{5. Symmetrization of the lower  orders}.
In Section \ref{sec:10} we reduce the off-diagonal term $ Q_5 $ to 
a pseudo-differential operator with  very negative order, 
i.e.\  we conjugate the above operator to 
another one of the form (see Lemma \ref{Lemma finale decoupling}) 
\be\label{op:dopo-deco}
(h, \bar h) \mapsto
\ompaph h + \ii a_7 (\vphi, x) |D|^{\frac12} T_\h^{\frac12} h 
+ a_8 h +  P_6 h  + Q_6 {\bar h} \,, 
\ee
where 
$ P_6 $ is in $ OPS^{- \frac12 } $ and 
$ Q_6 \in OPS^{-M } $ for a constant $ M $ large enough 
fixed in Section \ref{sec: reducibility}, in view of the reducibility scheme. 

\medskip

\noindent
\textbf{6. Time and space reduction at the order $1/2$}.
In Section \ref{sec:primo semi-FIO} we eliminate
the $ \vphi$- and the  $ x $-dependence from the coefficient of the 
 leading operator $ \ii a_7 (\vphi, x) |D|^{\frac12} T_\h^{\frac12} $. 
We conjugate  the  operator \eqref{op:dopo-deco} by 
the 
time-$ 1$ flow of the pseudo-PDE
$$
\partial_\t u = \ii \beta(\vphi, x) |D|^{\frac12} u  
$$
where $ \beta (\vphi, x) $ is a small function to be chosen. 
This kind of transformations 
-- which are ``semi-Fourier integral operators'', 
namely pseudo-differential operators of type $(\frac12, \frac12)$ 
in H\"ormander's notation -- has been introduced in \cite{Alaz-Bal} 
and studied as flows in \cite{BertiMontalto}. 

Choosing appropriately the functions $ \beta (\vphi, x) $ and $ \alpha (x) $ 
(introduced in Section \ref{sezione nuova cambio di variabile}), see formulas
\eqref{defbetaSFIO} and \eqref{equazione omologica per alpha}, 
the final outcome is a linear operator of the form, 
see \eqref{cal LM (1) Egorov},  
\begin{equation} \label{quella con P7}
(h, \bar h) \mapsto
\omega \cdot \partial_\vphi h  + \ii \mathtt m_{\frac12} |D|^{\frac12} T_\h^{\frac12} h 
+ (a_8 + a_9 {\cal H}) h  + P_7 h + {\cal T}_7 (h, \bar h) \,,
\end{equation}
where $\mH$ is the Hilbert transform.  
This linear operator has the constant coefficient $ \mathtt m_{\frac12} \approx 1 $ at the order $1/2$, while $ P_7 $ is in $ OPS^{-1/2} $
and the operator $ {\cal T}_7 $ is small, smoothing and satisfies tame estimates in Sobolev spaces, see \eqref{stima tame cal T (1)}. 

\medskip

\noindent
\textbf{7. Reduction of the lower orders.} 
In Section \ref{sezione descent method} we further diagonalize 
the linear operator in \eqref{quella con P7}, 
reducing it to constant coefficients  up to regularizing smoothing operators of very negative order $ | D |^{-M} $. This step,
based on standard pseudo-differential calculus, 
is not needed in \cite{BertiMontalto}, because the second order Melnikov conditions in \cite{BertiMontalto} do not lose space derivatives. 
We apply
an iterative sequence of pseudo-differential transformations that eliminate first the $ \vphi $- and then the $ x $-dependence of the diagonal symbols.  The final system has the form 
\be\label{op:QuasiF}
(h, \bar h) \mapsto  
\omega \cdot \partial_\vphi  h 
+ \ii {\mathtt m}_{\frac12} |D|^{\frac12} T_\h^{\frac12} h + 
\ii r (D) h  +    {\cal T}_8 (\vphi) (h, \bar h)
\ee
where the constant Fourier multiplier  $ r (\xi )  $ is real, even $ r ( \xi) = r (- \xi ) $,
it satisfies (see \eqref{stima code - 1/2 inizio riducibilita})
$$
\sup_{j \in \Z}|j|^{\frac12} |r_j|^{k_0, \gamma} \lesssim_M \e \gamma^{- (2M + 1)}\, , 
$$
and the variable coefficient operator $  {\cal T}_8  (\vphi) $ is regularizing and 
satisfies tame estimates, see more precisely \eqref{stima tame cal T 8}. 
We also remark that the operator \eqref{op:QuasiF}  is reversible and even, since all the previous
transformations that we performed are reversibility preserving and even.

At this point the procedure of diagonalization of $\mL$ up to smoothing operators is complete. 
Thus, in Section \ref{coniugio cal L omega}, restricting the operator \eqref{op:QuasiF} to $ H_{\Splus}^\bot $, we obtain the reduction of $\mL_\om$ up to smoothing remainders. We are now ready to begin the KAM reduction procedure.

\medskip

\noindent
\textbf{8. KAM reducibility.} 
In order to decrease quadratically the size of the resulting perturbation $ {\cal R}_0 $ (see \eqref{defL0-red})
we apply the KAM diagonalization iterative scheme of Section \ref{sec: reducibility},
which converges because  the operators
\be\label{operatori-tames}
\langle D \rangle^{\fm + \mathtt b} {\cal R}_0 \langle D \rangle^{\fm + \mathtt b +  1}, \quad \partial_{\vphi_i}^{s_0 +  {\mathtt b}} \langle D \rangle^{\fm + \mathtt b} {\cal R}_0 \langle D \rangle^{\fm + \mathtt b +  1}\,  , \
i = 1, \ldots, \nu \, , 
 \ee  
satisfy tame estimates for some $ \mathtt b := \mathtt b (\tau, k_0) \in \N $
and $ \fm := \fm (k_0) $ that are large enough (independently of $s$), see Lemma \ref{lem:tame iniziale}.
Such conditions hold under the assumption that 
$ M $ (the order of regularization of the remainder)
is chosen large enough as in \eqref{relazione mathtt b N} 
(essentially $ M = O(\fm + {\mathtt b})$).  
This is the property 
that compensates, along the KAM iteration, the loss of derivatives in $ \vphi  $ and $  x $ 
produced by the small divisors in the second order Melnikov non-resonance conditions.
Actually, for the construction of the quasi-periodic solutions, it is sufficient to prove the almost-reducibility of the linearized operator, in the sense that the remainder $\mR_n$ in  Theorem \ref{Teorema di riducibilita} is not zero but it is of order $ O( \e \gamma^{-2(M+1)} N_{n-1}^{- \mathtt a})$, which can be obtained imposing only the finitely many Diophantine conditions \eqref{Cantor set}, \eqref{Omega nu + 1 gamma}.

The big difference of the
 KAM reducibility scheme of Section \ref{sec: reducibility} with respect to the one 
developed in \cite{BertiMontalto} is that the second order 
Melnikov non-resonance conditions that we impose are very weak, see \eqref{Omega nu + 1 gamma}, 
in particular they lose regularity, not only in the $ \vphi $-variable, but also in the space variable $ x $.
For this reason we apply at each iterative step a smoothing procedure also in the space variable
(see the Fourier truncations $|\ell|, |j-j'| \leq N_{\vnu-1}$ 
in \eqref{Omega nu + 1 gamma}).


\medskip

After the above almost-diagonalization of the linearized operator we almost-invert it, by 
imposing the first order Melnikov non-resonance conditions in \eqref{prime di melnikov}, see Lemma \ref{lem:first-Mel}. Since all the changes of variables that we performed in the diagonalization process 
satisfy tame estimates in Sobolev spaces, we finally 
conclude the existence of an almost  inverse of ${\cal L}_\omega $ 
which satisfies tame estimates, see Theorem \ref{inversione parziale cal L omega}.  

At this point the proof of the Nash-Moser Theorem \ref{main theorem}, given in Section \ref{sec:NM}, follows in a usual way, in the same setting of \cite{BertiMontalto}. 

\medskip

\noindent
\textbf{Notation.}
Given a function $ u(\vphi, x ) $ we write that it is $ \even(\ph) \even(x)$ 
if it is even in $ \vphi $ for any $ x $ and, separately, even in $ x $ for any $ \vphi $.   
With similar meaning we say that $ u (\vphi, x) $ is 
 $ \even(\ph) \odd(x)$,  $ \odd(\ph) \even(x)$ and $ \odd(\ph) \even(x)$.

The notation $ a \lesssim_{s, \a, M} b $ means
that $a \leq C(s, \a, M ) b$ for some constant $C(s, \a, M ) > 0$ depending on 
the Sobolev index $ s $ and the constants $ \a, M $.  Sometimes, along the paper, we omit to write the dependence $ \lesssim_{s_0, k_0} $ with respect to $ s_0, k_0 $, 
 because $ s_0 $ (defined in \eqref{def:s0}) and $ k_0 $
 (determined  in Section \ref{sec:degenerate KAM}) are considered as fixed constants. 
Similarly, the set $\Splus$ of tangential sites is considered as fixed along the paper.

\section{Functional setting}

\subsection{Function spaces}
\label{subsec:function spaces}

In the paper we will use Sobolev norms for real or complex functions $u(\om, \h, \ph, x)$, 
$(\ph,x) \in \T^\nu \times \T$, depending on parameters $(\om,\h) \in F$ 
in a Lipschitz way together with their derivatives in the sense of Whitney, 
where $F$ is a closed subset of $\R^{\nu+1}$. 
We use the compact notation $\lm := (\om,\h)$ to collect the frequency $\om$ and the depth $\h$ into a parameter vector. 

We use the multi-index notation: if 
$ k = ( k_1, \ldots , k_{\nu+1}) \in \N^{\nu+1} $ we denote $|k| := k_1 + \ldots + k_{\nu+1}$ and $k! := k_1!  \cdots k_{\nu+1}!$ and if $ \lambda = (\lambda_1, \ldots, \lambda_{\nu+1}) \in \R^{\nu+1}$, we denote  the derivative $ \pa_\lambda^k := \pa_{\lambda_1}^{k_1} \ldots \pa_{\lambda_{\nu+1}}^{k_{\nu+1}} $ and $\lambda^k := \lambda_1^{k_1}  \cdots \lambda_{\nu+1}^{k_{\nu+1}} $. Recalling that $\| \ \|_s$ denotes the  norm of the Sobolev space $H^s(\T^{\nu+1}, \C) = H^s_{(\ph,x)}$ introduced in \eqref{unified norm}, we now define  the ``Whitney-Sobolev'' norm $\| \cdot \|_{s,F}^{k+1,\g}$.
\begin{definition} {\bf (Whitney-Sobolev functions)} \label{def:Lip F uniform}
Let $F$ be a closed subset of $\R^{\nu+1}$.  
Let $k \geq 0$ be an integer,  $\g \in (0,1]$, and $s \geq s_0 > (\nu+1)/2$. 
We say that a function $u : F \to H^s_{(\ph,x)}$ 
belongs to $\Lip(k+1,F,s,\g)$ if there exist functions $u^{(j)} : F \to H^s_{(\ph,x)}$, $j \in \N^\nu$, $0 \leq |j| \leq k$ with $u^{(0)} = u$, and a constant $M > 0$ 
such that, if $  R_j(\lm,\lm_0) := R_j^{(u)}(\lm,\lm_0) $ is defined by 
\begin{equation} \label{16 Stein uniform}
u^{(j)}(\lm) 
= \sum_{\ell \in \N^{\nu+1} : 
|j+\ell| \leq k} \frac{1}{\ell!} \, u^{(j+\ell)}(\lm_0) \, (\lm - \lm_0)^\ell 
+ R_j(\lm,\lm_0), \quad \lm, \lm_0 \in F, 
\end{equation} 
then 
\begin{equation} \label{17 Stein uniform}
\g^{|j|} \| u^{(j)}(\lm) \|_s \leq M, \quad 
\g^{k+1} \| R_j(\lm, \lm_0) \|_s \leq M |\lm - \lm_0|^{k+1 - |j|} \quad 
\forall \lm, \lm_0 \in F, \ |j| \leq k.
\end{equation} 
An element of $\Lip(k+1,F,s,\g)$ 
is in fact the collection $\{ u^{(j)} : |j| \leq k \}$. 
The norm of $u \in \Lip(k+1,F,s,\g)$ is defined as
\begin{equation} \label{def norm Lip Stein uniform}
\| u \|_{s,F}^\kug 
:= \| u \|_s^\kug
:= \inf \{ M > 0 : \text{\eqref{17 Stein uniform} holds} \}.
\end{equation}
If $ F = \R^{\nu+1} $ by $ \Lip(k+1,\R^{\nu+1},s, \g) $  we shall mean the 
space of the functions $ u = u^{(0)} $ for which
there exist $ u^{(j)} = \pa_{\lm}^j u $, $ |j| \leq k $,  
satisfying  \eqref{17 Stein uniform}, with the same norm \eqref{def norm Lip Stein uniform}.
\end{definition}

We make some remarks.
\begin{enumerate}
\item 
If $ F = \R^{\nu+1} $, and $ u \in \Lip(k+1,F,s,\g) $
the $ u^{(j)} $, $ | j | \geq 1 $, are uniquely determined as the partial derivatives 
$ u^{(j)} = \pa_{\lm}^j u $, $ | j | \leq k $,  of $ u = u^{(0)} $. Moreover all the derivatives $ \pa_{\lm}^j u $, $ | j | = k $
are Lipschitz.  Since $ H^s $ is a Hilbert space we have that 
$ \Lip(k+1,\R^{\nu+1},s,\g) $ coincides with the Sobolev space
$ W^{k+1, \infty} (\R^{\nu + 1}, H^s) $. 

\item
The Whitney-Sobolev norm  of $ u $ in \eqref{def norm Lip Stein uniform} is 
equivalently given by 
\begin{equation} \label{def -equiv}
\| u \|_{s,F}^\kug := \| u \|_s^\kug
= \max_{|j| \leq k} 
\Big\{ \g^{|j|} \sup_{\lambda \in F}  \| u^{(j)}(\lm) \|_s, \g^{k+1} 
\sup_{\lambda \neq \lambda_0}  \frac{\| R_j(\lm, \lm_0) \|_s}{|\lm - \lm_0|^{k+1 - |j|}}  \Big\} \, .
\end{equation}
\end{enumerate}

Theorem \ref{thm:WET} and \eqref{Wg} 
provide an extension operator which associates 
to an element $u \in \Lip(k+1,F,s,\g)$ 
an extension $\tilde u \in \Lip(k+1,\R^{\nu+1},s,\g) $. 
As already observed, the space $\Lip(k+1,\R^{\nu+1},s,\g) $ 
coincides with $ W^{k+1, \infty} (\R^{\nu + 1}, H^s)$, 
with equivalence of the norms (see \eqref{0203.1})
$$
\| u \|_{s, F}^\kug  \sim_{\nu, k} 
\| \tilde u \|_{W^{k+1,\infty, \gamma}(\R^{\nu+1}, H^s)} := 
\sum_{|\a| \leq k+1} \g^{|\a|} \| \pa_\lm^\a \tilde u \|_{L^\infty(\R^{\nu+1}, H^s)}  \,.
$$
By Lemma \ref{lemma:2702.1}, 
the extension $\tilde u$ is independent of the Sobolev space $H^s$.

We can identify any element $u \in \Lip(k+1, F, s, \g)$
(which is a collection $u = \{ u^{(j)} : |j| \leq k\}$) 
with the equivalence class of functions $f \in W^{k+1, \infty}(\R^{\nu+1}, H^s) / \! \! \sim$  
with respect to the equivalence relation $f \sim g$ when $\pa_\lm^j f(\lm) = \pa_\lm^j g(\lm)$
for all $\lm \in F$, for all $|j| \leq k+1$.

\smallskip

For any $N>0$, we introduce the smoothing operators
\begin{equation}\label{def:smoothings}
(\Pi_N u)(\ph,x) := \sum_{\la \ell,j \ra \leq N} u_{\ell j} e^{\ii (\ell\cdot\ph + jx)} \qquad
\Pi^\perp_N := {\rm Id} - \Pi_N.
\end{equation}

\begin{lemma} {\bf (Smoothing)} \label{lemma:smoothing}
Consider the space $\Lip(k+1,F,s,\g)$ defined in Definition \ref{def:Lip F uniform}. 
The smoothing operators $\Pi_N, \Pi_N^\perp$ satisfy the estimates
\begin{align}
\| \Pi_N u \|_{s}^\kug 
& \leq N^\alpha \| u \|_{s-\alpha}^\kug\, , \quad 0 \leq \a \leq s, 
\label{p2-proi} \\
\| \Pi_N^\bot u \|_{s}^\kug 
& \leq N^{-\alpha} \| u \|_{s + \alpha}^\kug\, , \quad  \a \geq 0.
\label{p3-proi}
\end{align}
\end{lemma}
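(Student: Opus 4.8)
The plan is to reduce the statement to the classical scale-by-scale Sobolev smoothing inequalities, applied uniformly in the parameter $\lm=(\om,\h)$, and then to read off the bound for the Whitney--Sobolev norm from its equivalent description \eqref{def -equiv}. The key structural point is that the operators $\Pi_N,\Pi_N^\perp$ in \eqref{def:smoothings} act only on the variables $(\ph,x)$ and do not involve the parameter $\lm$, so they commute with the parameter dependence.

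First I would make this precise. Let $u\in\Lip(k+1,F,s,\g)$ be the collection $\{u^{(j)}:|j|\le k\}$ with Taylor remainders $R_j^{(u)}(\lm,\lm_0)$ as in \eqref{16 Stein uniform}. Since \eqref{16 Stein uniform} is a linear identity in $u$ and $\Pi_N$ is a bounded linear operator on each $H^s_{(\ph,x)}$, the collection $\{\Pi_N u^{(j)}:|j|\le k\}$ (with $\Pi_N u^{(0)}=\Pi_N u$) satisfies the same Taylor identity \eqref{16 Stein uniform} with remainders $R_j^{(\Pi_N u)}(\lm,\lm_0)=\Pi_N R_j^{(u)}(\lm,\lm_0)$. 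Hence $\{\Pi_N u^{(j)}\}$ is an admissible choice of Whitney data for $\Pi_N u$, and it suffices to verify the bounds \eqref{17 Stein uniform} for this particular choice; the infimum defining the norm \eqref{def norm Lip Stein uniform} is then estimated from above. The same remark applies verbatim to $\Pi_N^\perp$.

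Next I would recall the elementary Fourier estimates. For $f=\sum_{(\ell,j)}f_{\ell j}e^{\ii(\ell\cdot\ph+jx)}$ and $0\le\alpha\le s$, since $\la\ell,j\ra\le N$ on the support of $\Pi_N f$,
\[
\|\Pi_N f\|_s^2=\sum_{\la\ell,j\ra\le N}|f_{\ell j}|^2\la\ell,j\ra^{2s}
\le N^{2\alpha}\sum_{\la\ell,j\ra\le N}|f_{\ell j}|^2\la\ell,j\ra^{2(s-\alpha)}\le N^{2\alpha}\|f\|_{s-\alpha}^2,
\]
and similarly, since $\la\ell,j\ra>N$ on the support of $\Pi_N^\perp f$, one gets $\|\Pi_N^\perp f\|_s\le N^{-\alpha}\|f\|_{s+\alpha}$ for every $\alpha\ge0$. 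Applying the first inequality to $f=u^{(j)}(\lm)$ and to $f=R_j^{(u)}(\lm,\lm_0)$ for all $|j|\le k$ and all $\lm,\lm_0\in F$, and inserting into \eqref{def -equiv}, yields
\[
\|\Pi_N u\|_s^\kug=\max_{|j|\le k}\Big\{\g^{|j|}\sup_{\lm}\|\Pi_N u^{(j)}(\lm)\|_s,\ \g^{k+1}\sup_{\lm\neq\lm_0}\tfrac{\|\Pi_N R_j(\lm,\lm_0)\|_s}{|\lm-\lm_0|^{k+1-|j|}}\Big\}\le N^\alpha\|u\|_{s-\alpha}^\kug,
\]
which is \eqref{p2-proi}; estimate \eqref{p3-proi} follows in the same way from the bound for $\Pi_N^\perp$.

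I do not expect a genuine obstacle here: the only point needing a line of care is the observation in the second paragraph, namely that $\{\Pi_N u^{(j)}\}$ (resp.\ $\{\Pi_N^\perp u^{(j)}\}$) really is admissible Whitney data for $\Pi_N u$ (resp.\ $\Pi_N^\perp u$), which is immediate from the linearity of \eqref{16 Stein uniform} and the fact that $\Pi_N,\Pi_N^\perp$ are $\lm$-independent bounded operators; everything else is the scalar Sobolev smoothing inequality applied uniformly in the parameters.
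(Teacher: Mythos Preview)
Your proof is correct and takes essentially the same approach as the paper: define the Whitney data for $\Pi_N u$ by $(\Pi_N u)^{(j)}:=\Pi_N u^{(j)}$ with remainders $\Pi_N R_j^{(u)}$, and then apply the classical Sobolev smoothing inequalities $\|\Pi_N f\|_s\le N^\alpha\|f\|_{s-\alpha}$ and $\|\Pi_N^\perp f\|_s\le N^{-\alpha}\|f\|_{s+\alpha}$ uniformly in $\lm$. Your write-up is simply more detailed than the paper's two-line sketch.
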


\begin{proof}
See Appendix \ref{sec:U}.
\end{proof}

\begin{lemma} {\bf (Interpolation)}\label{lemma:interpolation}
Consider the space $\Lip(k+1,F,s,\g)$ defined in Definition \ref{def:Lip F uniform}. 

(i) Let $s_1<s_2$. Then for any $\theta\in(0,1)$ one has
\begin{equation}\label{2202.3}
\| u \|_s^{k+1, \gamma} \leq (\| u \|_{s_1}^{k+1, \gamma})^\theta 
(\| u \|_{s_2}^{k+1, \gamma})^{1 - \theta}\,, \quad s:=\theta s_1 + (1-\theta) s_2\,. 
\end{equation}

(ii) Let $a_0, b_0 \geq0$ and $p,q>0$. For all $\epsilon>0$, there exists a constant $C(\epsilon):= C(\epsilon,p,q)>0$, which satisfies $C(1)<1$, such that
\begin{equation}\label{2202.2}
\| u \|^{k+1,\g}_{a_0 + p} \| v \|^{k+1,\g}_{b_0 + q} 
\leq \epsilon \| u \|^{k+1,\g}_{a_0 + p +q} \| v \|^{k+1,\g}_{b_0} 
+ C(\epsilon) \| u \|^{k+1,\g}_{a_0} \| v \|^{k+1,\g}_{b_0 + p +q}\ .
\end{equation}
\end{lemma}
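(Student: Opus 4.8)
\medskip
\noindent\textbf{Proof proposal.}
The plan is to deduce both statements from the elementary interpolation inequality on the Sobolev scale $H^s(\T^{\nu+1},\C)$ of \eqref{unified norm}, namely
\[
\| f \|_s \le \| f \|_{s_1}^{\theta}\, \| f \|_{s_2}^{1-\theta}\,, \qquad s=\theta s_1+(1-\theta)s_2\,,\ \theta\in(0,1)\,,
\]
which follows by writing $|f_{\ell j}|^2\langle\ell,j\rangle^{2s}=\big(|f_{\ell j}|^2\langle\ell,j\rangle^{2s_1}\big)^{\theta}\big(|f_{\ell j}|^2\langle\ell,j\rangle^{2s_2}\big)^{1-\theta}$ in \eqref{unified norm} and applying H\"older's inequality to the series $\sum_{(\ell,j)}|f_{\ell j}|^2\langle\ell,j\rangle^{2s}$ with conjugate exponents $1/\theta$ and $1/(1-\theta)$.

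For part (i) I would argue directly on the equivalent expression \eqref{def -equiv} of the Whitney--Sobolev norm, rather than passing through the Whitney extension: since \eqref{def -equiv} is a \emph{maximum} of finitely many quantities, no equivalence constants are produced, which is precisely what lets \eqref{2202.3} be stated with constant $1$. Each of those quantities is of the form $\gamma^{|j|}\|u^{(j)}(\lm)\|_s$ or $\gamma^{k+1}\|R_j(\lm,\lm_0)\|_s/|\lm-\lm_0|^{k+1-|j|}$, where $R_j=R_j^{(u)}$ is the fixed remainder of \eqref{16 Stein uniform}. Because $u^{(j)}(\lm)$ and $R_j(\lm,\lm_0)$ are honest elements of $H^{s_2}\subset H^s\subset H^{s_1}$, the scalar inequality above applies to each of them; multiplying by the $s$-independent weights $\gamma^{|j|}$, resp.\ $\gamma^{k+1}/|\lm-\lm_0|^{k+1-|j|}$, split as $(\,\cdot\,)^{\theta}(\,\cdot\,)^{1-\theta}$, and bounding the resulting factors at level $s_1$ and $s_2$ by $\|u\|_{s_1}^{k+1,\gamma}$ and $\|u\|_{s_2}^{k+1,\gamma}$ through \eqref{def -equiv}, one sees that every such quantity at level $s$ is $\le (\|u\|_{s_1}^{k+1,\gamma})^{\theta}(\|u\|_{s_2}^{k+1,\gamma})^{1-\theta}$. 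Taking the maximum over the finitely many of them yields \eqref{2202.3}. (Note $s_1<s$ guarantees $u\in\Lip(k+1,F,s,\gamma)$ as soon as it lies in $\Lip(k+1,F,s_2,\gamma)$, so the middle norm is meaningful.)

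For part (ii) I would first apply \eqref{2202.3} twice. Since $a_0+p=\tfrac{q}{p+q}\,a_0+\tfrac{p}{p+q}(a_0+p+q)$, it gives $\|u\|^{k+1,\gamma}_{a_0+p}\le(\|u\|^{k+1,\gamma}_{a_0})^{q/(p+q)}(\|u\|^{k+1,\gamma}_{a_0+p+q})^{p/(p+q)}$, and since $b_0+q=\tfrac{p}{p+q}\,b_0+\tfrac{q}{p+q}(b_0+p+q)$, it gives $\|v\|^{k+1,\gamma}_{b_0+q}\le(\|v\|^{k+1,\gamma}_{b_0})^{p/(p+q)}(\|v\|^{k+1,\gamma}_{b_0+p+q})^{q/(p+q)}$. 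Multiplying and regrouping the exponents, the left side of \eqref{2202.2} is $\le X^{\alpha}Y^{\beta}$ with $\alpha:=\tfrac{p}{p+q}$, $\beta:=\tfrac{q}{p+q}$, $X:=\|u\|^{k+1,\gamma}_{a_0+p+q}\|v\|^{k+1,\gamma}_{b_0}$, $Y:=\|u\|^{k+1,\gamma}_{a_0}\|v\|^{k+1,\gamma}_{b_0+p+q}$. I would then conclude with the weighted Young inequality $X^{\alpha}Y^{\beta}\le\alpha\delta X+\beta\delta^{-\alpha/\beta}Y$ (valid for all $\delta>0$ when $\alpha+\beta=1$, $\alpha,\beta>0$), choosing $\delta:=\epsilon/\alpha=\epsilon(p+q)/p$; this gives \eqref{2202.2} with $C(\epsilon)=\tfrac{q}{p+q}\big(\epsilon(p+q)/p\big)^{-p/q}$, hence $C(1)=\tfrac{q}{p+q}\big(p/(p+q)\big)^{p/q}<1$, both factors being $<1$.

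There is no real obstacle here; the only point deserving care is keeping the constant equal to $1$ in \eqref{2202.3}, which is the reason for working termwise with \eqref{def -equiv} rather than with the extension norm \eqref{0203.1} (whose equivalence carries a constant $C(\nu,k)$). Everything else is the standard H\"older/Young bookkeeping, carried out uniformly in $\lm,\lm_0\in F$.
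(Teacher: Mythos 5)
Your proof is correct and follows essentially the same route as the paper: part (i) applies the classical $H^s$ interpolation inequality termwise to $u^{(j)}(\lm)$ and $R_j(\lm,\lm_0)$ in the expression \eqref{def -equiv}, exploiting that the weights are $s$-independent and the outer operation is a max/sup (so no constants enter), and part (ii) follows from (i) plus the asymmetric (weighted) Young inequality. The only difference is that you carry out the Young step and the computation of $C(\epsilon)$ explicitly, whereas the paper simply cites it; your exponent bookkeeping and the verification $C(1)<1$ are correct.
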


\begin{proof}
See Appendix \ref{sec:U}.
\end{proof}

\begin{lemma}{\bf (Product and composition)}
\label{lemma:LS norms}
Consider the space $\Lip(k+1,F,s,\g)$ defined in Definition \ref{def:Lip F uniform}.
For all $ s \geq s_0 > (\nu + 1)/2 $, we have
\begin{align}
\| uv \|_{s}^\kug
& \leq C(s, k) \| u \|_{s}^\kug \| v \|_{s_0}^\kug 
+ C(s_0, k) \| u \|_{s_0}^\kug \| v \|_{s}^\kug\,. 
\label{p1-pr}
\end{align}
Let $ \| \b \|_{2s_0+1}^\kug \leq \d (s_0, k) $ small enough. 
Then the composition operator 
$$
\mB : u \mapsto \mB u, \quad  
(\mB u)(\ph,x) := u(\ph, x + \b (\ph,x)) \, , 
$$
satisfies the following tame estimates: for all $ s \geq s_0$, 
\be\label{pr-comp1}
\| {\cal B} u \|_{s}^\kug \lesssim_{s, k} \| u \|_{s+k+1}^\kug 
+ \| \b \|_{s}^\kug \| u \|_{s_0+k+2}^\kug \, .
\ee
Let $ \| \b \|_{2s_0+k+2}^\kug \leq \d (s_0, k) $ small enough.  The function $ \breve \beta $ defined by 
the inverse diffeomorphism 
$ y = x + \beta (\vphi, x) $ if and only if $ x = y + \breve \b ( \vphi, y ) $,  
satisfies 
\be\label{p1-diffeo-inv}
\| \breve \beta \|_{s}^\kug \lesssim_{s, k}  \| \b \|_{s+k+1}^\kug \, . 
\ee
\end{lemma}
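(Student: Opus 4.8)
The plan is to deduce all three estimates from their classical counterparts for $H^s(\T^{\nu+1})$-valued Sobolev functions on $\R^{\nu+1}$, using the extension machinery recorded after Definition \ref{def:Lip F uniform}. The key reduction is: by Theorem \ref{thm:WET} every $u \in \Lip(k+1,F,s,\g)$ has an extension $\tilde u \in W^{k+1,\infty}(\R^{\nu+1},H^s)$ with $\| \tilde u \|_{W^{k+1,\infty,\g}(\R^{\nu+1},H^\sigma)} \sim_{\nu,k} \| u \|_\sigma^\kug$ for $\sigma$ in the relevant finite range, while conversely the restriction to $F$ of \emph{any} $W^{k+1,\infty}(\R^{\nu+1},H^s)$ function $g$ lies in $\Lip(k+1,F,s,\g)$ with norm $\lesssim_{\nu,k} \| g \|_{W^{k+1,\infty,\g}(\R^{\nu+1},H^s)}$. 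So it suffices, for each operation, to exhibit a $W^{k+1,\infty}$-valued function on $\R^{\nu+1}$ that represents the desired object on $F$ and whose weighted $W^{k+1,\infty}$ norm obeys the claimed bound; in that setting the only analytic inputs are the standard tame product estimate $\| fg \|_s \lesssim_s \| f \|_s \| g \|_{s_0} + \| f \|_{s_0} \| g \|_s$ ($s \geq s_0$) and the standard Sobolev composition estimate $\| f \circ (\mathrm{id} + g) \|_s \lesssim_s \| f \|_s + \| g \|_s \| f \|_{s_0+1}$ (for $\| g \|_{s_0+1}$ small), applied fibrewise in $\lm$.

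For \eqref{p1-pr} I would take $\tilde u \tilde v$ as the representative of $uv$ on $F$ (its $\lm$-derivatives up to order $k$ on $F$ are the Leibniz combinations defining $uv \in \Lip(k+1,F,s,\g)$), expand $\pa_\lm^\alpha (\tilde u \tilde v) = \sum_{\beta \leq \alpha} \binom{\alpha}{\beta} \pa_\lm^\beta \tilde u \, \pa_\lm^{\alpha - \beta} \tilde v$ for $|\alpha| \leq k+1$, apply the tame product inequality in $H^s$ to each of the finitely many summands, write $\g^{|\alpha|} = \g^{|\beta|} \g^{|\alpha - \beta|}$, and bound $\g^{|\beta|} \| \pa_\lm^\beta \tilde u \|_\sigma \leq \| \tilde u \|_{W^{k+1,\infty,\g}(H^\sigma)}$ term by term (symmetrically in $v$). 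Each summand is then $\lesssim_{s,k} \| u \|_s^\kug \| v \|_{s_0}^\kug + \| u \|_{s_0}^\kug \| v \|_s^\kug$, giving the claim.

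For \eqref{pr-comp1} the representative of $\mB u$ is $U(\lm) := \tilde u(\lm, \vphi, x + \tilde\b(\lm,\vphi,x))$, which lies in $W^{k+1,\infty}(\R^{\nu+1},H^s)$ once $\| \b \|_{2s_0+1}^\kug$ is small (so that $\mathrm{id} + \tilde\b(\lm,\cdot)$ is a diffeomorphism of $\T$ for all $\lm$). Differentiating $U$ in $\lm$ by the chain rule, each $\pa_\lm^\alpha U$, $|\alpha| \leq k+1$, is a finite sum of terms $\big[ (\pa_x^m \pa_\lm^{\beta_0} \tilde u) \circ (\mathrm{id} + \tilde\b) \big] \prod_{i=1}^{m} \pa_\lm^{\beta_i} \tilde\b$ with $|\beta_i| \geq 1$ for $i \geq 1$, $m + |\beta_0| \leq |\alpha|$ and $m \leq k+1$. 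I would estimate the composition factor in $H^s$ by the Sobolev composition estimate, $\| (\pa_x^m \pa_\lm^{\beta_0} \tilde u) \circ (\mathrm{id} + \tilde\b) \|_s \lesssim_s \| \pa_\lm^{\beta_0} \tilde u \|_{s+m} + \| \tilde\b \|_s \| \pa_\lm^{\beta_0} \tilde u \|_{s_0+1+m}$ (using $\| \pa_x^m f \|_\sigma \leq \| f \|_{\sigma+m}$), and combine it with the $H^s$/$H^{s_0}$ bounds for the factors $\pa_\lm^{\beta_i} \tilde\b$ via the tame product inequality, taking $\sup$ over $\lm$ with the $\g$-weight bookkeeping as above. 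Since the tame product puts the high Sobolev index on only one factor at a time and $m \leq k+1$, the terms without a differentiated $\b$-factor contribute $\lesssim \| u \|_{s+k+1}^\kug$, and every term carrying a $\b$-factor contributes $\lesssim \| \b \|_s^\kug \| u \|_{s_0+k+2}^\kug$ (the extra unit coming from the $s_0+1$ in the composition estimate compounded with the at most $k+1$ transferred $x$-derivatives); the few remaining lower-order mixed terms are absorbed into these two by monotonicity of the Sobolev norms and, where convenient, the interpolation inequality \eqref{2202.2}. I expect this combinatorial chain-rule bookkeeping — tracking how the successive $\lm$-derivatives transfer $x$-derivatives onto $u$ and thereby generate the loss of $k+1$ (resp.\ $k+2$) derivatives — to be the one genuinely delicate point; everything else is mechanical.

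Finally, for \eqref{p1-diffeo-inv} I would start from the identity $\breve\b(\vphi,\cdot) = - \b(\vphi,\cdot) \circ (\mathrm{id} + \breve\b(\vphi,\cdot))$, equivalent to $y = x + \b(\vphi,x) \Leftrightarrow x = y + \breve\b(\vphi,y)$. Fibrewise in $\lm$, the bound $\| \breve\b \|_{H^s} \lesssim_s \| \b \|_{H^s}$ for $\| \b \|_{H^{s_0+1}}$ small is the classical inverse-diffeomorphism estimate (proved by a contraction/implicit function argument together with $1 + \breve\b_y = (1+\b_x)^{-1} \circ (\mathrm{id} + \breve\b)$). To upgrade to the Whitney--Sobolev norm, I would differentiate the identity in $\lm$: $\pa_\lm^j \breve\b$ satisfies a linear relation whose inhomogeneity involves $\pa_\lm^{j'} \b$ ($j' \leq j$) composed with $\mathrm{id} + \breve\b$ and the lower-order derivatives $\pa_\lm^{j''} \breve\b$ ($j'' < j$); solving recursively for $j = 1, \dots, k$ and estimating each step with the composition bound \eqref{pr-comp1} just established gives $\g^j \| \pa_\lm^j \breve\b \|_s \lesssim_{s,k} \| \b \|_{s+j}^\kug \leq \| \b \|_{s+k+1}^\kug$ (this recursion is exactly why the hypothesis here is the stronger smallness of $\| \b \|_{2s_0+k+2}^\kug$), and the Taylor remainders in the definition of the Whitney norm are controlled the same way, yielding \eqref{p1-diffeo-inv}.
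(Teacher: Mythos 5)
Your proposal is correct and follows the same route the paper takes: the paper reduces the Whitney--Sobolev estimates to their $W^{k+1,\infty,\g}(\R^{\nu+1}, H^s)$ counterparts via the extension machinery \eqref{0203.1}--\eqref{Wg} and then simply cites \cite{BertiMontalto} (Lemma 2.30 and formula (2.72)) for those; you carry out the same reduction and then prove the $W^{k+1,\infty,\g}$ estimates by hand (Leibniz plus tame product for \eqref{p1-pr}, Fa\`a di Bruno plus fibrewise Sobolev composition for \eqref{pr-comp1}, and the implicit-equation bootstrap for \eqref{p1-diffeo-inv}), with the $\lm$-derivative bookkeeping giving exactly the losses $k+1$ and $k+2$ in \eqref{pr-comp1} and $k+1$ in \eqref{p1-diffeo-inv}.
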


\begin{proof} See Appendix \ref{sec:U}. 
\end{proof}

If  $ \om $ belongs to the set of Diophantine  vectors $ \mathtt{DC}(\gamma, \tau) $, where
\begin{equation}\label{DC tau0 gamma0}
\mathtt{DC}(\gamma, \tau) := \Big\{ \omega \in \mathtt \R^\nu : |\omega \cdot \ell| \geq \frac{\gamma}{|\ell|^{\tau}} \ \  \forall \ell \in \Z^\nu \setminus  \{ 0 \} \Big\} \, ,
\end{equation}
the equation $\ompaph v = u$, where $u(\ph,x)$ has zero average with respect to $ \vphi $, 
has the periodic solution 
\begin{equation}\label{def:ompaph}
(\om \cdot \pa_\vphi )^{-1} u := \sum_{\ell \in \Z^{\nu} \setminus \{0\}, j \in \Z} 
\frac{ u_{\ell, j} }{\ii \om \cdot \ell }e^{\ii (\ell \cdot \vphi + j x )} \,.
\end{equation}
For all $\om \in \R^\nu$ we define its extension
\begin{equation} \label{def ompaph-1 ext}
(\ompaph)^{-1}_{ext} u(\ph,x) := \sum_{(\ell, j) \in \Z^{\nu+1}} 
\frac{\chi( \om \cdot \ell \g^{-1} \langle \ell \rangle^{\t}) }{\ii \om \cdot \ell}\,
u_{\ell,j} \, e^{\ii (\ell \cdot \ph + jx)},
\end{equation}
where $\chi \in \mC^\infty(\R,\R)$ is an even and positive cut-off function such that 
\begin{equation}\label{cut off simboli 1}
\chi(\xi) = \begin{cases}
0 & \quad \text{if } \quad |\xi| \leq \frac13 \\
1 & \quad \text{if} \quad  \ |\xi| \geq \frac23\,,
\end{cases} \qquad 
\partial_\xi \chi(\xi) > 0 \quad \forall \xi \in \Big(\frac13, \frac23 \Big) \, . 
\end{equation}
Note that $(\ompaph)^{-1}_{ext} u = (\ompaph)^{-1} u$
for all $\om \in \mathtt{DC}(\gamma, \tau)$.

\begin{lemma} { \bf (Diophantine equation)}
\label{lemma:WD}
For all $u \in W^{k+1,\infty,\g}(\R^{\nu+1}, H^{s+\mu})$, we have 
\be \label{2802.2}
\| (\om \cdot \pa_\vphi )^{-1}_{ext} u \|_{s,\R^{\nu+1}}^\kug
\leq C(k) \g^{-1} \| u \|_{s+\mu, \R^{\nu+1}}^\kug, 
\qquad \mu := k+1 +  \t(k+2). 
\ee
Moreover, for $F \subseteq \mathtt{DC}(\g,\t) \times \R$ one has 
\be \label{Diophantine-1}
\| (\om \cdot \pa_\vphi )^{-1} u \|_{s,F}^\kug
\leq C(k) \g^{-1}\| u \|_{s+\mu, F}^\kug \,.  
\ee
\end{lemma}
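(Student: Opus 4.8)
The plan is to prove the estimate \eqref{2802.2} on $\R^{\nu+1}$ first, and then deduce \eqref{Diophantine-1} by restriction. I would start by recalling that, since the paper identifies an element of $\Lip(k+1,\R^{\nu+1},s,\g)$ with the Sobolev-in-parameters space $W^{k+1,\infty}(\R^{\nu+1},H^s)$ (with equivalence of norms as recorded after Definition \ref{def:Lip F uniform}), it suffices to bound the $W^{k+1,\infty,\g}$-norm of $(\ompaph)^{-1}_{ext}u$ in terms of the $W^{k+1,\infty,\g}$-norm of $u$ at the higher Sobolev index $s+\mu$. Writing $g(\lm,\ell):=\chi(\om\cdot\ell\,\g^{-1}\langle\ell\rangle^\t)(\ii\,\om\cdot\ell)^{-1}$, the extended operator acts as the $(\ell,j)$-Fourier multiplier $g(\lm,\ell)$, so by Leibniz
$$
\pa_\lm^\a\big((\ompaph)^{-1}_{ext}u\big)_{\ell,j}
=\sum_{\a_1+\a_2=\a}\binom{\a}{\a_1}\big(\pa_\lm^{\a_1}g(\lm,\ell)\big)\,(\pa_\lm^{\a_2}u)_{\ell,j}\,,
$$
for $|\a|\le k+1$. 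Thus the whole estimate reduces to a pointwise bound on the symbol and its $\lm$-derivatives.

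The key step is therefore to prove that for all $|\a|\le k+1$ one has
$$
|\pa_\lm^{\a}g(\lm,\ell)|\ \le\ C(k)\,\g^{-1}\,\g^{-|\a|}\,\langle\ell\rangle^{\t(|\a|+1)}\,,\qquad \ell\in\Z^\nu\setminus\{0\}\,,
$$
(and $\pa_\lm^\a g=0$ for $\ell=0$ since $\chi$ vanishes near $0$). Here only the $\om$-components of $\lm=(\om,\h)$ matter. This is a routine but slightly delicate computation: differentiating $(\ii\,\om\cdot\ell)^{-1}$ produces factors $\ell^{\a_1'}(\om\cdot\ell)^{-1-|\a_1'|}$, while differentiating $\chi(\om\cdot\ell\,\g^{-1}\langle\ell\rangle^\t)$ produces factors $\g^{-1}\langle\ell\rangle^\t\ell^{\a_2'}$ times bounded derivatives of $\chi$. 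On the support of $\chi$ (and of its derivatives) one has $|\om\cdot\ell|\gtrsim\g\langle\ell\rangle^{-\t}$, which is exactly what converts each negative power of $\om\cdot\ell$ into a factor $\g^{-1}\langle\ell\rangle^{\t}$; collecting the at most $|\a|$ extra $\langle\ell\rangle$ and $\g^{-1}$ factors (and the one from the base case $|\a|=0$) gives the claimed bound, after absorbing $|\ell|\le\langle\ell\rangle$. Plugging this into the Leibniz expansion, multiplying by $\langle\ell,j\rangle^s$, using $\langle\ell\rangle^{\t(|\a|+1)}\le\langle\ell,j\rangle^{\t(k+2)}$ and $\langle\ell,j\rangle^s\langle\ell,j\rangle^{\t(k+2)}\le\langle\ell,j\rangle^{s+\mu}$ with $\mu=k+1+\t(k+2)$ (the extra $k+1$ is not even needed from the multiplier side, but is harmless and matches the statement; in fact the Leibniz redistribution of the $\g^{-|\a|}$ weights among $\pa_\lm^{\a_1}g$ and $\pa_\lm^{\a_2}u$ is where the $W^{k+1,\infty,\g}$-norms combine cleanly), and taking $\ell^2$ sums in $(\ell,j)$ together with the $\sup$ over $\lm$ and $|\a|\le k+1$, yields \eqref{2802.2}.

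Finally, \eqref{Diophantine-1} follows because for $\om\in\DC(\g,\t)$ one has $\chi(\om\cdot\ell\,\g^{-1}\langle\ell\rangle^\t)=1$ for every $\ell\ne0$ (since then $|\om\cdot\ell\,\g^{-1}\langle\ell\rangle^\t|\ge1>2/3$), hence $(\ompaph)^{-1}_{ext}u=(\ompaph)^{-1}u$ on $F\subseteq\DC(\g,\t)\times\R$ as already noted after \eqref{cut off simboli 1}; restricting the $\R^{\nu+1}$-estimate to $F$ and using that the Whitney--Sobolev norm on $F$ is dominated by the $W^{k+1,\infty,\g}$-norm of any extension (Theorem \ref{thm:WET}) gives the result. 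The main obstacle is purely bookkeeping: tracking how the $|\a|$ differentiations distribute the powers of $\g^{-1}$, $\langle\ell\rangle^{\t}$ and the weights $\g^{|j|}$ defining the Whitney--Sobolev norm so that everything lands on the single index $s+\mu$ with the single factor $\g^{-1}$; there is no conceptual difficulty beyond the small-divisor lower bound on $\mathrm{supp}\,\chi$.
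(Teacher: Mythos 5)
Your approach is essentially the same as the paper's: interpret $(\ompaph)^{-1}_{ext}$ as the Fourier multiplier with symbol $g_\ell(\lm)=\chi(\om\cdot\ell\,\g^{-1}\langle\ell\rangle^{\t})/(\ii\om\cdot\ell)$, bound $\pa_\lm^\a g_\ell$ pointwise using the lower bound $|\om\cdot\ell|\gtrsim\g\langle\ell\rangle^{-\t}$ on $\mathrm{supp}\,\chi$, then combine via Leibniz and sum. The paper packages the symbol bound as an auxiliary Lemma (Lemma \ref{lemma:cut-off sd}, applied with $f(\lm)=\om\cdot\ell$, $M=\g|\ell|$, $\rho=\g\langle\ell\rangle^{-\t}$), whereas you carry out the differentiation directly; this is an organizational, not a conceptual, difference.

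However, your stated symbol bound
\[
|\pa_\lm^\a g(\lm,\ell)|\le C(k)\,\g^{-1}\g^{-|\a|}\langle\ell\rangle^{\t(|\a|+1)}
\]
is off by a factor $|\ell|^{|\a|}$. Each $\om$-derivative, whether it falls on $(\om\cdot\ell)^{-1}$ or on the argument of $\chi$, produces an extra factor of a component of $\ell$; after using the small-divisor lower bound you get $\g^{|\a|}|\pa_\lm^\a g_\ell|\lesssim\g^{-1}\langle\ell\rangle^{\t(|\a|+1)}|\ell|^{|\a|}$, and it is precisely the absorption $|\ell|^{|\a|}\le\langle\ell\rangle^{k+1}$ that produces the additive $k+1$ in $\mu=k+1+\t(k+2)$ (this is also what the paper's bound $M^{k+1}/\rho^{k+2}=\g^{-1}|\ell|^{k+1}\langle\ell\rangle^{\t(k+2)}$ records explicitly). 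Your parenthetical remark that "the extra $k+1$ is not even needed from the multiplier side" is therefore incorrect: it is needed, and your displayed bound is too strong as written. Since you keep $\mu$ as stated in the lemma, the final inequality \eqref{2802.2} you arrive at is nonetheless the correct one, and the deduction of \eqref{Diophantine-1} by restriction and \eqref{Wg} is exactly as in the paper.
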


\begin{proof} See Appendix \ref{sec:U}. 
\end{proof}

We finally state a standard Moser tame estimate for the nonlinear 
composition operator
$$
u(\vphi, x) \mapsto {\mathtt f}(u)(\vphi, x) := f(\vphi, x, u(\vphi, x)) \, . 
$$ 
Since the variables $ (\vphi, x) := y $ have the same role, 
we state it for a  generic Sobolev space  $ H^s (\T^d ) $.  

\begin{lemma}{\bf (Composition operator)} \label{Moser norme pesate}
Let $ f \in {\cal C}^{\infty}(\T^d \times \R, \C )$ and $C_0 > 0$. 
Consider the space $\Lip(k+1,F,s,\g)$ given in Definition \ref{def:Lip F uniform}. 
If $u(\lambda) \in H^s(\T^d, \R)$, $\lambda \in F$ is a family of Sobolev functions
satisfying $\| u \|_{s_0,F}^{k+1, \gamma} \leq C_0$, then, for all $ s \geq s_0 > (d + 1)/2 $, 
\begin{equation} \label{0811.10}
\| {\mathtt f}(u) \|_{s, F}^{k+1, \gamma} \leq C(s, k, f, C_0 ) ( 1 + \| u \|_{s, F}^{k+1, \gamma}) \,.
\end{equation} 
The constant $C(s,k,f, C_0)$ depends on $s$, $k$ and linearly on $\| f \|_{\mC^m(\T^d \times B)}$, where $m$ is an integer larger than $s+k+1$, and $B \subset \R$ is a bounded interval such that $u(\lm,y) \in B$ for all $\lm \in F$, $y \in \T^d$,
for all $\| u \|_{s_0,F}^{k+1, \gamma} \leq C_0$.
\end{lemma}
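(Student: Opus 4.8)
\medskip
\noindent\textbf{Proof plan.} The idea is to reduce the weighted Whitney--Sobolev bound to the classical parameter-free Moser tame estimate via the extension operator of Theorem \ref{thm:WET}, and then to combine the Fa\`a di Bruno formula with the Sobolev product rule, keeping careful track of the powers of $\g$. The classical ingredient (which itself needs a short proof in the appendix) is: if $g \in \mathcal{C}^\infty(\T^d \times \R, \C)$ and $v \in H^s(\T^d,\R)$ with $\| v \|_{s_0} \leq C_0$, then, since $s_0 > d/2$ forces $\| v \|_{L^\infty} \leq C(C_0)$ and $v$ to take values in a fixed bounded interval $B$, one has $\| g(\cdot,v) \|_s \leq C(s,C_0)\, \| g \|_{\mathcal{C}^{m}(\T^d\times B)}\,(1+\| v \|_s)$ for an integer $m$ slightly larger than $s$, with the constant depending \emph{linearly} on $\| g \|_{\mathcal{C}^m}$; this follows by differentiating $g(\cdot,v)$ up to order $s$ in $(\ph,x)$, isolating the single top-order term $(\partial_u g)(\cdot,v)\,\partial^s v$ and estimating the remaining terms by interpolation. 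In particular the same holds with $s$ replaced by $s_0$, giving $\| g(\cdot,v) \|_{s_0} \leq C(s_0,C_0)\,\| g \|_{\mathcal{C}^{m_0}}$.

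Next, by the remarks following Definition \ref{def:Lip F uniform}, Theorem \ref{thm:WET} and Lemma \ref{lemma:2702.1}, I would fix a single extension $\tilde u$, independent of the Sobolev index, lying in $W^{k+1,\infty}(\R^{\nu+1},H^{s_0}) \cap W^{k+1,\infty}(\R^{\nu+1},H^{s})$, with $\| \tilde u \|_{W^{k+1,\infty,\g}(\R^{\nu+1},H^{s_0})} \lesssim_{\nu,k} \| u \|_{s_0,F}^{\kug} \leq C_0$ and $\| \tilde u \|_{W^{k+1,\infty,\g}(\R^{\nu+1},H^{s})} \lesssim_{\nu,k} \| u \|_{s,F}^{\kug}$. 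Then $\mathtt f(\tilde u)(\lm) := f(\cdot,\tilde u(\lm))$ is an admissible $W^{k+1,\infty}(\R^{\nu+1},H^s)$-extension of $\mathtt f(u)$: since $\tilde u$ is $\mathcal{C}^k$ in $\lm$ with Lipschitz $k$-th derivatives and $\partial_\lm^j\tilde u|_F = u^{(j)}$, the chain rule shows that the $\lm$-jet of $\mathtt f(\tilde u)$ on $F$ is exactly the collection produced from $\{u^{(j)}\}$ by the Fa\`a di Bruno formula, i.e. the Whitney jet that defines $\mathtt f(u)$. Hence, by the norm equivalence \eqref{0203.1}, it suffices to bound
\[
\| \mathtt f(u) \|_{s,F}^{\kug} \ \lesssim_{\nu,k}\ \| \mathtt f(\tilde u) \|_{W^{k+1,\infty,\g}(\R^{\nu+1},H^s)} \ =\ \sum_{|\alpha|\leq k+1} \g^{|\alpha|}\,\big\| \partial_\lm^\alpha \mathtt f(\tilde u) \big\|_{L^\infty(\R^{\nu+1},H^s)} .
\]

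For the core estimate, the Fa\`a di Bruno formula writes $\partial_\lm^\alpha \mathtt f(\tilde u)$ as a finite sum, with cardinality bounded in terms of $k$ only, of terms $(\partial_u^p f)(\cdot,\tilde u)\,\prod_{i=1}^{p}\partial_\lm^{\alpha_i}\tilde u$ with $1\leq p\leq|\alpha|$, $\alpha_1+\dots+\alpha_p=\alpha$, $|\alpha_i|\geq 1$ (plus the lone term $f(\cdot,\tilde u)$ when $\alpha=0$). To each such product I apply the Sobolev product estimate underlying \eqref{p1-pr} in the variables $(\ph,x)$, namely $\| g_0 g_1\cdots g_p \|_s \lesssim_s \sum_{m=0}^p \| g_m \|_s\,\prod_{m'\neq m}\| g_{m'} \|_{s_0}$ (valid since $s_0>d/2$), with $g_0 = (\partial_u^p f)(\cdot,\tilde u)$ and $g_i = \partial_\lm^{\alpha_i}\tilde u$. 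The term carrying the $H^s$-norm on $g_0$ is, by the classical Moser estimate above and the bounds on $\tilde u$, $\lesssim C(s,C_0)\| f \|_{\mathcal{C}^m}(1+\| \tilde u \|_s)\prod_i\| \partial_\lm^{\alpha_i}\tilde u \|_{s_0} \lesssim C(s,k,f,C_0)\,(1+\| u \|_{s,F}^{\kug})\,\g^{-|\alpha|}$; each term carrying the $H^s$-norm on some $g_i$ is $\lesssim \| g_0 \|_{s_0}\,\| \partial_\lm^{\alpha_i}\tilde u \|_s\,\prod_{i'\neq i}\| \partial_\lm^{\alpha_{i'}}\tilde u \|_{s_0} \lesssim C(s,k,f,C_0)\,\g^{-|\alpha_i|}\| u \|_{s,F}^{\kug}\,\g^{-(|\alpha|-|\alpha_i|)}$. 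Multiplying by $\g^{|\alpha|}$ cancels every power of $\g$; summing over the finitely many Fa\`a di Bruno terms and over $|\alpha|\leq k+1$ yields \eqref{0811.10}, with $C(s,k,f,C_0)$ depending linearly on $\| f \|_{\mathcal{C}^m}$ for an integer $m>s+k+1$ (the largest order occurring for $p=1$, $|\alpha|=k+1$, where the classical Moser estimate applied to $\partial_u f$ costs about $s$ derivatives in $(\ph,x)$ plus the remaining $u$-derivative, on top of $k+1$ parameter derivatives that all land on $\tilde u$ rather than on $f$).

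The $\g$-bookkeeping is routine; the genuinely delicate points are, first, checking that $\mathtt f(\tilde u)$ really is an admissible Whitney extension of $\mathtt f(u)$ -- that its chain-rule jet on $F$ matches the Whitney structure and that $\mathtt f(\tilde u)\in W^{k+1,\infty}(\R^{\nu+1},H^s)$ with the stated bound -- and, second, stating and proving the classical parameter-free Moser estimate in exactly the tame form with \emph{linear} dependence on a sufficiently high $\mathcal{C}^m$-norm of $f$, which is the property that the Fa\`a di Bruno argument propagates. Everything else is a mechanical combination of these two facts with the interpolation and product estimates of Lemmas \ref{lemma:interpolation}--\ref{lemma:LS norms}.
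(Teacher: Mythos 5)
Your proposal is correct and follows essentially the same route as the paper: extend $u$ to $\tilde u$ on $\R^{\nu+1}$ via Theorem \ref{thm:WET} (independently of the Sobolev index, by Lemma \ref{lemma:2702.1}), note that $\mathtt f(\tilde u)$ is a Whitney extension of $\mathtt f(u)$, pass to the equivalent $W^{k+1,\infty,\g}(\R^{\nu+1},H^s)$-norm via \eqref{0203.1}--\eqref{Wg}, and then apply the Moser composition estimate for that space. The only difference is that the paper simply cites Lemma 2.31 of \cite{BertiMontalto} for the last step, whereas you spell out the Fa\`a di Bruno plus tame product rule plus $\g$-bookkeeping that sit behind that reference, and you are a little more explicit about why the chain-rule jet of $\mathtt f(\tilde u)$ on $F$ agrees with the Whitney jet attached to $\mathtt f(u)$.
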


\begin{proof} See Appendix \ref{sec:U}. 
\end{proof}

\subsection{Linear operators}

Along the paper we consider $ \vphi $-dependent families of  linear operators
 $ A : \T^\nu \mapsto {\cal L}( L^2(\T_x))  $, 
$ \vphi \mapsto A(\vphi) $
acting on functions $ u(x) $ of the space variable $ x $, i.e.\ 
on subspaces of $ L^2 (\T_x) $, either real or complex valued. 
We also regard $ A $ as an operator (which for simplicity we denote by $A $ as well)
that acts on functions $ u(\varphi , x) $ of space-time, i.e.\  
we consider the corresponding operator 
$ A \in {\cal L}(L^2(\T^\nu \times \T ) )$ defined by
\begin{equation}\label{def azione toplitz u vphi x}
( A u) (\varphi , x) := (A(\varphi) u(\varphi, \cdot ))(x) \, .  
\end{equation}
We say that an operator $ A $ is {\it real} if 
it maps real valued functions into real valued functions. 

We represent a real operator acting on $ (\eta, \psi) \in L^2(\T^{\nu + 1}, \R^2) $ by a matrix 
\begin{equation}\label{cal R eta psi}
{\cal R} \begin{pmatrix}
\eta \\
\psi
\end{pmatrix} = \begin{pmatrix}
A & B \\
C & D
\end{pmatrix}
 \begin{pmatrix}
\eta \\
\psi
\end{pmatrix} 
\end{equation}
where $A, B, C, D$ are real operators acting on  the scalar valued components 
$ \eta, \psi \in L^2(\T^{\nu + 1}, \R)$.

The action of an operator $ A$ as in \eqref{def azione toplitz u vphi x} on a scalar function 
$ u  := u(\vphi, x ) \in L^2 (\T^\nu \times \T, \C) $,
that we expand in Fourier series as
\be\label{u-Fourier-expa}
u(\vphi, x ) =   \sum_{j \in \Z} u_{j} (\vphi) e^{\ii j x } =
\sum_{\ell \in \Z^\nu, j \in \Z}  u_{\ell,j}  e^{\ii (\ell \cdot \vphi + j x)}\,,
\ee
is 
\begin{equation}\label{matrice operatori Toplitz}
A u (\vphi, x) = \sum_{j , j' \in \Z} A_j^{j'}(\vphi) u_{j'}(\vphi) e^{\ii j x} = 
 \sum_{\ell \in \Z^\nu, j \in \Z} \, \sum_{\ell' \in \Z^\nu, j' \in \Z} A_j^{j'}(\ell - \ell') u_{\ell', j'} e^{\ii (\ell \cdot \vphi + j x)} \, . 
\ee
We shall identify an operator $ A $ with the matrix $ \big( A^{j'}_j (\ell- \ell') \big)_{j, j' \in \Z, \ell, \ell' \in \Z^\nu } $, which is T\"oplitz with respect to the index $\ell$. In this paper we always consider T\"oplitz operators as in 
\eqref{def azione toplitz u vphi x}, \eqref{matrice operatori Toplitz}. 

The matrix entries $A^{j'}_j (\ell- \ell')$ of a bounded operator  $A : H^s \to H^s$ (as in \eqref{matrice operatori Toplitz}) satisfy
\begin{equation} \label{1701.1}
\sum_{\ell, j} |A_j^{j'}(\ell - \ell')|^2 \langle \ell, j\rangle^{2s} 
\leq \| A \|_{\mL(H^s)}^2 \langle \ell', j' \rangle^{2s} \,, 
\ \quad \forall (\ell', j') \in \Z^{\nu+1} \, , 
\end{equation}
where $\| A \|_{\mL(H^s)} := \sup \{ \| Ah \|_s : \| h \|_s = 1 \}$ is  the operator norm 
(consider $h = e^{\ii (\ell', j') \cdot (\ph,x)}$).   

\begin{definition}\label{def:maj} 
Given a linear operator $ A $ as in \eqref{matrice operatori Toplitz} we define the operator
\begin{enumerate}
\item $ | A | $  {\bf (majorant operator)}  
whose matrix elements are $  | A_j^{j'}(\ell - \ell')| $,  
\item $ \Pi_N A $, $ N \in \N  $ {\bf (smoothed operator)}  
whose matrix elements are
\be\label{proiettore-oper}
 (\Pi_N A)^{j'}_j (\ell- \ell') := 
 \begin{cases}
 A^{j'}_j (\ell- \ell') \quad {\rm if} \quad   \langle \ell - \ell' ,j - j' \rangle \leq N \\
 0  \qquad \qquad \quad {\rm  otherwise} \, .
 \end{cases} 
\ee
We also denote $ \Pi_N^\bot := {\rm Id} - \Pi_N $, 
\item $ \langle \pa_{\vphi, x} \rangle^{b}  A $, $ b \in \R $,   
whose matrix elements are  $  \langle \ell - \ell', j - j' \rangle^b A_j^{j'}(\ell - \ell') $.
\item $\partial_{\vphi_m} A(\vphi) = [\partial_{\vphi_m}, A] = \partial_{\vphi_m} \circ A - A \circ \partial_{\vphi_m}$ {\bf (differentiated operator)} whose matrix elements are $\ii (\ell_m - \ell_m') A_{j}^{j'}(\ell - \ell')$. 
\end{enumerate}
\end{definition}
Similarly the commutator $ [\pa_x, A ]$ is represented by the matrix 
with entries $ \ii (j - j') A^{j'}_j (\ell- \ell') $.

Given linear operators $ A $, $ B$ as in \eqref{matrice operatori Toplitz} we have that (see Lemma 2.4 in \cite{BertiMontalto}) 
\begin{align}\label{disuguaglianza importante moduli} 
\| \, | A + B| u \|_s \leq \| \, | A | \, \norma u \norma \, \|_s 
+ \| \, | B| \, \norma u \norma \, \|_s  \, , \quad
\| \, | A B| u \|_s \leq \| \,| A| | B| \, \norma u \norma \, \|_s \, , 
\end{align}
where, for a given a function  
$ u(\varphi , x) $ 
expanded in Fourier series as in \eqref{u-Fourier-expa},
we define the majorant function
\begin{equation}\label{funzioni modulo fourier}
\norma u \norma (\vphi, x) :=  \sum_{\ell \in \Z^\nu, j \in \Z} |u_{\ell, j}| e^{\ii (\ell \cdot \vphi + j x)} \, .
\end{equation}
Note that  the Sobolev norms of $ u $ and $ \norma u \norma $ are the same, 
i.e. 
\begin{equation}\label{Soboequals}
 \| u \|_s = \| \norma u \norma \|_s.
 \end{equation}

\subsection{Pseudo-differential operators}\label{sec:pseudo}

In this section we recall the main properties of pseudo-differential operators on the torus that we shall use in the paper, similarly to \cite{Alaz-Bal}, \cite{BertiMontalto}. 
Pseudo-differential operators on the torus may be seen as a particular case
of  the theory on $ \R^n $, as developed for example in \cite{Ho1}. 

\begin{definition} \label{def:Ps2} {\bf ($\Psi {\rm DO}$)}
A linear operator $ A $ is called a \emph{pseudo-differential} operator 
of order $m$ 
if its symbol $ a (x,j) $ is 
the restriction to $ \R \times \Z $ of a function $ a (x, \xi ) $ which is $ {\cal C}^\infty $-smooth on $ \R \times \R $,
 $ 2 \pi $-periodic in $ x $, and satisfies the inequalities
\be\label{symbol-pseudo2}
\big| \pa_x^\a \pa_\xi^\b a (x,\xi ) \big| \leq C_{\a,\b} \langle \xi \rangle^{m - \b} \, , \quad \forall \a, \b \in \N \, .
\ee
We call $ a(x, \xi ) $ the symbol of the operator $ A $, which we denote 
$$ 
A = {\rm Op} (a) = a(x, D) \, , \quad D := D_x := \frac{1}{\ii} \pa_x \, . 
$$ 
We denote by $ S^m $ 
the class of all the symbols $ a(x, \xi ) $ satisfying \eqref{symbol-pseudo2}, and by $ OPS^m $ 
the associated set of pseudo-differential  operators of order $ m $. 
We set $ OPS^{-\infty} := \cap_{m \in \R} OPS^{m} $.

For a matrix of pseudo differential operators 
\begin{equation}\label{operatori matriciali sezione pseudo diff}
A = \begin{pmatrix}
A_1  & A_2 \\
A_3 & A_4
\end{pmatrix}, \quad A_i \in OPS^m, \quad i =1, \ldots , 4
\end{equation}
we say that  $A \in OPS^m$.   
\end{definition}

When the symbol $ a (x) $ is independent of $ j $, the operator $ A = {\rm Op} (a) $ is 
the multiplication operator by the function $ a(x)$, i.e.\  $ A : u (x) \mapsto a ( x) u(x )$. 
In such a case we shall also denote $ A = {\rm Op} (a)  = a (x) $. 

We underline that we regard any operator $ {\rm Op}(a) $ as an operator 
acting only on $ 2 \pi $-periodic functions 
$ u (x)  = \sum_{j \in \Z} u_j e^{\ii j x }$ as 
$$
(Au) (x)  := {\rm Op}(a) [u] (x) 
:= {\mathop \sum}_{j \in \Z} a(x,j) u_j e^{\ii j x }  \, . 
$$
Along the paper we consider $ \vphi $-dependent pseudo-differential operators
$ (A u)(\vphi, x) = \sum_{j \in \Z} a(\vphi, x, j) u_j(\vphi) e^{\ii j x } $
where the symbol $ a(\vphi, x, \xi ) $ is $ {\cal C}^\infty $-smooth also in $ \vphi $. 
We still denote $ A := A(\vphi) = {\rm Op} (a(\vphi, \cdot) ) = {\rm Op} (a ) $. 

Moreover we consider pseudo-differential operators  $ A(\lambda) := {\rm Op}(a(\lambda, \vphi, x, \xi)) $ that are ${k_0}$ times differentiable with respect to a parameter
$ \lambda := (\omega, \h)$ in an open subset $ \Lambda_0 \subseteq \R^\nu \times [\h_1, \h_2] $. 
The regularity constant $ k_0 \in \N $ 
is fixed once and for all in Section \ref{sec:degenerate KAM}. 
Note that 
$ \partial_{\lambda}^k A = {\rm Op}(\partial_{\lambda}^k a) $, $ \forall k \in \N^{\nu + 1} $.

We shall use 
the following notation, used also in \cite{Alaz-Bal}, \cite{BertiMontalto}.
For any $m \in \R \setminus \{ 0\}$, we set
\begin{equation}\label{definizione Dm}
|D|^m := {\rm Op} \big( \chi(\xi) |\xi|^m \big)\,,
\end{equation}
where $\chi$ is the even, positive $\mC^\infty$ cut-off defined in \eqref{cut off simboli 1}.
We also identify the Hilbert transform ${\cal H}$, acting on the $2 \pi$-periodic functions, defined by
\be\label{Hilbert-transf}
\mH ( e^{\ii jx} ) := - \ii \, \sign(j) e^{\ii jx} \, , \    \forall j \neq 0 \, ,  \quad \mH (1) := 0\,, 
\ee
with the Fourier multiplier $ {\rm Op}\big(- \ii\, \sign(\xi) \chi(\xi) \big)$, i.e. 
$ {\cal H} \equiv {\rm Op}\big(- \ii \,\sign(\xi) \chi(\xi) \big) $. 
 
We shall identify the projector $\pi_0 $, defined on the $ 2 \pi $-periodic functions  as
\begin{equation}\label{def pi0}
\pi_0 u := \frac{1}{2\pi} \int_\T u(x)\, d x\, , 
\end{equation}
with the Fourier multiplier $ {\rm Op}\big( 1 - \chi(\xi) \big)$, i.e. 
$ \pi_0 \equiv {\rm Op}\big( 1 -  \chi(\xi) \big) $,  
where the cut-off $\chi(\xi)$ is defined in \eqref{cut off simboli 1}. 
We also define the Fourier multiplier $ \langle D \rangle^m $, $m \in \R \setminus \{ 0 \}$, 
as
\begin{equation} \label{def langle xi rangle}
\langle D \rangle^m := \pi_0 + |D|^m
:= \Op \big( ( 1 - \chi(\xi)) + \chi(\xi) |\xi|^m \big), \quad \xi \in \R \, .
\end{equation}
We now recall the  pseudo-differential norm introduced in Definition 2.11 in \cite{BertiMontalto} 
(inspired by M\'etivier \cite{Met}, chapter 5),
which controls the regularity in $ (\vphi, x)$,  and the decay in $ \xi $,  of the symbol $ a (\vphi, x, \xi) \in S^m $, 
together with its derivatives 
$ \pa_\xi^\b a \in S^{m - \b}$, $ 0 \leq \b \leq \a $, in the Sobolev norm $ \| \ \|_s $.  

\begin{definition}\label{def:pseudo-norm} {\bf (Weighted $\Psi DO$ norm)}
Let $ A(\lambda) := a(\lambda, \vphi, x, D) \in OPS^m $ 
be a family of pseudo-differential operators with symbol $ a(\lambda, \vphi, x, \xi) \in S^m $, $ m \in \R $, which are 
$k_0$ times differentiable with respect to $ \lambda \in \Lambda_0 \subset \R^{\nu + 1} $. 
For $ \g \in (0,1) $, $ \a \in \N $, $ s \geq 0 $, we define  the weighted norm 
\begin{equation}\label{norm1 parameter}
\norma A \norma_{m, s, \alpha}^{k_0, \gamma} := \sum_{|k| \leq k_0} \gamma^{|k|} 
\sup_{\lambda \in {\Lambda}_0}\norma\partial_\lambda^k A(\lambda) \norma_{m, s, \alpha}  
\end{equation}
where 
\be\label{norm1}
\norma A(\lambda)  \norma_{m, s, \a} := 
\max_{0 \leq \beta  \leq \a} \, \sup_{\xi \in \R} \|  \partial_\xi^\beta 
a(\lambda, \cdot, \cdot, \xi )  \|_{s} \langle \xi \rangle^{-m + \beta} \, . 
\ee
For a matrix of pseudo differential operators $A \in OPS^m$ as in \eqref{operatori matriciali sezione pseudo diff}, we define its pseudo differential norm  
$$
\norma A \norma_{m, s, \alpha}^{k_0, \gamma} 
:= \max_{i = 1, \ldots, 4} \norma A_i \norma_{m, s, \alpha}^{k_0, \gamma}\,. 
$$
\end{definition}
For each $ k_0, \gamma, m  $ fixed, the norm \eqref{norm1 parameter} is 
non-decreasing both in $ s $ and $ \a $, 
namely  
\be\label{norm-increa}
\forall s \leq s', \, \a \leq  \a' \, ,  \qquad 
\norma \  \norma_{m, s, \a}^{k_0, \gamma} \leq \norma \ \norma_{m, s', \a}^{k_0, \gamma} \, ,  \ \ 
\norma \  \norma_{m, s, \a}^{k_0, \gamma}  \leq 
\norma \ \norma_{m, s, \a'}^{k_0, \gamma} \, ,
\ee
and it is non-increasing in  $ m $, i.e. 
\be\label{crescente-m-neg}
\forall m \leq m',  
\qquad \norma \  \norma_{m', s, \a}^{k_0, \gamma}  
\leq \norma \ \norma_{m, s, \a}^{k_0, \gamma} \, . 
\ee
Given a function $ a(\lambda, \vphi, x ) $ that is ${\cal C}^\infty $ in $(\ph,x)$ 
and ${k_0}$ times differentiable in $\lambda$, 
the ``weighted $\Psi$DO norm'' of the corresponding multiplication operator $\Op(a)$ is 
\be \label{norma a moltiplicazione}
\norma \Op (a)  \norma_{0, s, \a}^{k_0, \gamma} 
= \sum_{| k | \leq k_0} \gamma^{|k |} \sup_{\lambda \in \Lambda_0} 
\| \partial_{\lambda}^k a (\lambda) \|_s 
= \| a \|_{W^{k_0,\infty,\g}(\Lambda_0, H^s)}
\sim_{k_0} \| a \|_s^{k_0,\gamma} \, , \quad  \forall \a \in \N  \,, 
\ee
see \eqref{0203.1}.
For a Fourier multiplier $   g(\lambda, D)  $ with symbol $ g \in S^m $, we simply have 
\be\label{Norm Fourier multiplier}
\norma  {\rm Op}(g)  \norma_{m, s, \a}^{k_0, \gamma} 
= \norma  {\rm Op}(g)  \norma_{m, 0, \a}^{k_0, \gamma}
\leq C(m, \a, g, k_0) \, , \quad \forall s \geq 0 \, . 
\ee
Given a symbol $a(\lambda, \vphi, x, \xi) \in S^m$, we define its averages 
$$
 \langle a \rangle_{\vphi}(\lambda, x, \xi) 
:= \frac{1}{(2 \pi)^\nu} \int_{\T^{\nu}} a(\lambda, \vphi, x, \xi)\,d \vphi\, \, , 
 \quad 
\langle a \rangle_{\vphi, x}(\lambda, \xi) := \frac{1}{(2 \pi)^{\nu + 1}} \int_{\T^{\nu + 1}} a(\lambda, \vphi, x, \xi)\,d \vphi\, d x\,.
$$
One has that $ \langle a \rangle_{\vphi} $ and $ \langle a \rangle_{\vphi, x} $ 
are symbols in $ S^m $ that satisfy
\be\label{stima astratta simbolo mediato}
\norma {\rm Op}( \langle a \rangle_{\vphi} )  \norma_{m, s, \alpha}^{k_0, \gamma} 
\lesssim \norma {\rm Op}(a) \norma_{m, s, \alpha}^{k_0, \gamma} \, , \quad 
\norma {\rm Op}( \langle a \rangle_{\vphi, x} )  \norma_{m, s, \alpha}^{k_0, \gamma} \lesssim \norma {\rm Op}(a) \norma_{m, 0, \alpha}^{k_0, \gamma} \, , \quad \forall s \geq 0\,. 
\ee
The norm $ \norma \ \norma_{0,s,0}$  controls the action of a pseudo-differential 
operator on the Sobolev spaces $ H^s $, see Lemma \ref{lemma: action Sobolev}. 
The norm $  \norma \  \norma_{m, s, \alpha}^{k_0, \gamma}  $ 
is closed under composition and  satisfies tame estimates. 

\medskip

\noindent
{\bf Composition.}
If $ A = a( x, D) \in OPS^{m} $, $ B = b(x, D) \in  OPS^{m'} $ 
then the composition operator 
$ A B := A \circ B = \sigma_{AB} ( x, D) $ is a pseudo-differential operator in $OPS^{m+m'}$ 
whose symbol $\sigma_{AB} $ has the following 
asymptotic expansion:
for all $N \geq 1$,  
\be\label{expansion symbol}
\s_{AB} (x, \xi) = \sum_{\b =0}^{N-1} \frac{1}{\ii^\b \b !}  \pa_\xi^\b a (x, \xi) \, \pa_x^\b b (x, \xi) + r_N (x, \xi)
\qquad {\rm where} \qquad r_N := r_{N, AB} \in S^{m + m' - N }  \, ,
\ee
and the remainder $ r_N $ has the explicit formula 
\be\label{rNTaylor}
r_N (x, \xi) := r_{N, AB}(x, \xi) := \frac{1}{\ii^N (N-1)!} \int_0^1 (1- \t )^{N-1}  
\sum_{j \in \Z} (\pa_\xi^N a)(x, \xi + \t j ) \widehat {(\pa_x^N b)} (j , \xi) 
e^{\ii j x } \, d \tau \, .
\ee
We remind the following composition estimate 
proved in Lemma 2.13 in \cite{BertiMontalto}.

\begin{lemma} \label{lemma stime Ck parametri} 
{\bf (Composition)} 
Let $ A = a(\lambda, \vphi, x, D) $, $ B = b(\lambda, \vphi, x, D) $ be pseudo-differential operators
with symbols $ a (\lambda, \vphi, x, \xi) \in S^m $, $ b (\lambda, \vphi, x, \xi ) \in S^{m'} $, $ m , m' \in \R $. Then $ A(\lambda) \circ B(\lambda) \in OPS^{m + m'} $
satisfies,   for all $ \a \in \N $, $ s \geq s_0 $, 
\begin{equation}\label{estimate composition parameters}
\norma A B \norma_{m + m', s, \alpha}^{k_0, \gamma} 
\lesssim_{m,  \alpha, k_0} C(s) \norma A \norma_{m, s, \alpha}^{k_0, \gamma} 
\norma B \norma_{m', s_0 + \alpha + |m|, \alpha}^{k_0, \gamma}  
+ C(s_0) \norma A  \norma_{m, s_0, \alpha}^{k_0, \gamma}  
\norma B \norma_{m', s + \alpha + |m|, \alpha}^{k_0, \gamma} \, . 
\end{equation}
Moreover, for any integer $ N \geq 1  $,  
the remainder $ R_N := {\rm Op}(r_N) $ in \eqref{expansion symbol} satisfies
\be
\begin{aligned}
\norma R_N \norma_{m+ m'- N, s, \alpha}^{k_0, \gamma} 
\lesssim_{m, N,  \alpha, k_0} &
C(s) \norma A \norma_{m, s, N + \alpha}^{k_0, \gamma} 
\norma B  \norma_{m', s_0 + 2 N + |m| + \alpha, \alpha }^{k_0, \gamma}  
\\
+ \, &  C(s_0)\norma A \norma_{m, s_0 , N + \alpha}^{k_0, \gamma}
\norma B  \norma_{m', s + 2 N + |m| + \alpha, \alpha }^{k_0, \gamma}.
\label{stima RN derivate xi parametri} 
\end{aligned}
\ee
Both  \eqref{estimate composition parameters}-\eqref{stima RN derivate xi parametri} hold  
with the constant $ C(s_0) $ 
interchanged with $ C(s) $. 

Analogous estimates hold if $A$ and $B$ are matrix operators of the form \eqref{operatori matriciali sezione pseudo diff}. 
\end{lemma}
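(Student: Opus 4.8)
The plan is to reduce the statement to an explicit Fourier-series computation of the symbol of $AB$, followed by a Taylor expansion in the cotangent variable and by bookkeeping of Sobolev and symbol seminorms; the matrix case will then follow entrywise. First I would compute the exact symbol: writing $b(\lambda,\vphi,\cdot,\xi)=\sum_{j\in\Z}\widehat b(\lambda,\vphi,j,\xi)\,e^{\ii j x}$ (Fourier transform in $x$) and letting $A$ act mode by mode through \eqref{matrice operatori Toplitz}, one finds $AB=\Op(\sigma_{AB})$ with
\[
\sigma_{AB}(\lambda,\vphi,x,\xi)=\sum_{j\in\Z}a(\lambda,\vphi,x,\xi+j)\,\widehat b(\lambda,\vphi,j,\xi)\,e^{\ii j x}\,.
\]
Taylor expanding $a(\lambda,\vphi,x,\cdot)$ around $\xi$ to order $N$ with integral remainder, and using the elementary identities $\sum_j j^\beta\widehat b(\cdot,j,\xi)e^{\ii j x}=\ii^{-\beta}\pa_x^\beta b(\cdot,x,\xi)$ and $j^N\widehat b(\cdot,j,\xi)=\ii^{-N}\widehat{(\pa_x^N b)}(\cdot,j,\xi)$, this produces exactly the expansion \eqref{expansion symbol} together with the closed formula \eqref{rNTaylor} for $r_N$; since $\pa_\xi^\beta a\in S^{m-\beta}$ and $\pa_x^\beta b\in S^{m'}$, the finitely many terms of the sum already belong to $S^{m+m'}$, so it remains to prove the seminorm bounds.

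For the composition estimate \eqref{estimate composition parameters} I would fix $0\le\beta\le\alpha$ and $|k|\le k_0$, apply the Leibniz rule in $\xi$ and in $\lambda$ to the symbol formula above, and distribute the weight $\gamma^{|k|}=\gamma^{|k_1|}\gamma^{|k_2|}$; this reduces matters to estimating, for $\beta_1+\beta_2=\beta$ and $k_1+k_2=k$, the $H^s_{(\vphi,x)}$-norm of $\sum_j(\pa_\xi^{\beta_1}\pa_\lambda^{k_1}a)(\lambda,\vphi,x,\xi+j)\,e^{\ii j x}\,(\pa_\xi^{\beta_2}\pa_\lambda^{k_2}\widehat b)(\lambda,\vphi,j,\xi)$, multiplied by $\langle\xi\rangle^{-(m+m')+\beta}$. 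Each summand I would control by the Sobolev product inequality (the $\|\cdot\|_s$-version of \eqref{p1-pr}, valid since $H^s$ is an algebra for $s\ge s_0$), which generates the two-term structure $C(s)(\cdot)_s(\cdot)_{s_0}+C(s_0)(\cdot)_{s_0}(\cdot)_s$ of \eqref{estimate composition parameters}; then the symbol bound $\|\pa_\xi^{\beta_1}\pa_\lambda^{k_1}a(\cdot,\cdot,\xi+j)\|_s\le\gamma^{-|k_1|}\norma A\norma_{m,s,\alpha}^{k_0,\gamma}\langle\xi+j\rangle^{m-\beta_1}$ together with the Peetre-type inequality $\langle\xi+j\rangle^{m-\beta_1}\le\langle\xi\rangle^{m-\beta_1}\langle j\rangle^{|m-\beta_1|}$, and the fact that multiplication by $e^{\ii j x}$ only shifts the $x$-frequency, so that the $b$-factor (a single $x$-harmonic) carries the extra $\langle j\rangle$-powers produced by $a$ into its own $(\vphi,x)$-Sobolev regularity via $\langle j\rangle^\rho\langle\ell,j\rangle^{s}\le\langle\ell,j\rangle^{s+\rho}$, at the cost of raising the Sobolev index of $b$ by $|m|+\alpha$ and of summing the residual $j$-series by the Cauchy-Schwarz inequality against $\sum_j\langle j\rangle^{-2}<\infty$. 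A count of the powers of $\langle\xi\rangle$ — whose exponent is $-(m+m')+\beta+(m-\beta_1)+(m'-\beta_2)=0$ after extracting the factor $\langle\xi\rangle^{m'-\beta_2}$ from $\pa_\xi^{\beta_2}b\in S^{m'-\beta_2}$ — shows that the bound is uniform in $\xi$; restoring the $\gamma$-weights and taking the supremum over $|k|\le k_0$, $\beta\le\alpha$ yields \eqref{estimate composition parameters}, and the variant with $C(s_0)$ and $C(s)$ interchanged follows by sending the high Sobolev index to $a$ rather than to $b$ in the product inequality.

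For $R_N=\Op(r_N)$ I would run the identical scheme on \eqref{rNTaylor}: a $\xi$-derivative of $r_N$ may fall on $(\pa_\xi^N a)(\cdot,\xi+\tau j)$, producing up to $N+\alpha$ derivatives in $\xi$ of $a$ and a symbol of order $m-N$, while $\widehat{(\pa_x^N b)}=(\ii j)^N\widehat b$ contributes an additional $|j|^N$; since $\langle\xi+\tau j\rangle^{m-N}\le\langle\xi\rangle^{m-N}\langle j\rangle^{|m-N|}$ uniformly for $\tau\in[0,1]$, about $2N+|m|$ powers of $\langle j\rangle$ must be absorbed into the regularity of $b$, which gives the indices $s+2N+|m|+\alpha$ and $s_0+2N+|m|+\alpha$ of \eqref{stima RN derivate xi parametri}, the $\tau$-integral being harmless since all estimates are uniform in $\tau$. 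For a matrix of pseudo-differential operators as in \eqref{operatori matriciali sezione pseudo diff} the symbol of the product is the matrix product of the symbols, so each entry is a finite sum of scalar compositions and the scalar estimates apply entrywise, giving the same bounds for the matrix norm. The step I expect to be the main obstacle is not any individual inequality but the bookkeeping just sketched: one must check that every power of $\langle j\rangle$ generated by the shift $\xi\mapsto\xi+j$ inside $a$ — and, for the remainder, also by the $x$-derivatives of $b$ — is compensated by exactly the stated amount of Sobolev regularity of $b$, so that no positive power of $\langle\xi\rangle$ survives and the $j$-series converges; this is where the monotonicity \eqref{norm-increa} of the pseudo-differential norms and the hypothesis $s\ge s_0>(\nu+1)/2$ are used.
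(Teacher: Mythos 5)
The paper itself contains no proof of this Lemma; it is stated as a reminder, with a citation to Lemma 2.13 of Berti--Montalto, so there is no internal argument to compare against. Your derivation of the exact symbol formula $\sigma_{AB}(\vphi,x,\xi)=\sum_j a(\vphi,x,\xi+j)\,\widehat b(\vphi,j,\xi)\,e^{\ii jx}$, the Taylor expansion that produces \eqref{expansion symbol}--\eqref{rNTaylor}, and the identification of the sources of derivative loss (the Peetre factor $\langle j\rangle^{|m-\beta_1|}\leq\langle j\rangle^{|m|+\alpha}$, plus the extra $|j|^N$ from $\widehat{\partial_x^N b}$ in the remainder) are all correct. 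The gap is in how you close the $j$-summation: if you apply the Sobolev product inequality to each summand, then the triangle inequality over $j$, then Cauchy--Schwarz against $\sum_j\langle j\rangle^{-2}$, the Cauchy--Schwarz weight $\langle j\rangle$ must land somewhere, and absorbing it into the regularity of $b$ via $\langle j\rangle\langle\ell,j\rangle^{s}\leq\langle\ell,j\rangle^{s+1}$ produces the Sobolev index $s_0+|m|+\alpha+1$ for $b$, not the stated $s_0+|m|+\alpha$. Your sketch therefore over-shoots the claimed exponent by (at least) one derivative on $B$.

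To get the exact indices in \eqref{estimate composition parameters}, the $j$-series should not be split termwise; instead read $\sigma_{AB}(\cdot,\cdot,\xi)=a(\cdot,\cdot,\xi+D_x)\big[b(\cdot,\cdot,\xi)\big]$ as the action, at fixed $\xi$, of a $\vphi$-dependent pseudo-differential operator in $x$ on the Sobolev function $b(\cdot,\cdot,\xi)$. Normalizing by $\langle\xi\rangle^{-m}\langle D_x\rangle^{-|m|}$, the symbol $j\mapsto a(\vphi,x,\xi+j)\langle\xi\rangle^{-m}\langle j\rangle^{-|m|}$ lies in $S^0$ with $\norma\cdot\norma_{0,s,0}\lesssim_m\norma A\norma_{m,s,0}$ \emph{uniformly in $\xi$}, by the two-sided Peetre inequality $\langle\xi+j\rangle^{m}\langle j\rangle^{-|m|}\lesssim_m\langle\xi\rangle^{m}$ (valid for $m$ of either sign). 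The tame Sobolev action estimate for $OPS^0$-operators (Lemma~\ref{lemma: action Sobolev}) then yields \eqref{estimate composition parameters} directly, with exactly $|m|$ (respectively $|m|+\alpha$ after the Leibniz rule in $\xi$) extra derivatives on $b$ and nothing more; the integral remainder \eqref{rNTaylor} is handled by the same reduction applied to $\partial_\xi^N a\in S^{m-N}$ and $\widehat{\partial_x^N b}$, which accounts for the $2N+|m|+\alpha$ in \eqref{stima RN derivate xi parametri}. This is a refinement of your outline rather than a different route, but it is essential to reach the claimed exponents, and it is precisely the step you flagged as ``the main obstacle'' without resolving it.
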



For a Fourier multiplier $  g(\lambda, D) $ with symbol $ g \in S^{m'}  $ 
we have the simpler estimate
\be\label{lemma composizione multiplier}
\norma A \circ g(D) \norma_{m+m', s, \a}^{k_0, \gamma} 
\lesssim_{k_0,  \alpha} \norma A  \norma_{m, s,\a}^{k_0, \gamma} 
\norma {\rm Op}(g)  \norma_{m', 0,\a}^{k_0, \gamma}  
\lesssim_{k_0,  \alpha, m'} \norma A  \norma_{m, s,\a}^{k_0, \gamma}   \, .
\ee
By \eqref{expansion symbol} the  commutator between two pseudo-differential operators $ A = a(x, D) 
\in OPS^{m} $ and $ B = b (x, D)  \in OPS^{m'} $ 
is a pseudo-differential operator $ [A, B] \in OPS^{m + m' - 1} $ 
with symbol  $ a \star b $, 
namely 
\be\label{symbol commutator}
[A, B] = {\rm Op} ( a \star b ) \, .  
\ee
By \eqref{expansion symbol} the symbol  $ a \star b  \in S^{m + m' - 1} $ admits the expansion 
\be\label{Expansion Moyal bracket}
a \star b = - \ii \{ a, b \} + {\mathtt r_{\mathtt 2}} (a, b )  \qquad {\rm where}
\qquad \{a, b\} := \pa_\xi a \, \pa_x b - \pa_x a \, \pa_\xi b   \in S^{m + m' - 1} 
\ee
is the Poisson bracket between $ a(x, \xi) $ and $ b(x, \xi)  $, and  
\be\label{def:r2ab}
{\mathtt r_{\mathtt 2}} (a, b ) := r_{2, AB} - r_{2, BA} \in S^{m + m' - 2} \, . 
\ee
By Lemma \ref{lemma stime Ck parametri} we deduce the following  corollary.

\begin{lemma}{\bf (Commutator)} \label{lemma tame norma commutatore}
If $ A = a(\lambda, \vphi, x, D) \in OPS^m $ and  $ B = b(\lambda, \vphi, x, D) \in OPS^{m'} $,  
$ m , m' \in \R $, then  
the commutator $ [A, B] := AB - B A \in OPS^{m + m' - 1} $   satisfies 
\be
\begin{aligned}
\norma [A,B]  \norma_{m+m' -1, s, \alpha}^{k_0, \gamma} & 
\lesssim_{m,m',   \alpha, k_0} 
C(s) \norma A \norma_{m, s + 2 + |m' | + \alpha, \alpha + 1}^{k_0, \gamma} 
\norma B  \norma_{m', s_0 + 2 + |m| + \alpha , \alpha + 1}^{k_0, \gamma}  \\
& \qquad \qquad  
+ C(s_0) \norma A  
\norma_{m, s_0 + 2 + |m' | + \alpha, \alpha + 1}^{k_0, \gamma} 
\norma B \norma_{m', s + 2 + |m| + \alpha , \alpha + 1}^{k_0, \gamma}.  
\label{stima commutator parte astratta}
\end{aligned}
\ee
\end{lemma}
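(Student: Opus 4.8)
The plan is to obtain the commutator estimate as an immediate corollary of the composition estimate of Lemma \ref{lemma stime Ck parametri}, exploiting the cancellation of the principal symbols of $AB$ and $BA$.

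First I would apply the composition expansion \eqref{expansion symbol} with $N=1$ both to $A\circ B$ and to $B\circ A$. This gives $AB = \mathrm{Op}(a\,b) + R_{1,AB}$ and $BA = \mathrm{Op}(b\,a) + R_{1,BA}$, where $R_{1,AB} = \mathrm{Op}(r_{1,AB})$, $R_{1,BA} = \mathrm{Op}(r_{1,BA})$ and $r_{1,AB}, r_{1,BA} \in S^{m+m'-1}$ are given by the explicit formula \eqref{rNTaylor}. Since $ a(\lambda,\vphi,x,\xi)\,b(\lambda,\vphi,x,\xi) = b(\lambda,\vphi,x,\xi)\,a(\lambda,\vphi,x,\xi) $ pointwise — the symbols being scalar valued — the two multiplication operators cancel, so $ [A,B] = AB - BA = R_{1,AB} - R_{1,BA} \in OPS^{m+m'-1} $; in particular the norm on the left-hand side of \eqref{stima commutator parte astratta} is well defined.

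Then I would estimate the two remainders separately by \eqref{stima RN derivate xi parametri} with $N=1$. For $R_{1,AB}$ I use \eqref{stima RN derivate xi parametri} exactly as stated. For $R_{1,BA}$, which is the $N=1$ remainder of the composition $B\circ A$, I apply \eqref{stima RN derivate xi parametri} with $A$ and $B$ (hence $m$ and $m'$) interchanged and, moreover, with the roles of $C(s)$ and $C(s_0)$ swapped, which is permitted by the symmetry statement in Lemma \ref{lemma stime Ck parametri} (the estimate \eqref{stima RN derivate xi parametri} holding also with $C(s_0)$ interchanged with $C(s)$). Using then the triangle inequality $ \norma [A,B] \norma_{m+m'-1,s,\alpha}^{k_0,\gamma} \le \norma R_{1,AB}\norma_{m+m'-1,s,\alpha}^{k_0,\gamma} + \norma R_{1,BA}\norma_{m+m'-1,s,\alpha}^{k_0,\gamma} $ and the monotonicity \eqref{norm-increa} of the norm in both the Sobolev index $s$ and the differentiation order $\alpha$ (noting $1+\alpha = \alpha+1$ and $s \le s+2+|m|+\alpha$, $s_0 \le s_0+2+|m'|+\alpha$, and so on), each resulting summand is absorbed into the right-hand side of \eqref{stima commutator parte astratta}.

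The argument is pure bookkeeping; the only point requiring mild care is choosing, for each of the two remainders, the appropriate allocation of the constants $C(s)$, $C(s_0)$ among the two products in \eqref{stima RN derivate xi parametri}, so that after applying the norm monotonicity every term lines up with a term of the target right-hand side. There is no genuine analytic obstacle: the order gain $m+m' \mapsto m+m'-1$ is entirely a consequence of the commutation of the scalar symbols at leading order, and everything else is inherited from Lemma \ref{lemma stime Ck parametri}.
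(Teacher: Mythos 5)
Your proof is correct and follows exactly the paper's own argument: expand $AB$ and $BA$ via \eqref{expansion symbol} with $N=1$, note the cancellation of the leading terms, estimate the two remainders by \eqref{stima RN derivate xi parametri}, and finish with the monotonicity \eqref{norm-increa}. Your care in choosing the $C(s)/C(s_0)$ allocation for $R_{1,BA}$ (invoking the symmetry clause in Lemma \ref{lemma stime Ck parametri}) is indeed the only subtle point, and you handled it properly.
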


\begin{proof}
Use the expansion in \eqref{expansion symbol} with $N=1$ for both $AB$ and $BA$,
then use \eqref{stima RN derivate xi parametri} and \eqref{norm-increa}.
\end{proof}
Given two linear operators $A$ and $B$, we define inductively the operators ${\rm Ad}^n_{A}(B)$, 
$ n \in \N $ in the following way: $ {\rm Ad}_{A}(B) := [A, B] $ and $ {\rm Ad}_A^{n + 1}(B) := [A, {\rm Ad}_A^n (B)] $, $  n \in \N $. 
Iterating the estimate \eqref{stima commutator parte astratta}, one deduces  
\begin{align}
\norma {\rm Ad}^n_A(B) \norma_{n m + m' - n, s, \alpha}^{k_0, \gamma} & \lesssim_{m, m', s, \alpha, k_0} (\norma A\norma^{k_0, \gamma}_{m, s_0 + {\mathtt c}_{n }(m , m', \alpha), \alpha + n})^n \norma B \norma_{m', s + {\mathtt c}_{n }(m , m', \alpha), \alpha + n}^{k_0, \gamma} \label{stima Ad pseudo diff astratta}\\
& \quad + (\norma A\norma^{k_0, \gamma}_{m, s_0 + {\mathtt c}_{n }(m , m', \alpha), \alpha + n})^{n - 1} \norma A\norma^{k_0, \gamma}_{m, s + {\mathtt c}_{n }(m , m', \alpha), \alpha + n}\norma B \norma_{m', s_0 + {\mathtt c}_{n }(m , m', \alpha), \alpha + n}^{k_0, \gamma}   \nonumber
\end{align}
for suitable constants $ {\mathtt c}_{n }(m , m', \alpha) > 0$. 

\smallskip

We remind the following estimate for the adjoint operator proved in Lemma 2.16 in \cite{BertiMontalto}.  

\begin{lemma}{\bf (Adjoint)} \label{stima pseudo diff aggiunto}
{\bf }
Let $A= a(\lambda, \vphi, x, D)$ be a pseudo-differential operator with symbol 
$ a(\lambda, \vphi, x, \xi) \in S^m,  m \in \R $. Then the $ L^2$-adjoint 
$A^*  \in OPS^m $ 
satisfies 
$$
\norma A^* \norma_{m, s, 0}^{k_0, \gamma} \lesssim_{m} \norma A \norma_{m, s + s_0 + |m|, 0}^{k_0, \gamma} \, . 
$$
The same estimate holds if $A$ is a matrix operator of the form \eqref{operatori matriciali sezione pseudo diff}. 
\end{lemma}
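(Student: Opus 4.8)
The plan is to compute the symbol $a^*$ of the adjoint $A^* = {\rm Op}(a^*)$ by an explicit Fourier computation, and then to bound its weighted $\Psi$DO-norm by comparing the Fourier coefficients of $a^*$ with those of $a$ term by term. First I would fix the parameter $\lambda$ and the angle $\varphi$ and use that, since $A$ is Töplitz in the time Fourier index (as in \eqref{def azione toplitz u vphi x}), the $L^2(\T^{\nu+1})$-adjoint acts as $(A^*)(\varphi) = (A(\varphi))^*$, the $L^2(\T_x)$-adjoint at frozen $\varphi$. Expanding the symbol as $a(\lambda, \varphi, x, \xi) = \sum_{(\ell,k) \in \Z^{\nu+1}} a_{\ell,k}(\lambda,\xi) e^{\ii(\ell\cdot\varphi + kx)}$ and imposing $( Au, v)_{L^2(\T_x)} = ( u, A^* v)_{L^2(\T_x)}$ on exponentials, a direct identification of Fourier coefficients yields $A^* = {\rm Op}(a^*)$ with
\[
a^*_{\ell,k}(\lambda,\xi) = \overline{a_{-\ell,-k}(\lambda,\xi+k)}\,, \qquad (\ell,k) \in \Z^\nu \times \Z\,.
\]
In particular the modulus of each Fourier coefficient of $a^*$ equals that of a Fourier coefficient of $a$ with reflected $(\varphi,x)$-index and with $\xi$ shifted by the space index $k$. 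From this closed formula one also reads off that $a^* \in S^m$ (each $\partial_\xi^\beta$ passes through the sum and lands on $\partial_\xi^\beta a \in S^{m-\beta}$, while each $\partial_x$ produces a factor $\ii k$ absorbed by the rapid decay in $k$ of the Fourier coefficients of the $\mathcal C^\infty$ symbol, uniformly in $\xi$), so $A^* \in OPS^m$, consistently with the standard pseudo-differential calculus on the torus.

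Next I would estimate the norm for a fixed $\lambda$ and $\alpha = 0$. By the formula above and relabeling $(\ell,k) \mapsto (-\ell,-k)$,
\[
\| a^*(\lambda,\cdot,\cdot,\xi) \|_s^2 = \sum_{\ell,k} | a_{\ell,k}(\lambda,\xi-k)|^2 \langle \ell,k\rangle^{2s}\,.
\]
Set $s' := s + s_0 + |m|$. From Definition \ref{def:pseudo-norm} one has $\| a(\lambda,\cdot,\cdot,\eta)\|_{s'} \leq \norma A(\lambda) \norma_{m,s',0}\, \langle \eta\rangle^m$ for every $\eta$, hence the pointwise bound $| a_{\ell,k}(\lambda,\eta)| \leq \norma A(\lambda)\norma_{m,s',0}\, \langle\eta\rangle^m \langle\ell,k\rangle^{-s'}$. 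Using the elementary inequality $\langle\xi-k\rangle^m \leq 2^{|m|} \langle\xi\rangle^m \langle k\rangle^{|m|}$, valid for both signs of $m$ since $\langle\xi\rangle \leq 2\langle\xi-k\rangle\langle k\rangle$, I get
\[
\| a^*(\lambda,\cdot,\cdot,\xi)\|_s^2 \leq 4^{|m|}\, \big(\norma A(\lambda)\norma_{m,s',0}\big)^2\, \langle\xi\rangle^{2m} \sum_{\ell,k} \langle k\rangle^{2|m|} \langle\ell,k\rangle^{2s - 2s'}\,.
\]
Since $2s - 2s' = -2s_0 - 2|m|$ and $\langle k\rangle^{2|m|} \leq \langle\ell,k\rangle^{2|m|}$, the series is bounded by $\sum_{\ell,k} \langle\ell,k\rangle^{-2s_0} < \infty$, because $2s_0 > \nu+1$. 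Dividing by $\langle\xi\rangle^{2m}$ and taking the supremum over $\xi$ gives $\norma A^*(\lambda)\norma_{m,s,0} \lesssim_m \norma A(\lambda)\norma_{m,s',0}$, with $s' = s + s_0 + |m|$ exactly as claimed.

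Finally, to pass to the weighted norm $\norma \cdot \norma_{m,s,0}^{k_0,\gamma}$, I would observe that $\partial_\lambda^j A^* = (\partial_\lambda^j A)^*$ for every $|j| \leq k_0$ (differentiation in $\lambda$ acts on the jointly smooth symbol and the adjoint is $\C$-linear in the operator), so the previous bound applies with $\partial_\lambda^j A$ in place of $A$; multiplying by $\gamma^{|j|}$ and summing over $|j| \leq k_0$ yields the statement. The matrix case is immediate, since $A^*$ is the conjugate transpose of $A$, its entries are the adjoints of those of $A$, and $\norma \cdot \norma_{m,s,0}^{k_0,\gamma}$ of a matrix operator is the maximum over entries. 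There is no serious obstacle here: the only point that needs care is getting the symbol formula right (the complex conjugation, the reflection $(\ell,k) \mapsto (-\ell,-k)$, and the precise shift $\xi \mapsto \xi + k$ by the space index only), and the bookkeeping of the Sobolev loss — the extra $+\,s_0$ is what makes $\sum_{\ell,k}\langle\ell,k\rangle^{-2s_0}$ summable and the extra $+\,|m|$ absorbs the factor $\langle k\rangle^{|m|}$ coming from the $\xi$-shift, so both losses are genuinely needed. That only the $\alpha=0$ norm is claimed simplifies matters, since no control of $\xi$-derivatives of $a^*$ is required.
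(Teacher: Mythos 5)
Your proof is correct, and it is the natural Fourier computation that the lemma requires; the paper itself delegates this statement to Lemma 2.16 of \cite{BertiMontalto}, so there is no in-text proof to compare against, but your argument is what one would expect to find there. A few small checks confirm the details: the symbol formula $a^*_{\ell,k}(\lambda,\xi) = \overline{a_{-\ell,-k}(\lambda,\xi+k)}$ is correct (the $L^2(\T^{\nu+1})$ adjoint of a T\"oplitz family is indeed the frozen-$\ph$ $L^2(\T_x)$ adjoint, and the $\ph$-expansion contributes only the reflection $\ell \mapsto -\ell$); the pointwise bound $|a_{\ell,k}(\lambda,\eta)| \leq \norma A(\lambda)\norma_{m,s',0}\,\langle\eta\rangle^m\langle\ell,k\rangle^{-s'}$ follows from a single term of the $H^{s'}$ norm; and with $s' = s + s_0 + |m|$ the summand reduces to $\langle\ell,k\rangle^{-2s_0}$ after absorbing $\langle k\rangle^{2|m|} \leq \langle\ell,k\rangle^{2|m|}$, which is summable over $\Z^{\nu+1}$ precisely because $2s_0 > \nu+1$ by \eqref{def:s0}. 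The observation $\pa_\lambda^j A^* = (\pa_\lambda^j A)^*$ is valid because $\lambda$ is a real parameter, so the $k_0,\gamma$-weighted norm passes through by linearity, and the matrix case is immediate since the entries of $A^*$ are (permuted) adjoints of the entries of $A$ and the matrix norm is a max over entries. If anything, you could streamline the justification that $a^* \in S^m$: it follows from the same displayed bound (with $s$ replaced by $s'' > s_0 + \alpha$ to dominate $\mathcal C^\alpha$) together with the rapid decay in $k$ of the Fourier coefficients of a $\mathcal C^\infty$ symbol, but your sketch of this point is accurate.
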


Finally we report  a lemma about inverse of pseudo-differential operators.  

\begin{lemma} {\bf (Invertibility)} \label{Neumann pseudo diff}
Let  $ \Phi :=  {\rm Id} + A $ where $ A := {\rm Op}(a(\lambda, \vphi, x, \xi )) \in OPS^0 $.  
There exist constants  $ C(s_0, \a, k_0) $, $ C(s, \a, k_0) \geq 1 $, $ s \geq s_0 $,  such that,  if 
\be\label{hyp:lemma-2.14}
C(s_0,\a, k_0)\norma A \norma_{0, s_0 + \alpha, \alpha}^{k_0, \gamma}   \leq 1/2  \, , 
\ee
then, for all $ \lambda  $,  the operator 
$ \Phi $ is invertible, $ \Phi^{- 1}  \in OPS^0 $ and, for all $ s \geq s_0 $,    
\be\label{con:lemma-2.14}
\norma \Phi^{- 1} - {\rm Id}  \norma_{0,s,\alpha}^{k_0, \gamma} \leq C(s,\a, k_0)  \norma A \norma_{0,s + \alpha, \alpha}^{k_0, \gamma} \, . 
\ee
The same estimate holds for a matrix operator $\Phi = {\mathbb I}_2 + A$ where ${\mathbb I}_2 = \begin{pmatrix}
{\rm Id} & 0 \\
0 & {\rm Id}
\end{pmatrix}$ and $A$ has the form \eqref{operatori matriciali sezione pseudo diff}.  
\end{lemma}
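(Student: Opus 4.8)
The plan is to realize $\Phi^{-1}$ as the Neumann series $\Psi:=\sum_{n\ge 0}(-1)^n A^n$ and to show that it converges in the weighted pseudo-differential norm $\norma{\,\cdot\,}_{0,s,\alpha}^{k_0,\gamma}$ for every $s\ge s_0$. Each $A^n$ lies in $OPS^0$ by iterating the composition Lemma \ref{lemma stime Ck parametri}; from the telescoping identity $(\Id+A)\sum_{n=0}^N (-1)^n A^n=\Id-(-1)^{N+1}A^{N+1}$ and the fact that $A^{N+1}\to 0$ in $\mL(H^s)$ for $s\ge s_0$ (a consequence of convergence in the above norm together with Lemma \ref{lemma: action Sobolev}), one reads off that $\Psi$ is a two-sided inverse of $\Phi$. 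Hence $\Phi^{-1}=\Id+\sum_{n\ge 1}(-1)^n A^n\in OPS^0$, and \eqref{con:lemma-2.14} reduces to the key estimate $\sum_{n\ge 1}\norma A^n\norma_{0,s,\alpha}^{k_0,\gamma}\le C(s,\alpha,k_0)\,\norma A\norma_{0,s+\alpha,\alpha}^{k_0,\gamma}$.

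To prove the key estimate I would peel off one factor at a time, $A^n=A^{n-1}\circ A$, and apply the composition estimate \eqref{estimate composition parameters} with $m=m'=0$ in the variant (stated right after \eqref{stima RN derivate xi parametri}) in which the constants $C(s)$ and $C(s_0)$ are interchanged, namely
\[
\norma A^n\norma_{0,s,\alpha}^{k_0,\gamma}\;\lesssim_{\alpha,k_0}\;C(s_0)\,\norma A^{n-1}\norma_{0,s,\alpha}^{k_0,\gamma}\,\norma A\norma_{0,s_0+\alpha,\alpha}^{k_0,\gamma}+C(s)\,\norma A^{n-1}\norma_{0,s_0,\alpha}^{k_0,\gamma}\,\norma A\norma_{0,s+\alpha,\alpha}^{k_0,\gamma}.
\]
The effect of this arrangement is that the factor carrying the high Sobolev index $s$ in the first summand is multiplied by the \emph{small} low-regularity norm $\norma A\norma_{0,s_0+\alpha,\alpha}^{k_0,\gamma}$, controlled by the smallness hypothesis \eqref{hyp:lemma-2.14} (enlarging, if needed, the constant $C(s_0,\alpha,k_0)$ there so that it dominates the present composition constants); thus the high index never creeps up with $n$. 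Combined with the elementary sub-estimate $\norma A^{n}\norma_{0,s_0,\alpha}^{k_0,\gamma}\lesssim 2^{-n}\norma A\norma_{0,s_0,\alpha}^{k_0,\gamma}$ — obtained by the same peeling carried out at the fixed target regularity $s_0$, where both summands of \eqref{estimate composition parameters} stay at the indices $s_0$ and $s_0+\alpha$ — this turns into a linear recursion of the form $x_n\le\tfrac12 x_{n-1}+C(s,\alpha,k_0)\,2^{-n}\,\norma A\norma_{0,s+\alpha,\alpha}^{k_0,\gamma}$ for $x_n:=\norma A^n\norma_{0,s,\alpha}^{k_0,\gamma}$, which unrolls to $x_n\lesssim_{s,\alpha,k_0} n\,2^{-n}\,\norma A\norma_{0,s+\alpha,\alpha}^{k_0,\gamma}$ and is summable over $n$.

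Given the key estimate, $\Phi^{-1}-\Id=\sum_{n\ge 1}(-1)^n A^n$ converges absolutely in the Banach space defined by $\norma{\,\cdot\,}_{0,s,\alpha}^{k_0,\gamma}$, giving $\Phi^{-1}-\Id\in OPS^0$ together with \eqref{con:lemma-2.14}. The $k_0$-fold differentiability of $\Phi^{-1}$ in $\lambda$ is automatic, since the norm $\norma{\,\cdot\,}_{0,s,\alpha}^{k_0,\gamma}$ already encodes the $\lambda$-derivatives up to order $k_0$ with their $\gamma$-weights (equivalently, one may differentiate $\Phi\Phi^{-1}=\Id$ by Leibniz and invert $\Phi$ again on each term). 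The matrix case $\Phi={\mathbb I}_2+A$ with $A$ of the form \eqref{operatori matriciali sezione pseudo diff} is identical, using the matrix versions of \eqref{estimate composition parameters} and of the weighted norm recalled after Definition \ref{def:pseudo-norm}. The only genuinely delicate point is the one stressed above: arranging the Neumann iteration so that the total loss of derivatives incurred is exactly $\alpha$, as in \eqref{con:lemma-2.14}, rather than $n\alpha$ — which is precisely what the freedom to interchange $C(s)$ and $C(s_0)$ in \eqref{estimate composition parameters} makes possible.
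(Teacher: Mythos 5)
Your proof is correct and follows the same route as the paper, which cites a Neumann series argument (Lemma 2.17 of Berti--Montalto). The one point genuinely worth having spelled out — that the interchange of $C(s)$ and $C(s_0)$ in the composition estimate \eqref{estimate composition parameters} is what keeps the total derivative loss at $\alpha$ rather than accumulating to $n\alpha$, and that the low-norm decay $\norma A^n\norma_{0,s_0,\alpha}^{k_0,\gamma}\lesssim 2^{-n}$ feeds a summable forcing into the high-norm recursion — you identify and execute correctly.
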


\begin{proof}
By a Neumann series argument. See Lemma 2.17 in \cite{BertiMontalto}.
\end{proof}

\subsection{Integral operators and Hilbert transform}\label{sub:integral-op}

In this section we consider integral operators with a $ {\cal C}^\infty $ kernel, 
which are the operators in $ OPS^{-\infty}$. As in the previous section, 
they 
are ${k_0}$ times differentiable with respect to 
$ \lambda := (\omega, \h)$ in an open set $ \Lambda_0 \subseteq \R^{\nu+1} $.

\begin{lemma} \label{lem:Int}
Let $K := K( \lambda, \cdot ) \in {\cal C}^\infty(\T^\nu \times \T \times \T)$. Then the integral operator 
\be\label{integral operator}
({\cal R} u ) ( \vphi, x) := \int_\T K(\lambda, \vphi, x, y) u(\vphi, y)\,d y 
\ee
is in $ OPS^{- \infty}$ and, for all $ m, s,  \a \in \N $,  
$ \norma {\cal R}  \norma_{-m, s, \a}^{k_0, \gamma} \leq 
C(m, s, \alpha, k_0)  \| K \|_{{\cal C}^{s + m + \alpha}}^{k_0, \gamma} $.
\end{lemma}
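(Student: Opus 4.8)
The plan is to produce an explicit symbol in $S^{-\infty}$ representing $ {\cal R} $ and then read off the quantitative bound from it. First I would compute the action of $ {\cal R} $ on Fourier modes: writing $ u(\vphi, \cdot ) = \sum_{j \in \Z} u_j(\vphi) e^{\ii j x} $ and substituting $ y = x + \theta $ in the integral \eqref{integral operator}, one gets $ ({\cal R} u)(\vphi, x) = \sum_{j \in \Z} \widehat{g}(\lambda, \vphi, x, j)\, u_j(\vphi)\, e^{\ii j x} $, where
\[
\widehat{g}(\lambda, \vphi, x, j) := \int_\T g(\lambda, \vphi, x, \theta)\, e^{\ii j \theta}\, d\theta\, , \qquad g(\lambda, \vphi, x, \theta) := K(\lambda, \vphi, x, x + \theta)\, ,
\]
and $ g $ is $ {\cal C}^\infty $ and $ 2\pi $-periodic in every variable. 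Thus $ {\cal R} $ is represented, at the integer points $ \xi = j $, by the $ j $-th Fourier coefficient in $ \theta $ of the smooth periodic function $ g $.

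Next I would extend this to $ \xi \in \R $ in a way that keeps the smoothing bounds. Fix $ \psi \in {\cal C}^\infty(\R) $ with compact support in $ (-1,1) $ and $ \psi(0) = 1 $; since $ \psi $ vanishes at every nonzero integer, the function
\[
a(\lambda, \vphi, x, \xi) := \sum_{k \in \Z} \widehat{g}(\lambda, \vphi, x, k)\, \psi(\xi - k)
\]
is a locally finite sum (hence $ {\cal C}^\infty $ in $ \xi $) that agrees with $ \widehat{g} $ at every $ \xi = j \in \Z $, whence $ {\cal R} = \Op (a) $. The decay of $ a $ in $ \xi $ comes from integrating by parts $ N $ times in $ \theta $ in the formula for $ \widehat{g} $ — legitimate because $ e^{\ii k \theta} $ is $ 2\pi $-periodic for $ k \in \Z $ — which gives, together with the $ \partial_x $, $ \partial_\vphi $ and $ \partial_\lambda $ derivatives,
\[
\big\| \partial_\lambda^{k'} \partial_x^\alpha \widehat{g}(\lambda, \cdot, \cdot, k) \big\|_{L^\infty_{\vphi,x}} \lesssim_{N, \alpha} \langle k \rangle^{-N}\, \big\| \partial_\lambda^{k'} K(\lambda) \big\|_{{\cal C}^{\alpha + N}}\, ;
\]
here one uses $ \partial_\theta g = (\partial_y K)(\lambda, \vphi, x, x + \theta) $ and $ \partial_x g = (\partial_x K + \partial_y K)(\lambda, \vphi, x, x + \theta) $, so each such derivative costs exactly one derivative of $ K $.

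Finally I would convert this into the weighted pseudo-differential norm bound. For $ \beta \leq \alpha $ the derivative $ \partial_\xi^\beta a(\lambda, \vphi, x, \xi) = \sum_{k} \widehat{g}(\lambda, \vphi, x, k)\, \psi^{(\beta)}(\xi - k) $ is, for each fixed $ \xi $, a sum over the finitely many $ k $ with $ |\xi - k| < 1 $, for which $ \langle k \rangle \sim \langle \xi \rangle $. Choosing $ N = m + \alpha $ above, and bounding the $ H^s_{(\vphi,x)} $-norm by the requisite (of order $ s $) number of $ (\vphi,x) $-derivatives, one obtains, uniformly in $ \xi $ and $ 0 \leq \beta \leq \alpha $,
\[
\big\| \partial_\xi^\beta a(\lambda, \cdot, \cdot, \xi) \big\|_s\, \langle \xi \rangle^{m + \beta} \lesssim_{m,s,\alpha} \langle \xi \rangle^{m + \beta - N}\, \| K(\lambda) \|_{{\cal C}^{s + m + \alpha}} \lesssim \| K(\lambda) \|_{{\cal C}^{s + m + \alpha}}\, .
\]
Summing over $ |k'| \leq k_0 $ with the weights $ \gamma^{|k'|} $ — which again only differentiate $ K $ — gives exactly $ \norma {\cal R} \norma_{-m, s, \alpha}^{k_0, \gamma} \lesssim_{m,s,\alpha,k_0} \| K \|_{{\cal C}^{s + m + \alpha}}^{k_0, \gamma} $, and since $ m \in \N $ is arbitrary this shows $ {\cal R} \in OPS^{-\infty} $. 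I expect the only mildly delicate point to be the bookkeeping that reconciles the number of $ \theta $-integrations by parts ($ m + \alpha $, so as to dominate $ \partial_\xi^\beta $ for all $ \beta \leq \alpha $ together with the extra factor $ \langle \xi \rangle^{-m} $) and the order-$ s $ number of $ (\vphi,x) $-derivatives needed for the $ H^s $-norm, against the single regularity index $ s + m + \alpha $ of $ K $ claimed in the statement; the interchange of $ \partial_x $ with the $ \theta $-integral and the fact that $ \partial_x $ acting on $ K(\lambda, \vphi, x, x + \theta) $ produces a sum of two first-order derivatives of $ K $ (never more than one per slot) is what keeps the count tight, while the Whitney-in-$ \lambda $ structure is harmless since $ \partial_\lambda^{k'} $ commutes with everything and lands directly on $ K $.
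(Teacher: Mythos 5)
Your proof is correct, and since the paper itself defers the proof of this lemma to Lemma 2.32 of \cite{BertiMontalto} without giving an argument, your write-up in fact supplies the missing details. The approach—reading the operator on the Fourier side, substituting $y = x + \theta$ to exhibit the symbol as the Fourier coefficients in $\theta$ of the smooth periodic function $g(\lambda,\ph,x,\theta) = K(\lambda,\ph,x,x+\theta)$, extending to real $\xi$ by a compactly supported cutoff $\psi$ centred at the lattice points, and then integrating by parts $N = m+\alpha$ times in $\theta$ to get the $\langle\xi\rangle^{-m-\beta}$ decay—is the standard and essentially canonical one for this kind of lemma, and the bookkeeping ($s$ derivatives in $(\ph,x)$ for the $H^s$-norm, $m+\alpha$ in $\theta$ for the decay, each costing exactly one order of regularity of $K$, and $\partial_\lambda^{k'}$ commuting with the construction) lands exactly on the regularity index $s+m+\alpha$ claimed in the statement.
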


\begin{proof}
See Lemma 2.32 in \cite{BertiMontalto}.
\end{proof}

An integral operator transforms into another integral operator under a change of variables
\begin{equation}\label{diffeo p integral operator}
P u(\vphi, x) := u(\vphi, x + p(\vphi, x)) \, . 
\end{equation}
 
\begin{lemma} \label{lemma cio}
Let $K(\lambda, \cdot ) \in {\cal C}^\infty(\T^\nu \times \T \times \T)$ 
and $p(\lambda, \cdot ) \in {\cal C}^\infty(\T^\nu \times \T, \R) $. 
There exists $\delta := \delta(s_0, k_0) > 0$ such that if $\| p\|_{2 s_0 + k_0 + 1}^{k_0, \gamma} \leq \delta$, then 
the integral operator $ {\cal R }$ in \eqref{integral operator} transforms into the integral operator
$ \big( P^{- 1} {\cal R} P \big) u(\vphi,  x) = \int_\T \breve K(\lambda, \vphi, x, y) u(\vphi, y)\,dy  $
with a  $ {\cal C}^\infty $ kernel 
$$
 \breve K(\lambda, \vphi, x, z) := \big( 1 + \partial_z q (\lambda, \vphi, z) \big) K(\lambda, \vphi, x + q(\lambda, \vphi, x), 
 z + q(\lambda, \vphi, z)) ,
$$
 where $z \mapsto z + q(\lambda, \vphi, z)$ is the inverse diffeomorphism of $x \mapsto x + p(\lambda, \vphi, x)$.
The function $\breve K$ satisfies
$$
\| \breve K\|_{s}^{k_0, \gamma} \leq C(s, k_0)  \big( \| K \|_{s + k_0}^{k_0, \gamma} 
+ \| p \|_{{s + k_0 + 1}}^{k_0, \gamma} \| K \|_{s_0 + k_0 + 1}^{k_0, \gamma}\big) \qquad \forall s \geq s_0\,.
$$
\end{lemma}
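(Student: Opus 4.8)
The plan is to produce the formula for $\breve K$ by an explicit change of variables inside the integral, and then to control $\|\breve K\|_s^{k_0,\gamma}$ by combining the inverse-diffeomorphism bound \eqref{p1-diffeo-inv}, the composition bound \eqref{pr-comp1}, and the product bound \eqref{p1-pr} of Lemma \ref{lemma:LS norms}, together with Lemma \ref{lem:Int} to conclude that $P^{-1}\mathcal R P \in OPS^{-\infty}$.

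First I would fix notation: write the given $\vphi$-dependent diffeomorphism of $\T_x$ as $y = x + p(\lambda,\vphi,x)$ and denote its inverse by $x = y + q(\lambda,\vphi,y)$, so that $P$ in \eqref{diffeo p integral operator} satisfies $P^{-1}u(\vphi,x) = u(\vphi, x + q(\lambda,\vphi,x))$. Plugging the definition of $P$ into \eqref{integral operator} and performing, for each fixed $\vphi$, the substitution $w = y + p(\lambda,\vphi,y)$ (equivalently $y = w + q(\lambda,\vphi,w)$, $dy = (1 + \partial_w q(\lambda,\vphi,w))\,dw$) gives
$$
(\mathcal R P u)(\vphi,x) = \int_\T K\big(\lambda,\vphi,x,\, w + q(\lambda,\vphi,w)\big)\,\big(1 + \partial_w q(\lambda,\vphi,w)\big)\, u(\vphi,w)\, dw \, ,
$$
and then composing with $P^{-1}$, i.e. replacing $x$ by $x + q(\lambda,\vphi,x)$, yields exactly the asserted kernel $\breve K$. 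This step is elementary calculus and causes no trouble.

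For the estimate I would start from \eqref{p1-diffeo-inv} applied with $k = k_0 - 1$: the hypothesis $\|p\|_{2s_0+k_0+1}^{k_0,\gamma}\leq\delta$ is precisely what that lemma requires, so $q$ is a well-defined $\mathcal C^\infty$ function with $\|q\|_s^{k_0,\gamma}\lesssim_{s,k_0}\|p\|_{s+k_0}^{k_0,\gamma}$ for all $s\geq s_0$; in particular $\|q\|_{2s_0+1}^{k_0,\gamma}$ is small, which is the smallness needed to apply \eqref{pr-comp1}. The crucial point is that the two changes of variables $x \mapsto x + q(\lambda,\vphi,x)$ and $z \mapsto z + q(\lambda,\vphi,z)$ act on different variables and depend on $\vphi$ only, hence must be performed \emph{simultaneously}, as a single near-identity diffeomorphism of $\T^\nu_\vphi\times\T_x\times\T_z$ with displacement $(0, q(\lambda,\vphi,x), q(\lambda,\vphi,z))$ of norm $\lesssim\|q\|_s^{k_0,\gamma}$ — doing them in succession would lose $2k_0$ derivatives instead of $k_0$. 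Applying \eqref{pr-comp1} (in the form valid on $\T^{\nu+2}$) then gives $\|K(\lambda,\vphi,x + q,\,z + q)\|_s^{k_0,\gamma}\lesssim_{s,k_0}\|K\|_{s+k_0}^{k_0,\gamma} + \|q\|_s^{k_0,\gamma}\|K\|_{s_0+k_0+1}^{k_0,\gamma}$. Finally I would write $\breve K = K(\ldots) + (\partial_z q)\,K(\ldots)$, bound $\|\partial_z q\|_s^{k_0,\gamma}\lesssim\|q\|_{s+1}^{k_0,\gamma}\lesssim\|p\|_{s+k_0+1}^{k_0,\gamma}$, apply the product estimate \eqref{p1-pr} to the second summand, and use the bounds on $q$ and the smallness of $\|p\|_{s_0+k_0+1}^{k_0,\gamma}$ to absorb the lower-order contributions; this yields $\|\breve K\|_s^{k_0,\gamma}\lesssim_{s,k_0}\|K\|_{s+k_0}^{k_0,\gamma} + \|p\|_{s+k_0+1}^{k_0,\gamma}\|K\|_{s_0+k_0+1}^{k_0,\gamma}$. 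Since $\breve K\in\mathcal C^\infty(\T^\nu\times\T\times\T)$, Lemma \ref{lem:Int} then identifies $P^{-1}\mathcal R P$ with the integral operator of kernel $\breve K$ and places it in $OPS^{-\infty}$.

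I expect the only genuinely delicate point to be the bookkeeping of derivative losses — in particular verifying that the smallness of $p$ propagates to $q$ so that \eqref{pr-comp1} is applicable, and handling the two spatial changes of variables jointly so that the loss stays at $k_0$ (resp. $k_0+1$) derivatives. Everything else is a direct assembly of the tame estimates of Lemma \ref{lemma:LS norms}.
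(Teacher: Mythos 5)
Your proof is correct and reproduces the standard argument. The explicit change of variables $w = y + p(\vphi,y)$ inside the integral, followed by composition with $P^{-1}$, produces exactly the asserted kernel $\breve K$, and the tame estimate then follows from \eqref{p1-diffeo-inv}, \eqref{pr-comp1} and \eqref{p1-pr} as you describe. The paper does not give its own argument --- the proof is delegated to Lemma~2.34 of \cite{BertiMontalto}, which proceeds by direct differentiation of the kernel via Fa\`a di Bruno --- so there is no in-paper proof to compare against, but your composition-lemma route is equivalent in substance. You correctly identify the main point: the two spatial substitutions $x\mapsto x+q(\lambda,\vphi,x)$ and $z\mapsto z+q(\lambda,\vphi,z)$ must be treated as one near-identity diffeomorphism of $\T^\nu_\ph\times\T_x\times\T_z$, since composing \eqref{pr-comp1} twice in succession costs $2k_0$ derivatives rather than $k_0$.

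One small point worth flagging. Lemma~\ref{lemma:LS norms} is stated with $H^s = H^s(\T^{\nu+1})$ and $s_0 = \big[\tfrac{\nu+1}{2}\big]+1$; applying its $(\nu+2)$-torus analogue, as you do, replaces $s_0$ by $s_0' = \big[\tfrac{\nu+2}{2}\big]+1$ in the low-norm slot, and $s_0' = s_0+1$ when $\nu$ is even. Strictly, then, the low index obtained by your route is $s_0'+k_0+1$, not $s_0+k_0+1$ as written in the lemma statement; this is a benign bookkeeping discrepancy (it never affects the applications, which only need a fixed loss independent of $s$) but should at least be acknowledged rather than silently identified with $s_0$. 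Everything else --- the smallness of $\|q\|_{2s_0+1}^{k_0,\gamma}$ via \eqref{p1-diffeo-inv}, the bound $\|\partial_z q\|_s^{k_0,\gamma}\lesssim\|p\|_{s+k_0+1}^{k_0,\gamma}$, and the absorption of the product term via \eqref{p1-pr} --- is assembled correctly.
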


\begin{proof}
See Lemma 2.34 in \cite{BertiMontalto}.
\end{proof}

We now recall some properties of the Hilbert transform $ \mH $ 
defined as a Fourier multiplier in \eqref{Hilbert-transf}. 
The commutator between  $ \mH $ and the multiplication operator by a smooth function $ a $ is a 
regularizing operator in $ OPS^{-\infty } $, as stated  in 
Lemma 2.35 in \cite{BertiMontalto} 
(see also Lemma B.5 in \cite{Baldi Benj-Ono}, Appendices H and I in \cite{IPT}). 

\begin{lemma}\label{lem: commutator aH}
Let $ a( \lambda, \cdot, \cdot ) \in {\cal C}^{\infty} (\T^\nu \times \T, \R)$. Then the commutator $[a, \mH ] $ is in 
$ OPS^{-\infty }$ and satisfies, for all $ m,  s,  \a \in \N $, 
$$
\norma  [a, \mH  ] \norma_{-m, s, \a}^{k_0, \gamma} \leq C(m, s, \alpha, k_0) \| a \|_{{s + s_0 + 1+ m + \alpha}}^{k_0, \gamma} \, . 
$$
\end{lemma}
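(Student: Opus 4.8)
The plan is to reduce the statement to Lemma~\ref{lem:Int} by exhibiting $[a,\mH]$ as an integral operator with a $\mathcal{C}^\infty$ kernel, and to quantify the smoothing either by estimating that kernel or, more directly, by computing the symbol of $[a,\mH]$ in Fourier variables. First I would recall that on $2\pi$--periodic functions the Hilbert transform \eqref{Hilbert-transf} is the classical conjugate function operator, hence $\mH u(x)=\tfrac{1}{2\pi}\,\pv\!\int_{\T}\cot\!\big(\tfrac{x-y}{2}\big)\,u(y)\,dy$. Consequently, for $u=u(\vphi,x)$,
\[
[a,\mH]u(\vphi,x)=a(\vphi,x)\,\mH u(\vphi,x)-\mH(a\,u)(\vphi,x)=\int_{\T}K(\lambda,\vphi,x,y)\,u(\vphi,y)\,dy\,,
\]
where $K(\lambda,\vphi,x,y):=\tfrac{1}{2\pi}\big(a(\lambda,\vphi,x)-a(\lambda,\vphi,y)\big)\cot\!\big(\tfrac{x-y}{2}\big)$, and the point is that the simple pole at $x=y$ of $\cot(\tfrac{x-y}{2})$, viewed on $\T$, is cancelled by the first--order vanishing of $a(\vphi,x)-a(\vphi,y)$ on the diagonal, so that $K(\lambda,\cdot,\cdot,\cdot)\in\mathcal{C}^\infty(\T^\nu\times\T\times\T)$.

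For the quantitative estimate I would pass to Fourier variables. Writing $a=\sum_{\ell,k}\hat a(\ell,k)e^{\ii(\ell\cdot\vphi+kx)}$ and recalling $\mH=\mathrm{Op}\big(h(\xi)\big)$ with $h(\xi):=-\ii\,\sgn(\xi)\chi(\xi)$, a direct computation shows $[a,\mH]=\mathrm{Op}(b)$ with symbol $b(\lambda,\vphi,x,\xi)=\sum_{\ell,k}\hat a(\lambda;\ell,k)\big(h(\xi)-h(\xi+k)\big)e^{\ii(\ell\cdot\vphi+kx)}$, i.e.\ with matrix entries $\big([a,\mH]\big)_{j}^{j'}(\ell-\ell')=\hat a(\ell-\ell',j-j')\big(h(j')-h(j)\big)$. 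The structural fact that makes the operator smoothing is that $h\in\mathcal{C}^\infty(\R)$ is bounded with all its derivatives and, because $\chi$ is locally constant outside $\{1/3\le|\xi|\le2/3\}$ (see \eqref{cut off simboli 1}), $h'$ is supported in $[-2/3,2/3]$; hence $h(\xi)-h(\xi+k)$ vanishes unless the segment joining $\xi$ and $\xi+k$ meets $[-2/3,2/3]$, i.e.\ unless $\langle\xi\rangle\lesssim\langle k\rangle$, while all its $\xi$--derivatives are uniformly bounded. Inserting this into Definition~\ref{def:pseudo-norm}: on the frequency support of the $k$--th Fourier coefficient of $b$ one has $\langle\xi\rangle^{m+\beta}\lesssim\langle k\rangle^{m+\beta}$, so that for all $m\ge0$, $\beta\le\alpha$ and $|\kappa|\le k_0$, $\|\partial_\xi^\beta\partial_\lambda^\kappa b(\lambda,\cdot,\cdot,\xi)\|_s\,\langle\xi\rangle^{m+\beta}\lesssim\|\partial_\lambda^\kappa a(\lambda)\|_{s+m+\alpha}$; summing the $\gamma$--weighted contributions and taking the supremum over $\lambda$ gives $\norma[a,\mH]\norma_{-m,s,\alpha}^{k_0,\gamma}\lesssim_{m,s,\alpha,k_0}\|a\|_{s+m+\alpha}^{k_0,\gamma}\le\|a\|_{s+s_0+1+m+\alpha}^{k_0,\gamma}$, and since $m$ is arbitrary, $[a,\mH]\in OPS^{-\infty}$.

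Alternatively — and this is the route that matches Lemma~\ref{lem:Int} verbatim — one estimates the kernel directly: with a partition of unity on $\T\times\T$ into a neighbourhood of the diagonal and its complement, Hadamard's lemma gives $a(\vphi,x)-a(\vphi,y)=(x-y)g(\vphi,x,y)$ near the diagonal with $\|g\|_{\mathcal{C}^N}\lesssim\|a\|_{\mathcal{C}^{N+1}}$, while $(x-y)\cot(\tfrac{x-y}{2})$ is smooth and bounded there and $\cot(\tfrac{x-y}{2})$ is smooth with bounded derivatives away from the diagonal; this yields $\|K\|_{\mathcal{C}^N}^{k_0,\gamma}\lesssim_N\|a\|_{\mathcal{C}^{N+1}}^{k_0,\gamma}\lesssim\|a\|_{N+1+s_0}^{k_0,\gamma}$ by Sobolev embedding, and Lemma~\ref{lem:Int} with $N=s+m+\alpha$ closes the estimate. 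Either way, the only genuinely delicate step is the cancellation of the singularity together with the careful bookkeeping of the single lost derivative — equivalently, the compact support of $h'$ in Fourier space, or the pairing of the pole of $\cot$ on the torus with the first--order vanishing of $a(x)-a(y)$ on the diagonal — while everything else is routine, in the same vein as the proof of Lemma~2.35 in \cite{BertiMontalto}.
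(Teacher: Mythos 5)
Both routes are correct and complete. The paper does not prove this lemma itself (it only cites Lemma~2.35 of \cite{BertiMontalto}), and the referenced argument is essentially your second one: write $\mH$ on $\T$ as convolution with $\frac{1}{2\pi}\cot(\frac{x-y}{2})$, observe that the commutator kernel $K(\lambda,\vphi,x,y)=\frac{1}{2\pi}\big(a(\lambda,\vphi,x)-a(\lambda,\vphi,y)\big)\cot(\frac{x-y}{2})$ is ${\cal C}^\infty$ because the simple pole of $\cot$ on the diagonal is cancelled by the first-order vanishing of $a(\vphi,x)-a(\vphi,y)$, and conclude by Lemma~\ref{lem:Int}; this produces exactly the loss $s_0+1$ in the statement, one derivative coming from the Hadamard factorization $a(x)-a(y)=(x-y)g(x,y)$ and $s_0$ from the Sobolev embedding $H^{s_0}(\T^{\nu+1})\hookrightarrow L^\infty$. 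Your first route is a genuine alternative and is in fact sharper: computing the symbol $b(\lambda,\vphi,x,\xi)=\sum_{\ell,k}\hat a(\lambda;\ell,k)\big(h(\xi)-h(\xi+k)\big)e^{\ii(\ell\cdot\vphi+kx)}$ with $h(\xi)=-\ii\,\sgn(\xi)\chi(\xi)$, the compact support of $h'$ makes every $\xi$-derivative of $h(\xi)-h(\xi+k)$ uniformly bounded and supported in the region $\langle\xi\rangle\lesssim\langle k\rangle$, so the weight $\langle\xi\rangle^{m+\beta}$ is absorbed into $\langle k\rangle^{m+\beta}$ directly inside the $\ell^2$ sum defining $\|\cdot\|_s$, giving $\norma\,[a,\mH]\,\norma_{-m,s,\alpha}^{k_0,\gamma}\lesssim_{m,s,\alpha,k_0}\|a\|_{s+m+\alpha}^{k_0,\gamma}$ with no Sobolev-embedding loss at all. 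The two proofs rest on the same structural fact in dual guise: the compact support of $h'$ in frequency \emph{is} the cancellable simple pole of $\cot(\frac{\cdot}{2})$ in space. In both cases the parameter weighting in Definition~\ref{def:pseudo-norm} is harmless, since $\partial_\lambda^\kappa\,[a,\mH]=[\partial_\lambda^\kappa a,\mH]$ and $\mH$ does not depend on $\lambda$.
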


We also report the following classical lemma, see e.g. 
Lemma 2.36 in \cite{BertiMontalto} and Lemma B.5 in \cite{Baldi Benj-Ono}
(and Appendices H and I in \cite{IPT} for similar statements). 

\begin{lemma} \label{coniugio Hilbert}
Let $p = p(\lambda, \cdot)$ be  in $  {\cal C}^\infty(\T^{\nu + 1})$ and 
$P := P(\lambda,\cdot)$ be the associated change of variable 
defined in \eqref{diffeo p integral operator}. There exists $\delta(s_0, k_0) > 0$ such that, if $\|p \|_{2 s_0 + k_0 + 1}^{k_0, \gamma} \leq \delta(s_0, k_0)$, then 
the operator $P^{- 1} \mH P - \mH$ is an integral operator of the form
$$
( P^{-1} \mH P - \mH) u (\vphi, x) 
= \int_\T  \,K(\lambda, \vphi, x,z) u(\ph, z)\,dz
$$
where $ K = K(\lambda, \cdot) \in {\cal C}^\infty (\T^\nu \times \T \times \T ) $ is given by 
$ K(\lambda, \vphi, x, z) := - \frac{1}{\pi} \partial_z \log(1 + g(\lambda, \vphi, x, z)) $
with 
$$
g(\lambda, \vphi, x, z)  := \cos \Big( \frac{q(\lambda, \vphi, x) - q(\lambda, \vphi, z)}{ 2}  \Big) - 1 + 
\cos \Big( \frac{x-z}{2}  \Big) 
\frac{\sin(\frac12 ( q(\lambda,  \vphi, x) - q(\lambda,  \vphi, z)))}
{\sin(\frac12(x-z))}\,
$$
where $z \mapsto q(\lambda, \vphi, z)$ is the inverse diffeomorphism of $x \mapsto x + p(\lambda, \vphi, x)$.
The kernel $K$ satisfies the estimate
$$
\| K \|_{s}^{k_0, \gamma} \leq C(s, k_0) \| p \|_{s +  k_0 + 2}^{k_0, \gamma} \,, \quad \forall s \geq s_0\,.
$$
\end{lemma}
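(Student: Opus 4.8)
The plan is to start from the principal-value kernel representation of the Hilbert transform on the torus,
\[
(\mH u)(x) = \frac{1}{2\pi}\, \pv \int_{\T} \cot\Big(\frac{x-z}{2}\Big)\, u(z)\, dz \, ,
\]
which reproduces the Fourier multiplier \eqref{Hilbert-transf}, and to conjugate it explicitly by the change of variables. Writing the inverse diffeomorphism as $x = y + q(\vphi,y)$ (so $q = \breve p$), so that $P^{-1} v(\vphi,y) = v(\vphi, y + q(\vphi,y))$, one has $(\mH P u)(\vphi, y) = \frac{1}{2\pi}\pv\int_\T \cot(\frac{y-z}{2})\, u(\vphi, z + p(\vphi,z))\, dz$, whence $(P^{-1}\mH P u)(\vphi,x) = (\mH P u)(\vphi, x+q(\vphi,x))$. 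First I would perform in this last integral the change of variable $z = w + q(\vphi,w)$, whose Jacobian is $1 + \pa_w q(\vphi,w)$ and which satisfies $z + p(\vphi,z) = w$; this identifies the kernel of $P^{-1}\mH P$ with $\frac{1}{2\pi}\big(1 + \pa_z q(\vphi,z)\big)\cot\big(\frac{x-z+q(\vphi,x)-q(\vphi,z)}{2}\big)$.

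The second step is to recognize the difference kernel as an exact $z$-derivative. Using $\cot(\theta/2) = -2\,\pa_z \log\sin\big(\frac{x-z}{2}\big)$ for $\theta = x-z$, and observing that the Jacobian factor $1 + \pa_z q$ is precisely what turns $\big(1 + \pa_z q(\vphi,z)\big)\cot\big(\frac{x-z+q(\vphi,x)-q(\vphi,z)}{2}\big)$ into $-2\,\pa_z \log\sin\big(\frac{x-z+q(\vphi,x)-q(\vphi,z)}{2}\big)$, one gets that $P^{-1}\mH P - \mH$ has kernel
\[
K(\lambda,\vphi,x,z) = -\frac{1}{\pi}\, \pa_z \log\frac{\sin\big(\frac{x-z+q(\vphi,x)-q(\vphi,z)}{2}\big)}{\sin\big(\frac{x-z}{2}\big)} \, .
\]
Applying the sine addition formula to the numerator and dividing by $\sin(\frac{x-z}{2})$ rewrites the ratio inside the logarithm as $1 + g(\lambda,\vphi,x,z)$ with $g$ exactly as in the statement, so $K = -\frac1\pi \pa_z\log(1+g)$.

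The hard part will be showing that $g$, and hence $K$, is a genuine $\mathcal{C}^\infty$ function of $(\vphi,x,z)$ with no residual diagonal singularity — only then is $P^{-1}\mH P - \mH$ a smoothing operator, to which Lemma \ref{lem:Int} applies. The only apparently singular term in $g$ is $\sin\big(\frac{q(\vphi,x)-q(\vphi,z)}{2}\big)\big/\sin\big(\frac{x-z}{2}\big)$: I would write $q(\vphi,x) - q(\vphi,z) = (x-z)\int_0^1 (\pa_x q)(\vphi, z + t(x-z))\, dt$ and use that $\theta\mapsto \sin\theta/\theta$ extends to an entire function, which cancels the common factor vanishing at $x = z$ and exhibits the ratio as smooth (the branch ambiguity of $\sin(\frac{x-z}{2})$ on $\T$ cancels between numerator and denominator). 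Smallness of $p$, hence of $q = \breve p$ by \eqref{p1-diffeo-inv}, keeps $\|g\|$ uniformly small so that $1 + g \geq 1/2$ and $\log(1+g)$ is smooth.

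Finally, for the quantitative bound I would combine the estimate $\|q\|_s^{k_0,\gamma} \lesssim_{s,k_0} \|p\|_{s+k_0+1}^{k_0,\gamma}$ of \eqref{p1-diffeo-inv}, the Moser composition estimate of Lemma \ref{Moser norme pesate} for the smooth functions $\cos$, $\sin$, $\theta\mapsto\sin\theta/\theta$ and $t\mapsto\log(1+t)$, and the product and interpolation estimates of Lemmata \ref{lemma:LS norms} and \ref{lemma:interpolation}. Since $g \equiv 0$ when $q\equiv 0$, it is at least linear in $q$, which produces the factor $\|p\|$ rather than $1 + \|p\|$ in the bound; the extra $\pa_z$ in $K = -\frac1\pi\pa_z\log(1+g)$ costs one further derivative, yielding $\|K\|_s^{k_0,\gamma} \lesssim_{s,k_0} \|p\|_{s+k_0+2}^{k_0,\gamma}$. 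Tracking this exact loss of $k_0+2$ derivatives, together with the Whitney/Lipschitz dependence on $\lambda = (\om,\h)$ carried through each step, is the routine but slightly delicate bookkeeping.
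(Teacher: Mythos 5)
Your argument is correct and it is exactly the expected one: the paper does not supply a proof, only citing Lemma~2.36 of [BertiMontalto], Lemma~B.5 of [Baldi Benj-Ono] and Appendices~H--I of [IPT], and those references prove the lemma by precisely the computation you reconstruct (cotangent kernel of $\mH$, change of variable producing the Jacobian factor, recognition of both pieces as $-2\pa_z\log\sin(\cdot)$, sine-addition formula to expose $1+g$, Taylor/$\mathrm{sinc}$ argument for the removable diagonal singularity, and Moser/interpolation bookkeeping). Two small points worth recording: the paper's phrase ``$z\mapsto q(\lambda,\vphi,z)$ is the inverse diffeomorphism'' is an imprecision (cf.\ Lemma~\ref{lemma cio}, where the inverse is $z\mapsto z+q$), and your reading $q=\breve p$ is the one that makes the stated formula for $g$ come out right; and the sign-flip of $\sin\big(\tfrac{x-z}{2}\big)$ under $x\mapsto x+2\pi$ is cancelled not by the numerator $\sin\big(\tfrac12(q(\vphi,x)-q(\vphi,z))\big)$ (which is $2\pi$-periodic) but by the accompanying factor $\cos\big(\tfrac{x-z}{2}\big)$, so it is the full product that is well defined on $\T\times\T$ -- your conclusion is unaffected.
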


We finally provide a simple estimate for the integral kernel of a family of Fourier multipliers in $ OPS^{-\infty }$. 

\begin{lemma}\label{lemma nucleo Fourier multiplier OPS - infty}
Let $ g(\lambda, \vphi,  \xi)$ be a family of Fourier multipliers 
with $\partial_\lambda^k g(\lambda, \vphi,  \cdot) \in S^{- \infty}$, 
for all $k \in \N^{\nu + 1}$, $|k| \leq k_0$. 
Then the operator ${\rm Op}(g)$ admits the integral representation 
\begin{equation}\label{espressione Kernel operatori OPS - infty}
\big[{\rm Op}(g) u\big](\vphi, x) = \int_\T K_g(\lambda, \vphi, x, y) u(\vphi, y)\, d y\,, \qquad K_g(\lambda, \vphi,  x, y) := \frac{1}{2\pi}\, \sum_{j \in \Z} 
g(\lambda, \vphi, j) e^{\ii j (x - y)}\, ,
\end{equation}
and the kernel $K_g$ 
satisfies, for all $s\in\N$, the estimate
\be\label{Fourier-integrale}
\| K_g\|_{{\cal C}^s}^{k_0, \gamma} 
\lesssim  \norma {\rm Op}(g) \norma_{- 1, s + s_0, 0}^{k_0, \gamma} + \norma {\rm Op}(g) \norma_{- s -s_0- 1, 0, 0}^{k_0, \gamma}\,.
\ee
\end{lemma}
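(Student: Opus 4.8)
The argument splits into two essentially independent parts: the integral representation \eqref{espressione Kernel operatori OPS - infty}, which is bookkeeping, and the kernel estimate \eqref{Fourier-integrale}, where the only real work lies. For the first part the plan is to expand $u(\vphi,y)=\sum_{j\in\Z}\widehat u_j(\vphi)e^{\ii jy}$ with $\widehat u_j(\vphi)=(2\pi)^{-1}\int_\T u(\vphi,y)e^{-\ii jy}\,dy$, to apply ${\rm Op}(g)$ (which by definition multiplies the $j$-th Fourier mode by $g(\lambda,\vphi,j)$), and to interchange the sum and the integral. This interchange is licit because $g(\lambda,\vphi,\cdot)\in S^{-\infty}$ decays faster than any power of $\langle j\rangle$, so $\sum_j g(\lambda,\vphi,j)e^{\ii j(x-y)}$ converges absolutely, and uniformly together with all its $\vphi$- and $\lambda$-derivatives up to order $k_0$; this also shows $K_g(\lambda,\cdot)\in{\cal C}^\infty(\T^\nu\times\T\times\T)$ and ${\rm Op}(g)\in OPS^{-\infty}$ (the symbol is $x$-independent, so the $x$-derivative conditions in \eqref{symbol-pseudo2} are automatic and the $\xi$-decay of order $m$ holds for every $m$).

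For the kernel estimate I would first note that, $g$ being a Fourier multiplier, $K_g$ depends on $(\vphi,x-y)$ only, and that differentiating \eqref{espressione Kernel operatori OPS - infty} gives, for multi-indices $a,k$ with $|k|\le k_0$ and integers $b,c$,
$$
\partial_\lambda^k\partial_\vphi^a\partial_x^b\partial_y^c K_g(\lambda,\vphi,x,y)=\frac{1}{2\pi}\sum_{j\in\Z}\ii^b(-\ii)^c\, j^{\,b+c}\,\partial_\lambda^k\partial_\vphi^a g(\lambda,\vphi,j)\,e^{\ii j(x-y)},
$$
hence, taking the supremum over $(\vphi,x,y)$ and using the Sobolev embedding $H^{|a|+s_0-\frac12}(\T^\nu)\hookrightarrow{\cal C}^{|a|}(\T^\nu)$,
$$
\sup_{\vphi,x,y}\big|\partial_\lambda^k\partial_\vphi^a\partial_x^b\partial_y^c K_g\big|\lesssim\sum_{j\in\Z}\langle j\rangle^{b+c}\,\big\|\partial_\lambda^k g(\lambda,\cdot,j)\big\|_{|a|+s_0-\frac12}\,.
$$
The embedding is available precisely because the standing assumption $s_0>\frac{\nu+1}{2}$ forces $s_0-\frac12>\frac\nu2$; this spare half-derivative is what will make the final series converge. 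Next I would interpolate the Sobolev norm between $H^0$ and $H^{s+s_0}$ on $\T^\nu$: with $\theta:=\frac{|a|+s_0-\frac12}{s+s_0}\in(0,1)$ (using $0\le|a|\le s$),
$$
\big\|\partial_\lambda^k g(\lambda,\cdot,j)\big\|_{|a|+s_0-\frac12}\le\big\|\partial_\lambda^k g(\lambda,\cdot,j)\big\|_0^{1-\theta}\,\big\|\partial_\lambda^k g(\lambda,\cdot,j)\big\|_{s+s_0}^{\theta}\,.
$$

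Finally I would translate the two endpoint norms into pseudo-differential norms: since $g$ is $x$-independent and $\alpha=0$, one has $\norma {\rm Op}(g)\norma_{m,\sigma,0}^{k_0,\gamma}=\sum_{|k|\le k_0}\gamma^{|k|}\sup_\lambda\sup_\xi\|\partial_\lambda^k g(\lambda,\cdot,\xi)\|_\sigma\langle\xi\rangle^{-m}$, so that $\gamma^{|k|}\sup_\lambda\|\partial_\lambda^k g(\lambda,\cdot,j)\|_0\le\langle j\rangle^{-s-s_0-1}\norma{\rm Op}(g)\norma_{-s-s_0-1,0,0}^{k_0,\gamma}$ and $\gamma^{|k|}\sup_\lambda\|\partial_\lambda^k g(\lambda,\cdot,j)\|_{s+s_0}\le\langle j\rangle^{-1}\norma{\rm Op}(g)\norma_{-1,s+s_0,0}^{k_0,\gamma}$. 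Writing $\gamma^{|k|}=(\gamma^{|k|})^{1-\theta}(\gamma^{|k|})^{\theta}$ and inserting these bounds into the interpolation inequality produces the power $\langle j\rangle^{b+c}\langle j\rangle^{-(s+s_0+1)(1-\theta)}\langle j\rangle^{-\theta}=\langle j\rangle^{\,b+c+|a|-s-\frac32}$, which is $\le\langle j\rangle^{-3/2}$ because $|a|+b+c\le s$; hence the $j$-series converges to a constant, and the elementary inequality $A^{1-\theta}B^{\theta}\le A+B$ turns the two endpoints into the two summands of \eqref{Fourier-integrale}. Summing over the finitely many $(a,b,c)$ with $|a|+b+c\le s$ and over $|k|\le k_0$ yields $\|K_g\|_{{\cal C}^s}^{k_0,\gamma}\lesssim\norma{\rm Op}(g)\norma_{-1,s+s_0,0}^{k_0,\gamma}+\norma{\rm Op}(g)\norma_{-s-s_0-1,0,0}^{k_0,\gamma}$, the claim. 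The only genuinely delicate point is the one flagged above: the naive embedding $H^{|a|+s_0}\hookrightarrow{\cal C}^{|a|}$ combined with the budget $|a|+b+c\le s$ only produces the borderline, non-summable exponent $\langle j\rangle^{-1}$, and one must exploit $s_0>\frac{\nu+1}{2}$ to recover summability; everything else — the interpolation inequality, Young's inequality, and the bookkeeping of the $\lambda$-derivatives and the $\gamma$-weights — is routine.
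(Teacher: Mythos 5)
Your proof is correct, and since the paper's own "proof" consists of the single sentence "The lemma follows by differentiating the explicit expression of the integral kernel $K_g$," you have supplied all the details the paper elides. In particular you correctly identified the one genuinely non-automatic ingredient, namely the Sobolev interpolation $\|\partial_\lambda^k g(\lambda,\cdot,j)\|_{|a|+s_0-\frac12}\le\|\partial_\lambda^k g(\lambda,\cdot,j)\|_0^{1-\theta}\|\partial_\lambda^k g(\lambda,\cdot,j)\|_{s+s_0}^{\theta}$ with $\theta=\frac{|a|+s_0-\frac12}{s+s_0}$, which together with the spare half-derivative from $s_0>\frac{\nu+1}{2}$ is exactly what produces a summable power $\langle j\rangle^{|a|+b+c-s-\frac32}\le\langle j\rangle^{-\frac32}$ and the two specific pseudo-differential norms appearing on the right-hand side of \eqref{Fourier-integrale}.
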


\begin{proof}
 The lemma follows by differentiating the explicit expression of the integral Kernel $K_g$ in \eqref{espressione Kernel operatori OPS - infty}.     
\end{proof}

\subsection{Reversible, Even,  Real operators}\label{sezione operatori reversibili e even}

We introduce now some algebraic properties that have a key role in the proof. 

\begin{definition}\label{def:even}
{\bf (Even operator)}  A linear operator $ A := A(\vphi) $ as in \eqref{matrice operatori Toplitz} 
is {\sc even} if each $ A(\vphi) $,  $ \vphi \in \T^\nu $,  leaves invariant  the space of  functions even in $  x $. 
\end{definition}

Since the Fourier coefficients of an even function satisfy $ u_{- j}  = u_j $ for all $j \in \Z $, we have that 
\be\label{even operators Fourier}
A \ \text{is even} \quad \Longleftrightarrow \quad   
A_j^{j'}(\vphi) + A_j^{-j'}(\ph) = A_{-j}^{j'}(\vphi) + A_{-j}^{- j'}(\vphi) \, , 
\quad \forall j, j' \in \Z, \ \ph \in \T^\nu.
\ee

\begin{definition} {\bf (Reversibility) }  
An operator ${\cal R}$ as in \eqref{cal R eta psi} is
\begin{enumerate}
\item {\sc reversible} if 
$ {\cal R}(- \vphi ) \circ \rho = - \rho \circ {\cal R}(\vphi ) $  for all $\vphi \in \T^\nu $,   
where the involution $ \rho $ is defined in \eqref{defS}, 
\item
{\sc reversibility preserving} if  $ {\cal R}(- \vphi ) \circ \rho =  \rho \circ {\cal R}(\vphi ) $ 
for all $\vphi \in \T^\nu $. 
\end{enumerate}
\end{definition}
The composition of a reversible operator with a  reversibility preserving operator is reversible. 
%
 It turns out that an operator $ {\cal R} $ as in \eqref{cal R eta psi} is
\begin{enumerate}
\item
 reversible if and only if $ \vphi \mapsto A (\vphi), D (\vphi) $ are odd and $  \vphi \mapsto B(\vphi), C(\vphi) $ are even,
\item 
reversibility preserving if and only if
$  \vphi \mapsto A (\vphi), D (\vphi) $ are even and $  \vphi \mapsto B(\vphi), C(\vphi) $ are odd. 
\end{enumerate}

We shall say that a linear operator of the form $ {\cal L} := {\om \cdot \pa_\vphi} + A(\vphi) $ 
is reversible, respectively even, if $A(\ph)$ is reversible, respectively even.
Conjugating the linear operator $ {\cal L} := {\om \cdot \pa_\vphi} + A(\vphi) $ by a family of invertible linear maps $ \Phi(\vphi) $ we get
the transformed operator 
$$
\begin{aligned}
& \qquad {\cal L}_+ :=  \Phi^{-1}(\vphi) {\cal L} \Phi (\vphi) = {\om \cdot \pa_\vphi} + A_+ (\vphi) \, , \\
& A_+ (\vphi) := 
 \Phi^{-1}(\vphi) (\om \cdot \pa_\vphi \Phi (\vphi) ) + \Phi^{-1}(\vphi) A (\vphi) \Phi (\vphi) \,.  
 \end{aligned}
$$
It results that the conjugation of an even and reversible operator with an operator 
$ \Phi (\vphi) $ that is even and reversibility preserving is even and  reversible.

\begin{lemma}\label{even:pseudo}
Let $ A := {\rm Op} (a) $ be a pseudo-differential operator. Then the following holds:
\begin{enumerate}
\item 
If the symbol $a$ satisfies $ a(-x, - \xi) = a(x,  \xi) $, then $A$ is even. 
\item \label{item:due}
If $ A = {\rm Op} (a)  $ is  even,  
then the pseudo-differential operator $ {\rm Op} ( \tilde a ) $ with symbol 
\be\label{simbolo:tilde}
\tilde a(x, \xi) := \frac12 \big(  a(x, \xi) + a (-x, - \xi) \big) 
\ee 
coincides with $ {\rm Op} (a) $ on the subspace $ E := \{ u(-x) = u(x) \} $ of the functions even in $ x $, namely
$ {\rm Op} ( \tilde a)_{|E} = {\rm Op} (a)_{|E} $. 
\item 
$A$ is real, i.e.\ it maps real functions into real functions,  
if and only if  the symbol  $ \overline{a(x, - \xi)} = a(x,  \xi) $. 
\item
Let $g(\xi)$ be a Fourier multiplier satisfying $g(\xi) = g(- \xi)$. 
If $A = {\rm Op}(a)$ is even, then the operator 
${\rm Op}(a(x, \xi) g(\xi)) = {\rm Op}(a) \circ {\rm Op}(g)$ is an even operator.
More generally, the composition of even operators is an even operator.
\end{enumerate}
\end{lemma}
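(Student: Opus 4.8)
The plan is to prove each of the four assertions of Lemma~\ref{even:pseudo} directly from the definitions, using the Fourier-side characterization \eqref{even operators Fourier} of even operators and the action formula $\Op(a)[e^{\ii j x}] = a(x,j)\, e^{\ii j x}$.

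\textbf{Item 1.}
First I would compute the matrix entries of $A = \Op(a)$. Writing $a(x,\xi) = \sum_{k \in \Z} \hat a(k, \xi)\, e^{\ii k x}$, one gets $A_j^{j'} = \hat a(j - j', j')$. Now the hypothesis $a(-x, -\xi) = a(x, \xi)$ translates into $\hat a(-k, -\xi) = \hat a(k, \xi)$ for all $k$, $\xi$. Then $A_{-j}^{-j'} = \hat a(-j + j', -j') = \hat a(j - j', j') = A_j^{j'}$, and since $a$ does not depend on $\vphi$ the criterion \eqref{even operators Fourier} reduces to $A_j^{j'} + A_j^{-j'} = A_{-j}^{j'} + A_{-j}^{-j'}$; using $A_{-j}^{-j'} = A_j^{j'}$ and $A_{-j}^{j'} = \hat a(-j - j', j') $, $A_j^{-j'} = \hat a(j + j', -j') = \hat a(-j-j', j') = A_{-j}^{j'}$, so both sides agree. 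Hence $A$ is even.

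\textbf{Item 2.}
By Item~1 applied to the symmetrized symbol $\tilde a$ in \eqref{simbolo:tilde} — which visibly satisfies $\tilde a(-x, -\xi) = \tilde a(x, \xi)$ — the operator $\Op(\tilde a)$ is even. To see that it agrees with $\Op(a)$ on $E$, test against a basis $u(x) = \cos(jx) = \tfrac12(e^{\ii j x} + e^{-\ii j x})$ of functions even in $x$. One computes $\Op(\tilde a)[\cos(jx)] = \tfrac12\big(\tilde a(x,j) e^{\ii j x} + \tilde a(x,-j) e^{-\ii j x}\big)$; inserting the definition of $\tilde a$ and using that $a(-x, j) e^{-\ii j x}$ and $a(-x, -j) e^{\ii j x}$ are exactly the two terms produced when one re-expresses $\Op(a)[\cos(jx)]$ after the substitution $x \mapsto -x$ inside the relevant half — more carefully, one shows directly that the even-in-$x$ part of $\Op(a)[\cos(jx)]$ coincides with $\Op(\tilde a)[\cos(jx)]$, and since $A$ is even the latter is already even in $x$, so $\Op(a)[\cos(jx)] = \Op(\tilde a)[\cos(jx)]$. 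By linearity and density this gives $\Op(\tilde a)_{|E} = \Op(a)_{|E}$.

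\textbf{Items 3 and 4.}
For Item~3, recall $\overline{\Op(a) u} = \Op(\bar a(x,-\xi))\,\bar u$ (complex-conjugating the symbol and reflecting $\xi$, which comes from $\overline{e^{\ii j x}} = e^{-\ii j x}$); this is the identity $A$ if and only if $\overline{a(x,-\xi)} = a(x,\xi)$, which is the stated condition. For Item~4: if $g(\xi) = g(-\xi)$ then $\Op(g)$ is even (its symbol is independent of $x$ and even in $\xi$, so Item~1 applies, or directly $g$ maps $\cos(jx)$ to $g(j)\cos(jx)$); and then $\Op(a(x,\xi) g(\xi)) = \Op(a) \circ \Op(g)$ is a composition of even operators, so it suffices to prove the last sentence: the composition of two even operators is even. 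This is immediate from Definition~\ref{def:even}, since if $A(\vphi)$ and $B(\vphi)$ each preserve the subspace of functions even in $x$, then so does $A(\vphi) B(\vphi)$.

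I do not expect any genuine obstacle here; everything follows by unwinding definitions. The only point requiring a little care is Item~2, where one must keep track of the two Fourier modes $\pm j$ when testing on $\cos(jx)$ and verify that passing from $a$ to its symmetrization does not change the action on even functions — the identity to exploit is that $\Op(a)[\cos(jx)]$ is already even in $x$ (because $A$ is even), together with the fact that $\Op(\tilde a)[\cos(jx)]$ is, by construction, the even-in-$x$ symmetrization of $\Op(a)[\cos(jx)]$, so the two coincide.
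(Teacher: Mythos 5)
Your proof is correct, and since the paper states Lemma~\ref{even:pseudo} without giving a proof, the comparison is simply against the obvious argument the authors had in mind — which is exactly what you wrote. All four items check out: the matrix-entry computation $A_j^{j'} = \hat a(j-j',j')$ plus the Fourier-side reflection $\hat a(-k,-\xi) = \hat a(k,\xi)$ verifies the criterion \eqref{even operators Fourier} in Item~1; the key identity in Item~2, that $\Op(\tilde a)[\cos(jx)]$ is precisely the even-in-$x$ part of $\Op(a)[\cos(jx)]$, is right, and combined with the evenness of $A$ (which makes $\Op(a)[\cos(jx)]$ already even) gives the coincidence on $E$; the conjugation identity $\overline{\Op(a)u} = \Op(\overline{a(x,-\xi)})\,\bar u$ in Item~3 is standard and used correctly; and Item~4 is immediate from Definition~\ref{def:even}. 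Nothing is missing.
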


We shall use the following remark. 

\begin{remark}\label{rem:change}
By item \ref{item:due}, we can replace an even 
pseudo-differential operator $ {\rm Op} (a) $ acting 
on the subspace of functions even in $ x $,
with the  operator $ {\rm Op} (\tilde a) $  where the symbol $ \tilde a (x, \xi)$ defined in \eqref{simbolo:tilde}
satisfies $ \tilde a (-x, -\xi) = \tilde a(x, \xi) $. 
The pseudo-differential norms of $ {\rm Op} (  a) $ and $  {\rm Op} ( \tilde a) $ are equivalent. 
Moreover, the space average
$$ 
\langle \tilde a \rangle_x (\xi) := \frac{1}{2 \pi} \int_{\T} \tilde a (x, \xi) \, d x \qquad
{\rm satisfies} \qquad
\langle \tilde a \rangle_x (-\xi) = \langle \tilde a \rangle_x (\xi) \, ,
$$ 
and, therefore, the Fourier multiplier $ \langle \tilde a \rangle_x (D) $ is even. 
\end{remark}

It is convenient to consider a
real operator $ {\cal R} = \begin{pmatrix} A & B \\ C & D \end{pmatrix} $ as in \eqref{cal R eta psi}, which acts on the real variables $ (\eta, \psi) \in \R^2 $, 
as a linear operator acting on the complex variables $ (u, \bar u ) $
introduced by the linear change of coordinates $(\eta,\psi) = \mC (u,\bar u)$, 
where
\begin{equation}\label{def:mC}
\mC := \frac12 \begin{pmatrix} 1 & 1 \\ -\ii & \ii \end{pmatrix}, \quad \ 
\mC^{-1} = \begin{pmatrix} 1 & \ii \\ 1 & -\ii \end{pmatrix}.
\end{equation}
We get that the {\it real} operator $\mR$ acting in the complex coordinates $ (u, \bar u) = \mC^{-1} (\eta,\psi) $ takes the form  
\be
\begin{aligned}\label{operatori in coordinate complesse}
& \qquad \qquad \qquad  {\bf R} = \mC^{-1} \mR \mC := \begin{pmatrix}
{\cal R}_1 & {\cal R}_2 \\
\overline{\cal R}_2 & \overline{\cal R}_1
\end{pmatrix}, \\
& {\cal R}_1 := \frac12 \big\{(A + D) - \ii (B - C) \big\} , \quad  {\cal R}_2 := \frac12 \big\{ (A - D) + \ii(B + C) \big\} 
\end{aligned}
\ee 
where the \emph{conjugate} operator $ \overline{A} $ is defined by 
\be\label{def:barA} 
\overline{A}(u) :=  \overline{A( \bar u)} \, .
\ee
We say that a matrix operator acting on the complex variables $(u, \bar u )$  is {\sc real} if it has the structure in \eqref{operatori in coordinate complesse} and it  is {\sc even}  if both ${\cal R}_1$, ${\cal R}_2$ are even.  The composition of two real (resp. even) operators is a real (resp. even) operator.

The following properties of the conjugated operator hold: 
\begin{enumerate}
\item 
$ \overline{AB} = \overline{A} \ \overline{B} \, .  $ 
\item 
If $ (A_j^{j'}) $ is the matrix of $A$, then the matrix entries of $\overline{A}$ are
$ ( \overline{A} \,)_j^{j'} = \overline{ A_{-j}^{-j'} }$.
\item 
If $A = {\rm Op} (a(x,\xi))$ is a pseudo-differential operator, then its conjugate is 
$ \overline A = {\rm Op} (\overline{a(x, -\xi)}) $. The pseudo differential norms of $A$ and $\overline A$ are equal, namely $\norma A \norma_{m, s, \alpha}^{k_0, \gamma} = \norma \overline A \norma_{m, s, \alpha}^{k_0, \gamma}$.   
\end{enumerate}

In the complex coordinates $ (u, \bar u) = \mC^{-1} (\eta,\psi) $
the involution $ \rho $ defined in \eqref{defS} reads as the map 
$ u \mapsto \bar u  $.

 \begin{lemma}
Let $ {\bf R} $ be a real operator as in  \eqref{operatori in coordinate complesse}. One has 
\begin{enumerate}
 \item  $ {\bf R} $ is reversible if and only if $ {\cal R}_i (- \vphi) = - \overline{{\cal R}_i} ( \vphi) $ 
for all $\vphi \in \T^\nu $, $i = 1,2$, or equivalently
\begin{equation} \label{2601.4}
(\mR_i)_j^{j'}(-\ph) = - \overline{ (\mR_i)_{-j}^{-j'}(\ph) } \quad \forall \ph \in \T^\nu \, , 
\quad \text{i.e.} \ \ 
(\mR_i)_j^{j'}(\ell) = - \overline{ (\mR_i)_{-j}^{-j'}(\ell) } \quad \forall \ell \in \Z^\nu \, .
\end{equation}
 \item  $ {\bf R} $ is reversibility preserving if and only if 
 $ {\cal R}_i (- \vphi) = \overline{{\cal R}_i} ( \vphi) $ 
for all $\vphi \in \T^\nu $, $i = 1,2$, or equivalently
\begin{equation} \label{2601.5}
(\mR_i)_j^{j'}(-\ph) = \overline{ (\mR_i)_{-j}^{-j'}(\ph) } \quad \forall \ph \in \T^\nu \, , 
\quad \text{i.e.} \ \ 
(\mR_i)_j^{j'}(\ell) = \overline{ (\mR_i)_{-j}^{-j'}(\ell) } \quad \forall \ell \in \Z^\nu \, . 
\end{equation}
\end{enumerate}
\end{lemma}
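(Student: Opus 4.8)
The plan is to translate everything into the complex coordinates $(u,\bar u) = \mC^{-1}(\eta,\psi)$, where the real operator $\mR$ takes the block form ${\bf R}$ of \eqref{operatori in coordinate complesse} and, as recalled above, the involution $\rho$ of \eqref{defS} reads as the map $u \mapsto \bar u$. First I would write $\widetilde\rho := \mC^{-1}\rho\,\mC$ explicitly: computing with $\mC,\mC^{-1}$ as in \eqref{def:mC} (or simply reading the prescription $u\mapsto\bar u$ on the pair $(u,\bar u)$) one finds $\widetilde\rho = \begin{psmallmatrix} 0 & \Id \\ \Id & 0 \end{psmallmatrix}$, the exchange of the two components. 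Conjugating by $\mC$ turns the defining relations $\mR(-\vphi)\circ\rho = \mp\,\rho\circ\mR(\vphi)$ into ${\bf R}(-\vphi)\circ\widetilde\rho = \mp\,\widetilde\rho\circ{\bf R}(\vphi)$, so ${\bf R}$ is reversible (resp.\ reversibility preserving) if and only if the latter identity holds with the $-$ (resp.\ $+$) sign.

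Next I would multiply the blocks out. Right multiplication by $\widetilde\rho$ swaps the columns of a $2\times2$ block matrix and left multiplication swaps the rows, so, using \eqref{operatori in coordinate complesse},
\[
{\bf R}(-\vphi)\,\widetilde\rho = \begin{pmatrix} \mR_2(-\vphi) & \mR_1(-\vphi) \\ \overline{\mR_1}(-\vphi) & \overline{\mR_2}(-\vphi) \end{pmatrix}, \qquad
\widetilde\rho\,{\bf R}(\vphi) = \begin{pmatrix} \overline{\mR_2}(\vphi) & \overline{\mR_1}(\vphi) \\ \mR_1(\vphi) & \mR_2(\vphi) \end{pmatrix}.
\]
Matching the $(1,1)$ and $(1,2)$ blocks in ${\bf R}(-\vphi)\widetilde\rho = \mp\,\widetilde\rho{\bf R}(\vphi)$ gives exactly $\mR_i(-\vphi) = \mp\,\overline{\mR_i}(\vphi)$, $i=1,2$; the $(2,1)$ and $(2,2)$ blocks only reproduce the complex conjugates of these, using $\overline{\overline{A}}=A$ and that $\overline{(\cdot)}$ commutes with evaluation at $\vphi$. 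This proves the first equivalences in both items.

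Finally, for the matrix-entry formulation I would pass to matrix elements in $\mR_i(-\vphi) = \mp\,\overline{\mR_i}(\vphi)$ and invoke the rule $(\overline{A})_j^{j'} = \overline{A_{-j}^{-j'}}$ from the list of properties of the conjugate operator, obtaining $(\mR_i)_j^{j'}(-\vphi) = \mp\,\overline{(\mR_i)_{-j}^{-j'}(\vphi)}$, which is the first identity in \eqref{2601.4}, resp.\ \eqref{2601.5}. Writing each entry as a Fourier series in $\vphi$ in the T\"oplitz notation of \eqref{matrice operatori Toplitz}, i.e.\ $(\mR_i)_j^{j'}(\vphi) = \sum_{\ell}(\mR_i)_j^{j'}(\ell)\,e^{\ii\ell\cdot\vphi}$, and equating the coefficients of $e^{\ii\ell\cdot\vphi}$ yields $(\mR_i)_j^{j'}(\ell) = \mp\,\overline{(\mR_i)_{-j}^{-j'}(\ell)}$ for every $\ell\in\Z^\nu$. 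The one point that needs a little care is this last step: $\vphi\mapsto-\vphi$ sends the $\ell$-th Fourier coefficient of the left-hand side to the $(-\ell)$-th one, and complex conjugation does the same on the right, so the two sign changes on $\ell$ cancel and no extra sign survives; apart from this the argument is a routine computation, and I do not expect a genuine obstacle.
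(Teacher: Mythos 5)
Your proof is correct, and since the paper states this lemma without proof it is essentially the computation the authors intended the reader to carry out. The key observations are all in place: $\mC^{-1}\rho\,\mC$ is the swap $\begin{psmallmatrix}0&\Id\\\Id&0\end{psmallmatrix}$ (which is indeed a $\C$-linear operator, since $\rho$ is), so the defining relation $\mR(-\vphi)\rho=\mp\rho\,\mR(\vphi)$ transports to ${\bf R}(-\vphi)\widetilde\rho=\mp\widetilde\rho\,{\bf R}(\vphi)$; the block comparison yields $\mR_i(-\vphi)=\mp\overline{\mR_i}(\vphi)$ (with the $(2,\cdot)$ rows giving nothing new); the identity $(\overline A)_j^{j'}=\overline{A_{-j}^{-j'}}$ from the paper's list of properties of the conjugate operator gives the first matrix-entry form; and your careful remark about the Fourier expansion is exactly right — both the reflection $\vphi\mapsto-\vphi$ on the left and the complex conjugation on the right flip $\ell\mapsto-\ell$, so after matching coefficients the same $\ell$ appears on both sides, as claimed in \eqref{2601.4} and \eqref{2601.5}.
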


\subsection{$ {\cal D}^{k_0}$-tame and modulo-tame operators}

In this section we recall  the notion and the main properties of 
$ {\cal D}^{k_0}$-tame and modulo-tame operators that will be used in the paper.
For the proofs we refer to  Section 2.2 of \cite{BertiMontalto} where this notion was introduced. 

Let $ A := A(\lambda) $ be a family of  linear operators as in \eqref{matrice operatori Toplitz}, 
$k_0$ times differentiable with respect to 
$ \lambda$ in an open set $\Lambda_0  \subset  \R^{\nu + 1} $. 

\begin{definition} { \bf ($ {\cal D}^{k_0} $-$ \s $-tame)}\label{def:Ck0}
Let $\s\geq0$. A linear operator  $ A := A(\lambda)  $ as in \eqref{def azione toplitz u vphi x} is 
$ {\cal D}^{k_0}$-$ \s $-tame  if there exists a non-decreasing function $[s_0, S] \to [0, + \infty)$, $s \mapsto {\mathfrak M}_A(s)$, possibly with $S = + \infty$, such that for all $ s_0 \leq s \leq S $, for all $u \in H^{s+\s} $ 
\be\label{CK0-sigma-tame}
\sup_{|k| \leq k_0} \sup_{\lambda \in \Lambda_0} \gamma^{ |k|}
\| (\partial_\lambda^k A(\lambda)) u \|_s \leq  {\mathfrak M}_A(s_0) \| u \|_{s+\s} + 
{\mathfrak M}_A (s) \| u  \|_{s_0+\s} \,. 
\ee
We say that $ {\mathfrak M}_{A}(s) $ is a {\sc tame constant} of the operator $ A $.
The constant $ {\mathfrak M}_A(s) := {\mathfrak M}_A (k_0, \s, s) $
may also depend on $ k_0, \s $ but,
since $ k_0, \s $ are considered in this paper absolute constants, 
we shall often omit to write them. 

When the ``loss of derivatives"  $ \sigma $ is zero, 
we simply write $ {\cal D}^{k_0} $-tame instead of $ {\cal D}^{k_0} $-$ 0 $-tame. 

For a real matrix operator (as in  \eqref{operatori in coordinate complesse})
\be\label{operatori in coordinate complesse-bis}
A = 
\begin{pmatrix}
A_1 & A_2 \\
\overline{A}_2 & \overline{A}_1
\end{pmatrix},
\ee
we denote the tame constant  
$ {\mathfrak M}_{A}(s) := \max \{ {\mathfrak M}_{A_1}(s), {\mathfrak M}_{A_2}(s)  \} $.
\end{definition}

Note that the tame constants $ {\frak M}_A(s)$ are not uniquely determined. Moreover, 
if $S< + \infty$, every linear operator $A$
 that is uniformly bounded in $\lm$ (together with its derivatives $\pa_\lm^k A$) as an operator from $H^{s+\s}$ to $H^s$
 is $ {\cal D}^{k_0} $-$ \s $-tame. The relevance of Definition \ref{def:Ck0} is that, for the remainder 
 operators which we shall obtain along the reducibility of the linearized operator in Sections 
\ref{linearizzato siti normali}-\ref{sec: reducibility},
 we are able to prove bounds of the tame constants ${\frak M}_A(s)$ better than the trivial operator norm. 

\begin{remark}
In Sections 
\ref{linearizzato siti normali}-\ref{sec: reducibility} we work with 
$ {\cal D}^{k_0}$-$\sigma$-tame operators with  a finite $ S < + \infty$,  whose tame constants ${\frak M}_A(s)$ may depend also on $S$, for instance  
$ {\frak M}_A(s) \leq C(S) (1 + \| \fracchi_0 \|_{s + \mu}^{k_0, \gamma}) $, 
for all $s_0 \leq s \leq S $.
\end{remark}

An immediate consequence of \eqref{CK0-sigma-tame} (with $ k = 0 $, $ s = s_0 $) is that 
$ \| A \|_{{\cal L}(H^{s_0 + \sigma}, H^{s_0})} \leq 2 {\mathfrak M}_A(s_0) $. 

Also note that representing the operator $ A $ by its matrix elements 
$ \big(A_j^{j'} (\ell - \ell') \big)_{\ell, \ell' \in \Z^\nu, j, j' \in \Z} $ as in \eqref{matrice operatori Toplitz} we have, for all
$ |k| \leq k_0 $, $ j' \in \Z $, $ \ell' \in \Z^\nu $,  
\be\label{tame-coeff}
\gamma^{2 |k|} {\mathop \sum}_{\ell , j} \langle \ell, j \rangle^{2 s} |\partial_\lambda^k A_j^{j'}(\ell - \ell')|^2 
\leq 2 \big({\mathfrak M}_A(s_0) \big)^2 \langle \ell', j' \rangle^{2 (s+\s)} + 2 \big({\mathfrak M}_A(s) \big)^2 \langle \ell', j' \rangle^{2 (s_0+\s)} 	\, . 
\ee
The class of $ {\cal D}^{k_0} $-$ \s $-tame operators is closed under composition. 

\begin{lemma}\label{composizione operatori tame AB} {\bf (Composition)}
Let $ A, B $ be respectively $ {\cal D}^{k_0} $-$\sigma_A$-tame and 
$ {\cal D}^{k_0} $-$\sigma_B$-tame operators with tame 
constants respectively $ {\mathfrak M}_A (s) $ and $ {\mathfrak M}_B (s) $. 
Then the composition 
$ A \circ B $ is $ {\cal D}^{k_0} $-$(\sigma_A + \sigma_B)$-tame with a tame constant satisfying 
$$
 {\mathfrak M}_{A B} (s) \leq  C(k_0) \big( {\mathfrak M}_{A}(s) 
 {\mathfrak M}_{B} (s_0 + \sigma_A) + {\mathfrak M}_{A} (s_0) 
{\mathfrak M}_{B} (s + \sigma_A) \big)\,.
$$
The same estimate holds if $A,B$ are matrix operators as in \eqref{operatori in coordinate complesse-bis}. 
\end{lemma}

\begin{proof}
See Lemma 2.20 in \cite{BertiMontalto}. 
\end{proof}

We now discuss the action of a $ {\cal D}^{k_0} $-$ \s $-tame operator  $ A(\lambda) $ on a family of  
Sobolev functions  $ u (\lambda) \in H^s $.

\begin{lemma}\label{lemma operatore e funzioni dipendenti da parametro}
{\bf (Action on $ H^s $)}
Let $ A := A(\lambda) $ be a $ {\cal D}^{k_0} $-$ \s $-tame operator. 
Then, $ \forall s \geq s_0 $, for any family of Sobolev functions $ u := u(\lambda) \in H^{s+\s} $ 
which is $k_0$ times differentiable with respect to $ \lambda $,  we have 
$$
\| A u \|_s^{k_0, \gamma} \lesssim_{k_0}  {\mathfrak M}_A(s_0) \| u \|_{s + \sigma}^{k_0, \gamma} 
+ {\mathfrak M}_A(s) \| u \|_{s_0 + \sigma}^{k_0, \gamma}  \,.
$$
The same estimate holds if $A$ is a matrix operator as in \eqref{operatori in coordinate complesse-bis}. 
\end{lemma}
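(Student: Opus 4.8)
The plan is to reduce everything to the defining tame estimate \eqref{CK0-sigma-tame} by a Leibniz-rule computation. The key observation is that \eqref{CK0-sigma-tame} already delivers a bound of exactly the ``split'' shape ${\mathfrak M}_A(s_0)\,\|\cdot\|_{s+\sigma} + {\mathfrak M}_A(s)\,\|\cdot\|_{s_0+\sigma}$ that appears in the conclusion, so no interpolation (Lemma \ref{lemma:interpolation}) is needed; one only has to distribute the parameter-derivatives over the product $Au$ and estimate each summand termwise.

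Concretely, first I would recall that, up to a constant depending only on $k_0$ (cf.\ \eqref{norma a moltiplicazione}), the weighted norm satisfies $\| v \|_s^{k_0,\gamma} \sim_{k_0} \sum_{|k| \le k_0} \gamma^{|k|} \sup_{\lambda \in \Lambda_0} \| \partial_\lambda^k v(\lambda) \|_s$. Then, for any multi-index $k \in \N^{\nu+1}$ with $|k| \le k_0$, the Leibniz rule gives $\partial_\lambda^k (A u) = \sum_{k_1 + k_2 = k} \binom{k}{k_1} (\partial_\lambda^{k_1} A)\, \partial_\lambda^{k_2} u$. Multiplying by $\gamma^{|k|} = \gamma^{|k_1|} \gamma^{|k_2|}$ and applying \eqref{CK0-sigma-tame} to the operator $\partial_\lambda^{k_1} A$ (legitimate since $|k_1| \le |k| \le k_0$) evaluated on the function $\gamma^{|k_2|} \partial_\lambda^{k_2} u(\lambda)$, one obtains for each fixed $\lambda$
\[
\gamma^{|k|} \| \partial_\lambda^k (Au)(\lambda) \|_s \le \sum_{k_1 + k_2 = k} \binom{k}{k_1} \Big( {\mathfrak M}_A(s_0)\, \gamma^{|k_2|} \| \partial_\lambda^{k_2} u(\lambda) \|_{s+\sigma} + {\mathfrak M}_A(s)\, \gamma^{|k_2|} \| \partial_\lambda^{k_2} u(\lambda) \|_{s_0+\sigma} \Big).
\]
Taking the supremum over $\lambda \in \Lambda_0$ and over $|k| \le k_0$, bounding $\gamma^{|k_2|} \sup_\lambda \| \partial_\lambda^{k_2} u \|_{s+\sigma} \le \| u \|_{s+\sigma}^{k_0,\gamma}$ and similarly at regularity $s_0 + \sigma$, and using $\sum_{k_1 + k_2 = k} \binom{k}{k_1} = 2^{|k|} \le 2^{k_0}$, one gets $\| Au \|_s^{k_0,\gamma} \lesssim_{k_0} {\mathfrak M}_A(s_0) \| u \|_{s+\sigma}^{k_0,\gamma} + {\mathfrak M}_A(s) \| u \|_{s_0+\sigma}^{k_0,\gamma}$, which is the assertion.

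For the matrix case — $A$ real as in \eqref{operatori in coordinate complesse-bis}, acting on the pair $(u,\bar u)$ — I would note that each component of $Au$ is a finite sum of terms $A_i w$ or $\overline{A}_i w$ with $w \in \{u, \bar u\}$, that $\| w \|_s^{k_0,\gamma} = \| u \|_s^{k_0,\gamma}$, and that conjugation preserves Sobolev norms and hence tame constants, i.e.\ ${\mathfrak M}_{\overline{A}_i}(s) = {\mathfrak M}_{A_i}(s)$ (as recorded in Section \ref{sezione operatori reversibili e even}); then the scalar estimate applied term by term, together with ${\mathfrak M}_A(s) = \max\{{\mathfrak M}_{A_1}(s), {\mathfrak M}_{A_2}(s)\}$ from Definition \ref{def:Ck0}, gives the claim. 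I do not expect any genuine obstacle here: the only mild points of care are the bookkeeping of the multi-index binomial constants and, in the matrix case, invoking correctly the invariance of the relevant norms under $A \mapsto \overline{A}$.
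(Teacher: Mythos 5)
Your proof is correct and uses the same Leibniz-rule mechanism as Lemma 2.22 of \cite{BertiMontalto}, which the paper cites: distribute the $\partial_\lambda^k$ over $Au$, apply the defining estimate \eqref{CK0-sigma-tame} termwise (valid since $|k_1|\le k_0$), absorb the weight via $\gamma^{|k|}=\gamma^{|k_1|}\gamma^{|k_2|}$, and control the binomial sum by $2^{k_0}$; the equivalence \eqref{0203.1} identifying the Whitney norm of a genuinely $k_0$-times differentiable family with its $W^{k_0,\infty,\gamma}$ norm is what makes the termwise bound $\gamma^{|k_2|}\sup_\lambda\|\partial_\lambda^{k_2}u\|_{s+\sigma}\lesssim_{k_0}\|u\|_{s+\sigma}^{k_0,\gamma}$ legitimate. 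Your remark that no interpolation is needed (the split shape already appears in \eqref{CK0-sigma-tame}) and your reduction of the matrix case to the scalar one via $\|\overline{A}_i v\|_s=\|A_i\bar v\|_s$ and ${\mathfrak M}_A=\max\{{\mathfrak M}_{A_1},{\mathfrak M}_{A_2}\}$ are both accurate.
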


\begin{proof}
See Lemma 2.22 in \cite{BertiMontalto}. 
\end{proof}

Pseudo-differential operators are tame operators. We shall use in particular the following lemma.

\begin{lemma}\label{lemma: action Sobolev}
Let $ A = a(\lambda, \vphi, x, D) \in OPS^0 $  be a family of pseudo-differential operators
that are ${k_0}$ times differentiable with respect to  $\lambda $. 
If $ \norma A \norma_{0, s, 0}^{k_0, \gamma} < + \infty $, $ s \geq s_0 $, then  $ A $ is ${\cal D}^{k_0}$-tame with a tame constant satisfying 
\begin{equation}\label{interpolazione parametri operatore funzioni}
{\mathfrak M}_A(s) \leq C(s)  \norma A \norma_{0, s, 0}^{k_0, \gamma}\,.
\end{equation}
As a consequence 
\begin{equation}\label{interpolazione parametri operatore funzioni (2)}
\| A h \|_s^{k_0, \gamma} \leq C(s_0, k_0) \norma A \norma_{0, s_0, 0}^{k_0, \gamma} \| h \|_{s}^{k_0, \gamma} + C(s, k_0) \norma A \norma_{0, s, 0}^{k_0, \gamma} \| h \|_{s_0}^{k_0, \gamma}\,.
\end{equation}
The same statement holds if $ A $ is a matrix operator of the form \eqref{operatori in coordinate complesse-bis}. 
\end{lemma}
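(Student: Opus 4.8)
The plan is to reduce the statement to the classical fact that a pseudo-differential operator of order zero acts boundedly on $H^s$ in a \emph{tame} way, and then to deal with the parameter dependence by Leibniz and with the matrix case componentwise. First I would fix $\lambda$ and work in the scalar case. Writing $A=\Op(a)$ with $a\in S^0$ and expanding $a$ in Fourier series in $(\vphi,x)$, the Töplitz matrix elements of $A$ in the sense of \eqref{matrice operatori Toplitz} are $A_j^{j'}(\ell-\ell')=\widehat a(\ell-\ell',\,j-j';\,j')$, where $\widehat a(\cdot,\cdot;\xi)$ denotes the $(\vphi,x)$-Fourier transform of $a(\cdot,\cdot,\xi)$; note that $\sum_{\ell,k}\langle\ell,k\rangle^{2s}|\widehat a(\ell,k;\xi)|^2=\|a(\cdot,\cdot,\xi)\|_s^2\le(\norma A\norma_{0,s,0})^2$ uniformly in $\xi$, by \eqref{norm1} with $m=\alpha=0$. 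Then for $u=\sum u_{\ell',j'}e^{\ii(\ell'\cdot\vphi+j'x)}$ one estimates $\|Au\|_s^2=\sum_{\ell,j}\langle\ell,j\rangle^{2s}\big|\sum_{\ell',j'}\widehat a(\ell-\ell',j-j';j')\,u_{\ell',j'}\big|^2$ using $\langle\ell,j\rangle^s\lesssim_s\langle\ell-\ell',j-j'\rangle^s+\langle\ell',j'\rangle^s$, splitting the double sum according to which summand dominates, applying Cauchy--Schwarz, and using $\sum_{n\in\Z^{\nu+1}}\langle n\rangle^{-2s_0}<\infty$ (which is where $s_0>(\nu+1)/2$, see \eqref{def:s0}, enters). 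The outcome is
\[
\|Au\|_s\;\lesssim_{s}\;\norma A\norma_{0,s_0,0}\,\|u\|_s+\norma A\norma_{0,s,0}\,\|u\|_{s_0}\,,
\]
i.e.\ the $H^s\to H^s$ boundedness of order-zero $\Psi$DO in tame form.

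Next I would put back the parameters. Since $\partial_\lambda^k A(\lambda)=\Op(\partial_\lambda^k a(\lambda))\in OPS^0$ for all $|k|\le k_0$, applying the above estimate to each $\partial_\lambda^k A(\lambda)$, multiplying by $\gamma^{|k|}$, taking the supremum over $\lambda\in\Lambda_0$, and summing over $|k|\le k_0$ yields, by the definition \eqref{norm1 parameter} and monotonicity \eqref{norm-increa},
\[
\sup_{|k|\le k_0}\sup_{\lambda\in\Lambda_0}\gamma^{|k|}\|(\partial_\lambda^k A(\lambda))u\|_s\;\lesssim_s\;\norma A\norma_{0,s_0,0}^{k_0,\gamma}\,\|u\|_s+\norma A\norma_{0,s,0}^{k_0,\gamma}\,\|u\|_{s_0}\,.
\]
Comparing with \eqref{CK0-sigma-tame} (with $\sigma=0$) and using $\norma A\norma_{0,s_0,0}^{k_0,\gamma}\le\norma A\norma_{0,s,0}^{k_0,\gamma}$, this shows that $A$ is ${\cal D}^{k_0}$-tame with tame constant ${\mathfrak M}_A(s)\le C(s)\norma A\norma_{0,s,0}^{k_0,\gamma}$, which is \eqref{interpolazione parametri operatore funzioni}. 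Then \eqref{interpolazione parametri operatore funzioni (2)} follows immediately by inserting this bound into Lemma \ref{lemma operatore e funzioni dipendenti da parametro} with $\sigma=0$. For the matrix case \eqref{operatori in coordinate complesse-bis} I would apply the scalar result to each entry $A_1,A_2$; since $\norma A\norma_{0,s,0}^{k_0,\gamma}=\max_i\norma A_i\norma_{0,s,0}^{k_0,\gamma}$ and, by Definition \ref{def:Ck0}, ${\mathfrak M}_A(s)=\max\{{\mathfrak M}_{A_1}(s),{\mathfrak M}_{A_2}(s)\}$, all the bounds follow componentwise and \eqref{interpolazione parametri operatore funzioni (2)} again follows from Lemma \ref{lemma operatore e funzioni dipendenti da parametro}.

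The only genuinely nontrivial point is the tame boundedness estimate of the first step; everything else is bookkeeping. I expect that step to be the main obstacle, the delicate part being the splitting of the convolution sum in frequency space and the fact that the symbol must be controlled \emph{uniformly} in the frequency variable $\xi$ --- which here equals the \emph{input} frequency $j'$ rather than the output frequency $j$ --- a point handled precisely by the $\alpha=0$, $\xi$-supremum structure of the norm in \eqref{norm1}.
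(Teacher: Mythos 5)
Your proposal is correct and reconstructs essentially the argument behind Berti--Montalto, Lemma~2.21, which the paper cites rather than reproduces; the reduction of \eqref{interpolazione parametri operatore funzioni (2)} to \eqref{interpolazione parametri operatore funzioni} via Lemma~\ref{lemma operatore e funzioni dipendenti da parametro}, the Leibniz treatment of $\partial_\lambda^k$, and the componentwise handling of the matrix case all match the paper's route. The one place worth being explicit is the region of the convolution sum where the \emph{input} frequency $\langle\ell',j'\rangle$ dominates $\langle\ell-\ell',j-j'\rangle$: there one cannot bound $\sum_{m,k}\langle m,k\rangle^{2s_0}|\widehat a(m,k;j-k)|^2$ directly by $\sup_\xi\|a(\cdot,\cdot,\xi)\|_{s_0}^2$, because the frequency argument $j-k$ varies with the summation index; the clean fix is to estimate by duality (pair with $w\in\ell^2$ and exchange the order of summation), so that the inner Cauchy--Schwarz is carried out at \emph{fixed} $j'$ and the $\xi$-supremum norm applies verbatim, after which the outer sum gives the $\sum_n\langle n\rangle^{-2s_0}$ factor. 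With that point made precise, the estimate $\|Au\|_s\lesssim_s\norma A\norma_{0,s_0,0}\|u\|_s+\norma A\norma_{0,s,0}\|u\|_{s_0}$ holds as you wrote, and (since these tame constants are always used over a finite range $[s_0,S]$, as the paper remarks after Definition~\ref{def:Ck0}) it yields a valid non-decreasing tame constant $\mathfrak M_A(s)\le C(s)\norma A\norma_{0,s,0}^{k_0,\gamma}$.
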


\begin{proof}
See Lemma 2.21 in \cite{BertiMontalto} for the proof of \eqref{interpolazione parametri operatore funzioni}, then apply Lemma \ref{lemma operatore e funzioni dipendenti da parametro} 
to deduce \eqref{interpolazione parametri operatore funzioni (2)}. 
\end{proof}

In view of the KAM reducibility scheme of Section  \ref{sec: reducibility}, we also consider
the stronger notion of $ {\cal D}^{k_0} $-modulo-tame operator, which we  need  only for 
operators with loss of derivatives $ \s = 0 $.

\begin{definition}\label{def:op-tame} 
{ \bf ($ {\cal D}^{k_0} $-modulo-tame)} A  linear operator $ A := A(\lambda) $ as in \eqref{def azione toplitz u vphi x} is  
$ {\cal D}^{k_0}$-modulo-tame  if there exists a non-decreasing function $[s_0, S] \to [0, + \infty)$, $s \mapsto {\mathfrak M}_{A}^\sharp (s)$, such that for all $ k \in \N^{\nu + 1} $, $ |k| \leq k_0 $,  the majorant operators $  | \partial_\lambda^k A | $ 
(Definition \ref{def:maj})
satisfy the following weighted tame estimates:   for all $ s_0 \leq s \leq S $, $ u \in H^{s} $,  
\be\label{CK0-tame}
\sup_{|k| \leq k_0} \sup_{\lambda \in \Lambda_0}\gamma^{ |k|}
\| \, | \partial_\lambda^k A | u \|_s \leq  
{\mathfrak M}_{A}^\sharp (s_0) \| u \|_{s} +
{\mathfrak M}_{A}^\sharp (s) \| u \|_{s_0} \,.
\ee
The constant $ {\mathfrak M}_A^\sharp (s) $ is called a {\sc modulo-tame constant} of the operator $ A $. 

For a matrix operator as in  \eqref{operatori in coordinate complesse-bis}
we denote the modulo tame constant  
$ {\mathfrak M}^\sharp_{A}(s) := \max \{ {\mathfrak M}^\sharp_{A_1}(s), {\mathfrak M}^\sharp_{A_2}(s)  \} $.
\end{definition}

If $ A $, $B$ are $ {\cal D}^{k_0}$-modulo-tame operators, with
$ | A_j^{j'} (\ell) | \leq | B_j^{j'} (\ell) | $, then $ {\mathfrak M}^\sharp_{A}(s) \leq {\mathfrak M}^\sharp_{B}(s) $. 

\begin{lemma}\label{A versus |A|}
An operator $ A $ that is $ {\cal D}^{k_0}$-modulo-tame is also $ {\cal D}^{k_0}$-tame and
$ {\mathfrak M}_A (s) \leq  {\mathfrak M}_A^\sharp (s) $. 
The same holds if $A$ is a matrix operator as in \eqref{operatori in coordinate complesse-bis}. 
\end{lemma}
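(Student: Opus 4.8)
The plan is to unwind the two definitions and show that the modulo-tame estimate \eqref{CK0-tame} implies the tame estimate \eqref{CK0-sigma-tame} with $\sigma = 0$, taking $\mathfrak{M}_A(s) := \mathfrak{M}_A^\sharp(s)$. The key observation is that for any linear operator $A$ as in \eqref{matrice operatori Toplitz} and any $u$, one has the pointwise bound on Fourier coefficients $|(Au)_{\ell,j}| \le (|A|\,\norma u\norma)_{\ell,j}$, where $|A|$ is the majorant operator of Definition \ref{def:maj} and $\norma u \norma$ is the majorant function \eqref{funzioni modulo fourier}. Indeed, $(Au)_{\ell,j} = \sum_{\ell',j'} A_j^{j'}(\ell-\ell') u_{\ell',j'}$, so $|(Au)_{\ell,j}| \le \sum_{\ell',j'} |A_j^{j'}(\ell-\ell')|\,|u_{\ell',j'}| = (|A|\,\norma u\norma)_{\ell,j}$. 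Since Sobolev norms depend only on the moduli of Fourier coefficients, this gives $\| A u \|_s \le \| \,|A|\,\norma u \norma\, \|_s$, and likewise $\| (\partial_\lambda^k A) u \|_s \le \| \,|\partial_\lambda^k A|\,\norma u \norma\, \|_s$ for each $k$ (note $\partial_\lambda^k A$ has matrix entries $\partial_\lambda^k A_j^{j'}(\ell-\ell')$).

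First I would apply \eqref{CK0-tame} with $u$ replaced by $\norma u \norma$: for all $|k| \le k_0$ and $\lambda \in \Lambda_0$,
\[
\gamma^{|k|} \| \,|\partial_\lambda^k A|\,\norma u\norma\, \|_s \le \mathfrak{M}_A^\sharp(s_0)\,\|\norma u\norma\|_s + \mathfrak{M}_A^\sharp(s)\,\|\norma u\norma\|_{s_0}.
\]
Then, using $\|\norma u\norma\|_s = \|u\|_s$ from \eqref{Soboequals} together with the pointwise bound above, I obtain
\[
\gamma^{|k|}\|(\partial_\lambda^k A) u\|_s \le \gamma^{|k|}\|\,|\partial_\lambda^k A|\,\norma u\norma\,\|_s \le \mathfrak{M}_A^\sharp(s_0)\,\|u\|_s + \mathfrak{M}_A^\sharp(s)\,\|u\|_{s_0},
\]
which is precisely \eqref{CK0-sigma-tame} with $\sigma=0$ and tame constant $\mathfrak{M}_A(s) = \mathfrak{M}_A^\sharp(s)$. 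Taking the supremum over $|k|\le k_0$ and $\lambda$ completes the scalar case, and since $\mathfrak{M}_A^\sharp(s)$ is non-decreasing it is an admissible tame constant; hence $\mathfrak{M}_A(s) \le \mathfrak{M}_A^\sharp(s)$ (the infimum defining the optimal tame constant is at most $\mathfrak{M}_A^\sharp(s)$).

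For the matrix case \eqref{operatori in coordinate complesse-bis} one argues componentwise: applying the scalar result to each block $A_1, A_2$ (and noting $\overline{A}_i$ has the same matrix entries up to reflection and conjugation, hence the same tame constants, by the properties of the conjugate operator listed after \eqref{operatori in coordinate complesse}), and then recalling that both $\mathfrak{M}_A(s)$ and $\mathfrak{M}_A^\sharp(s)$ for a matrix operator are defined as the maximum over the two blocks. I do not expect any real obstacle here: the entire content is the majorant pointwise inequality plus \eqref{Soboequals}, and the lemma is essentially a bookkeeping statement reconciling the two definitions. The only point requiring a line of care is making sure the inequality $|(\partial_\lambda^k A)u| \le |\partial_\lambda^k A|\,\norma u\norma$ is applied with the right operator (the $k$-th derivative, whose majorant is $|\partial_\lambda^k A|$, appearing in Definition \ref{def:op-tame}).
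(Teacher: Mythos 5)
Your proof is correct and is exactly the standard argument (the paper simply cites Lemma 2.24 of \cite{BertiMontalto} for it): the pointwise Fourier-coefficient bound $|(\partial_\lambda^k A u)_{\ell,j}| \le (|\partial_\lambda^k A|\,\norma u\norma)_{\ell,j}$, combined with $\|\norma u\norma\|_s = \|u\|_s$ from \eqref{Soboequals}, turns the modulo-tame estimate \eqref{CK0-tame} directly into the tame estimate \eqref{CK0-sigma-tame} with $\sigma=0$ and the same constant. No gaps.
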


\begin{proof}
See Lemma 2.24 in \cite{BertiMontalto}. 
\end{proof}

The class of operators which are $ {\cal D}^{k_0} $-modulo-tame is closed under
sum and composition.

\begin{lemma} \label{interpolazione moduli parametri} {\bf (Sum and composition)}
Let $ A, B $ be $ {\cal D}^{k_0} $-modulo-tame operators with modulo-tame constants respectively 
$ {\mathfrak M}_A^\sharp(s) $ and $ {\mathfrak M}_B^\sharp(s) $. Then 
$ A+ B $ is $ {\cal D}^{k_0} $-modulo-tame with a modulo-tame constant satisfying 
\be\label{modulo-tame-A+B}
{\mathfrak M}_{A + B}^\sharp (s) \leq {\mathfrak M}_A^\sharp (s)  + {\mathfrak M}_B^\sharp (s)  \,.
\ee
The composed operator 
$  A  \circ B $ is $ {\cal D}^{k_0} $-modulo-tame with a modulo-tame constant satisfying 
\begin{equation}\label{modulo tame constant for composition}
 {\mathfrak M}_{A B}^\sharp (s) \leq  C(k_0) \big( {\mathfrak M}_{A}^\sharp(s) 
 {\mathfrak M}_{B}^\sharp (s_0) + {\mathfrak M}_{A}^\sharp (s_0) 
{\mathfrak M}_{B}^\sharp (s) \big)\,.
\end{equation}
Assume in addition that $ \langle \partial_{\vphi, x} \rangle^{\mathtt b} A $, 
$ \langle \partial_{\vphi, x} \rangle^{\mathtt b}  B $ (see Definition \ref{def:maj}) are $ {\cal D}^{k_0}$-modulo-tame with a modulo-tame constant 
respectively $ {\mathfrak M}_{\langle \partial_{\vphi, x} \rangle^{\mathtt b} A}^\sharp (s) $ and  
 $ {\mathfrak M}_{\langle \partial_{\vphi, x} \rangle^{\mathtt b} B}^\sharp (s) $. 
Then $ \langle \partial_{\vphi, x} \rangle^{\mathtt b} (A  B) $ is $ {\cal D}^{k_0}$-modulo-tame 
with a modulo-tame constant satisfying
\be
\begin{aligned}\label{K cal A cal B}
{\mathfrak M}_{\langle \partial_{\vphi, x} \rangle^{\mathtt b} (A  B)}^\sharp (s) & \leq
 C({\mathtt b}) C(k_0)\Big( 
{\mathfrak M}_{\langle \partial_{\vphi, x} \rangle^{\mathtt b} A}^\sharp (s) 
{\mathfrak M}_{B}^\sharp (s_0) + 
{\mathfrak M}_{\langle \partial_{\vphi, x} \rangle^{\mathtt b} A }^\sharp (s_0) 
{\mathfrak M}_{B}^\sharp (s)  \\ 
& \qquad \qquad \qquad \quad + {\mathfrak M}_{A}^\sharp (s) {\mathfrak M}_{ \langle\partial_{\vphi, x} \rangle^{\mathtt b} B}^\sharp (s_0) 
+ {\mathfrak M}_{A}^\sharp (s_0) {\mathfrak M}_{ \langle \partial_{\vphi, x} \rangle^{\mathtt b} B}^\sharp (s)\Big) 
\end{aligned}
\ee
for some constants $ C(k_0) , C( {\mathtt b} ) \geq 1 $. The same statement holds if $A$ and $B$ are matrix operators as in \eqref{operatori in coordinate complesse-bis}. 
\end{lemma}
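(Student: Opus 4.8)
**Proof plan for Lemma \ref{interpolazione moduli parametri} (Sum and composition of $\mathcal{D}^{k_0}$-modulo-tame operators).**

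The plan is to reduce everything to the matrix-entry characterization of the majorant operators and then use the definition \eqref{CK0-tame} together with the elementary inequality $|(A+B)_j^{j'}(\ell)| \leq |A_j^{j'}(\ell)| + |B_j^{j'}(\ell)|$ and its multiplicative analog for composition. First I would dispose of \eqref{modulo-tame-A+B}: since $|\partial_\lambda^k(A+B)| \leq |\partial_\lambda^k A| + |\partial_\lambda^k B|$ entrywise, applying the triangle inequality in $H^s$ together with the two tame bounds \eqref{CK0-tame} for $A$ and $B$ immediately yields the sum estimate. This is a one-line argument once the majorant operator formalism is in place.

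For the composition bound \eqref{modulo tame constant for composition}, the key observation is the entrywise domination
$$
|(\partial_\lambda^k(AB))_j^{j'}(\ell)| \leq \sum_{|k_1|+|k_2|=|k|} \binom{k}{k_1} \big(|\partial_\lambda^{k_1}A|\,|\partial_\lambda^{k_2}B|\big)_j^{j'}(\ell),
$$
which follows from the Leibniz rule applied to the Töplitz-matrix product \eqref{matrice operatori Toplitz} and from $|\sum a_n b_n| \leq \sum |a_n||b_n|$. Thus it suffices to estimate $\| |\partial_\lambda^{k_1}A|\,|\partial_\lambda^{k_2}B|\, u\|_s$ for a test function $u$; but $|\partial_\lambda^{k_2}B|\,u$ is again a function whose Sobolev norms are controlled via \eqref{CK0-tame} for $B$, and then \eqref{CK0-tame} for $A$ applied to it, distributing the high index $s$ to one factor and $s_0$ to the other. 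Collecting the $\gamma^{|k|} = \gamma^{|k_1|}\gamma^{|k_2|}$ weights and summing the finitely many terms in the Leibniz expansion (which contributes the harmless constant $C(k_0)$) gives \eqref{modulo tame constant for composition}. The matrix-operator case reduces to this scalar case by the definition $\mathfrak M^\sharp_A(s) := \max\{\mathfrak M^\sharp_{A_1}(s),\mathfrak M^\sharp_{A_2}(s)\}$ and by expanding the $2\times 2$ block product, each block being a sum of two compositions of scalar modulo-tame operators, so \eqref{modulo-tame-A+B}-\eqref{modulo tame constant for composition} apply blockwise.

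For the last and most delicate estimate \eqref{K cal A cal B}, involving $\langle\partial_{\vphi,x}\rangle^{\mathtt b}(AB)$, the main point is a commutation-type inequality for the weight $\langle\partial_{\vphi,x}\rangle^{\mathtt b}$ acting on a product. Writing $\langle\ell,j\rangle \leq C(\mathtt b)(\langle\ell-\ell'',j-j''\rangle + \langle\ell'',j''\rangle)$ for the intermediate Töplitz indices $(\ell'',j'')$ appearing in the matrix product of $A$ and $B$, one gets the entrywise domination
$$
\big|\big(\langle\partial_{\vphi,x}\rangle^{\mathtt b}(\partial_\lambda^k(AB))\big)_j^{j'}(\ell)\big| \lesssim_{\mathtt b} \sum_{|k_1|+|k_2|=|k|}\Big( \big(|\langle\partial_{\vphi,x}\rangle^{\mathtt b}\partial_\lambda^{k_1}A|\,|\partial_\lambda^{k_2}B|\big)_j^{j'}(\ell) + \big(|\partial_\lambda^{k_1}A|\,|\langle\partial_{\vphi,x}\rangle^{\mathtt b}\partial_\lambda^{k_2}B|\big)_j^{j'}(\ell)\Big),
$$
i.e. the weight $\langle\partial_{\vphi,x}\rangle^{\mathtt b}$ can be moved onto either factor at the cost of a constant $C(\mathtt b)$ and a sum of two terms. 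Then one repeats the composition argument applied to the four resulting products, using the modulo-tame bounds for $\langle\partial_{\vphi,x}\rangle^{\mathtt b}A$, $\langle\partial_{\vphi,x}\rangle^{\mathtt b}B$, $A$, $B$ and distributing the high Sobolev index $s$ across the two factors in all possible ways; this produces exactly the four cross terms on the right-hand side of \eqref{K cal A cal B}. I expect the main obstacle to be the careful bookkeeping of the weight-splitting inequality together with the Leibniz expansion in $\lambda$: one must verify that $\langle\partial_{\vphi,x}\rangle^{\mathtt b}$ distributes over the product only up to the stated constant (using convexity of $t\mapsto t^{\mathtt b}$ or the binomial inequality $(a+b)^{\mathtt b}\leq 2^{\mathtt b}(a^{\mathtt b}+b^{\mathtt b})$) and that no genuinely new term arises. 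All of this is routine once the entrywise majorization is set up correctly, and the result is quoted from \cite{BertiMontalto} (Lemma 2.25 there) so the proof can indeed be a reference.
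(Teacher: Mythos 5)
Your proposal is correct and reconstructs the argument that the paper only cites: the paper's proof is just a pointer to Lemma 2.25 of \cite{BertiMontalto} (with $\langle \partial_\vphi \rangle^{\mathtt b}$ replaced by $\langle \partial_{\vphi, x} \rangle^{\mathtt b}$ in estimate (2.76) there), and your three ingredients --- entrywise majorant domination combined with the Leibniz rule for $\partial_\lambda^k$, the one-factor tame bound applied successively to distribute $s$ and $s_0$, and the splitting $\langle \ell - \ell', j-j' \rangle^{\mathtt b} \lesssim_{\mathtt b} \langle \ell - \ell'', j - j'' \rangle^{\mathtt b} + \langle \ell'' - \ell', j'' - j' \rangle^{\mathtt b}$ along the intermediate T\"oplitz index --- are exactly the mechanism of that proof. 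Two cosmetic slips: the weight-splitting inequality should be written for $\langle \ell - \ell', j - j' \rangle$ with the second summand $\langle \ell'' - \ell', j'' - j' \rangle$ (you wrote $\langle \ell, j \rangle$ and $\langle \ell'', j'' \rangle$), and the constant $C(\mathtt b)$ arises only after raising to the power $\mathtt b$, not at the level of the unweighted triangle inequality; neither affects the argument.
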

\begin{proof}
\noindent
 The estimates \eqref{modulo-tame-A+B}, \eqref{modulo tame constant for composition} are proved in Lemma 2.25 of \cite{BertiMontalto}. The bound \eqref{K cal A cal B} is proved as  the estimate (2.76) of Lemma 2.25 in \cite{BertiMontalto}, replacing $\langle \partial_\vphi \rangle^{\mathtt b}$ (cf. Definition 2.3 in \cite{BertiMontalto}) with $\langle \partial_{\vphi, x} \rangle^{\mathtt b}$. 
\end{proof}


Iterating \eqref{modulo tame constant for composition}-\eqref{K cal A cal B}, one estimates ${\mathfrak M}_{\langle\partial_{\vphi, x} \rangle^{ \mathtt b} A^n}^\sharp (s)$, and arguing as in Lemma 2.26 of \cite{BertiMontalto} we 
deduce the following lemma.

\begin{lemma}{\bf (Invertibility)}\label{serie di neumann per maggiorantia}
Let $\Phi := {\rm Id} + A $, where $ A $ and 
$ \langle \partial_{\vphi, x} \rangle^{\mathtt b}  A $ are $ {\cal D}^{k_0}$-modulo-tame.
Assume  the smallness condition 
\begin{equation}\label{piccolezza neumann tamea}
4 C(\mathtt b) C(k_0)  {\mathfrak M}_{A}^\sharp (s_0)  \leq 1/ 2\,.
\end{equation}
Then the operator $ \Phi $ is invertible,  
$\check A :=   \Phi^{- 1} - {\rm Id}  $  is 
${\cal D}^{k_0}$-modulo-tame, as well as $ \langle \partial_{\vphi, x} \rangle^{\mathtt b}  \check A $,
and they admit modulo-tame constants satisfying 
$$
 {\mathfrak M}_{\check A}^\sharp (s) \leq  2 {\mathfrak M}_A^\sharp (s) \, , \quad 
{\mathfrak M}_{\langle \partial_{\vphi, x} \rangle^{\mathtt b}  \check A}^\sharp (s)  \leq 
2 {\mathfrak M}_{ \langle \partial_{\vphi, x} \rangle^{\mathtt b}A}^\sharp (s)  + 
8  C(\mathtt b) C(k_0)  {\mathfrak M}_{ \langle \partial_{\vphi, x} \rangle^{\mathtt b}A}^\sharp (s_0)\, {\mathfrak M}_A^\sharp(s) \, .  
$$
The same statement holds if $A$ is a matrix operator of the form \eqref{operatori in coordinate complesse-bis}. 
\end{lemma}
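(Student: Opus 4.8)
\textbf{Proof plan for Lemma \ref{serie di neumann per maggiorantia} (Invertibility).} The plan is to invert $\Phi = {\rm Id} + A$ by the Neumann series $\Phi^{-1} = \sum_{n \geq 0} (-A)^n$, so that $\check A := \Phi^{-1} - {\rm Id} = \sum_{n \geq 1} (-A)^n$, and to control the modulo-tame constants of $\check A$ and of $\langle \partial_{\vphi,x}\rangle^{\mathtt b} \check A$ term by term using Lemma \ref{interpolazione moduli parametri}. First I would record the iterated bounds: from \eqref{modulo tame constant for composition}, an induction on $n$ gives ${\mathfrak M}_{A^n}^\sharp(s) \leq \big(2 C(k_0) {\mathfrak M}_A^\sharp(s_0)\big)^{n-1}\, {\mathfrak M}_A^\sharp(s)$ for $n \geq 1$ (the constant $C(k_0)$ is absorbed appropriately; here the factor $2$ is chosen so that the bound is stable under the inductive step $A^{n+1} = A \circ A^n$). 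The smallness condition \eqref{piccolezza neumann tamea}, namely $4 C(\mathtt b) C(k_0) {\mathfrak M}_A^\sharp(s_0) \leq 1/2$, guarantees $2 C(k_0) {\mathfrak M}_A^\sharp(s_0) \leq 1/4 < 1$, so the geometric series $\sum_{n \geq 1} (2 C(k_0) {\mathfrak M}_A^\sharp(s_0))^{n-1}$ converges and is bounded by $2$. Summing over $n$ and using \eqref{modulo-tame-A+B} for the (infinite) sum then yields ${\mathfrak M}_{\check A}^\sharp(s) \leq 2 {\mathfrak M}_A^\sharp(s)$. One must briefly justify that the pointwise matrix bound $|\check A_j^{j'}(\ell)| \leq \sum_{n \geq 1} |(A^n)_j^{j'}(\ell)|$ holds (since $|A+B| \leq |A| + |B|$ and $|AB| \leq |A||B|$ entrywise, by the first inequality in \eqref{disuguaglianza importante moduli}) and that the series converges in the relevant operator topology, which follows from Lemma \ref{A versus |A|} (modulo-tame implies tame, hence bounded on $H^{s_0}$) together with the convergent geometric majorant.

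Next I would treat $\langle \partial_{\vphi,x}\rangle^{\mathtt b} \check A = \sum_{n \geq 1} \langle \partial_{\vphi,x}\rangle^{\mathtt b}(-A)^n$. For each $n$, apply \eqref{K cal A cal B} inductively, writing $A^n = A \circ A^{n-1}$; a Leibniz-type bookkeeping over which factor carries the derivative $\langle \partial_{\vphi,x}\rangle^{\mathtt b}$ gives a bound of the form
\begin{equation}\label{pf:iter-b}
{\mathfrak M}_{\langle \partial_{\vphi,x}\rangle^{\mathtt b} A^n}^\sharp(s) \leq n\, \big(C(\mathtt b) C(k_0)\big)^{n-1} \big(2 {\mathfrak M}_A^\sharp(s_0)\big)^{n-1} \Big( {\mathfrak M}_{\langle \partial_{\vphi,x}\rangle^{\mathtt b} A}^\sharp(s) + {\mathfrak M}_{\langle \partial_{\vphi,x}\rangle^{\mathtt b} A}^\sharp(s_0)\, \frac{{\mathfrak M}_A^\sharp(s)}{{\mathfrak M}_A^\sharp(s_0)} \Big)\,,
\end{equation}
up to harmless constant rearrangements; the factor $n$ comes from the $n$ choices of which copy of $A$ receives the derivative weight, and the extra $\mathtt b$-derivative of $\langle \partial_{\vphi,x}\rangle^{\mathtt b}(A\cdot A)$ versus $\langle \partial_{\vphi,x}\rangle^{\mathtt b}A \cdot A$ is handled by the inequality $\langle \ell + \ell', j + j'\rangle^{\mathtt b} \lesssim_{\mathtt b} \langle \ell,j\rangle^{\mathtt b} + \langle \ell', j'\rangle^{\mathtt b}$ already built into the proof of \eqref{K cal A cal B}. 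Since \eqref{piccolezza neumann tamea} forces $2 C(\mathtt b) C(k_0) {\mathfrak M}_A^\sharp(s_0) \leq 1/4$, the series $\sum_{n \geq 1} n\, (2 C(\mathtt b) C(k_0) {\mathfrak M}_A^\sharp(s_0))^{n-1}$ converges (to something $\leq C$ absolute, indeed $\leq 16/9$), and summing \eqref{pf:iter-b} over $n$, again via \eqref{modulo-tame-A+B}, produces a bound of the announced shape ${\mathfrak M}_{\langle \partial_{\vphi,x}\rangle^{\mathtt b}\check A}^\sharp(s) \leq 2 {\mathfrak M}_{\langle \partial_{\vphi,x}\rangle^{\mathtt b}A}^\sharp(s) + 8 C(\mathtt b) C(k_0) {\mathfrak M}_{\langle \partial_{\vphi,x}\rangle^{\mathtt b}A}^\sharp(s_0)\, {\mathfrak M}_A^\sharp(s)$, after absorbing the geometric factors into the numerical constants $2$ and $8$. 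Finally, the matrix-operator case \eqref{operatori in coordinate complesse-bis} follows verbatim, since Lemma \ref{interpolazione moduli parametri} and the entrywise majorant calculus are stated for that case as well, and the Neumann series respects the $2\times 2$ block structure.

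The main obstacle, and really the only delicate point, is the bookkeeping in the inductive step for $\langle \partial_{\vphi,x}\rangle^{\mathtt b} A^n$: one has to peel off one factor of $A$ at a time and, at each stage, decide whether the weight $\langle \partial_{\vphi,x}\rangle^{\mathtt b}$ lands on the single new factor or on the already-accumulated product, which is exactly the situation covered by the four-term estimate \eqref{K cal A cal B}. Carrying the induction so that the constants $C(\mathtt b), C(k_0)$ do not blow up faster than geometrically — and so that the final two numerical constants really are $2$ and $8$ after summing — requires being slightly careful with how ${\mathfrak M}_A^\sharp(s_0)$-factors are distributed, but this is routine once the smallness \eqref{piccolezza neumann tamea} is in force. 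Everything else is a direct transcription of the corresponding argument in Lemma 2.26 of \cite{BertiMontalto}, with $\langle \partial_\vphi\rangle^{\mathtt b}$ replaced throughout by $\langle \partial_{\vphi,x}\rangle^{\mathtt b}$, so I would simply refer to that proof for the remaining details.
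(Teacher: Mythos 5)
Your overall strategy — invert $\Phi$ by a Neumann series, iterate the composition estimates \eqref{modulo tame constant for composition} and \eqref{K cal A cal B} to bound the powers $A^n$ and $\langle\partial_{\vphi,x}\rangle^{\mathtt b}A^n$, and sum using the smallness condition — is exactly the route the paper takes (which in turn just defers to Lemma 2.26 of \cite{BertiMontalto}). Your first inductive bound ${\mathfrak M}^\sharp_{A^n}(s)\leq (2C(k_0){\mathfrak M}^\sharp_A(s_0))^{n-1}{\mathfrak M}^\sharp_A(s)$ and the conclusion ${\mathfrak M}^\sharp_{\check A}(s)\leq 2{\mathfrak M}^\sharp_A(s)$ are fine.

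However, the specific intermediate estimate \eqref{pf:iter-b} is packaged in a way that does not sum to the stated constant. By forcing the two contributions into the single bracket $\big({\mathfrak M}^\sharp_{\langle\partial\rangle^{\mathtt b}A}(s) + {\mathfrak M}^\sharp_{\langle\partial\rangle^{\mathtt b}A}(s_0)\,{\mathfrak M}^\sharp_A(s)/{\mathfrak M}^\sharp_A(s_0)\big)$, you overcount the $n=1$ term, which should just be ${\mathfrak M}^\sharp_{\langle\partial\rangle^{\mathtt b}A}(s)$ but in your form already carries a spurious ${\mathfrak M}^\sharp_{\langle\partial\rangle^{\mathtt b}A}(s_0)\,{\mathfrak M}^\sharp_A(s)/{\mathfrak M}^\sharp_A(s_0)$. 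After summing the geometric series you get a coefficient $\sim 2/{\mathfrak M}^\sharp_A(s_0)$ in front of ${\mathfrak M}^\sharp_{\langle\partial\rangle^{\mathtt b}A}(s_0)\,{\mathfrak M}^\sharp_A(s)$, and the smallness \eqref{piccolezza neumann tamea} forces ${\mathfrak M}^\sharp_A(s_0)\leq 1/(8C(\mathtt b)C(k_0))$, hence $2/{\mathfrak M}^\sharp_A(s_0)\geq 16\,C(\mathtt b)C(k_0)$, which is \emph{larger} than the claimed $8\,C(\mathtt b)C(k_0)$ — so the stated estimate does not follow from \eqref{pf:iter-b} as written. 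The fix is routine but must be made: carry the induction with separate coefficients, i.e.\ prove ${\mathfrak M}^\sharp_{\langle\partial_{\vphi,x}\rangle^{\mathtt b}A^n}(s)\leq \alpha_n\,{\mathfrak M}^\sharp_{\langle\partial\rangle^{\mathtt b}A}(s)+\beta_n\,{\mathfrak M}^\sharp_{\langle\partial\rangle^{\mathtt b}A}(s_0)\,{\mathfrak M}^\sharp_A(s)$ with $\alpha_1=1$, $\beta_1=0$, and verify from \eqref{K cal A cal B} (together with the already-established bound on ${\mathfrak M}^\sharp_{A^n}$) that $\alpha_n\leq(2C(\mathtt b)C(k_0){\mathfrak M}^\sharp_A(s_0))^{n-1}$ and $\beta_n\leq 2(n-1)\,C(\mathtt b)C(k_0)\,(2C(\mathtt b)C(k_0){\mathfrak M}^\sharp_A(s_0))^{n-2}$ for $n\geq 2$. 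Summing then yields $\sum\alpha_n\leq 2$ and $\sum\beta_n\leq 8C(\mathtt b)C(k_0)$ under \eqref{piccolezza neumann tamea}, which is the stated bound. Once you keep the coefficients separate (as the bilinear structure of \eqref{K cal A cal B} anyway suggests), the $\beta_1=0$ cancellation does the work that your form \eqref{pf:iter-b} throws away.
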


\begin{corollary}\label{serie di neumann per maggioranti}
Let $m \in \R$, $\Phi := {\rm Id} + A $ where 
$ \langle D \rangle^m A \langle D \rangle^{- m}$ and 
$ \langle \partial_{\vphi, x} \rangle^{\mathtt b}  \langle D \rangle^m A \langle D \rangle^{- m}$ 
are ${\cal D}^{k_0}$-modulo-tame. 
Assume the smallness condition 
\begin{equation}\label{piccolezza neumann tame}
4 C(\mathtt b) C(k_0)  {\mathfrak M}_{\langle D \rangle^m A \langle D \rangle^{- m}}^\sharp (s_0)  \leq 1/ 2\,.
\end{equation}
Let $\check A :=   \Phi^{- 1} - {\rm Id}$. 
Then the operators $\langle D \rangle^m \check A \langle D \rangle^{- m}$ and  
$ \langle \partial_{\vphi, x} \rangle^{\mathtt b} \langle D \rangle^m \check A \langle D \rangle^{- m}$ are ${\cal D}^{k_0}$-modulo-tame and they admit modulo-tame constants satisfying 
\begin{align*}
& \qquad \qquad \qquad  \qquad \qquad \qquad \quad  {\mathfrak M}_{\langle D \rangle^m \check A \langle D \rangle^{- m}}^\sharp (s) \leq  2 {\mathfrak M}_{\langle D \rangle^m A \langle D \rangle^{- m}}^\sharp (s) \,, 
\\
& {\mathfrak M}_{\langle \partial_{\vphi, x} \rangle^{\mathtt b}   \langle D \rangle^m \check A \langle D \rangle^{- m} }^\sharp (s)  \! \leq \!
2 {\mathfrak M}_{ \langle \partial_{\vphi, x} \rangle^{\mathtt b}\langle D \rangle^m A \langle D \rangle^{- m}}^\sharp (s) 
\!  + \!
8  C(\mathtt b) C(k_0)  {\mathfrak M}_{ \langle \partial_{\vphi, x} \rangle^{\mathtt b}\langle D \rangle^m A \langle D \rangle^{- m}}^\sharp (s_0)\, {\mathfrak M}_{\langle D \rangle^m A \langle D \rangle^{- m}}^\sharp(s) \, .  
\end{align*}
The same statement holds if $A$ is a matrix operator of the form \eqref{operatori in coordinate complesse-bis}. 
\end{corollary}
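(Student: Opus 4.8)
The plan is to reduce Corollary \ref{serie di neumann per maggioranti} directly to Lemma \ref{serie di neumann per maggiorantia} by a conjugation trick. Set $\widetilde A := \langle D \rangle^m A \langle D \rangle^{-m}$. Then $\widetilde\Phi := \langle D \rangle^m \Phi \langle D \rangle^{-m} = {\rm Id} + \widetilde A$, and since $\langle D \rangle^m$ is an invertible Fourier multiplier, $\Phi$ is invertible if and only if $\widetilde\Phi$ is, with $\langle D \rangle^m \Phi^{-1} \langle D \rangle^{-m} = \widetilde\Phi^{-1}$; consequently $\langle D \rangle^m \check A \langle D \rangle^{-m} = \widetilde\Phi^{-1} - {\rm Id} =: \check{\widetilde A}$. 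Thus the statement about $\langle D \rangle^m \check A \langle D \rangle^{-m}$ and $\langle \partial_{\vphi,x} \rangle^{\mathtt b} \langle D \rangle^m \check A \langle D \rangle^{-m}$ is precisely the statement of Lemma \ref{serie di neumann per maggiorantia} applied to the operator $\widetilde A$ in place of $A$.

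Concretely, the steps I would carry out are: (i) observe that the hypotheses of the Corollary are exactly that $\widetilde A$ and $\langle \partial_{\vphi,x} \rangle^{\mathtt b} \widetilde A$ are $\mathcal D^{k_0}$-modulo-tame, and that the smallness condition \eqref{piccolezza neumann tame} is exactly \eqref{piccolezza neumann tamea} for $\widetilde A$; (ii) invoke Lemma \ref{serie di neumann per maggiorantia} with $A \rightsquigarrow \widetilde A$ and $\Phi \rightsquigarrow \widetilde\Phi = {\rm Id} + \widetilde A$, obtaining that $\widetilde\Phi$ is invertible, that $\check{\widetilde A} := \widetilde\Phi^{-1} - {\rm Id}$ and $\langle \partial_{\vphi,x} \rangle^{\mathtt b} \check{\widetilde A}$ are $\mathcal D^{k_0}$-modulo-tame, with the modulo-tame constant bounds $\mathfrak M_{\check{\widetilde A}}^\sharp(s) \leq 2 \mathfrak M_{\widetilde A}^\sharp(s)$ and the analogous bound with $\langle \partial_{\vphi,x} \rangle^{\mathtt b}$; (iii) translate back: since $\langle D \rangle^m$ and $\langle D \rangle^{-m}$ are Fourier multipliers, they commute with $\langle \partial_{\vphi,x} \rangle^{\mathtt b}$ (which acts only on the $\ell - \ell'$, $j - j'$ indices) and with the majorant operation, so $\langle \partial_{\vphi,x} \rangle^{\mathtt b} \langle D \rangle^m \check A \langle D \rangle^{-m} = \langle \partial_{\vphi,x} \rangle^{\mathtt b} \check{\widetilde A}$; substituting $\mathfrak M_{\widetilde A}^\sharp(s) = \mathfrak M_{\langle D \rangle^m A \langle D \rangle^{-m}}^\sharp(s)$ and $\mathfrak M_{\langle \partial_{\vphi,x} \rangle^{\mathtt b} \widetilde A}^\sharp(s) = \mathfrak M_{\langle \partial_{\vphi,x} \rangle^{\mathtt b} \langle D \rangle^m A \langle D \rangle^{-m}}^\sharp(s)$ yields exactly the displayed estimates.

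The only point requiring a little care — and the one I would flag as the main (minor) obstacle — is the verification that $\langle D \rangle^m \Phi^{-1} \langle D \rangle^{-m} = ({\rm Id} + \widetilde A)^{-1}$ genuinely makes sense as an operator on the relevant scale of Sobolev spaces, rather than merely formally: one needs that $\check{\widetilde A}$, which is produced by Lemma \ref{serie di neumann per maggiorantia} as a Neumann-type series $\sum_{n \geq 1} (-\widetilde A)^n$ that converges in modulo-tame norm, is indeed $\langle D \rangle^m (\sum_{n \geq 1}(-A)^n) \langle D \rangle^{-m}$; this follows because $\langle D \rangle^m (-A)^n \langle D \rangle^{-m} = (-\widetilde A)^n$ for each $n$ (the $\langle D \rangle^{\pm m}$ telescope between consecutive factors) and the series for $\Phi^{-1} - {\rm Id}$ converges by Lemma \ref{serie di neumann per maggiorantia} applied in the untwisted form or, more simply, by noting that $\mathfrak M_A^\sharp(s_0) \leq C \mathfrak M_{\widetilde A}^\sharp(s_0)$ is not needed — one works entirely on the twisted side and only reads off the conclusion at the end. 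I expect no real difficulty beyond bookkeeping; the proof is essentially a one-line reduction, and I would write it as such, remarking that the matrix-operator case \eqref{operatori in coordinate complesse-bis} is identical since $\langle D \rangle^{\pm m}$ acts diagonally.

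\begin{proof}
Set $ \widetilde A := \langle D \rangle^m A \langle D \rangle^{- m} $ and
$ \widetilde \Phi := \langle D \rangle^m \Phi \langle D \rangle^{-m} = {\rm Id} + \widetilde A $.
Since $ \langle D \rangle^{\pm m} $ are invertible Fourier multipliers which commute with
$ \langle \partial_{\vphi, x} \rangle^{\mathtt b} $ and with the majorant operation of Definition \ref{def:maj},
the hypotheses say precisely that $ \widetilde A $ and $ \langle \partial_{\vphi, x} \rangle^{\mathtt b} \widetilde A $
are $ {\cal D}^{k_0} $-modulo-tame, and \eqref{piccolezza neumann tame} is exactly
\eqref{piccolezza neumann tamea} for $ \widetilde A $.
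By Lemma \ref{serie di neumann per maggiorantia} applied to $ \widetilde A $, the operator $ \widetilde \Phi $
is invertible, hence so is $ \Phi $ with $ \langle D \rangle^m \Phi^{-1} \langle D \rangle^{-m} = \widetilde \Phi^{-1} $;
consequently, with $ \check A := \Phi^{-1} - {\rm Id} $, we have
$ \langle D \rangle^m \check A \langle D \rangle^{-m} = \widetilde \Phi^{-1} - {\rm Id} $,
and again Lemma \ref{serie di neumann per maggiorantia} gives that this operator and
$ \langle \partial_{\vphi, x} \rangle^{\mathtt b} \langle D \rangle^m \check A \langle D \rangle^{-m}
= \langle \partial_{\vphi, x} \rangle^{\mathtt b} ( \widetilde \Phi^{-1} - {\rm Id}) $
are $ {\cal D}^{k_0} $-modulo-tame, with modulo-tame constants bounded by
$ 2 {\mathfrak M}^\sharp_{\widetilde A}(s) $ and by
$ 2 {\mathfrak M}^\sharp_{\langle \partial_{\vphi, x} \rangle^{\mathtt b} \widetilde A}(s)
+ 8 C({\mathtt b}) C(k_0) {\mathfrak M}^\sharp_{\langle \partial_{\vphi, x} \rangle^{\mathtt b} \widetilde A}(s_0)
{\mathfrak M}^\sharp_{\widetilde A}(s) $,
respectively. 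Substituting
$ {\mathfrak M}^\sharp_{\widetilde A}(s) = {\mathfrak M}^\sharp_{\langle D \rangle^m A \langle D \rangle^{-m}}(s) $
and
$ {\mathfrak M}^\sharp_{\langle \partial_{\vphi, x} \rangle^{\mathtt b} \widetilde A}(s)
= {\mathfrak M}^\sharp_{\langle \partial_{\vphi, x} \rangle^{\mathtt b} \langle D \rangle^m A \langle D \rangle^{-m}}(s) $
yields the claimed estimates. The matrix-operator case follows in the same way, since $ \langle D \rangle^{\pm m} $ act
diagonally.
\end{proof}
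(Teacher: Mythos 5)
Your proposal is correct and takes essentially the same approach as the paper: both define the conjugated operator $\widetilde A = \langle D \rangle^m A \langle D \rangle^{-m}$ (the paper calls it $A_m$), observe that the hypotheses of the Corollary are exactly those of Lemma~\ref{serie di neumann per maggiorantia} applied to $\widetilde A$, and read off the conclusion via $\langle D \rangle^m \Phi^{-1} \langle D \rangle^{-m} = \widetilde\Phi^{-1} = {\rm Id} + \langle D \rangle^m \check A \langle D \rangle^{-m}$.
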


\begin{proof}
Let us write 
$ \Phi_m := \langle D \rangle^m \Phi \langle D \rangle^{- m} = {\rm Id} + A_m $ with 
$ A_m := \langle D \rangle^m A \langle D \rangle^{- m} $. 
The corollary follows by Lemma \ref{serie di neumann per maggiorantia}, since the smallness condition
 \eqref{piccolezza neumann tame} is \eqref{piccolezza neumann tamea} with $A = A_m$, and
 $\Phi_m^{-1} =  {\rm Id} + \langle D \rangle^m \check A \langle D \rangle^{- m} $.  
\end{proof}

\begin{lemma} \label{lemma:smoothing-tame} {\bf (Smoothing)}
Suppose that $ \langle \pa_{\vphi,x} \rangle^{\mathtt b} A $, $ {\mathtt b} \geq  0 $, is $ {\cal D}^{k_0} $-modulo-tame. Then 
the operator $ \Pi_N^\bot A $ (see Definition \ref{def:maj}) is $ {\cal D}^{k_0} $-modulo-tame with a modulo-tame constant satisfying 
\be\label{proprieta tame proiettori moduli}
{\mathfrak M}_{\Pi_N^\bot A}^\sharp (s) \leq N^{- {\mathtt b} }{\mathfrak M}_{ \langle \pa_{\vphi, x} \rangle^{\mathtt b} A}^\sharp (s) \, ,
\quad
{\mathfrak M}_{\Pi_N^\bot A}^\sharp (s) \leq  {\mathfrak M}_{ A}^\sharp (s) \, . 
\ee
The same estimate holds when $A$ is a matrix operator of the form \eqref{operatori in coordinate complesse-bis}. 
\end{lemma}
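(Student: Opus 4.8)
\textbf{Proof plan for Lemma \ref{lemma:smoothing-tame}.}
The plan is to reduce everything to the definition of modulo-tame operator and the elementary fact that the majorant of $ \Pi_N^\bot A $ is dominated, entrywise, by the majorant of $ \langle \pa_{\vphi, x} \rangle^{\mathtt b} A $ divided by $ N^{\mathtt b} $. First I would unwind Definition \ref{def:maj}: the matrix entries of $ \Pi_N^\bot A $ are $ A_j^{j'}(\ell - \ell') $ when $ \langle \ell - \ell', j - j' \rangle > N $ and $ 0 $ otherwise, while the entries of $ \langle \pa_{\vphi,x}\rangle^{\mathtt b} A $ are $ \langle \ell - \ell', j-j' \rangle^{\mathtt b} A_j^{j'}(\ell - \ell') $. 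Hence, on the range of indices where $ \Pi_N^\bot A $ is nonzero, $ \langle \ell - \ell', j - j' \rangle^{\mathtt b} > N^{\mathtt b} $, so that
$$
\big| (\Pi_N^\bot A)_j^{j'}(\ell - \ell') \big|
\leq N^{-\mathtt b} \, \big| (\langle \pa_{\vphi,x}\rangle^{\mathtt b} A)_j^{j'}(\ell - \ell') \big| ,
\qquad \forall \ell, \ell', j, j',
$$
and the same inequality holds verbatim after applying $ \pa_\lambda^k $ to both sides, since $ \pa_\lambda^k $ acts only on the entries $ A_j^{j'}(\ell-\ell') $ and commutes with the Fourier cut-off $ \Pi_N^\bot $ and with the multiplier $ \langle \ell-\ell', j-j'\rangle^{\mathtt b} $.

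Next I would invoke the monotonicity remark immediately preceding Lemma \ref{A versus |A|}: if $ |A_j^{j'}(\ell)| \leq |B_j^{j'}(\ell)| $ for all indices then $ {\mathfrak M}_A^\sharp(s) \leq {\mathfrak M}_B^\sharp(s) $. Applying this with $ A \rightsquigarrow \Pi_N^\bot A $ and $ B \rightsquigarrow N^{-\mathtt b} \langle \pa_{\vphi,x}\rangle^{\mathtt b} A $, and using that $ N^{-\mathtt b} B $ has modulo-tame constant $ N^{-\mathtt b}\, {\mathfrak M}_B^\sharp(s) $ (immediate from \eqref{CK0-tame}, which is homogeneous of degree $1$ in the operator), one gets the first bound $ {\mathfrak M}_{\Pi_N^\bot A}^\sharp(s) \leq N^{-\mathtt b} {\mathfrak M}_{\langle \pa_{\vphi,x}\rangle^{\mathtt b} A}^\sharp(s) $. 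For the second bound one simply notes that $ |(\Pi_N^\bot A)_j^{j'}(\ell)| \leq |A_j^{j'}(\ell)| $ pointwise (the cut-off only kills entries), so the same monotonicity gives $ {\mathfrak M}_{\Pi_N^\bot A}^\sharp(s) \leq {\mathfrak M}_A^\sharp(s) $. The matrix-operator case \eqref{operatori in coordinate complesse-bis} follows by applying the scalar statement to each block $ A_1, A_2 $ and taking the maximum, as in Definition \ref{def:op-tame}.

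I do not expect a genuine obstacle here; the only point requiring a little care is the interchange of $ \pa_\lambda^k $ with the operations $ \Pi_N^\bot $ and $ \langle \pa_{\vphi,x}\rangle^{\mathtt b} $, and the observation that taking majorants (absolute values of matrix entries) does not interfere with these, so that the entrywise domination is preserved for every $ k $ with $ |k| \leq k_0 $ and every $ \lambda \in \Lambda_0 $. Everything else is a direct transcription of Definition \ref{def:op-tame} together with the elementary inequality $ \langle \ell - \ell', j - j' \rangle^{-\mathtt b} \leq N^{-\mathtt b} $ valid on the support of $ \Pi_N^\bot $. The argument is identical to the proof of the analogous statement (with $ \langle \pa_\vphi\rangle^{\mathtt b} $ in place of $ \langle \pa_{\vphi,x}\rangle^{\mathtt b} $) in \cite{BertiMontalto}.
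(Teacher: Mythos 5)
Your proposal is correct and follows essentially the same route as the paper, which simply cites the analogous Lemma 2.27 of \cite{BertiMontalto} with $\langle \pa_\vphi \rangle^{\mathtt b}$ replaced by $\langle \pa_{\vphi,x}\rangle^{\mathtt b}$; you have just written out the entrywise domination $|(\Pi_N^\bot A)_j^{j'}(\ell-\ell')| \leq N^{-\mathtt b}|(\langle \pa_{\vphi,x}\rangle^{\mathtt b}A)_j^{j'}(\ell-\ell')|$ (and likewise for each $\pa_\lambda^k$, since the Fourier cut-off and the $(\vphi,x)$-multiplier are $\lambda$-independent) and then invoked the monotonicity of $\mathfrak M^\sharp$ that the paper records just before Lemma \ref{A versus |A|}. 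No gap.
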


\begin{proof}
As in Lemma 2.27 in \cite{BertiMontalto}, replacing $\langle \partial_\vphi \rangle^{\mathtt b}$ (cf. Definition 2.3 in \cite{BertiMontalto}) with $\langle \partial_{\vphi, x} \rangle^{\mathtt b}$. 
\end{proof}

In order to verify that an operator is modulo-tame, we shall use the following Lemma. Notice that the right hand side of \eqref{tame op modulo tame op lemma astratto} below contains 
 tame constants
(not modulo-tame) of operators which control more space and time derivatives than $\langle \partial_{\vphi, x} \rangle^{\mathtt b}\langle D \rangle^\fm A \langle D \rangle^\fm$. 

\begin{lemma}\label{lem: Initialization astratto} Let $\mathtt b, \fm \geq 0$. Then 
  \begin{equation}\label{tame op modulo tame op lemma astratto}
  {\mathfrak M}_{\langle \partial_{\vphi, x} \rangle^{\mathtt b}\langle D \rangle^\fm A \langle D \rangle^\fm}^\sharp ( s) \lesssim_{s_0, \mathtt b}  {\mathfrak M}_{ \langle D \rangle^{\fm + \mathtt b} A \langle D \rangle^{\fm + \mathtt b +  1} }(s) + \max_{i =1, \ldots,  \nu} \big\{
{\mathfrak M}_{ \partial_{\vphi_i}^{s_0 +  {\mathtt b}} \langle D \rangle^{\fm + \mathtt b} A \langle D \rangle^{\fm + \mathtt b +  1} }(s)  \big\}   \,. 
\end{equation}
\end{lemma}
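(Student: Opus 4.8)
\textbf{Proof strategy for Lemma \ref{lem: Initialization astratto}.}

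The plan is to pass from the modulo-tame bound on the left-hand side to ordinary tame bounds on the right-hand side by estimating the majorant operator $|\langle \partial_{\vphi,x}\rangle^{\mathtt b}\langle D\rangle^{\fm} A \langle D\rangle^{\fm}|$ through its matrix entries. Recall that a modulo-tame estimate is an estimate for the action of the majorant operator $|B|$, whose matrix entries are $|B_j^{j'}(\ell-\ell')|$, while a tame estimate controls the action of $B$ itself. The crucial point is that the majorant of $\langle\partial_{\vphi,x}\rangle^{\mathtt b}\langle D\rangle^\fm A\langle D\rangle^\fm$ has entries
\[
\langle \ell-\ell',\,j-j'\rangle^{\mathtt b}\,\langle j\rangle^\fm\,\langle j'\rangle^\fm\,\big| A_j^{j'}(\ell-\ell')\big|,
\]
and one wants to dominate the $H^s$-operator-norm-type quantity built from these by the tame constants of $\langle D\rangle^{\fm+\mathtt b}A\langle D\rangle^{\fm+\mathtt b+1}$ and of its $\partial_{\vphi_i}^{s_0+\mathtt b}$-derivative. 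The key elementary inequality is the splitting $\langle\ell-\ell',j-j'\rangle^{\mathtt b}\lesssim_{\mathtt b}\langle\ell-\ell'\rangle^{\mathtt b}+\langle j-j'\rangle^{\mathtt b}$, combined with $\langle j-j'\rangle^{\mathtt b}\langle j\rangle^\fm\lesssim \langle j\rangle^{\fm+\mathtt b}\langle j'\rangle^{\mathtt b}$ (and symmetrically), so that each entry of the majorant is bounded, up to a constant depending on $\mathtt b$, by
\[
\langle j\rangle^{\fm+\mathtt b}\langle j'\rangle^{\fm+\mathtt b+1}\big|A_j^{j'}(\ell-\ell')\big|
\;+\;\langle\ell-\ell'\rangle^{\mathtt b}\,\langle j\rangle^{\fm+\mathtt b}\langle j'\rangle^{\fm+\mathtt b+1}\big|A_j^{j'}(\ell-\ell')\big|,
\]
i.e.\ by the absolute values of the entries of $\langle D\rangle^{\fm+\mathtt b}A\langle D\rangle^{\fm+\mathtt b+1}$ and of $\langle\partial_{\vphi,x}\rangle^{\mathtt b}$ applied to the same (the one extra power of $\langle j'\rangle$ being used to absorb $\langle j-j'\rangle^{\mathtt b}\le \langle j\rangle^{\mathtt b}\langle j'\rangle^{\mathtt b}$ and similar bookkeeping, together with the possibility $|j-j'|$ large while $|j|$ small).

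Next I would handle the frequency variable $\ell-\ell'$. The factor $\langle\ell-\ell'\rangle^{\mathtt b}$ is not directly a $\partial_\vphi$-derivative of order $\mathtt b$ — but one controls $\langle\ell-\ell'\rangle^{s_0+\mathtt b}$ by $\max_i |\ell_i-\ell_i'|^{s_0+\mathtt b}$ up to constants, i.e.\ by the entries of $\max_i \partial_{\vphi_i}^{s_0+\mathtt b}$ applied to the operator (the $s_0$ extra powers beyond $\mathtt b$ being harmless since they only strengthen the right-hand side). Thus the majorant entries are dominated (up to constants depending on $s_0,\mathtt b$) by the sum of the absolute values of the matrix entries of $\langle D\rangle^{\fm+\mathtt b}A\langle D\rangle^{\fm+\mathtt b+1}$ and of $\partial_{\vphi_i}^{s_0+\mathtt b}\langle D\rangle^{\fm+\mathtt b}A\langle D\rangle^{\fm+\mathtt b+1}$, $i=1,\dots,\nu$. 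Since $\|\,|B|\,u\|_s=\|\,|B|\,|u|\,\|_s$ and the Sobolev norm only sees absolute values of Fourier coefficients (cf.\ \eqref{Soboequals}, \eqref{funzioni modulo fourier}), the weighted modulo-tame estimate \eqref{CK0-tame} for the left-hand operator follows from the (ordinary) tame estimates \eqref{CK0-sigma-tame} for the operators on the right, using the inequality $\|\,|B_1+B_2|\,u\|_s\le \|\,|B_1|\,|u|\,\|_s+\|\,|B_2|\,|u|\,\|_s$ from \eqref{disuguaglianza importante moduli}. The derivatives $\partial_\lambda^k$ with $|k|\le k_0$ are carried along verbatim throughout, since all the manipulations above are algebraic in the matrix entries and commute with $\partial_\lambda$; this is what produces the $\mathcal D^{k_0}$ (as opposed to plain) tame/modulo-tame constants on both sides, and it is why the weights $\gamma^{|k|}$ match.

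I expect the main technical obstacle to be the careful bookkeeping of the exponents: one must check that a single extra power $\langle D\rangle^{+1}$ on the right (the asymmetry $\fm+\mathtt b$ versus $\fm+\mathtt b+1$) together with $s_0+\mathtt b$ derivatives in $\vphi$ really suffices to absorb $\langle\ell-\ell',j-j'\rangle^{\mathtt b}$ uniformly, in all the regimes ($|j-j'|$ comparable to $|j|$, or to $|j'|$, or both small, or $|\ell-\ell'|$ dominating). This is exactly the content of the analogous initialization lemma (Lemma 2.28) in \cite{BertiMontalto}, adapted by replacing $\langle\partial_\vphi\rangle^{\mathtt b}$ with $\langle\partial_{\vphi,x}\rangle^{\mathtt b}$, and the proof reduces to that computation; I would simply run through the entry-wise estimate above and then invoke \eqref{disuguaglianza importante moduli}, \eqref{Soboequals}, and the definition of tame constant to conclude.
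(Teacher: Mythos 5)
Your overall plan (work entry-wise, split off the weight $\langle\ell-\ell',j-j'\rangle^{\ttb}$, use the extra $s_0$ powers in $\varphi$ and the extra $\langle D\rangle$ as summable weights) is the right one, and your exponent arithmetic matches the paper's. However, the concluding step — passing from the pointwise entry bound to the modulo-tame estimate by invoking \eqref{disuguaglianza importante moduli} together with \eqref{Soboequals} and the tame estimates \eqref{CK0-sigma-tame} — contains a genuine gap. Inequality \eqref{disuggaglianza importante moduli} bounds $\||B_1+B_2|u\|_s$ by $\||B_1|\,\norma u\norma\|_s+\||B_2|\,\norma u\norma\|_s$; the quantities on the right involve the action of the \emph{majorant operators} $|B_i|$, not of $B_i$ themselves. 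A tame estimate for $B_i$ controls $\|B_i u\|_s$ but says nothing a priori about $\||B_i|\,\norma u\norma\|_s$ (in general $\mathfrak M_B\le\mathfrak M^\sharp_B$ with no reverse inequality), so this route would only establish the lemma with modulo-tame constants on the right, which is a weaker and in fact unavailable statement.

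The mechanism that converts ordinary tame constants into a modulo-tame bound is different, and you did not identify it: one first uses the $\ell^2$-type consequence \eqref{tame-coeff} of the tame estimate to control the weighted square sums $\sum_{\ell,j}\langle\ell,j\rangle^{2s}\langle\ell-\ell'\rangle^{2(s_0+\ttb)}\langle j\rangle^{2(\fm+\ttb)}|\partial_\lambda^k A_j^{j'}(\ell-\ell')|^2\langle j'\rangle^{2(\fm+\ttb+1)}$ in terms of $\mathbb M(s_0,\ttb),\mathbb M(s,\ttb)$ (this is where the factor $\langle\ell-\ell'\rangle^{2(s_0+\ttb)}\lesssim_{\ttb}1+\max_i|\ell_i-\ell_i'|^{2(s_0+\ttb)}$ enters), and then performs a Cauchy–Schwarz over $(\ell',j')$ splitting off the $\ell^2$-summable weight $\langle\ell-\ell'\rangle^{-s_0}\langle j'\rangle^{-1}$ — this is precisely why the right-hand side carries $s_0+\ttb$ derivatives in $\varphi$ and the asymmetric extra $\langle D\rangle$. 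Your pointwise domination of the majorant entries is consistent with this computation but does not on its own reach the conclusion; the summability step and \eqref{tame-coeff} are the missing (and essential) ingredients.
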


\begin{proof}
We denote by $ {\mathbb M} (s, \mathtt b)  $ the right hand side in \eqref{tame op modulo tame op lemma astratto}.
For any $\a, \b \in \N $, the matrix elements of the operator 
$\pa_{\ph_i}^\a \langle D \rangle^\b A \langle D \rangle^{\b + 1}$
are  $\ii^\a (\ell_i - \ell'_i)^\a \langle j \rangle^\b A_j^{j'}(\ell - \ell') 
\langle j' \rangle^{\b+1} $. 
Then, by \eqref{tame-coeff} with $ \s = 0 $, applied to 
the operators $ \langle D \rangle^{\fm + \mathtt b} A \langle D \rangle^{\fm + \mathtt b +  1} $ 
and 
$ \partial_{\vphi_i}^{s_0 +  {\mathtt b}} \langle D \rangle^{\fm + \mathtt b} A \langle D \rangle^{\fm + \mathtt b +  1} $, 
we get, using the inequality 
$ \langle \ell - \ell' \rangle^{2 (s_0+{\mathtt b})}   \lesssim_{\mathtt b} 
1 + \max_{i = 1, \ldots, \nu}|\ell_i - \ell'_i|^{2 (s_0+{\mathtt b})} $, the bound
\begin{align}
& \!\! \! \gamma^{2|k|}  {\mathop \sum}_{\ell, j} \langle \ell, j \rangle^{2 s} 
 \langle \ell - \ell' \rangle^{2 ({s_0+{\mathtt b}})}  \langle j \rangle^{2 (\fm + \mathtt b)} 
  | \partial_\lambda^k {A}_j^{j'}(\ell - \ell')|^2 \langle j' \rangle^{2 (\fm + \mathtt b + 1)}  
\notag \\ 
& \qquad 
\lesssim_{\mathtt b}  {\mathbb M}^2(s_0, {\mathtt b}) \langle \ell', j'\rangle^{2 s} + 
{\mathbb M}^2(s, {\mathtt b}) \langle \ell', j' \rangle^{2 s_0}  \, .  \label{carletto}
\end{align}
For all $|k | \leq k_0 $, by Cauchy-Schwarz inequality and using that 
\begin{equation} \label{1309.1}
\langle\ell - \ell', j - j' \rangle^{\mathtt b} 
\lesssim_\ttb \langle \ell - \ell' \rangle^\mathtt b \langle j - j' \rangle^{\mathtt b} 
\lesssim_\ttb \langle \ell - \ell' \rangle^{\mathtt b }(\langle j  \rangle^{\mathtt b} + \langle j'  \rangle^{\mathtt b} ) 
\lesssim_\ttb \langle \ell - \ell' \rangle^{\mathtt b } \langle j \rangle^{\mathtt b} \langle j' \rangle^{\mathtt b}
\end{equation} 
we get
\begin{align}
\| |\langle \partial_{\vphi, x} \rangle^{\mathtt b} \langle D \rangle^\fm 
& \partial_\lambda^k A \langle D \rangle^\fm | h \|_s^2 
\lesssim_{\mathtt b} 
{\mathop \sum}_{\ell, j} \langle \ell, j \rangle^{2 s} 
\Big( {\mathop \sum}_{\ell', j'} | \langle \ell - \ell' \rangle^{\mathtt b} \langle j \rangle^{\fm + \mathtt b} \partial_\lambda^k   A_j^{j'}(\ell - \ell') \langle j' \rangle^{\fm + \mathtt b}| |h_{\ell', j'}| \Big)^2 
\nonumber \\
& \lesssim_{\mathtt b} {\mathop \sum}_{\ell, j} \langle \ell, j \rangle^{2 s} 
\Big(  {\mathop \sum}_{\ell', j'} \langle \ell - \ell' \rangle^{s_0 + \mathtt b} \langle  j \rangle^{\fm + \mathtt b}
|\partial_\lambda^k A_j^{j'}(\ell - \ell')| \langle j' \rangle^{\fm + \mathtt b + 1} |h_{\ell', j'}| \frac{1}{\langle \ell - \ell' \rangle^{s_0} \langle j' \rangle } \Big)^2 \nonumber \\
& \lesssim_{s_0, \mathtt b} {\mathop \sum}_{\ell, j} \langle \ell, j \rangle^{2 s} 
{\mathop \sum}_{\ell', j'} \langle \ell - \ell' \rangle^{2 (s_0 + \mathtt b)} 
\langle j \rangle^{2 (\fm + \mathtt b)} |\partial_\lambda^k A_j^{j'}(\ell - \ell')|^2 \langle j' \rangle^{2 (\fm + \mathtt b + 1)} |h_{\ell', j'}|^2 \nonumber \\
& \lesssim_{s_0, \mathtt b}  {\mathop \sum}_{\ell' , j'} |h_{\ell', j'}|^2 
{\mathop \sum}_{\ell, j} \langle \ell, j \rangle^{2 s} \langle \ell - \ell' \rangle^{2 (s_0 + \mathtt b)} \langle j \rangle^{2 (\fm + \mathtt b)} |\partial_\lambda^k A_j^{j'}(\ell - \ell')|^2 \langle j' \rangle^{2 (\fm + \mathtt b + 1)} \nonumber \\
& \stackrel{ \eqref{carletto}}{{\lesssim}_{s_0, \mathtt b}}  \gamma^{- 2 |k|}
{\mathop \sum}_{\ell', j'} |h_{\ell', j'}|^2 
\big( {\mathbb M}^2(s_0, \mathtt b) \langle \ell', j' \rangle^{2 s} + {\mathbb M}^2(s, \mathtt b) \langle \ell', j' \rangle^{2 s_0}    \big)
\nonumber   \\
 & \lesssim_{s_0, \mathtt b}  \gamma^{- 2|k|} \big( {\mathbb M}^2(s_0, \mathtt b) \| h \|_{s}^2 + {\mathbb M}^2(s, \mathtt b) \| h \|_{s_0}^2 \big)   \label{tame-inizio}
\end{align}
using \eqref{Soboequals}, whence the claimed statement follows.  
\end{proof}

\begin{lemma}\label{lemma coniugazione proiettore pi 0}
Let $\pi_0$ be the projector defined in \eqref{def pi0} by $\pi_0 u := \frac{1}{2\pi} \int_\T u(x)\, d x$. 
Let $A,B$ be $\vphi$-dependent families of operators as in \eqref{matrice operatori Toplitz} that, 
together with their adjoints $A^*, B^*$ with respect to the $L^2_x$ scalar product,
are ${\cal D}^{k_0}$-$\sigma$-tame. 
Let $m_1, m_2 \geq 0$, $\beta_0 \in \N$. 
Then for any $\beta \in \N^\nu$, $|\beta| \leq \beta_0$, the operator 
$\langle D \rangle^{m_1} \big( \partial_\vphi^\beta ( A \pi_0 B - \pi_0 ) \big) \langle D \rangle^{m_2}$ is ${\cal D}^{k_0}$-tame with a tame constant satisfying, for all $s \geq s_0$, 
\be
\begin{aligned}\label{tame-c-finale}
\mathfrak M_{\langle D \rangle^{m_1} ( \partial_\vphi^\beta ( A \pi_0 B - \pi_0 ) ) 
\langle D \rangle^{m_2}}(s) 
& \lesssim_{m, s, \beta_0, k_0} 
\mathfrak M_{A - \Id}(s + \beta_0 + m_1) \big( 1 + \mathfrak M_{B^* - \Id}(s_0 + m_2) \big) 
\\ & \qquad \quad
+ \mathfrak M_{B^* - \Id}(s + \beta_0 + m_2) \big( 1 + \mathfrak M_{A - \Id}(s_0 + m_1) \big).
\end{aligned}
\ee
The same estimate holds if $A, B$ are matrix operators of the form \eqref{operatori in coordinate complesse-bis} 
and $\pi_0$ is replaced by the matrix operator 
$\Pi_0$ defined in \eqref{def pauli matrix cal R4}.
\end{lemma}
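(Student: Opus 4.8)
The plan is to exploit two elementary facts: that $\pi_0$ has rank one in the $x$-variable, and that $\langle D\rangle^m$ acts as the identity on the zero $x$-mode. Write $e_0:=(2\pi)^{-1/2}$ for the normalised constant function on $\T_x$, so that $\pi_0=e_0\otimes e_0$ in the sense $(\pi_0 v)(\ph,\cdot)=e_0\,\langle v(\ph,\cdot),e_0\rangle_{L^2_x}$. Since $\chi(0)=0$ in \eqref{cut off simboli 1}, the symbol of $\langle D\rangle^m$ equals $1$ at $\xi=0$, whence $\langle D\rangle^{m_1}\pi_0=\pi_0=\pi_0\langle D\rangle^{m_2}$ and $\langle D\rangle^m e_0=e_0$. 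Decomposing $A=\Id+(A-\Id)$, $B=\Id+(B-\Id)$ gives the algebraic identity
\[
A\pi_0 B-\pi_0=(A-\Id)\pi_0+\pi_0(B-\Id)+(A-\Id)\pi_0(B-\Id).
\]
I would sandwich each of the three summands by $\langle D\rangle^{m_1},\langle D\rangle^{m_2}$, absorbing the $\langle D\rangle$-factor that is adjacent to a $\pi_0$ into that $\pi_0$, and use $\pi_0=e_0\otimes e_0$ together with $\langle Cv,e_0\rangle_{L^2_x}=\langle v,C^*e_0\rangle_{L^2_x}$ applied to the operator standing to the right of $\pi_0$. This rewrites each summand as a $\ph$-dependent rank-one operator $u\longmapsto g_1(\ph,\cdot)\,\langle u(\ph,\cdot),g_2(\ph,\cdot)\rangle_{L^2_x}$ with $g_1\in\{e_0,\ \langle D\rangle^{m_1}(A-\Id)e_0\}$ and $g_2\in\{e_0,\ \langle D\rangle^{m_2}(B^*-\Id)e_0\}$, not both equal to $e_0$; this is the only step where the $L^2_x$-adjoint enters, and it is why only $A-\Id$ and $B^*-\Id$ appear in the estimate (hence the hypotheses on $A,A^*,B,B^*$).

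Next I would apply $\partial_\ph^\beta$. Since $\pi_0$ and $\langle D\rangle^{m_i}$ are $\ph$-independent, the Leibniz rule yields a finite sum over $\beta_1+\beta_2=\beta$ of operators of the same rank-one shape, now with $g_1=\langle D\rangle^{m_1}\partial_\ph^{\beta_1}\big((A-\Id)e_0\big)$ (or $g_1=e_0$ when the $A$-factor is absent, all of $\beta$ then falling on the remaining factor) and $g_2=\langle D\rangle^{m_2}\partial_\ph^{\beta_2}\big((B^*-\Id)e_0\big)$ (or $g_2=e_0$); here I use that $\partial_\ph^\gamma$ commutes with the $L^2_x$-adjoint up to the sign $(-1)^{|\gamma|}$. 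The key observation is that each $g_i$ is obtained by letting a ${\cal D}^{k_0}$-$\sigma$-tame operator act on the \emph{constant} $e_0$, whose Sobolev norms are all equal and independent of $\lambda$; by Lemma \ref{lemma operatore e funzioni dipendenti da parametro} the loss $\sigma$ is then irrelevant and
\[
\|g_1\|_s^{k_0,\gamma}\lesssim_{\beta_0}{\mathfrak M}_{A-\Id}(s+|\beta_1|+m_1),\qquad \|g_2\|_s^{k_0,\gamma}\lesssim_{\beta_0}{\mathfrak M}_{B^*-\Id}(s+|\beta_2|+m_2),
\]
each $\ph$-derivative costing one Sobolev derivative, $\langle D\rangle^{m_i}$ costing $m_i$, and with the convention $\|e_0\|_s^{k_0,\gamma}\lesssim 1$.

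It then remains to estimate a single rank-one operator $u\mapsto g_1\,\langle u,g_2\rangle_{L^2_x}$. The function $\langle u(\ph,\cdot),g_2(\ph,\cdot)\rangle_{L^2_x}$ is, up to a constant, the zeroth $x$-Fourier coefficient of $u\,\overline{g_2}$, so $\|\langle u,g_2\rangle_{L^2_x}\|_{H^s_\ph}^{k_0,\gamma}\leq\|u\,\overline{g_2}\|_s^{k_0,\gamma}$, which is controlled by the tame product estimate \eqref{p1-pr}; applying \eqref{p1-pr} once more to $g_1\cdot\langle u,g_2\rangle_{L^2_x}$ shows that this operator is ${\cal D}^{k_0}$-tame with \emph{zero} loss of derivatives and tame constant $\lesssim\|g_1\|_s^{k_0,\gamma}\|g_2\|_{s_0}^{k_0,\gamma}+\|g_1\|_{s_0}^{k_0,\gamma}\|g_2\|_s^{k_0,\gamma}$. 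Summing over the three summands and over $\beta_1+\beta_2=\beta$, inserting the bounds on $\|g_i\|$, and using the interpolation inequality of Lemma \ref{lemma:interpolation} to move the at most $\beta_0$ derivatives $\partial_\ph^{\beta_1},\partial_\ph^{\beta_2}$ onto the factor evaluated at the high index $s$, then yields \eqref{tame-c-finale}. The matrix case is identical, with $e_0$ replaced by the relevant constant $\C^2$-valued vectors and $\pi_0$ by $\Pi_0$, which has rank $\leq 2$ in $x$.

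The conceptual steps are all short; the hard part will be the combined bookkeeping of the two distinct losses of derivatives — the $\beta_0$ from $\partial_\ph^\beta$ and the $m_1+m_2$ from the $\langle D\rangle$-factors — and in particular arranging that the $\beta_0$-shift falls only on the high Sobolev index in \eqref{tame-c-finale}. This forces the use of interpolation (equivalently, of the log-convexity in $s$ of the tame constants, which may be taken of the form $C(1+\|\cdot\|_{s+\mu}^{k_0,\gamma})$) rather than the crude monotonicity of ${\mathfrak M}$; the adjoint identity $\big(\partial_\ph^{\beta_2}(B-\Id)\big)^*=\pm\,\partial_\ph^{\beta_2}(B^*-\Id)$, though routine, is likewise essential to obtain $B^*-\Id$ rather than $B-\Id$ in the final bound.
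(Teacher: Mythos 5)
Your approach is essentially the paper's: reduce $A\pi_0 B - \pi_0$ to rank-one operators using $\pi_0 = e_0 \otimes e_0$ and the identities $\langle D\rangle^{m}\pi_0 = \pi_0 = \pi_0\langle D\rangle^{m}$, then estimate via Sobolev product bounds. The paper compresses this into the single formula $\langle D\rangle^{m_1}(A\pi_0 B - \pi_0)\langle D\rangle^{m_2}[h] = g_1(h,g_2)_{L^2_x} + (h,g_3)_{L^2_x}$ with $g_1 = \frac{1}{2\pi}\langle D\rangle^{m_1}(A-\Id)[1]$, $g_2 = \langle D\rangle^{m_2}B^*[1]$, $g_3 = \frac{1}{2\pi}\langle D\rangle^{m_2}(B^*-\Id)[1]$, which is algebraically the same as your three-term decomposition once you write $g_2 = 1 + 2\pi g_3$. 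The bookkeeping subtlety you flag at the end is real and the paper's one-line proof glosses over it: after the Leibniz split $\beta_1 + \beta_2 = \beta$, the straightforward estimate yields $\mathfrak M_{A-\Id}(s+\beta_0+m_1)\big(1+\mathfrak M_{B^*-\Id}(s_0+\beta_0+m_2)\big) + \cdots$, i.e.\ $\beta_0$ is added to the $s_0$-slot as well, and recovering the exact form \eqref{tame-c-finale} for a \emph{generic} non-decreasing tame constant requires some log-convexity of $\mathfrak M$, which is not among the hypotheses (Lemma~\ref{lemma:interpolation} concerns Sobolev norms, not tame constants). In practice this is harmless — in all applications the tame constants are of the form $C(1+\|\fracchi_0\|_{s+\mu}^{k_0,\gamma})$ and both $\mathfrak M_{A-\Id},\mathfrak M_{B^*-\Id}$ are controlled, so the weaker bound with $s_0+\beta_0+m_i$ would serve equally well — but you are right to notice that this is the one place where the argument is not purely formal.
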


\begin{proof}
A direct calculation shows that $\langle D \rangle^{m_1} \big( A \pi_0 B - \pi_0\big) \langle D \rangle^{m_2} [h] 
= g_1 ( h, g_2 )_{L^2_x} + (h, g_3)_{L^2_x}$ where $ g_1, g_2, g_3 $ are the functions defined by
$$
g_1 := \frac{1}{2\pi}\, \langle D \rangle^{m_1}(A - {\rm Id})[1]\,, \quad 
g_2 := \langle D \rangle^{m_2} B^*[1]\,, \quad 
g_3 := \frac{1}{2\pi}\, \langle D \rangle^{m_2}(B^* - {\rm Id})[1] \,. 
$$
The estimate \eqref{tame-c-finale} then follows by computing for any $\beta \in \N^\nu$, $k \in \N^{\nu + 1}$ with $|\beta| \leq \beta_0$, $|k| \leq k_0$, the operator $\partial_\lambda^k \partial_\vphi^\beta \big(\langle D \rangle^{m_1} ( A \pi_0 B - \pi_0 ) \langle D \rangle^{m_2} \big)$. 
\end{proof}

\subsection{Tame estimates for the flow of pseudo-PDEs}\label{AppendiceA} 

We report in this section several 
results 
concerning  tame estimates  for the flow $ \Phi^\tau $ of the pseudo-PDE
\begin{equation}\label{pseudo PDE}
\begin{cases}
\partial_\tau u = \ii a(\vphi, x) |D|^{\frac12}  u  \\
u(0, x) = u_0(\vphi, x) \, , 
\end{cases}\qquad \vphi \in \T^\nu\,, \quad  x \in \T\,,
\end{equation}
where $a(\vphi, x) = a(\lambda, \vphi, x) $ is a real valued function that is $ {\cal C}^\infty$ 
with respect to the variables $(\vphi, x)$ and $ k_0 $ times differentiable with respect to the parameters 
$\lambda =(\omega, \h ) $. 
The function $ a := a(i) $  may depend also on the ``approximate" torus $ i (\vphi )$.  
Most of these results  have been obtained  in the Appendix of \cite{BertiMontalto}. 

The flow operator 
$ \Phi^\tau := \Phi (\tau) := \Phi (\lambda, \vphi, \tau) $ 
satisfies the equation 
\begin{equation}\label{flow-propagator-beta-1}
\begin{cases}
\partial_\tau \Phi(\tau) = \ii a(\vphi, x) |D|^{\frac12} \Phi(\tau) \\
\Phi(0) = {\rm Id}\,.
\end{cases}
\end{equation}
Since the function $ a (\vphi, x)  $ is real valued, usual energy estimates imply that the flow $ \Phi (\tau) $ is a 
bounded operator mapping $  H^s_x $ to $ H^s_x $.
In the Appendix of \cite{BertiMontalto} it is proved that 
the flow $ \Phi (\tau) $ satisfies also tame estimates in $ H^s_{\vphi, x} $, see Proposition \ref{proposition 2.40 unificata} below.
Moreover, since \eqref{pseudo PDE} is an autonomous equation, its flow $ \Phi (\vphi, \tau) $  satisfies the group property 
\be\label{group-flow}
\Phi (\vphi, \tau_1 + \tau_2 ) =  \Phi (\vphi, \tau_1  ) \circ  \Phi (\vphi, \tau_2 ) \, ,  \qquad 
\Phi (\vphi, \tau )^{-1} = \Phi (\vphi, - \tau ) \, , 
\ee
and, 
since $ a  (\lambda, \cdot) $ is $ {k_0} $ times differentiable  with respect to the parameter $ \lambda $, then
 $ \Phi (\lambda, \vphi, \tau) $   is  $ k_0 $ times differentiable with 
respect to $ \lambda $ as well. 
Also notice that  $  \Phi^{-1} (\tau) = \Phi( - \tau ) = \overline \Phi ( \tau ) $, because these operators solve the same Cauchy problem.  
Moreover, if $ a(\vphi, x)   $ is $ \odd(\vphi)\even(x) $, 
then, recalling Section \ref{sezione operatori reversibili e even}, the real operator 
$$ 
{\bf \Phi} (\vphi, \tau ) := \begin{pmatrix} \Phi (\vphi, \tau) & 0 \\ 0 & \overline\Phi (\vphi, \tau) \end{pmatrix} 
$$ 
is even and reversibility preserving.

The operator  $ \partial_\lambda^k \pa_{\vphi}^{\beta} \Phi $ 
loses $ | D_x |^{\frac{|\b| + |k|}{2}} $ derivatives, 
which,  in \eqref{copenaghen B omega} below,  
are compensated  by $ \langle D \rangle^{- m_1} $ on the left hand side and $\langle D \rangle^{- m_2}$ on the right hand side, 
with $m_1, m_2 \in \R$ satisfying $m_1 + m_2= \frac{|\b| + |k|}{2}$. 
The following proposition provides tame estimates in the Sobolev spaces
$ H^{s}_{\vphi, x} $.  

\begin{proposition} \label{proposition 2.40 unificata}
Let $ \beta_0, k_0 \in \N$. 
For any $\beta, k \in \N^\nu$ with $|\b| \leq \b_0$, $|k| \leq k_0$,
for any $m_1, m_2 \in \R$ with $m_1 + m_2 = \frac{|\beta| + |k|}{2}$,
for any $s \geq s_0$, 
there exist constants 
$ \sigma(|\beta|, |k|, m_1, m_2) > 0$, $\delta(s, m_1) > 0$ such that if
\begin{equation}\label{piccolezza a partial vphi beta k D beta k}
\| a \|_{ 2s_0 + |m_1| + 2} \leq \delta (s, m_1) \,, \quad  
\| a \|_{s_0 + \sigma(\b_0, k_0, m_1, m_2)}^{k_0, \gamma} \leq 1\,, 
\end{equation}
then the flow $ \Phi(\tau) := \Phi(\lambda, \vphi, \tau)$ of \eqref{pseudo PDE} satisfies
\begin{align}
& \sup_{\tau \in [0, 1]} \| \langle D \rangle^{- m_1} \partial_\lambda^k \partial_\vphi^\beta \Phi (\tau)  \langle D \rangle^{- m_2} h \|_s 
\lesssim_{s,\b_0, k_0, m_1,m_2} \gamma^{- |k|} 
\Big( \| h \|_s + \| a \|_{s + \sigma( |\beta|, |k|, m_1, m_2)}^{k_0, \gamma} \| h \|_{s_0} \Big)\label{copenaghen B omega} \\
& 
\sup_{\tau \in [0, 1]} \| \partial_\lambda^k (\Phi(\tau )- {\rm I d}) h \|_s   \lesssim_s \gamma^{- |k|}\big( \| a \|_{s_0}^{k_0, \gamma} \|  h\|_{s + \frac{|k| + 1}{2}} + \| a \|_{s + s_0 + k_0 + \frac32}^{k_0, \gamma} \| h \|_{s_0 + \frac{|k| + 1}{2}}  \big) \, .  
\label{arido 2}
\end{align}
\end{proposition}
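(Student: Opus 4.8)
The argument rests on an energy estimate for the Cauchy problem \eqref{flow-propagator-beta-1} together with an induction on the total number of derivatives $N:=|\beta|+|k|$. The structural point that makes everything work is that, since $a$ is real valued and $|D|^{\frac12}$ is self-adjoint, the self-adjoint part of the generator $\ii a|D|^{\frac12}$ equals $-\tfrac{\ii}{2}[|D|^{\frac12},a] \in OPS^{-1/2}$, hence it is a \emph{bounded} operator whose $\mathcal L(L^2)$-norm is controlled by $\|a\|_{C^{\sigma_0}}$ for a fixed $\sigma_0$; by Gr\"onwall this gives well-posedness of \eqref{flow-propagator-beta-1} on every $H^s_x$ and an a priori bound $\|\Phi(\tau)u\|_{L^2_x}\lesssim e^{C\|a\|_{C^{\sigma_0}}}\|u\|_{L^2_x}$. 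More generally, for any $m\in\R$ the conjugated operator $\langle D\rangle^m\Phi(\tau)\langle D\rangle^{-m}$ solves the equation with generator $\ii a|D|^{\frac12}+[\langle D\rangle^m,\ii a|D|^{\frac12}]\langle D\rangle^{-m}$, and the extra term lies in $OPS^{-1/2}$, so the same energy argument applies; this is the substitute for an Egorov statement, and it is here that the smallness hypothesis \eqref{piccolezza a partial vphi beta k D beta k} on $\|a\|_{2s_0+|m_1|+2}$ enters, to keep the Gr\"onwall constants of these conjugated flows uniformly bounded.

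\textbf{Base case $\beta=k=0$.} Here $m_1=m_2=0$ and \eqref{copenaghen B omega} reduces to the tame energy estimate $\sup_{\tau\in[0,1]}\|\Phi(\tau)h\|_s\lesssim\|h\|_s+\|a\|_{s+\sigma}^{k_0,\gamma}\|h\|_{s_0}$. Set $w(\tau):=\langle D\rangle^s\Phi(\tau)h$, so that $\partial_\tau w=\ii a|D|^{\frac12}w+C_s w$ with $C_s:=[\langle D\rangle^s,\ii a|D|^{\frac12}]\langle D\rangle^{-s}\in OPS^{-1/2}$. Differentiating $\|w(\tau)\|_{L^2}^2$, the contribution of $\ii a|D|^{\frac12}$ is absorbed by its bounded self-adjoint part, while the contribution of $C_s$ is bounded via $\|C_sw\|_{L^2}$, which by Lemma \ref{lemma tame norma commutatore} applied to the pseudo-differential norm of $C_s$ and by Lemma \ref{lemma: action Sobolev} is $\lesssim\|a\|_{s_0+\sigma}^{k_0,\gamma}\|w\|_{L^2}+\|a\|_{s+\sigma}^{k_0,\gamma}\|\Phi(\tau)h\|_{s_0}$. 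Running Gr\"onwall first at $s=s_0$ and then at general $s$ gives the claim; the same computation with $\langle D\rangle^s$ replaced by $\langle D\rangle^{s+m}$ (acting on $\langle D\rangle^{-m}h$) shows that $\langle D\rangle^m\Phi(\tau)\langle D\rangle^{-m}$ is ${\cal D}^{k_0}$-tame with tame constant $\lesssim 1+\|a\|_{s+\sigma(m)}^{k_0,\gamma}$ for every $m\in\R$.

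\textbf{Inductive step and the estimate \eqref{arido 2}.} Assume \eqref{copenaghen B omega} for all pairs of total order $<N$, and let $|\beta|+|k|=N$. Applying $\partial_\vphi^\beta\partial_\lambda^k$ to \eqref{flow-propagator-beta-1} and using Duhamel's formula (note $\partial_\vphi^\beta\partial_\lambda^k\Phi(0)=0$ for $N\geq1$), and since $|D|^{\frac12}$ has constant coefficients,
\[
\partial_\vphi^\beta\partial_\lambda^k\Phi(\tau)=\sum_{\substack{\beta'\leq\beta,\ k'\leq k\\ (\beta',k')\neq(\beta,k)}}c_{\beta'k'}\int_0^\tau\Phi(\tau-s)\,\ii\big(\partial_\vphi^{\beta-\beta'}\partial_\lambda^{k-k'}a\big)\,|D|^{\frac12}\,\partial_\vphi^{\beta'}\partial_\lambda^{k'}\Phi(s)\,d s\,.
\]
For each term write $\langle D\rangle^{-m_1}\Phi(\tau-s)=\big(\langle D\rangle^{-m_1}\Phi(\tau-s)\langle D\rangle^{m_1}\big)\langle D\rangle^{-m_1}$, where the parenthesis is a bounded tame flow by the base case, and insert $\langle D\rangle^{\mu_1'}\langle D\rangle^{-\mu_1'}$ with $\mu_1':=\tfrac{|\beta'|+|k'|}{2}-m_2$. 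The middle factor $\langle D\rangle^{-m_1}\big(\partial_\vphi^{\beta-\beta'}\partial_\lambda^{k-k'}a\big)|D|^{\frac12}\langle D\rangle^{\mu_1'}$ has order $-m_1+\tfrac12+\mu_1'=\tfrac{|\beta'|+|k'|+1-N}{2}\leq0$ since $|\beta'|+|k'|\leq N-1$, hence it is a bounded pseudo-differential operator whose tame norm is controlled by $\|a\|_{s+\sigma}^{k_0,\gamma}$ and which carries a factor $\gamma^{-|k-k'|}$ from $\partial_\lambda^{k-k'}$; the remaining factor $\langle D\rangle^{-\mu_1'}\partial_\vphi^{\beta'}\partial_\lambda^{k'}\Phi(s)\langle D\rangle^{-m_2}$ is tame by the inductive hypothesis with a factor $\gamma^{-|k'|}$ (indeed $\mu_1'+m_2=\tfrac{|\beta'|+|k'|}{2}$, as required there). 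Composing via Lemma \ref{composizione operatori tame AB}, using Lemma \ref{lemma: action Sobolev} and integrating $s\in[0,1]$ yields \eqref{copenaghen B omega}, the $\gamma$-powers combining to $\gamma^{-|k|}$. Finally \eqref{arido 2} follows by writing $\Phi(\tau)-\Id=\ii a|D|^{\frac12}\int_0^\tau\Phi(s)\,d s$, applying $\partial_\lambda^k$ and Leibniz, estimating each $\partial_\lambda^{k'}\Phi(s)$ by \eqref{copenaghen B omega} with $\beta=0$, $m_1=0$, $m_2=|k'|/2$ (so that the inner $|D|^{\frac12}$ together with the outer one costs a total of $\tfrac{|k|+1}{2}$ derivatives), and applying the tame product estimate \eqref{p1-pr} to the multiplication by $\partial_\lambda^{k-k'}a$.

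\textbf{Main obstacle.} The delicate part is the bookkeeping of the auxiliary indices $\sigma(|\beta|,|k|,m_1,m_2)$: at every step of the induction one must verify that the intermediate operator $\langle D\rangle^{-m_1}\big(\partial_\vphi^{\beta-\beta'}\partial_\lambda^{k-k'}a\big)|D|^{\frac12}\langle D\rangle^{\mu_1'}$ has non-positive order — which is precisely the role of the constraint $m_1+m_2=\tfrac{|\beta|+|k|}{2}$ — while tracking how many derivatives of $a$ are consumed by the tame composition and commutator estimates (Lemmas \ref{lemma stime Ck parametri}, \ref{lemma tame norma commutatore}, \ref{composizione operatori tame AB}, \ref{lemma: action Sobolev}) and by the smallness condition \eqref{piccolezza a partial vphi beta k D beta k} ensuring uniform Gr\"onwall constants for the conjugated flows $\langle D\rangle^{\pm m}\Phi(\tau)\langle D\rangle^{\mp m}$. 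Most of these computations have been carried out in the Appendix of \cite{BertiMontalto}.
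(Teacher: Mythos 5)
Your proof is correct and takes essentially the same route as the paper, which defers to Propositions A.7, A.10 and A.11 in the cited reference \cite{BertiMontalto}: a base-case energy estimate exploiting that the self-adjoint part of $\ii a|D|^{1/2}$ and the correction from conjugation by $\langle D\rangle^m$ are both in $OPS^{-1/2}$, followed by a Leibniz/Duhamel induction in the total number of $(\vphi,\lambda)$-derivatives, with the index constraint $m_1+m_2=\tfrac{|\beta|+|k|}{2}$ used exactly as you say to keep the intermediate symbol of non-positive order. The only thing the paper adds over \cite{BertiMontalto} is precisely the symmetric $\langle D\rangle^{-m_1},\langle D\rangle^{-m_2}$ conjugation you handle in the base case, so the reasoning matches.
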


\begin{proof}
The proof is similar to Propositions A.7, A.10 and A.11 in \cite{BertiMontalto} with, in addition, the presence of $\langle D \rangle^{- m_1}$ and $\langle D \rangle^{- m_2}$ in \eqref{copenaghen B omega}.
\end{proof}

We consider also the dependence of the flow $\Phi$  with respect to the torus $ i := i(\vphi ) $ and the estimates for the adjoint operator $\Phi^*$.  

\begin{lemma} \label{lemma 2.42 unificato}
Let $s_1 > s_0$,  $ \beta_0 \in \N$. 
For any $\beta \in \N^\nu$, $|\beta| \leq \beta_0$, for any $m_1, m_2 \in \R$ satisfying $m_1 + m_2 = \frac{|\beta| + 1}{2}$ there exists 
a constant $\sigma(|\beta|) = \sigma(|\beta|, m_1, m_2) > 0$ such that if
$ \| a \|_{s_1 + \sigma(\beta_0)} \leq \delta (s) $
with $\delta(s) > 0$ small enough, then the following estimate holds:  
\begin{align}
\sup_{\tau \in [0, 1]} \| \langle D \rangle^{- m_1}  \partial_\vphi^\beta \Delta_{12}\Phi (\tau)  \langle D \rangle^{- m_2} h \|_{s_1} 
\lesssim_{s_1} \| \Delta_{12} a \|_{s_1 + \sigma(|\beta|)} \| h \|_{s_1} \, ,
\label{stima derivate flusso generalissima derivate i}
\end{align}
where $\Delta_{12}\Phi:= \Phi(i_2) - \Phi(i_1)$ and $\Delta_{12}a:= a(i_2) - a(i_1)$. Moreover, for any 
$ k \in \N^{\nu + 1}$, $|k| \leq k_0$, for all $s \geq s_0 $, 
\begin{align*}
 \| (\partial_\lambda^k\Phi^* )  h \|_s & \lesssim_s \gamma^{- |k|} \big( \| h \|_{s + \frac{|k|}{2}} + \| a \|_{s + s_0 + |k| + \frac32}^{k_0, \gamma} \| h \|_{s_0 + \frac{|k|}{2}}\big)  \\
 \| \partial_\lambda^k(\Phi^* - {\rm Id}) h \|_s & \lesssim_s \gamma^{- |k|} \big(\| a \|_{s_0}^{k_0, \gamma} \| h \|_{s + \frac{|k| + 1}{2}} + \| a \|_{s + s_0 + |k| + 2}^{k_0, \gamma} \| h \|_{s_0 + \frac{|k| + 1}{2}} \big) \, . 
\end{align*}
Finally, for all $s \in [s_0, s_1] $,
$$
\| \Delta_{12} \Phi^* h \|_s 
\lesssim_s \| \Delta_{12} a \|_{s + s_0 + \frac12} \| h \|_{s + \frac12} \,.
$$
\end{lemma}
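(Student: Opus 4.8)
All three bounds are obtained by the scheme of Propositions A.7, A.10 and A.11 of \cite{BertiMontalto}: write a Duhamel (variation of constants) formula, expand the $ \vphi $-derivatives by Leibniz, and estimate each factor by the flow bounds \eqref{copenaghen B omega}--\eqref{arido 2}, distributing the regularizing weights so as to compensate the derivative losses.

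\textbf{Dependence on the torus.} Set $ a_\ell := a(i_\ell) $ and $ \Phi_\ell(\tau) := \Phi(i_\ell;\tau) $, $ \ell=1,2 $. Subtracting the two equations \eqref{flow-propagator-beta-1} gives $ \partial_\tau(\Delta_{12}\Phi) = \ii a_2 |D|^{\frac12}\Delta_{12}\Phi + \ii(\Delta_{12}a)|D|^{\frac12}\Phi_1 $ with $ \Delta_{12}\Phi(0)=0 $, hence, by the group property \eqref{group-flow} of $ \Phi_2 $,
$$
\Delta_{12}\Phi(\tau) = \ii \int_0^\tau \Phi_2(\tau)\,\Phi_2(s)^{-1}\,(\Delta_{12}a)\,|D|^{\frac12}\,\Phi_1(s)\, d s \, .
$$
Apply $ \langle D\rangle^{-m_1}\partial_\vphi^\beta(\cdot)\langle D\rangle^{-m_2} $, expand $ \partial_\vphi^\beta $ over the three factors $ \Phi_2(\tau)\Phi_2(s)^{-1} $, $ \Delta_{12}a $, $ \Phi_1(s) $, and insert pairs of weights $ \langle D\rangle^{\mu}\langle D\rangle^{-\mu} $ between consecutive factors. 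Every $ \partial_{\vphi_i} $ hitting a flow factor costs $ |D|^{1/2} $ and the inhomogeneity carries one extra $ |D|^{1/2} $; since $ m_1+m_2=(|\beta|+1)/2 $, these losses are matched by the outer weights provided the inner $ \mu $'s are chosen so that each flow factor is sandwiched by weights $ \langle D\rangle^{-m_1'}(\cdot)\langle D\rangle^{-m_2'} $ with $ m_1'+m_2' $ equal to half the number of $ \vphi $-derivatives it absorbs. Then \eqref{copenaghen B omega} (with $ k=0 $) estimates the flow factors and \eqref{p1-pr} the middle one, all at the single index $ s_1 $; estimating the flows only at the low index $ s_1 $ avoids the high-norm term, so the bound is linear in $ \| \Delta_{12}a\|_{s_1+\sigma(|\beta|)} $. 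The smallness hypothesis $ \|a\|_{s_1+\sigma(\beta_0)}\le\delta(s) $ guarantees the hypotheses \eqref{piccolezza a partial vphi beta k D beta k} of Proposition \ref{proposition 2.40 unificata}, after enlarging $ \sigma $.

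\textbf{The adjoint.} Since $ a $ is real, $ (\ii a|D|^{1/2})^*=-\ii|D|^{1/2}a $, and since the generator is time independent $ \Phi(\tau)^* $ is the flow at time $ \tau $ of the pseudo-PDE $ \partial_\tau w=-\ii|D|^{1/2}(aw) $. By \eqref{expansion symbol} and Lemma \ref{lemma stime Ck parametri}, $ |D|^{1/2}\circ a=\Op\big(\chi(\xi)|\xi|^{1/2}a(\vphi,x)\big)+R $ with $ R\in OPS^{-1/2} $ and $ \norma R\norma_{-1/2,s,0}^{k_0,\gamma}\lesssim_s\|a\|_{s+s_0+2}^{k_0,\gamma} $, so $ \Phi^* $ is the flow of $ \partial_\tau w=-\ii\, b(\vphi,x,D)\,w $ with $ b\in S^{1/2} $, real leading symbol $ -a(\vphi,x)|\xi|^{1/2}\chi(\xi) $, lower order part controlled by weighted norms of $ a $. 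The proof of Proposition \ref{proposition 2.40 unificata} uses only realness of the leading symbol (for the energy estimates) and carries the lower order part by Duhamel, hence \eqref{copenaghen B omega}--\eqref{arido 2} hold with $ a $ replaced by $ b $; taking $ m_1=|k|/2 $, $ m_2=0 $ in \eqref{copenaghen B omega} gives the bound for $ \partial_\lambda^k\Phi^* $, and \eqref{arido 2} the one for $ \partial_\lambda^k(\Phi^*-\Id) $, with the extra $ \tfrac12 $ in $ \|h\|_{s+\frac{|k|+1}{2}} $ produced, as in \eqref{arido 2}, by the $ |D|^{1/2} $ in $ \Phi^*(\tau)-\Id=-\ii\int_0^\tau\Phi^*(\tau)\Phi^*(s)^{-1}b(\vphi,x,D)\, d s $, and the slightly larger number of derivatives of $ a $ absorbing the $ s_0 $ lost by $ R $ and the constants of Lemmata \ref{lemma stime Ck parametri}--\ref{stima pseudo diff aggiunto}. (Equivalently, $ \Phi(\tau)^*=|D|^{1/2}\Phi(\tau)^{-1}|D|^{-1/2} $ modulo $ \pi_0 $, since $ |D|^{1/4}a|D|^{1/4} $ is self-adjoint so $ |D|^{1/4}\Phi|D|^{-1/4} $ is unitary.)

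\textbf{Dependence on the torus of the adjoint, and main obstacle.} Writing $ \Phi_\ell^* $ as the flow of $ -\ii\, b_\ell(\vphi,x,D) $, $ b_\ell:=b(i_\ell) $, one has $ \|\Delta_{12}b\|_s\lesssim_s\|\Delta_{12}a\|_{s+s_0+2} $, and the Duhamel formula $ \Delta_{12}(\Phi^*)(\tau)=-\ii\int_0^\tau\Phi_2^*(\tau)\Phi_2^*(s)^{-1}(\Delta_{12}b)(\vphi,x,D)\Phi_1^*(s)\, d s $ is estimated as in the first part, now at the running index $ s\in[s_0,s_1] $ with $ m_1=m_2=0 $: the single loss $ |D|^{1/2} $ carried by $ \Delta_{12}b $ produces $ \|h\|_{s+\frac12} $, and $ \|\Delta_{12}b\|_{s+s_0}\lesssim\|\Delta_{12}a\|_{s+s_0+\frac12} $ (after enlarging $ \sigma $) gives the claim. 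The only delicate point throughout is the bookkeeping of the $ \langle D\rangle $-weights in the Leibniz/Duhamel expansions: one has to split each total loss ($ |D|^{(|\beta|+1)/2} $, $ |D|^{(|k|+1)/2} $, or $ |D|^{1/2} $) among the flow factors so that each is hit by Proposition \ref{proposition 2.40 unificata} in exactly the form $ \langle D\rangle^{-m_1'}\partial_\lambda^{k'}\partial_\vphi^{\beta'}\Phi\langle D\rangle^{-m_2'} $ with $ m_1'+m_2'=(|\beta'|+|k'|)/2 $, keeping the hypotheses on $ \|a\| $ and the final indices consistent; no analytic input beyond Proposition \ref{proposition 2.40 unificata}, the symbolic calculus of Section \ref{sec:pseudo} and the estimates of Section \ref{subsec:function spaces} is needed.
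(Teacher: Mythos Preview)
Your proposal is correct and follows essentially the same approach as the paper, which merely cites Propositions A.13, A.14, A.17 and A.18 of \cite{BertiMontalto}; those propositions are proved precisely via the Duhamel formula, Leibniz expansion in $\vphi$, and weight-bookkeeping against the flow estimates of Proposition \ref{proposition 2.40 unificata}, exactly as you outline. Your parenthetical observation that $\Phi^* = |D|^{1/2}\Phi^{-1}|D|^{-1/2}$ on zero-average functions (from the unitarity of $|D|^{1/4}\Phi|D|^{-1/4}$) is a clean shortcut for the adjoint bounds that the cited reference does not exploit.
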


\begin{proof}
The proof is similar to Propositions A.13, A.14, A.17 and A.18 of \cite{BertiMontalto}.
\end{proof}

\section{Degenerate KAM theory}\label{sec:degenerate KAM}

In this section we extend the degenerate KAM theory approach  of \cite{BaBM} and \cite{BertiMontalto}.

\begin{definition}\label{def:non-deg}
A function $ f := (f_1, \ldots, f_N ) :  [ {\mathtt h}_1, {\mathtt h}_2] \to \R^N $ is called non-degenerate 
if, for any vector $ c := (c_1, \ldots, c_N ) \in \R^N \setminus \{0\}$, 
the function $ f \cdot c = f_1 c_1 + \ldots + f_N c_N $ 
is not identically zero on the whole interval $ [ {\mathtt h}_1, {\mathtt h}_2]  $. 
\end{definition}

From a geometric point of view, $ f $ non-degenerate means that the image of the curve $ f([{\mathtt h}_1,{\mathtt h}_2]) \subset \R^N $ is not contained in any  hyperplane of $ \R^N $. For such a reason a curve $ f $ which satisfies the non-degeneracy property of Definition \ref{def:non-deg} is also referred to as an {\it essentially non-planar} curve, or a curve with {\it full torsion}.  
Given $ \Splus \subset \N^+ $ we denote the unperturbed tangential and normal frequency vectors by 
\be\label{tangential-normal-frequencies}
{\vec \om} ( {\mathtt h} ) := ( \om_j ( {\mathtt h} ) )_{j \in \Splus} \, , \quad 
{\vec \Omega} ({\mathtt h} ) := ( \Om_j ( {\mathtt h} ) )_{j \in \N^+ \setminus \Splus} := 
( \om_j ( {\mathtt h} ) )_{j \in \N^+ \setminus \Splus} \, ,  
\ee
where $ \om_j ( {\mathtt h}) = \sqrt{j \tanh ( {\mathtt h} j)} $ are defined in \eqref{LIN:fre}.

\begin{lemma}\label{non degenerazione frequenze imperturbate}
{\bf (Non-degeneracy)}  The frequency vectors 
$ {\vec \om} ({\mathtt h}) \in \R^\nu $, $ ({\vec \om} ({\mathtt h} ), 1) \in \R^{\nu+1} $ and 
$$
({\vec \om} ({\mathtt h} ),  \Omega_j({\mathtt h} )) \in \R^{\nu+1}\,,
\quad ( {\vec \om} ( {\mathtt h} ),  \Omega_j( {\mathtt h} ),  \Omega_{j'}( {\mathtt h} )) \in \R^{\nu+2} \,, 	\ \forall j, j' \in 
\N^+ \setminus \Splus \,, \ j \neq j'\,,
$$
are non-degenerate.
\end{lemma}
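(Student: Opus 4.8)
The plan is to reduce everything to a single analyticity-plus-asymptotics argument and then invoke a generalized Vandermonde determinant. First I would use the scaling trick already anticipated in the text: the maps $\h \mapsto \om_j(\h^4) = \sqrt{j \tanh(\h^4 j)}$ are analytic on a neighbourhood of $[\h_1,\h_2]$ (the only possible branch issue, the square root, is harmless since $j\tanh(\h^4 j)>0$ there), so it suffices to prove non-degeneracy after this analytic reparametrization — a finite-to-one analytic change of variable cannot destroy the property that a nonzero analytic combination is not identically zero. Working with an analytic curve is the key simplification: an analytic function on an interval is either identically zero or has only isolated zeros, so to rule out that some $c\cdot f \equiv 0$ it is enough to produce, for each candidate coefficient vector $c\neq 0$, a point (or an asymptotic regime) at which $c\cdot f\neq 0$.

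The main step is the first vector, $\vec\om(\h) = (\om_j(\h))_{j\in\Splus}$: I claim no nontrivial linear combination $\sum_{j\in\Splus} c_j \sqrt{j\tanh(\h j)}$ vanishes identically. Suppose it did. Using the expansion \eqref{espansione asintotica degli autovalori}, $\sqrt{j\tanh(\h j)} = \sqrt j + r(j,\h)$ with $r(j,\h)$ and all its $\h$-derivatives exponentially small, $|\pa_\h^k r(j,\h)| \le C_k e^{-\h j}$. So the identity would force $\sum_{j\in\Splus} c_j r(j,\h) \equiv -\big(\sum_j c_j\sqrt j\big)$, a constant; differentiating in $\h$, $\sum_{j\in\Splus} c_j\, \pa_\h r(j,\h) \equiv 0$. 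Now order $\Splus = \{j_1 < j_2 < \cdots < j_\nu\}$ and let $\h\to +\infty$: since $r(j,\h)$ behaves like (a constant times) $e^{-2\h j}$ to leading order (from $\tanh(\h j) = 1 - 2e^{-2\h j} + \cdots$, hence $\sqrt{j\tanh(\h j)} = \sqrt j\,(1 - e^{-2\h j} + \cdots)$), the term with the smallest index $j_1$ dominates all the others exponentially. Dividing by $e^{-2\h j_1}$ and letting $\h\to\infty$ gives $c_{j_1}=0$; iterating, all $c_j=0$, a contradiction. (Alternatively, and this is the route the authors seem to signal via Lemma \ref{lemma-VDM}, one evaluates the Wronskian-type / generalized Vandermonde determinant $\det\big(\pa_\h^{k}\om_{j}(\h)\big)_{0\le k\le \nu-1,\ j\in\Splus}$ at a suitable point and shows it is nonzero using the distinct ``exponential rates'' $2j$; either way the engine is the strict monotonicity of $j\mapsto j$ on $\Splus$.)

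For the remaining vectors the argument is essentially the same with one more coordinate. For $(\vec\om(\h),1)$: a relation $\sum_{j\in\Splus}c_j\om_j(\h) + c_0 \equiv 0$ combined with the previous case forces first $c_0=0$ (look at $\h\to\infty$ where each $\om_j(\h)\to\sqrt j$ are distinct constants — actually one needs the full argument, but the $e^{-2\h j_1}$-domination still kills the $c_j$ one at a time and then $c_0$ separately), hence $c=0$. For $(\vec\om(\h),\Om_j(\h))$ with $j\in\N^+\setminus\Splus$, and for $(\vec\om(\h),\Om_j(\h),\Om_{j'}(\h))$ with $j\neq j'$ and both outside $\Splus$: the extra entries $\om_j(\h),\om_{j'}(\h)$ have the same analytic structure $\sqrt{\cdot}+$(exponentially small), with exponential rates $2j$, $2j'$ respectively. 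Since $\Splus \cup \{j\}$, resp. $\Splus\cup\{j,j'\}$, are sets of \emph{distinct} positive integers (here $j\neq j'$ is used, and $j,j'\notin\Splus$), the collection of exponential rates $\{2k : k\in \Splus\cup\{j,j'\}\}$ consists of distinct numbers, so the same ``smallest-rate dominates, peel off coefficients one by one'' argument — or the corresponding nonvanishing generalized Vandermonde determinant from Lemma \ref{lemma-VDM} — applies verbatim and yields that all coefficients vanish.

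The main obstacle, and the only place real care is needed, is making the ``exponential-rate domination'' rigorous uniformly: one must check that the subleading corrections to $\sqrt{j\tanh(\h j)}$ (the higher terms $e^{-4\h j}$, etc., coming from the full series of $\tanh$) do not interfere when separating two indices whose rates $2j < 2j'$ might be close, and that the finitely many coefficients can indeed be extracted in order. This is handled cleanly by the generalized Vandermonde/Wronskian determinant computation (Lemma \ref{lemma-VDM}), which is exactly why the authors isolate it as a lemma: evaluated in the regime $\h\to+\infty$ after factoring out the dominant exponentials, the determinant reduces to a classical Vandermonde in the distinct quantities $\{j : j \in \Splus \cup\{j,j'\}\}$ (together with the ``polynomial'' rows coming from the $\sqrt j$ and the constant $1$), which is manifestly nonzero. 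Once that determinant is shown to be not identically zero in $\h$, analyticity finishes the proof: a nonzero analytic determinant has isolated zeros, so on the cofinite set of $\h$ where it is nonzero the rows are linearly independent, i.e. no nonzero $c$ can annihilate the curve, which is precisely non-degeneracy in the sense of Definition \ref{def:non-deg}.
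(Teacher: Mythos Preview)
Your argument is correct but follows a genuinely different path from the paper. The paper's proof works at $\h = 0$: it uses the substitution $\h \mapsto \h^4$ precisely to make each $g_j(\h) := \om_j(\h^4)$ analytic \emph{through the origin}, Taylor-expands there as $g_j(\h) = \sum_{n\ge 0} b_n\, j^{2n+1}\, \h^{8n+2}/(8n+2)!$, selects $N$ indices $n_1 < \cdots < n_N$ with $b_{n_i} \neq 0$ (possible because $g_1$ is analytic but not a polynomial), and shows that the matrix $\big(\pa_\h^{8n_i+2} g_{j_k}\big)(0)$ equals $\prod_i b_{n_i}$ times the generalized Vandermonde $\det(j_k^{2n_i+1})$, which is nonzero by Lemma~\ref{lemma-VDM}; the extra constant coordinate $1$ is then handled by Laplace expansion along a first column of the form $(1,0,\ldots,0)^T$. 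Your route works at the opposite end, $\h \to +\infty$: you extend the identity by real-analyticity on $(0,\infty)$, use the asymptotics $\om_j(\h) = \sqrt{j} - \sqrt{j}\,e^{-2\h j}(1+o(1))$, and peel off the coefficients one at a time by ordering the exponential rates $2j$. Your approach is more elementary --- it avoids Lemma~\ref{lemma-VDM} altogether --- and in fact the $\h^4$ substitution you import from the paper is unnecessary for it, since the $\om_j(\h)$ are already real-analytic on $(0,\infty)$; the paper needs that substitution only to reach $\h = 0$. The paper's approach, by contrast, produces an explicit nonvanishing determinant at a single point, making the linear independence concretely algebraic rather than asymptotic.
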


\begin{proof}
We first prove that for any $ N $, for any 
$ \omega_{j_1}( {\mathtt h}), \ldots, \omega_{j_N}( {\mathtt h} ) $ with 
$ 1 \leq j_1 < j_2 < \ldots < j_N $ 
the function $  [{\mathtt h}_1, {\mathtt h}_2] \ni {\mathtt h} \mapsto (\omega_{j_1}({\mathtt h} ), \ldots, \omega_{j_N}({\mathtt h} )) \in \R^N $
is non-degenerate according to Definition \ref{def:non-deg}, namely that, for all $ c \in \R^N \setminus \{0\} $, the function    
$ {\mathtt h} \mapsto c_1 \omega_{j_1}({\mathtt h} ) + \ldots + c_N \omega_{j_N}({\mathtt h}) $ is not identically zero
on the interval  $  [{\mathtt h}_1, {\mathtt h}_2] $. 
We shall prove, equivalently, that the function 
$$ 
{\mathtt h} \mapsto c_1 \omega_{j_1}( {\mathtt h}^4 ) + \ldots + c_N \omega_{j_N}( {\mathtt h}^4) 
$$
is not identically zero on the interval $ [{\mathtt h}_1^4, {\mathtt h}_2^4] $. 
The advantage of replacing $\h$ with $\h^4$ is that each function  
$$ 
\h \mapsto \om_j ({\mathtt h}^4) = \sqrt{j \tanh ({\mathtt h}^4 j)} 
$$ 
is {\it analytic also in a neighborhood of $ {\mathtt h} = 0 $}, unlike the function 
$ \om_j ( {\mathtt h} ) = \sqrt{j \tanh ( {\mathtt h}  j)}  $. 
Clearly, 
the function $g_1(\h) := \sqrt{ \tanh (\h^4) }$ is analytic 
in a neighborhood 
of any  $\h \in \R \setminus \{ 0 \}$,  
because 
$g_1$ is the composition of analytic functions. 
Let us prove that it has an analytic continuation at $ \h = 0 $. 
The Taylor series at $ z = 0 $ of the hyperbolic tangent has the form 
$$
\tanh (z) = \sum_{n=0}^\infty T_{n} z^{2n+1}  = 
z- \frac{z^3}{3} + \frac{2}{15} z^5 + \ldots \, ,  
$$
and it is convergent for $ |z| < \pi / 2 $ 
(the poles of $ \tanh z $ closest  to $ z = 0 $ are $ \pm \ii \pi / 2 $). 
Then the power series 
$$
\tanh ( z^4 ) = \sum_{n=0}^\infty  T_n z^{4(2n+1)}  = z^4 \Big(1+  \sum_{n\geq 1}  T_n z^{8n} \Big) 
=  z^4- \frac{z^{12}}{3} + \frac{2}{15} z^{20} + \ldots  
$$
is convergent in $ | z | < (\pi/2)^{1/4} $.
Moreover $|\sum_{n\geq 1}  T_n z^{8n}| < 1$ in a ball $|z| < \d$, 
for some positive $\d$ sufficiently small. 
As a consequence, also the real function 
\be\label{def:gh}
g_1 ({\mathtt h}) := 
\om_1 ( {\mathtt h}^4) = \sqrt{\tanh ({\mathtt h}^4) } = 
{\mathtt h}^2 \Big(1+  \sum_{n\geq 1}  T_n {\mathtt h}^{8n} \Big)^{1/2} =
\sum_{n=0}^{+\infty} b_n \frac{{\mathtt h}^{8n+2}}{(8n+2)!} = {\mathtt h}^2 - \frac{{\mathtt h}^{10}}{6} + \ldots 
\ee
is analytic in the ball $|z| < \d$. 
Thus 
$g_1$ is analytic on the whole real axis.  The Taylor coefficients $b_n$ are computable. 
We expand in Taylor series at $ {\mathtt h} = 0 $ 
also each function, for $ j \geq 1 $,   
\be\label{def:gh-j}
g_j ({\mathtt h} ) := \om_j ( {\mathtt h}^4 ) = \sqrt{j} \sqrt{\tanh ( {\mathtt h}^4 j ) } =  
\sqrt{j} \, g_1 ( j^{1/4} {\mathtt h} ) =
\sum_{n=0}^{+\infty} b_n  j^{2n+1} \frac{{\mathtt h}^{8n+2}}{(8n+2)!} \,,
\ee
which is analytic on the whole $\R$, similarly as $g_1$. 

Now fix $ N $ integers $ 1 \leq j_1 <  j_2 < \ldots < j_N $. 
We prove that for all $ c \in \R^N \setminus \{0\} $, the analytic function 
$ c_1 g_{j_1}({\mathtt h}) + \ldots + c_N g_{j_N}({\mathtt h}) $ is not identically zero.
Suppose, by contradiction, that there exists $ c \in \R^N \setminus \{0\} $ such that 
\be\label{la-identita}
c_1 g_{j_1}({\mathtt h}) + \ldots + c_N g_{j_N}({\mathtt h})  = 0 
\quad \forall {\mathtt h} \in\mathbb R . 
\ee
The real analytic function $ g_1({\mathtt h}) $ defined in \eqref{def:gh} is not a polynomial 
(to see this, observe its limit as $\h \to \infty$).
Hence there exist $ N $ Taylor coefficients $ b_n \neq 0 $ of $ g_1 $,
say $ b_{n_1}, \ldots, b_{n_N} $ with $ n_1 < n_2 < \ldots < n_N $. 
We differentiate with respect to $ {\mathtt h} $ the identity in \eqref{la-identita}  
 and we find
 $$
 \begin{cases}
 c_1 \big(D_{\mathtt h}^{(8n_1+2)} g_{j_1}\big)({\mathtt h}) + \ldots + c_N \big(D_{\mathtt h}^{(8n_1+2)} g_{j_N}\big)({\mathtt h})  = 0 \cr 
 c_1 \big(D_{\mathtt h}^{(8n_2+2)} g_{j_1}\big)({\mathtt h}) + \ldots + c_N \big(D_{\mathtt h}^{(8n_2+2)} g_{j_N}\big)({\mathtt h})  = 0 \cr
 \ldots  \ldots \ldots \cr
 c_1 \big(D_{\mathtt h}^{(8n_N+2)} g_{j_1}\big)({\mathtt h}) + \ldots + c_N \big(D_{\mathtt h}^{(8n_N+2)} g_{j_N}\big)({\mathtt h})  = 0 \, .
 \end{cases}
$$
As a consequence the $ N \times N $-matrix 
\be\label{matrixAN}
{\cal A}({\mathtt h}) := \begin{pmatrix}
\big(D_{\mathtt h}^{(8n_1+2)} g_{j_1}\big)({\mathtt h}) & 
 \dots & \big(D_h^{(8n_1+2)} g_{j_N}\big)({\mathtt h}) 
\\
\big(D_{\mathtt h}^{(8n_2+2)} g_{j_1}\big)({\mathtt h}) &  
\dots & \big(D_h^{(8n_2+2)} g_{j_N}\big)({\mathtt h} ) \\
\vdots & \ddots & \vdots \\
\big(D_{\mathtt h}^{(8n_N+2)} g_{j_1}\big)({\mathtt h}) & 
\dots & \big(D_{\mathtt h}^{(8n_N+2)} g_{j_N}\big)({\mathtt h}) 
\end{pmatrix}
\ee
is singular for all $ {\mathtt h} \in \R $, and so the analytic function 
\be\label{detAh0} 
\det {\cal A}({\mathtt h}) = 0 \quad  \forall {\mathtt h} \in \mathbb R 
\ee
is identically zero. 
In particular at $ {\mathtt h} = 0 $ we have  $ \det {\cal A}(0) = 0 $. 
On the other hand, by \eqref{def:gh-j} and the multi-linearity of the determinant we compute 
$$
\det {\cal A}(0) := \det \begin{pmatrix}
b_{n_1} j_1^{2n_1+1} & \dots & b_{n_1} j_N^{2n_1+1}   \\
b_{n_2} j_1^{2n_2+1}  &  \dots & b_{n_2} j_N^{2n_2+1}  \\
\vdots & \ddots & \vdots \\
b_{n_N} j_1^{2n_N+1} &  \dots & b_{n_N} j_N^{2n_N+1} 
\end{pmatrix} =
b_{n_1} \ldots b_{n_N}
\det \begin{pmatrix}
 j_1^{2n_1+1} &   \dots &  j_N^{2n_1+1} \\
 j_1^{2n_2+1}  &  \dots &  j_N^{2n_2+1} \\
\vdots & \ddots & \vdots   \\
j_1^{2n_N + 1} &  \dots &  j_N^{2 n_N + 1} 
\end{pmatrix} \, . 
$$
This is a generalized Vandermonde determinant. 
We use the following result. 

\begin{lemma}\label{lemma-VDM} 
Let $x_1, \ldots, x_N, \a_1, \ldots, \a_N$ be real numbers, with 
$ 0 < x_1 <  \ldots < x_N  $ and $ \a_1 < \ldots < \a_N $.  
Then 
$$
\det \begin{pmatrix}
 x_1^{\a_1}  &   \dots &   x_N^{\a_1}   \\
\vdots & \ddots & \vdots  \\
x_1^{\a_N} &  \dots &  x_N^{\a_N} 
\end{pmatrix} >  0 \, . 
$$
\end{lemma}

\begin{proof}
The lemma is proved in \cite{RS}. 
\end{proof}

Since $ 1 \leq j_1 < j_2 < \ldots < j_N $ and the exponents $ \a_j := 2 n_j + 1 $ are increasing 
$ \a_1 < \ldots < \a_N  $,  Lemma \ref{lemma-VDM} implies that $\det {\cal A}(0) \neq 0 $
(recall that $ b_{n_1},  \ldots,  b_{n_N} \neq 0 $). 
This is a contradiction with \eqref{detAh0}.  

\smallskip

In order to conclude the proof of Lemma \ref{non degenerazione frequenze imperturbate} 
we have to prove that, for any $ N $, for any 
$ 1 \leq j_1 < j_2 < \ldots < j_N $, 
the function $  [{\mathtt h}_1, {\mathtt h}_2] \ni {\mathtt h}  \mapsto (1, \omega_{j_1}({\mathtt h} ), \ldots, \omega_{j_N}({\mathtt h} )) \in \R^{N+1} $
is non-degenerate according to Definition \ref{def:non-deg},
namely that, for all $ c = (c_0, c_1, \ldots, c_N ) 
\in \R^{N+1} \setminus \{0\}$, the function    
$ {\mathtt h}  \mapsto c_0 + c_1 \omega_{j_1}({\mathtt h} ) + \ldots + c_N \omega_{j_N}( {\mathtt h}) $ is not identically zero
on the interval  $  [{\mathtt h}_1, {\mathtt h}_2] $. 
We shall prove, equivalently, that the real analytic function 
$ {\mathtt h} \mapsto c_0 + c_1 \omega_{j_1}({\mathtt h}^4) + \ldots + c_N \omega_{j_N}({\mathtt h}^4) $
is not identically zero on $\mathbb R$.  

Suppose, by contradiction, that there exists 
$ c = (c_0, c_1, \ldots, c_N )  \in \R^{N+1} \setminus \{0\} $ such that 
\be\label{la-identita+1}
c_0 + c_1 g_{j_1}({\mathtt h}) + \ldots + c_N g_{j_N}({\mathtt h} )  = 0 
\quad \forall \h \in \R. 
\ee
As above, 
we differentiate with respect to $ {\mathtt h}  $ the identity \eqref{la-identita+1},  
and we find that 
the $ (N+1) \times (N+1) $-matrix 
\be\label{defB}
{\cal B}({\mathtt h}) := \begin{pmatrix}
1 & g_{j_1}({\mathtt h}) &  \ldots & g_{j_N}({\mathtt h}) \\
0 & (D_{\mathtt h}^{(8n_1+2)} g_{j_1})({\mathtt h}) & 
 \dots & (D_{\mathtt h}^{(8n_1+2)} g_{j_N})({\mathtt h}) \\
0 & \vdots & \ddots & \vdots  \\
0 & (D_{\mathtt h}^{(8n_N+2)} g_{j_1})({\mathtt h} ) & 
 \dots & (D_{\mathtt h}^{(8n_N+2)} g_{j_N})({\mathtt h} )
\end{pmatrix}
\ee
is singular for all $ {\mathtt h} \in \R $, 
and so the analytic function $\det \mB(\h) = 0$ for all $\h \in \R$. 
By expanding the determinant of the matrix in \eqref{defB} along the first column by Laplace we get 
$ \det {\cal B}({\mathtt h}) = \det {\cal A}({\mathtt h} ) $,
where the matrix $ {\cal A}({\mathtt h}) $ is  defined in \eqref{matrixAN}. 
We have already proved that $ \det {\cal A}(0) \neq 0 $, 
and this gives a contradiction. 
\end{proof}

In the next proposition 
we deduce the quantitative bounds \eqref{0 Melnikov}-\eqref{2 Melnikov+} 
from the qualitative non-degeneracy condition of Lemma 
\ref{non degenerazione frequenze imperturbate}, 
the analyticity of the linear frequencies $\om_j$ in \eqref{LIN:fre}, 
and their asymptotics \eqref{espansione asintotica degli autovalori}.

\begin{proposition}\label{Lemma: degenerate KAM}
{\bf (Transversality)}
There exist $ \barka \in \N $, $ \rho_0 > 0$ such that, 
for any $\h \in [ {\mathtt h}_1, {\mathtt h}_2] $,
\begin{align}\label{0 Melnikov}
\max_{k \leq \barka} 
|\partial_{\mathtt h}^{k}  \{{\vec \om} ({\mathtt h}) \cdot \ell   \} | & 
\geq \rho_0 \langle \ell \rangle\,, 
\quad \forall \ell  \in \Z^\nu \setminus \{ 0 \},   
\\
\label{1 Melnikov}
\max_{k \leq \barka}
|\partial_{\mathtt h}^{k}  \{{\vec \om} ({\mathtt h}) \cdot \ell  +  \Omega_j ({\mathtt h} ) \} | 
& \geq \rho_0 \langle \ell  \rangle\,, 
\quad \forall \ell  \in \Z^\nu, \  j \in \N^+ \setminus \Splus,  
\\
\label{2 Melnikov-}
\max_{k \leq \barka}
|\partial_{\mathtt h}^{k}  \{{\vec \om} ({\mathtt h} ) \cdot \ell  
 + \Omega_j ({\mathtt h} ) - \Omega_{j'}( {\mathtt h} ) \} | 
& \geq \rho_0 \langle \ell  \rangle\,, 
\quad \forall \ell \in \Z^\nu \setminus \{0\}, \  j, j' \in \N^+ \setminus \Splus,   
\\
\label{2 Melnikov+}
\max_{k \leq \barka} 
|\partial_{\mathtt h}^{k}  \{{\vec \om} ({\mathtt h} ) \cdot \ell 
+  \Omega_j ({\mathtt h} ) +  \Omega_{j'}({\mathtt h} ) \} | 
& \geq \rho_0 \langle \ell  \rangle\,, 
\quad \forall \ell  \in \Z^{\nu}, \  j, j' \in \N^+ \setminus \Splus 
\end{align}
where $\vec\om(\h)$ and $\Omega_j(\h)$ are defined in \eqref{tangential-normal-frequencies}.
We recall the notation $\la \ell \ra := \max \{ 1, |\ell| \}$.
We call (following \cite{Ru1}) $ \rho_0 $  the ``\emph{amount of non-degeneracy}'' 
and $ \barka $ the ``\emph{index of non-degeneracy}''.
\end{proposition}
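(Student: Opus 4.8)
\textbf{Proof strategy for Proposition \ref{Lemma: degenerate KAM}.}
The plan is to derive the quantitative transversality bounds \eqref{0 Melnikov}--\eqref{2 Melnikov+} from the qualitative non-degeneracy of Lemma \ref{non degenerazione frequenze imperturbate} by a compactness argument, handling the infinitely many indices $(\ell,j,j')$ by splitting into a ``low-frequency'' regime where compactness applies directly and a ``high-frequency'' regime where the exponentially small corrections in \eqref{espansione asintotica degli autovalori} are negligible. First I would set up the standard reduction: for a fixed tuple of indices, the function $\h \mapsto \vec\om(\h)\cdot\ell + \Omega_j(\h) \pm \Omega_{j'}(\h)$ is analytic on $[\h_1,\h_2]$ (extending analytically to a neighbourhood as in the proof of Lemma \ref{non degenerazione frequenze imperturbate}), and non-degeneracy means it is not identically zero. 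A real-analytic function on a compact interval that is not identically zero has only finitely many zeros, each of finite order, so there exist $k$ and a positive lower bound on $\max_{k\le \barka}|\partial_\h^k(\cdot)|$; the point is to make $\barka$ and the lower constant \emph{uniform} in the indices after dividing by $\langle\ell\rangle$.

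For the case \eqref{0 Melnikov} (and similarly \eqref{1 Melnikov}) I would argue as follows. Divide by $\langle\ell\rangle$ and write $\bar\ell := \ell/\langle\ell\rangle$, which ranges in a compact subset of $\R^\nu$. Consider the function $F(\h,\bar\ell) := \sum_{i} \bar\ell_i \om_{j_i}(\h)$. By Lemma \ref{non degenerazione frequenze imperturbate}, for each fixed $\bar\ell \ne 0$ there exists $k(\bar\ell)\le N$ (with $N=\nu$ or $\nu+1$) and a point where the $k$-th derivative is nonzero; since $F$ is jointly continuous in $(\h,\bar\ell)$ together with finitely many $\h$-derivatives, for each $\bar\ell$ there is a neighbourhood and a uniform lower bound $\max_{k\le N}\sup_\h|\partial_\h^k F(\h,\bar\ell')| \ge \rho(\bar\ell) > 0$ on it. Covering the compact set $\{|\bar\ell|\le\sqrt\nu\}$ by finitely many such neighbourhoods and taking the minimum of the $\rho$'s and the max of the $k$'s yields $\barka$ and $\rho_0$. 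The only subtlety is that $\bar\ell$ does not range over all of the sphere but over a \emph{dense} subset; this is harmless because the lower bound, being an inequality involving a supremum over $\h$ of a continuous function of $\bar\ell$, passes to the closure, and every rescaled integer vector lies in the closure.

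For the two-index cases \eqref{2 Melnikov-}, \eqref{2 Melnikov+} the new issue is that $j,j'$ are unbounded, so a naive compactification fails. Here I would use \eqref{espansione asintotica degli autovalori}: write $\Omega_j(\h) = \sqrt{j} + r(j,\h)$ with $|\partial_\h^k r(j,\h)| \le C_k e^{-\h_1 j}$. Fix a threshold $C_0$ (to be chosen). \textbf{High-frequency regime:} if $\max\{j,j'\} > C_0\langle\ell\rangle$ (for the $+$ case, or $\max\{j,j'\} > C_0(\langle\ell\rangle+\min\{j,j'\})$ suitably arranged for the $-$ case, to exploit $|\sqrt j - \sqrt{j'}| \sim |j-j'|/\sqrt{\max}$), the dominant term $\sqrt{j}$ or $\sqrt{j}+\sqrt{j'}$ is already $\gtrsim \langle\ell\rangle$ in absolute value after subtracting $|\vec\om\cdot\ell| \le C|\ell|$, so \eqref{2 Melnikov+} holds with $k=0$; the $-$ case needs the more careful arrangement because $\sqrt j - \sqrt{j'}$ can be small, but then $j,j'$ are comparable and one reduces to a bounded set of ratios. \textbf{Low-frequency regime:} if $\max\{j,j'\} \le C_0\langle\ell\rangle$, then $(\ell,j,j')$ ranges, after rescaling by $\langle\ell\rangle$, over a compact set, and moreover the error terms $r(j,\h), r(j',\h)$ are uniformly small with all their derivatives; the compactness argument of the previous paragraph applies to the leading part $\sum\bar\ell_i\sqrt{j_i} + (\text{possibly }\sqrt j \pm \sqrt{j'})/\langle\ell\rangle$, and the exponentially small remainder is absorbed into a slightly smaller $\rho_0$. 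I expect the \textbf{main obstacle} to be precisely the bookkeeping in the $-$ case \eqref{2 Melnikov-}: one must choose the case-split threshold so that whenever $\sqrt j - \sqrt{j'}$ is not itself large, the pair $(j,j')$ (rescaled) lies in a compact region where the non-degeneracy of $(\vec\om(\h),\Omega_j(\h),\Omega_{j'}(\h))$ from Lemma \ref{non degenerazione frequenze imperturbate} can be quantified uniformly --- including the diagonal-like degeneration $j\approx j'$, where one must use that $\Omega_j - \Omega_{j'}$ together with $\vec\om\cdot\ell$ (with $\ell\ne 0$) is still not identically zero, which is exactly what the $\ell\ne0$ restriction in \eqref{2 Melnikov-} encodes.
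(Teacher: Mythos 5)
Your overall strategy matches the paper's: derive the quantitative bounds from the qualitative non-degeneracy of Lemma~\ref{non degenerazione frequenze imperturbate} by compactness after rescaling by $\langle\ell\rangle$, and split the indices into a high-frequency regime (handled with $k=0$) and a low-frequency regime where the asymptotics \eqref{espansione asintotica degli autovalori} and compactness apply. Whether you phrase compactness as a finite covering or, as in the paper, by contradiction and subsequence extraction, is a cosmetic difference; both require the same ingredients. However there are two real issues with the execution.

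First, the quantifier in your key lemma for \eqref{0 Melnikov} is inverted. You need $\max_{k\le\barka}|\partial_\h^k F(\h,\bar\ell)|\ge\rho_0$ \emph{for every} $\h\in[\h_1,\h_2]$, i.e. a positive lower bound on $\inf_\h\max_{k\le\barka}|\cdot|$, whereas you write $\max_{k\le N}\sup_\h|\partial_\h^k F(\h,\bar\ell')|\ge\rho(\bar\ell)$, which is trivially satisfied as soon as $F(\cdot,\bar\ell')$ is not identically zero and carries no information uniform in $\h$. The correct deduction is that a non-identically-zero real-analytic function on a compact interval has finitely many zeros, each of finite order, so there is a (finite, $\bar\ell$-dependent) $N(\bar\ell)$ with $\inf_\h\max_{k\le N(\bar\ell)}|\partial_\h^k F(\h,\bar\ell)|>0$, and this persists on a neighbourhood of $\bar\ell$ by uniform continuity of the finitely many derivatives; only then does the finite covering give uniform $\barka$ and $\rho_0$. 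Relatedly, your asserted bound $N\le\nu$ or $\nu+1$ is not justified (and is not needed, nor claimed in the paper): the index $\barka$ is produced by compactness, not a priori.

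Second, and more importantly, your treatment of \eqref{2 Melnikov-} contains a genuine gap. You split on $\max\{j,j'\}$ and assert that in the low-frequency regime ``the pair $(j,j')$ (rescaled) lies in a compact region where the non-degeneracy of $(\vec\om(\h),\Omega_j(\h),\Omega_{j'}(\h))$ can be quantified uniformly.'' This fails: the regime where $\sqrt j-\sqrt{j'}$ is bounded (the relevant restriction is $|\sqrt j-\sqrt{j'}|\le C\langle\ell\rangle$, not a bound on $\max\{j,j'\}$) includes pairs with $j,j'\to\infty$ simultaneously, and these do not lie in any compact index set, so you cannot appeal to a uniform quantification of the non-degeneracy of finitely many tuples $(\vec\om,\Omega_j,\Omega_{j'})$. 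The correct mechanism (as in the paper) is to use \eqref{espansione asintotica degli autovalori} to write $\Omega_j(\h)-\Omega_{j'}(\h)=\sqrt j-\sqrt{j'}+r(j,\h)-r(j',\h)$; along a subsequence $(\sqrt j-\sqrt{j'})/\langle\ell\rangle$ converges to some $\bar d\in\R$ while the exponentially small corrections and all their $\h$-derivatives vanish, and the limit identity $\vec\om(\h)\cdot\bar c+\bar d\equiv0$ is then in contradiction with the non-degeneracy of $(\vec\om(\h),1)$ — not of $(\vec\om,\Omega_j,\Omega_{j'})$, which drops out entirely in the limit. You hint at this with the ``leading part'' remark, but the ``compact region of $(j,j')$'' picture in the last paragraph is incorrect and would not close the argument. (Also note that the $j=j'$ diagonal is handled simply by reducing to \eqref{0 Melnikov}, which requires $\ell\ne0$.)
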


Note that  in  \eqref{2 Melnikov-} we exclude the index $ \ell = 0 $. 
In this case we directly have that, for all $ {\mathtt h} \in [{\mathtt h}_1, {\mathtt h}_2]$
\be\label{caso-ell=0}
|  \Omega_j ({\mathtt h}) - \Omega_{j'}({\mathtt h} ) | \geq 
c_1 | \sqrt{j} -\sqrt{j'}| = c_1 \frac{| j - j' |}{\sqrt{j} + \sqrt{j'}}  
\quad \forall j, j' \in \N^+, 
\qquad \text{where} \ c_1 := \sqrt{\tanh ( {\mathtt h}_1) } \,.
\ee

\begin{proof} All the inequalities  \eqref{0 Melnikov}-\eqref{2 Melnikov+} are proved by contradiction.

{\sc Proof of \eqref{0 Melnikov}}.
Suppose that for all $\barka \in \N$, for all $\rho_0 > 0$ there exist 
$ \ell \in \Z^\nu \setminus \{0\} $, $ {\mathtt h} \in [ {\mathtt h}_1, {\mathtt h}_2]$  such that 
$ \max_{k \leq \barka} |\partial_{\mathtt h}^k\{ {\vec \om}({\mathtt h}) \cdot \ell  \} | < \rho_0 \langle \ell  \rangle $. 
This implies that for all $ m \in \N $, taking $ \barka = m$, $\rho_0 = \frac{1}{1+m }$,
there exist $ \ell_m \in \Z^\nu \setminus \{0\} $, $ {\mathtt h}_m \in [ {\mathtt h}_1, {\mathtt h}_2]$ such that 
$$
\max_{k \leq m} |  \partial_{\mathtt h}^k \{ {\vec \om} ( {\mathtt h}_m) \cdot \ell _m \}| <  \frac{1}{1 + m} 
 \langle \ell_m  \rangle
$$
and therefore 
\begin{equation}\label{rick 1}
\forall k \in \N, \quad \forall m \geq k \, , \quad 
\Big| \partial_{\mathtt h}^k {\vec \om} ( {\mathtt h}_m) \cdot \frac{\ell_m}{\langle \ell_m \rangle} \Big| < \frac{1}{1 + m}\,.
\end{equation}
The sequences $({\mathtt h}_m)_{m \in \N} \subset [ {\mathtt h}_1, {\mathtt h}_2] $
and $( \ell_m / \langle \ell_m \rangle)_{m \in \N} \subset \R^\nu \setminus \{0\} $ are bounded.
By compactness  there exists a sequence $ m_n \to + \infty $ such that 
$ {\mathtt h}_{m_n} \to \bar {\mathtt h} \in [ {\mathtt h}_1, {\mathtt h}_2], $ $ \ell_{m_n} / \langle \ell_{m_n} \rangle \to \bar c \neq 0 $. 
Passing to the limit in \eqref{rick 1} for $m_n \to + \infty$ we deduce that 
$ \partial_{\mathtt h}^k {\vec \om}(\bar {\mathtt h}) \cdot \bar c = 0 $ for all $ k \in \N $. 
We conclude that the analytic function $ {\mathtt h} \mapsto {\vec \om} ({\mathtt h} )\cdot \bar c $ is identically zero.  
Since $ \bar c \neq 0 $, 
this is in contradiction with Lemma \ref{non degenerazione frequenze imperturbate}.

\medskip

{\sc Proof of \eqref{1 Melnikov}}. 
First of all note that for all ${\mathtt h} \in [ {\mathtt h}_1, {\mathtt h}_2] $, we have 
$ |{\vec \om} ( {\mathtt h}  ) \cdot \ell  +  \Omega_j ({\mathtt h} )  | \geq $ 
$ \Omega_j ( {\mathtt h} )   - |{\vec \om} ({\mathtt h} )  \cdot \ell  | \geq $ 
$ c_1 j^{1/2} - C | \ell  | \geq  |\ell| $
if $ j^{1/2} \geq C_0 |\ell  | $ for some $ C_0 > 0 $. 
Therefore in \eqref{1 Melnikov} we can restrict to the indices 
$ (\ell  , j ) \in \Z^\nu \times (\N^+ \setminus \Splus) $ satisfying  
\be\label{prima restrizione}
j^{\frac12} < C_0 | \ell  | \, . 
\ee
Arguing by contradiction (as for proving \eqref{0 Melnikov}), we suppose that 
for all $m \in \N$ there exist  $ \ell_m \in \Z^\nu$, $ j_m \in \N^+ \setminus \Splus  $ and 
$ {\mathtt h}_m \in [ {\mathtt h}_1, {\mathtt h}_2]$, such that 
$$
\max_{k \leq m}\Big| 
\partial_{\mathtt h}^k \Big\{ {\vec \om}({\mathtt h}_m) \cdot \frac{\ell_m}{\langle \ell_m \rangle} + 
\frac{ \Omega_{j_m}({\mathtt h}_m)}{\langle \ell_m \rangle} \Big\} \Big| 
< \frac{1}{1 + m} 
$$
and therefore
\begin{equation}\label{rick 2}
\forall k \in \N, \quad \forall m \geq k \, , \quad 
\Big| 
\partial_{\mathtt h}^k \Big\{ {\vec \om}({\mathtt h}_m) \cdot \frac{\ell_m}{\langle \ell_m \rangle} + 
\frac{ \Omega_{j_m}({\mathtt h}_m)}{\langle \ell_m \rangle} \Big\} \Big| 
< \frac{1}{1 + m} \, . 
\end{equation}
Since the sequences $({\mathtt h}_m)_{m \in \N} \subset [{\mathtt h}_1, {\mathtt h}_2] $ 
and $(\ell_{m} / \langle \ell_m \rangle)_{m \in \N} \in \R^\nu $ are bounded, 
there exists a sequence $m_n \to + \infty$ such that 
\be\label{up to subsequence}
{\mathtt h}_{m_n} \to \bar {\mathtt h} \in [ {\mathtt h}_1, {\mathtt h}_2]\,,\quad \frac{\ell_{m_n}}{\langle \ell_{m_n} \rangle} \to \bar c \in \R^\nu \,.
\ee
We now distinguish two cases.

\emph{Case 1: $ (\ell_{m_n}) \subset \Z^\nu $ is bounded}. 
In this case, up to a subsequence,  $\ell_{m_n} \to \bar \ell \in \Z^\nu $, and since $|j_m| \leq C 
| \ell_m|^{2}$ for all $m$ (see \eqref{prima restrizione}), 
we have $j_{m_n} \to \bar \jmath $. Passing to the limit for $m_n \to + \infty$ in \eqref{rick 2} we deduce, by \eqref{up to subsequence}, that 
$$
\partial_{\mathtt h}^k \big\{ {\vec \om}(\bar {\mathtt h}) \cdot \bar c + 
 \Omega_{\bar \jmath} (\bar {\mathtt h})  \langle \bar \ell \rangle^{-1} \big\} = 0\,,\quad \forall k \in \N .
$$
Therefore the analytic function
$ {\mathtt h} \mapsto {\vec \om}(  {\mathtt h} ) \cdot \bar c + {\langle \bar \ell \rangle}^{-1} \Omega_{\bar \jmath}( {\mathtt h} )  $
is identically zero. Since
$ ( \bar c, \langle \bar \ell \rangle^{-1}) \neq 0 $  this is in contradiction with Lemma \ref{non degenerazione frequenze imperturbate}.

\emph{Case 2: $ (\ell_{m_n})$ is unbounded}. Up to a subsequence, $ | \ell_{m_n}| \to + \infty $.
In this case the constant $ \bar c $ in \eqref{up to subsequence} is nonzero. 
Moreover, by \eqref{prima restrizione}, we also have that, up to a subsequence, 
\be\label{rapporto tende bar d}
j_{m_n}^{\frac12}  \langle \ell_{m_n} \rangle^{-1} \to \bar d \in \R.
\ee
By \eqref{espansione asintotica degli autovalori},  \eqref{up to subsequence}, 
\eqref{rapporto tende bar d}, we get
\be\label{convergenza Meln1}
\frac{ \Omega_{j_{m_n}}( {\mathtt h}_{m_n})}{\langle \ell_{m_n} \rangle} = 
\frac{ j_{m_n}^{\frac12}}{ \langle \ell_{m_n} \rangle } + 
 \frac{ r(j_{m_n} \,, {\mathtt h}_{m_n}) }{ \langle \ell_{m_n} \rangle }  
\to \bar d \,, 
\quad 
\partial_{\mathtt h}^k \frac{ \Omega_{j_{m_n}} ({\mathtt h}_{m_n})}{\langle \ell_{m_n} \rangle}  =
\partial_{\mathtt h}^k \frac{ r( j_{m_n} \,, {\mathtt h}_{m_n}) }{ \langle \ell_{m_n} \rangle }  
\to 0 
\quad \forall k \geq 1
\ee
as $ m_n \to + \infty $. 
Passing to the limit in \eqref{rick 2}, by \eqref{convergenza Meln1}, \eqref{up to subsequence}
we deduce that 
$ \partial_{\mathtt h}^k \big\{ {\vec \om}(\bar {\mathtt h}) \cdot \bar c + \bar d  \big\} = 0 $, for all $k \in \N $. 
Therefore the analytic function 
$ {\mathtt h} \mapsto {\vec \om}( {\mathtt h}) \cdot \bar c + \bar d = 0 $ is identically zero. 
Since $ (\bar c, \bar d ) \neq 0 $
this is in contradiction with  Lemma \ref{non degenerazione frequenze imperturbate}.

\medskip

{\sc Proof of \eqref{2 Melnikov-}}. 
For all $ {\mathtt h} \in [{\mathtt h}_1, {\mathtt h}_2] $, by \eqref{caso-ell=0} and \eqref{LIN:fre},
we have
\[ 
| {\vec \om} ({\mathtt h} ) \cdot \ell  + \Omega_j ({\mathtt h} ) -  \Omega_{j'} ({\mathtt h} ) | 
\geq |  \Omega_j ({\mathtt h} ) -  \Omega_{j'} ({\mathtt h} ) | 
- | {\vec \om} ({\mathtt h} ) | |\ell | 
\geq c_1 |j^{\frac12} - j'^{\frac12}|  - C |\ell| \geq \langle \ell \rangle 
\]
provided $ | j^{\frac12} - j'^{\frac12}|  \geq C_1  \langle \ell  \rangle $, for some $ C_1 > 0 $.
Therefore in  \eqref{2 Melnikov-} we can restrict to the indices such that 
\begin{equation}\label{rick 4}
|j^{\frac12} - j'^{\frac12}| < C_1 \langle \ell \rangle\,.
\end{equation}
Moreover in \eqref{2 Melnikov-} we can also assume that $ j \neq j' $, 
otherwise \eqref{2 Melnikov-} reduces to \eqref{0 Melnikov}, which is already proved. 
If, by contradiction, \eqref{2 Melnikov-} is false, we deduce, arguing as in the previous cases, 
that, for all $m \in \N$, there exist $ \ell_m \in \Z^\nu\setminus \{0\} $, 
$j_m, j'_m \in \N^+ \setminus \Splus $, $ j_m \neq j'_m $, 
$ {\mathtt h}_m \in [ {\mathtt h}_1, {\mathtt h}_2]$, such that 
\begin{equation}\label{rick 5}
\forall k \in \N \, , \quad \forall m \geq k \, , \quad 
\Big| \partial_{\mathtt h}^k \Big\{ {\vec \om}({\mathtt h}_m) \cdot 
\frac{\ell_m}{\langle \ell_m \rangle} + \frac{ \Omega_{j_m}({\mathtt h}_m)}{\langle \ell_m \rangle} - 
\frac{  \Omega_{j'_m}({\mathtt h}_m)}{\langle \ell_m \rangle} \Big\} \Big| 
<  \frac{1}{1 + m}\,.
\end{equation}
As in the previous cases, since the sequences 
$ ({\mathtt h}_m)_{m \in \N}$, $(\ell_{m} / \langle \ell_{m}\rangle)_{m \in \N} $ 
are bounded, there exists $ m_n \to + \infty $ such that 
\be\label{conver2}
{\mathtt h}_{m_n} \to \bar {\mathtt h} \in [ {\mathtt h}_1, {\mathtt h}_2]\,,\quad \ell_{m_n} / \langle \ell_{m_n} \rangle \to \bar c 
\in \R^\nu \setminus \{0\} \,.
\ee
We distinguish again two cases.

\emph{Case 1 : $ (\ell_{m_n} )$ is unbounded}.
Using \eqref{rick 4} we deduce that, up to a subsequence,  
\be\label{convjj'}
|j_m^{\frac12} - j_m'^{\frac12}| \langle \ell_m \rangle^{-1} \to \bar d \in \R \, . 
\ee
Hence passing to the limit in \eqref{rick 5} for $ m_n \to + \infty $, 
we deduce by \eqref{conver2}, \eqref{convjj'},
\eqref{espansione asintotica degli autovalori} that 
$$
\partial_{\mathtt h}^k \{ {\vec \om}(\bar {\mathtt h}) \cdot \bar c + \bar d \} = 0 
\quad \forall k \in \N. 
$$
Therefore the analytic function $ {\mathtt h}  \mapsto  {\vec \om}( {\mathtt h} ) \cdot \bar c + \bar d $ is identically zero.  
This in contradiction with  Lemma \ref{non degenerazione frequenze imperturbate}.

\emph{Case 2 : $ (\ell_{m_n}) $ is bounded}. 
By \eqref{rick 4},  we have that
$ | \sqrt{j_m} - \sqrt{j_m'} | \leq C $ and so, up to a subsequence, 
only the following two subcases are possible: 
\begin{enumerate}
\item[$(i)$] $ j_m, j_m' \leq C $. 
Up to a subsequence, 
$ j_{m_n} \to \bar \jmath $, $ j'_{m_n} \to \bar \jmath' $,  
$ \ell_{m_n} \to \bar \ell \neq 0 $ and $\h_{m_n} \to \bar \h$.
Hence passing to the limit in \eqref{rick 5} we deduce that 
$$ 
\partial_{\mathtt h}^k  
\Big\{ {\vec \om}( \bar {\mathtt h} ) \cdot \bar c + 
\frac{ \Omega_{\bar \jmath}(\bar {\mathtt h}) - 
\Omega_{\bar \jmath'}( \bar {\mathtt h} )}{\langle \bar \ell \rangle} \Big\} = 0 
\quad  \forall k  \in \N  \, . 
$$ 
Hence the analytic function 
$ {\mathtt h}  \mapsto {\vec \om}( \bar {\mathtt h}) \cdot \bar c 
+ ({  \Omega_{\bar \jmath}(\bar {\mathtt h}) -  \Omega_{\bar \jmath'}( \bar {\mathtt h} )}) {\langle \bar \ell \rangle}^{-1} $
is identically zero, which is a contradiction  
with Lemma \ref{non degenerazione frequenze imperturbate}.

\item[$(ii)$] $ j_m, j_m' \to + \infty $. 
By \eqref{convjj'} and \eqref{espansione asintotica degli autovalori}, we deduce, 
passing to the limit in \eqref{rick 5}, that 
$$
\partial_{\mathtt h}^k  
\big\{ {\vec \om}( {\mathtt h} ) \cdot \bar c + \bar d \big\} = 0 
\quad  \forall k  \in \N  \, . 
$$ 
Hence the analytic function 
$ {\mathtt h}  \mapsto {\vec \om}( {\mathtt h} ) \cdot \bar c 
+ \bar d $
is identically zero, which 
contradicts  Lemma \ref{non degenerazione frequenze imperturbate}.
\end{enumerate}

{\sc Proof of \eqref{2 Melnikov+}.} 
The proof is similar to \eqref{1 Melnikov}. 
First of all note that for all ${\mathtt h}  \in [{\mathtt h}_1, {\mathtt h}_2] $, we have 
$$ 
| {\vec \om} ({\mathtt h} ) \cdot \ell  + \Omega_j ( {\mathtt h} ) +  \Omega_{j'} ({\mathtt h} )  | \geq  
\Omega_j ({\mathtt h} ) + \Omega_{j'} ( {\mathtt h} )
 - |{\vec \om} ({\mathtt h}  )  \cdot \ell  | \geq  c_1 \sqrt{j}+  c_1\sqrt{j'} - C | \ell  | \geq  |\ell  | 
 $$ 
if $ \sqrt{j}+  \sqrt{j'} \geq C_0 |\ell  | $ for some $ C_0 > 0 $. 
Therefore in \eqref{1 Melnikov} we can restrict the analysis to the indices 
$ (\ell  , j, j' ) \in \Z^\nu \times (\N^+ \setminus \Splus)^2 $ satisfying  
\be\label{prima restrizione2+}
\sqrt{j}+  \sqrt{j'}  < C_0 | \ell  | \, . 
\ee
Arguing by contradiction as above, we suppose that 
for all $m \in \N$ there exist  $ \ell_m \in \Z^\nu$, $ j_m \in \N^+ \setminus \Splus  $ and 
$ {\mathtt h}_m \in [ {\mathtt h}_1, {\mathtt h}_2]$ such that 
\begin{equation}\label{rick 22+}
\forall k \in \N, \quad \forall m \geq k \, , \quad 
\Big| 
\partial_{\mathtt h}^k \Big\{ {\vec \om}({\mathtt h}_m) \cdot \frac{\ell_m}{\langle \ell_m \rangle} + 
\frac{ \Omega_{j_m}({\mathtt h}_m)}{\langle \ell_m \rangle} + 
\frac{ \Omega_{j_m'}({\mathtt h}_m)}{\langle \ell_m \rangle} \Big\} \Big| 
< \frac{1}{1 + m} \, . 
\end{equation}
Since the sequences $({\mathtt h}_m)_{m \in \N} \subset [{\mathtt h}_1, {\mathtt h}_2] $ 
and $(\ell_{m} / \langle \ell_m \rangle)_{m \in \N} \in \R^\nu $ are bounded, there exist $m_n \to + \infty$ such that 
\be\label{up to subsequence2+}
{\mathtt h}_{m_n} \to \bar {\mathtt h} \in [ {\mathtt h}_1, {\mathtt h}_2]\,,\quad \frac{\ell_{m_n}}{\langle \ell_{m_n} \rangle} \to \bar c \in \R^\nu \,.
\ee
We now distinguish two cases.

\emph{Case 1: $ (\ell_{m_n}) \subset \Z^\nu $ is bounded}. 
Up to a subsequence,  $\ell_{m_n} \to \bar \ell \in \Z^\nu $, 
and since, by \eqref{prima restrizione2+}, also $ j_m, j_m' \leq C $ for all $m$, 
we have $j_{m_n} \to \bar \jmath $, $j_{m_n}' \to \bar \jmath' $. 
Passing to the limit for $m_n \to + \infty$ in \eqref{rick 22+} we deduce, by \eqref{up to subsequence2+}, that 
$$
\partial_{\mathtt h}^k \big\{ {\vec \om}(\bar {\mathtt h}) \cdot \bar c + 
\Omega_{\bar \jmath} (\bar {\mathtt h})  \langle \bar \ell \rangle^{-1}+
 \Omega_{\bar \jmath'} (\bar {\mathtt h})  \langle \bar \ell \rangle^{-1}  \big\} = 0 
 \quad \forall k \in \N \, .
$$
Therefore the analytic function
$ {\mathtt h} \mapsto {\vec \om}(  {\mathtt h} ) \cdot \bar c + {\langle \bar \ell \rangle}^{-1}  \Omega_{\bar \jmath}( \h) +
\langle \bar \ell \rangle^{-1}  \Omega_{\bar \jmath'} ({\mathtt h} ) $
is identically zero. 
This is in contradiction with Lemma \ref{non degenerazione frequenze imperturbate}.

\emph{Case 2: $ (\ell_{m_n})$ is unbounded}. 
Up to a subsequence, $ | \ell_{m_n}| \to + \infty $.
In this case the constant $ \bar c$ in \eqref{up to subsequence2+} is nonzero. 
Moreover, by \eqref{prima restrizione2+}, we also have that, up to a subsequence, 
\be\label{rapporto tende}
( j_{m_n}^{\frac12} + j_{m_n}'^{\frac12} ) \langle \ell_{m_n} \rangle^{-1} \to \bar d \in \R \, .
\ee
By \eqref{espansione asintotica degli autovalori},  
\eqref{up to subsequence2+}, \eqref{rapporto tende}, 
passing to the limit as $ m_n \to + \infty $ in \eqref{rick 22+}
we deduce that 
$ \partial_{\mathtt h}^k \big\{ {\vec \om}(\bar {\mathtt h}) \cdot \bar c + \bar d  \big\} = 0 $
for all $k \in \N $. 
Therefore the analytic function 
$ {\mathtt h} \mapsto {\vec \om}( {\mathtt h} ) \cdot \bar c + \bar d = 0 $ is identically zero. 
Since $ (\bar c, \bar d ) \neq 0 $,
this is in contradiction with Lemma \ref{non degenerazione frequenze imperturbate}.
\end{proof}

\section{Nash-Moser theorem and measure estimates}\label{sec:functional}

Rescaling $ u \mapsto \e u $, we 
write \eqref{WW0} 
as the Hamiltonian system 
generated by the Hamiltonian 
$$
{\cal H}_\e (u) := \e^{-2} H(\e u ) = H_L (u) + \e P_\e (u) 
$$
where $ H $ is the water waves Hamiltonian \eqref{Hamiltonian} (with $ g = 1 $ and depth $ \mathtt h $),  
$ H_L $ is defined in \eqref{Hamiltonian linear} and
\begin{equation}\label{definizione P epsilon}
P_\e (u, {\mathtt h}) := P_\e (u) := 
\frac{1}{2 \e} \int_\T  \psi\,\big( G(\e \eta, {\mathtt h}) - G(0, {\mathtt h}) \big) \psi \,dx  \, .
\end{equation}
We decompose the phase space 
\begin{equation}\label{phase space 2017}
H_{0, {\rm even}}^1 := \Big\{ u := (\eta, \psi) \in H_0^1(\T_x) \times  \dot{H}^1(\T_x)\,, \quad u(x) = u(- x)\Big\}
= H_{\Splus} \oplus  H_{\Splus}^\bot 
\end{equation} 
as the direct sum of the symplectic subspaces
$H_{\Splus}$ and $H_{\Splus}^\bot$  defined in \eqref{splitting S-S-bot}, 
 we  introduce action-angle variables on the tangential sites as in \eqref{AA}, 
and we  leave unchanged  the  normal component $ z $. 
The symplectic $ 2 $-form in \eqref{2form tutto} reads 
\begin{equation}\label{2form}
{\cal W} := \Big( {\mathop \sum}_{j \in \Splus} d \theta_j \wedge d I_j \Big)  \oplus 
{\cal W}_{|H_{\Splus}^\bot} = d \Lambda ,
\end{equation}
where $ \Lambda $ is the Liouville  $1$-form
\begin{equation}\label{Lambda 1 form}
\Lambda_{(\theta, I, z)}[\widehat \theta, \widehat I, \widehat z] := - 
\sum_{j \in \Splus} I_j  \widehat \theta_j - \frac12 \big( J z\,,\,\widehat z \big)_{L^2}  \, . 
\end{equation}
 Hence the Hamiltonian system generated by $ {\cal H}_\e $
transforms into the one 
generated by the Hamiltonian
\be\label{new Hamiltonian} 
H_\e :=  {\cal H}_\e \circ A = \e^{-2} H \circ \e A 
\ee
where
\begin{equation}\label{definizione A}
A (\theta, I, z) := v (\theta, I) +  z := 
\sum_{j \in \Splus} 
\sqrt{\frac{2}{\pi}}
\begin{pmatrix} 
\omega_j^{1/2} \sqrt{\xi_j + I_j} \, \cos (\theta_j ) \\
- \omega_j^{- 1/2} \sqrt{\xi_j + I_j} \, \sin (\theta_j )  \\
\end{pmatrix} \cos (jx) + z \, . 
\end{equation}
We denote by $X_{H_\e} := (\partial_I H_\e,  - \partial_\theta H_\e, J \nabla_z H_\e)$  
the Hamiltonian vector field
in the variables $(\theta, I, z ) \in \T^\nu \times \R^\nu \times H_{\Splus}^\bot $. 
The involution $ \rho $ in \eqref{defS} becomes 
\be\label{involuzione tilde rho}
\tilde \rho : (\theta, I, z ) \mapsto (- \theta, I, \rho z ) \, .
\ee
By \eqref{Hamiltonian} and \eqref{new Hamiltonian}  the Hamiltonian $ H_\e  $ reads (up to a constant) 
\begin{equation}\label{definizione cal N P}
H_\e = {\cal N} + \e P \, , \quad 
{\cal N} := H_L \circ A  =    {\vec \om} (\h) \cdot  I  + 
\frac12 (z, \Om z)_{L^2} \, , \quad P :=  P_\e \circ A\,, 
\end{equation}
where $ {\vec \om} (\h) $ is defined in \eqref{tangential-normal-frequencies}
 and  $ \Om $  in \eqref{definizione Omega}. We look for an embedded invariant torus 
$$
i : \T^\nu \to \T^\nu \times \R^\nu \times H_{\Splus}^\bot, \quad  
\vphi \mapsto i (\vphi) := ( \theta (\vphi), I (\vphi), z (\vphi)) 
$$
of the Hamiltonian vector field $ X_{H_\e}  $ 
filled by quasi-periodic solutions with Diophantine frequency $ \om \in \R^\nu $
(and which satisfies also  first and second order Melnikov non-resonance conditions
as in \eqref{Cantor set infinito riccardo}). 

\subsection{Nash-Moser theorem of hypothetical conjugation}

For $ \a \in \R^\nu $, we consider the modified Hamiltonian
\begin{equation}\label{H alpha}
H_\a := {\cal N}_\a + \e P \, , \quad {\cal N}_\a :=  \a \cdot I + \frac12 (z, \Om z)_{L^2} \, .
\end{equation}
We look for zeros of the nonlinear operator
\begin{align}
 {\cal F} (i, \a ) 
& :=   {\cal F} (i, \a, \om, {\mathtt h}, \e )  := \Dom i (\vphi) - X_{H_\a} ( i (\vphi))
=  \Dom i (\vphi) -  (X_{{\cal N}_\a}  +  \e X_{P})  (i(\vphi) )  \label{operatorF}  \\
& \nonumber  :=  \left(
\begin{array}{c}
\Dom \theta (\vphi) -  \a - \e \partial_I P ( i(\vphi)   )   \\
\Dom I (\vphi)  +  \e \partial_\teta P( i(\vphi)  )  \\
\Dom z (\vphi) -  J (\Om z(\vphi) + \e \nabla_z P ( i(\vphi) ))  
\end{array}
\right) 
\end{align}
where $ \Theta(\ph) := \teta (\vphi) - \vphi $ is $ (2 \pi)^\nu $-periodic. Thus 
$ \vphi \mapsto i (\vphi) $ is an embedded torus, invariant for the Hamiltonian vector field $ X_{H_\a } $  
and  filled by quasi-periodic solutions with frequency $ \om $.

Each Hamiltonian $ H_\a $ in \eqref{H alpha} is reversible, i.e. $  H_\a \circ \tilde \rho = H_\a $
where the involution $ \tilde \rho $ is defined in \eqref{involuzione tilde rho}. 
We look for reversible solutions of $ {\cal F}(i, \a) = 0 $,  namely satisfying $ {\tilde \rho} i (\vphi ) = i (- \vphi) $ 
(see \eqref{involuzione tilde rho}), i.e.  
\begin{equation}\label{parity solution}
\theta(-\vphi) = - \theta (\vphi) \, , \quad 
I(-\vphi) = I(\vphi) \, , \quad 
z (- \vphi ) = ( \rho z)(\vphi) \, . 
\end{equation}
The norm of the periodic component of the embedded torus 
\begin{equation}\label{componente periodica}
{\mathfrak I}(\vphi)  := i (\vphi) - (\vphi,0,0) := ( {\Theta} (\ph), I(\ph), z(\ph))\,, \quad \Theta(\ph) := \teta (\vphi) - \vphi \, , 
\end{equation}
is 
\be\label{def:norma-cp}
\|  {\mathfrak I}  \|_s^{k_0,\g} 
:= \| \Theta \|_{H^s_\vphi}^{k_0,\g} +  \| I  \|_{H^s_\vphi}^{k_0,\g} 
+  \| z \|_s^{k_0,\g}\,, 
\ee
where $ \| z \|_s^{k_0,\g} =  \| \eta \|_s^{k_0,\g} + \| \psi \|_s^{k_0,\g} $.
We define 
\begin{equation} \label{def k0}
k_0 := \barka + 2,	
\end{equation}
where $\barka$ is the index of non-degeneracy provided by Proposition \ref{Lemma: degenerate KAM}, 
which only depends on the linear unperturbed frequencies. 
Thus $k_0$ is considered as an absolute constant,
and we will often omit to explicitly write the dependence of the various constants 
with respect to $ k_0 $.
We look for quasi-periodic solutions with frequency $ \om $  
belonging to a $ \d $-neighborhood (independent of $ \e $)
\be\label{unperturbed-frequencies} 
\tOm := \Big\{  \om \in \R^\nu \, : \, {\rm dist} \big(\om,  {\vec \omega}[{\mathtt h}_1, {\mathtt h}_2]\big) < \d \Big\}, \quad \d > 0
\ee
of the unperturbed linear frequencies $ {\vec \omega}[{\mathtt h}_1, {\mathtt h}_2] $ defined  in \eqref{tangential-normal-frequencies}.
 
\begin{theorem} \label{main theorem}
{\bf (Nash-Moser theorem)}
Fix finitely many tangential sites $ \Splus \subset \N^+ $ and let $ \nu := |\Splus | $. 
Let $\t \geq 1$. 
There exist positive constants $ a_0, \e_0, \kappa_1, C $ 
depending on $\Splus, k_0, \t$ such that, 
for all $ \g = \e^a $, $ 0 < a < a_0 $, for all $ \e \in (0, \e_0) $,
there exist a $ k_0 $ times differentiable function
\be\label{mappa aep}
\a_\infty : \R^\nu \times [{\mathtt h}_1, {\mathtt h}_2] \mapsto \R^\nu \, , 
\quad \a_\infty (\om, {\mathtt h} ) = \om + r_\e (\om, {\mathtt h}) \, , 
\quad {with} \quad 
|r_\e|^{k_0, \gamma}  \leq C \e \g^{-1} \, , 
\ee
a family of embedded tori  $ i_\infty  $ defined for all 
$ (\om, {\mathtt h})  \in \R^\nu \times   [{\mathtt h}_1, {\mathtt h}_2 ] $ 
satisfying 
\eqref{parity solution} and 
\be\label{stima toro finale}
\|  i_\infty (\vphi) -  (\vphi,0,0) \|_{s_0}^{k_0, \g} \leq C \e \g^{-1}  \, , 
\ee
a sequence of $ {k_0} $ times differentiable functions 
$ \mu_j^\infty : \R^\nu \times [{\mathtt h}_1, {\mathtt h}_2 ]  \to \R $, 
$  j \in \N^+ \setminus \Splus $, of the form
\begin{equation}\label{autovalori infiniti}
\mu_j^\infty (\omega, \h ) = 
\mathtt m_{\frac12}^\infty (\om, \h ) ( j \tanh ( \h j ) )^{\frac12}   
+ \rin_j^\infty (\omega, \h )
\end{equation} 
satisfying 
\begin{equation}\label{stime autovalori infiniti}
| \mathtt m_{\frac12}^\infty  - 1|^{k_0, \gamma} \leq C \e \g^{-1} \,, \qquad 
 \sup_{j \in \N^+ \setminus \Splus} j^{\frac12} |\rin_j^\infty |^{k_0, \gamma} 
 \leq C \e \gamma^{-\kappa_1}
\end{equation}
such that  for all $ (\om, {\mathtt h}) $ in the  Cantor like set
\begin{align}\label{Cantor set infinito riccardo}
{\cal C}_\infty^{\gamma} & := \Big\{ ( \omega, {\mathtt h} ) \in \tOm \times [ {\mathtt h}_1, {\mathtt h}_2 ] \, : 
 \, |\om \cdot \ell  | \geq 8 \g \langle \ell \rangle^{-\tau}, \, \forall \ell \in \Z^{\nu} \setminus \{ 0 \} \, ,  
   \\
&  \qquad  |\omega \cdot \ell  + \mu_j^\infty (\om, {\mathtt h})  | \geq 4 \gamma j^{\frac12} \langle \ell  \rangle^{- \tau}, \, 
 \forall \ell   \in \Z^\nu, \, j \in \N^+ \setminus \Splus, \ 
  \nonumber \\
  & \qquad |\omega \cdot \ell  + 
 \mu_j^\infty (\om, {\mathtt h}) + \mu_{j'}^\infty (\om, {\mathtt h} ) | \geq 
 4 \gamma (j^{\frac12} + j'^{\frac12} ) \langle \ell  \rangle^{-\tau}, \,\,
 \forall \ell   \in \Z^\nu ,\,\,j, j' \in \N^+ \setminus \Splus \,, \nonumber  \\ 
& \qquad |\omega \cdot \ell  + 
 \mu_j^\infty (\om, {\mathtt h}) -  \mu_{j'}^\infty (\om, {\mathtt h}) | \geq 
 4 \gamma j^{-\perd} j'^{-\perd} \langle \ell  \rangle^{-\tau}, \,\,
 \forall \ell   \in \Z^\nu ,\,\,j, j' \in \N^+ \setminus \Splus , \, (\ell,j,j') \neq (0,j,j) 
 \Big\} \nonumber
\end{align}
the function $ i_\infty (\vphi) := i_\infty (\omega, {\mathtt h}, \e)(\vphi) $ is a solution of 
$ {\cal F}( i_\infty, \a_\infty (\om, {\mathtt h}) , \om, {\mathtt h}, \e)  = 0 $. As a consequence the embedded torus 
$ \vphi \mapsto i_\infty (\vphi) $ is invariant for the Hamiltonian vector field $ X_{H_{\a_\infty (\om, {\mathtt h})}  } $  
and it is filled by quasi-periodic solutions with frequency $ \om $.
\end{theorem}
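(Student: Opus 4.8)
The plan is to prove Theorem \ref{main theorem} by a Nash--Moser iterative scheme applied to the nonlinear operator $\mathcal F$ in \eqref{operatorF}, treating $(\omega,\mathtt h)$ as parameters and adjusting the counter-term $\alpha$ along the iteration. The starting point is the approximate solution $i_0(\varphi) = (\varphi,0,0)$, $\alpha_0 = \omega$, for which $\mathcal F(i_0,\alpha_0) = \mathcal O(\varepsilon)$ since $X_P$ is bounded in the appropriate norms. At each step one linearizes $\mathcal F$ at the current approximate torus $i_n$ and seeks an \emph{almost-approximate inverse} of $d_{i,\alpha}\mathcal F$ satisfying tame estimates with a fixed finite loss of derivatives. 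Following Berti--Bolle \cite{BB13} and the procedure outlined in Section \ref{ideas of the proof}, the construction of this inverse is reduced, via symplectic coordinates around the approximately invariant torus, to the problem of (almost) inverting the linearized operator $\mathcal L_\omega$ in the normal subspace $H_{\mathbb S^+}^\bot$; this is exactly the content of Sections \ref{sezione approximate inverse}--\ref{sec: reducibility}, which I may assume, culminating in the tame bounds \eqref{stima inverso approssimato 2}--\eqref{stima cal G omega bot alta} of Theorem \ref{thm:stima inverso approssimato}.

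The iteration itself then follows a standard differentiable Nash--Moser scheme with smoothing operators $\Pi_{N_n}$ (defined in \eqref{def:smoothings}, with $N_n = N_0^{\chi^n}$ for suitable $\chi \in (1,2)$) applied at each step to keep the approximate solutions trigonometric polynomials, so that all the pseudo-differential machinery of Section \ref{sec:pseudo} applies. Writing $i_{n+1} = i_n + \mathfrak I_{n+1}$ where $\mathfrak I_{n+1}$ solves the linearized equation with the almost-approximate inverse and a truncated right-hand side $\Pi_{N_n}\mathcal F(i_n,\alpha_n)$, together with the corresponding update $\alpha_{n+1} = \alpha_n + \delta\alpha_n$ chosen to cancel the obstruction to solvability (the average of the first component of \eqref{operatorF}, cf.\ \eqref{equazione psi hat}), one obtains the quadratic-type decay $\|\mathcal F(i_{n+1},\alpha_{n+1})\|_{s_0} \lesssim N_n^{-\mathtt a_1}\|\cdots\|$ plus lower-order contributions controlled by the ultraviolet cut-off and by the almost-reducibility remainder $\mathcal R_n = \mathcal O(N_{n-1}^{-\mathtt a})$; the three error terms in \eqref{stima inverso approssimato 2}--\eqref{stima cal G omega bot alta} are all small enough provided $\mathtt a$ (hence $M$, the regularization order in \eqref{relazione mathtt b N}) is taken large relative to $\tau$ and $\mathtt d$, as fixed in \eqref{alpha beta}. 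The reversibility structure \eqref{parity solution} is preserved at every step because all the transformations employed are reversibility preserving and even, as recorded in Section \ref{sezione operatori reversibili e even}. Standard interpolation (Lemma \ref{lemma:interpolation}) and the tame product and composition estimates (Lemma \ref{lemma:LS norms}, Lemma \ref{Moser norme pesate}) give convergence of $i_n \to i_\infty$ in $\|\cdot\|_{s_0}^{k_0,\gamma}$ on the nested Cantor sets, with the bound \eqref{stima toro finale}; simultaneously $\alpha_n \to \alpha_\infty$ with $\alpha_\infty(\omega,\mathtt h) = \omega + r_\varepsilon$, $|r_\varepsilon|^{k_0,\gamma} \lesssim \varepsilon\gamma^{-1}$, and the Floquet exponents $\mu_j^\infty$ produced by the reducibility of Section \ref{sec: reducibility} satisfy \eqref{autovalori infiniti}--\eqref{stime autovalori infiniti}. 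A crucial point is that all these limit objects are defined for \emph{all} $(\omega,\mathtt h)\in\R^\nu\times[\mathtt h_1,\mathtt h_2]$ — not just on the Cantor set — by invoking the Whitney extension theorem of Appendix \ref{sec:U}; the iterative scheme is set up on $\R^{\nu+1}$ from the outset using the extended Diophantine inverse \eqref{def ompaph-1 ext} and Lemma \ref{lemma:WD}, and the non-resonance conditions are only used to identify the set $\mathcal C_\infty^\gamma$ on which the limit torus is an exact solution.

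Finally, on $\mathcal C_\infty^\gamma$ defined in \eqref{Cantor set infinito riccardo} all the finitely-many Diophantine and Melnikov conditions imposed at every step of the scheme hold simultaneously, and both the Nash--Moser iteration and the almost-reducibility of Section \ref{sec: reducibility} pass to the limit $n\to\infty$ (the truncation and almost-reducibility remainders vanish), so that $\mathcal F(i_\infty,\alpha_\infty(\omega,\mathtt h),\omega,\mathtt h,\varepsilon) = 0$ there; consequently $\varphi\mapsto i_\infty(\varphi)$ is an embedded invariant torus for $X_{H_{\alpha_\infty(\omega,\mathtt h)}}$ filled by quasi-periodic solutions with frequency $\omega$. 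The $k_0$-fold differentiability in $(\omega,\mathtt h)$ of $i_\infty$, $\alpha_\infty$ and $\mu_j^\infty$ is tracked throughout by working in the weighted Whitney--Sobolev norms $\|\cdot\|_{s}^{k_0,\gamma}$ of Definition \ref{def:Lip F uniform}, which is why $k_0$ was fixed as $\barka+2$ in \eqref{def k0}.

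\medskip

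The main obstacle, which is where essentially all the work lies, is not the abstract Nash--Moser iteration above — that is by now routine given the tame estimates — but the construction of the almost-approximate inverse of $d_{i,\alpha}\mathcal F$, i.e.\ proving Theorem \ref{thm:stima inverso approssimato}. This requires the full analysis of Sections \ref{linearizzato siti normali}--\ref{sec: reducibility}: the introduction of the linearized good unknown of Alinhac, the straightening of the quasi-periodic transport vector field $\omega\cdot\partial_\varphi + V\partial_x$ via a Nash--Moser--H\"ormander argument (a small-divisor problem in its own right, since the perturbation $V\partial_x$ is \emph{singular} relative to $|D|^{1/2}$), the subsequent symmetrization and reduction to constant coefficients up to smoothing operators of very negative order $|D|^{-M}$, and then the KAM reducibility scheme with the very weak second-order Melnikov conditions \eqref{2nd melnikov perd} that lose derivatives in both $\varphi$ and $x$ — the loss being compensated precisely by the smoothing order $M$ fixed large in terms of $\mathtt d$ and $\tau$. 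The delicate interplay is the bookkeeping that ensures $M$, $\mathtt b$, $\mathfrak m$, $\mathtt a$, $\tau$ and the non-degeneracy index $\barka$ (via \eqref{unperturbed measure} and Russmann's lemma) can all be chosen consistently so that the measure of the excluded parameter set is $o(1)$ as $\varepsilon\to 0$ while the iteration still converges; this consistency is what the chain of relations \eqref{alpha beta}, \eqref{relazione mathtt b N}, \eqref{relazione tau k0} encodes.
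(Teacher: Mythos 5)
Your proposal follows essentially the same route as the paper's proof of Theorem~\ref{main theorem}: a differentiable Nash--Moser iteration (the paper's Theorem~\ref{iterazione-non-lineare}) built on the almost-approximate inverse of Theorem~\ref{thm:stima inverso approssimato}, with Whitney extension to define $i_\infty$, $\alpha_\infty$ and $\mu_j^\infty$ on all of $\R^\nu\times[\mathtt h_1,\mathtt h_2]$, and then the identification of $\mathcal{C}_\infty^\gamma$ as a subset of the intersection of the intermediate Cantor sets. One point worth flagging is the last step: you assert that on $\mathcal{C}_\infty^\gamma$ ``all the finitely-many Diophantine and Melnikov conditions imposed at every step of the scheme hold simultaneously,'' but this is not automatic, since the iterate-level conditions involve the intermediate eigenvalues $\mu_j^n(\tilde\imath_{n-1})$ while $\mathcal{C}_\infty^\gamma$ is defined in terms of the limit eigenvalues $\mu_j^\infty$; the paper devotes Lemmata~\ref{lemma inclusione cantor riccardo 1} and \ref{lemma inclusione cantor riccardo 2} to establishing the chain of inclusions $\mathcal{C}_\infty^\gamma\subseteq\mathcal{G}_\infty\subseteq\bigcap_n\mathcal{G}_n$, using the quantitative closeness $|r_j^\infty - r_j^n(i_\infty)|^{k_0,\gamma}\lesssim\varepsilon\gamma^{-2(M+1)}|j|^{-2\mathfrak m}N_{n-1}^{-\mathtt a}$ together with the weight $|j|^{-\mathtt d}|j'|^{-\mathtt d}$ in the second Melnikov conditions, and this comparison requires $\mathfrak m>\mathtt d$ (encoded in \eqref{alpha beta}). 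Also a small bookkeeping detail: the paper uses two coupled scales, $K_n=K_0^{\chi^n}$ for the nonlinear Nash--Moser iteration (with $\chi=3/2$) and $N_n=K_n^p$ for the KAM reducibility, linked by the constraint \eqref{cond-su-p}; your sketch conflates them into one.
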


Theorem \ref{main theorem} is proved in Section \ref{sec:NM}.
The very weak second Melnikov non-resonance conditions in \eqref{Cantor set infinito riccardo}
can be verified for most parameters if $ \perd $ is large enough, 
i.e.\ $ \perd  > \frac34 \, \barka $, see Theorem \ref{Teorema stima in misura} below.

\subsection{Measure estimates}\label{sec:measure}

The aim is now to deduce Theorem \ref{thm:main0} from Theorem \ref{main theorem}. 

By  \eqref{mappa aep}  the function $ \a_\infty ( \cdot, {\mathtt h}) $ from $ \tOm $ into the image $\a_\infty (\tOm, {\mathtt h}) $ is invertible:
\be\label{a-b}
\beta = \a_\infty (\om, {\mathtt h}) = \om + r_\e (\om, {\mathtt h})  
\quad \Longleftrightarrow \quad 
\om  = \a_\infty^{-1}(\b , {\mathtt h}) 
= \beta + \breve r_\e (\beta, {\mathtt h}) \quad \text{with}  \quad
| \breve r_\e |^{k_0, \gamma} \leq C \e \g^{-1} \, . 
\ee
We underline that the function $\alpha_\infty^{- 1}(\cdot, {\mathtt h})$ is the inverse of $\alpha_\infty(\cdot, {\mathtt h})$, at any fixed value of $ {\mathtt h} $ in $[ {\mathtt h}_1, {\mathtt h}_2]$.
Then, for any $ \b \in \a_\infty ({\cal C}_\infty^\gamma ) $, 
Theorem \ref{main theorem} proves the existence of an embedded invariant torus  
filled by quasi-periodic solutions with Diophantine frequency 
$ \om =  \a_\infty^{-1}(\b, {\mathtt h} )  $ for the  Hamiltonian 
$$
H_\b =  \beta \cdot I + \frac12 ( z, \Om z)_{L^2} + \e P \, . 
$$
Consider the curve of the unperturbed tangential  frequencies  
$ [{\mathtt h}_1, {\mathtt h}_2] \ni {\mathtt h} \mapsto \vec \om ({\mathtt h} ) := 
( \sqrt{j \tanh ({\mathtt h} j) } )_{j \in \Splus} $ in \eqref{tang-vec}. 
In Theorem \ref{Teorema stima in misura} below we 
prove that for ``most" values of $ {\mathtt h} \in [{\mathtt h}_1, {\mathtt h}_2] $ 
the vector $(\alpha_\infty^{- 1}(\vec \om ({\mathtt h} ), {\mathtt h}), {\mathtt h} )$ is in ${\cal C}^\gamma_\infty$. 
Hence, for such 
values of $ {\mathtt h} $ we have found 
 an embedded invariant torus for the Hamiltonian 
$ H_\e $ in \eqref{definizione cal N P}, 
filled by quasi-periodic solutions with Diophantine frequency $ \om =  \a_\infty^{-1}( \vec \om ({\mathtt h} ), {\mathtt h} ) $.

This implies Theorem \ref{thm:main0} together with the  following measure estimate.

\begin{theorem}\label{Teorema stima in misura} {\bf (Measure estimates)} 
Let
\begin{equation}\label{relazione tau k0}
\gamma = \e^a \,, \quad \ 0 < a < \min\{a_0, 1 / (k_0 + \kappa_1) \} \, , \quad 
\tau > \barka ( \nu + 4) \, , \quad
\perd > \frac{3 \barka}{4} \,,
\end{equation} 
where $\barka$ is the index of non-degeneracy given by Proposition \ref{Lemma: degenerate KAM}
and $k_0 = \barka + 2$.
Then the set 
\be\label{defG-ep}
{\cal G}_\e 
:= \big\{ {\mathtt h} \in [ {\mathtt h}_1, {\mathtt h}_2] :  
\big( \a_\infty^{-1}({\vec \om}({\mathtt h}) , {\mathtt h}), {\mathtt h} \big)  \in  {\cal C}^\gamma_\infty \big\} 
\ee
has a measure satisfying  $ |{\cal G}_\e| \to {\mathtt h}_2 - {\mathtt h}_1 $ as $ \e \to 0 $.
\end{theorem}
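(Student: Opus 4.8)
\textbf{Plan of proof for Theorem \ref{Teorema stima in misura}.}
The strategy is the standard degenerate–KAM measure estimate, following \cite{BaBM} and \cite{BertiMontalto}, combined with the transversality information of Proposition \ref{Lemma: degenerate KAM}. We must show that the ``bad'' set $\mathcal G_\e^c := [\mathtt h_1,\mathtt h_2] \setminus \mathcal G_\e$ has measure $O(\gamma^{1/\barka})$, which tends to $0$ as $\e \to 0$ since $\gamma = \e^a$. First I would write $\mathcal G_\e^c$ as a union of resonant sets, one for each type of Melnikov condition in \eqref{Cantor set infinito riccardo}: the Diophantine set $R_\ell^{(0)}$ (indexed by $\ell \neq 0$), the first Melnikov set $R_{\ell j}^{(I)}$, the $(+)$ second Melnikov set $R_{\ell j j'}^{(II,+)}$, and the $(-)$ second Melnikov set $R_{\ell j j'}^{(II,-)}$ (indexed by $(\ell,j,j') \neq (0,j,j)$). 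For each of these, the set is defined by the condition that the corresponding small-divisor function, evaluated at $\om = \a_\infty^{-1}(\vec\om(\mathtt h),\mathtt h)$, is smaller than $\gamma$ times the appropriate power of $\langle\ell\rangle$ and $j,j'$.

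The key step is to show that along the curve $\mathtt h \mapsto \om_\e(\mathtt h) := \a_\infty^{-1}(\vec\om(\mathtt h),\mathtt h)$ the perturbed Melnikov functions inherit the transversality \eqref{0 Melnikov}–\eqref{2 Melnikov+}. By \eqref{a-b} and \eqref{stime autovalori infiniti}, the functions $\mathtt h \mapsto \om_\e(\mathtt h)$ and $\mathtt h \mapsto \mu_j^\infty(\om_\e(\mathtt h),\mathtt h)$ are $\mathcal C^{k_0}$-perturbations, of size $O(\e\gamma^{-\kappa_1})$, of the unperturbed $\vec\om(\mathtt h)$ and $\Omega_j(\mathtt h)$; since $k_0 = \barka + 2$, using Lemma \ref{Lemma: degenerate KAM perturbato} (referenced in the excerpt as the stability of transversality under small $\mathcal C^{k_0}$-perturbations), the bounds \eqref{0 Melnikov}–\eqref{2 Melnikov+} persist with $\rho_0$ replaced by $\rho_0/2$, once $\e\gamma^{-\kappa_1}$ is small enough — which holds because $a < 1/(k_0+\kappa_1)$ forces $\e\gamma^{-\kappa_1} = \e^{1-a\kappa_1} \to 0$. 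Then I would apply the classical Rüssmann lemma (Theorem 17.1 in \cite{Ru1}): if a $\mathcal C^{\barka}$ function $f$ on an interval of length $L$ satisfies $\max_{k\le\barka}|f^{(k)}(\mathtt h)| \ge \rho$ for all $\mathtt h$, then $|\{\mathtt h : |f(\mathtt h)| < \sigma\}| \le C L (\sigma/\rho)^{1/\barka}$. Applying this to each resonant function (after dividing by $\langle\ell\rangle$, so the amount of non-degeneracy is $\sim\rho_0$), we get, for instance, $|R_{\ell j j'}^{(II,-)}| \lesssim (\gamma \langle\ell\rangle^{-\tau} j^{-\perd}j'^{-\perd} / (\rho_0 \langle\ell\rangle))^{1/\barka} \lesssim \gamma^{1/\barka}\langle\ell\rangle^{-(1+\tau)/\barka} (jj')^{-\perd/\barka}$.

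The remaining point is the summability of these bounds over all indices. For the $0$th and $1$st Melnikov sets, summation over $\ell$ (and, for the first Melnikov, over $j$, using the constraint $j^{1/2} \lesssim |\ell|$ from \eqref{prima restrizione} so that $j$ ranges over $O(|\ell|^2)$ values) converges provided $\tau > \barka(\nu+\cdot)$; the condition $\tau > \barka(\nu+4)$ in \eqref{relazione tau k0} is chosen precisely to make all four families summable. For the $(-)$ second Melnikov set one must be careful: when $|j-j'|$ is bounded one uses \eqref{caso-ell=0} and \eqref{rick 4} to reduce to finitely many $(j,j')$ per $\ell$; when $j,j'$ are both large one uses the factor $(jj')^{-\perd/\barka}$, and the series $\sum_{j,j'} (jj')^{-\perd/\barka}$ converges iff $\perd/\barka > 1$, but since one also has the constraint relating $j,j'$ to $\ell$ one only needs $\perd > \tfrac34\barka$, which is exactly the hypothesis in \eqref{relazione tau k0}. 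Collecting, $|\mathcal G_\e^c| \le \sum |R_\bullet| \lesssim \gamma^{1/\barka} \to 0$, whence $|\mathcal G_\e| \to \mathtt h_2 - \mathtt h_1$. I expect the main obstacle to be the bookkeeping in the $(-)$ second Melnikov case — verifying that the diagonal terms $(\ell,j,j)$ with $\ell\neq0$ reduce to \eqref{0 Melnikov}, that the near-diagonal terms with $\ell = 0$ are handled by the elementary bound \eqref{caso-ell=0}, and that the off-diagonal terms have enough decay in $\langle\ell\rangle, j, j'$ to be summable with the weak exponent $\perd$ — together with checking that the asymptotic expansion \eqref{espansione asintotica degli autovalori} of the $\mu_j^\infty$ is uniform enough in $j$ for the Rüssmann lemma to apply with a single index of non-degeneracy $\barka$.
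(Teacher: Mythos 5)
Your proposal is correct and follows essentially the same route as the paper's proof: decompose $\mathcal G_\e^c$ into the four families of resonant sets, propagate the transversality \eqref{0 Melnikov}--\eqref{2 Melnikov+} to the perturbed frequencies via Lemma \ref{Lemma: degenerate KAM perturbato} (using $\e\gamma^{-(\barka+\kappa_1)}\ll 1$, guaranteed by $a<1/(k_0+\kappa_1)$), invoke the R\"ussmann lemma to bound each resonant set by $\gamma^{1/\barka}$ times the appropriate powers of $\langle\ell\rangle,j,j'$, and finally sum using the index restrictions $j^{1/2}\lesssim\langle\ell\rangle$, $j^{1/2}+j'^{1/2}\lesssim\langle\ell\rangle$, $|\sqrt j-\sqrt{j'}|\lesssim\langle\ell\rangle$ and the emptiness of $R^{(II)}_{0jj'}$ for $j\neq j'$, which together deliver summability under exactly $\tau>\barka(\nu+4)$ and $\perd>\frac34\barka$.
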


The rest of this section is devoted to the proof of Theorem \ref{Teorema stima in misura}. 
By \eqref{a-b} the vector
\begin{equation}\label{omega epsilon kappa}
\om_\e ({\mathtt h}) :=    \a_\infty^{-1}( {\vec \om} ({\mathtt h} ), {\mathtt h}  ) 
= {\vec \om} ({\mathtt h}) + {\mathtt r}_\e ( {\mathtt h} ) \, , \quad   
{\mathtt r}_\e ( {\mathtt h} ) := {\breve r}_\e ({\vec \om} ({\mathtt h}), {\mathtt h} ) \, , 
\ee
satisfies
\be\label{omega epsilon kappa-1}
| \pa_{\mathtt h}^k {\mathtt r}_\e  ({\mathtt h}) | \leq C \e \g^{-k-1} 
\quad \forall 0 \leq k \leq k_0 \, .
\end{equation}
We also denote, with a small abuse of notation, 
for all $ j \in \N^+ \setminus \Splus $, 
\begin{equation}\label{mu j infty kappa}
\mu_j^\infty ( \h ) 
:= \mu_j^\infty ( \om_\e(\h), \h) 
:= \mathtt m_{\frac12}^\infty (\h) (j \tanh (\h j))^{\frac12} 
+ \rin_j^\infty (\h), 
\end{equation}
where 
\begin{equation}\label{autovalori in kappa}
\mathtt m_{\frac12}^\infty (\h) := \mathtt m_{\frac12}^\infty (\omega_\e(\h), \h)\,, \qquad 
\rin_j^\infty(\h) := \rin_j^\infty(\omega_\e(\h), \h).
\end{equation}
By \eqref{stime autovalori infiniti}, \eqref{autovalori in kappa} and 
\eqref{omega epsilon kappa}-\eqref{omega epsilon kappa-1}, 
using that $ \e \gamma^{- k_0 - 1} \leq 1 $ 
(which by \eqref{relazione tau k0} is satisfied for $ \e $ small), we get
\begin{equation}\label{stime coefficienti autovalori in kappa}
| \partial_{\mathtt h}^{k} ( \mathtt m_{\frac12}^\infty({\mathtt h}) - 1) |
\leq C \e \g^{-1-k} \,, \quad 
\sup_{j \in \N^+ \setminus \Splus} 
j^{\frac12} | \pa_{\mathtt h}^k  \rin_j^\infty ({\mathtt h})| \leq C \e \gamma^{- \kappa_1 - k} 
\qquad \forall 0 \leq k \leq k_0\,.
\end{equation}
By \eqref{Cantor set infinito riccardo}, \eqref{omega epsilon kappa}, 
\eqref{mu j infty kappa}, the Cantor set $ {\cal G}_\e $  in \eqref{defG-ep} becomes 
\begin{align}
{\cal G}_\e  & = \Big\{ {\mathtt h} \in [{\mathtt h}_1, {\mathtt h}_2]  \!: \! 
 \, |\om_\e({\mathtt h}) \cdot \ell  | \geq 8 \g \langle \ell \rangle^{-\tau}, \, \forall \ell \in \Z^{\nu} \setminus \{ 0 \},  
\notag \\
&  \qquad  | \om_\e({\mathtt h}) \cdot \ell  + \mu_j^\infty ({\mathtt h})  | \geq 4 \gamma j^{\frac12} \langle \ell  \rangle^{- \tau}, \, 
 \forall \ell   \in \Z^\nu, \, j \in \N^+ \setminus \Splus,  
\nonumber \\ 
& \qquad |\om_\e({\mathtt h}) \cdot \ell  + 
 \mu_j^\infty ({\mathtt h}) + \mu_{j'}^\infty ({\mathtt h}) | \geq 
 4 \gamma (j^{\frac12} + j'^{\frac12} ) \langle \ell  \rangle^{-\tau}, \,\,
 \forall \ell   \in \Z^\nu ,\,\,j, j' \in \N^+ \setminus \Splus \,, 
\nonumber \\ 
& \qquad | \om_\e({\mathtt h}) \cdot \ell  + 
 \mu_j^\infty ({\mathtt h}) -  \mu_{j'}^\infty ({\mathtt h}) | \geq 
  \frac{4 \gamma \langle \ell  \rangle^{-\tau} }{ j^{\perd} j'^{\perd}}  , 
 \forall \ell   \in \Z^\nu, j, j' \in \N^+ \setminus \Splus, (\ell, j, j') \neq (0, j, j)
 \Big\} \, . 
\label{Cantor set infinito riccardo1}
\end{align}
We estimate the measure of the complementary set  
\begin{equation}\label{complementare insieme di cantor}
{\cal G}_\e^c  := [{\mathtt h}_1, {\mathtt h}_2] \setminus {\cal G}_\e 
:= \Big( \bigcup_{\ell \neq 0} R_{\ell}^{(0)} \Big) 
\cup \Big( \bigcup_{\ell , j} R_{\ell, j}^{(I)} \Big)
\cup \Big(\bigcup_{\ell, j, j'} Q_{\ell j j'}^{(II)} \Big)
\cup \Big(\bigcup_{(\ell, j, j') \neq (0,j,j)} R_{\ell j j'}^{(II)} \Big) 
\end{equation}
where the ``resonant sets" are 
\begin{align}
& R_\ell^{(0)} := \big\{ {\mathtt h} \in [{\mathtt h}_1, {\mathtt h}_2] : 
 |\om_\e({\mathtt h}) \cdot \ell  | < 8 \g \langle \ell \rangle^{-\tau} \big\} 
\label{reso1} \\
& R_{\ell j}^{(I)} := \big\{ {\mathtt h} \in [{\mathtt h}_1, {\mathtt h}_2] : 
 | \om_\e({\mathtt h}) \cdot \ell  + \mu_j^\infty ({\mathtt h})  | < 4 \gamma j^{\frac12} \langle \ell  \rangle^{- \tau} \big\} \label{reso2} \\
 & Q_{\ell j j'}^{(II)}:= \big\{ {\mathtt h} \in [{\mathtt h}_1, {\mathtt h}_2]: 
|\om_\e({\mathtt h}) \cdot \ell  + 
 \mu_j^\infty ({\mathtt h}) + \mu_{j'}^\infty ({\mathtt h}) | < 
 4 \gamma (j^{\frac12} + j'^{\frac12} ) \langle \ell  \rangle^{-\tau} \big\}  \label{reso3} \\
 & R_{\ell j j'}^{(II)}  := \Big\{ {\mathtt h} \in [{\mathtt h}_1, {\mathtt h}_2]: 
| \om_\e({\mathtt h}) \cdot \ell  + 
 \mu_j^\infty ({\mathtt h}) -  \mu_{j'}^\infty ({\mathtt h}) | < 
  \frac{4 \gamma \langle \ell  \rangle^{-\tau} }{ j^{\perd} j'^{\perd}} \Big\} \label{reso4}    
\end{align}
with $j,j' \in \N^+ \setminus \Splus$.
We first note that some of these sets are empty. 

\begin{lemma}\label{restrizione indici risonanti} 
For $ \e $, $ \gamma \in (0, \g_0 ) $ small, we have that 
\begin{enumerate}
\item 
If $R_{\ell j}^{(I)} \neq \emptyset$ then $j^{\frac12} \leq C \langle \ell \rangle$.
\item 
If $ R_{\ell j j'}^{(II)} \neq \emptyset$ then  $|j^{\frac12} - j'^{\frac12}| \leq C \langle \ell \rangle$. Moreover,
$ R_{0 j j'}^{(II)} = \emptyset $,  for all $ j \neq j' $.
\item  
 If $Q_{\ell j j'}^{(II)} \neq \emptyset$ then $j^{\frac12} + j'^{\frac12} \leq C \langle \ell \rangle $. 
 \end{enumerate}
\end{lemma}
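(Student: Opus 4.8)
The plan is to prove all three items by elementary triangle inequalities, using only the structure \eqref{mu j infty kappa} of the perturbed Floquet exponents $\mu_j^\infty(\h)$, the smallness bounds \eqref{stime coefficienti autovalori in kappa} and \eqref{omega epsilon kappa-1}, together with a priori two-sided bounds on $\sqrt{j\tanh(\h j)}$. First I would record the preliminary facts used throughout. Since $\h \geq \h_1$ and $j \geq 1$, monotonicity of $\tanh$ gives $\tanh(\h_1) \leq \tanh(\h j) < 1$, so, with $c_1 := \sqrt{\tanh(\h_1)}$ as in \eqref{caso-ell=0}, one has $c_1 \sqrt{j} \leq \sqrt{j\tanh(\h j)} \leq \sqrt{j}$ for all $\h \in [\h_1,\h_2]$, $j \geq 1$. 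Combining this with $|\mathtt m_{\frac12}^\infty(\h) - 1| = O(\e\gamma^{-1})$ and $|\rin_j^\infty(\h)| \leq j^{-1/2}\, O(\e\gamma^{-\kappa_1})$ from \eqref{stime coefficienti autovalori in kappa}, and recalling from \eqref{relazione tau k0} that $\e\gamma^{-\kappa_1}$ is small for $\e$ small, the definition \eqref{mu j infty kappa} yields $\tfrac12 c_1 \sqrt{j} \leq \mu_j^\infty(\h) \leq 2\sqrt{j}$ uniformly in $\h$ and in $j \in \N^+\setminus\Splus$. Moreover, by \eqref{omega epsilon kappa}--\eqref{omega epsilon kappa-1}, $\om_\e(\h) = \vec\om(\h) + {\mathtt r}_\e(\h)$ stays in a bounded set, say $|\om_\e(\h)| \leq C_0$ uniformly on $[\h_1,\h_2]$ for $\e$ small.

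Granting these bounds, items (1) and (3) are immediate. If $\h \in R^{(I)}_{\ell j}$, then \eqref{reso2} and $|\om_\e(\h)\cdot\ell| \leq C_0 |\ell|$ give $\mu_j^\infty(\h) \leq C_0|\ell| + 4\gamma\sqrt{j}$, which together with $\mu_j^\infty(\h) \geq \tfrac12 c_1 \sqrt{j}$ and $4\gamma \leq \tfrac14 c_1$ yields $\sqrt{j} \leq C\langle\ell\rangle$. Likewise, if $\h \in Q^{(II)}_{\ell j j'}$, then \eqref{reso3} gives $\mu_j^\infty(\h) + \mu_{j'}^\infty(\h) \leq C_0|\ell| + 4\gamma(\sqrt{j}+\sqrt{j'})$, while the lower bound gives $\mu_j^\infty + \mu_{j'}^\infty \geq \tfrac12 c_1(\sqrt{j}+\sqrt{j'})$, hence $\sqrt{j}+\sqrt{j'} \leq C\langle\ell\rangle$. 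For the first assertion of (2), if $\h \in R^{(II)}_{\ell j j'}$, then \eqref{reso4} gives $|\mu_j^\infty(\h)-\mu_{j'}^\infty(\h)| < 4\gamma$; using $\mathtt m_{\frac12}^\infty \geq \tfrac12$, the elementary inequality $|\sqrt{j\tanh(\h j)} - \sqrt{j'\tanh(\h j')}| \geq c_1|\sqrt{j}-\sqrt{j'}|$ (obtained exactly as in \eqref{caso-ell=0} by monotonicity of $\tanh$), and $|\rin_j^\infty|, |\rin_{j'}^\infty| = O(\e\gamma^{-\kappa_1})$, one concludes $\tfrac12 c_1 |\sqrt{j}-\sqrt{j'}| \leq C\langle\ell\rangle$.

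The delicate point --- and the main obstacle --- is the emptiness of $R^{(II)}_{0jj'}$ for $j \neq j'$, where $\langle\ell\rangle = 1$ provides no slack, so one must use the \emph{sharp} decay $|\rin_j^\infty(\h)| \leq j^{-1/2}\, O(\e\gamma^{-\kappa_1})$ rather than merely $O(\e\gamma^{-\kappa_1})$. Assume without loss of generality $j > j'$. From the first assertion of (2) with $\ell = 0$ we may assume $\sqrt{j} - \sqrt{j'} \leq C$, so $\sqrt{j'} \geq \sqrt{j} - C$; thus for $j$ large, $\sqrt{j'} \geq \tfrac12\sqrt{j}$ and $\sqrt{j}+\sqrt{j'} \leq 2\sqrt{j}$. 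Then, using $\sqrt{j}-\sqrt{j'} = (j-j')/(\sqrt{j}+\sqrt{j'}) \geq 1/(\sqrt{j}+\sqrt{j'})$,
$$ \mu_j^\infty(\h) - \mu_{j'}^\infty(\h) \geq \tfrac12 c_1(\sqrt{j}-\sqrt{j'}) - \frac{C\e\gamma^{-\kappa_1}}{\sqrt{j}} - \frac{C\e\gamma^{-\kappa_1}}{\sqrt{j'}} \geq \frac{c_1}{4\sqrt{j}} - \frac{C'\e\gamma^{-\kappa_1}}{\sqrt{j}} \geq \frac{c_1}{8\sqrt{j}} $$
for $\e$ small. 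Since $\perd > \tfrac34 \barka \geq \tfrac12$ by \eqref{relazione tau k0} (recall $\barka \geq 1$) and $j' \geq 1$, we have $j^{\perd} j'^{\perd} \geq j^{1/2}$, so the last quantity exceeds $4\gamma/(j^{\perd}j'^{\perd}) \leq 4\gamma/\sqrt{j}$ for $\gamma$ small; hence $\h \notin R^{(II)}_{0jj'}$. For the remaining finitely many small pairs $(j,j')$ one uses instead that $|\sqrt{j}-\sqrt{j'}|$ is bounded below by an absolute constant, so $\mu_j^\infty - \mu_{j'}^\infty$ is bounded below by a positive constant, which for $\gamma$ small still exceeds $4\gamma/(j^\perd j'^\perd) \leq 4\gamma$. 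This gives $R^{(II)}_{0jj'} = \emptyset$. The only arithmetic point worth double-checking in this last step is the inequality $j^\perd j'^\perd \geq j^{1/2}$, which is precisely where the hypothesis $\perd > \tfrac34\barka$ is used.
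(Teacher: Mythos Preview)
Your proof is correct and, for items (1), (3) and the first half of (2), follows the same triangle-inequality argument as the paper. One small slip: in the first assertion of (2) you write that \eqref{reso4} gives $|\mu_j^\infty(\h)-\mu_{j'}^\infty(\h)| < 4\gamma$, but you have dropped the term $\om_\e(\h)\cdot\ell$; once you put it back via the triangle inequality you get $|\mu_j^\infty-\mu_{j'}^\infty| < C_0|\ell| + 4\gamma \leq C\langle\ell\rangle$, which is exactly the conclusion you state.

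For the key claim $R_{0jj'}^{(II)}=\emptyset$ your case-split argument is valid but takes a slightly different route from the paper. The paper avoids the split by first proving the uniform elementary inequality (its formula \eqref{el-sqrt})
\[
|\sqrt{j}-\sqrt{j'}| \;\geq\; \frac{c_0}{2}\Big(\frac{1}{\sqrt{j}}+\frac{1}{\sqrt{j'}}\Big) \;\geq\; \frac{c_0}{j^{1/4}(j')^{1/4}}
\qquad \forall\, j\neq j',
\]
valid for all $j,j'\in\N^+$. The first inequality lets the $O(\e\gamma^{-\kappa_1})j^{-1/2}$ remainders be absorbed into $\tfrac{c}{2}|\sqrt{j}-\sqrt{j'}|$ in one stroke (no large/small case), and the second gives $|\mu_j^\infty-\mu_{j'}^\infty| \geq \tfrac{c}{8}|\sqrt{j}-\sqrt{j'}| \geq c'(jj')^{-1/4} \geq 4\gamma/(j^\perd (j')^\perd)$ using only $\perd \geq 1/4$. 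Your version instead uses $|\sqrt{j}-\sqrt{j'}| \geq 1/(2\sqrt{j})$ in the large-$j$ regime and then $j^\perd(j')^\perd \geq \sqrt{j}$, which forces $\perd \geq 1/2$ and the extra case distinction for small $j$. Since the paper's standing hypothesis $\perd > \tfrac34\barka$ with $\barka\geq 1$ certainly gives $\perd > 1/2$, your route goes through; the paper's inequality is simply sharper and yields a cleaner, case-free proof requiring only $\perd \geq 1/4$.
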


\begin{proof}
Let us consider the case of $ R_{\ell j j'}^{(II)} $. If $ R_{\ell j j'}^{(II)} \neq \emptyset $ 
there is $ {\mathtt h} \in [{\mathtt h}_1, {\mathtt h}_2] $ such that 
\be\label{up1}
|  
 \mu_j^\infty ({\mathtt h}) -  \mu_{j'}^\infty ({\mathtt h}) | < 
  \frac{4 \gamma \langle \ell  \rangle^{-\tau} }{ j^{\perd} j'^{\perd}} + |\om_\e({\mathtt h}) \cdot \ell | 
  \leq C \langle \ell \rangle \, . 
\ee
On the other hand, \eqref{mu j infty kappa}, \eqref{stime coefficienti autovalori in kappa}, 
and \eqref{caso-ell=0} imply  
\be\label{lo1}
| \mu_j^\infty ({\mathtt h}) -  \mu_{j'}^\infty ({\mathtt h}) | \geq 
\mathtt m_{\frac12}^\infty c | \sqrt{j} - \sqrt{j'} | - C \e \gamma^{- \kappa_1}  
\geq \frac{c}{2} | \sqrt{j} - \sqrt{j'} | - 1 \, . 
\ee
Combining \eqref{up1} and \eqref{lo1} we deduce $|j^{\frac12} - j'^{\frac12}| \leq C \langle \ell \rangle $.

Next we prove that $ R_{0 j j'}^{(II)} = \emptyset $, $ \forall  j \neq j' $.
Recalling \eqref{mu j infty kappa}, \eqref{stime coefficienti autovalori in kappa}, 
and the definition $\Om_j(\h) = \sqrt{j \tanh ( {\mathtt h} j)} $, 
we have 
\begin{align}
| \mu_j^\infty ({\mathtt h}) -  \mu_{j'}^\infty ({\mathtt h}) | 
& \geq \mathtt m_{\frac12}^\infty ({\mathtt h}) 
| \Om_j  ({\mathtt h}) -  \Om_{j'}  ({\mathtt h}) |  -  \frac{C\e \gamma^{- \kappa_1}}{j^{\frac12}} -  \frac{C\e \gamma^{-\kappa_1}}{(j')^{\frac12}} 	\nonumber \\
& \stackrel{\eqref{caso-ell=0}} \geq \label{low-b1}
\frac{c}{2} | \sqrt{j}  -  \sqrt{j'} |  -  \frac{C\e \gamma^{- \kappa_1}}{j^{\frac12}} 
- \frac{C\e \gamma^{- \kappa_1}}{(j')^{\frac12}} \,.
\end{align}
Now we observe that, 
for any fixed $j \in \N^+$, the minimum of $|\sqrt{j} -  \sqrt{j'}|$ 
over all $j' \in \N^+ \setminus \{ j \}$ is attained at $j' = j+1$. 
By symmetry, this implies that $ |\sqrt{j} -  \sqrt{j'}| $ is greater or equal than both 
$(\sqrt{j+1} + \sqrt{j})^{-1}$ and $(\sqrt{j'+1} + \sqrt{j'})^{-1}$.
Hence, with $c_0 := 1/(1 + \sqrt{2})$, one has
\be\label{el-sqrt}
|\sqrt{j} -  \sqrt{j'}| 
\geq c_0 \, \max \Big\{ \frac{1}{\sqrt{j}} \, , \frac{1}{\sqrt{j'}} \Big\}
\geq \frac{c_0}{2}\, \Big( \frac{1}{\sqrt{j}} \, + \frac{1}{\sqrt{j'}} \Big)
\geq \frac{c_0}{j^{\frac14} (j')^{\frac14}}
\qquad \forall j, j' \in \N^+, \ j \neq j'.
\ee  
As a consequence of \eqref{low-b1} and of the three inequalities in \eqref{el-sqrt}, 
for $\e \gamma^{- \kappa_1}$ small enough, we get for all $ j \neq j' $ 
$$
| \mu_j^\infty ({\mathtt h}) -  \mu_{j'}^\infty ({\mathtt h}) |  
\geq \frac{c}{8} | \sqrt{j}  -  \sqrt{j'} | 
\geq \frac{4 \gamma}{j^\perd j'^{\perd}} \, , 
$$
for $ \gamma $ small, since $ \perd \geq 1/4 $. 
This proves that  $ R_{0 j j'}^{(II)} = \emptyset $, for all $   j \neq j' $. 

The statement for $R_{\ell j}^{(I)}$ and $Q_{\ell j j'}^{(II)}$ is elementary. 
\end{proof}

By Lemma \ref{restrizione indici risonanti}, 
the last union in \eqref{complementare insieme di cantor} becomes 
\begin{equation} \label{7.marzo.1}
\bigcup_{(\ell, j, j') \neq (0,j,j)} R_{\ell j j'}^{(II)}
= \bigcup_{\begin{subarray}{c} \ell	\neq 0 \\ |\sqrt{j} - \sqrt{j'}| \leq C \langle \ell \rangle
\end{subarray}} R_{\ell j j'}^{(II)} .
\end{equation}
In order to estimate the measure of the sets 
\eqref{reso1}-\eqref{reso4} that are nonempty,
the key point is to prove that the perturbed frequencies satisfy estimates  similar to 
\eqref{0 Melnikov}-\eqref{2 Melnikov+}  in Proposition \ref{Lemma: degenerate KAM}. 

\begin{lemma} \label{Lemma: degenerate KAM perturbato}
{\bf (Perturbed transversality)}
For $ \e $ small enough, for all $ {\mathtt h} \in [{\mathtt h}_1, {\mathtt h}_2] $, 
\begin{align}\label{0 Melnikov perturbate}
\max_{k \leq \barka} |\partial_{\mathtt h}^{k}  \{\omega_\e ({\mathtt h}) \cdot \ell   \} | 
& \geq \frac{\rho_0}{2}\, \langle \ell \rangle 
\quad \forall \ell  \in \Z^\nu \setminus \{ 0 \}, 
\\
\label{1 Melnikov perturbate}
\max_{k \leq \barka} |\partial_{\mathtt h}^{k}  \{\omega_\e ({\mathtt h}) \cdot \ell  + \mu_j^\infty ({\mathtt h}) \} | 
& \geq \frac{\rho_0}{2}\, \langle \ell \rangle 
\quad \forall \ell  \in \Z^\nu, \, j \in  \N^+ \setminus \Splus : j^{\frac12}\leq C  \langle \ell \rangle,   
\\
\label{2 Melnikov- perturbate}
\max_{k \leq \barka} |\partial_{\mathtt h}^{k} \{ \omega_\e ({\mathtt h}) \cdot \ell  
+ \mu_j^\infty ({\mathtt h}) - \mu_{j'}^\infty({\mathtt h}) \} | 
& \geq \frac{\rho_0}{2}\, \langle \ell \rangle 
\quad \forall \ell \in \Z^\nu \setminus \{0\}, \  
j, j' \in \N^+ \setminus \Splus : |j^{\frac12} - j'^{\frac12}| \leq C  \langle \ell \rangle,
\\
\label{2 Melnikov+ perturbate}
\max_{k \leq \barka} |\partial_{\mathtt h}^{k}  \{\omega_\e ({\mathtt h}) \cdot \ell 
+ \mu_j^\infty ({\mathtt h}) + \mu_{j'}^\infty({\mathtt h}) \} | 
& \geq \frac{\rho_0}{2}\, \langle \ell \rangle 
\quad \forall \ell  \in \Z^{\nu}, \ j, j' \in \N^+ \setminus \Splus : j^{\frac12} + j'^{\frac12} \leq C  \langle \ell \rangle,
\end{align}
where $\barka$ is the index of non-degeneracy given by Proposition \ref{Lemma: degenerate KAM}.
\end{lemma}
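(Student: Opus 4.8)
The plan is to deduce Lemma \ref{Lemma: degenerate KAM perturbato} from the unperturbed transversality Proposition \ref{Lemma: degenerate KAM} by a perturbative argument, exploiting the fact that $\om_\e(\h) - \vec\om(\h)$ and $\mu_j^\infty(\h) - \Om_j(\h)$ are both small in $\mathcal C^{k_0}$-norm on the whole interval $[\h_1,\h_2]$, where $k_0 = \barka + 2$. The point is that the quantities inside the curly brackets in \eqref{0 Melnikov perturbate}--\eqref{2 Melnikov+ perturbate} differ from the corresponding unperturbed ones in \eqref{0 Melnikov}--\eqref{2 Melnikov+} by functions whose $\h$-derivatives up to order $\barka$ are $O(\e\g^{-\kappa_1 - \barka})$, hence $\ll \rho_0$ for $\e$ small (here we use $\g = \e^a$ with $a$ small enough that $\e\g^{-\kappa_1-k_0} \to 0$, as in \eqref{relazione tau k0}). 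Since a single derivative bounded below by $\rho_0\langle\ell\rangle$ is robust under a $\mathcal C^\barka$-small perturbation, each of the four inequalities follows with $\rho_0$ replaced by $\rho_0/2$.

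\textbf{Steps.} First I would treat \eqref{0 Melnikov perturbate}: by \eqref{omega epsilon kappa}--\eqref{omega epsilon kappa-1} one has $\partial_\h^k(\om_\e(\h)\cdot\ell) = \partial_\h^k(\vec\om(\h)\cdot\ell) + \partial_\h^k(\mathtt r_\e(\h)\cdot\ell)$ with $|\partial_\h^k(\mathtt r_\e(\h)\cdot\ell)| \leq C\e\g^{-k-1}|\ell| \leq C\e\g^{-\barka-1}\langle\ell\rangle$. Picking the index $k\leq\barka$ realizing the max in \eqref{0 Melnikov}, we get $\max_{k\leq\barka}|\partial_\h^k(\om_\e(\h)\cdot\ell)| \geq \rho_0\langle\ell\rangle - C\e\g^{-\barka-1}\langle\ell\rangle \geq \tfrac{\rho_0}{2}\langle\ell\rangle$ for $\e$ small. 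Next, for \eqref{1 Melnikov perturbate} I would write $\mu_j^\infty(\h) = \Om_j(\h) + (\mathtt m_{\frac12}^\infty(\h) - 1)\Om_j(\h) + \rin_j^\infty(\h)$ and note, using \eqref{stime coefficienti autovalori in kappa} and $\Om_j(\h)\lesssim\sqrt j$, that $|\partial_\h^k(\mu_j^\infty(\h) - \Om_j(\h))| \leq C\e\g^{-\kappa_1-k}(\sqrt j\,\g^{1-\kappa_1^{-1}\kappa_1} + j^{-1/2}) \lesssim \e\g^{-\kappa_1-\barka}\sqrt j$; combined with the restriction $\sqrt j \leq C\langle\ell\rangle$ (which is exactly why the index restrictions appear in the statement, and which holds on the nonempty resonant sets by Lemma \ref{restrizione indici risonanti}), this perturbation is $\lesssim \e\g^{-\kappa_1-\barka}\langle\ell\rangle \ll \rho_0\langle\ell\rangle$, so \eqref{1 Melnikov} gives \eqref{1 Melnikov perturbate}. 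The inequalities \eqref{2 Melnikov- perturbate} and \eqref{2 Melnikov+ perturbate} follow identically: for the difference one controls $|\partial_\h^k(\mu_j^\infty - \mu_{j'}^\infty - (\Om_j - \Om_{j'}))| \lesssim \e\g^{-\kappa_1-\barka}(\sqrt j + \sqrt{j'})$, and on the relevant index set $|\sqrt j - \sqrt{j'}| \leq C\langle\ell\rangle$ (resp. $\sqrt j + \sqrt{j'} \leq C\langle\ell\rangle$) — for the $+$ case this immediately bounds $\sqrt j + \sqrt{j'}$, while for the $-$ case one should note that one only needs to bound the perturbation relative to $\rho_0\langle\ell\rangle$, and when $\sqrt j + \sqrt{j'}$ is large one instead uses the elementary lower bound $|\Om_j - \Om_{j'}| \geq c_1|\sqrt j - \sqrt{j'}|$ of \eqref{caso-ell=0} together with the $\ell=0$ exclusion to absorb the perturbation directly.

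\textbf{Main obstacle.} The only genuinely delicate point is the case $\ell \neq 0$ with $j, j'$ both large in \eqref{2 Melnikov- perturbate}: here $\langle\ell\rangle$ may be much smaller than $\sqrt j + \sqrt{j'}$, so the crude bound $|\partial_\h^k(\mu_j^\infty - \mu_{j'}^\infty - (\Om_j - \Om_{j'}))| \lesssim \e\g^{-\kappa_1-\barka}(\sqrt j + \sqrt{j'})$ is not obviously $\ll \rho_0\langle\ell\rangle$. The resolution is that the dangerous part of $\mu_j^\infty - \mu_{j'}^\infty - (\Om_j - \Om_{j'})$ is $(\mathtt m_{\frac12}^\infty - 1)(\Om_j - \Om_{j'})$, and $|\Om_j - \Om_{j'}| = |\sqrt{j\tanh(\h j)} - \sqrt{j'\tanh(\h j')}| \lesssim |\sqrt j - \sqrt{j'}| \leq C\langle\ell\rangle$ by \eqref{caso-ell=0} and the mean value theorem, so this term is actually $\lesssim \e\g^{-1-\barka}\langle\ell\rangle$; the remaining part $\rin_j^\infty - \rin_{j'}^\infty$ is $\lesssim \e\g^{-\kappa_1-\barka}(j^{-1/2} + j'^{-1/2}) \lesssim \e\g^{-\kappa_1-\barka}$, hence also $\ll \rho_0\langle\ell\rangle$. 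Thus all four estimates hold with the constant $\rho_0/2$, choosing $\e$ small depending on $\rho_0$, $\barka$, $\kappa_1$ and the fixed exponent $a$.
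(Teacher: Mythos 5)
Your proposal is correct and follows essentially the same route as the paper: isolate \eqref{2 Melnikov- perturbate} as the delicate case, split $\mu_j^\infty - \mu_{j'}^\infty - (\Om_j - \Om_{j'})$ into $(\mathtt m_{\frac12}^\infty - 1)(\Om_j - \Om_{j'}) + (\rin_j^\infty - \rin_{j'}^\infty)$, and control the first piece through the observation that $\Om_j - \Om_{j'}$ is comparable to $j^{1/2} - j'^{1/2}$ which is $\leq C\langle\ell\rangle$ on the relevant index set, while the second piece is controlled via the pointwise decay of $\rin_j^\infty$ together with the elementary inequality \eqref{el-sqrt}.

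Two small points to repair. First, the citation of \eqref{caso-ell=0} for the upper bound $|\Om_j - \Om_{j'}| \lesssim |\sqrt{j} - \sqrt{j'}|$ is backwards: \eqref{caso-ell=0} is a \emph{lower} bound on $|\Om_j - \Om_{j'}|$. The upper bound (and the corresponding bound on its $\h$-derivatives) follows instead from the expansion \eqref{espansione asintotica degli autovalori}, writing $\Om_j - \Om_{j'} = (\sqrt{j} - \sqrt{j'}) + (r(j,\h) - r(j',\h))$ with $|\pa_\h^k r(j,\h)| \leq C_k e^{-\h j}$, and combining with \eqref{el-sqrt} to absorb the exponentially small tail; this is exactly the paper's estimate \eqref{derivate Omega j - Omega j'}. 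Second, when you bound $\pa_\h^k\big[(\mathtt m_{\frac12}^\infty - 1)(\Om_j - \Om_{j'})\big]$ by Leibniz, you need bounds on $\pa_\h^{k_2}(\Om_j - \Om_{j'})$ for all $0 \leq k_2 \leq \barka$, not just $k_2 = 0$; your write-up only states the $k_2 = 0$ bound. This is easily filled by the same calculation (\eqref{derivate Omega j - Omega j'} in the paper), but as stated it is a gap in your argument.
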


\begin{proof}
The most delicate estimate is \eqref{2 Melnikov- perturbate}. 
We split 
$$
\mu_j^\infty ({\mathtt h}) = { \Omega}_j({\mathtt h}) + (\mu_j^\infty - { \Omega}_j)({\mathtt h})
$$
where $ { \Omega}_j({\mathtt h}) := j^\frac12 ( \tanh (j {\mathtt h} ))^\frac12 $. 
A direct calculation using \eqref{espansione asintotica degli autovalori} and \eqref{el-sqrt} shows that, for $\h \in [\h_1, \h_2]$, 
\begin{equation}\label{derivate Omega j - Omega j'}
| \pa_{\mathtt h}^k \{ { \Omega}_j({\mathtt h}) - { \Omega}_{j'}({\mathtt h}) \}| 
\leq C_k |j^{\frac12} - j'^{\frac12}| \quad \forall \, k \geq 0 \, . 
\end{equation}
Then, using \eqref{stime coefficienti autovalori in kappa}, 
one has, for all $0 \leq k \leq k_0 $,  
\begin{align}
|\pa_{\mathtt h}^k 
\{ (\mu_j^\infty - \mu_{j'}^\infty)({\mathtt h}) - ( { \Omega}_j - { \Omega}_{j'})({\mathtt h}) \}| & \leq 
|\pa_{\mathtt h}^k \{ (\mathtt m_{\frac12}^\infty({\mathtt h}) - 1) ({ \Omega}_j({\mathtt h}) - { \Omega}_{j'}({\mathtt h}) ) \} | \nonumber  + 
| \pa_{\mathtt h}^k \rin_j^\infty({\mathtt h})| + | \pa_{\mathtt h}^k \rin_{j'}^\infty({\mathtt h})| \nonumber\\
& \stackrel{\eqref{derivate Omega j - Omega j'}} 
\leq C_{k_0} \{ \e \g^{-1-k} |j^{\frac12} - j'^{\frac12}| 
+ \e \gamma^{- \kappa_1 - k} ( j^{-\frac12} + (j')^{-\frac12} ) \} \nonumber \\
& \stackrel{\eqref{el-sqrt}} \leq
C_{k_0}' \e \gamma^{- \kappa_1 - k } |j^{\frac12} - j'^{\frac12}|\,. 
\label{mu j - mu j' infty}
\end{align}
Recall that $k_0 = \barka + 2$ (see \eqref{def k0}). 
By \eqref{omega epsilon kappa-1} and \eqref{mu j - mu j' infty}, using $|j^{\frac12} - j'^{\frac12}| \leq C \langle \ell \rangle$, we get 
\begin{align}
\max_{k \leq \barka} | \pa_{\mathtt h}^k \{\omega_\e ({\mathtt h}) \cdot \ell + \mu_j^\infty ({\mathtt h}) - \mu_{j'}^\infty({\mathtt h})\}| 
& \geq \max_{k \leq \barka} | \pa_{\mathtt h}^k \{{\vec \om}({\mathtt h}) \cdot \ell 
+ \Omega_j({\mathtt h}) - \Omega_{j'}({\mathtt h}) \}| 
- C \e \gamma^{- (1 + \barka)} |\ell| \nonumber\\
& \quad - C \e \gamma^{- (\barka + \kappa_1)} |j^{\frac12} - j'^{\frac12}| 
\nonumber\\
& \geq \max_{k \leq \barka} | \pa_{\mathtt h}^k \{{\vec \om}({\mathtt h}) \cdot \ell + 
\Omega_j({\mathtt h}) - \Omega_{j'}({\mathtt h}) \}| 
- C \e \gamma^{- (\barka + \kappa_1)} \langle \ell \rangle 
\nonumber\\
& \stackrel{\eqref{2 Melnikov-}}{\geq} 
\rho_0 \langle \ell \rangle - C \e \gamma^{- (\barka + \kappa_1 )} 
\langle \ell \rangle 
\geq \rho_0 \langle \ell \rangle / 2 \nonumber 
\end{align}
provided $ \e \gamma^{- (\barka + \kappa_1)} \leq \rho_0 \slash (2 C) $, 
which, by  \eqref{relazione tau k0}, is satisfied for $ \e $ small enough. 
\end{proof}

As an application of R\"ussmann Theorem 17.1 in \cite{Ru1} we deduce the following 

\begin{lemma}{\bf (Estimates of the resonant sets)} \label{stima risonanti Russman}
The measure of the sets in \eqref{reso1}-\eqref{reso4} satisfies 
\begin{align*}
& |R_{\ell}^{(0)}| \lesssim 
\big( \gamma \langle \ell \rangle^{- (\tau + 1)}\big)^{\frac{1}{\barka}} 
\quad \forall \ell \neq 0 \, , 
\qquad   \, 
|R_{\ell j}^{(I)}| \lesssim 
\big( \gamma j^{\frac12} \langle \ell \rangle^{- (\tau + 1)}\big)^{\frac{1}{\barka}} \, , 
\\
&  
|R_{\ell j j'}^{(II)}| \lesssim \Big(  \gamma 
\frac{\langle \ell \rangle^{-(\tau + 1)} }{j^{\perd} j'^{\perd}} \Big)^{\frac{1}{\barka}} 
\quad \forall \ell \neq 0 , \qquad 
|Q_{\ell j j'}^{(II)}| \lesssim   
\big(  \gamma (j^{\frac12} + j'^{\frac12}) \langle \ell \rangle^{- (\tau + 1)}\big)^{\frac{1}{\barka}} \, . 
\end{align*}
\end{lemma}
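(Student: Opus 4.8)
The plan is to apply the quantitative transversality estimates of Lemma \ref{Lemma: degenerate KAM perturbato} together with R\"ussmann's measure lemma (Theorem 17.1 in \cite{Ru1}). Recall that R\"ussmann's lemma states that if $g \in \mC^{\barka}([\h_1,\h_2],\R)$ satisfies $\max_{k \leq \barka} |\partial_\h^k g(\h)| \geq \rho > 0$ for all $\h$, then $|\{\h : |g(\h)| < \delta\}| \leq C(\barka) (\delta / \rho)^{1/\barka}$, where $C(\barka)$ depends only on $\barka$ and on bounds for the $\mC^{\barka+1}$-norm of $g$ on the interval.

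For each of the four families of resonant sets I would verify the two hypotheses of R\"ussmann's lemma: a uniform lower bound on the maximum of the derivatives up to order $\barka$, and a uniform upper bound on the $\mC^{\barka+1}$-norm of the relevant function of $\h$. The first hypothesis is exactly what Lemma \ref{Lemma: degenerate KAM perturbato} provides: for $R_\ell^{(0)}$ use \eqref{0 Melnikov perturbate} with $g(\h) := \om_\e(\h)\cdot \ell / \langle\ell\rangle$ and $\rho = \rho_0/2$; for $R_{\ell j}^{(I)}$ use \eqref{1 Melnikov perturbate} with $g(\h) := (\om_\e(\h)\cdot\ell + \mu_j^\infty(\h))/\langle\ell\rangle$ (noting $R_{\ell j}^{(I)} \neq \emptyset$ forces $j^{1/2} \leq C\langle\ell\rangle$ by Lemma \ref{restrizione indici risonanti}, so the hypothesis of \eqref{1 Melnikov perturbate} applies); for $R_{\ell j j'}^{(II)}$ use \eqref{2 Melnikov- perturbate} with $g(\h) := (\om_\e(\h)\cdot\ell + \mu_j^\infty(\h) - \mu_{j'}^\infty(\h))/\langle\ell\rangle$ (again Lemma \ref{restrizione indici risonanti} gives $|j^{1/2}-j'^{1/2}| \leq C\langle\ell\rangle$, and $\ell \neq 0$ here); and for $Q_{\ell j j'}^{(II)}$ use \eqref{2 Melnikov+ perturbate} similarly. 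The second hypothesis — the $\mC^{\barka+1}$-bound — follows from the analyticity of $\vec\om(\h)$ and $\Omega_j(\h)$ together with the asymptotics \eqref{espansione asintotica degli autovalori}, the estimates \eqref{omega epsilon kappa-1} on $\mathtt r_\e$, and \eqref{stime coefficienti autovalori in kappa} on $\mathtt m_{1/2}^\infty$ and $\rin_j^\infty$; in the two-index cases one uses in addition \eqref{derivate Omega j - Omega j'} to bound $\partial_\h^k(\Omega_j - \Omega_{j'})$ by $C_k |j^{1/2}-j'^{1/2}| \leq C_k' \langle\ell\rangle$, so that after dividing by $\langle\ell\rangle$ the $\mC^{\barka+1}$-norm is bounded uniformly in $\ell, j, j'$. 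Then R\"ussmann's lemma with $\rho = \rho_0/2$ and with $\delta$ equal to the right-hand side of each defining inequality divided by $\langle\ell\rangle$ yields exactly the claimed bounds, e.g. $|R_\ell^{(0)}| \lesssim (\gamma\langle\ell\rangle^{-(\tau+1)})^{1/\barka}$, since dividing $|\om_\e(\h)\cdot\ell| < 8\gamma\langle\ell\rangle^{-\tau}$ by $\langle\ell\rangle$ gives threshold $8\gamma\langle\ell\rangle^{-(\tau+1)}$.

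The main obstacle I expect is the uniformity of the $\mC^{\barka+1}$ bound in the two-index cases $R_{\ell j j'}^{(II)}$ and $Q_{\ell j j'}^{(II)}$: a priori $\mu_j^\infty - \mu_{j'}^\infty$ is merely bounded by $C\langle\ell\rangle$ while $\partial_\h^k(\mu_j^\infty - \mu_{j'}^\infty)$ could grow, so one must exploit the cancellation in the difference carefully — this is precisely the content of combining \eqref{derivate Omega j - Omega j'} with \eqref{mu j - mu j' infty}, which shows $\partial_\h^k(\mu_j^\infty - \mu_{j'}^\infty)$ is controlled by $|j^{1/2} - j'^{1/2}|$ up to small errors, hence by $\langle\ell\rangle$ on the nonempty resonant sets. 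Once this is in place, the division by $\langle\ell\rangle$ renders all the rescaled functions uniformly bounded in $\mC^{\barka+1}$, and the constant $C(\barka)$ in R\"ussmann's lemma can be taken independent of $(\ell,j,j')$. Note also that the case $\ell = 0$ is excluded from $R_{\ell j j'}^{(II)}$ precisely because $R_{0jj'}^{(II)} = \emptyset$ for $j \neq j'$ by Lemma \ref{restrizione indici risonanti} and reduces to the already-treated $R_0^{(0)}$ when $j = j'$; this is why the statement restricts to $\ell \neq 0$ for the $R_{\ell j j'}^{(II)}$ bound.
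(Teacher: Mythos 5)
Your proposal is correct and follows essentially the same route as the paper: rescale each resonance function by $\langle\ell\rangle^{-1}$, invoke Lemma \ref{restrizione indici risonanti} to restrict to indices on which Lemma \ref{Lemma: degenerate KAM perturbato} gives the uniform lower bound $\rho_0/2$, verify the uniform $\mC^{\barka+1}$ upper bound via \eqref{omega epsilon kappa-1}, \eqref{stime coefficienti autovalori in kappa} and the cancellation \eqref{derivate Omega j - Omega j'}/\eqref{mu j - mu j' infty}, then apply R\"ussmann's Theorem 17.1. The only slip is stating $g\in\mC^{\barka}$ while requiring a $\mC^{\barka+1}$ bound for the constant; the paper checks derivatives up to order $k_0=\barka+2$ and records $f_{\ell jj'}\in\mC^{\barka+1}$, which is what R\"ussmann's theorem actually needs.
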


\begin{proof}
We prove the estimate of $ R_{\ell j j'}^{(II)} $ in \eqref{reso4}. 
The other cases are simpler. 
We write 
$$
R_{\ell j j'}^{(II)} = \Big\{ {\mathtt h} \in [{\mathtt h}_1, {\mathtt h}_2] : 
| f_{\ell j j'}({\mathtt h})| <  \frac{4 \gamma }{ \langle \ell \rangle^{\tau + 1} j^{\perd} j'^{\perd}}
\Big\}
$$
where
$ f_{\ell j j'}(\h) := ( \omega_\e({\mathtt h}) \cdot \ell + \mu_j^\infty({\mathtt h}) - \mu_{j'}^\infty({\mathtt h}) ) \langle \ell\rangle^{-1} $. 
By \eqref{7.marzo.1}, we restrict to the case 
$|j^{\frac12} - j'^{\frac12}| \leq C \langle \ell \rangle$ and $\ell \neq 0$.
By \eqref{2 Melnikov- perturbate},
$$
\max_{k \leq \barka} | \pa_{\mathtt h}^k 
f_{\ell j j'}({\mathtt h})| \geq \rho_0 / 2 \,, \quad \forall {\mathtt h} \in [{\mathtt h}_1, {\mathtt h}_2]\,.
$$
In addition, \eqref{omega epsilon kappa}-\eqref{stime coefficienti autovalori in kappa} 
and Lemma \ref{restrizione indici risonanti} imply that
$ \max_{k \leq k_0} | \pa_{\mathtt h}^k f_{\ell j j'}({\mathtt h}) | \leq C $ 
for all ${\mathtt h} \in [{\mathtt h}_1, {\mathtt h}_2] $, 
provided $\e \gamma^{- (k_0 + \kappa_1)}$ is small enough,
namely, by \eqref{relazione tau k0}, $\e$ is small enough.
In particular, $f_{\ell j j'}$ is of class $\mC^{k_0 - 1} = \mC^{\barka + 1}$. 
Thus Theorem 17.1 in \cite{Ru1} applies,
whence the lemma follows. 
\end{proof}

\noindent 
{\sc Proof of Theorem \ref{Teorema stima in misura} completed.}
By Lemma \ref{restrizione indici risonanti} 
(in particular, recalling that $R_{\ell jj'}^{(II)}$ is empty for $\ell = 0$ and $j \neq j'$, 
see \eqref{7.marzo.1})
and Lemma \ref{stima risonanti Russman}, 
the measure of the set ${\cal G}_\e^c$ in \eqref{complementare insieme di cantor} is estimated by
\begin{align}
|{\cal G}_\e^c| & \leq 
\sum_{\ell \neq 0} |R_\ell^{(0)}| 
+ \sum_{\ell, j} |R_{\ell j}^{(I)}| 
+ \sum_{(\ell , j, j') \neq (0,j,j)} |R_{\ell j j'}^{(II)}| 
+ \sum_{\ell , j, j'} |Q_{\ell j j'}^{(II)}| 
\nonumber \\
& \leq 
\sum_{\ell \neq 0 } |R_\ell^{(0)}| 
+ \sum_{j \leq C \langle \ell \rangle^2} |R_{\ell j}^{(I)}| 
+ \sum_{ \begin{subarray}{c} \ell \neq 0 \\ 
| \sqrt{j} - \sqrt{j'} | \leq C \langle \ell \rangle \end{subarray}} 
|R_{\ell j j'}^{(II)}| 
+ \sum_{ j, j' \leq C \langle \ell \rangle^2} |Q_{\ell j j'}^{(II)}| 
\nonumber\\
& 
\lesssim 
\sum_{\ell} \Big( \frac{\gamma}{\langle \ell \rangle^{\tau + 1} } \Big)^{\frac{1}{\barka}} 
+ \sum_{j \leq C \langle \ell \rangle^2} 
\Big( \frac{\gamma j^{\frac12} }{ \langle \ell \rangle^{\tau + 1} } \Big)^{\frac{1}{\barka}} 
+ \sum_{| \sqrt{j} - \sqrt{j'} | \leq C \langle \ell \rangle } 
\Big(  \frac{ \gamma }{ \langle \ell \rangle^{\tau + 1} j^{\perd}j'^{\perd}}  \Big)^{\frac{1}{\barka}} 
+ \sum_{ j, j' \leq C \langle \ell \rangle^{2}} 
\Big( \frac{\gamma (j^{\frac12} + j'^{\frac12})}{ \langle \ell \rangle^{\tau + 1} } \Big)^{\frac{1}{\barka}} 
\nonumber \\
& \leq C \gamma^{\frac{1}{\barka}} \Big\{ 
\sum_{\ell \in \Z^\nu} \frac{1}{ \langle \ell \rangle^{\frac{\tau}{\barka} - 4} } \, 
+ \sum_{| \sqrt{j} - \sqrt{j'} | \leq C \langle \ell \rangle } 
\frac{1}{ \langle \ell \rangle^{\frac{\tau + 1}{\barka}} j^{\frac{\perd}{\barka}} j'^{\frac{\perd}{\barka}} } \Big\} \, .
\label{stima-complementary-set}
\end{align}
The first series in \eqref{stima-complementary-set} converges because 
$\frac{\tau}{\barka} - 4 > \nu$ by \eqref{relazione tau k0}. 
For the second series in \eqref{stima-complementary-set}, we observe that 
the sum is symmetric in $(j,j')$ and, 
for $j \leq j'$, the bound $| \sqrt{j} - \sqrt{j'} | \leq C \langle \ell \rangle$ implies 
that $j \leq j' \leq j + C^2 \langle \ell \rangle^2 + 2C \sqrt{j} \langle \ell \rangle$. 
Since 
\[
\forall \ell, j, \quad \ 
\sum_{j' = j}^{j+p} \frac{1}{j'^{\frac{\perd}{\barka}}} 
\leq 
\sum_{j' = j}^{j+p} \frac{1}{j^{\frac{\perd}{\barka}}} 
= \frac{p+1}{j^{\frac{\perd}{\barka}}} \,, \quad 
p := C^2 \langle \ell \rangle^2 + 2C \sqrt{j} \langle \ell \rangle,
\]
the second series in \eqref{stima-complementary-set} converges because 
$\frac{\t+1}{\barka} - 2 > \nu$ and $2 \frac{\perd}{\barka} - \frac12 > 1$ 
by \eqref{relazione tau k0}. 
By \eqref{stima-complementary-set} we get
$$
|{\cal G}_\e^c | \leq C \gamma^{\frac{1}{\barka}}  \, . 
$$
In conclusion, for  $ \gamma = \e^a $,  
we find  $ |{\cal G}_\e| \geq {\mathtt h}_2 - {\mathtt h}_1 - C \e^{a / \barka } $ 
and the proof of Theorem \ref{Teorema stima in misura} is concluded.

\section{Approximate inverse}\label{sezione approximate inverse}
In order to implement a convergent Nash-Moser scheme that leads to a solution of 
$ \mF(i, \alpha) = 0 $ we construct an \emph{almost-approximate right inverse} (see Theorem \ref{thm:stima inverso approssimato})
of the linearized operator 
\be\label{diaF}
d_{i, \alpha} {\cal F}(i_0, \alpha_0 )[\widehat \imath \,, \widehat \alpha ] =
\Dom \widehat \imath - d_i X_{H_\alpha} ( i_0 (\vphi) ) [\widehat \imath ] - (\widehat \alpha,0, 0 )
 \, . 
\ee
Note that 
$ d_{i, \alpha} {\cal F}(i_0, \alpha_0 ) = d_{i, \alpha} {\cal F}(i_0 ) $ is independent of $ \alpha_0 $, see \eqref{operatorF} and recall 
that the perturbation  $P$ does not depend on $\alpha $. 

Since  the linearized operator $ d_i X_{H_\alpha} ( i_0 (\vphi) ) $ 
has the $ (\theta, I, z ) $-components which are all coupled,  it is particularly intricate to invert the operator \eqref{diaF}.  
Then we  implement the approach in \cite{BB13}, \cite{BBM-auto},   \cite{BertiMontalto}  to reduce it,
approximately, to a triangular form. 
We outline the steps of this strategy. 
The first observation is that, close to an invariant torus, there exists
symplectic coordinates in which the linearized equations are a triangular system as in \eqref{linear-torus-new coordinates}. 
We implement quantitatively this observation for any torus, which, in general, is non invariant.
Thus we define  the ``error function'' 
\begin{equation} \label{def Zetone}
Z(\vphi) :=  (Z_1, Z_2, Z_3) (\vphi) := {\cal F}(i_0, \alpha_0) (\vphi) =
\om \cdot \pa_\vphi i_0(\vphi) - X_{H_{\alpha_0}}(i_0(\vphi)) \, .
\end{equation}
If $ Z = 0 $ then the torus $ i_0 $ is invariant for $ X_{H_{\alpha_0}} $; 
in general, we say that $ i_0 $ is ``approximately invariant'', up to order $O(Z)$. 
Given a torus $i_0(\vphi) = (\theta_0(\vphi), I_0(\vphi), z_0(\vphi))$ satisfying \eqref{ansatz 0}
(condition which is satisfied by the approximate solutions obtained by the Nash-Moser iteration of Section \ref{sec:NM}), 
we first construct an isotropic torus $i_\delta(\vphi) = (\theta_0(\vphi), I_\delta(\vphi), z_0(\vphi))$ which is close to $i_0$,
see Lemma \ref{toro isotropico modificato}.  
Note that, by  \eqref{stima toro modificato}, 
$ {\cal F}(i_\delta, \alpha_0) $ is also $ O(Z) $. 
Since $i_\d$ is isotropic, the diffeomorphism $ (\phi, y, w) \mapsto G_\delta(\phi, y, w)$ 
defined in \eqref{trasformazione modificata simplettica} is symplectic.  
In these coordinates, the torus $i_\delta$ reads $(\phi, 0, 0)$,  
and the transformed Hamiltonian system becomes \eqref{sistema dopo trasformazione inverso approssimato}, where, by Lemma \ref{coefficienti nuovi}, 
the terms $ \pa_\phi K_{00}, K_{10} - \omega, K_{01}$ are $O(Z)$. 
Thus, neglecting such terms, 
the problem of finding an approximate inverse of the linearized operator $d_{i, \alpha} {\cal F}(i_0, \alpha_0 )$ is reduced to the task of inverting the operator ${\mathbb D}$ in \eqref{operatore inverso approssimato}.
We solve system  \eqref{operatore inverso approssimato proiettato}   in a {\it triangular} way. 
First we solve the equation for the $y$-component of system \eqref{operatore inverso approssimato proiettato}, simply by inverting the differential operator $\omega \cdot \partial_\vphi$, see \eqref{soleta}  and 
recall that $\omega$ is Diophantine. 
Then in \eqref{normalw} we solve the equation for the $w$-component, 
thanks to the almost invertibility of the operator $\mL_\omega$ in \eqref{Lomega def}, 
which is proved in Theorem \ref{inversione parziale cal L omega} 
and stated in this section as assumption \eqref{inversion assumption}-\eqref{tame inverse}. 
Finally the equation \eqref{equazione psi hat} for the $\phi$-component is solved in \eqref{sol psi}, by modifying the counterterms according to \eqref{sol alpha} and by inverting $\omega \cdot \partial_\vphi$. In conclusion, in Theorem \ref{thm:stima inverso approssimato} we estimate quantitatively how 
the conjugation of ${\mathbb D}$ with the differential of $G_\delta$ (see \eqref{definizione T}) is an almost approximate inverse of the linearized operator $d_{i, \alpha}{\cal F}(i_0, \alpha_0)$. 

\smallskip

First of all, we state some preliminary estimates for the composition operator induced by 
the Hamiltonian vector field $ X_P = (  \pa_I P, - \pa_\theta P, J \nabla_z P) $ in \eqref{operatorF}. 
\begin{lemma}{\bf (Estimates of the perturbation $P$)}\label{lemma quantitativo forma normale}
Let $ \fracchi(\ph) $ in \eqref{componente periodica} satisfy
$ \| {\mathfrak I} \|_{3 s_0 + 2 k_0 + 5}^{k_0, \gamma}  \leq 1$.
Then 
the following estimates hold:
\begin{equation}\label{stime XP}
\| X_P(i)\|_s^{k_0, \gamma}  \lesssim_s 1 + \| {\mathfrak I}\|_{s + 2 s_0 + 2 k_0 + 3}^{k_0, \gamma}\,, 
\end{equation}
and for all $\widehat \imath := (\widehat \theta, \widehat I, \widehat z)$ 
\begin{align}\label{stima derivata XP}
 \| d_i X_P(i)[\widehat \imath]\|_s^{k_0, \gamma} & \lesssim_s \| \widehat \imath \|_{s + 1}^{k_0, \gamma} + \| \mathfrak I\|_{s + 2 s_0 + 2 k_0 + 4}^{k_0, \gamma} \| \widehat \imath \|_{s_0 + 1}^{k_0, \gamma}\,, \\
 \label{stima derivata seconda XP}
\| d^2_i X_P(i)[\widehat \imath, \widehat \imath]\|_s^{k_0, \gamma} & \lesssim_s \| \widehat \imath\|_{s + 1}^{k_0, \gamma} \| \widehat \imath \|_{s_0 + 1}^{k_0, \gamma} + \| \mathfrak I\|_{s + 2 s_0 + 2 k_0 + 5}^{k_0, \gamma} (\| \widehat \imath \|_{s_0 + 1}^{k_0, \gamma})^2\,.
\end{align}
\end{lemma}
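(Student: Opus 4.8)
\textbf{Proof plan for Lemma \ref{lemma quantitativo forma normale}.}
The plan is to reduce all three estimates to the standard Moser tame estimate for the nonlinear composition operator (Lemma \ref{Moser norme pesate}) and to the tame estimates for the Dirichlet--Neumann operator established in Appendix \ref{subDN}, together with the bilinear product estimate \eqref{p1-pr} and the composition estimates \eqref{pr-comp1} for the change of variable $A$ defined in \eqref{definizione A}. The key point is that $P = P_\e \circ A$, where $P_\e$ is given in \eqref{definizione P epsilon} and involves the Dirichlet--Neumann operator $G(\e\eta,\h)$ acting on $\psi$; since $A$ is an affine map in the action-angle variables composed with algebraic and square-root nonlinearities in $(\theta,I)$ (analytic on the region $|I_j| < \xi_j$ guaranteed by the smallness of $\fracchi$), the composition estimates and Lemma \ref{Moser norme pesate} apply to produce a loss of finitely many derivatives, which is exactly the loss $2s_0 + 2k_0 + 3$ appearing in \eqref{stime XP}.

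First I would record the structure of $X_P(i) = (\partial_I P, -\partial_\theta P, J\nabla_z P)$ evaluated at $i(\vphi) = (\theta(\vphi), I(\vphi), z(\vphi))$. Each component is obtained from $P_\e$ by first differentiating (which the smoothness of $P_\e$ in $(\eta,\psi)$ allows, using the analyticity of $G(\e\eta,\h)$ in $\eta$ and the $OPS^{-\infty}$ bounds on $G(\e\eta,\h) - |D|\tanh(\h|D|)$ from Appendix \ref{subDN}), then composing with $A$. The $\h$- and $\omega$-dependence is handled uniformly in the $\|\cdot\|^{k_0,\gamma}$ norms because the Dirichlet--Neumann estimates in Appendix \ref{subDN} are stated with this parameter dependence; here I would invoke the chain rule for the $\pa_\lambda^k$ derivatives, bounding each factor by the product estimate \eqref{p1-pr} and absorbing the constants. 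The bound \eqref{stime XP} then follows by applying Lemma \ref{Moser norme pesate} with $u = \fracchi$ (so that $\|u\|_{s_0}^{k_0,\gamma} \leq 1$ by hypothesis), tracking that the integer $m$ in that lemma, which must exceed $s + k_0 + 1$, forces the shift by $2s_0 + 2k_0 + 3$ once one also accounts for the derivative loss in $G(\e\eta)$ and in the explicit $|D|\tanh(\h|D|)$ term.

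The estimates \eqref{stima derivata XP} and \eqref{stima derivata seconda XP} are then obtained by differentiating the map $i \mapsto X_P(i)$ once, respectively twice, in the direction $\widehat\imath$; each differentiation produces either a derivative of $P_\e$ of one higher order composed with $A$ and $dA[\widehat\imath]$, or a derivative of $A$ itself, and in every case one applies the tame product rule \eqref{p1-pr} in the bilinear (respectively trilinear) form, which splits the $H^s$ norm into one factor carrying the high index and the remaining factors estimated at $s_0$. The term $+1$ in the Sobolev index on the right-hand sides comes from the fact that the vector field $X_P$ contains one genuine space derivative (the transport term $\partial_x V$ and the operator $G(\eta)$ are order one) acting on $\widehat\imath$. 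I expect the main obstacle to be purely bookkeeping: carefully verifying that the loss of regularity produced by composing the Dirichlet--Neumann tame estimates with the change of variables $A$ and its differentials does not exceed $2s_0 + 2k_0 + 5$, and that the smallness hypothesis $\|\fracchi\|_{3s_0+2k_0+5}^{k_0,\gamma} \leq 1$ is exactly strong enough to keep the argument of $G(\e\eta,\h)$ in the region where the Appendix \ref{subDN} estimates hold and to control all the intermediate constants $C(s,k,f,C_0)$ uniformly.
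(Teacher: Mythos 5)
Your plan is correct and follows essentially the same route as the paper, which simply cites Lemma 5.1 of \cite{BertiMontalto} together with the Dirichlet--Neumann estimates of Proposition \ref{lemma dirichlet Neumann}: you compose $P_\e$ (which is analytic on a small ball, with the kernel $K_G(\e\eta)$ bounds from Appendix \ref{subDN} controlling the $G(\e\eta,\h)-G(0,\h)$ part) with the action-angle map $A$ of \eqref{definizione A}, and then close via the Moser estimate \eqref{0811.10}, the product estimate \eqref{p1-pr}, and standard differentiation in $\widehat\imath$.  One small remark to keep the bookkeeping honest: the loss of one derivative on $\widehat\imath$ in \eqref{stima derivata XP}--\eqref{stima derivata seconda XP} comes from the order-one operator $G(\e\eta,\h)$ appearing in $\nabla_z P$ (and from differentiating the kinetic-energy integrand in $\eta$, which produces a factor of order one via $\partial_x$), not from the transport term $\partial_x V$ — that term belongs to the linearized operator $\mL$ in \eqref{linearized vero}, not directly to $X_P$.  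Otherwise your identification of where the $2s_0+2k_0+3$ shift comes from (the $m > s+k+1$ requirement in Lemma \ref{Moser norme pesate} plus the $k_0$-losses in the DN kernel estimate \eqref{estimate DN}) is the right accounting.
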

\begin{proof}
The proof is the same as the one of Lemma 5.1 in \cite{BertiMontalto}, using also the estimates on the Dirichlet Neumann operator in Proposition \ref{lemma dirichlet Neumann}. 
\end{proof}

Along this section we assume the following hypothesis, 
which is verified by the approximate solutions obtained at each step of the Nash-Moser 
Theorem \ref{iterazione-non-lineare}. 

\begin{itemize}
\item {\sc Ansatz.} 
The map $(\omega, \h) \mapsto \fracchi_0(\omega, \h) :=  i_0(\ph; \om, \h) - (\ph,0,0) $ 
is ${k_0} $ times differentiable with respect 
to the parameters $(\omega, \h) \in \R^\nu \times [\h_1, \h_2] $, 
and for some $ \mu := \mu (\t, \nu) >  0 $,  $\gamma \in (0, 1)$,   
\begin{equation}\label{ansatz 0}
\| {\mathfrak I}_0  \|_{s_0+\mu}^{k_0, \gamma} +  |\alpha_0 - \omega|^{k_0, \gamma} \leq C\e \gamma^{- 1} \,.
\end{equation}
For some $\kappa : = \kappa( \tau, \nu) > 0$, we shall always assume the smallness condition $ \e \g^{- \kappa} \ll 1  $.
\end{itemize}

We now   implement the symplectic procedure to 
reduce $ d_{i, \alpha} {\cal F}(i_0, \alpha_0 ) $ approximately to a triangular form. 
 An invariant torus $ i_0 $ with Diophantine flow 
is isotropic (see \cite{HF},\cite{BB13}), 
namely the pull-back $ 1$-form $ i_0^* \Lambda $ is closed, 
where $ \Lambda $ is the 1-form in \eqref{Lambda 1 form}. 
This is equivalent to say that the 2-form 
$ i_0^* {\cal W} =  i_0^* d \Lambda  = d i_0^* \Lambda = 0 $. 
For an approximately invariant torus $ i_0 $ the 1-form $ i_0^* \Lambda $ 
is only ``approximately closed":
we consider
\begin{equation}\label{coefficienti pull back di Lambda}
i_0^* \Lambda = {\mathop \sum}_{k = 1}^\nu a_k (\vphi) d \vphi_k \,,\quad 
a_k(\vphi) := - \big( [\pa_\ph \teta_0 (\vphi)]^T I_0 (\vphi)  \big)_k 
- \frac12 ( \partial_{\vphi_k} z_0(\ph), J z_0(\ph) )_{L^2(\T_x)}
\end{equation}
and we show that 
\begin{equation} \label{def Akj} 
i_0^* {\cal W} = d \, i_0^* \Lambda = {\mathop\sum}_{1 \leq k < j \leq \nu} A_{k j}(\vphi) d \vphi_k \wedge d \vphi_j\,,\quad A_{k j} (\vphi) := 
\partial_{\vphi_k} a_j(\ph) - \partial_{\vphi_j} a_k(\ph) \, , 
\end{equation} 
is of order $O(Z)$, see Lemma \ref{lemma:aprile}. 
By \eqref{operatorF}, \eqref{stime XP}, \eqref{ansatz 0}, 
the error function $Z$ defined in \eqref{def Zetone} is estimated in terms of the approximate torus as
\begin{equation}\label{Lemma Z piccolezza} 
\| Z\|_{s}^{k_0, \gamma} \lesssim_s \e \gamma^{-  1} + \| \fracchi_0\|_{s + 2}^{k_0, \gamma}.
\end{equation}

\begin{lemma} \label{lemma:aprile}
Assume that $ \om $ belongs to $ \mathtt {DC} (\gamma, \tau)$ defined in \eqref{DC tau0 gamma0}. 
Then the coefficients $ A_{kj}  $ in \eqref{def Akj} satisfy 
\begin{equation}\label{stima A ij}
\| A_{k j} \|_s^{k_0, \gamma} \lesssim_s \gamma^{-1}
\big(\| Z \|_{s+\t(k_0 + 1) + k_0+1}^{k_0, \gamma} + \| Z \|_{s_0+1}^{k_0, \gamma} \|  {\mathfrak I}_0 \|_{s+\t(k_0 + 1) + k_0+1}^{k_0, \gamma} \big)\,.
\end{equation}
\end{lemma}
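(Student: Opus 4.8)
\textbf{Proof plan for Lemma \ref{lemma:aprile}.}

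The plan is to derive an explicit formula for $\omega \cdot \partial_\vphi A_{kj}$ in terms of the error function $Z$ and then invert $\omega \cdot \partial_\vphi$ using the Diophantine hypothesis. First I would record the key identity: since $\mathcal F(i_0,\alpha_0) = \omega \cdot \partial_\vphi i_0 - X_{H_{\alpha_0}}(i_0) = Z$, the torus $i_0$ would be exactly invariant if $Z$ were zero, and in that case the pull-back $1$-form $i_0^*\Lambda$ would be closed because the flow is Diophantine hence the torus is isotropic (this is the classical Herman--F\'ejoz argument, see \cite{HF},\cite{BB13}). For the approximate torus, one computes instead $\omega\cdot\partial_\vphi (i_0^*\mathcal W)$, or equivalently $\omega\cdot\partial_\vphi A_{kj}$ for each pair $k<j$. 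Using that $\mathcal W = d\Lambda$ is a closed (indeed exact, constant-coefficient) $2$-form and that $X_{H_{\alpha_0}}$ is a Hamiltonian vector field, the Lie-derivative computation $\mathcal L_{\omega\cdot\partial_\vphi}(i_0^*\mathcal W) = i_0^*\big(\mathcal L_{X_{H_{\alpha_0}}}\mathcal W\big) + (\text{terms linear in } Z) = 0 + (\text{terms linear in } Z)$ shows that $\omega\cdot\partial_\vphi A_{kj}$ is a bilinear-type expression in $Z$, $\partial_\vphi Z$ and the components of $\mathfrak I_0$, $\partial_\vphi \mathfrak I_0$. Concretely, differentiating \eqref{coefficienti pull back di Lambda} along $\omega\cdot\partial_\vphi$ and substituting $\omega\cdot\partial_\vphi i_0 = X_{H_{\alpha_0}}(i_0) + Z$, the purely $H_{\alpha_0}$ part cancels by the Hamiltonian structure (the $2$-form is invariant under the Hamiltonian flow), leaving only the contributions proportional to $Z$.

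Next I would invert $\omega\cdot\partial_\vphi$. Since each $A_{kj}$ has zero average in $\vphi$ (it is a $\vphi$-derivative of the periodic coefficients $a_k$, cf. \eqref{def Akj}), the equation $\omega\cdot\partial_\vphi A_{kj} = (\text{expression in } Z)$ can be solved by $(\omega\cdot\partial_\vphi)^{-1}$ on $\mathtt{DC}(\gamma,\tau)$, and Lemma \ref{lemma:WD}, estimate \eqref{Diophantine-1}, gives the gain $\gamma^{-1}$ at the cost of $\mu = k_0+1+\tau(k_0+2)$ derivatives. This accounts for the shift $s \rightsquigarrow s + \tau(k_0+1) + k_0 + 1$ appearing in \eqref{stima A ij} (the precise bookkeeping of the derivative count follows the same scheme as in \cite{BB13},\cite{BertiMontalto}). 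Finally I would apply the tame product estimate \eqref{p1-pr} to the bilinear right-hand side in $Z$ and $\mathfrak I_0$, splitting into a term with $\|Z\|_s$ times $\|\mathfrak I_0\|_{s_0}$ (absorbed, using the ansatz \eqref{ansatz 0}, into the $\|Z\|_{s+\dots}$ term) and a term with $\|Z\|_{s_0+1}$ times $\|\mathfrak I_0\|_{s+\dots}$, which is exactly the structure of the claimed bound. One also uses \eqref{Lemma Z piccolezza} and the smallness $\e\gamma^{-\kappa}\ll 1$ to keep all lower-order factors under control.

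I expect the main obstacle to be the algebraic computation of $\omega\cdot\partial_\vphi A_{kj}$ and the verification that the ``main'' term — the one not proportional to $Z$ — genuinely cancels. This requires carefully exploiting that $\mathcal W$ in \eqref{2form} is a constant-coefficient symplectic form and that $X_{H_{\alpha_0}}$ is symplectic with respect to it, so that $\mathcal L_{X_{H_{\alpha_0}}}\mathcal W = d\, \iota_{X_{H_{\alpha_0}}}\mathcal W = d(dH_{\alpha_0}) = 0$; pulling this back through $i_0$ and commuting the pull-back with $d$ and with $\omega\cdot\partial_\vphi$ is where one must be meticulous. The rest — the Diophantine inversion and the tame estimates — is routine given Lemmata \ref{lemma:WD} and \ref{lemma:LS norms}. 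This is precisely the argument of Lemma 5.3 in \cite{BertiMontalto} (or the corresponding lemma in \cite{BB13}), adapted to the present water-waves setting, so I would follow that proof essentially verbatim.
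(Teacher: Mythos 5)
Your proposal takes essentially the same route as the paper's proof, which records the identity $\omega\cdot\partial_\vphi A_{kj} = \mathcal{W}\big(\partial_{\ph_k} Z, \partial_{\ph_j} i_0\big) + \mathcal{W}\big(\partial_{\ph_k} i_0, \partial_{\ph_j} Z\big)$ (cited from Lemma~5 of \cite{BB13}) and then invokes Lemma~\ref{lemma:WD} and the ansatz \eqref{ansatz 0}; your derivation of that identity — differentiating \eqref{coefficienti pull back di Lambda}, substituting $\omega\cdot\partial_\vphi i_0 = X_{H_{\alpha_0}}(i_0)+Z$, and using the infinitesimal invariance of $\mathcal{W}$ under the Hamiltonian flow to cancel the $Z$-independent part — is exactly the content of that citation. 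The only slip is in the loss constant: with the paper's convention $k+1=k_0$ in Lemma~\ref{lemma:WD}, one has $\mu = k_0 + \tau(k_0+1)$ rather than $k_0+1+\tau(k_0+2)$, which is harmless since the resulting shift $\mu+1 = \tau(k_0+1)+k_0+1$ is precisely what you reported as matching \eqref{stima A ij}.
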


\begin{proof}
The  $ A_{kj}  $  satisfy the identity 
$  \Dom A_{k j} 
= $ $ {\cal W}\big( \pa_\ph Z(\vphi) \underline{e}_k ,  \pa_\ph i_0(\vphi)  \underline{e}_j \big) 
+ $ $ {\cal W} \big(\pa_\ph i_0(\vphi) \underline{e}_k , \pa_\ph Z(\vphi) \underline{e}_j \big)  $
where  $ \underline{e}_k  $ denotes the $ k $-th versor of $ \R^\nu $, see \cite{BB13}, Lemma 5. 
Then \eqref{stima A ij} follows by \eqref{ansatz 0} and Lemma \ref{lemma:WD}. 
\end{proof}

As in \cite{BB13}, \cite{BBM-auto} we first modify the approximate torus $ i_0 $ to obtain an isotropic torus $ i_\d $ which is 
still approximately invariant. We denote the Laplacian  $ \Delta_\vphi := \sum_{k=1}^\nu \partial_{\vphi_k}^2 $.

\begin{lemma}\label{toro isotropico modificato} {\bf (Isotropic torus)} 
The torus $ i_\delta(\vphi) := (\theta_0(\vphi), I_\delta(\vphi), z_0(\vphi) ) $ defined by 
\begin{equation}\label{y 0 - y delta}
I_\d := I_0 +  [\pa_\ph \theta_0(\vphi)]^{- T}  \rho(\vphi) \, , \qquad 
\rho_j(\vphi) := \Delta_\vphi^{-1} {\mathop\sum}_{ k = 1}^\nu \partial_{\vphi_j} A_{k j}(\vphi) 
\end{equation}
 is {\it isotropic}. 
There is $ \s := \s(\nu,\t, k_0) $ such that
\begin{align} \label{2015-2}
\| I_\delta - I_0 \|_s^{k_0, \gamma} & \leq \| I_0 \|_{s+1}^{k_0,\gamma} \\
\label{stima y - y delta}
\| I_\delta - I_0 \|_s^{k_0, \gamma} 
& \lesssim_s  \gamma^{-1} \big(\| Z \|_{s + \s}^{k_0, \gamma} + 
\| Z \|_{s_0 + \s}^{k_0, \gamma} \|  {\mathfrak I}_0 \|_{s + \s}^{k_0, \gamma} \big) \,,
\\
\label{stima toro modificato}
\| {\cal F}(i_\delta, \alpha_0) \|_s^{k_0, \gamma}
& \lesssim_s  \| Z \|_{s + \s}^{k_0, \gamma}  +  \| Z \|_{s_0 + \s}^{k_0, \gamma} \|  {\mathfrak I}_0 \|_{s + \s}^{k_0, \gamma} \\
\label{derivata i delta}
\| d_i [ i_\d][ \widehat \imath ] \|_s^{k_0, \gamma} & \lesssim_s \| \widehat \imath \|_s^{k_0, \gamma} +  
\| {\mathfrak I}_0\|_{s + \s}^{k_0, \gamma} \| \widehat \imath  \|_{s_0}^{k_0, \gamma} \, .
\end{align}
 \end{lemma}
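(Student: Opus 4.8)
\textbf{Proof strategy for Lemma \ref{toro isotropico modificato}.}
The plan is to follow the scheme of Berti-Bolle \cite{BB13} (and its adaptation in \cite{BBM-auto}), which proceeds in two logical halves: first verify the algebraic identity that makes $i_\d$ isotropic, then prove the five quantitative estimates by combining the formula \eqref{y 0 - y delta} with the bounds \eqref{stima A ij} on the two-form coefficients $A_{kj}$ from Lemma \ref{lemma:aprile} and the Diophantine estimates of Lemma \ref{lemma:WD}.

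First I would prove that $i_\d$ is isotropic. The pull-back one-form $i_\d^*\Lambda$ has coefficients $\tilde a_k(\vphi)$ given by the same expression as in \eqref{coefficienti pull back di Lambda} but with $I_0$ replaced by $I_\d$; since $\theta_0$ and $z_0$ are unchanged, $\tilde a_k = a_k - ([\pa_\ph\theta_0]^T[\pa_\ph\theta_0]^{-T}\rho)_k = a_k - \rho_k$ (the transpose-inverse notation in \eqref{y 0 - y delta} is chosen precisely so that $[\pa_\ph\theta_0]^T I_\d = [\pa_\ph\theta_0]^T I_0 + \rho$). Hence the new two-form coefficients are $\tilde A_{kj} = A_{kj} - (\pa_{\vphi_k}\rho_j - \pa_{\vphi_j}\rho_k)$. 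Using the definition $\rho_j = \D_\vphi^{-1}\sum_k \pa_{\vphi_k}A_{kj}$ together with the fact that $A_{kj}$ is a closed two-form up to the lower-order structure — more precisely the cocycle-type relation $\pa_{\vphi_m}A_{kj} + \pa_{\vphi_k}A_{jm} + \pa_{\vphi_j}A_{mk} = 0$, which holds because $A$ is exact, $A_{kj} = \pa_{\vphi_k}a_j - \pa_{\vphi_j}a_k$ — one computes $\sum_k\pa_{\vphi_k}(\pa_{\vphi_k}\rho_j - \pa_{\vphi_j}\rho_k) = \D_\vphi\rho_j - \pa_{\vphi_j}\sum_k\pa_{\vphi_k}\rho_k$, and a short manipulation shows $\tilde A_{kj}$ is $\D_\vphi$-harmonic, hence zero since it has zero average (being a derivative). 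This is exactly Lemma 6 in \cite{BB13}; I would simply cite that computation.

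For the estimates: \eqref{2015-2} is immediate from \eqref{y 0 - y delta} because $[\pa_\ph\theta_0]^{-T}$ is close to the identity (by \eqref{ansatz 0}, $\|\Theta_0\|$ is small) and $\rho$ is built from $A_{kj}$, which in turn involves one derivative of $\fracchi_0$; the smallness condition absorbs constants so that the $\|I_0\|_{s+1}$ term dominates. For \eqref{stima y - y delta} I would insert the bound \eqref{stima A ij} on $\|A_{kj}\|_s^{k_0,\g}$ (which already contains the factor $\g^{-1}$ and the loss $\t(k_0+1)+k_0+1$) into the expression for $\rho$, noting that $\D_\vphi^{-1}$ acting on a derivative is bounded on Sobolev spaces, and then use the product estimate \eqref{p1-pr} together with \eqref{ansatz 0} to handle the matrix $[\pa_\ph\theta_0]^{-T}$; this fixes $\s$ as $\t(k_0+1)+k_0+1$ plus a fixed offset. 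The estimate \eqref{stima toro modificato} follows by writing $\mF(i_\d,\a_0) = \mF(i_0,\a_0) + (\mF(i_\d,\a_0)-\mF(i_0,\a_0))$, bounding the first term by $\|Z\|_s$ via \eqref{Lemma Z piccolezza}, and the difference via the mean value theorem along the segment from $i_0$ to $i_\d$, using the derivative estimates \eqref{stima derivata XP} of Lemma \ref{lemma quantitativo forma normale} and the already-proved \eqref{stima y - y delta}. Finally \eqref{derivata i delta} is obtained by differentiating \eqref{y 0 - y delta} with respect to $i$ in the direction $\widehat\imath$, which produces terms linear in $\widehat\imath$ with coefficients controlled by $\|\fracchi_0\|_{s+\s}$, again via the product and composition lemmas.

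\textbf{Main obstacle.} The delicate point is bookkeeping the derivative losses so that a single $\s = \s(\nu,\t,k_0)$ works uniformly in all five estimates: each application of Lemma \ref{lemma:aprile}, of $\D_\vphi^{-1}\pa_\vphi$, of the product rule \eqref{p1-pr}, and of the mean value step along $i_0\rightsquigarrow i_\d$ costs a controlled but nontrivial number of derivatives, and one must also check that the ``low norm'' $\|\fracchi_0\|_{s_0+\s}$ (equivalently $\|Z\|_{s_0+\s}\|\fracchi_0\|_{s+\s}$) genuinely appears only as the quadratic correction and not the leading term — this is what the smallness hypothesis $\e\g^{-\kappa}\ll 1$ in the Ansatz is for. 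None of this is conceptually hard, but it is the part where the constants must be tracked carefully, exactly as in Lemma 5 of \cite{BB13} and the corresponding lemma in \cite{BertiMontalto}.
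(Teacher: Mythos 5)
Your proposal is correct and follows the same route as the paper, which simply cites \cite{BBM-auto} (whose proof is in turn the scheme of Lemma 5--6 in \cite{BB13}): isotropy via the cocycle identity $\pa_{\vphi_m}A_{kj} + \pa_{\vphi_k}A_{jm} + \pa_{\vphi_j}A_{mk} = 0$ and the definition of $\rho$, then the five estimates by inserting \eqref{stima A ij} into the formula for $\rho$ and using the product, composition and Diophantine lemmas together with \eqref{stima derivata XP} for the Taylor step. Note that you have (correctly) read the defining formula for $\rho_j$ as $\rho_j = \Delta_\vphi^{-1}\sum_{k}\partial_{\vphi_k}A_{kj}$, matching \cite{BB13}; the index on the derivative in the displayed equation \eqref{y 0 - y delta} is a typographical slip ($\partial_{\vphi_j}$ for $\partial_{\vphi_k}$), and the argument would not go through with the formula as literally printed.
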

We denote 
by $ \s := \s(\nu, \tau, k_0 ) $ possibly different (larger) ``loss of derivatives"  constants. 

\begin{proof}
The Lemma follows  as in \cite{BBM-auto}
by \eqref{stima derivata XP} and 
\eqref{coefficienti pull back di Lambda}-\eqref{stima A ij}. 
\end{proof}

In order to find an approximate inverse of the linearized operator $d_{i, \alpha} {\cal F}(i_\delta )$, 
we introduce  the symplectic diffeomorpshim 
$ G_\delta : (\phi, y, w) \to (\theta, I, z)$ of the phase space $\T^\nu \times \R^\nu \times H_{\Splus}^\bot$ defined by
\begin{equation}\label{trasformazione modificata simplettica}
\begin{pmatrix}
\theta \\
I \\
z
\end{pmatrix} := G_\delta \begin{pmatrix}
\phi \\
y \\
w
\end{pmatrix} := 
\begin{pmatrix}
\!\!\!\!\!\!\!\!\!\!\!\!\!\!\!\!\!\!\!\!\!\!\!\!\!\!\!\!\!\!\!\!\!
\!\!\!\!\!\!\!\!\!\!\!\!\!\!\!\!\!\!\!\!\!\!\!\!\!\!\!\!\!\!\!\!\!
\!\!\!\!\!\!\!\!\!\!\!\!\!\!\!\!\!\!\!\!\!\!\!\!\!\!\!\!\!\!\!\! \theta_0(\phi) \\
I_\delta (\phi) + [\pa_\phi \theta_0(\phi)]^{-T} y - \big[ (\pa_\teta \tilde{z}_0) (\theta_0(\phi)) \big]^T J w \\
\!\!\!\!\!\!\!\!\!\!\!\!\!\!\!\!\!\!\!\!\!\!\!\!\!\!\!\!\!
\!\!\!\!\!\!\!\!\!\!\!\!\!\!\!\!\!\!\!\!\!\!\!\!\!\!\!\!\!
\!\!\!\!\!\!\!\!\!\!\!\!\!\!\!\!\!\!\!\!\!\!\!\!\!\!\!\!\!  z_0(\phi) + w
\end{pmatrix} 
\end{equation}
where $ \tilde{z}_0 (\theta) := z_0 (\theta_0^{-1} (\theta))$. 
It is proved in \cite{BB13} that $ G_\delta $ is symplectic, because  the torus $ i_\d $ is isotropic 
(Lemma \ref{toro isotropico modificato}).
In the new coordinates,  $ i_\delta $ is the trivial embedded torus
$ (\phi , y , w ) = (\phi , 0, 0 ) $.  Under the symplectic change of variables $ G_\d $ the Hamiltonian 
vector field $ X_{H_\a} $ (the Hamiltonian $  H_\a $ is defined in \eqref{H alpha}) changes into 
\be\label{new-Hamilt-K}
X_{K_\a} = (D G_\d)^{-1} X_{H_\a} \circ G_\d \qquad {\rm where} \qquad K_\alpha := H_{\alpha} \circ G_\d  \, .
\ee
By \eqref{parity solution} the transformation $ G_\d $ is also reversibility preserving and so $ K_\alpha $ is reversible, 
$ K_\a \circ \tilde \rho = K_\a $.  

The Taylor expansion of $ K_\a $ at the trivial torus $ (\phi , 0, 0 ) $ is 
\begin{align} 
K_\alpha (\phi, y , w)
& =  K_{00}(\phi, \alpha) + K_{10}(\phi, \alpha) \cdot y + (K_{0 1}(\phi, \alpha), w)_{L^2(\T_x)} + 
\frac12 K_{2 0}(\phi) y \cdot y 
\nonumber \\ & 
\quad +  \big( K_{11}(\phi) y , w \big)_{L^2(\T_x)} 
+ \frac12 \big(K_{02}(\phi) w , w \big)_{L^2(\T_x)} + K_{\geq 3}(\phi, y, w)  
\label{KHG}
\end{align}
where $ K_{\geq 3} $ collects the terms at least cubic in the variables $ (y, w )$.
The Taylor coefficient $K_{00}(\phi, \alpha) \in \R $,  
$K_{10}(\phi, \alpha) \in \R^\nu $,  
$K_{01}(\phi, \alpha) \in H_{\Splus}^\bot$, 
$K_{20}(\phi) $ is a $\nu \times \nu$ real matrix, 
$K_{02}(\phi)$ is a linear self-adjoint operator of $ H_{\Splus}^\bot $ and 
$K_{11}(\phi) \in {\cal L}(\R^\nu, H_{\Splus}^\bot )$. 
Note that, by \eqref{H alpha} and \eqref{trasformazione modificata simplettica}, 
the only Taylor coefficients that depend on $ \a $ are
$ K_{00} $, $ K_{10} $, $ K_{01} $.

The Hamilton equations associated to \eqref{KHG}  are 
\begin{equation}\label{sistema dopo trasformazione inverso approssimato}
\begin{cases}
\dot \phi \hspace{-30pt} & = K_{10}(\phi, \alpha) +  K_{20}(\phi) y + 
K_{11}^T (\phi) w + \partial_{y} K_{\geq 3}(\phi, y, w)
\\
\dot y \hspace{-30pt} & = 
\partial_\phi K_{00}(\phi, \alpha) - [\partial_{\phi}K_{10}(\phi, \alpha)]^T  y - 
[\partial_{\phi} K_{01}(\phi, \alpha)]^T  w  
\\
& \quad -
\partial_\phi \big( \frac12 K_{2 0}(\phi) y \cdot y + ( K_{11}(\phi) y , w )_{L^2(\T_x)} + 
\frac12 ( K_{02}(\phi) w , w )_{L^2(\T_x)} + K_{\geq 3}(\phi, y, w) \big)
\\
\dot w \hspace{-30pt} & = J \big( K_{01}(\phi, \alpha) + 
K_{11}(\phi) y +  K_{0 2}(\phi) w + \nabla_w K_{\geq 3}(\phi, y, w) \big) 
\end{cases} 
\end{equation}
where $ \partial_{\phi}K_{10}^T $ is the $ \nu \times \nu $ transposed matrix and 
$ \partial_{\phi}K_{01}^T $,  $ K_{11}^T  : {H_{\Splus}^\bot \to \R^\nu} $ are defined by the 
duality relation $ ( \partial_{\phi} K_{01} [\hat \phi ],  w)_{L^2_x}  = \hat \phi \cdot [\partial_{\phi}K_{01}]^T w  $,
$ \forall \hat \phi \in \R^\nu, w \in H_{\Splus}^\bot $, 
and similarly for $ K_{11} $. 
Explicitly, for all  $ w \in H_{\Splus}^\bot $, 
and denoting by $\underline{e}_k$ the $k$-th versor of $\R^\nu$, 
\begin{equation} \label{K11 tras}
K_{11}^T(\phi) w =  {\mathop \sum}_{k=1}^\nu \big(K_{11}^T(\phi) w \cdot \underline{e}_k\big) \underline{e}_k   =
{\mathop \sum}_{k=1}^\nu  
\big( w, K_{11}(\phi) \underline{e}_k  \big)_{L^2(\T_x)}  \underline{e}_k  \, \in \R^\nu \, .  
\end{equation}
The coefficients $ K_{00} $, $ K_{10} $, $K_{01} $  in the Taylor expansion \eqref{KHG} vanish 
on an exact solution (i.e. $ Z  = 0 $).

\begin{lemma} \label{coefficienti nuovi} 
We have  
\begin{align}\label{K 00 10 01}
& \|  \partial_\phi K_{00}(\cdot, \alpha_0) \|_s^{k_0, \gamma} 
+ \| K_{10}(\cdot, \alpha_0) - \om  \|_s^{k_0, \gamma} +  \| K_{0 1}(\cdot, \alpha_0) \|_s^{k_0, \gamma} 
\lesssim_s  \| Z \|_{s + \s}^{k_0, \gamma} +  \| Z \|_{s_0 + \s}^{k_0, \gamma} \| {\mathfrak I}_0 \|_{s + \s}^{k_0, \gamma} \, . \\
%
%
%
\notag
& \| \partial_\alpha K_{00}\|_s^{k_0, \gamma} + \| \partial_\alpha K_{10} - {\rm Id} \|_s^{k_0, \gamma} + \| \partial_\alpha K_{0 1}\|_s^{k_0, \gamma} \lesssim_s   \| \fracchi_0\|_{s + \sigma}^{k_0, \gamma} \, , 
\quad   \|K_{20}  \|_s^{k_0, \gamma} \lesssim_s \e \big( 1 + \| \fracchi_0\|_{s + \s}^{k_0, \gamma} \big) \, ,  \\ 
\notag
& \| K_{11} y \|_s^{k_0, \gamma} 
\lesssim_s \e \big(\| y \|_s^{k_0, \gamma}
+ \| \fracchi_0 \|_{s + \sigma}^{k_0, \gamma}  
\| y \|_{s_0}^{k_0, \gamma} \big) \, , \quad  \| K_{11}^T w \|_s^{k_0, \gamma}
\lesssim_s \e \big(\| w \|_{s + 2}^{k_0, \gamma}
+  \| \fracchi_0 \|_{s + \sigma}^{k_0, \gamma}
\| w \|_{s_0 + 2}^{k_0, \gamma} \big)\, . 
\end{align}
\end{lemma}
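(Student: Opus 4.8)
The estimates \eqref{K 00 10 01} are of two different natures, and I would treat them separately. The bounds on the first line, concerning $\partial_\phi K_{00}(\cdot,\alpha_0)$, $K_{10}(\cdot,\alpha_0)-\omega$ and $K_{01}(\cdot,\alpha_0)$, express the fact that these Taylor coefficients vanish at an exact solution and hence are controlled by the error function $Z$ defined in \eqref{def Zetone}. The bounds on the remaining lines, concerning $\partial_\alpha K_{00}$, $\partial_\alpha K_{10}-\mathrm{Id}$, $\partial_\alpha K_{01}$, $K_{20}$, $K_{11}$ and $K_{11}^T$, are instead ``absolute'' estimates: they hold at any torus $i_0$ satisfying the Ansatz \eqref{ansatz 0}, with the smallness coming from the $\e$ factor in front of the perturbation $P$ in \eqref{definizione cal N P} (for $K_{20}$, $K_{11}$, $K_{11}^T$) or from differentiating the affine-in-$\alpha$ structure of $\mathcal{N}_\alpha$ in \eqref{H alpha} (for the $\partial_\alpha$ terms).

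\textbf{First line.} The strategy, following \cite{BB13} and \cite{BBM-auto}, is to compute the coefficients $\partial_\phi K_{00}$, $K_{10}$, $K_{01}$ directly from the definition $K_\alpha = H_\alpha \circ G_\delta$ in \eqref{new-Hamilt-K} and the Taylor expansion \eqref{KHG}, evaluating at the isotropic torus $i_\delta$. Using the identities $DG_\delta(\phi,0,0)[\hat\phi,0,0] = \partial_\phi i_\delta(\phi)[\hat\phi]$ and the fact that $G_\delta$ is symplectic (Lemma \ref{toro isotropico modificato}), one expresses these coefficients in terms of $\mathcal{F}(i_\delta,\alpha_0) = \omega\cdot\partial_\phi i_\delta - X_{H_{\alpha_0}}(i_\delta)$: schematically, $K_{10}-\omega$ and $K_{01}$ are obtained by pairing $\mathcal{F}(i_\delta,\alpha_0)$ with the columns of $DG_\delta$ and with $J\partial_\phi \tilde z_0$, and $\partial_\phi K_{00}$ involves in addition the isotropy defect, which is itself $O(Z)$ by Lemma \ref{lemma:aprile}. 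Then I would invoke \eqref{stima toro modificato} to replace $\|\mathcal{F}(i_\delta,\alpha_0)\|_s^{k_0,\gamma}$ by $\|Z\|_{s+\sigma}^{k_0,\gamma} + \|Z\|_{s_0+\sigma}^{k_0,\gamma}\|\mathfrak{I}_0\|_{s+\sigma}^{k_0,\gamma}$, and use the tame product estimate \eqref{p1-pr} together with \eqref{derivata i delta} and the Ansatz \eqref{ansatz 0} to absorb the contributions of $\theta_0$, $I_\delta$, $z_0$; the loss $\sigma$ is enlarged as needed, which is allowed since $\sigma := \sigma(\nu,\tau,k_0)$ is a free loss-of-derivatives constant.

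\textbf{Remaining lines.} For the $\partial_\alpha$ estimates I would use that, by \eqref{H alpha} and \eqref{trasformazione modificata simplettica}, only $K_{00}$, $K_{10}$, $K_{01}$ depend on $\alpha$ and this dependence is \emph{affine}, inherited from $\mathcal{N}_\alpha = \alpha\cdot I + \frac12(z,\Omega z)_{L^2}$; differentiating in $\alpha$ kills the $\e P$ part and leaves expressions built only from $G_\delta$, i.e. from $\theta_0$, $I_\delta$, $z_0$, so that \eqref{derivata i delta}, \eqref{stima y - y delta} and the Ansatz give $\|\fracchi_0\|_{s+\sigma}^{k_0,\gamma}$-type bounds, with $\partial_\alpha K_{10}-\mathrm{Id}$ picking up the correction $[\partial_\phi\theta_0]^{-T}-\mathrm{Id}$ which is small with $\fracchi_0$. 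For $K_{20}$, $K_{11}$, $K_{11}^T$ I would observe that the quadratic part of $K_\alpha$ comes from the quadratic part of $H_\alpha\circ G_\delta$, which splits as $\frac12(w,\Omega w)_{L^2}$ (contributing only to $K_{02}$, since $G_\delta$ acts trivially on the $w$-component up to the $z_0$-shift) plus $\e$ times the Hessian of $P$ transported by $DG_\delta$; hence $K_{20}$, $K_{11}$, $K_{11}^T$ all carry an explicit factor $\e$, and their size is controlled by \eqref{stime XP}, \eqref{stima derivata XP}, the composition/product estimates of Lemma \ref{lemma:LS norms}, and Lemma \ref{toro isotropico modificato}; the two extra derivatives in $\|K_{11}^T w\|_s$ come from the transpose involving $\partial_\theta \tilde z_0$ evaluated at $\theta_0(\phi)$ as in \eqref{K11 tras}. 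The whole argument is essentially identical to Lemma 5.4 of \cite{BertiMontalto}, the only new input being the tame bounds on the Dirichlet--Neumann operator entering $P$, supplied by Proposition \ref{lemma dirichlet Neumann}.

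\textbf{Main obstacle.} The routine but delicate point is bookkeeping the loss of derivatives $\sigma$: the pull-back $G_\delta$, its differential, and the passage through $\mathcal{F}(i_\delta,\alpha_0)$ via Lemma \ref{toro isotropico modificato} each cost a fixed number of derivatives depending on $\tau$, $\nu$, $k_0$ (through the Diophantine inverse $\omega\cdot\partial_\phi$ in \eqref{y 0 - y delta} and Lemma \ref{lemma:WD}), and one must check that the sum of all these losses can be collected into a single $\sigma := \sigma(\nu,\tau,k_0)$ uniformly in $s$, and that every product is estimated by keeping the high Sobolev index on a single factor as in \eqref{p1-pr} so that only $\|\fracchi_0\|_{s+\sigma}^{k_0,\gamma}$ and the low-norm Ansatz constant appear. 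No genuinely new difficulty arises beyond \cite{BertiMontalto}; the finite-depth Dirichlet--Neumann estimates are the only substantive addition.
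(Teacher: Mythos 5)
Your proposal follows essentially the same route as the paper's (terse) proof, which simply cites \cite{BB13}, \cite{BBM-auto}, \cite{BertiMontalto} together with \eqref{stime XP}, \eqref{ansatz 0}, \eqref{2015-2}, \eqref{stima y - y delta}, \eqref{stima toro modificato}, \eqref{K11 tras}; you correctly separate the $Z$-controlled coefficients from the $\e$- and $\fracchi_0$-controlled ones and invoke the right lemmas. Two small imprecisions worth flagging: (i) at the modified torus $i_\delta$ the isotropy defect vanishes \emph{exactly} (that is the whole point of Lemma \ref{toro isotropico modificato}), so $\partial_\phi K_{00}$ is estimated by $\|\mathcal F(i_\delta,\alpha_0)\|$ alone via \eqref{stima toro modificato}, not by an additional isotropy-defect term as your sentence suggests; (ii) the ``$+2$'' in $\|K_{11}^T w\|_s$ is inherited from the loss of derivatives in the Hamiltonian vector field estimates \eqref{stime XP}--\eqref{stima derivata XP} (which cost $s_0+2k_0+5$-type shifts and, for the water-waves $X_P$, one space derivative) transported through the $L^2$-duality \eqref{K11 tras}, rather than specifically from $\partial_\theta\tilde z_0(\theta_0(\phi))$.
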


\begin{proof}
The lemma follows as in \cite{BB13}, \cite{BBM-auto}, \cite{BertiMontalto} 
by \eqref{stime XP}, \eqref{ansatz 0}, \eqref{2015-2}, \eqref{stima y - y delta}, \eqref{stima toro modificato}, \eqref{K11 tras}.
\end{proof}

Under the linear change of variables 
\begin{equation}\label{DGdelta}
D G_\delta(\vphi, 0, 0) 
\begin{pmatrix}
\widehat \phi \, \\
\widehat y \\
\widehat w
\end{pmatrix} 
:= 
\begin{pmatrix}
\pa_\phi \theta_0(\vphi) & 0 & 0 \\
\pa_\phi I_\delta(\vphi) & [\pa_\phi \theta_0(\vphi)]^{-T} & 
- [(\pa_\theta \tilde{z}_0)(\theta_0(\vphi))]^T J \\
\pa_\phi z_0(\vphi) & 0 & I
\end{pmatrix}
\begin{pmatrix}
\widehat \phi \, \\
\widehat y \\
\widehat w
\end{pmatrix} 
\end{equation}
the linearized operator  $d_{i, \alpha}{\cal F}(i_\delta )$ is approximately transformed
(see the proof of Theorem \ref{thm:stima inverso approssimato}) 
into the one  obtained when one linearizes 
the Hamiltonian system \eqref{sistema dopo trasformazione inverso approssimato} at $(\phi, y , w ) = (\vphi, 0, 0 )$,
differentiating also in $ \a $ at $ \a_0 $, and changing $ \partial_t \rightsquigarrow \Dom $, 
namely 
\begin{equation}\label{lin idelta}
\begin{pmatrix}
\widehat \phi  \\
\widehat y    \\ 
\widehat w \\
\widehat \a 
\end{pmatrix} \mapsto
\begin{pmatrix}
\Dom \widehat \phi - \partial_\phi K_{10}(\vphi)[\widehat \phi \, ] - \partial_\alpha K_{10}(\vphi)[\widehat \alpha] - 
K_{2 0}(\vphi)\widehat y - K_{11}^T (\vphi) \widehat w \\
 \Dom  \widehat y + \partial_{\phi\phi} K_{00}(\vphi)[\widehat \phi] + 
 \partial_\phi \partial_\alpha  K_{00}(\vphi)[\widehat \alpha] + 
[\partial_\phi K_{10}(\vphi)]^T \widehat y + 
[\partial_\phi  K_{01}(\vphi)]^T \widehat w   \\ 
\Dom  \widehat w - J 
\{ \partial_\phi K_{01}(\vphi)[\widehat \phi] + \partial_\alpha K_{01}(\vphi)[\widehat \alpha] + K_{11}(\vphi) \widehat y + K_{02}(\vphi) \widehat w \}
\end{pmatrix} \! .  \hspace{-5pt}
\end{equation}
 As in \cite{BBM-auto},  
 by \eqref{DGdelta}, \eqref{ansatz 0}, \eqref{2015-2}, 
 the induced composition operator satisfies:   
for all $ \widehat \imath := (\widehat \phi, \widehat y, \widehat w) $
\begin{gather} \label{DG delta}
\|DG_\delta(\vphi,0,0) [\widehat \imath] \|_s^{k_0, \gamma} + \|DG_\delta(\vphi,0,0)^{-1} [\widehat \imath] \|_s^{k_0, \gamma} 
\lesssim_s \| \widehat \imath \|_{s}^{k_0, \gamma} +  \| {\mathfrak I}_0 \|_{s + \s}^{k_0, \gamma}  \| \widehat \imath \|_{s_0}^{k_0, \gamma}\,,
\\ 
\| D^2 G_\delta(\vphi,0,0)[\widehat \imath_1, \widehat \imath_2] \|_s^{k_0, \gamma} 
\lesssim_s  \| \widehat \imath_1\|_s^{k_0, \gamma}  \| \widehat \imath_2 \|_{s_0}^{k_0, \gamma} 
+ \| \widehat \imath_1\|_{s_0}^{k_0, \gamma}  \| \widehat \imath_2 \|_{s}^{k_0, \gamma} 
+  \| {\mathfrak I}_0  \|_{s + \s}^{k_0, \gamma} \|\widehat \imath_1 \|_{s_0}^{k_0, \gamma}  \| \widehat \imath_2\|_{s_0}^{k_0, \gamma} \, .  \label{DG2 delta} 
\end{gather}
In order to construct an ``almost-approximate" inverse of \eqref{lin idelta} we need  that 
\be\label{Lomega def}
{\cal L}_\omega := \Pi_{\Splus}^\bot \big(\Dom   - J K_{02}(\vphi) \big)_{|{H_{\Splus}^\bot}} 
\ee
is ``almost-invertible" up to remainders of size $O(N_{n - 1}^{- \mathtt a})$ (see precisely \eqref{stima R omega corsivo}) where 
 \be\label{NnKn}
 N_n := K_n^p \, , \quad \forall n \geq 0\,,  
 \ee
 and  
\be\label{definizione Kn}
K_n := K_0^{\chi^{n}} \, , \quad \chi := 3/ 2 
\ee
are the scales used in the nonlinear Nash-Moser iteration in Section \ref{sec:NM}.  
The almost invertibility of $\mL_\om$ is proved in Theorem \ref{inversione parziale cal L omega} 
as the conclusion of the analysis 
of Sections \ref{linearizzato siti normali}-\ref{sec: reducibility}, 
and it is stated here as an assumption (to avoid the involved definition of the 
set $\tLm_o$).
Let $H^s_\bot(\T^{\nu + 1}) := H^s(\T^{\nu + 1}) \cap H_{\Splus}^\bot$ 
and recall that the phase space contains only functions even in $x$, see \eqref{phase space 2017}.
\begin{itemize}
\item {\bf Almost-invertibility of ${\cal L}_\omega$.} 
There exists a subset $ \tLm_o  \subset \mathtt{DC}(\gamma, \tau) \times [\h_1, \h_2] $ such that, 
for all $ (\omega, \h) \in  \tLm_o  $ the operator $ {\cal L}_\omega $ in \eqref{Lomega def} may be decomposed as
\be\label{inversion assumption}
{\cal L}_\omega  
= {\cal L}_\omega^< + {\cal R}_\omega + {\cal R}_\omega^\bot 
\ee
where $ {\cal L}_\omega^< $ is invertible. More precisely, there exist constants 
$  K_0, M, \sigma, \mu(\mathtt b), \mathtt a, p > 0$ such that for any $s_0 \leq s \leq S$,  the operators ${\cal R}_\omega$, ${\cal R}_\omega^\bot $ satisfy the estimates  
\begin{align}  \label{stima R omega corsivo}
\|{\cal R}_\omega h \|_s^{k_0, \gamma} & 
\lesssim_S  \e \gamma^{- 2(M+1)} N_{n - 1}^{- {\mathtt a}}\big( \|  h \|_{s + \sigma}^{k_0, \gamma} +  \| \fracchi_0 \|_ {s + \mu (\mathtt b)  + \sigma }^{k_0, \gamma} \| h \|_{s_0 + \sigma}^{k_0, \gamma} \big)\, , \\
\label{stima R omega bot corsivo bassa}
\| {\cal R}_\omega^\bot h \|_{s_0}^{k_0, \gamma} & \lesssim_S
K_n^{- b} \big( \| h \|_{s_0 + b 
+ \sigma}^{k_0, \gamma} + 
\| \fracchi_0 \|_ { s_0  + \mu (\mathtt b)  + \sigma +b }^{k_0, \gamma}  \|  h \|_{s_0 + \sigma}^{k_0, \gamma}\big)\,, 
\qquad \forall b > 0\,, \\
\label{stima R omega bot corsivo alta}
\| {\cal R}_\omega^\bot h \|_s^{k_0, \gamma} & \lesssim_S 
 \|  h \|_{s + \sigma}^{k_0, \gamma} + \| \fracchi_0 \|_ {s  + \mu (\mathtt b) + \sigma}^{k_0, \gamma} \| h \|_{s_0 + \sigma}^{k_0, \gamma} \,.
\end{align}
Moreover, for every function
$ g \in H^{s+\sigma}_{\bot}(\T^{\nu + 1}, \R^2) $ and such that $ g(-\vphi) = - \rho g( \vphi)$, for every $(\omega, \mathtt h) \in \Lambda_o$, there 
is a solution $ h :=  ({\cal L}_\om^<)^{- 1} g  \in H^{s}_{\bot}(\T^{\nu + 1}, \R^2) $ such that
$ h (-\vphi) =  \rho h ( \vphi) $,  
of the linear equation $ {\cal L}_\om^< h = g $. The operator $({\cal L}_\om^<)^{- 1}$ satisfies for all $s_0 \leq s \leq S$ the tame estimate 
\begin{equation}\label{tame inverse}
\| ({\cal L}_\om^<)^{- 1} g \|_s^{k_0, \gamma} \lesssim_S  \g^{-1} 
\big(  \| g \|_{s + \sigma}^{k_0, \gamma} + 
 \| {\mathfrak I}_0 \|_{s + \mu({\mathtt b}) + \sigma}^{k_0, \gamma}  \|g \|_{s_0 + \sigma}^{k_0, \gamma}  \big) \,.
\end{equation}
\end{itemize}

In order to find an almost-approximate inverse of the linear operator in \eqref{lin idelta} 
(and so of $ d_{i, \a} {\cal F}(i_\d) $),
it is sufficient to invert the operator
\begin{equation}\label{operatore inverso approssimato} 
{\mathbb D} [\widehat \phi, \widehat y, \widehat w, \widehat \alpha ] := 
  \begin{pmatrix}
\Dom \widehat \phi - \partial_\alpha K_{10}(\vphi)[\widehat \alpha] - 
K_{20}(\vphi) \widehat y  - K_{11}^T(\vphi) \widehat w\\
\Dom  \widehat y + \partial_\phi \partial_\alpha  K_{00}(\vphi)[\widehat \alpha] \\
(\mL_\omega^{<}) \widehat w  - J \partial_\alpha K_{01}(\vphi)[\widehat \alpha] -J K_{11}(\vphi)\widehat y  
\end{pmatrix}
\end{equation}
obtained by neglecting in \eqref{lin idelta} 
the terms $ \partial_\phi K_{10} $, $ \partial_{\phi \phi} K_{00} $, $ \partial_\phi K_{00} $, 
$ \partial_\phi K_{01} $, which are $O(Z)$ by Lemma \ref{coefficienti nuovi}, 
and the small remainders ${\cal R}_\omega $, ${\cal R}_\omega^\bot $ 
appearing in \eqref{inversion assumption}. 
We look for an inverse of ${\mathbb D}$ 
by solving the system 
\begin{equation}\label{operatore inverso approssimato proiettato}
{\mathbb D} [\widehat \phi, \widehat y, \widehat w, \widehat \alpha] 
= 
\begin{pmatrix}
g_1  \\
g_2  \\
g_3 
\end{pmatrix}
\end{equation}
where $(g_1, g_2, g_3)$ satisfy the reversibility property 
\begin{equation}\label{parita g1 g2 g3}
g_1(\vphi) = g_1(- \vphi)\,,\quad g_2(\vphi) = - g_2(- \vphi)\,,\quad g_3(\vphi) = - (\rho g_3)(- \vphi)\,.
\end{equation}
We first consider the second equation in \eqref{operatore inverso approssimato proiettato}, namely 
$ \omega \cdot \partial_\vphi  \widehat y  = g_2  -  \partial_\alpha \partial_\phi K_{00}(\vphi)[\widehat \alpha] $. 
By reversibility, the $\vphi$-average of the right hand side of this equation is zero, and so its solution is  
\begin{equation}\label{soleta}
\widehat y := ( \omega \cdot \partial_\vphi )^{-1} \big(
g_2 - \partial_\alpha \partial_\phi K_{00}(\vphi)[\widehat \alpha] \big)\,.  
\end{equation}
Then we consider the third equation
$ ({\cal L}_\om^<) \widehat w = g_3 + J K_{11}(\vphi) \widehat y + J \partial_\alpha K_{0 1}(\vphi)[\widehat \alpha] $,
which, by the inversion assumption \eqref{tame inverse}, has a solution 
\begin{equation}\label{normalw}
\widehat w := ({\cal L}_\om^<)^{-1} \big( g_3 +J K_{11}(\vphi) \widehat y + J \partial_\alpha K_{0 1}(\vphi)[\widehat \alpha] \big) \, .  
\end{equation}
Finally, we solve the first equation in \eqref{operatore inverso approssimato proiettato}, 
which, substituting \eqref{soleta}, \eqref{normalw}, becomes
\begin{equation}\label{equazione psi hat}
\omega \cdot \partial_\vphi \widehat \phi  = 
g_1 +  M_1(\vphi)[\widehat \alpha] + M_2(\vphi) g_2 + M_3(\vphi) g_3\,,
\end{equation}
where
\begin{align}\label{M1}
& \qquad \qquad M_1(\vphi) :=  \partial_\alpha K_{10}(\vphi) - M_2(\vphi)\partial_\alpha \partial_\phi K_{00}(\vphi)  + M_3(\vphi) J \partial_\alpha K_{01}(\vphi)\,, \\
& \label{cal M2}
M_2(\vphi) :=  K_{20}(\vphi) [\omega \cdot \partial_\vphi]^{-1} + K_{11}^T(\vphi)({\cal L}_\om^<)^{- 1} J K_{11}(\vphi)[\omega \cdot \partial_\vphi]^{-1} \, , \quad 
M_3(\vphi) :=  K_{11}^T (\vphi) ({\cal L}_\om^<)^{-1} \, .  
\end{align}
In order to solve equation \eqref{equazione psi hat} we have 
to choose $ \widehat \alpha $ such that the right hand side  has zero average.  
By Lemma \ref{coefficienti nuovi}, \eqref{ansatz 0}, the $\ph$-averaged matrix is
$ \langle M_1 \rangle = {\rm Id} + O( \e \gamma^{-1 }) $.  
Therefore, for $ \e \gamma^{- 1} $ small enough,  
$ \langle M_1 \rangle$ is invertible and $\langle M_1 \rangle^{-1} = {\rm Id} 
+ O(\e \gamma^{- 1})$. Thus we define 
\begin{equation}\label{sol alpha}
\widehat \alpha  := - \langle M_1 \rangle^{-1} 
( \langle g_1 \rangle + \langle M_2 g_2 \rangle + \langle M_3 g_3 \rangle ) \, .
\end{equation}
With this choice of $ \widehat \alpha$,
equation \eqref{equazione psi hat} has the solution
\begin{equation}\label{sol psi}
\widehat \phi :=
(\omega \cdot \partial_\vphi )^{-1} \big( g_1 + M_1(\vphi)[\widehat \alpha] + M_2(\vphi) g_2 + M_3(\vphi) g_3 \big) \, . 
\end{equation}
In conclusion, we have obtained
a solution  $(\widehat \phi, \widehat y, \widehat w, \widehat \alpha)$ of the linear system \eqref{operatore inverso approssimato proiettato}. 

\begin{proposition}\label{prop: ai}
Assume \eqref{ansatz 0} (with $\mu = \mu(\mathtt b) + \sigma$) and \eqref{tame inverse}. 
Then, for all $(\om, \h) \in \tLm_o $, for all $ g := (g_1, g_2, g_3) $ even in $x$ and satisfying \eqref{parita g1 g2 g3},
 system \eqref{operatore inverso approssimato proiettato} has a solution 
$ {\mathbb D}^{-1} g := (\widehat \phi, \widehat y, \widehat w, \widehat \alpha ) $,
where $(\widehat \phi, \widehat y, \widehat w, \widehat \alpha)$ are defined in 
\eqref{sol psi}, \eqref{soleta},  \eqref{normalw}, \eqref{sol alpha}, which satisfies \eqref{parity solution} and for any $s_0 \leq s \leq S$
\begin{equation} \label{stima T 0 b}
\| {\mathbb D}^{-1} g \|_s^{k_0, \gamma}
\lesssim_S \gamma^{-1} \big( \| g \|_{s + \sigma }^{k_0, \gamma} 
+  \| {\mathfrak I}_0  \|_{s + \mu(\mathtt b) + \sigma}^{k_0, \gamma}
 \| g \|_{s_0 + \sigma}^{k_0, \gamma}  \big).
\end{equation}
\end{proposition}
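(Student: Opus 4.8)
The plan is to estimate each of the four components $\widehat\alpha, \widehat y, \widehat w, \widehat\phi$ in the order in which they were constructed, using throughout the tame estimate \eqref{tame inverse} for $(\mL_\om^<)^{-1}$, the estimate \eqref{Diophantine-1} for $(\om\cdot\pa_\vphi)^{-1}$, and the bounds of Lemma \ref{coefficienti nuovi} on the Taylor coefficients $K_{00}, K_{10}, K_{01}, K_{20}, K_{11}$. All constants will be allowed to depend on the loss parameters $\s, \mu(\mathtt b)$, which we are free to enlarge from line to line; following the convention already used, I will denote by $\s$ a running (increasing) loss.

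First I would record the elementary product/interpolation inequality: for the operator products $M_2, M_3$ (and $M_1$) composed of the pieces $K_{20}, K_{11}, K_{11}^T, (\mL_\om^<)^{-1}, (\om\cdot\pa_\vphi)^{-1}$, the composition estimate (the asymmetric tame bound with low-norm factor at $s_0$) gives
\begin{equation}\label{aux:M}
\| M_i(\vphi) g \|_s^{k_0,\g} \lesssim_S \e\g^{-1}\big( \| g \|_{s+\s}^{k_0,\g} + \| \fracchi_0 \|_{s+\mu(\mathtt b)+\s}^{k_0,\g}\| g \|_{s_0+\s}^{k_0,\g} \big), \quad i = 2,3,
\end{equation}
using \eqref{ansatz 0} to absorb the $\e\g^{-\kappa}$-small factors and the smallness $\e\g^{-\kappa}\ll1$; the same bound holds for $M_1 - {\rm Id}$. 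With \eqref{aux:M} in hand, \eqref{sol alpha} gives $|\widehat\alpha|^{k_0,\g} \lesssim \| \langle M_1\rangle^{-1}\|(\|g_1\|_{s_0}^{k_0,\g} + \|M_2 g_2\|_{s_0}^{k_0,\g} + \|M_3 g_3\|_{s_0}^{k_0,\g})$, and since $\langle M_1\rangle^{-1} = {\rm Id} + O(\e\g^{-1})$ is bounded, we obtain $|\widehat\alpha|^{k_0,\g}\lesssim_S \|g\|_{s_0+\s}^{k_0,\g}$ — in fact this low-norm bound has \emph{no} $\fracchi_0$ term, which is what will let us close the estimates without a quadratic feedback. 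Then \eqref{soleta} with \eqref{Diophantine-1} gives $\|\widehat y\|_s^{k_0,\g}\lesssim_S \g^{-1}(\|g_2\|_{s+\s}^{k_0,\g} + |\widehat\alpha|^{k_0,\g}(1 + \|\fracchi_0\|_{s+\s}^{k_0,\g}))\lesssim_S \g^{-1}(\|g\|_{s+\s}^{k_0,\g} + \|\fracchi_0\|_{s+\s}^{k_0,\g}\|g\|_{s_0+\s}^{k_0,\g})$, using Lemma \ref{coefficienti nuovi} for $\partial_\alpha\partial_\phi K_{00}$.

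Next, for $\widehat w$ in \eqref{normalw} I would apply \eqref{tame inverse} to $g_3 + JK_{11}(\vphi)\widehat y + J\partial_\alpha K_{01}(\vphi)[\widehat\alpha]$: the tame estimate for $(\mL_\om^<)^{-1}$ produces $\g^{-1}$ times the high norm of the argument plus $\|\fracchi_0\|_{s+\mu(\mathtt b)+\s}^{k_0,\g}$ times its low norm; bounding $\|K_{11}\widehat y\|_s^{k_0,\g}$ by Lemma \ref{coefficienti nuovi} and feeding in the estimate for $\widehat y$ already obtained, and using $\|\partial_\alpha K_{01}\|_s^{k_0,\g}\lesssim_s\|\fracchi_0\|_{s+\s}^{k_0,\g}$ together with the $\fracchi_0$-free bound for $|\widehat\alpha|$, yields $\|\widehat w\|_s^{k_0,\g}\lesssim_S \g^{-1}(\|g\|_{s+\s}^{k_0,\g} + \|\fracchi_0\|_{s+\mu(\mathtt b)+\s}^{k_0,\g}\|g\|_{s_0+\s}^{k_0,\g})$. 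Finally $\widehat\phi$ in \eqref{sol psi} is handled by \eqref{Diophantine-1} applied to $g_1 + M_1[\widehat\alpha] + M_2 g_2 + M_3 g_3$, using \eqref{aux:M} and the bounds on $M_1$, $\widehat\alpha$; this gives the same form of estimate. Summing the four contributions gives \eqref{stima T 0 b}. The parity/reversibility property \eqref{parity solution} for $({\mathbb D}^{-1}g)$ follows from \eqref{parita g1 g2 g3}: $(\om\cdot\pa_\vphi)^{-1}$ maps odd functions of $\vphi$ to even and vice versa, $(\mL_\om^<)^{-1}$ preserves the relevant parity by the statement in the almost-invertibility assumption, and the coefficients $K_{ij}$ have the parities dictated by the reversibility of $K_\alpha$ (recorded after \eqref{new-Hamilt-K}), so one checks componentwise that $\widehat\phi$ is even, $\widehat y$ is odd, $\widehat w$ satisfies $\widehat w(-\vphi) = \rho\widehat w(\vphi)$.

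The only real subtlety — the ``main obstacle'' — is bookkeeping the loss of derivatives so that a single loss constant $\s$ (depending on $\tau, k_0, \nu$) and the fixed $\mu(\mathtt b)$ suffice for \emph{all} four steps simultaneously, and in particular making sure that the low-norm ($s_0$) estimates never carry a $\|\fracchi_0\|$ factor (which would, upon iteration in the Nash–Moser scheme, generate an uncontrolled quadratic term). This is why it is essential that the low-norm bound for $\widehat\alpha$ is $\fracchi_0$-free: it comes from the fact that in \eqref{aux:M} the $\fracchi_0$-term multiplies $\|g\|_{s_0+\s}$, so evaluating at $s = s_0$ still leaves a genuine loss of derivatives on $g$ but no torus-dependent amplification. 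Everything else is a routine application of Lemmas \ref{lemma:LS norms}, \ref{lemma:WD}, \ref{lemma stime Ck parametri} and the composition estimate for $\mathcal D^{k_0}$-tame operators, exactly as in the corresponding step of \cite{BB13}, \cite{BBM-auto}, \cite{BertiMontalto}.
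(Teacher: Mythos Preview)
Your proof is correct and follows essentially the same approach as the paper, which simply cites \eqref{normalw}, \eqref{M1}, \eqref{cal M2}, \eqref{sol alpha}, \eqref{sol psi}, Lemma \ref{coefficienti nuovi}, \eqref{tame inverse}, and \eqref{ansatz 0} in one line. You have merely written out in detail the bookkeeping that the paper leaves implicit, including the intermediate bounds on $M_1, M_2, M_3$ and the observation that $|\widehat\alpha|^{k_0,\gamma}$ is controlled by the low norm of $g$ alone.
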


\begin{proof}
The lemma follows by \eqref{normalw}, \eqref{M1}, \eqref{cal M2}, \eqref{sol alpha}, \eqref{sol psi}, Lemma \ref{coefficienti nuovi}, \eqref{tame inverse}, \eqref{ansatz 0}.
\end{proof}
Finally we prove that the operator 
\begin{equation}\label{definizione T} 
{\bf T}_0 := {\bf T}_0(i_0) := (D { \widetilde G}_\delta)(\vphi,0,0) \circ {\mathbb D}^{-1} \circ (D G_\delta) (\vphi,0,0)^{-1}
\end{equation}
is an almost-approximate right  inverse for $d_{i,\alpha} {\cal F}(i_0 )$ where
$ \widetilde{G}_\delta (\phi, y, w, \alpha) := $  $ \big( G_\delta (\phi, y, w), \alpha \big) $ 
 is the identity on the $ \alpha $-component. 
We denote the norm $ \| (\phi, y, w, \alpha) \|_s^{k_0, \gamma} := $ $  \max \{  \| (\phi, y, w) \|_s^{k_0, \gamma}, 
$ $ | \alpha |^{k_0, \gamma}  \} $.

\begin{theorem}  \label{thm:stima inverso approssimato}
{\bf (Almost-approximate inverse)}
Assume the inversion assumption  \eqref{inversion assumption}-\eqref{tame inverse}.
Then, there exists $ \bar \sigma := \bar \sigma(\tau, \nu, k_0) > 0 $ such that, 
if \eqref{ansatz 0} holds with $\mu = \mu(\mathtt b) + \bar \sigma $, then for all $ (\om, \h) \in \tLm_o $, 
for all $ g := (g_1, g_2, g_3) $ even in $x$ and satisfying \eqref{parita g1 g2 g3},  
the operator $ {\bf T}_0 $ defined in \eqref{definizione T} satisfies,  for all $s_0 \leq s \leq S $, 
\begin{equation}\label{stima inverso approssimato 1}
\| {\bf T}_0 g \|_{s}^{k_0, \gamma} 
\lesssim_S  \gamma^{-1}  \big(\| g \|_{s + \bar \sigma}^{k_0, \gamma}  
+  \| {\mathfrak I}_0 \|_{s + \mu({\mathtt b}) +  \bar \sigma }^{k_0, \gamma}
\| g \|_{s_0 + \bar \sigma}^{k_0, \gamma}  \big)\, .
\end{equation}
 Moreover  ${\bf T}_0 $ is an almost-approximate inverse of $d_{i, \alpha} {\cal F}(i_0 )$, namely 
\begin{equation}\label{splitting per approximate inverse}
d_{i, \alpha} {\cal F}(i_0) \circ {\bf T}_0 - {\rm Id} = {\cal P} (i_0) + {\cal P}_\omega (i_0) + {\cal P}_\omega^\bot  (i_0)
\end{equation}
where, for all $ s_0 \leq s \leq S $, 
\begin{align}
 \| {\cal P} g \|_s^{k_0, \gamma} &
 \lesssim_S  \g^{-1} \Big( \| {\cal F}(i_0, \alpha_0) \|_{s_0 + \bar \sigma}^{k_0, \gamma} \| g \|_{s + \bar \sigma}^{k_0, \gamma}  
\nonumber\\
& \quad + \big\{ \| {\cal F}(i_0, \alpha_0) \|_{s + \bar \sigma}^{k_0, \gamma}
+\| {\cal F}(i_0, \alpha_0) \|_{s_0  + \bar \sigma}^{k_0, \gamma} \| {\mathfrak I}_0 \|_{s + \mu({\mathtt b}) + \bar \sigma}^{k_0, \gamma} \big\} \| g \|_{s_0 + \bar \sigma}^{k_0, \gamma} \Big) ,
\label{stima inverso approssimato 2} \\
\| {\cal P}_\omega g \|_s^{k_0, \gamma} & 
\lesssim_S  \e \g^{-2M - 3} N_{n - 1}^{- {\mathtt a}} \big( \| g \|_{s + \bar \sigma}^{k_0, \gamma} +\| \fracchi_0 \|_{s  + \mu({\mathtt b}) + \bar \sigma}^{k_0, \gamma}  \|  g \|_{s_0 + \bar \sigma}^{k_0, \gamma} \big)\,, \label{stima cal G omega} \\
\| {\cal P}_\omega^\bot g\|_{s_0}^{k_0, \gamma} & \lesssim_{S, b} 
\gamma^{- 1} K_n^{- b } \big( \| g \|_{s_0 + \bar \sigma + b }^{k_0, \gamma} +
\| \fracchi_0 \|_{s_0 +  \mu({\mathtt b}) + \bar \sigma  +b    }^{k_0, \gamma} \big \| g \|_{s_0 + \bar \sigma}^{k_0, \gamma} \big)\,,\,\,
\quad\forall b > 0 \, ,   \label{stima cal G omega bot bassa} \\
\| {\cal P}_\omega^\bot g\|_s^{k_0, \gamma} &  \lesssim_S \gamma^{- 1}  
\big(\| g \|_{s + \bar \sigma}^{k_0, \gamma} + \| \fracchi_0 \|_{s + \mu({\mathtt b}) + \bar \sigma}^{k_0, \gamma}  \| g \|_{s_0 + \bar \sigma}^{k_0, \gamma} \big)\, . \label{stima cal G omega bot alta} 
\end{align}
\end{theorem}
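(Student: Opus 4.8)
\textbf{Proof strategy for Theorem \ref{thm:stima inverso approssimato}.}
The plan is to follow the scheme of \cite{BB13}, \cite{BBM-auto}, \cite{BertiMontalto}: we must quantify the error made when passing from the true linearized operator $d_{i,\alpha}\mathcal F(i_0)$ to the ``triangular model'' operator $\mathbb D$ of \eqref{operatore inverso approssimato}, which we have already inverted in Proposition \ref{prop: ai}. The starting point is the identity that expresses $d_{i,\alpha}\mathcal F(i_\delta)$ in the coordinates $(\phi,y,w)$ induced by the symplectic diffeomorphism $G_\delta$ of \eqref{trasformazione modificata simplettica}: conjugating by $DG_\delta(\vphi,0,0)$, the operator $d_{i,\alpha}\mathcal F(i_\delta)$ is transformed into the operator in \eqref{lin idelta} \emph{up to a remainder}. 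This remainder has two sources: first, the discrepancy between $d_{i,\alpha}\mathcal F(i_\delta)$ and $d_{i,\alpha}\mathcal F(i_0)$, which by Lemma \ref{toro isotropico modificato} (in particular \eqref{stima y - y delta}, \eqref{derivata i delta}) is controlled by $\|Z\|$, hence by $\|\mathcal F(i_0,\alpha_0)\|$; second, the fact that $G_\delta$ is the symplectic diffeomorphism straightening $i_\delta$ and not $i_0$, so that its differential at $(\vphi,0,0)$ differs from the exact linearization data by terms again of order $\|Z\|$, estimated via \eqref{DG delta}, \eqref{DG2 delta}.

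The second step is to discard the ``small'' entries of \eqref{lin idelta}. The operator in \eqref{lin idelta} differs from $\mathbb D$ precisely in the terms $\partial_\phi K_{10}$, $\partial_{\phi\phi}K_{00}$, $\partial_\phi K_{00}$, $\partial_\phi K_{01}$, which by Lemma \ref{coefficienti nuovi} (the first line of \eqref{K 00 10 01}) are all $O(\|Z\|_{s+\sigma} + \|Z\|_{s_0+\sigma}\|\mathfrak I_0\|_{s+\sigma})$, and — inside the $w$-component — in the replacement of $\mathcal L_\omega$ by $\mathcal L_\omega^<$, i.e. by the remainders $\mathcal R_\omega + \mathcal R_\omega^\bot$ of \eqref{inversion assumption}. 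Composing $\mathbb D^{-1}$ (Proposition \ref{prop: ai}) on the right with $d_{i,\alpha}\mathcal F(i_0)$, these three blocks of discarded terms produce respectively: $(i)$ a remainder $\mathcal P$ built from the $O(\|Z\|)$ Taylor coefficients and from the $G_\delta$-vs-$i_0$ discrepancy, giving \eqref{stima inverso approssimato 2} after using \eqref{Lemma Z piccolezza} and the interpolation Lemma \ref{lemma:interpolation} to split the products; $(ii)$ a remainder $\mathcal P_\omega$ coming from $\mathcal R_\omega$, which by \eqref{stima R omega corsivo} carries the factor $\e\gamma^{-2(M+1)}N_{n-1}^{-\mathtt a}$, yielding \eqref{stima cal G omega} after composing with the $\gamma^{-1}$-bounded operators $\mathbb D^{-1}$ and $DG_\delta^{\pm 1}$; $(iii)$ a remainder $\mathcal P_\omega^\bot$ coming from $\mathcal R_\omega^\bot$, which by \eqref{stima R omega bot corsivo bassa}--\eqref{stima R omega bot corsivo alta} gives the two-regime estimates \eqref{stima cal G omega bot bassa}--\eqref{stima cal G omega bot alta}. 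The estimate \eqref{stima inverso approssimato 1} for $\mathbf T_0 g$ itself is the direct composition of \eqref{DG delta}, \eqref{stima T 0 b} and again \eqref{DG delta}, using the tame product rule Lemma \ref{lemma:LS norms} and absorbing lower-order factors by $\e\gamma^{-\kappa}\ll 1$; the loss of derivatives $\bar\sigma$ is fixed large enough to accommodate all the $\sigma$'s appearing in Lemmata \ref{toro isotropico modificato}, \ref{coefficienti nuovi}, \ref{lemma:WD} and in \eqref{DG delta}. Finally, the reversibility/parity structure \eqref{parity solution} is preserved throughout because $G_\delta$ is reversibility preserving (by \eqref{parity solution} for $i_0$) and $\mathbb D^{-1}$ maps data satisfying \eqref{parita g1 g2 g3} to outputs satisfying \eqref{parity solution}, as recorded in Proposition \ref{prop: ai}.

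\textbf{Main obstacle.} The delicate point is not any single estimate but the bookkeeping of the \emph{splitting} \eqref{splitting per approximate inverse} into exactly three remainders with the claimed \emph{distinct} structures: one must carefully track which contributions are genuinely $O(\mathcal F(i_0,\alpha_0))$ — hence quadratic in the Nash-Moser scheme and harmless — versus which come from the almost-reducibility tail $\mathcal R_\omega$ (size $N_{n-1}^{-\mathtt a}$, controlled by choosing $\mathtt a$ large) versus the ultraviolet tail $\mathcal R_\omega^\bot$ (size $K_n^{-b}$ for arbitrary $b$, at the cost of $b$ extra derivatives). In particular one must verify that the $O(\|Z\|)$ terms from $G_\delta$ and from Lemma \ref{coefficientinuovi} never get multiplied by $\mathcal R_\omega$ or $\mathcal R_\omega^\bot$ in a way that would spoil either the quadratic nature of $\mathcal P$ or the smallness of $\mathcal P_\omega,\mathcal P_\omega^\bot$; this is ensured by performing the composition $d_{i,\alpha}\mathcal F(i_0)\circ \mathbf T_0$ in the right order and isolating the $\mathcal L_\omega^<$-inversion as the last step. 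Since the algebra is identical to \cite[Section 5]{BertiMontalto} once the Dirichlet--Neumann-dependent estimates \eqref{stime XP}--\eqref{stima derivata seconda XP} are in place, we only indicate the modifications and refer to \cite{BertiMontalto} for the remaining details.
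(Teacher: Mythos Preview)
Your proposal is correct and follows essentially the same approach as the paper: the paper writes $d_{i,\alpha}\mathcal F(i_0) = DG_\delta(\vphi,0,0)\circ \mathbb D \circ D\widetilde G_\delta(\vphi,0,0)^{-1} + \mathcal E + \mathcal E_\omega + \mathcal E_\omega^\bot$, where $\mathcal E$ collects the $i_0$-vs-$i_\delta$ discrepancy $\mathcal E_0$, the second-order $G_\delta$-error $\mathcal E_1$, and the conjugate of the $O(Z)$ block $R_Z$, while $\mathcal E_\omega, \mathcal E_\omega^\bot$ are the conjugates of $\mathbb R_\omega, \mathbb R_\omega^\bot$; then $\mathcal P, \mathcal P_\omega, \mathcal P_\omega^\bot$ are simply $\mathcal E\circ\mathbf T_0$, $\mathcal E_\omega\circ\mathbf T_0$, $\mathcal E_\omega^\bot\circ\mathbf T_0$, and the estimates follow from the lemmas you cite. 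The ``main obstacle'' you flag is less delicate than you suggest: because the splitting $\mathcal L_\omega = \mathcal L_\omega^< + \mathcal R_\omega + \mathcal R_\omega^\bot$ enters linearly in the third component of \eqref{lin idelta}, the three remainders separate cleanly at the level of the operator decomposition, before any composition with $\mathbf T_0$, so no cross-contamination can occur.
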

\begin{proof}
Bound \eqref{stima inverso approssimato 1} 
follows from \eqref{definizione T}, \eqref{stima T 0 b},  \eqref{DG delta}.
By \eqref{operatorF}, since $ X_\mN $ does not depend on $ I $,  
and $ i_\d $ differs by $ i_0 $ only in the $ I $ component (see \eqref{y 0 - y delta}),  
we have \,
\be \label{verona 0}
{\cal E}_0 := d_{i, \alpha} {\cal F}(i_0 )   - d_{i, \alpha} {\cal F}(i_\delta ) 
= \e \int_0^1 \partial_I d_i X_P (\theta_0, I_\d + s (I_0 - I_\d), z_0) [I_0-I_\d,   \Pi [ \, \cdot \, ] \,  ] ds 
\ee
where $ \Pi $ is the projection $ (\widehat \imath, \widehat \a ) \mapsto \widehat \imath $. Denote by $  {\mathtt u} := (\phi, y, w) $  the symplectic coordinates induced by $ G_\d $ in \eqref{trasformazione modificata simplettica}. 
Under the symplectic map $G_\delta $, the nonlinear operator ${\cal F}$ in \eqref{operatorF} is transformed into 
\be \label{trasfo imp}
{\cal F}(G_\delta(  {\mathtt u} (\vphi) ), \alpha ) 
= D G_\delta( {\mathtt u}  (\vphi) ) \big(  {\cal D}_\om {\mathtt u} (\vphi) - X_{K_\alpha} ( {\mathtt u} (\vphi), \alpha)  \big) 
\ee
where $ K_{\alpha} = H_{\alpha} \circ G_\delta $, see \eqref{new-Hamilt-K} and 
\eqref{sistema dopo trasformazione inverso approssimato}. 
Differentiating  \eqref{trasfo imp} at the trivial torus 
$ {\mathtt u}_\delta (\vphi) = G_\delta^{-1}(i_\delta) (\vphi) = (\ph, 0 , 0 ) $, 
at  $ \alpha = \alpha_0 $, 
we get
\begin{align} \label{verona 2}
d_{i , \alpha} {\cal F}(i_\delta ) 
=  & D G_\delta( {\mathtt u}_\delta) 
\big( \Dom 
- d_{\mathtt u, \alpha} X_{K_\alpha}( {\mathtt u}_\delta, \alpha_0) 
\big) D \widetilde G_\d ( {\mathtt u}_\d)^{-1}
+ {\cal E}_1 \,,
\\
{\cal E}_1 
:=  & 
D^2 G_\delta( {\mathtt u}_\delta) \big[ D G_\delta( {\mathtt u}_\delta)^{-1} {\cal F}(i_\delta, \alpha_0), \,  D G_\d({\mathtt u}_\d)^{-1} 
 \Pi [ \, \cdot \, ] \,  \big] \,   \label{verona 2 0} 
\end{align}
In expanded form $ \Dom  - d_{\mathtt u, \alpha} X_{K_\a}( {\mathtt u}_\delta, \alpha_0) $ is provided by \eqref{lin idelta}.
By \eqref{operatore inverso approssimato}, \eqref{Lomega def}, 
\eqref{inversion assumption} and Lemma \ref{coefficienti nuovi} we split  
\begin{equation}\label{splitting linearizzato nuove coordinate}
\om \! \cdot \! \pa_\vphi 
- d_{\mathtt u, \alpha} X_K( {\mathtt u}_\delta, \alpha_0) 
= \mathbb{D}  
+ R_Z   + {\mathbb R}_\omega+ 
{\mathbb R}_\omega^\bot  
\end{equation}
where  
$$
R_Z [  \widehat \phi, \widehat y, \widehat w, \widehat \alpha]
:= \begin{pmatrix}
 - \partial_\phi K_{10}(\vphi, \alpha_0) [\widehat \phi ] \\
 \partial_{\phi \phi} K_{00} (\vphi, \alpha_0) [ \widehat \phi ] + 
 [\partial_\phi K_{10}(\vphi, \alpha_0)]^T \widehat y + 
 [\partial_\phi K_{01}(\vphi, \alpha_0)]^T \widehat w  \\
 - J \{ \partial_{\phi} K_{01}(\vphi, \alpha_0)[ \widehat \phi ] \}
 \end{pmatrix}\,,
$$
and 
$$
{\mathbb R}_\omega[\widehat \phi, \widehat y, \widehat w, \widehat \alpha] := \begin{pmatrix}
0 \\
0 \\
{\cal R}_\omega [\widehat w]
\end{pmatrix}\,,\qquad {\mathbb R}_\omega^\bot[\widehat \phi, \widehat y , \widehat w, \widehat \alpha] := \begin{pmatrix}
0 \\
0 \\
{\cal R}_\omega^\bot[\widehat w]
\end{pmatrix}\,.
$$
By \eqref{verona 0}, \eqref{verona 2}, \eqref{verona 2 0}, \eqref{splitting linearizzato nuove coordinate} we get the decomposition
\begin{align} 
 d_{i, \alpha} {\cal F}(i_0 ) 
& = D G_\delta({\mathtt u}_\delta) \circ {\mathbb D} \circ D {\widetilde G}_\delta ({\mathtt u}_\delta)^{-1} + {\cal E} + {\cal E}_\omega + {\cal E}_\omega^\bot  \label{E2}
 \end{align}
where 
\begin{equation}\label{cal E (n) omega}
{\cal E} := {\cal E}_0 + {\cal E}_1 + D G_\delta ( {\mathtt u}_\delta)R_Z D {\widetilde G}_\delta ({\mathtt u}_\delta)^{-1}\,, \quad {\cal E}_\omega := D G_\delta ( {\mathtt u}_\delta)   {\mathbb R}_\omega    D {\widetilde G}_\delta ({\mathtt u}_\delta)^{-1}\,,
\end{equation}
\begin{equation}\label{cal E omega bot}
   {\cal E}_\omega^\bot :=   D G_\delta( {\mathtt u}_\delta)  {\mathbb R}_\omega^\bot  
D {\widetilde G}_\delta ({\mathtt u}_\delta)^{-1} \, . 
\end{equation}
Applying $ {\bf T}_0 $ defined in \eqref{definizione T} to the right hand side in \eqref{E2} (recall that $ {\mathtt u}_\delta (\vphi) := (\vphi, 0, 0 ) $), 
since $ {\mathbb D} \circ  {\mathbb D}^{-1} = {\rm Id} $ (Proposition \ref{prop: ai}), 
we get 
\begin{align*}
& \qquad \qquad \quad d_{i, \alpha} {\cal F}(i_0 ) \circ {\bf T}_0  - {\rm Id} 
={\cal P} + {\cal P}_\omega + {\cal P}_\omega^\bot\,, \quad {\cal P} := {\cal E} \circ {\bf T}_0, \quad
 {\cal P}_\omega := \mE_\omega \circ {\bf T}_0 \, , \quad {\cal P}_\omega^\bot :=  \mE_\omega^\bot \circ {\bf T}_0 \, . 
\end{align*}
By \eqref{ansatz 0}, \eqref{K 00 10 01}, \eqref{2015-2},  \eqref{stima y - y delta}, \eqref{stima toro modificato}, 
 \eqref{DG delta}-\eqref{DG2 delta} we get the estimate 
\begin{equation}\label{stima parte trascurata 2}
\| {\cal E} [\, \widehat \imath, \widehat \alpha \, ] \|_s^{k_0, \gamma} \lesssim_s 
 \| Z \|_{s_0 + \s}^{k_0, \gamma} \| \widehat \imath \|_{s + \s}^{k_0, \gamma} +  
\| Z \|_{s + \s}^{k_0, \gamma} \| \widehat \imath \|_{s_0 + \s}^{k_0, \gamma} + 
\| Z \|_{s_0 + \s}^{k_0, \gamma} \| \widehat \imath \|_{s_0 + \s}^{k_0, \gamma} \| \fracchi_0 \|_{s+\s}^{k_0, \gamma} \, ,
\end{equation}
where $ Z := \mF(i_0, \alpha_0)$, recall \eqref{def Zetone}. 
Then \eqref{stima inverso approssimato 2} follows from 
\eqref{stima inverso approssimato 1}, \eqref{stima parte trascurata 2}, 
\eqref{ansatz 0}. Estimates \eqref{stima cal G omega}, 
\eqref{stima cal G omega bot bassa}, \eqref{stima cal G omega bot alta}
 follow by \eqref{stima R omega corsivo}-\eqref{stima R omega bot corsivo alta}, 
\eqref{stima inverso approssimato 1},  \eqref{DG delta}, \eqref{2015-2}, \eqref{ansatz 0}.  
 \end{proof}

\section{The linearized operator in the normal directions}\label{linearizzato siti normali}

In order to write an explicit  expression of the linear operator 
$\mL_\om$ defined in \eqref{Lomega def}
we have to express the operator $ K_{02}(\phi) $ 
in terms of the original water waves Hamiltonian vector field.  

\begin{lemma} \label{thm:Lin+FBR}
The operator  $ K_{02}(\phi) $ is 
\begin{equation}\label{K 02}
K_{02}(\phi) = \Pi_{\Splus}^\bot \partial_u \nabla_u H(T_\delta(\phi)) + \e R(\phi) 
\end{equation}
where $ H $ is the water waves Hamiltonian defined in \eqref{Hamiltonian} (with gravity constant $ g =1 $ and depth 
$ h $ replaced by $ \mathtt h $), evaluated at the torus 
\begin{equation}\label{T delta}
T_\delta(\phi) := \e A(i_\delta(\phi)) = \e A(\theta_0(\phi), I_\delta(\phi), z_0(\phi) ) =
\e  v (\theta_0 (\phi), I_\d(\phi)) +  \e z_0 (\phi) 
\end{equation}
with $ A (\theta, I, z ) $, $ v (\teta, I )$ defined in \eqref{definizione A}.
The operator $ K_{02}(\phi) $ is even and reversible.  
The remainder $ R(\phi) $ has the ``finite dimensional" form 
\begin{equation}\label{forma buona resto}
R(\phi)[h] = {\mathop \sum}_{j \in \Splus} \big(h\,,\,g_j \big)_{L^2_x} \chi_j\,, \quad \forall h \in H_{\Splus}^\bot \, ,  
\end{equation}
for functions $ g_j, \chi_j \in H_{\Splus}^\bot  $ which satisfy the tame estimates: 
for some $ \sigma:= \sigma(\tau, \nu) > 0 $, 
$ \forall s \geq s_0 $, 
\begin{equation}\label{stime gj chij}
\| g_j\|_s^{k_0, \gamma} +\| \chi_j\|_s^{k_0, \gamma} \lesssim_s 1 + \| \fracchi_\delta\|_{s + \s}^{k_0, \gamma}\,,\quad \| d_i g_j[\widehat \imath]\|_s +\| d_i \chi_j
[\widehat \imath]\|_s \lesssim_s \| \widehat \imath \|_{s + \s}+ \| \fracchi_\delta\|_{s + \s} \| \widehat \imath\|_{s_0 + \s}\, . 
\end{equation}
\end{lemma}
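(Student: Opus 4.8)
The plan is to compute $K_{02}(\phi)$ starting from the definition of the Hamiltonian $K_\alpha = H_\alpha \circ G_\delta$ in \eqref{new-Hamilt-K} and the Taylor expansion \eqref{KHG}. First I would recall that $K_{02}(\phi)$ is, by definition, $\Pi_{\Splus}^\bot$ applied to the second $w$-differential of $K_\alpha$ at $(\phi,0,0)$, which by the chain rule equals $\Pi_{\Splus}^\bot \, (D G_\delta)^* \, \partial_u \nabla_u H_\alpha(T_\delta(\phi)) \, (D G_\delta)$ restricted to $H_{\Splus}^\bot$, plus terms involving $D^2 G_\delta$ contracted against $\nabla_u H_\alpha$. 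Since $H_\alpha$ and $H$ differ only by the linear-in-$I$ term $\alpha\cdot I$, whose Hessian in $u$ vanishes, we get $\partial_u \nabla_u H_\alpha = \partial_u \nabla_u H$. The map $D G_\delta(\phi,0,0)$ is given explicitly by \eqref{DGdelta}: on the $w$-component it acts as the identity plus a correction that lands in the finite-dimensional tangential directions (the term $-[(\partial_\theta \tilde z_0)(\theta_0(\phi))]^T J w$ feeds into the $I$-slot, and $\partial_\phi z_0$, $\partial_\phi I_\delta$ are finite-rank in $\phi$). Therefore, modulo the main term $\Pi_{\Splus}^\bot \partial_u \nabla_u H(T_\delta(\phi))$, everything else is a composition of the $H^s$-smooth (hence tame) operator $\partial_u \nabla_u H(T_\delta(\phi))$ with finite-rank operators of the form $h \mapsto \sum_{j\in\Splus}(h, g_j)_{L^2_x}\chi_j$.

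The key steps, in order: (i) expand $d_w^2 (H\circ G_\delta)(\phi,0,0)$ using the second-order chain rule, isolating the pure term $(D_w G_\delta)^* \partial_u\nabla_u H \,(D_w G_\delta)$ and the mixed/second-derivative terms; (ii) use \eqref{DGdelta} to write $D_w G_\delta(\phi,0,0) = \iota + \Phi_\delta(\phi)$ where $\iota$ is the inclusion $H_{\Splus}^\bot \hookrightarrow \T^\nu\times\R^\nu\times H_{\Splus}^\bot$ and $\Phi_\delta(\phi)$ has range in the finite-dimensional tangential space, built from $\partial_\phi z_0$, $\partial_\phi I_\delta$ and $(\partial_\theta\tilde z_0)(\theta_0(\phi))$; (iii) observe that after applying $\Pi_{\Splus}^\bot$ on the left and restricting to $H_{\Splus}^\bot$ on the right, the term $\iota^* \partial_u\nabla_u H \, \iota$ gives exactly $\Pi_{\Splus}^\bot \partial_u\nabla_u H(T_\delta(\phi))_{|H_{\Splus}^\bot}$, while all remaining terms are finite-rank of the asserted form; (iv) extract the explicit functions $g_j,\chi_j$ and prove the tame estimates \eqref{stime gj chij} by combining the tame bounds on $\partial_u\nabla_u H(T_\delta(\phi))$ (these come from the explicit form \eqref{linearized vero} of the linearized water waves vector field and the Dirichlet–Neumann estimates in Proposition \ref{lemma dirichlet Neumann}), the bounds \eqref{DG delta}-\eqref{DG2 delta} on $DG_\delta, D^2 G_\delta$, the estimates on $i_\delta$ from Lemma \ref{toro isotropico modificato}, and the composition/interpolation lemmas of Section \ref{subsec:function spaces}; (v) check the parity statements: $\partial_u\nabla_u H$ evaluated at the reversible, even torus $T_\delta(\phi)$ is even and reversible because $H$ is reversible and the water waves equations preserve the even-in-$x$ subspace, and the finite-rank correction inherits evenness and reversibility since $G_\delta$ is reversibility preserving (by \eqref{parity solution}); hence $K_{02}$ is even and reversible.

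The main obstacle I expect is step (iv): organizing the many contributions to $R(\phi)$ into the clean finite-dimensional form \eqref{forma buona resto} with functions $g_j,\chi_j$ depending on $\phi$, and then tracking the loss of derivatives $\sigma=\sigma(\tau,\nu)$ through the compositions — in particular the terms where $\partial_\theta\tilde z_0 = \partial_\theta(z_0\circ\theta_0^{-1})$ appears, since $\theta_0^{-1}$ brings in the inverse diffeomorphism of the angle map and its estimates (via \eqref{p1-diffeo-inv}) must be combined with the Diophantine-division bounds used to control $I_\delta$. The estimate for $d_i g_j[\widehat\imath]$, $d_i\chi_j[\widehat\imath]$ requires in addition differentiating the whole construction with respect to the torus $i_0$, which is routine but bookkeeping-heavy. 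None of this is conceptually hard: the lemma is essentially an adaptation of the corresponding statement in \cite{BB13}, \cite{BBM-auto}, \cite{BertiMontalto}, the only genuinely new input being the use of the finite-depth Dirichlet–Neumann estimates of Appendix \ref{subDN} in place of the infinite-depth ones.
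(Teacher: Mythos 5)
Your plan is correct and follows the paper's own route, which simply refers to Lemma 6.1 of \cite{BertiMontalto} (itself built on \cite{BB13}, \cite{BBM-auto}): expand $d_w^2 K_\alpha(\phi,0,0)$ by the chain rule, isolate the identity part of $D_w G_\delta$, collect the remaining finite-rank contributions, and verify parity and reversibility from the corresponding properties of $H$ and $G_\delta$, with the finite-depth Dirichlet--Neumann estimates of Appendix~\ref{subDN} as the only new analytical input. One small notational wrinkle to fix in a written-out version: $H_\alpha$ is a function of $(\theta, I, z)$ while $H$ is a function of $u=(\eta,\psi)$, so $\partial_u\nabla_u H_\alpha(T_\delta(\phi))$ in your step (i) is not literally well-posed --- you must also compose through the action-angle map $A$ of \eqref{definizione A}, writing $K_\alpha = \e^{-2} H(\e A \circ G_\delta) + (\alpha - \vec\omega(\h))\cdot(\text{$I$-component of }G_\delta)$, observing that the second summand is affine in $w$ and hence drops from $K_{02}$, and then computing $D_w(\e A\circ G_\delta)$ rather than $D_w G_\delta$; this is also what produces the explicit factor $\e$ in front of $R(\phi)$.
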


\begin{proof}
The lemma follows as in Lemma 6.1 in \cite{BertiMontalto}.
\end{proof}

By Lemma \ref{thm:Lin+FBR} the linear operator $ {\cal L}_\om $ defined in \eqref{Lomega def} has the form 
\be\label{representation Lom}
{\cal L}_\om =  \Pi_{\Splus}^\bot ( {\cal L} + \e R)_{| H_{\Splus}^\bot}  \qquad {\rm where} \qquad {\cal L} := 
\Dom   - J \partial_u  \nabla_u 
H (T_\delta(\vphi))
\ee
is obtained linearizing the original water waves system \eqref{WW0}, \eqref{HS} at the torus
$ u = (\eta, \psi) = T_\d(\vphi) $ 
defined in \eqref{T delta}, changing $ \pa_t \rightsquigarrow \Dom $.
The function $ \eta (\vphi, x) $ is $ \even (\vphi) \even (x) $ and $ \psi (\vphi, x) $ is $ \odd (\vphi) \even (x) $.  

In order to compute the linearization of the Dirichlet-Neumann operator, we recall the ``shape derivative'' formula, given for instance in \cite{Lannes}, \cite{LannesLivre},
\begin{equation} \label{formula shape der}
G'(\eta) [\hat {\eta} ] \psi 
= \lim_{\ep \rightarrow 0} \frac{1}{\ep} \{ G (\eta+\ep \hat \eta ) \psi - G(\eta) \psi \}
= - G(\eta) (B \hat \eta ) -\partial_x (V \hat \eta )
\end{equation}
where 
\begin{equation} \label{def B V}
\B := \B(\eta,\psi) := \frac{\eta_x \psi_x + G(\eta)\psi }{ 1 + \eta_x^2 }\,, 
\qquad 
V := V(\eta,\psi) := \psi_x - B \eta_x \, .
\end{equation}
It turns out that $ (V,B) = \nabla_{x,y} \Phi $ is the velocity field 
evaluated at the free surface  $ (x,  \eta (x)) $. 
Using \eqref{formula shape der}, 
the linearized operator of \eqref{WW0} is represented by the $2 \times 2$ operator matrix
\begin{equation}  \label{linearized vero}
\mL := \om \cdot \pa_\vphi + \begin{pmatrix} 
\partial_x V + G(\eta) B & - G(\eta) \\
(1 + B V_x) + B G(\eta) B \  &  V \partial_x - B G(\eta) \end{pmatrix} .
\end{equation}
Since the operator $G(\eta)$ is even according to Definition \ref{def:even}, the function $ B $ is $ \odd (\vphi) \even (x) $ and
$ V $ is $ \odd (\vphi) \odd (x) $. 
The operator $ {\cal L}$ acts on $ H^1(\T) \times H^1(\T)$.

The operators $ {\cal L}_\om $ and $ {\cal L} $ are real, even and reversible. We are going to make several transformations, whose aim is to conjugate the linearized operator to a constant coefficients operator, up to a remainder that is small in size and regularizing at a conveniently high order.

\begin{remark}\label{remark riccardo inverse}
 It is convenient to first ignore the projection $\Pi_{\mathbb S_+}^\bot$ and consider the linearized operator ${\cal L}$ acting on the whole space $H^1(\T) \times H^1(\T)$. At the end of the conjugation procedure, we shall restrict ourselves to the phase space $H^1_0(\T) \times \dot{H}^1(\T)$ and perform the projection on the normal subspace $ H_{\Splus}^\bot $, see Section \ref{coniugio cal L omega}. 
The finite dimensional remainder $ \e  R $
transforms under conjugation into an operator of the same form 
and therefore it will be dealt with only once at the end of Section \ref{coniugio cal L omega}. 
\end{remark}

For the sequel we will always assume the following ansatz (satisfied by the approximate solutions
obtained along the nonlinear Nash-Moser iteration of Section \ref{sec:NM}): 
for some constant $ \mu_0 := \mu_0 (\tau, \nu) > 0$, $\gamma \in (0, 1) $,  
\begin{equation}\label{ansatz I delta}
\| \fracchi_0 \|_{s_0 + \mu_0}^{k_0, \gamma} \leq 1   \ \, , 
\qquad \text{and  so, by } \eqref{2015-2}, \ \| \fracchi_\d \|_{s_0 + \mu_0}^{k_0, \gamma} \leq 2 \, . 
\end{equation}
In order to estimate the  variation of the eigenvalues with respect to the approximate invariant torus, 
we need also to estimate the derivatives (or the variation) with respect to the torus $i(\vphi)$ in another low norm $\| \ \|_{s_1}$, for all the Sobolev indices $s_1$ such that  
\begin{equation}\label{vincolo s1 derivate i}
s_1 + \sigma_0 \leq s_0 + \mu_0 \,, \quad \text{for \,\,some} \quad \sigma_0 := \sigma_0(\tau, \nu)> 0\,.
\end{equation}
Thus by \eqref{ansatz I delta} we have 
\be\label{estimate:low norm for der}
\|\fracchi_0 \|_{s_1 + \sigma_0}^{k_0, \gamma} \leq 1 
\qquad \text{and  so, by } \eqref{2015-2}, \ 
\|\fracchi_\d \|_{s_1 + \sigma_0}^{k_0, \gamma} \leq 2 \,.
\ee
 The constants $\mu_0$ and $\sigma_0$ represent the {\it loss of derivatives} 
 accumulated along the reduction procedure 
of  Sections \ref{sec: change-transport equation}-\ref{sezione descent method}.
What is important is that they are independent of the Sobolev index $ s $. Along Sections \ref{linearizzato siti normali}-\ref{sezione descent method}, we shall denote by $\sigma := \sigma(k_0, \tau, \nu) > 0$ a constant (which possibly increases from lemma to lemma) representing the loss of derivatives along the finitely many steps of the reduction procedure. 

As a consequence of Moser composition Lemma \ref{Moser norme pesate}, 
the Sobolev norm of the function $ u = T_\d $ defined in \eqref{T delta} satisfies, $ \forall s \geq s_0 $,  
\be\label{tame Tdelta}
\| u \|_s^{k_0,\gamma} = \| \eta \|_s^{k_0,\gamma}  + 
\| \psi \|_s^{k_0,\gamma} \leq \e C(s)  \big(  1 + \| \fracchi_0 \|_{s}^{k_0, \gamma})  
\ee
(the function $ A $  defined in \eqref{definizione A} is smooth). Similarly 
\begin{equation}\label{derivata i T delta}
\| \Delta_{12} u \|_{s_1} \lesssim_{s_1} \e \| i_2 - i_1 \|_{s_1} 
\end{equation}
where we denote $ \Delta_{12} u:= u(i_2) - u(i_1)$; we will systematically use this notation.

In the next sections we shall also assume
that, for some $ \kappa := \kappa (\tau, \nu) > 0 $, we have  
$$
\e \gamma^{- \kappa} \leq \delta (S)  \, ,
$$
where $ \delta (S) > 0 $ is a constant small enough and  $ S $ will be  fixed in 
\eqref{costanti nash moser 2}.
We recall that $ \fracchi_0 := \fracchi_0 (\om, \h )  $ 
is defined for all  $ (\om, \h) \in \R^\nu \times [\h_1, \h_2] $ 
and that  the functions $ B, V $ 
appearing  in  $ {\cal L } $ in \eqref{linearized vero} 
are $ {\cal C}^\infty $ in $ (\vphi, x) $ as 
the approximate torus 
$ u = (\eta, \psi) =  T_\d (\vphi)  $. 
This enables to  use directly pseudo-differential operator theory as reminded in Section \ref{sec:pseudo}.

Starting from here,  until the end of Section \ref{coniugio cal L omega}, 
our goal is to prove Proposition \ref{prop: sintesi linearized}.

\subsection{Linearized  good unknown of Alinhac}\label{sez:Alinhac}

Following \cite{Alaz-Bal}, \cite{BertiMontalto} we conjugate  the linearized operator $ \mL $ in \eqref{linearized vero} by the multiplication operator
\be\label{mapZ}
\mZ := \begin{pmatrix}  1 & 0 \\ B & 1  \end{pmatrix},
\quad 
\mZ^{-1} = \begin{pmatrix} 1 & 0 \\ - B & 1  \end{pmatrix} \, , 
\ee
where $ B = B (\vphi, x)$ is the function defined in \eqref{def B V}, 
obtaining  
\begin{equation} \label{mZ mL0}
\mL_0 := \mZ^{-1} \mL \mZ 
= \om \cdot \pa_\vphi + \begin{pmatrix}
\partial_x V \  & \ - G(\eta) \\ 
a \ &  V \partial_x 
\end{pmatrix}
\end{equation} 
where  $a$ is the function
\begin{equation}  \label{a}
a := a(\vphi, x) := 1 + (\omega \cdot \partial_\vphi B) + V B_x \, .
\end{equation} 
All $a,B,V$ are real valued periodic functions of $(\ph,x)$ --- variable coefficients --- and
satisfy 
\[
B = \odd(\ph) \even(x), \quad 
V = \odd(\ph) \odd(x), \quad 
a = \even(\ph) \even(x) \, . 
\]
The matrix $ \mZ $ in \eqref{mapZ} amounts to introduce, as in Lannes \cite{Lannes}-\cite{LannesLivre},  
a linearized version of the \emph{good unknown of Alinhac}, working 
with the variables  $ (\eta, \varsigma ) $ with $ \varsigma := \psi - B \eta $, instead of $ (\eta, \psi) $.

\begin{lemma}  \label{lemma:remainder mR0}
The maps $\mZ^{\pm 1} -  {\rm Id} $ are even, reversibility preserving and ${\cal D}^{k_0}$-tame with tame constants satisfying, for all $s \geq s_0$,  
\begin{equation}  \label{est Z-Id}
{\mathfrak M}_{\mZ^{\pm 1} - {\rm Id}} (s)\,,\,  {\mathfrak M}_{(\mZ^{\pm 1} - {\rm Id})^*} (s)
\lesssim_s \e \big( 1+ \| \fracchi_0 \|_{s + \sigma}^{k_0, \gamma} \big) \, . 
\end{equation}
The operator $ {\cal L}_0 $ is even and reversible. 
There is $\sigma := \sigma(\tau, \nu) > 0 $ such that    the functions 
\begin{equation}\label{stima V B a c}
\|a - 1 \|_s^{k_0, \gamma} + \| V \|_s^{k_0, \gamma} + \| B\|_s^{k_0, \gamma} \lesssim_s \e \big(1 + \| \fracchi_0 \|_{s + \sigma}^{k_0, \gamma} \big)\,.
\end{equation}
Moreover 
\begin{align}\label{stima derivate i primo step}
& \| \Delta_{12} a  \|_{s_1} + \| \Delta_{12} V  \|_{s_1} + \| \Delta_{12} B  \|_{s_1} \lesssim_{s_1}  \e \| i_1 - i_2 \|_{s_1 + \sigma} \\
& \label{derivate in i cal Z}
\| \Delta_{12} ({\cal Z}^{\pm 1} ) h \|_{s_1}\,,\,\| \Delta_{12} ({\cal Z}^{\pm 1})^* h \|_{s_1} \lesssim_{s_1}  \e \| i_1 - i_2\|_{s_1 + \sigma} \|  h \|_{s_1 }\,.
\end{align}
\end{lemma}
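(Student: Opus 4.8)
\textbf{Proof plan for Lemma \ref{lemma:remainder mR0}.}

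The plan is to verify each assertion by tracking the structure of the conjugation $\mathcal{L}_0 = \mathcal{Z}^{-1}\mathcal{L}\mathcal{Z}$ in \eqref{mZ mL0}, which was already computed explicitly. First I would establish the estimates \eqref{stima V B a c} for the variable coefficients $a,V,B$. By \eqref{def B V}, $B$ and $V$ are built from $\eta_x,\psi_x$ and $G(\eta)\psi$ via algebraic operations (products, the smooth function $t\mapsto t/(1+t^2)$); applying the tame estimates for the Dirichlet--Neumann operator from Proposition \ref{lemma dirichlet Neumann} (Appendix \ref{subDN}), the Moser composition estimate Lemma \ref{Moser norme pesate}, the product estimate \eqref{p1-pr}, together with the bound \eqref{tame Tdelta} on $u = T_\delta$ and the ansatz \eqref{ansatz I delta}, one gets $\|V\|_s^{k_0,\gamma} + \|B\|_s^{k_0,\gamma} \lesssim_s \e(1 + \|\fracchi_0\|_{s+\sigma}^{k_0,\gamma})$. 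For $a$ one uses \eqref{a}: the term $\om\cdot\pa_\vphi B$ loses one $\vphi$-derivative (absorbed into $\sigma$) and $VB_x$ is handled by \eqref{p1-pr}; the constant $1$ is subtracted off, giving $\|a-1\|_s^{k_0,\gamma}\lesssim_s \e(1+\|\fracchi_0\|_{s+\sigma}^{k_0,\gamma})$. The parity statements $B = \odd(\ph)\even(x)$, $V=\odd(\ph)\odd(x)$, $a=\even(\ph)\even(x)$ follow from the parities of $\eta,\psi$ recorded after \eqref{representation Lom} ($\eta$ is $\even(\ph)\even(x)$, $\psi$ is $\odd(\ph)\even(x)$), the fact that $G(\eta)$ is even (Lemma \ref{even:pseudo}), and that $\pa_x$ flips the $x$-parity while $\om\cdot\pa_\vphi$ flips the $\vphi$-parity.

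Next I would treat the map $\mathcal{Z}^{\pm1} - \mathrm{Id}$, which by \eqref{mapZ} is the multiplication operator $\big(\begin{smallmatrix} 0 & 0 \\ \pm B & 0\end{smallmatrix}\big)$. Its tame constant is controlled by $\|B\|_{s}^{k_0,\gamma}$ via \eqref{interpolazione parametri operatore funzioni}--\eqref{interpolazione parametri operatore funzioni (2)} applied to the multiplication $\Op(B)$ (equivalently, directly by the algebra/product estimates), so \eqref{est Z-Id} for $\mathcal{Z}^{\pm1}-\mathrm{Id}$ follows from \eqref{stima V B a c}. Since $B$ is real valued, the $L^2_x$-adjoint $(\mathcal{Z}^{\pm1}-\mathrm{Id})^*$ is again multiplication by $\pm B$ (the adjoint of a multiplication operator by a real function is itself), so the same bound holds for the adjoint; this gives the second half of \eqref{est Z-Id}. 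The parity claim ``$\mathcal{Z}^{\pm1}-\mathrm{Id}$ is even and reversibility preserving'' follows from Section \ref{sezione operatori reversibili e even}: an operator of the form $\big(\begin{smallmatrix} A & B \\ C & D\end{smallmatrix}\big)$ is reversibility preserving iff $A,D$ are even in $\vphi$ and $B,C$ are odd in $\vphi$; here the only nonzero entry is $C = \pm B$ with $B$ odd in $\vphi$, and it is even (in $x$) since multiplication by an $\even(x)$ function preserves functions even in $x$. Consequently, since $\mathcal{L}$ is even and reversible and $\mathcal{Z}^{\pm1}$ are even and reversibility preserving, the conjugate $\mathcal{L}_0 = \mathcal{Z}^{-1}\mathcal{L}\mathcal{Z}$ is even and reversible, as stated after \eqref{mZ mL0}.

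Finally, the Lipschitz-in-$i$ estimates \eqref{stima derivate i primo step}--\eqref{derivate in i cal Z} are obtained by the same computations, differentiating through the formulas \eqref{def B V}, \eqref{a}, \eqref{mapZ}: one writes $\Delta_{12}B$, $\Delta_{12}V$, $\Delta_{12}a$ as sums of terms each containing a factor $\Delta_{12}u$ (or $\Delta_{12}$ of $G(\eta)\psi$), uses the low-norm bound \eqref{derivata i T delta} and the variation estimate for the Dirichlet--Neumann operator from Proposition \ref{lemma dirichlet Neumann}, together with the low-norm product rule in $H^{s_1}$ and the ansatz \eqref{estimate:low norm for der}; since $s_1+\sigma_0 \le s_0+\mu_0$ the relevant factors stay bounded. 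The estimate \eqref{derivate in i cal Z} for $\Delta_{12}(\mathcal{Z}^{\pm1})$ and its adjoint then reduces to $\|\Delta_{12}B\|_{s_1}\|h\|_{s_1}$ (again using that the adjoint of multiplication by $B$ is multiplication by $B$), which is \eqref{stima derivate i primo step}. I expect the only real work — hence the main obstacle — to be the tame and variational estimates for $G(\eta)$ itself, i.e.\ ensuring that Proposition \ref{lemma dirichlet Neumann} is stated in the form needed here (tame bounds on $H^s_{\vphi,x}$, Lipschitz dependence on the torus in low norm, and $\mathcal{C}^{k_0}$-dependence on $(\omega,\h)$, including the finite-depth parameter $\h$); everything else is a bookkeeping exercise with the product, composition, and multiplication-operator estimates already collected in Section \ref{subsec:function spaces} and Section \ref{sec:pseudo}.
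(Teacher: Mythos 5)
Your proposal is correct and reproduces essentially the argument the paper delegates to Lemma 6.3 of Berti--Montalto: the parity bookkeeping for $a,B,V$, the tame estimates via the Dirichlet--Neumann proposition together with Moser/product estimates and the $O(\e)$ bound \eqref{tame Tdelta} on $T_\delta$, the reduction of the $\mathcal{Z}^{\pm1}-\mathrm{Id}$ and adjoint estimates to $\|B\|_s^{k_0,\gamma}$ using \eqref{norma a moltiplicazione} and Lemma \ref{lemma: action Sobolev}, and the Lipschitz-in-$i$ bounds by differentiation through the explicit formulas. The only small imprecision is the parenthetical that the adjoint ``is again multiplication by $\pm B$'' — the $L^2_x$-adjoint of $\begin{psmallmatrix} 0&0\\ B&0\end{psmallmatrix}$ is $\begin{psmallmatrix} 0&B\\ 0&0\end{psmallmatrix}$, so the matrix position changes, but since each nonzero block is multiplication by the real function $B$ the tame bound is unchanged, which is what you actually use.
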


\begin{proof}
The proof is the same as the one of Lemma 6.3 in \cite{BertiMontalto}. 
\end{proof}

We  expand $ {\cal L}_0 $ in \eqref{mZ mL0} as 
\begin{equation} \label{mZ mL0-new}
\mL_0 
= \om \cdot \pa_\vphi + \begin{pmatrix}
V \pa_x  \  & 0 \\ 
0 &  V \partial_x 
\end{pmatrix} +
\begin{pmatrix}
V_x \  & \ - G(\eta) \\ 
a \ &  0
\end{pmatrix}  \, . 
\end{equation} 
In the next section we deal with the first order operator  $\ompaph + V  \pa_x $.

\section{Straightening the first order vector field}\label{sec: change-transport equation}

The aim of this section 
is to conjugate the variable coefficients operator $\ompaph + V(\ph,x) \pa_x $ to the constant coefficients vector field $\ompaph$, namely to find a change of variable $ {\cal B} $ such that 
\be\label{goal:strait}
{\cal B}^{-1} \big( \ompaph + V(\ph,x) \pa_x \big) {\cal B} = \ompaph \, . 
\ee
{\bf Quasi-periodic transport equation.}
We consider a $ \vphi $-dependent family of diffeomorphisms of $ \T_x $ of the space variable $y = x +\b(\ph,x)$ 
where the function $\b : \T^{\nu}_{\vphi} \times \T_x \to \R$ is odd in $x$, even in $\ph$, 
and $\| \b_x \|_{L^\infty}<1/2$. 
We denote by $\mB$ the corresponding composition operator, namely $(\mB h)(\ph,x) := h(\ph, x + \b (\ph,x)) $. 
The conjugated operator in the left hand side in \eqref{goal:strait} is 
\begin{equation} \label{2702.2} 
\mB^{-1} \big( \ompaph + V(\ph,x) \partial_x \big) \mB  
= \ompaph + c(\ph,y) \, \partial_y
\end{equation}
where 
\begin{equation} \label{2702.3} 
c(\ph,y) := \mB^{-1} \big(\ompaph \b + V (1 + \b_x ) \big) (\ph,y) \, .
\end{equation}
In view of \eqref{2702.2}-\eqref{2702.3}
we obtain \eqref{goal:strait} if  
$ \b(\ph,x) $  solves  the equation
\be\label{transport}
\om \cdot \partial_\vphi \beta (\ph,x) + V(\vphi,x) ( 1 + \beta_x (\ph,x) ) = 0 \, ,
\ee
which can be interpreted as a {\it quasi-periodic transport} equation. 
\\[1mm]
{\bf Quasi-periodic characteristic equation.}
Instead of solving directly \eqref{transport} we solve the equation satisfied 
by the inverse diffeomorphism 
\be \label{1202.10}
x + \b(\ph,x) = y \quad \iff \quad x = y + \breve\b (\ph,y) \, , \
\quad  \forall x,y \in \R, \ \ph \in \T^\nu \, .
\ee
It turns out that equation \eqref{transport} for $\beta(\ph,x)$ is equivalent to the following equation 
for $\breve\b(\ph,y)$:
\be \label{1202.11}
\ompaph \breve\b(\ph,y) = V(\ph, y + \breve\b(\ph,y)) 
\ee
which is a quasi-periodic version of the {\it characteristic} equation $ \dot x = V(\omega t, x ) $. 

\begin{remark}\label{rem:geo}
We can give a geometric interpretation of equation \eqref{1202.11} 
in terms of conjugation of vector fields on the torus $ \T^{\nu} \times \T $.
Under the diffeomorphism of $ \T^{\nu} \times \T $ defined by 
$$
\begin{pmatrix} 
\vphi \\ 
x
\end{pmatrix} = \begin{pmatrix} 
\psi \\ 
y + \breve \b (\psi, y)
\end{pmatrix} \, , \quad 
\text{the system}  \quad 
\frac{d}{dt}   \begin{pmatrix} 
 \vphi \\ 
x
\end{pmatrix} = 
\begin{pmatrix} 
 \omega \\ 
V(\vphi, x)
\end{pmatrix}
$$
transforms into  
$$
\frac{d}{dt}   \begin{pmatrix} 
 \psi \\ 
y
\end{pmatrix} = 
\begin{pmatrix} 
 \omega \\ 
 \big\{ - \ompaph \breve\b(\psi,y) + V(\ph, y + \breve\b(\psi,y)) \big\} \big( 1 + \breve \beta_y (\psi, y) \big)^{-1} 
\end{pmatrix} \, .
$$
The vector field  in the new coordinates 
reduces to $ (\omega, 0 ) $ if and only if 
 \eqref{1202.11} holds. 
 In the new variables  the solutions are simply given by $ y(t) = c  $, $ c \in \R $, and 
  {\it all} the solutions of the scalar quasi-periodically forced differential equation 
 $  \dot x = V(\om t , x )  $ are time quasi-periodic 
of  the form $ x(t) = c + \breve \beta (\omega t, c ) $. 
\end{remark}

In Theorem \ref{thm:nonlinear transport} we solve equation \eqref{1202.11}, 
for $ V(\vphi, x) $ small  and   $ \omega $ 
Diophantine, 
 by  applying the Nash-Moser-H\"ormander implicit function theorem in Appendix \ref{sec:NMH}.  
Rename $\breve\beta \to u$, $y \to x$, and write \eqref{1202.11} as 
\be \label{1202.12}
F(u)(\ph,x) := \ompaph u(\ph,x) - V(\ph,x+u(\ph,x)) = 0 \, .
\ee
The linearized operator at a given function $ u (\vphi, x ) $ is 
\be \label{1202.16}
F'(u)h:= \ompaph h - q(\ph,x) h, \quad 
q(\ph,x) := V_x(\ph, x + u(\ph,x)) \, .
\ee
In the next lemma we solve the linear problem $ F'(u)h = f $.

\begin{lemma}{\bf (Linearized quasi-periodic characteristic equation)}
\label{lemma:inv transport}
Let $\varsigma := 3k_0 + 2\t(k_0+1) +2 = 2\mu + k_0 + 2$, where $\mu$ is the loss in \eqref{Diophantine-1} (with $k+1 = k_0$), 
and let $\omega \in \mathtt{DC}(2 \gamma, \tau)$. 
Assume that  the periodic function $u $ is $ \even(\ph) \odd(x)$, that 
$ V $ is $ \odd(\ph) \odd(x) $, 
and 
\begin{equation}\label{2202.1}
\|u \|_{s_0 + \varsigma}^{k_0, \gamma} + \g^{-1} \|V \|_{s_0+\varsigma}^{k_0, \gamma} \leq \d_0
\end{equation}
with $\d_0$ small enough. 
Then, given a periodic function $f $ which is $ \odd(\ph) \odd(x)$, the linearized equation
\be \label{1202.13}
F'(u)h = f 
\ee
has a unique periodic solution $h(\ph,x)$ which is $\even(\ph) \odd(x)$ having zero average in $\ph$, i.e. \begin{equation}\label{trasporto media zero}
\langle h \rangle_\ph (x) := \frac{1}{(2\pi)^\nu} \int_{\T^\nu} h(\ph,x)\, d\ph = 0  
\quad \forall x\in\T.
\end{equation}
This defines a right inverse of the linearized operator $ F' (u) $, which we denote by $h = F'(u)^{-1} f$. 
It satisfies 
\be \label{1202.14}
\| F'(u)^{-1} f  \|_s^{k_0, \gamma} \lesssim_s 
\g^{-1} \big( \|f \|_{s+\varsigma}^{k_0, \gamma} + \g^{-1} 
(\|V \|_{s+\varsigma}^{k_0, \gamma} + \|u \|_{s+\varsigma}^{k_0, \gamma} \|V \|_{s_0+\varsigma}^{k_0, \gamma} ) \|f \|_{s_0}^{k_0 , \gamma}  \big)
\ee
for all $s \geq s_0$, where $\| \cdot  \|_s^{k_0, \gamma}$ denotes the norm of 
$\Lip(k_0,\mathtt{DC}(2 \gamma, \tau),s,\g)$. 
\end{lemma}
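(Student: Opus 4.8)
The linearized operator is $F'(u)h = \ompaph h - q(\ph,x)h$ with $q(\ph,x) = V_x(\ph, x+u(\ph,x))$, so equation \eqref{1202.13} is a scalar first order transport-type equation with variable coefficient $q$. The plan is to conjugate $\ompaph - q$ to the constant coefficient operator $\ompaph$ by a multiplication operator, following the classical ``integrating factor'' scheme for reducing first order operators. First I would look for $w(\ph,x)$ such that $e^{-w} (\ompaph - q) e^{w} = \ompaph$, which requires $\ompaph w = q$; since $\om$ is Diophantine this is solvable provided $q$ has zero average in $\ph$. This is the crucial point: by hypothesis $V$ is $\odd(\ph)\odd(x)$ and $u$ is $\even(\ph)\odd(x)$, hence $x + u(\ph,x)$ is $\odd(\ph)$-odd-ish in the sense that the composition $V_x(\ph, x+u(\ph,x))$ is $\odd(\ph)\even(x)$ — being odd in $\ph$, its $\ph$-average vanishes, so $w := (\ompaph)^{-1}q$ is well defined by \eqref{def:ompaph} and is $\even(\ph)\even(x)$.

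Then I would write the solution of $F'(u)h = f$ explicitly. Conjugating, $h = e^{w} v$ where $\ompaph v = e^{-w} f$. The right hand side $e^{-w}f$ is $\odd(\ph)\odd(x)$ (product of the $\even(\ph)\even(x)$ function $e^{-w}$ with the $\odd(\ph)\odd(x)$ function $f$), so in general it does not have zero $\ph$-average... but wait: since $e^{-w}f$ is odd in $\ph$, its $\ph$-average does vanish, so $v := (\ompaph)^{-1}(e^{-w}f)$ is well defined and, crucially, automatically has zero $\ph$-average. Multiplying back, $h = e^{w}v$; one then checks that $h$ is $\even(\ph)\odd(x)$ as claimed. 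The zero-average condition \eqref{trasporto media zero} needs a small extra argument: $\langle h \rangle_\ph = \langle e^w v\rangle_\ph$ need not vanish a priori, so one should instead normalize by subtracting $\langle h \rangle_\ph(x)$ — but note that $\ompaph \langle h\rangle_\ph = 0$ and $q \langle h \rangle_\ph$ would then have to be accounted for; the clean fix is that the kernel of $F'(u)$ acting on $\ph$-independent functions consists of solutions of $-q(\ph,x)h(x) = 0$, forcing $h \equiv 0$ there, so the right inverse landing in zero-$\ph$-average functions is uniquely determined, and one defines $F'(u)^{-1}f$ to be this particular solution. Here uniqueness on the zero-average subspace follows because two such solutions differ by a zero-average solution of the homogeneous equation $\ompaph k = q k$, and an energy/Fourier estimate using $\|q\|$ small forces $k = 0$.

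For the quantitative estimate \eqref{1202.14} I would proceed as follows. By Lemma \ref{Moser norme pesate} (composition operator) applied to $V_x$ and the diffeomorphism $x \mapsto x + u(\ph,x)$, together with the diffeomorphism estimate \eqref{pr-comp1}, one gets $\|q\|_s^{k_0,\gamma} \lesssim_s \|V\|_{s+k_0+1}^{k_0,\gamma} + \|u\|_{s+k_0+2}^{k_0,\gamma}\|V\|_{s_0+k_0+1}^{k_0,\gamma}$. By Lemma \ref{lemma:WD} (Diophantine equation), $\|w\|_s^{k_0,\gamma} = \|(\ompaph)^{-1}q\|_s^{k_0,\gamma} \lesssim \gamma^{-1}\|q\|_{s+\mu}^{k_0,\gamma}$ with $\mu = k_0+1+\tau(k_0+2)$; note the smallness hypothesis \eqref{2202.1} on $\|u\|_{s_0+\varsigma}^{k_0,\gamma}$ and $\gamma^{-1}\|V\|_{s_0+\varsigma}^{k_0,\gamma}$ (with $\varsigma = 2\mu+k_0+2$, chosen precisely to absorb these two shifts plus the shift in the composition estimate applied again below) guarantees $\|w\|_{s_0}^{k_0,\gamma}$ is small, so $e^{\pm w} - 1$ is small in low norm and, by Lemma \ref{Moser norme pesate} applied to the analytic function $t \mapsto e^{\pm t}$, satisfies $\|e^{\pm w}-1\|_s^{k_0,\gamma} \lesssim_s \|w\|_s^{k_0,\gamma}$. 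Then $\|e^{-w}f\|_s^{k_0,\gamma} \lesssim_s \|f\|_s^{k_0,\gamma} + \|w\|_s^{k_0,\gamma}\|f\|_{s_0}^{k_0,\gamma}$ by the product estimate \eqref{p1-pr}, and one more application of Lemma \ref{lemma:WD} gives $\|v\|_s^{k_0,\gamma} \lesssim \gamma^{-1}\|e^{-w}f\|_{s+\mu}^{k_0,\gamma}$. Finally $h = e^w v$ (or its zero-average normalization) is estimated once more by \eqref{p1-pr}, and collecting all the shifts — each of the three applications of a composition or Diophantine estimate contributes a bounded shift, summing to at most $\varsigma$ — yields \eqref{1202.14}.

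\textbf{Main obstacle.} The principal difficulty is not any single estimate but the careful bookkeeping of parity: one must verify at each step that the parities $\even(\ph)/\odd(\ph)$ and $\even(x)/\odd(x)$ propagate correctly through composition with the diffeomorphism $x+u(\ph,x)$, through $\ompaph^{-1}$ (which preserves parity in $\ph$ by the explicit formula \eqref{def:ompaph}), and through the exponential, so that $q$ has vanishing $\ph$-average (otherwise the homological equation $\ompaph w = q$ is not solvable) and so that the final $h$ lands in the correct parity class and can be normalized to zero $\ph$-average. The smallness threshold $\delta_0$ in \eqref{2202.1} must be chosen after fixing all the implicit constants from Lemmata \ref{Moser norme pesate}, \ref{lemma:WD} at the index $s_0+\varsigma$, which is a routine but delicate ordering-of-quantifiers issue.
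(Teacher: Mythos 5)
Your overall strategy -- variation of constants with an integrating factor built from the Diophantine inverse of $q = V_x(\ph,x+u(\ph,x))$, using parity to see that $q$ is $\odd(\ph)\even(x)$ and hence has zero $\ph$-average -- is precisely the paper's, and your parity checks on $q$, $w$, $e^{\pm w}$, $v := (\ompaph)^{-1}(e^{-w}f)$ and $h$ are correct, as is the estimate scheme (composition, $(\ompaph)^{-1}$, Moser on $e^{\pm w}$, product rule, $(\ompaph)^{-1}$ again, with $\varsigma$ chosen to absorb the accumulated shifts using \eqref{pr-comp1}, \eqref{Diophantine-1}, \eqref{0811.10}, \eqref{p1-pr}, \eqref{2202.2}).

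The gap is in producing the solution with zero $\ph$-average. You correctly notice that $h_0 := e^w v$ is a solution, that $\langle h_0 \rangle_\ph$ need not vanish, and that subtracting $\langle h_0\rangle_\ph(x)$ destroys the equation because $q\langle h_0\rangle_\ph \neq 0$; but the ``clean fix'' you then describe never actually produces a candidate, and your discussion of the kernel of $F'(u)$ on $\ph$-independent functions is a red herring: the kernel of $F'(u)$ is the family $g(x)\,e^{w(\ph,x)}$ with $g$ an arbitrary function of $x$ alone, which is generically not $\ph$-independent. The missing step is elementary: the general solution of $\ompaph v' = e^{-w}f$ is $v' = v + g(x)$ with $g$ free, so the general solution of $F'(u)h=f$ is $h = (v+g)\,e^{w}$; imposing $\langle h\rangle_\ph=0$ gives
\begin{equation*}
g(x) \;=\; -\,\frac{\langle v\,e^{w}\rangle_\ph(x)}{\langle e^{w}\rangle_\ph(x)}\,,
\end{equation*}
well defined since $e^{w}>0$ pointwise and hence $\langle e^{w}\rangle_\ph>0$. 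Since $v\,e^w$ is $\even(\ph)\odd(x)$ and $e^w$ is $\even(\ph)\even(x)$, one gets $g = \odd(x)$, so $h=(v+g)e^w$ remains $\even(\ph)\odd(x)$. This construction simultaneously gives uniqueness: only one choice of $g$ enforces the zero average. In particular, the ``energy/Fourier estimate using $\|q\|$ small'' you invoke for uniqueness is both unnecessary and misleading -- uniqueness has nothing to do with the size of $q$, only with the strict positivity of $\langle e^{w}\rangle_\ph$.
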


\begin{proof}
Given $f$, we have to solve the linear equation 
$\ompaph h - q h = f$, 
where $q$ is the function defined  in \eqref{1202.16}. 
From the parity of $u,V$ it follows that $q$ is $\odd(\ph) \even(x)$.
By variation of constants, we look for solutions of the form $h = w e^v$, and we find (recalling  \eqref{def:ompaph})
$$
v := (\ompaph)^{-1} q, \quad 
w := w_0 + g, \quad
w_0:= (\ompaph)^{-1} (e^{-v} f), \quad
g=g(x) := - \frac{\langle w_0 e^v \rangle_\ph}{\langle e^v \rangle_\ph} \,.
$$
This choice of $g$, and hence of $w$, is the only one matching the zero average requirement \eqref{trasporto media zero}; this gives uniqueness of the solution.
Moreover $v = \even(\ph) \even(x) \, , w_0 = \even(\ph)\odd(x) \, ,  g = \odd(x)$,  
whence 
$h $ is $ \even(\ph) \odd(x)$. 
Using \eqref{p1-pr}, \eqref{pr-comp1}, \eqref{Diophantine-1}, \eqref{0811.10}, \eqref{2202.1}, and \eqref{2202.2} the proof of \eqref{1202.14} is complete.
\end{proof}

We now prove the existence of a solution of equation \eqref{1202.12}.

\begin{theorem}{\bf (Solution of the quasi-periodic characteristic equation \eqref{1202.12})}
\label{thm:nonlinear transport}
Let $\varsigma$ be the constant defined in Lemma \ref{lemma:inv transport}, 
and let $s_2 := 2s_0 + 3\varsigma + 1$, $p := 3\varsigma + 2$.
Assume that $V $ is $ \odd(\ph) \odd(x)$. 
There exist $\d \in (0,1), C > 0$ depending on $\varsigma,s_0$ such that, 
for all $\omega \in \mathtt{DC}(2 \gamma, \tau)$, 
if $V \in \Lip(k_0,\mathtt{DC}(2 \gamma, \tau), s_2 + p, \g)$ 
satisfies 
\begin{equation} \label{0811.2}
\g^{-1} \| V \|_{s_2 + p}^{k_0,\g} \leq \d,
\end{equation}
then there exists a solution $u \in \Lip(k_0,\mathtt{DC}(2 \gamma, \tau), s_2, \g)$ 
of $F(u) = 0$. 
The solution $u$ is $\even(\ph)\odd(x)$, it has zero average in $\ph$, and satisfies
\begin{equation} \label{0811.1}
\| u \|_{s_2}^{k_0,\g} 
\leq C \g^{-1} \| V \|_{s_2 + p}^{k_0,\g}.
\end{equation}
If, in addition, $V \in \Lip(k_0,\mathtt{DC}(2 \gamma, \tau), s + p, \g)$ for $s > s_2$, 
then $u \in \Lip(k_0,\mathtt{DC}(2 \gamma, \tau), s, \g)$, with 
\begin{equation} \label{0811.3}
\| u \|_s^{k_0,\g} \leq C_s \g^{-1} \| V \|_{s + p}^{k_0,\g} 
\end{equation}
for some constant $C_s$ depending on $s,\varsigma,s_0$, independent of $V,\g$.
\end{theorem}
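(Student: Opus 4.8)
The plan is to prove Theorem \ref{thm:nonlinear transport} by applying the Nash-Moser-H\"ormander implicit function theorem (Theorem \ref{thm:NMH} of Appendix \ref{sec:NMH}) to the nonlinear operator $F$ defined in \eqref{1202.12}, using Lemma \ref{lemma:inv transport} as the crucial right-inverse estimate for the linearized operator. First I would set up the scale of Banach spaces: take $E_s := \Lip(k_0, \mathtt{DC}(2\gamma,\tau), s, \gamma)$ as the ambient family of Whitney-Sobolev spaces (restricted to the parity class $\even(\ph)\odd(x)$, respectively $\odd(\ph)\odd(x)$ for the target), equipped with smoothing operators inherited from $\Pi_N, \Pi_N^\perp$ via Lemma \ref{lemma:smoothing}. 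The map $F$ sends a neighbourhood of $0$ in the domain to the target, with $F(0) = -V(\ph,x)$, whose norm is controlled by the hypothesis \eqref{0811.2}; the smallness of $\gamma^{-1}\|V\|_{s_2+p}^{k_0,\gamma}$ plays the role of the smallness of $F(0)$ required by the abstract theorem.

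Next I would verify the hypotheses of the Nash-Moser-H\"ormander scheme. The quadratic (tame) estimates on $F$ and its second derivative follow directly from the composition and Moser estimates \eqref{p1-pr}, \eqref{pr-comp1}, \eqref{0811.10}: indeed $F(u) = \ompaph u - \mathtt{V}(u)$ where $\mathtt{V}(u)(\ph,x) = V(\ph, x+u(\ph,x))$ is a nonlinear composition operator, and $F'(u)h = \ompaph h - V_x(\ph,x+u)h$, $F''(u)[h,h] = -V_{xx}(\ph,x+u)h^2$, all of which satisfy the standard tame bounds with a fixed loss of derivatives in $x$ (one derivative lands on $V$ each time) and no loss in $\ph$. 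The key structural input is Lemma \ref{lemma:inv transport}: for $u$ small in $\|\cdot\|_{s_0+\varsigma}^{k_0,\gamma}$, the operator $F'(u)$ admits a right inverse $F'(u)^{-1}$ mapping the parity class $\odd(\ph)\odd(x)$ into $\even(\ph)\odd(x)$, satisfying the tame estimate \eqref{1202.14} with loss $\varsigma$ and one extra power of $\gamma^{-1}$. One must check that the approximate solutions generated along the Nash-Moser iteration stay within the ball \eqref{2202.1} where this inversion is valid; this is guaranteed by \eqref{0811.2} with $\delta$ chosen small enough, since the iterates are $O(\gamma^{-1}\|V\|)$-close to $0$.

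Then the abstract theorem yields a solution $u \in \Lip(k_0, \mathtt{DC}(2\gamma,\tau), s_2, \gamma)$ with $\|u\|_{s_2}^{k_0,\gamma} \lesssim \gamma^{-1}\|V\|_{s_2+p}^{k_0,\gamma}$, which is \eqref{0811.1}; the constants $s_2 = 2s_0 + 3\varsigma + 1$ and $p = 3\varsigma + 2$ are precisely the ones dictated by the loss $\varsigma$ and the structure of the Nash-Moser-H\"ormander statement (roughly, one needs the base regularity above twice the loss and the data regularity with an extra margin). The parity properties $u = \even(\ph)\odd(x)$ and $\langle u\rangle_\ph = 0$ are preserved along the iteration because each Newton step solves $F'(u_n)^{-1}$ applied to a function in the correct parity class with zero $\ph$-average (Lemma \ref{lemma:inv transport} produces exactly such corrections), and the smoothing operators $\Pi_N$ commute with the parity projections and with $\langle \cdot \rangle_\ph$. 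Finally, the higher regularity statement \eqref{0811.3} follows from the standard ``regularity propagation'' part of the Nash-Moser-H\"ormander theorem: if $V$ is more regular, so is the solution, with the linear-in-$\|V\|$ bound, because the iteration scheme is tame in all indices $s \geq s_2$.

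The main obstacle I expect is not the abstract machinery but the bookkeeping of the parity classes through the scheme and the precise matching of the loss-of-derivatives constants: one must ensure that the right inverse from Lemma \ref{lemma:inv transport} genuinely preserves the subspace $\{u = \even(\ph)\odd(x), \langle u\rangle_\ph = 0\}$ and that this subspace is compatible with the smoothing operators and with the norms used in Theorem \ref{thm:NMH}, so that the Nash-Moser iterates never leave it; and one must track that $\varsigma = 3k_0 + 2\tau(k_0+1)+2$ feeds correctly into $s_2$ and $p$. A secondary subtlety is the Diophantine restriction to $\mathtt{DC}(2\gamma,\tau)$ (rather than $\mathtt{DC}(\gamma,\tau)$): this margin is what allows the Whitney-differentiable inversion \eqref{Diophantine-1} to be applied at each step to functions whose $\ph$-frequency support may slightly exceed the naive cutoff, and one should confirm it is respected throughout. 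Once these points are handled, the conjugation identity \eqref{goal:strait} follows by setting $\breve\beta := u$ and recovering $\beta$ from \eqref{1202.10} via the inverse-diffeomorphism estimate \eqref{p1-diffeo-inv}.
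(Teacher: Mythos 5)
Your proposal is correct and follows essentially the same route as the paper's proof: the paper likewise applies Theorem \ref{thm:NMH} with $\Phi(u) = \ompaph u - \mV(u)$, takes $E_a$ and $F_b$ to be the parity- and zero-average-constrained Whitney-Sobolev spaces $\Lip(k_0,\mathtt{DC}(2\gamma,\tau),2s_0+a,\gamma)$ with dyadic Fourier truncations as smoothing operators, verifies the tame second-derivative estimates via \eqref{pr-comp1}, \eqref{0811.10}, uses Lemma \ref{lemma:inv transport} for the tame right inverse, and fixes $\mu=1$, $a_1=\varsigma$, $\alpha = 3\varsigma+1$, $\beta = 4\varsigma+1$, $a_2 = 5\varsigma+3$ to satisfy \eqref{ineq 2016}, which yields exactly $s_2 = 2s_0+\alpha$ and $s_2+p = 2s_0+a_2+\varsigma$. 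The only small detail worth making explicit, which you allude to, is that the paper solves $\Phi(u)=\Phi(0)+g$ with the data $g := V$ (so the smallness condition is on $\|g\|_{F_\beta}$ rather than literally on $F(0)$, though the two are the same here).
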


\begin{proof}
We apply Theorem \ref{thm:NMH} of Appendix \ref{sec:NMH}. 
For $a,b \geq 0$, we define
\begin{alignat}{2} \label{0411.1}
E_a & := \big\{ u \in \Lip(k_0,\mathtt{DC}(2 \gamma, \tau), 2s_0 + a, \g) : 
u = \even(\ph) \odd(x), \ \langle u \rangle_\ph (x) = 0 \big\}, \quad & 
\| u \|_{E_a} & := \| u \|_{2s_0 + a}^{k_0,\g} ,
\\
F_b & := \big\{ g \in \Lip(k_0,\mathtt{DC}(2 \gamma, \tau), 2s_0 + b, \g) : 
g = \odd(\ph) \odd(x) \big\}, \quad &
\| g \|_{F_b} & := \| g \|_{2s_0 + b}^{k_0,\g}
\label{0411.2}
\end{alignat} 
($s_0$ is in the last term of \eqref{1202.14}, 
while $2s_0$ appears in the composition estimate \eqref{pr-comp1}).
We consider Fourier truncations at powers of $2$ as smoothing operators, namely
\begin{equation} \label{0411.3}
S_n \ : \ u(\ph,x) = \sum_{(\ell, j) \in \Z^{\nu+1}}  u_{\ell j} e^{\ii (\ell \cdot \ph + j x)} 
\quad \mapsto \quad 
(S_n u)(\ph,x) := \sum_{\langle \ell, j \rangle \leq 2^n}  u_{\ell j} e^{\ii (\ell \cdot \ph + j x)}
\end{equation}
on both spaces $E_a$ and $F_b$. 
Hence both $E_a$ and $F_b$ satisfy \eqref{S0}-\eqref{S4}, and the operators $R_n$ defined in \eqref{new.24} give the dyadic decomposition $2^n < \langle \ell, j \rangle \leq 2^{n+1}$.  
Since $S_n$ in \eqref{0411.3} are ``crude'' Fourier truncations, 
\eqref{2705.4} holds with ``$=$'' instead of ``$\leq$'' and $C=1$. 
As a consequence, every $g \in F_\b$ satisfies the first inequality in \eqref{2705.1} with $A=1$ 
(it becomes, in fact, an equality), and, similarly, if $g \in F_{\b+c}$ then \eqref{0406.1} holds with $A_c = 1$ (and ``$=$''). 

We denote by $\mV$ the composition operator $\mV(u)(\ph,x) := V(\ph, x + u(\ph,x))$,  
and define $\Phi(u) := \ompaph u - \mV(u)$, 
namely we take the nonlinear operator $F$ in \eqref{1202.12} 
as the operator $\Phi$ of Theorem \ref{thm:NMH}. 
By Lemma \ref{lemma:LS norms}, if $\| u \|_{2s_0+1}^{k_0,\g} \leq \d_{\ref{lemma:LS norms}}$ 
(where we denote by $\d_{\ref{lemma:LS norms}}$ the constant $\d$ of Lemma \ref{lemma:LS norms}), 
then $\mV(u)$ satisfies \eqref{pr-comp1}, namely for all $s \geq s_0$
\begin{equation} \label{0611.1}
\| \mV(u) \|_s^{k_0,\g} 
\lesssim_s \| V \|_{s+k_0}^{k_0,\g} + \| u \|_s^{k_0,\g} \| V \|_{s_0 + k_0 + 1}^{k_0,\g} ,
\end{equation}
and its second derivative $\mV''(u)[v,w] = V_{xx}(\ph,x+u(\ph,x)) v w$ satisfies 
\begin{align} 
\| \mV''(u)[v,w] \|_s^{k_0,\g} 
& \lesssim_s \| V \|_{s_0+k_0+3}^{k_0,\g} 
\Big( \| v \|_{s}^{k_0,\g} \| w \|_{s_0}^{k_0,\g} + \| v \|_{s_0}^{k_0,\g} \| w \|_{s}^{k_0,\g} \Big)
\notag \\ & \quad \ \ 
+ \big\{ \| V \|_{s_0+k_0+3}^{k_0,\g} \| u \|_s^{k_0,\g} + \| V \|_{s+k_0+2}^{k_0,\g} \big\} 
\| v \|_{s_0}^{k_0,\g} \| w \|_{s_0}^{k_0,\g}.
\label{0611.2}
\end{align}
We fix $\mu, U$ of Theorem \ref{thm:NMH} as 
$\mu := 1$, $U := \{ u \in E_1 : \| u \|_{E_1} \leq \d_{\ref{lemma:LS norms}} \}$. 
Thus $\Phi$ maps $U \to F_0$ and $U \cap E_{a+\mu} \to F_a$ for all $a \in [0,a_2 - 1]$, 
provided that $\| V \|_{2s_0 + a_2 - 1 + k_0}^{k_0,\g} < \infty$ ($a_2$ will be fixed below in \eqref{2102.1}). 
Moreover, for all $a \in [0,a_2-1]$, $\Phi$ is of class $C^2(U \cap E_{a+\mu}, F_a)$
and it satisfies \eqref{Phi sec} with $a_0 := 0$, 
\begin{equation} \label{0711.1}
M_1(a) := C(a) \| V \|_{s_0 + k_0 + 3}^{k_0,\g}, \quad 
M_2(a) := M_1(a), \quad 
M_3(a) := C(a) \| V \|_{2 s_0 + k_0 + 2 + a}^{k_0,\g}.
\end{equation}
We fix $a_1, \d_1$ of Theorem \ref{thm:NMH} as 
$a_1 := \varsigma$, 
where $\varsigma = 3k_0 + 2\t(k_0+1) +2$ is the constant appearing in Lemma \ref{lemma:inv transport},
and $\d_1 := \frac12 \d_{\ref{lemma:inv transport}}$, 
where $\d_{\ref{lemma:inv transport}}$ is the constant $\d_0$ of Lemma \ref{lemma:inv transport}.
If $\g^{-1} \| V \|_{s_0 + \varsigma}^{k_0,\g} \leq \d_1$ and $\| v \|_{E_{a_1}} \leq \d_1$, 
then, by Lemma \ref{lemma:inv transport}, the right inverse $\Psi(v) := F'(v)^{-1}$ is well defined,
and it satisfies 
\begin{equation} \label{0711.2}
\| \Psi(v)g \|_{E_a} \leq L_1(a) \| g \|_{F_{a+\varsigma}} 
+ (L_2(a) \| v \|_{E_{a+\varsigma}} + L_3(a)) \| g \|_{F_0} 
\end{equation}
where
\begin{equation} \label{0711.3}
L_1(a) := C(a) \g^{-1}, \quad 
L_2(a) := C(a) \g^{-2} \| V \|_{s_0 + \varsigma}^{k_0, \g}, \quad 
L_3(a) := C(a) \g^{-2} \| V \|_{2s_0 + a + \varsigma}^{k_0, \g}.
\end{equation}
We fix $\a, \b, a_2$ of Theorem \ref{thm:NMH} as
\begin{equation}\label{2102.1}
\b := 4\varsigma+1, \quad \a := 3\varsigma + 1, \quad a_2 := 5\varsigma + 3,
\end{equation}
so that \eqref{ineq 2016} is satisfied. Bound \eqref{0711.2} implies \eqref{tame in NM} for all $a \in [a_1, a_2]$ 
provided that $\| V \|_{2s_0 + a_2 + \varsigma}^{k_0,\g} < \infty$. 

All the hypotheses of the first part of Theorem \ref{thm:NMH} are satisfied.  
As a consequence, there exists a constant $\d_{\ref{qui.02}}$ (given by \eqref{qui.02} with $A = 1$) 
such that, if $\| g \|_{F_\b} \leq \d_{\ref{qui.02}}$,
then the equation $\Phi(u) = \Phi(0) + g$ has a solution $u \in E_\a$, with bound \eqref{qui.01}.
In particular, the result applies to $g = V$, in which case the equation $\Phi(u) = \Phi(0) + g$ becomes $\Phi(u) = 0$. 
We have to verify the smallness condition $\| g \|_{F_\b} \leq \d_{\ref{qui.02}}$. Using \eqref{0711.1}, \eqref{0711.3}, \eqref{0811.2}, we verify that $\d_{\ref{qui.02}} \geq C \g$. Thus, the smallness condition $\| g \|_{F_\b} \leq \d_{\ref{qui.02}}$ is satisfied if $\| V \|_{2s_0 + a_2 + \varsigma}^{k_0,\g} \g^{-1}$ is smaller 
than some $\d$ depending on $\varsigma, s_0$. This is assumption \eqref{0811.2}, since $2s_0 + a_2 + \varsigma = s_2 + p$.
Then \eqref{qui.01}, recalling \eqref{2102.1}, gives $\| u \|_{s_2}^{k_0,\g} \leq C \g^{-1} \| V \|_{s_2 + \varsigma}^{k_0,\g}$,
which implies \eqref{0811.1} since $p \geq \varsigma$.

We finally prove estimate \eqref{0811.3}. Let $c>0$. If, in addition, $\| V \|_{2s_0 + a_2 + c + \varsigma}^{k_0,\g} < \infty$,
then all the assumptions of the second part of Theorem \ref{thm:NMH} are satisfied. 
By \eqref{0711.1}, \eqref{0711.3} and \eqref{0811.2}, we estimate the constants defined in \eqref{qui.04}-\eqref{qui.03} as
\[
\mG_1 \leq C_c \g^{-2} \| V \|_{2s_0 + a_2 + c + \varsigma}^{k_0, \gamma}, \quad 
\mG_2 \leq C_c \g^{-1}, \quad 
z \leq C_c
\]
for some constant $C_c$ depending on $c$. Bound \eqref{0211.10} implies \eqref{0811.3} 
with $s = s_2 + c$
(the highest norm of $V$ in \eqref{0811.3} does not come from the term $\| V \|_{F_{\b+c}}$ of \eqref{0211.10}, but from the factor $\mG_1$). 
The proof is complete.
\end{proof}

The next lemma deals with the dependence of the solution $u$ of \eqref{1202.12} on $V$ (actually it would be enough to estimate this Lipschitz dependence only in the ``low'' norm $s_1$ introduced in \eqref{vincolo s1 derivate i}). 

\begin{lemma} {\bf (Lipschitz dependence of $u$ on $V$)}
\label{lemma:uV Lip}
Let $\varsigma,s_2,p$ be as defined in Theorem \ref{thm:nonlinear transport}.
Let $V_1, V_2$ satisfy \eqref{0811.2}, 
and let $u_1, u_2$ be the solutions of 
$$
\omega \cdot \partial_\vphi u_i - V_i(\vphi, x + u_i(\vphi, x)) = 0, \quad i = 1,2,
$$
given by Theorem \ref{thm:nonlinear transport}. Then for all $s \geq s_2-\mu$ (where $\mu$ is the constant defined in \eqref{Diophantine-1})
\begin{equation} \label{0811.8}
\| u_1 - u_2 \|_s^{k_0,\g} 
\lesssim_s \g^{-1} \| V_1 - V_2 \|_{s + \mu + k_0}^{k_0,\g} 
+ \g^{-2} \max_{i=1,2} \| V_i \|_{s + 2\mu + p}^{k_0,\g} 
\| V_1 - V_2 \|_{s_2 + k_0}^{k_0,\g}.
\end{equation}
\end{lemma}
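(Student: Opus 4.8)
The plan is to estimate the difference $u_1 - u_2$ by viewing it as the solution of a linearized equation with a forcing term coming from the difference $V_1 - V_2$. First I would write $h := u_1 - u_2$ and subtract the two equations $\ompaph u_i = \mV_i(u_i)$, where $\mV_i(u)(\ph,x) := V_i(\ph, x + u(\ph,x))$. Using the mean value theorem in the $u$-variable, one gets
\be
\ompaph h - q(\ph,x) h = (V_1 - V_2)(\ph, x + u_2(\ph,x)) =: f
\ee
where $q(\ph,x) := \int_0^1 (V_1)_x(\ph, x + u_2 + t h)\, dt$. Up to replacing the exact coefficient $V_x(\ph, x + u_1(\ph,x))$ appearing in $F'(u_1)$ by this averaged coefficient $q$, this is precisely an equation of the form solved by Lemma \ref{lemma:inv transport}. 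One subtlety is that $h$ need not have zero average in $\ph$, so I would not apply Lemma \ref{lemma:inv transport} verbatim; rather I would re-run its proof (variation of constants: $h = w e^v$ with $v := (\ompaph)^{-1} q$, and then solve $\ompaph w = e^{-v} f$) observing that the compatibility condition is $\langle e^{-v} f\rangle_\ph = 0$ for the component with nonzero mean, which here holds automatically because subtracting the two equations and integrating in $\ph$ shows $\langle \ompaph h \rangle_\ph = 0 = \langle q h \rangle_\ph$ up to the forcing — more simply, since $h = u_1 - u_2$ with both $u_i$ of zero $\ph$-average, $h$ itself has zero $\ph$-average, so exactly the argument of Lemma \ref{lemma:inv transport} applies and gives uniqueness.

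Next I would carry out the tame estimates. The coefficient $q$ is controlled by composition estimates (Lemma \ref{lemma:LS norms}, in particular \eqref{pr-comp1}, and the Moser estimate \eqref{0811.10}) in terms of $\| V_1 \|_{s+k_0}^{k_0,\g}$ and $\| u_2 \|_{s}^{k_0,\g}$; the forcing $f = (V_1 - V_2)(\ph, x + u_2)$ is controlled by \eqref{pr-comp1} in terms of $\| V_1 - V_2 \|_{s+k_0}^{k_0,\g}$ plus $\| u_2 \|_{s}^{k_0,\g} \| V_1 - V_2 \|_{s_0+k_0}^{k_0,\g}$. Then $v = (\ompaph)^{-1} q$ costs a factor $\g^{-1}$ and a loss $\mu$ of derivatives by \eqref{Diophantine-1}, and similarly $w_0 = (\ompaph)^{-1}(e^{-v} f)$ costs $\g^{-1}$ and loss $\mu$. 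Assembling $h = (w_0 + g) e^v$ with $g = - \langle w_0 e^v\rangle_\ph / \langle e^v\rangle_\ph$ (here $g=0$ since $h$ has zero mean, but in any case the bound is the same) and using the algebra and interpolation estimates \eqref{p1-pr}, \eqref{2202.3}, \eqref{2202.2}, together with the smallness \eqref{2202.1}/\eqref{0811.2} to absorb $e^{\pm v} - 1$ factors, yields a bound of the form
\be
\| h \|_s^{k_0,\g} \lesssim_s \g^{-1} \| f \|_{s+\mu}^{k_0,\g} + \g^{-2}\big( \|V_1\|_{s+\mu+k_0}^{k_0,\g} + \|u_2\|_{s+\mu}^{k_0,\g} \big) \| f \|_{s_0}^{k_0,\g}.
\ee
Finally I would substitute the bound for $f$ and use \eqref{0811.3} for $u_2$, namely $\|u_2\|_{s+\mu}^{k_0,\g} \lesssim \g^{-1} \|V_2\|_{s+\mu+p}^{k_0,\g}$, to replace $u_2$-norms by $V_2$-norms; interpolating to put all the highest-order dependence onto a single factor $\|V_i\|_{s+2\mu+p}^{k_0,\g}$ and collecting the $\g$-powers gives \eqref{0811.8}. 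The constraint $s \geq s_2 - \mu$ is exactly what is needed so that the norms $\|V_i\|_{s+2\mu+p}^{k_0,\g} = \|V_i\|_{(s+\mu) + (\mu+p)}^{k_0,\g}$ make sense given the regularity $s_2 + p$ assumed of $V_i$ in \eqref{0811.2}.

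The main obstacle I expect is purely bookkeeping: tracking the precise loss-of-derivatives exponents through the chain $f \to v,w_0 \to h$ and through the substitution of \eqref{0811.3}, and then choosing the interpolation split (using \eqref{2202.2}) so that the final estimate has exactly the stated form with $\|V_i\|_{s+2\mu+p}$ as the only high-norm factor and $\|V_1-V_2\|_{s_2+k_0}$ in low norm. There is no conceptual difficulty — everything reduces to the linear solver of Lemma \ref{lemma:inv transport} applied with a slightly different (averaged) coefficient, plus the composition/interpolation toolkit of Section \ref{subsec:function spaces} — but one must be careful that the averaged coefficient $q$ still has the parity $\odd(\ph)\even(x)$ (which it does, since $V_1, u_2, h$ have the parities from Theorem \ref{thm:nonlinear transport}), so that $v$ is $\even(\ph)\even(x)$ and the construction closes within the correct subspaces.
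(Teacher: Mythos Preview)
Your approach is correct and is essentially the paper's own: subtract the two equations, apply the mean value theorem to obtain a linear equation $\ompaph h - a h = b$ with averaged coefficient $a = \int_0^1 (\pa_x V_1)(\ph, x + tu_1 + (1-t)u_2)\,dt$ and forcing $b = (V_1-V_2)(\ph, x+u_2)$, solve it by the variation-of-constants formula from Lemma~\ref{lemma:inv transport}, and estimate using \eqref{pr-comp1}, \eqref{Diophantine-1}, \eqref{0811.10}, \eqref{0811.1}, \eqref{0811.3}. Two small corrections: the term $g$ is not zero in general (the zero $\ph$-average of $h$ is precisely what determines $g$, not what makes it vanish), and the constraint $s \geq s_2-\mu$ is needed so that \eqref{0811.1}--\eqref{0811.3} apply to bound $\|u_2\|_{s+\mu}^{k_0,\g}$, not for the $V_i$-norms to ``make sense''.
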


\begin{proof}
The difference $h := u_1 -u_2$ is $\even(\ph) \odd(x)$, it has zero average in $\ph$ 
and it solves $\omega \cdot \partial_\vphi h - a h = b$, where 
$$
a (\vphi, x) := \int_0^1 (\partial_x V_1)(\vphi, x + t u_1 +(1 - t) u_2)\, d t \,, \quad 
b(\vphi, x) := (V_1 - V_2)(\vphi, x + u_2)\, .
$$
The function 
$a $ is $  \odd(\ph) \even(x)$ and $b $ is $ \odd(\ph) \odd(x)$. 
Then, by variation of constants and uniqueness, $h = w e^v$, where (as in Lemma \ref{lemma:inv transport})
$$
v := (\omega \cdot \partial_\vphi)^{- 1} a, \quad 
w := w_0 + g, \quad 
w_0 := (\omega \cdot \partial_\vphi)^{- 1} (e^{-v} b), \quad 
g = g(x) := - \frac{\langle w_0 e^v \rangle_\ph}{\langle e^v \rangle_\ph}\,.
$$
Then \eqref{0811.8} follows by \eqref{pr-comp1}, \eqref{0811.2}, \eqref{0811.1}, \eqref{0811.3}, \eqref{Diophantine-1} and \eqref{0811.10}.
%
%
\end{proof}

In Theorem \ref{thm:nonlinear transport}, 
for any $\lm = (\om,\h) \in \mathtt{DC}(2 \g,\t) \times [\h_1, \h_2]$ 
we have constructed a periodic  function $u = \breve \b$ that solves \eqref{1202.12}, namely 
the quasi-periodic characteristic  equation \eqref{1202.11}, 
so that the periodic function $\b$, defined by the inverse diffeomorphism in \eqref{1202.10}, solves 
the quasi-periodic transport equation \eqref{transport}.

By Theorem \ref{thm:WET} we define an extension $\mE_k(u) = \mE_k(\breve\b) =: \breve\b_{ext}$ 
(with $k+1 = k_0$) 
to the whole parameter space $\R^\nu \times [\h_1, \h_2]$.
By the linearity of the extension operator $\mE_k$ 
and by the norm equivalence \eqref{basso Stein}, 
the difference of the extended functions $\mE_k(u_1) - \mE_k(u_2)$ 
also satisfies the same estimate \eqref{0811.8} as $u_1 - u_2$. 

We define an extension $\b_{ext}$ of $\b$ to the whole space $\lm \in \R^\nu \times [\h_1, \h_2]$
by
\[ 
y = x + \b_{ext} (\ph,x) \quad \Leftrightarrow \quad 
x = y + \breve\b_{ext} (\ph,y) \quad \forall x,y \in \T, \ \ph \in \T^\nu
\] 
(note that, in general, $\b_{ext}$ and $\mE_k(\b)$ are two different extensions of $\b$ 
outside $\mathtt{DC}(\g,\t) \times [\h_1, \h_2]$).
The extended functions $\b_{ext}, \breve\b_{ext}$ induce the operators $\mB_{ext}, \mB_{ext}^{-1}$ by 
\[
(\mB_{ext} h) (\ph,x) := h(\ph, x + \b_{ext}(\ph,x)), \qquad 
(\mB_{ext}^{-1} h) (\ph,y) := h(\ph, y + \breve\b_{ext}(\ph,y)), \qquad 
\mB_{ext} \circ \mB_{ext}^{-1} = \mathrm{Id},
\]
and they are defined for $\lm \in \R^\nu \times [\h_1, \h_2]$. 

\emph{Notation:} for simplicity, in the sequel we will drop the subscript ``$ext$'' and we rename 
\begin{equation} \label{def ext}
\b_{ext} := \b, \quad 
\breve\b_{ext} := \breve\b, \quad  
\mB_{ext} := \mB, \quad  
\mB_{ext}^{-1} := \mB^{-1}.
\end{equation} 
We have the following estimates on the transformations ${\cal B}$ and ${\cal B}^{-1}$. 

\begin{lemma} 
\label{stime trasformazione cal B}
Let $\b, \breve\b$ be defined in \eqref{def ext}.
There exists $\s:= \s(\t,\nu,k_0)$ such that, if \eqref{ansatz I delta} holds with $\mu_0 \geq \sigma$, then 
for any $s \geq s_2$, 
\begin{equation}\label{beta beta tilde stima}
\| \beta\|_s^{k_0, \gamma}, \| \breve \beta\|_s^{k_0, \gamma} \lesssim_{s} \e \g^{-1} 
\big(1 + \| \fracchi_0 \|_{s + \sigma}^{k_0, \gamma} \big)\,.
\end{equation}
The operators $ A = {\cal B}^{\pm 1} - {\rm Id}, ({\cal B}^{\pm 1} - {\rm Id})^*$ satisfy the estimates  
\begin{equation}\label{est BAPQ (1)}
\| A h \|_s^{k_0, \gamma} \lesssim_{s} \e \g^{-1} \big( \| h \|_{s + k_0 + 1}^{k_0, \gamma} + \| \fracchi_0\|_{s + \sigma}^{k_0, \gamma} \| h \|_{s_0 + k_0 + 2}^{k_0, \gamma} \big) \quad\forall s\geq s_2 \, .
\end{equation}
Let $i_1, i_2$ be two given embedded tori. Then, denoting $\Delta_{12} \b = \b(i_2) - \b(i_1)$ and similarly for the other quantities, we have
\begin{align}
\| \Delta_{12} \beta \|_{s_1 }, \| \Delta_{12} \breve \beta \|_{s_1 } 
& \lesssim_{s_1} \e\g^{-1} \| i_1 - i_2 \|_{s_1 + \sigma} \,,
\label{stima differenza beta beta tilde}
\\
\| (\Delta_{12} A)[h]\|_{s_1} & \lesssim_{s_1} \e\g^{-1} \| i_1 - i_2\|_{s_1 + \sigma} \| h \|_{s_1 + 1}\,, \quad A \in \{ {\cal B}^{\pm 1}, ({\cal B}^{\pm 1})^* \}\,,
\label{Delta 12 cal B}
\end{align}
where $s_1$ is introduced in \eqref{vincolo s1 derivate i}.
\end{lemma}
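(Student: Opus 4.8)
The plan is to derive all four estimates of Lemma \ref{stime trasformazione cal B} directly from the solution of the quasi-periodic characteristic equation provided in Theorem \ref{thm:nonlinear transport} and the composition lemmas of Section \ref{subsec:function spaces}. First I would record that, by Lemma \ref{lemma:remainder mR0} (specifically \eqref{stima V B a c}), the velocity field $V$ coming from $\mL_0$ satisfies $\| V \|_s^{k_0,\gamma} \lesssim_s \e (1 + \| \fracchi_0 \|_{s+\sigma}^{k_0,\gamma})$, so in particular $\g^{-1} \| V \|_{s_2 + p}^{k_0,\g} \lesssim \e \g^{-1}(1 + \| \fracchi_0\|_{s_2 + p + \sigma}^{k_0,\gamma}) \leq \d$ is guaranteed by the ansatz \eqref{ansatz I delta} (with $\mu_0$ taken large enough, $\mu_0 \geq \sigma$) and the smallness assumption $\e \g^{-\kappa} \ll 1$. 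Thus Theorem \ref{thm:nonlinear transport} applies and yields $\breve \b = u$ solving \eqref{1202.12}, which is $\even(\ph)\odd(x)$ with zero average in $\ph$, and by \eqref{0811.3} satisfies $\| \breve \b \|_s^{k_0,\g} \lesssim_s \g^{-1} \| V \|_{s+p}^{k_0,\g} \lesssim_s \e \g^{-1}(1 + \| \fracchi_0\|_{s+p+\sigma}^{k_0,\gamma})$ for all $s \geq s_2$. This is the bound \eqref{beta beta tilde stima} for $\breve\b$ (after renaming the loss constant); the same bound for $\b$ follows from the inverse-diffeomorphism estimate \eqref{p1-diffeo-inv} in Lemma \ref{lemma:LS norms}, since $\| \breve \b\|_{2s_0 + k_0 + 2}^{k_0,\g}$ is small by the ansatz, so $\| \b \|_s^{k_0,\g} \lesssim_s \| \breve\b\|_{s+k_0+1}^{k_0,\g}$.

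Next, for the operator estimate \eqref{est BAPQ (1)}: the operators $\mB^{\pm 1} - \Id$ are, by definition, of the form $h \mapsto h(\ph, x + p(\ph,x)) - h(\ph,x)$ with $p \in \{\b, \breve\b\}$, and a telescoping argument (write $\mB h - h = \int_0^1 p\, (\pa_x h)(\ph, x + t p)\, dt$, or simply apply the composition tame estimate \eqref{pr-comp1} to $\mB$ and subtract) gives $\| (\mB^{\pm 1} - \Id) h\|_s^{k_0,\g} \lesssim_s \| p \|_{s_0 + k_0 + 1}^{k_0,\g} \| h\|_{s+k_0+1}^{k_0,\g} + \| p \|_{s+k_0+1}^{k_0,\g}\| h\|_{s_0+k_0+2}^{k_0,\g}$. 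Inserting \eqref{beta beta tilde stima} and using $\| p \|_{s_0 + k_0 +1}^{k_0,\g} \lesssim \e\g^{-1}$ (small by the ansatz) yields \eqref{est BAPQ (1)} for $\mB^{\pm 1} - \Id$. For the adjoints $(\mB^{\pm 1} - \Id)^*$ one uses the standard fact that the $L^2_x$-adjoint of a composition operator $h \mapsto h(x + p(\ph,x))$ is again a composition operator against the inverse diffeomorphism multiplied by the Jacobian $1 + \pa_y q$, where $x = y + q(\ph,y)$ is the inverse of $y = x + p(\ph,x)$; this is exactly the mechanism in Lemma \ref{lemma cio}. Hence $(\mB^{\pm 1})^* = \Op(\text{multiplication by } 1 + \pa_y q) \circ (\text{composition against } q)$, and the same telescoping/composition estimates, together with \eqref{p1-diffeo-inv} to control $q$ in terms of $p$, give \eqref{est BAPQ (1)} for the adjoints as well.

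Finally, the Lipschitz-in-$i$ estimates \eqref{stima differenza beta beta tilde}--\eqref{Delta 12 cal B}. For \eqref{stima differenza beta beta tilde}, I would apply Lemma \ref{lemma:uV Lip} to $V_1 = V(i_1)$, $V_2 = V(i_2)$: by \eqref{stima derivate i primo step} one has $\| \Delta_{12} V \|_{s_1 + \mu + k_0} \lesssim \e \| i_1 - i_2\|_{s_1 + \sigma}$ and $\| V_i\|_{s_1 + 2\mu + p} \lesssim \e$ (small), so \eqref{0811.8} at the low norm $s = s_1$ gives $\| \Delta_{12} \breve\b\|_{s_1}^{k_0,\g} \lesssim \g^{-1}\| \Delta_{12} V\|_{s_1 + \mu + k_0} + \g^{-2}\e \| \Delta_{12} V\|_{s_2 + k_0}$; absorbing the $\e$ and enlarging $\sigma$ to cover the index $s_2 + k_0$ (which, by the constraint \eqref{vincolo s1 derivate i} and the freedom in choosing $\mu_0$, is dominated by $s_1 + \sigma$) gives $\| \Delta_{12}\breve\b\|_{s_1} \lesssim \e\g^{-1}\| i_1 - i_2\|_{s_1 + \sigma}$. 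The bound for $\Delta_{12}\b$ then follows by differentiating the inverse-function relation \eqref{1202.10} and using \eqref{beta beta tilde stima}, \eqref{stima differenza beta beta tilde} for $\breve\b$. For \eqref{Delta 12 cal B} one writes $\Delta_{12}(\mB^{\pm 1})[h]$ as an integral over the interpolating path of diffeomorphisms, obtaining an extra factor $(\pa_x h)(\ph, x + \cdots)$ times $\Delta_{12} p$, and estimates it at the norm $s_1$ using the tame product and composition estimates; the adjoint case is handled through the Jacobian-times-composition structure as above, with the help of Lemma \ref{lemma cio} for the $\Delta_{12}$ of the kernel. I expect the main technical nuisance to be bookkeeping of the loss-of-derivatives constants: one must verify that the various Sobolev indices $s_2 + k_0$, $s + 2\mu + p$, etc., arising from Theorem \ref{thm:nonlinear transport} and Lemma \ref{lemma:uV Lip} can all be absorbed into a single $\sigma = \sigma(\tau,\nu,k_0)$ with $\mu_0 \geq \sigma$ in \eqref{ansatz I delta}, which is routine but requires care; the genuinely substantive input — existence and tame/Lipschitz bounds for the straightening — is already packaged in Theorems \ref{thm:nonlinear transport} and Lemma \ref{lemma:uV Lip}.
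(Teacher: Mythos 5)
Your proposal follows essentially the same route as the paper's proof: estimate $\breve\b$ via \eqref{0811.3} of Theorem \ref{thm:nonlinear transport} together with \eqref{stima V B a c}, pass to $\b$ via \eqref{p1-diffeo-inv}, obtain \eqref{est BAPQ (1)} from the telescoping identity $(\mB - \Id)h = \b\int_0^1 (\pa_x h)(\ph,x+\tau\b)\,d\tau$ combined with \eqref{pr-comp1} and \eqref{p1-pr}, handle the adjoints by the Jacobian-times-composition formula, and deduce \eqref{stima differenza beta beta tilde}--\eqref{Delta 12 cal B} from Lemma \ref{lemma:uV Lip} and \eqref{stima derivate i primo step}. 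Your observation about the absorbing of the fixed Sobolev indices (such as $s_2 + k_0$ from \eqref{0811.8}) into a uniform loss $\sigma$ is the correct bookkeeping, and you also correctly include the derivative $\pa_y\breve\b$ in the Jacobian factor of $\mB^*$, which is the intended reading of the paper's formula.
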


\begin{proof}
Bound \eqref{beta beta tilde stima} for $\breve \beta$ follows, recalling that $\breve \beta = u$,
by  \eqref{0811.3} and  \eqref{stima V B a c}.   
Estimate \eqref{beta beta tilde stima} 
for $\beta$ follows by that for $\breve \beta$, applying \eqref{p1-diffeo-inv}.
We now prove estimate \eqref{est BAPQ (1)} for ${\cal B} - {\rm Id}$. We have 
$$
({\cal B} - {\rm Id}) h = \beta\, \int_0^1{\cal B}_\tau [h_x]\, d \tau\,, \qquad 
{\cal B}_\tau [f](\vphi, x) : = f(\vphi, x + \tau \beta(\vphi, x))\,.
$$
Then \eqref{est BAPQ (1)} follows by applying \eqref{pr-comp1} to the operator ${\cal B}_\tau$, using the estimates on $\beta$, ansatz \eqref{ansatz I delta} and  \eqref{p1-pr}. 
The estimate for ${\cal B}^{- 1} - \mathrm{Id}$ is obtained similarly. 
The estimate on the adjoint operators follows because 
$$
{\cal B}^*  h(\vphi, y) = (1 + \breve \beta(\vphi, y)) h(\vphi, y + \breve \beta(\vphi, y)), \quad ({\cal B}^{- 1})^* h(\vphi, x) =   (1 +  \beta(\vphi, x)) h(\vphi, x +  \beta(\vphi, x))\,.
$$
Estimates  \eqref{stima differenza beta beta tilde}, \eqref{Delta 12 cal B} 
follow by Lemma \ref{lemma:uV Lip}, and by \eqref{stima V B a c}-\eqref{stima derivate i primo step}. 
\end{proof}

We now conjugate the whole operator $\mL_0$ in \eqref{mZ mL0} by the diffeomorphism $ {\cal B} $.

\begin{lemma} \label{lem:stime coefficienti cal L1}
Let $\b, \breve\b, \mB, \mB^{-1}$ be defined in \eqref{def ext}.
For all $\lm \in \mathtt{DC}(\g,\t) \times [\h_1, \h_2]$, the transformation $\mB$ 
conjugates the operator $\mL_0$ defined in \eqref{mZ mL0} to 
\begin{align}
\mL_1 & := \mB^{-1} \mL_0 \mB
= \ompaph + \begin{pmatrix} 
  a_1 &  - a_2 \pa_y \mH T_\h + \mR_1 \\ 
a_3 & 0 
\end{pmatrix},
\label{mL1}
\\ 
& \quad T_\h  := \tanh( \h |D_y|) := \Op\big( \tanh(\h \chi(\xi) |\xi|) \big),
\label{def Th}
\end{align}
where $ a_1, a_2, a_3 $ are the functions 
\be \label{defa2a3}
a_1(\ph,y)  := (\mB^{-1} V_x)(\ph,y), \quad 
a_2(\ph,y) := 1 + (\mB^{-1} \b_x)(\ph,y) \, , \quad 
a_3(\ph,y)  := (\mB^{-1} a)(\ph,y),
\ee
and $\mR_1$ is a pseudo-differential operator of order $OPS^{-\infty}$. 
Formula \eqref{defa2a3} defines the functions $a_1, a_2, a_3$ on the whole parameter space 
$\R^\nu \times [\h_1, \h_2]$. 
The operator $\mR_1$ admits an extension to $\R^\nu \times [\h_1, \h_2]$ as well, 
which we also denote by $\mR_1$. 
The real valued functions $\b, a_1, a_2, a_3$ have parity
\begin{equation} \label{rem:eps a1 b1 c1}
\beta = \even(\ph)\odd(x); \qquad
a_1 = \odd(\ph)\even(y); \qquad 
a_2, a_3 = \even(\ph)\even(y).
\end{equation}
There exists $\sigma = \sigma (\tau, \nu, k_0) > 0$ such that for any $m, \alpha \geq 0$, assuming \eqref{ansatz I delta} with $\mu_0 \geq \sigma + m + \alpha$, for any $s \geq s_0$, 
on $\R^\nu \times [\h_1, \h_2]$ the following estimates hold:
\begin{align}
& \| a_1 \|_{s}^{k_0, \gamma} + \| a_2 - 1 \|_s^{k_0, \gamma} + \| a_3 - 1  \|_s^{k_0, \gamma}  
\lesssim_s \e \gamma^{-1}  \big(1 + \| \fracchi_0 \|_{s+\s}^{k_0, \gamma} \big), 
\label{stime coefficienti cal L1}\\
& \norma \mR_1  \norma_{- m,s,\alpha}^{k_0, \gamma} \lesssim_{m,s,\a} \e \gamma^{-1} 
\big(1 + \| \fracchi_0 \|_{s+\s + m + \alpha}^{k_0, \gamma} \big) \, . \label{stima resto cal L1}
\end{align}
Finally, given   two tori  $i_1, i_2 $, we have  
\begin{align}
& \| \Delta_{12} a_1 \|_{s_1} + \| \Delta_{12}a_2  \|_{s_1} + \| \Delta_{12} a_3   \|_{s_1}  
\lesssim_{s_1} \e \gamma^{-1} \| \Delta_{12} i\|_{s_1 + \sigma} \, , \label{stime coefficienti cal L1 Delta}\\
& \norma \Delta_{12} \mR_1  \norma_{- m,s_1,\alpha} \lesssim_{m,s_1,\a} \e \gamma^{-1} \| \Delta_{12} i\|_{s_1 + \sigma + m + \alpha} \, . 
\label{stima resto cal L1 Delta}
\end{align}
\end{lemma}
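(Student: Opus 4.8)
\textbf{Proof strategy for Lemma \ref{lem:stime coefficienti cal L1}.}
The plan is to conjugate the operator $\mL_0$ in \eqref{mZ mL0-new} by $\mB$ block by block, handling separately the diagonal transport part, the off-diagonal Dirichlet--Neumann term, and the lower order coefficients, then to collect all the resulting estimates from the composition lemmas already established.

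First I would treat the two diagonal transport operators $V\pa_x$ appearing in \eqref{mZ mL0-new}. By the computation \eqref{2702.2}--\eqref{2702.3}, together with the fact that $\breve\b$ solves \eqref{1202.11} (equivalently $\b$ solves \eqref{transport}), the conjugation $\mB^{-1}(\ompaph + V\pa_x)\mB$ equals exactly $\ompaph$; this is the whole point of Section \ref{sec: change-transport equation}. For the lower order (zeroth order) terms, I would use that conjugation of a multiplication operator $\Op(f)$ by $\mB$ is again multiplication by $\mB^{-1}f$, which gives the formulas \eqref{defa2a3} for $a_1 = \mB^{-1}V_x$ and $a_3 = \mB^{-1}a$; the coefficient $a_2$ arises from the conjugation of $G(\eta)$, see below. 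The parity statements \eqref{rem:eps a1 b1 c1} follow from the parities of $V, a, \b$ recorded in Section \ref{sez:Alinhac} and in Theorem \ref{thm:nonlinear transport} (namely $\b$ is $\even(\ph)\odd(x)$, $V$ is $\odd(\ph)\odd(x)$, hence $V_x$ is $\odd(\ph)\even(x)$, $a$ is $\even(\ph)\even(x)$), using that $\mB^{-1}$ is reversibility preserving and even when its generating function has the stated parity; I would quote the composition rules for even/reversible operators from Section \ref{sezione operatori reversibili e even}.

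The substantive step is the off-diagonal term: conjugating $-G(\eta) = -(|D|\tanh(\h|D|) + \mR_G)$ by $\mB$. Writing $G(\eta) = |D_x|T_\h + \mR_G$ with $\mR_G \in OPS^{-\infty}$ (from Appendix \ref{subDN}), I would first note that $|D_x| = \pa_x \mH$ (up to the smoothing projection $\pi_0$, absorbed into $\mR_1$), so $\mB^{-1}|D_x|\mB = \mB^{-1}\pa_x\mB \cdot \mB^{-1}\mH\mB + [\text{smoothing}]$. The conjugation $\mB^{-1}\pa_x\mB = (1+\b_x)\circ(\mB^{-1}\cdot)\pa_y$ produces the factor $a_2 = 1 + \mB^{-1}\b_x$, and the conjugation of $\mH$ differs from $\mH$ by an integral operator in $OPS^{-\infty}$ by Lemma \ref{coniugio Hilbert}; similarly $\mB^{-1}T_\h\mB - T_\h$ and $\mB^{-1}\mR_G\mB$ are in $OPS^{-\infty}$ by Lemma \ref{lemma cio} and Lemma \ref{lemma nucleo Fourier multiplier OPS - infty} applied to the Fourier multiplier $T_\h$. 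Collecting all these $OPS^{-\infty}$ contributions defines $\mR_1$. I expect the main obstacle here to be the bookkeeping: $T_\h = \tanh(\h|D|)$ is not itself smoothing, so one must either commute it past $\mB$ (which is cheap only because $T_\h - 1 \in OPS^{-\infty}$, since $\tanh(\h|\xi|) - 1$ decays exponentially) or keep $|D_x|T_\h$ together and argue that $\mB^{-1}|D_x|T_\h\mB - a_2\pa_y\mH T_\h$ is smoothing; the second route is cleaner and I would follow it, using that $|D_x|T_\h = \pa_x\mH T_\h$ up to $\pi_0$ and that all the discrepancies come from conjugating $\mH$ and from the exponentially small tail $T_\h - 1$.

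Finally, the quantitative estimates \eqref{stime coefficienti cal L1}, \eqref{stima resto cal L1} and their Lipschitz-in-$i$ counterparts \eqref{stime coefficienti cal L1 Delta}, \eqref{stima resto cal L1 Delta} follow mechanically: for the functions $a_1, a_2, a_3$ apply the composition estimate \eqref{pr-comp1} for $\mB^{-1}$ to $V_x$, $\b_x$, $a$, using the bounds \eqref{stima V B a c} on $V, a, B$, the bound \eqref{beta beta tilde stima} on $\b$, and the ansatz \eqref{ansatz I delta}; for $\mR_1$ apply Lemmata \ref{lem:Int}, \ref{lemma cio}, \ref{lem: commutator aH}, \ref{coniugio Hilbert}, \ref{lemma nucleo Fourier multiplier OPS - infty}, together with \eqref{est BAPQ (1)} and \eqref{stima resto cal L1 Delta}'s source estimates, to control the $\norma\cdot\norma_{-m,s,\alpha}^{k_0,\gamma}$ norm of each smoothing piece; the extension to $\R^\nu\times[\h_1,\h_2]$ is provided by the extension of $\b$ constructed after Theorem \ref{thm:nonlinear transport} (cf. \eqref{def ext}). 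The Lipschitz bounds \eqref{stime coefficienti cal L1 Delta}, \eqref{stima resto cal L1 Delta} are obtained the same way, replacing the tame estimates by their $\Delta_{12}$ versions \eqref{stima differenza beta beta tilde}, \eqref{Delta 12 cal B}, \eqref{stime coefficienti cal L1 Delta}'s inputs, and \eqref{stima resto cal L1 Delta}. The whole argument is, modulo this bookkeeping, identical to Lemma 6.6 of \cite{BertiMontalto}, the only new point being the presence of the finite-depth multiplier $T_\h$, which is harmless precisely because $T_\h - 1 \in OPS^{-\infty}$ uniformly in $\h \in [\h_1,\h_2]$.
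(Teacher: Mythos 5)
Your proposal follows essentially the same route as the paper's proof: exact cancellation of $V\pa_x$ by the transport change of variable, multiplication-operator conjugation giving $a_1, a_3$, the factorization $\mB^{-1}\pa_x\mH T_\h\mB = (\mB^{-1}\pa_x\mB)(\mB^{-1}\mH\mB)(\mB^{-1}T_\h\mB)$ together with the decomposition $T_\h = \mathrm{Id} + \Op(r_\h)$, $r_\h \in S^{-\infty}$, and Lemma \ref{coniugio Hilbert} to identify the $OPS^{-\infty}$ remainder. Two small points: your ``two routes'' dichotomy is illusory---both rely on the same fact $T_\h - \mathrm{Id}\in OPS^{-\infty}$, and the paper implements precisely your ``second route'' via \eqref{tangente iperbolica espansione} and \eqref{bla bla bla bla}; and the qualifier ``up to the smoothing projection $\pi_0$, absorbed into $\mR_1$'' is unnecessary, since $|D| = \pa_x\mH$ holds exactly on periodic functions (both sides vanish on constants), so no $\pi_0$ correction enters here.
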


\begin{proof} 
By \eqref{mZ mL0-new} and  \eqref{2702.2}-\eqref{transport} we have that
\begin{equation}\label{first-tra1}
\mL_1  := \mB^{-1} \mL_0 \mB 
= \om \cdot \pa_\vphi +
\begin{pmatrix}
a_1 & \ - \mB^{-1}  G(\eta) \mB \\ 
a_3 &  0
\end{pmatrix} 
\end{equation} 
where the functions $ a_1 $ and $ a_3 $ are defined in \eqref{defa2a3}. 
We now conjugate the Dirichlet-Neumann operator $ G(\eta )  $
 under the diffeomorphism $ \mB $. 
Following Proposition \ref{lemma dirichlet Neumann}, we write 
\be\label{expa:Geta}
G(\eta ) = |D_x| \tanh(\h |D_x|) + \mR_G = \pa_x \mH T_\h + \mR_G \, , \ \qquad  T_{\h} := \tanh(\h |D_x|) \, ,
\ee
where $ \mR_G $ is an integral operator in $ OPS^{-\infty } $.
We decompose
\begin{equation} \label{tangente iperbolica espansione} 
\tanh(\mathtt h |D_x|) = {\rm Id} + {\rm Op}(r_{\mathtt h}), \quad 
r_{\mathtt h}(\xi) := \, - \frac{2}{1 + e^{2 \mathtt h |\xi| \chi(\xi)}} \in S^{- \infty},
\end{equation}
and,
since $\mB^{-1} \, \pa_x \, \mB = a_2 \pa_y$ where the function $ a_2 $ is defined in \eqref{defa2a3}, we have
\begin{align}
\mB^{-1} \pa_x \mH T_\h \mB 
& = (\mB^{-1} \pa_x \mB) (\mB^{-1} \mH \mB) (\mB^{-1} T_\h \mB) = a_2 \pa_y \{ \mH + (\mB^{-1} \mH \mB - \mH) \} (\mB^{-1} T_\h \mB)
\nonumber\\ & 
= a_2 \pa_y \mH T_\h 
+ a_2 \pa_y \mH [\mB^{-1} {\rm Op}(r_\h) \mB - {\rm Op}(r_\h)]
+ a_2 \pa_y (\mB^{-1} \mH \mB - \mH) (\mB^{-1} T_\h \mB) \, . \label{bla bla bla bla}
\end{align}
Therefore by \eqref{expa:Geta}-\eqref{bla bla bla bla} we get 
\begin{equation} \label{def mR1}
- {\cal B}^{- 1} G(\eta) {\cal B}  = - a_2 \pa_y \mH T_\h  + {\cal R}_1\,,
\end{equation}
where  
$ {\cal R}_1 $ 
is the operator in $ OPS^{-\infty} $ defined by 
\be
\begin{aligned}
 {\cal R}_1 := & {\cal R}_1^{(1)} + {\cal R}_1^{(2)} + {\cal R}_1^{(3)}  \qquad
& {\cal R}_1^{(1)} := & - \mB^{-1} \mR_G \mB \, , \\ 
 {\cal R}_1^{(2)}  := & -  a_2 \pa_y \mH [\mB^{-1} {\rm Op}(r_\h) \mB - {\rm Op}(r_\h)] \, , \qquad
& {\cal R}_1^{(3)} := & - a_2 \pa_y (\mB^{-1} \mH \mB - \mH) \mB^{-1} T_\h \mB \, .
\label{2702.1}
\end{aligned}
\ee
Notice that $ \mB^{-1} \mR_G \mB $ and $ \mB^{-1} {\rm Op}(r_\h) \mB $ are in $OPS^{-\infty}$
since $ \mR_G $ and $ {\rm Op}(r_\h) $, defined in \eqref{expa:Geta} and in \eqref{tangente iperbolica espansione},  are in $OPS^{-\infty}$.
The operator  $\mB^{-1} \mH \mB - \mH$ is in $OPS^{-\infty}$  by Lemma \ref{coniugio Hilbert}.

In conclusion,  \eqref{first-tra1} and \eqref{def mR1} imply  \eqref{mL1}-\eqref{defa2a3}, for 
all $ \lambda $ in the Cantor set  $\mathtt{DC}(\g,\t) \times [\h_1, \h_2]$.
By formulas \eqref{2702.1}, $\mR_1$ is defined on the whole parameter space 
$\R^\nu \times [\h_1, \h_2]$. 

Estimates \eqref{stime coefficienti cal L1}, \eqref{stime coefficienti cal L1 Delta} 
for $a_1, a_2, a_3$ on $\R^\nu \times [\h_1, \h_2]$ 
follow by \eqref{stima V B a c}, \eqref{stima derivate i primo step} 
and  Lemma \ref{stime trasformazione cal B}. Estimates \eqref{stima resto cal L1}, \eqref{stima resto cal L1 Delta} follow applying Lemmata \ref{lemma cio} and \ref{coniugio Hilbert} and Proposition \ref{lemma dirichlet Neumann}, and by using Lemma \ref{stime trasformazione cal B}.
\qedhere
\end{proof}

\begin{remark} 
We stress that the conjugation 
identity \eqref{mL1} holds only on the Cantor set $\mathtt{DC}(\g,\t) \times [\h_1, \h_2]$.
It is technically convenient to consider the extension of $a_1, a_2, a_3, \mR_1$ 
to the whole parameter space $\R^\nu \times [\h_1, \h_2]$, 
in order to directly use the results of Section \ref{sec:pseudo}
expressed by means of classical derivatives with respect to the parameter $\lm$. 
Formulas \eqref{defa2a3} and \eqref{2702.1} 
define $a_1, a_2, a_3, \mR_1$ on the whole parameter space $\R^\nu \times [\h_1, \h_2]$. 
Note that the resulting extended operator $\mL_1$ in the right hand side of \eqref{mL1} 
is defined on $\R^\nu \times [\h_1, \h_2]$, 
and in general it is different from $\mB^{-1} \mL_0 \mB$ 
outside $\mathtt{DC}(\g,\t) \times [\h_1, \h_2]$. 
\end{remark}

In the sequel we rename in \eqref{mL1}-\eqref{rem:eps a1 b1 c1} the space variable $ y $ by $ x $.

\section{Change of the space variable}\label{sezione nuova cambio di variabile}

We consider a $\vphi$-independent diffeomorphism of the torus $\T$ of the form 
\begin{equation}\label{diffeo solo x nuovo}
y =  x + \alpha(x) \qquad {\rm with  \ inverse } \qquad x =  y + \breve \alpha(y)
\end{equation}
where $\alpha $ is a $ {\cal C}^\infty(\T_x)$ real valued function, independent of $ \vphi $, 
satisfying $ \| \alpha_x \|_{L^\infty} \leq 1/2$. We also make the following ansatz on 
$\alpha$ that will be verified when we choose it in Section \ref{sec:primo semi-FIO}, see formula \eqref{equazione omologica per alpha}: the function $\alpha$ is ${\rm odd}(x)$ and  $\alpha = \alpha(\lambda) =  \alpha(\lambda, i_0 (\lambda))$, $\lambda \in \R^{\nu + 1}$ is $k_0$ times differentiable with respect to the parameter $\lambda \in \R^{\nu + 1}$ with $\partial_\lambda^k \alpha \in {\cal C}^\infty(\T)$ for any $k \in \N^{\nu + 1}$, $|k| \leq k_0 $, and it satisfies the estimate 
\begin{equation}\label{stime alpha nuove ansatz}
\begin{aligned}
& \| \alpha\|_s^{k_0, \gamma} \lesssim_s \e \gamma^{- 1} 
\big(1 + \| \fracchi_0\|_{s + \sigma}^{k_0, \gamma} \big)\,, \   \forall s \geq s_0\,, \quad   \| \Delta_{12} \alpha\|_{s_1} \lesssim_{s_1} \e \gamma^{- 1} \| \Delta_{12} i \|_{s_1 + \sigma}\,,
\end{aligned}
\end{equation}
for some $\sigma = \sigma(k_0, \tau, \nu) > 0$. 
By \eqref{stime alpha nuove ansatz} and Lemma \ref{lemma:LS norms}, 
arguing as in the proof of Lemma \ref{stime trasformazione cal B}, one gets 
\begin{equation}\label{stime alpha breve nuovo dopo ansatz}
 \| \breve \alpha\|_s^{k_0, \gamma} \lesssim_s \e \gamma^{- 1}
\big(1 + \| \fracchi_0\|_{s + \sigma}^{k_0, \gamma} \big)\,, \  \forall s \geq s_0\,,  \quad
\| \Delta_{12} \breve \alpha\|_{s_1} \lesssim_{s_1} \e \gamma^{- 1} \| \Delta_{12} i \|_{s_1 + \sigma}\,,
\end{equation}
for some $\sigma = \sigma(k_0, \tau, \nu) > 0$. 
Furthermore, the function $\breve \alpha (y) $ is $\odd(y)$. 

 We conjugate the operator $ {\cal L}_1 $ in \eqref{mL1} by the composition operator
\begin{equation}\label{operatore cambio di variabile x nuovo}
(\mA u)(\ph,x) := u(\ph,x + \a(x)), \quad 
(\mA^{-1} u)(\ph,y) := u(\ph, y + \breve\a(y)) \, . 
\end{equation}
By \eqref{mL1}, using that the operator ${\cal A}$ is $\vphi$-independent, 
recalling expansion \eqref{tangente iperbolica espansione}
and arguing as in \eqref{bla bla bla bla} to compute the conjugation ${\cal A}^{- 1} \big( - a_2 \partial_x {\cal H} T_{\mathtt h}\big) {\cal A} $, one has 
\begin{equation}\label{nuovo cal L2 prima}
{\cal L}_2 := {\cal A}^{- 1} {\cal L}_1 {\cal A} = \omega \cdot \partial_\vphi + \begin{pmatrix}
a_4 & - a_5 \partial_y {\cal H} T_{\mathtt h} + {\cal R}_2 \\
a_6 & 0 
\end{pmatrix}\,,
\end{equation}
where  $ a_4, a_5, a_6 $ are the functions 
\begin{align}
& a_4 (\vphi, y):= ({\cal A}^{- 1} a_1)(\vphi, y) = a_1(\vphi, y + \breve \alpha(y))\,,  
\label{coefficienti cal L2 nuovo-a4} \\
&  a_5(\vphi, y) := \big( {\cal A}^{- 1} ( a_2 (1 + \alpha_x) ) \big)(\vphi, y) 
= \{ a_2(\vphi, x) (1 + \alpha_x(x)) \} |_{x = y + \breve \alpha(y)} \label{coefficienti cal L2 nuovo} \\
& a_6(\vphi, y) := ({\cal A}^{- 1} a_3)(\vphi, y) = a_3(\vphi, y + \breve \alpha(y)) \label{coefficienti cal L2 nuovo-a6} 
\end{align}
and $ {\cal R}_2 $  is the operator in $ OPS^{-\infty} $ given by
\be
{\cal R}_2 := - a_5 \pa_y \mH \big[ \mA^{-1} {\rm Op}(r_\h) \mA - {\rm Op}(r_\h) \big]
-  a_5 \pa_y (\mA^{-1} \mH \mA - \mH) (\mA^{-1} T_\h \mA) + {\cal A}^{- 1}{\cal R}_1 {\cal A}\,.
\label{coefficienti cal L2 nuovo-R2}
\ee

\begin{lemma}\label{stime trasformazione nuova cal A}
There exists a constant $\sigma = \sigma(k_0, \tau, \nu) > 0$ such that,
if \eqref{ansatz I delta} holds with $\mu_0 \geq \sigma$, then the following holds: the operators $A \in \{ {\cal A}^{\pm 1} - {\rm Id}, ({\cal A}^{\pm 1} - {\rm Id})^* \}$ are even and reversibility preserving and satisfy
\begin{equation}\label{stime nuovo cal A inv}
\begin{aligned}
& \| A h\|_s^{k_0, \gamma} \lesssim_s \e\gamma^{- 1} \big(  \| h \|_{s + k_0 + 1}^{k_0, \gamma} + \| \fracchi_0\|_{s + \sigma}^{k_0, \gamma} \| h \|_{s_0 + k_0 + 2}^{k_0, \gamma}\big)\,, \quad \forall s \geq s_0\,, \\
& \| (\Delta_{1 2} A) h \|_{s_1 } \lesssim_{s_1} \e \gamma^{- 1} \| \Delta_{12} i \|_{s_1 + \sigma} \| h \|_{s_1 + 1}\,.
\end{aligned}
\end{equation}
The real valued 
functions $a_4, a_5, a_6$ in \eqref{coefficienti cal L2 nuovo-a4}-\eqref{coefficienti cal L2 nuovo-a6} satisfy 
\be\label{parity a4 a5 a6}
a_4 = \odd(\ph)\even(y), \qquad 
a_5, a_6 = \even(\ph)\even(y) \, , 
\ee
and 
\begin{equation}\label{stime a4 a5 a6 nuovo}
\begin{aligned}
& \| a_4\|_s^{k_0, \gamma}\,, \| a_5 - 1\|_s^{k_0, \gamma}\,, \| a_6 - 1\|_s^{k_0, \gamma} \lesssim_s \e \gamma^{- 1} 
\big(1 + \| \fracchi_0\|_{s + \sigma}^{k_0, \gamma} \big) \\
& \| \Delta_{12} a_4\|_{s_1}\,, \| \Delta_{12} a_5\|_{s_1}\,, \| \Delta_{12} a_6\|_{s_1} \lesssim_{s_1} \e \gamma^{- 1} \| \Delta_{12} i\|_{s_1 + \sigma}\,.
\end{aligned}
\end{equation}  
The remainder ${\cal R}_2$ defined in \eqref{coefficienti cal L2 nuovo-R2} is an even and reversible pseudo-differential operator in $OPS^{- \infty}$. Moreover, for any $m, \alpha \geq 0$, and assuming \eqref{ansatz I delta} with $\sigma + m + \alpha \leq \mu_0$, the following estimates hold: 
\begin{equation}\label{stime cal R2 nuovo}
\begin{aligned}
& \norma {\cal R}_2 \norma_{- m, s, \alpha}^{k_0, \gamma} \lesssim_{m, s, \alpha} \e \gamma^{- 1} 
\big(1 + \| \fracchi_0 \|^{k_0, \gamma}_{s + \sigma + m + \alpha} \big)\,, \quad \forall s \geq s_0 \\
& \norma \Delta_{12} {\cal R}_2 \norma_{- m , s_1 , \alpha} \lesssim_{m, s_1, \alpha} \e \gamma^{- 1} \| \Delta_{12} i\|_{s_1 + \sigma + m + \alpha} \, . 
\end{aligned}
\end{equation}
\end{lemma}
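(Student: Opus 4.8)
\textbf{Plan of the proof of Lemma \ref{stime trasformazione nuova cal A}.}
The statement collects four groups of assertions: (i) algebraic properties (parity, reversibility) of $\mathcal A^{\pm1}$ and of the coefficients $a_4,a_5,a_6$; (ii) tame estimates for $\mathcal A^{\pm1}-\mathrm{Id}$ and their adjoints; (iii) tame estimates for $a_4,a_5,a_6$ (and their variations $\Delta_{12}$); (iv) the fact that $\mathcal R_2$ is an even and reversible $OPS^{-\infty}$ operator together with its $\norma\cdot\norma^{k_0,\gamma}_{-m,s,\alpha}$ bounds. Each of these is a direct consequence of the ansatz \eqref{stime alpha nuove ansatz}--\eqref{stime alpha breve nuovo dopo ansatz} on $\alpha,\breve\alpha$, the estimates on $a_1,a_2,a_3,\mathcal R_1$ from Lemma \ref{lem:stime coefficienti cal L1}, and the calculus results of Sections \ref{sub:integral-op} and \ref{AppendiceA}. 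The plan is therefore to treat the groups in this order, reducing everything to the composition and change-of-variable lemmas already available.

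\textbf{Step 1 (algebra).} First I would check the parities. Since $\alpha$ is $\odd(x)$ and $\vphi$-independent, the same computation as in Lemma \ref{even:pseudo} (item 4) and Remark \ref{rem:change} shows that the composition operator $\mathcal A$ in \eqref{operatore cambio di variabile x nuovo} is even; being $\vphi$-independent, it is trivially reversibility preserving. Hence by the composition rules recalled in Section \ref{sezione operatori reversibili e even}, $\mathcal L_2=\mathcal A^{-1}\mathcal L_1\mathcal A$ is again even and reversible, and the off-diagonal/diagonal parity structure forces \eqref{parity a4 a5 a6}: indeed $a_4=\mathcal A^{-1}a_1$ inherits $\odd(\ph)$ from $a_1$ (see \eqref{rem:eps a1 b1 c1}) and $\even(y)$ because $a_1$ is $\even$ in its argument and $\breve\alpha$ is $\odd(y)$; similarly $a_5,a_6$ are $\even(\ph)\even(y)$, using that $1+\alpha_x$ is $\even(x)$. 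That $\mathcal R_2$ is even and reversible then follows because it equals $\mathcal L_2-\omega\cdot\partial_\vphi-\mathrm{(explicit\ even,\ reversible\ matrix)}$, all pieces being even and reversible.

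\textbf{Step 2 (analytic estimates).} For \eqref{stime nuovo cal A inv} I would write, as in the proof of Lemma \ref{stime trasformazione cal B}, $(\mathcal A-\mathrm{Id})h=\alpha\int_0^1\mathcal A_\tau[h_x]\,d\tau$ with $\mathcal A_\tau[f](\ph,x):=f(\ph,x+\tau\alpha(x))$, and apply the tame composition bound \eqref{pr-comp1} together with the ansatz \eqref{stime alpha nuove ansatz}, \eqref{ansatz I delta} and the product estimate \eqref{p1-pr}; the inverse and the adjoints ($\mathcal A^*h(\ph,y)=(1+\breve\alpha_y(y))h(\ph,y+\breve\alpha(y))$, and analogously for $(\mathcal A^{-1})^*$) are handled identically using \eqref{stime alpha breve nuovo dopo ansatz}. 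The coefficient estimates \eqref{stime a4 a5 a6 nuovo} follow from the explicit formulas \eqref{coefficienti cal L2 nuovo-a4}--\eqref{coefficienti cal L2 nuovo-a6} by combining \eqref{stime coefficienti cal L1} for $a_1,a_2,a_3$ with the composition estimate applied to $\mathcal A^{-1}$ and with the bound on $\alpha_x$; the $\Delta_{12}$ bounds are obtained the same way, using \eqref{stime coefficienti cal L1 Delta}, \eqref{stime alpha nuove ansatz} (second inequality) and the Lipschitz version of the composition lemma, exactly as in Lemma \ref{stime trasformazione cal B}. For the $OPS^{-\infty}$ part \eqref{stime cal R2 nuovo}, I would estimate the three summands of \eqref{coefficienti cal L2 nuovo-R2} separately: the term $\mathcal A^{-1}\mathcal R_1\mathcal A$ is controlled by Lemma \ref{lemma cio} (an integral operator transforms into an integral operator under a change of variable, with the stated kernel estimate) together with \eqref{stima resto cal L1}; the term with $\mathcal A^{-1}\mathrm{Op}(r_\h)\mathcal A-\mathrm{Op}(r_\h)$ is an $OPS^{-\infty}$ operator again by Lemma \ref{lemma cio} applied to the smoothing Fourier multiplier $\mathrm{Op}(r_\h)$ (of symbol in $S^{-\infty}$, see \eqref{tangente iperbolica espansione}), followed by composition with $a_5\partial_y\mathcal H$ and Lemma \ref{lemma stime Ck parametri}; the term with $\mathcal A^{-1}\mathcal H\mathcal A-\mathcal H$ is $OPS^{-\infty}$ by Lemma \ref{coniugio Hilbert}, and the extra factor $\mathcal A^{-1}T_\h\mathcal A=T_\h+(\mathcal A^{-1}\mathrm{Op}(r_\h)\mathcal A-\mathrm{Op}(r_\h))+\ldots$ is bounded in $OPS^0$ via the composition lemma. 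In all three cases the losses $m+\alpha$ in the Sobolev index are absorbed by the ansatz $\mu_0\geq\sigma+m+\alpha$, exactly as in Lemma \ref{lem:stime coefficienti cal L1}.

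\textbf{Expected main obstacle.} The genuinely delicate point is the $OPS^{-\infty}$ bound on $\mathcal R_2$ with explicit control of the number $\alpha$ of $\xi$-derivatives and the order $-m$: one must make sure that conjugating the fixed smoothing operators $\mathcal R_G$, $\mathrm{Op}(r_\h)$ and the commutator $[\mathcal A^{-1},\mathcal H]$ by the change of variable $\mathcal A$ does not destroy the smoothing order, and that the resulting kernels still satisfy $\mathcal C^{s}$-estimates uniform in $\lambda$ with the precise loss of derivatives. This is exactly the role of Lemmata \ref{lemma cio} and \ref{coniugio Hilbert}, so the work consists in checking that the smallness assumption on $\|\alpha\|$ (equivalently $\|\breve\alpha\|$, via \eqref{stime alpha breve nuovo dopo ansatz}) entering those lemmata is implied by $\e\gamma^{-1}\ll1$ together with \eqref{ansatz I delta}, and then bookkeeping the indices so that $\sigma$ can be chosen independent of $s$. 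Everything else is routine application of the tame product, composition and pseudo-differential calculus estimates of Section \ref{sec:pseudo} and Section \ref{sub:integral-op}.
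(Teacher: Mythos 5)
Your proposal is correct and takes essentially the same route as the paper: the parity statements follow from the oddness of $\alpha,\breve\alpha$ exactly as you say; the bounds \eqref{stime nuovo cal A inv} come from writing $(\mathcal A-\mathrm{Id})h=\alpha\int_0^1\mathcal A_\tau[h_x]\,d\tau$ as in Lemma \ref{stime trasformazione cal B}; the coefficient bounds \eqref{stime a4 a5 a6 nuovo} use the explicit formulas plus \eqref{pr-comp1} and the estimates on $a_1,a_2,a_3$; and the $OPS^{-\infty}$ bounds for $\mathcal R_2$ repeat the argument used for $\mathcal R_1$ in Lemma \ref{lem:stime coefficienti cal L1}, namely Lemmata \ref{lemma cio} and \ref{coniugio Hilbert} combined with the composition estimates. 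The only detail worth flagging is your (correct) writing of the adjoint as $\mathcal A^*h(\ph,y)=(1+\breve\alpha_y(y))\,h(\ph,y+\breve\alpha(y))$, with the derivative $\breve\alpha_y$ as Jacobian factor; the paper's Lemma \ref{stime trasformazione cal B} has a typographical slip there.
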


\begin{proof}
The transformations ${\cal A}^{\pm 1} - {\rm Id}, ({\cal A}^{\pm 1} - {\rm Id})^*$ are even and reversibility preserving because $\alpha$ and $\breve \alpha$ are odd functions. 
Estimate \eqref{stime nuovo cal A inv} can be proved by using \eqref{stime alpha nuove ansatz}, \eqref{stime alpha breve nuovo dopo ansatz}, 
arguing as in the proof of Lemma \ref{stime trasformazione cal B}.

Estimate \eqref{stime a4 a5 a6 nuovo} follows by definitions 
\eqref{coefficienti cal L2 nuovo-a4}-\eqref{coefficienti cal L2 nuovo-a6}, 
by estimates 
\eqref{stime alpha nuove ansatz}, 
\eqref{stime alpha breve nuovo dopo ansatz}, 
\eqref{stime nuovo cal A inv}, 
\eqref{stime coefficienti cal L1},
\eqref{stime coefficienti cal L1 Delta},
and by applying Lemma \ref{lemma:LS norms}. 
Estimates \eqref{stime cal R2 nuovo} of the remainder ${\cal R}_2$ follow by using the same arguments we used in Lemma \ref{lem:stime coefficienti cal L1} to get estimates \eqref{stima resto cal L1}, \eqref{stima resto cal L1 Delta} for the remainder ${\cal R}_1$. 
\end{proof}

In the sequel we rename in \eqref{nuovo cal L2 prima}-\eqref{coefficienti cal L2 nuovo-R2} the space variable $ y $ by $ x $.

\section{Symmetrization of the order $1/2$}\label{sec:Symm}

The aim of this section is to conjugate the operator $\mL_2$ defined in \eqref{nuovo cal L2 prima} 
to a new operator $\mL_4$ in which the highest order derivatives appear in the off-diagonal entries with the same order and opposite coefficients (see \eqref{0104.2}-\eqref{0104.14}). In the complex variables $(u, \bar u)$ that we will introduce in Section \ref{sec:10}, this amounts to the symmetrization of the linear operator at the 
highest order, see \eqref{0104.21}-\eqref{resti-bis}.

We first conjugate $\mL_2$ by  the real, even and reversibility preserving transformation 
\begin{equation}
\label{M_2}
\mM_2 := \begin{pmatrix} 
\Lm_\h & \  0  \vspace{1mm} \\
0  & \ \Lm_\h^{-1}
\end{pmatrix}, 
\end{equation}
where 
$\Lambda_\h$ is the Fourier multiplier, acting on the periodic functions,
\begin{equation}\label{formula compatta Lambda h inverso}
\Lambda_\h := \pi_0 + |D|^{\frac14} T_\h^{\frac14}\,, 
\qquad \text{with\,\,inverse} \qquad   \Lambda_\h^{- 1} = \pi_0 + |D|^{- \frac14} T_\h^{- \frac14}\,,
\end{equation}
with $T_\h = \tanh ( {\mathtt h} |D| ) $   
and $\pi_0$  defined in \eqref{def pi0}.  
The conjugated operator is 
\begin{equation} \label{mL2 costruz}
\mL_3 
:= \mM_2^{-1} \mL_2 \mM_2 
= \om \cdot \pa_\vphi +  \begin{pmatrix}  
\Lm_\h^{-1} a_4 \Lm_\h \ & \ \Lm_\h^{-1} (- a_5 \pa_x \mH T_\h + \mR_2) \Lm_\h^{-1}  \\
\Lm_\h a_6 \Lm_\h \ & \ 0  \end{pmatrix}
=: \om \cdot \pa_\vphi + \begin{pmatrix} A_3 & B_3 \\ 
C_3 & 0 \end{pmatrix}.
\end{equation}
We develop the operators in \eqref{mL2 costruz} up to order $- 1/2$.  
First we write
\begin{align}
 A_3 & = \Lm_\h^{-1} a_4 \Lm_\h = a_4 + {\cal R}_{A_3} 
  \qquad {\rm where} \qquad {\cal R}_{A_3} := [\Lm_\h^{-1},  a_4] \Lm_\h  \in OPS^{- 1}   \label{espansione A2 simmetrizzazione} 
 \end{align}
by Lemma \ref{lemma tame norma commutatore}. 
Using that $|D|^m \pi_0 = \pi_0 |D|^m = 0$ for any $m \in \R$ and that $\pi_0^2 = \pi_0$
on the periodic functions, one has   
\begin{align}
C_3 & =  \Lm_\h a_6 \Lm_\h = a_6 \Lambda_\h^2 + [\Lambda_\h, a_6] \Lambda_\h = a_6 (\pi_0 + |D|^{\frac14} T_\h^{\frac14})^2  + [\Lambda_\h, a_6]  \Lambda_\h\nonumber\\
& = a_6 |D|^{\frac12} T_\h^{\frac12} + \pi_0 + {\cal R}_{C_3}
\qquad {\rm where } \qquad 
{\cal R}_{C_3} := (a_6 - 1) \pi_0 + [\Lambda_\h, a_6] \Lambda_\h   \,. \label{espansione C2}
\end{align}
Using that $|D| = \mH \partial_x$, \eqref{formula compatta Lambda h inverso} and 
$|D| \pi_0 = 0$ on the periodic functions, 
we write $B_3$ in \eqref{mL2 costruz} as 
\begin{align}
B_3 & = \Lm_\h^{-1} (- a_5 \pa_x \mH T_\h + \mR_2) \Lm_\h^{-1} 
= - a_5 |D| T_\h \Lambda_\h^{-2} - [\Lm_\h^{-1}, a_5] |D| T_\h \Lm_\h^{-1} 
+ \Lm_\h^{-1} \mR_2 \Lm_\h^{-1} \nonumber \\
& 
= - a_5 |D| T_\h \big(\pi_0 + |D|^{- \frac14} T_\h^{- \frac14} \big)^2 
- [ \Lm_\h^{-1}, a_5] |D| T_\h \Lm_\h^{-1} 
+ \Lm_\h^{-1} \mR_2 \Lm_\h^{-1} \nonumber \\
& = - a_5 |D|^{\frac12} T_\h^{\frac12} + {\cal R}_{B_3} \qquad 
{\rm where} \qquad 
{\cal R}_{B_3} := - [\Lm_\h^{-1}, a_5] |D| T_\h \Lm_\h^{-1} + \Lm_\h^{-1} \mR_2 \Lm_\h^{-1} \,.   \label{espansione B2 simmetrizzazione}
\end{align}

\begin{lemma}\label{stime simmetrizzazione ordine principale}
The operators $\Lambda_{\mathtt h} \in OPS^{\frac14}$, 
$\Lambda_{\mathtt h}^{-1} \in OPS^{-\frac14}$ 
and ${\cal R}_{A_3}, {\cal R}_{B_3}, {\cal R}_{C_3} \in OPS^{-\frac12} $. Furthermore, there exists $\sigma(k_0, \tau, \nu) > 0$ such that for any $\alpha > 0$, assuming \eqref{ansatz I delta} with $\mu_0 \geq \sigma + \alpha$, then for all $s \geq s_0$,  
\begin{align}\label{stima Lambda mathtt h}
& \norma \Lambda_{\mathtt h} \norma_{\frac14, s, \alpha}^{k_0, \gamma}\,, \norma \Lambda_{\mathtt h}^{- 1} \norma_{- \frac14, s, \alpha}^{k_0, \gamma} \lesssim_\alpha 1\,, \\
& \label{stima cal R 3 B C D}
\norma {\cal R} \norma_{- \frac12, s, \alpha}^{k_0, \gamma} \lesssim_{s,  \alpha} 
\e \g^{-1} \big(1 + \| \fracchi_0 \|_{s + \sigma  +  \alpha}^{k_0, \gamma} \big)\,, \quad \norma \Delta_{12}{\cal R}\norma_{- \frac12, s_1, \alpha} \lesssim_{s_1, \alpha} \e \g^{-1} \| \Delta_{12} i \|_{s_1 + \sigma  +  \alpha}
\end{align}
for all $\mR \in \{ \mR_{A_3}, \mR_{B_3}, \mR_{C_3} \}$. The operator $\mL_3$ in \eqref{mL2 costruz} is real, even and reversible. 
\end{lemma}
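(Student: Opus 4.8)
\textbf{Proof strategy for Lemma \ref{stime simmetrizzazione ordine principale}.}

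The plan is to verify the three assertions in order: (1) the symbol orders of $\Lambda_\h$, $\Lambda_\h^{-1}$ and of the remainders $\mR_{A_3}, \mR_{B_3}, \mR_{C_3}$; (2) the quantitative $\Psi$DO-norm estimates \eqref{stima Lambda mathtt h}-\eqref{stima cal R 3 B C D}; (3) the algebraic properties (real, even, reversible) of $\mL_3$. First I would treat the Fourier multipliers: by \eqref{formula compatta Lambda h inverso}, $\Lambda_\h = \pi_0 + |D|^{1/4} T_\h^{1/4}$ is, up to the smoothing projector $\pi_0$ (which is in $OPS^{-\infty}$), the Fourier multiplier with symbol $\chi(\xi)|\xi|^{1/4}\tanh^{1/4}(\h\chi(\xi)|\xi|)$, which lies in $S^{1/4}$ because $\tanh^{1/4}$ is bounded with all derivatives bounded on $[0,\infty)$; similarly $\Lambda_\h^{-1} \in OPS^{-1/4}$. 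The estimate \eqref{stima Lambda mathtt h} is then immediate from \eqref{Norm Fourier multiplier}, since these are $\vphi$- and $x$-independent symbols, so the constant depends only on $\alpha$, $\h_1$ (and $k_0$), with no Sobolev loss.

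Next I would handle the three remainders. For $\mR_{A_3} = [\Lambda_\h^{-1}, a_4]\Lambda_\h$: the commutator $[\Lambda_\h^{-1}, a_4]$ lies in $OPS^{-1/4-1} = OPS^{-5/4}$ by Lemma \ref{lemma tame norma commutatore} (with $m = -1/4$, $m' = 0$), and composing with $\Lambda_\h \in OPS^{1/4}$ gives $OPS^{-1}\subset OPS^{-1/2}$ by Lemma \ref{lemma stime Ck parametri}; the norm bound follows by combining the commutator estimate \eqref{stima commutator parte astratta}, the composition estimate \eqref{estimate composition parameters}, the multiplication-operator identity \eqref{norma a moltiplicazione}, and the bound \eqref{stime a4 a5 a6 nuovo} on $a_4$ (with a suitable adjustment of the loss constant $\sigma$ and using \eqref{norm-increa} to absorb the finitely many index shifts). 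For $\mR_{C_3} = (a_6-1)\pi_0 + [\Lambda_\h, a_6]\Lambda_\h$: the first term is in $OPS^{-\infty}$ (it is multiplication by $(a_6-1)$ composed with $\pi_0$, and one estimates it by \eqref{norma a moltiplicazione}, \eqref{Norm Fourier multiplier}, \eqref{stime a4 a5 a6 nuovo}), while $[\Lambda_\h, a_6] \in OPS^{-3/4}$ composed with $\Lambda_\h \in OPS^{1/4}$ gives $OPS^{-1/2}$; again the norm estimates follow from Lemmata \ref{lemma tame norma commutatore}, \ref{lemma stime Ck parametri} and \eqref{stime a4 a5 a6 nuovo}. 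The only slightly more delicate term is $\mR_{B_3} = -[\Lambda_\h^{-1}, a_5]|D| T_\h \Lambda_\h^{-1} + \Lambda_\h^{-1}\mR_2\Lambda_\h^{-1}$: here $[\Lambda_\h^{-1}, a_5] \in OPS^{-5/4}$, and $|D| T_\h \in OPS^1$ (the symbol $\chi(\xi)|\xi|\tanh(\h\chi(\xi)|\xi|)$ is in $S^1$), so the product with $\Lambda_\h^{-1} \in OPS^{-1/4}$ lands in $OPS^{-5/4+1-1/4} = OPS^{-1/2}$; the second summand $\Lambda_\h^{-1}\mR_2\Lambda_\h^{-1}$ is in $OPS^{-\infty}$ since $\mR_2 \in OPS^{-\infty}$ by Lemma \ref{stime trasformazione nuova cal A}, and its norm is controlled by \eqref{stime cal R2 nuovo} together with \eqref{lemma composizione multiplier}. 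Throughout, the $\Delta_{12}$-estimates are obtained in exactly the same way, using the Lipschitz bounds \eqref{stime a4 a5 a6 nuovo}, \eqref{stime cal R2 nuovo}, and the fact that the commutator/composition estimates hold verbatim for the variation operators.

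Finally, for the algebraic properties: $\mL_2$ is real, even and reversible by Lemma \ref{stime trasformazione nuova cal A}; the multiplier $\Lambda_\h$ is real (its symbol satisfies $\overline{g(\xi)} = g(\xi) = g(-\xi)$) and even, hence by the last item of Lemma \ref{even:pseudo} the matrix operator $\mM_2 = \mathrm{diag}(\Lambda_\h, \Lambda_\h^{-1})$ is even, and a direct check using the structure \eqref{cal R eta psi}-\eqref{2601.5} (or the reversibility-preserving criterion after Definition of reversibility) shows $\mM_2$ is reversibility preserving; therefore $\mL_3 = \mM_2^{-1}\mL_2\mM_2$ is real, even and reversible. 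The main obstacle I anticipate is purely bookkeeping: keeping track of the exact loss-of-derivatives constant $\sigma$ and the auxiliary Sobolev indices appearing in the composition and commutator lemmata (Lemmata \ref{lemma stime Ck parametri}, \ref{lemma tame norma commutatore}), so that the final bound is stated uniformly in $s$ with only a fixed, $s$-independent loss — this requires repeatedly invoking the monotonicity \eqref{norm-increa} and choosing $\mu_0$ in \eqref{ansatz I delta} large enough ($\mu_0 \geq \sigma + \alpha$).
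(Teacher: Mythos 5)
Your proof is correct and follows essentially the same route as the paper's (one-line) proof: verifying the orders from the definitions \eqref{espansione A2 simmetrizzazione}--\eqref{espansione B2 simmetrizzazione}, then invoking Lemmata \ref{lemma stime Ck parametri} and \ref{lemma tame norma commutatore} together with \eqref{norma a moltiplicazione}, \eqref{stime a4 a5 a6 nuovo}, and \eqref{stime cal R2 nuovo} for the quantitative bounds, with \eqref{Norm Fourier multiplier} handling the Fourier multipliers. Your order bookkeeping for the commutators and compositions, and the argument that $\mM_2$ is even and reversibility preserving because $\Lambda_\h^{\pm 1}$ are real, even, $\vphi$-independent Fourier multipliers, are all accurate.
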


\begin{proof}
The lemma follows by the definitions of ${\cal R}_{A_3}$, ${\cal R}_{B_3}$, ${\cal R}_{C_3}$ in 
\eqref{espansione A2 simmetrizzazione}, \eqref{espansione B2 simmetrizzazione}, \eqref{espansione C2},  by Lemmata \ref{lemma stime Ck parametri} and \ref{lemma tame norma commutatore}, 
recalling \eqref{norma a moltiplicazione} and using 
\eqref{stime a4 a5 a6 nuovo}, \eqref{stime cal R2 nuovo}.
\end{proof}

Consider now a transformation $\mM_3$ of the form
\begin{equation} \label{0104.1}
\mM_3 := \begin{pmatrix} p & 0 \\ 
0 & 1 \end{pmatrix},
\quad 
\mM_3^{-1} = \begin{pmatrix} p^{-1} & 0 \\ 
0 & 1 \end{pmatrix},
\end{equation}
where $p(\ph,x)$ is a real-valued periodic function, with $p-1$ small (see \eqref{0104.14}).
The conjugated operator is 
\begin{equation} \label{0104.2}
\mL_4 := \mM_3^{-1} \mL_3 \mM_3 
= \om \cdot \pa_\vphi + \begin{pmatrix} 
p^{-1} (\ompaph p) + p^{-1} A_3 p \ & \ p^{-1} B_3 \\ 
C_3 p & 0 \end{pmatrix}
= \om \cdot \pa_\vphi + \begin{pmatrix} A_4 & B_4 \\ 
C_4 & 0 \end{pmatrix} 
\end{equation}
where, recalling \eqref{espansione A2 simmetrizzazione},  
\eqref{espansione B2 simmetrizzazione}, \eqref{espansione C2}, 
one has 
\begin{align}
A_4 & =  \breve a_4 + {\cal R}_{A_4}\,, \quad 
\breve a_4 := a_4 + p^{-1} (\ompaph p)\,, \quad {\cal R}_{A_4} := p^{- 1}{\cal R}_{A_3} p \label{definizione A3 simmetrizzazione}  \\
B_4 & =  - p^{- 1}a_5 |D|^{\frac12} T_\h^{\frac12}  + {\cal R}_{B_4}\,, \quad {\cal R}_{B_4} := p^{- 1}{\cal R}_{B_3} \label{definizione B3 simmetrizzazione} \\
C_4 & = a_6 p  |D|^{\frac12} T_\h^{\frac12} + \pi_0 + {\cal R}_{C_4}\,, \quad {\cal R}_{C_4} :=  a_6 [|D|^{\frac12} T_\h^{\frac12}, p] + \pi_0 (p - 1)+ {\cal R}_{C_3} p  \label{definizione C3 simmetrizzazione}
\end{align}
and therefore ${\cal R}_{A_4}, {\cal R}_{B_4}, {\cal R}_{C_4} \in OPS^{- \frac12}$. 
The coefficients of the highest order term in $B_4$ in \eqref{definizione B3 simmetrizzazione} 
and $C_4$ in \eqref{definizione C3 simmetrizzazione} are opposite if $a_6 p = p^{-1} a_5$. 
Therefore we fix the real valued function
\begin{equation}  \label{0104.14}
p := \sqrt{\frac{a_5}{a_6}}\,, \qquad 
a_6 p = p^{-1} a_5 = \sqrt{a_5 a_6} \, 
=: a_7 \,.
\end{equation} 

\begin{lemma}\label{lemma simmetrizzazione ordine principale}
There exists $\sigma := \sigma(\tau, \nu, k_0) > 0$ such that for any $\alpha > 0$, assuming \eqref{ansatz I delta} with $\mu_0 \geq \sigma + \alpha$, then for any $s \geq s_0$ the following holds. 
The transformation $\mM_3$ defined in \eqref{0104.1} 
is real, even and reversibility preserving and  
satisfies 
\begin{equation}\label{stime M3}
\norma \mM^{\pm 1}_3 - {\rm Id} \norma_{0, s, 0}^{k_0, \gamma} 
\lesssim_s \e \g^{-1} \big(1 + \| \fracchi_0 \|_{s + \sigma}^{k_0, \gamma} \big) \,.
\end{equation}
The real valued functions $\breve a_4, a_7$ defined in \eqref{definizione A3 simmetrizzazione}, \eqref{0104.14} satisfy  
\begin{equation} \label{0104.15}
\breve a_4 = {\rm odd}(\vphi) {\rm even}(x)\,, \quad 
\quad a_7 = {\rm even}(\vphi) {\rm even}(x)\, ,
\end{equation}
and, for any $ s \geq s_0 $, 
\begin{equation}\label{stime an bn cn (3)}
\| \breve a_4 \|_s^{k_0, \gamma}, \| a_7 - 1\|_s^{k_0, \gamma}  \lesssim_{s} \e \g^{-1} 
\big(1 + \| \fracchi_0 \|_{s + \sigma}^{k_0, \gamma} \big)\,.
\end{equation}
 The remainders ${\cal R}_{A_4}, {\cal R}_{B_4}, {\cal R}_{C_4} \in OPS^{ - \frac12}$ defined in \eqref{definizione A3 simmetrizzazione}-\eqref{definizione C3 simmetrizzazione} satisfy 
\begin{equation}\label{stime R A3 B3 C3 N}
\norma {\cal R} \norma_{ - \frac12, s, \alpha}^{k_0, \gamma} \lesssim_{s,  \alpha} 
\e \g^{-1} \big(1 + \| \fracchi_0 \|_{s + \sigma  +  \alpha}^{k_0, \gamma} \big)\,, \qquad {\cal R} \in \{ {\cal R}_{A_4}, {\cal R}_{B_4}, {\cal R}_{C_4}\}\,.
\end{equation}
Let $ i_1, i_2$ be given embedded tori. Then 
\begin{align}\label{stime Delta M3}
& \norma \Delta_{12} \mM^{\pm 1}_3 \norma_{0, s_1, 0} \lesssim_{s_1} \e \g^{-1} \| \Delta_{12} i\|_{s_1 + \sigma}\,, \\
\label{stime Delta an bn cn (3)}
& \|\Delta_{12} \breve a_4 \|_{s_1}, \| \Delta_{12} a_7\|_{s_1} \lesssim_{s_1} \e \g^{-1} \| \Delta_{12} i  \|_{s_1 + \sigma }\, \, , \\
\label{stime Delta R A3 B3 C3 N}
& \norma \Delta_{12}{\cal R} \norma_{ - \frac12, s_1, \alpha} \lesssim_{s_1,  \alpha} 
\e \g^{-1} \| \Delta_{12} i \|_{s_1 + \sigma  + \alpha}\,, \qquad {\cal R} \in \{ {\cal R}_{A_4}, {\cal R}_{B_4}, {\cal R}_{C_4}\}\,.
\end{align}
The operator $ \mL_4 $ in \eqref{0104.2} is real, even and reversible.
\end{lemma}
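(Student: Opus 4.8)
\textbf{Proof plan for Lemma \ref{lemma simmetrizzazione ordine principale}.}
The statement collects together all the properties of the transformation $\mM_3$ defined in \eqref{0104.1}, of the new coefficients $\breve a_4, a_7$, and of the remainders $\mR_{A_4}, \mR_{B_4}, \mR_{C_4}$, once the function $p$ has been fixed as in \eqref{0104.14}. The plan is to verify each item in turn, using only: the definition \eqref{0104.14} of $p$; the estimates \eqref{stime a4 a5 a6 nuovo} on $a_4, a_5, a_6$ and the parities \eqref{parity a4 a5 a6}; the estimates \eqref{stima cal R 3 B C D} on $\mR_{A_3}, \mR_{B_3}, \mR_{C_3}$ together with Lemma \ref{stime simmetrizzazione ordine principale}; and the general calculus tools of Section \ref{sec:pseudo} (Lemmata \ref{lemma stime Ck parametri}, \ref{lemma tame norma commutatore}, \ref{Neumann pseudo diff}) and the composition/product estimates of Section \ref{subsec:function spaces}.

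First I would treat $p$ itself. Since $p = \sqrt{a_5/a_6}$ and, by \eqref{stime a4 a5 a6 nuovo}, $a_5 - 1$ and $a_6 - 1$ are $O(\e\g^{-1})$-small in every $\| \cdot \|_s^{k_0,\gamma}$ norm (in particular in $\| \cdot \|_{s_0 + \sigma}^{k_0,\gamma}$, using the ansatz \eqref{ansatz I delta} to absorb the loss $\sigma$), the Moser composition estimate \eqref{0811.10} applied to the smooth function $(t_5, t_6) \mapsto \sqrt{(1+t_5)/(1+t_6)}$ gives $\| p - 1 \|_s^{k_0,\gamma} \lesssim_s \e\g^{-1}(1 + \| \fracchi_0\|_{s+\sigma}^{k_0,\gamma})$, and likewise $\| p^{-1} - 1 \|_s^{k_0,\gamma}$; the same argument with \eqref{stime Delta an bn cn (3)}-type bounds for $\Delta_{12} a_5, \Delta_{12} a_6$ yields $\| \Delta_{12} p^{\pm 1}\|_{s_1} \lesssim_{s_1} \e\g^{-1}\|\Delta_{12} i\|_{s_1 + \sigma}$. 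Since $p$ is a multiplication operator, \eqref{norma a moltiplicazione} converts these into the $\norma \cdot \norma_{0,s,0}^{k_0,\gamma}$ bounds \eqref{stime M3} and \eqref{stime Delta M3}. The parities: $a_5, a_6$ are $\even(\ph)\even(x)$ by \eqref{parity a4 a5 a6}, hence so is $p$, which is exactly what makes $\mM_3 = \mathrm{diag}(p,1)$ real, even (Lemma \ref{even:pseudo}) and reversibility preserving (since the diagonal entries are even in $\ph$ and the off-diagonal entries are zero). Then $\mL_4 = \mM_3^{-1}\mL_3\mM_3$ is real, even and reversible because $\mL_3$ has these properties (Lemma \ref{stime simmetrizzazione ordine principale}) and conjugation by an even, reversibility-preserving map preserves them (Section \ref{sezione operatori reversibili e even}).

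Next, the coefficients. From \eqref{definizione A3 simmetrizzazione}, $\breve a_4 = a_4 + p^{-1}(\ompaph p)$: the estimate \eqref{stime an bn cn (3)} follows from \eqref{stime a4 a5 a6 nuovo} for $a_4$ and from the product estimate \eqref{p1-pr} applied to $p^{-1}$ and $\ompaph p$, each of which is $O(\e\g^{-1})$-small by the previous paragraph (the derivative $\ompaph$ costs one Sobolev index, absorbed into $\sigma$); the parity $\breve a_4 = \odd(\ph)\even(x)$ in \eqref{0104.15} holds because $a_4$ is $\odd(\ph)\even(x)$ by \eqref{parity a4 a5 a6}, $p$ is $\even(\ph)\even(x)$, so $p$ is even in $\ph$ and $\ompaph p$ is odd in $\ph$ and even in $x$. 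For $a_7 = \sqrt{a_5 a_6}$: \eqref{stime an bn cn (3)} comes again from \eqref{0811.10} applied to $\sqrt{(1+t_5)(1+t_6)}$, and $a_7 = \even(\ph)\even(x)$ is immediate from the parities of $a_5, a_6$. The $\Delta_{12}$ versions \eqref{stime Delta an bn cn (3)} are obtained the same way from \eqref{stime a4 a5 a6 nuovo} (its $\Delta_{12}$ line), \eqref{stime Delta M3} and the product/composition estimates. Finally, the verification that $a_6 p = p^{-1} a_5 = \sqrt{a_5 a_6}$ is the elementary algebraic identity recorded in \eqref{0104.14}, which forces the highest-order coefficients of $B_4$ and $C_4$ to be exactly opposite; and the membership $\mR_{A_4}, \mR_{B_4}, \mR_{C_4} \in OPS^{-1/2}$ with the bounds \eqref{stime R A3 B3 C3 N}, \eqref{stime Delta R A3 B3 C3 N} follows by inserting the definitions \eqref{definizione A3 simmetrizzazione}-\eqref{definizione C3 simmetrizzazione} into Lemma \ref{lemma stime Ck parametri} (for the compositions $p^{-1}\mR_{A_3}p$, $p^{-1}\mR_{B_3}$, $\mR_{C_3}p$), Lemma \ref{lemma tame norma commutatore} (for the commutator $[|D|^{1/2}T_\h^{1/2}, p] \in OPS^{-1/2}$), and \eqref{norma a moltiplicazione} together with \eqref{stima code - 1/2 inizio riducibilita}-type bounds on $\pi_0(p-1)$, combined with the smallness estimates on $p$ and on $\mR_{A_3}, \mR_{B_3}, \mR_{C_3}$ from \eqref{stima cal R 3 B C D}.

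I do not expect any serious obstacle: the lemma is a bookkeeping step, and every estimate is a direct application of a tool already proved. The only point requiring a little care is tracking the loss-of-derivatives constant $\sigma$ through the composition estimate \eqref{0811.10} and the pseudo-differential composition \eqref{estimate composition parameters}-\eqref{stima RN derivate xi parametri}, making sure at each application that the required low-norm smallness $\| \fracchi_0\|_{s_0 + \mu_0}^{k_0,\gamma} \leq 1$ from \eqref{ansatz I delta} holds with $\mu_0 \geq \sigma + \alpha$; this is exactly the hypothesis stated in the lemma, so there is nothing to optimize — one simply chooses $\sigma$ large enough (and finite, independent of $s$) to accommodate the finitely many applications.
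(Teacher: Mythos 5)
Your proposal follows the same route as the paper's proof: fix $p = \sqrt{a_5/a_6}$, get its parity and $\even(\ph)\even(x)$ structure from \eqref{parity a4 a5 a6}, estimate $p^{\pm1}-1$ via Lemma \ref{Moser norme pesate} and interpolation, convert to $\norma\cdot\norma_{0,s,0}^{k_0,\gamma}$ via \eqref{norma a moltiplicazione}, then propagate through the definitions of $\breve a_4, a_7$ and the remainders using Lemmata \ref{lemma stime Ck parametri}, \ref{lemma tame norma commutatore}, and the bounds from Lemma \ref{stime simmetrizzazione ordine principale}. One small slip: the reference to ``\eqref{stima code - 1/2 inizio riducibilita}-type bounds on $\pi_0(p-1)$'' is spurious — that equation concerns the eigenvalue remainders $r_j$ in Section~\ref{sezione descent method} and has nothing to do with $\pi_0(p-1)$; the term $\pi_0(p-1)$ in \eqref{definizione C3 simmetrizzazione} is simply the composition of a fixed $OPS^{-\infty}$ Fourier multiplier with a multiplication operator and is handled by \eqref{norma a moltiplicazione} together with \eqref{lemma composizione multiplier} (or \eqref{estimate composition parameters}), using the already-obtained smallness of $p-1$.
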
 

\begin{proof}
By \eqref{parity a4 a5 a6},
the functions $a_5, a_6 $ are $ \even(\ph)\even(x)$, and therefore
$p $ is $ \even(\ph) \even(x)$. Moreover, since $a_4$ is $\odd(\ph)\even(x)$, we 
deduce \eqref{0104.15}.
Since $p$ is $\even(\ph) \even(x)$, 
the transformation $\mM_3$ is real, even and reversibility preserving. 

By definition \eqref{0104.14}, Lemma \ref{Moser norme pesate}, the interpolation estimate \eqref{p1-pr} and applying estimates \eqref{stime a4 a5 a6 nuovo} on $a_5$ and $a_6$, one gets that $p$ satisfies the estimates 
\begin{equation}\label{stime p}
\| p^{\pm 1} - 1 \|_s^{k_0, \gamma} \lesssim_s \e \g^{-1} 
\big(1 + \| \fracchi_0 \|_{s + \sigma}^{k_0, \gamma}\big)\,, \quad  \| \Delta_{12} p^{\pm 1}\|_{s_1} \lesssim_{s_1}  \e \g^{-1} \| \Delta_{12} i \|_{s_1 + \sigma}
\end{equation}
for some $\sigma = \sigma(\tau, \nu, k_0) > 0$. 
Hence estimates \eqref{stime M3}, \eqref{stime Delta M3} for $\mM_3^{\pm 1}$ follow 
by definition \eqref{0104.1}, using estimates \eqref{norma a moltiplicazione}, \eqref{stime p}.
Estimates \eqref{stime an bn cn (3)}, \eqref{stime Delta an bn cn (3)} for $\breve a_4, a_7$ follow by definitions \eqref{definizione A3 simmetrizzazione}, \eqref{0104.14} and applying estimates \eqref{stime a4 a5 a6 nuovo} on $a_4$, $a_5$ and $a_6$, estimates \eqref{stime p} on $p$, Lemma \ref{Moser norme pesate} and the interpolation estimate \eqref{p1-pr}. 
Estimates \eqref{stime R A3 B3 C3 N}, \eqref{stime Delta R A3 B3 C3 N} follow by 
definitions \eqref{definizione A3 simmetrizzazione}-\eqref{definizione C3 simmetrizzazione}, 
estimate \eqref{norma a moltiplicazione}, 
Lemmata \ref{lemma stime Ck parametri} and \ref{lemma tame norma commutatore}, 
bounds \eqref{stime a4 a5 a6 nuovo} on $a_4, a_5, a_6$, 
\eqref{stime p} on $p$, 
and Lemma \ref{stime simmetrizzazione ordine principale}. 
\end{proof}

\section{Symmetrization of the lower orders}\label{sec:10}

To symmetrize the linear operator $\mL_4 $ in \eqref{0104.2}, with $ p $ fixed in \eqref{0104.14}, 
at lower orders, 
it is convenient to introduce the complex coordinates $(u,\bar u):= \mC^{-1}(\eta,\psi)$, with $\mC$ defined in \eqref{def:mC}, namely $u = \eta + \ii \psi$, $\bar u = \eta - \ii \psi$. 
In these  complex coordinates the linear operator $\mL_4 $ becomes, 
using \eqref{operatori in coordinate complesse} and \eqref{0104.14}, 
\be \label{0104.21}
\mL_5 
:= \mC^{-1} \mL_4 \mC
= \ompaph 
+ \ii a_7 |D|^{\frac12} T_\h^{\frac12} \Sigma + a_8  {\mathbb I}_2 + \ii \Pi_0 
+ {\cal P}_5 + {\cal Q}_5\,,  
\qquad a_8 := \frac{\breve a_4}{2}\,,
\ee
where the real valued functions $ a_7 $, $\breve a_4$  
are defined in \eqref{0104.14}, \eqref{definizione A3 simmetrizzazione} and 
satisfy \eqref{0104.15},  
\begin{align} \label{def pauli matrix cal R4}
& \Sigma := \begin{pmatrix}
1 & 0 \\
0 & - 1
\end{pmatrix}  \, , \qquad 
\Pi_0 := \frac12 \begin{pmatrix}
 \pi_0 &  \pi_0 \\
- \pi_0 & -  \pi_0
\end{pmatrix} \,, 
\qquad 
{\mathbb I}_2 := 
\begin{pmatrix}
1 & 0 \\
0 & 1
\end{pmatrix} \, , 
\end{align}
$\pi_0$ is defined in \eqref{def pi0}, 
and
\be\label{resti-bis}
\begin{aligned}
& \qquad \qquad \quad 
{\cal P}_5 := \begin{pmatrix}
P_5 & 0 \\
0 & \overline P_5
\end{pmatrix}\,, 
\quad  {\cal Q}_5 := \begin{pmatrix}
0 & Q_5 \\
\overline Q_5 & 0
\end{pmatrix}\,,   \\
&  P_5  := \frac12 \big\{ {\cal R}_{A_4}  + \ii ({\cal R}_{C_4} - {\cal R}_{B_4}) \big\}\,,
\quad \quad 
Q_5  := a_8 + \frac12 \big\{ {\cal R}_{A_4}  + \ii ({\cal R}_{C_4} + {\cal R}_{B_4}) \big\}\, . 
\end{aligned}
\ee
By the estimates of Lemma \ref{lemma simmetrizzazione ordine principale} we have
\begin{align}
& \| a_7 - 1\|_s^{k_0, \gamma} \lesssim_s \e \g^{-1} \big(1 + \| \fracchi_0\|_{s + \sigma}^{k_0, \gamma}\big)\,, \quad \| \Delta_{12} a_7\|_{s_1} \lesssim_{s_1} \e \g^{-1} \| \Delta_{12} i \|_{s_1 + \sigma} \label{stime b0 (4) prima di decoupling} \\
& \| a_8\|_s^{k_0, \gamma} \lesssim_s \e \g^{-1} \big(1 + \| \fracchi_0\|_{s + \sigma}^{k_0, \gamma} \big)\,, \quad \| \Delta_{12} a_8\|_{s_1} \lesssim_{s_1} \e \g^{-1} \| \Delta_{12} i \|_{s_1 + \sigma}\,, \label{stime a0 (4) prima di decoupling} \\
& \norma {\cal P}_5 \norma_{- \frac12, s, \alpha}^{k_0, \gamma}\,,\,
\norma {\cal Q}_5 \norma_{ 0, s, \alpha}^{k_0, \gamma} 
\lesssim_{s, \alpha} \e \g^{-1} \big(1 + \| \fracchi_0 \|_{s + \sigma + \alpha}^{k_0, \gamma} \big) \label{stima cal P4 Q4 inizio decoupling} \\
& \norma \Delta_{12} {\cal P}_5 \norma_{- \frac12, s_1, \alpha}\,,\, 
\norma \Delta_{12} {\cal Q}_5 \norma_{ 0, s_1, \alpha} \lesssim_{s_1, \alpha} \e \g^{-1} \| \Delta_{12} i \|_{s_1 + \sigma + \alpha}\,. \label{stima derivate cal P4 Q4 inizio decoupling}
\end{align}
Now we define inductively a finite number of transformations to remove all the terms of orders $\geq -M$ 
from the off-diagonal operator $ {\cal Q}_5$. The constant $M$ will be fixed in \eqref{relazione mathtt b N}.

Let ${\cal L}_5^{(0)}:= {\cal L}_5$, $P_{5}^{(0)} := P_5$ and $Q_{5}^{(0)} := Q_5$. 
In the rest of the section we prove the following inductive claim: 
\begin{itemize}
\item 
{\sc Symmetrization of ${\cal L}_5^{(0)} $ in decreasing orders.} 
For $m \geq 0$, there is a real,  even and reversible 
 operator of the form 
\begin{equation}\label{cal Lm}
{\cal L}_5^{(m)} := \omega \cdot \partial_\vphi + \ii a_7 |D|^{\frac12} T_\h^{\frac12} \Sigma + a_8 {\mathbb I}_2 + \ii \Pi_0 +  {\cal P}_5^{(m)} + {\cal Q}_5^{(m)}\,,
\end{equation}
where 
\be
\begin{aligned}\label{cal Rm cal Qm decoupling}
{\cal P}_5^{(m)} = \begin{pmatrix}
P_5^{(m)} &0\\
0 & \overline P_5^{(m)}
\end{pmatrix}\,, \quad {\cal Q}_5^{(m)} = \begin{pmatrix}
0 & Q_5^{(m)}\\
\overline Q_5^{(m)} & 0
\end{pmatrix}\,, \\  
 P_5^{(m)} = {\rm Op}(p_m) \in OPS^{-\frac12}\,, \quad 
 Q_5^{(m)} = {\rm Op}(q_m)\in OPS^{- \frac{m}{2} }\, . 
\end{aligned}
\ee
For any $\alpha \in \N$, assuming \eqref{ansatz I delta} with $\mu_0 \geq \aleph_4(m, \alpha) + \sigma$, 
where the increasing constants $ \aleph_4 (m, \alpha)$  are defined inductively by 
\begin{equation}\label{definizione kn(alpha)}
\aleph_4(0, \alpha) := \alpha \,, \qquad  \aleph_4({m + 1}, \alpha) :=  \aleph_4(m, \alpha + 1) + \frac{m}{2} + 2 \alpha + 4 \, ,
\end{equation}
we have
\begin{align}
& \norma {\cal P}_5^{(m)} \norma_{- \frac12, s, \alpha}^{k_0, \gamma} \, , \ \norma {\cal Q}_5^{(m)} \norma_{- \frac{m}{2}, s, \alpha} \lesssim_{m, s, \alpha} \e \g^{-1} \big(1 + \| \fracchi_0\|_{s + \aleph_4(m, \alpha) + \sigma}^{k_0, \gamma} \big)\,, \label{stima induttiva disaccoppiamento} \\
& \norma \Delta_{12} {\cal P}_5^{(m)} \norma_{- \frac12 , s_1, \alpha}\,,\, \norma \Delta_{12} {\cal Q}_5^{(m)} \norma_{- \frac{m }{2}, s_1, \alpha} \lesssim_{m, s_1, \alpha}  \e \g^{-1} \| \Delta_{12} i\|_{s_1 + \aleph_4(m, \alpha) + \sigma} \label{stima induttiva disaccoppiamento derivate in i}\, .
\end{align}
For $ m \geq 1 $,  there exist real, even, reversibility preserving, invertible maps $ \Phi_{m-1} $ 
of the form 
\be\label{defPhi-n-m-1}
\begin{aligned}
& \Phi_{m-1} := {\mathbb I}_2 + \Psi_{m-1}\,, \qquad \Psi_{m-1} := \begin{pmatrix}
0 & \psi_{m-1}(\vphi, x, D) \\
\overline {\psi_{m-1}(\vphi, x, D)} & 0
\end{pmatrix}\,,
\end{aligned}
\ee
with $ \psi_{m-1}(\vphi, x, D) $ 
in $ OPS^{- \frac{m-1}{2} - \frac12} $, 
such that   
\be\label{def:L5m-indu}
{\cal L}_5^{(m )} = \Phi_{m-1}^{- 1} {\cal L}_5^{(m-1)} \Phi_{m-1} \, . 
\ee
\end{itemize}
{\bf Initialization.} 
The real, even and reversible operator $ {\cal L}_5^{(0)} = {\cal L}_5 $ in \eqref{0104.21} 
satisfies the assumptions
\eqref{cal Lm}-\eqref{stima induttiva disaccoppiamento derivate in i} for $m = 0$ 
by \eqref{stima cal P4 Q4 inizio decoupling}-\eqref{stima derivate cal P4 Q4 inizio decoupling}.
\\[1mm]
\noindent
{\bf Inductive step.}  We conjugate $ {\cal L}_5^{(m)} $ in \eqref{cal Lm} by  a
real  operator of the form (see \eqref{defPhi-n-m-1}) 
\be\label{defPhi-n}
\begin{aligned}
& \Phi_m := {\mathbb I}_2 + \Psi_m\,, \ \Psi_m := \begin{pmatrix}
0 & \psi_m(\vphi, x, D) \\
\overline {\psi_m(\vphi, x, D)} & 0
\end{pmatrix}\,, \ 
 \psi_m(\vphi, x, D) := {\rm Op} ( \psi_m )  \in OPS^{- \frac{m}{2} - \frac12}\, . 
\end{aligned}
\ee
We compute 
\begin{align}
{\cal L}_5^{(m)} \Phi_m & =  \Phi_m \big( \Dom   + \ii a_7 |D|^{\frac12} T_\h^{\frac12} \Sigma + a_8{\mathbb I}_2 + \ii \Pi_0 +  {\cal P}_5^{(m)}  \big) \nonumber\\
& \quad + \big[\ii a_7 |D|^{\frac12} T_\h^{\frac12} \Sigma  + a_8 {\mathbb I}_2 + \ii \Pi_0 +   
{\cal P}_5^{(m)}, \Psi_m \big] + (\Dom \Psi_m)
+ {\cal Q}_5^{(m)} + {\cal Q}_5^{(m)} \Psi_m\,. \label{coniugato-n-decoupling}
\end{align}
In the next lemma we choose $ \Psi_m $ to decrease the order of the off-diagonal operator $ {\cal Q}_5^{(m)} $. 

\begin{lemma}\label{lemma-bassan-dec}
Let 
\be\label{def:psin} 
\psi_m(\vphi, x, \xi) := \begin{cases} - \dfrac{\chi(\xi) q_m(\vphi, x, \xi) }{2 \ii a_7(\vphi, x) |\xi|^{\frac12} 
\tanh^{\frac12} (\h |\xi|)} \ \, \   
\qquad \ \rm{if} \quad |\xi| > \frac13 \, , \\
0 \qquad \qquad \qquad\qquad \, \qquad \qquad \qquad \ \rm{if}  \quad   |\xi| \leq \frac13\,,
\end{cases} \qquad \psi_m \in S^{- \frac{m}{2} - \frac12} \, , 
\ee 
where the cut-off function $\chi$ is defined in \eqref{cut off simboli 1}. 
Then the operator $ \Psi_m $ in \eqref{defPhi-n} solves  
\be\label{ourgoaln}
\ii \big[ a_7 |D|^{\frac12} T_\h^{\frac12} \Sigma, \Psi_m \big] + {\cal Q}_5^{(m)}  =   {\cal Q}_{ \psi_m} 
\ee
where
\begin{equation}\label{R T psi n definizione}
 {\cal Q}_{\psi_m} :=   \begin{pmatrix}
0 & q_{ \psi_m}(\vphi, x, D) \\
\overline{q_{\psi_m}(\vphi, x, D)} 
\end{pmatrix},\quad  q_{\psi_m} \in S^{- \frac{m}{2} - 1}  \, .
\end{equation}
Moreover, there exists $\sigma(k_0, \tau, \nu) > 0$ such that,
for any $\alpha > 0$, if \eqref{ansatz I delta} holds with 
$\mu_0 \geq  \aleph_4(m, \alpha + 1)+ \alpha + \frac{m}{2} + \sigma + 4 $, then 
\begin{equation}\label{R T psi n}
\norma q_{\psi_m}(\vphi, x, D)  \norma_{ - \frac{m}{2} - 1, s, \alpha}^{k_0, \gamma} 
\lesssim_{s, \alpha}   \e \g^{-1} \big( 1 + \| \fracchi_0 \|_{s +  \aleph_4(m, \alpha + 1) + \frac{m}{2} + \alpha  + \sigma + 4}^{k_0, \gamma} \big)\, . 
\end{equation}
The map $ \Psi_m $ is real, even, reversibility preserving and
\begin{align}\label{stima psi n}
& \norma \psi_m(\vphi, x, D) 
\norma_{ - \frac{m}{2} - \frac12 , s, \alpha}^{k_0, \gamma}  
\lesssim_{m, s, \alpha} \e \g^{-1} \big(1 + \| \fracchi_0 \|_{s + \sigma + \aleph_4(m, \alpha)}^{k_0, \gamma} \big)\,,\\
& \label{derivate i psi n}  \norma \Delta_{12}\psi_m( \vphi, x, D)  \norma_{- \frac{m}{2} - \frac12, s_1, \alpha} \lesssim_{m, s_1, \alpha} 
\e \g^{-1} \| \Delta_{12} i \|_{s_1 + \sigma + \aleph_4(m, \alpha)}\,, \\
& \label{derivate i R T psi n}
 \norma \Delta_{12} q_{\psi_m}(\vphi, x, D)  \norma_{- \frac{m}{2} - 1 , s_1, \alpha} \lesssim_{m, s_1, \alpha} \e \g^{-1} \| \Delta_{12} i \|_{s_1 + \aleph_4(m, \alpha + 1) + \frac{m}{2} + \alpha  + \sigma + 4  }\,.
\end{align}
\end{lemma}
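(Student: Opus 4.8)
\textbf{Proof plan for Lemma \ref{lemma-bassan-dec}.}
The plan is to solve the homological equation \eqref{ourgoaln} at the level of symbols, using the symbolic calculus of Section \ref{sec:pseudo}. First I would observe that $\Sigma$ is diagonal while $\Psi_m$ is purely off-diagonal, so the matrix commutator $[a_7 |D|^{\frac12} T_\h^{\frac12}\Sigma, \Psi_m]$ is again off-diagonal, with $(1,2)$-entry equal to
$$
a_7 |D|^{\frac12} T_\h^{\frac12} \circ \psi_m(\vphi,x,D) + \psi_m(\vphi,x,D) \circ a_7 |D|^{\frac12} T_\h^{\frac12}\,,
$$
because conjugating by $\Sigma$ flips the sign of the second factor. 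Thus \eqref{ourgoaln}, read in the $(1,2)$-slot, becomes a scalar equation: I want
$$
\ii\big( a_7 |D|^{\frac12} T_\h^{\frac12} \psi_m(\vphi,x,D) + \psi_m(\vphi,x,D)\, a_7 |D|^{\frac12} T_\h^{\frac12}\big) + Q_5^{(m)} = q_{\psi_m}(\vphi,x,D)
$$
with $q_{\psi_m}\in S^{-\frac m2 -1}$. By the composition expansion \eqref{expansion symbol}, the principal symbol of the left-hand side (before $Q_5^{(m)}$) is $2\ii a_7(\vphi,x)\chi(\xi)|\xi|^{\frac12}\tanh^{\frac12}(\h|\xi|)\,\psi_m(\vphi,x,\xi)$, of order $-\frac m2+\frac12$; setting this equal to $-q_m(\vphi,x,\xi)$ (the symbol of $Q_5^{(m)}$, which is in $S^{-\frac m2}$) and solving gives exactly the formula \eqref{def:psin}, well defined for $|\xi|>\frac13$ since there $a_7\approx 1\neq 0$ and $\tanh^{\frac12}(\h|\xi|)$ is bounded below; the cut-off for $|\xi|\le\frac13$ is harmless and keeps $\psi_m$ smooth. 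With this choice, the remainder $q_{\psi_m}$ collects the lower-order terms of the two compositions, i.e. the $\beta\ge1$ terms of \eqref{expansion symbol} for $a_7|D|^{\frac12}T_\h^{\frac12}\circ\psi_m(\vphi,x,D)$ and all terms of $\psi_m(\vphi,x,D)\circ a_7|D|^{\frac12}T_\h^{\frac12}$ minus its principal part; each such term gains one order, so $q_{\psi_m}\in S^{-\frac m2-1}$, proving \eqref{R T psi n definizione}.

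Next I would record the parity and reversibility structure. Since $Q_5^{(m)}$ is the off-diagonal block of a real, even, reversible operator (the inductive hypothesis on ${\cal L}_5^{(m)}$) and $a_7 = \even(\ph)\even(x)$, $\tanh^{\frac12}(\h|\xi|)$ and $|\xi|^{\frac12}\chi(\xi)$ are even in $\xi$, the symbol $\psi_m$ inherits the symmetry making $\Psi_m$ (and hence $\Phi_m = {\mathbb I}_2+\Psi_m$) real, even and reversibility preserving — this is exactly the bookkeeping of Section \ref{sezione operatori reversibili e even} and Lemma \ref{even:pseudo}, applied to a quotient $q_m/(a_7|\xi|^{\frac12}\tanh^{\frac12}(\h|\xi|))$. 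Consequently $q_{\psi_m}$ too has the parity of an off-diagonal block of an even reversible operator, which will be used in the inductive step.

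For the quantitative estimates \eqref{stima psi n}, \eqref{R T psi n}, \eqref{derivate i psi n}, \eqref{derivate i R T psi n}, the mechanism is entirely routine given the toolbox: formula \eqref{def:psin} expresses $\psi_m$ as a product of $q_m\in S^{-\frac m2}$ with $1/a_7$ (estimated by Lemma \ref{Moser norme pesate} and \eqref{norma a moltiplicazione} using $\|a_7-1\|\ll1$) and the fixed Fourier multiplier $\chi(\xi)|\xi|^{-\frac12}\tanh^{-\frac12}(\h|\xi|)\in S^{-\frac12}$ (bounded by \eqref{Norm Fourier multiplier}); applying the composition estimate \eqref{estimate composition parameters} and the inductive bound \eqref{stima induttiva disaccoppiamento} for $Q_5^{(m)}$ gives \eqref{stima psi n} with the loss $\aleph_4(m,\alpha)+\sigma$. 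Then $q_{\psi_m}$ is obtained from the remainders $r_N$ of two compositions via \eqref{stima RN derivate xi parametri} (with $N=1$ on the first composition, and writing $\psi_m A = Ap_m + \mathrm{Op}(r)$ with $A = a_7|D|^{\frac12}T_\h^{\frac12}$ and $r$ of lower order): each application costs a fixed number of extra $\vphi,x$-derivatives and $\xi$-derivatives, which is precisely how the constant $\aleph_4(m,\alpha+1)+\frac m2+\alpha+\sigma+4$ in \eqref{R T psi n} arises, matching the recursion \eqref{definizione kn(alpha)}. The $\Delta_{12}$-estimates follow verbatim by the same computations, differentiating the explicit formulas and using the already-established $\Delta_{12}$-bounds \eqref{stima induttiva disaccoppiamento derivate in i}, \eqref{stime an bn cn (3)}. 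The only mildly delicate point — the main obstacle, such as it is — is the careful tracking of how many derivatives in $(\vphi,x,\xi)$ are consumed at each step so that the constants $\aleph_4(m,\alpha)$ defined by \eqref{definizione kn(alpha)} genuinely dominate all the losses; this is a matter of patient accounting with \eqref{estimate composition parameters}–\eqref{stima RN derivate xi parametri} rather than a conceptual difficulty.
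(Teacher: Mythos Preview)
Your proposal is correct and follows essentially the same route as the paper: compute the matrix commutator to reduce to the scalar anticommutator $a_7|D|^{1/2}T_\h^{1/2}\,\mathrm{Op}(\psi_m)+\mathrm{Op}(\psi_m)\,a_7|D|^{1/2}T_\h^{1/2}$, expand symbolically via \eqref{expansion symbol} with $N=1$, solve the principal-symbol equation to obtain \eqref{def:psin}, and then estimate the remainder with \eqref{stima RN derivate xi parametri}. One small point you gloss over: since $\psi_m$ only kills $\chi(\xi)q_m$ rather than $q_m$ itself, the remainder $q_{\psi_m}$ also contains the harmless $S^{-\infty}$ term $(1-\chi(\xi))q_m$ (see the paper's \eqref{puffo 0}), which you should mention explicitly when writing out the proof.
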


\begin{proof}
We first note that in \eqref{def:psin} the denominator 
$ a_7 |\xi|^{\frac12} \tanh(\h |\xi|)^{\frac12} \geq c | \xi |^{\frac12}  $ with $ c >  0 $
for all $ | \xi | \geq 1 / 3 $, since $ a_7 - 1 = O(\e \gamma^{-1})$ by  \eqref{stime an bn cn (3)} and \eqref{ansatz I delta}.
Thus the symbol $ \psi_m $ is well defined and estimate  \eqref{stima psi n} follows by \eqref{def:psin},  
\eqref{lemma composizione multiplier} and \eqref{stima induttiva disaccoppiamento}, \eqref{stime an bn cn (3)},
Lemma \ref{Moser norme pesate}, \eqref{ansatz I delta}. Recalling the definition \eqref{def pauli matrix cal R4} of $\Sigma$,
the vector valued commutator $\ii [ a_7 |D|^{\frac12} T_\h^{\frac12} \Sigma, \Psi_m ] $ is 
\be\label{VVcommutator} 
\ii \big[ a_7 |D|^{\frac12} T_\h^{\frac12} \Sigma, \Psi_m \big]  
=   \begin{pmatrix}
0 & A  \\
\bar A  & 0
\end{pmatrix} \, , \qquad 
A := \ii \big(a_7 |D|^{\frac12} T_\h^{\frac12}  {\rm Op} ( \psi_m )  + {\rm Op} ( \psi_m )  a_7 |D|^{\frac12} T_\h^{\frac12} \big) \, . 
\ee
 By \eqref{VVcommutator}, in order to solve \eqref{ourgoaln}
 with a  remainder $ {\cal Q}_{\psi_m} \in OPS^{- \frac{m}{2} - 1 } $  as in \eqref{R T psi n definizione},  
we have to solve 
\be\label{sol-voluta}
\ii a_7 |D|^{\frac12} T_\h^{\frac12} {\rm Op} ( \psi_m )  + 
\ii {\rm Op} ( \psi_m ) a_7 |D|^{\frac12} T_\h^{\frac12} + 
{\rm Op} ( q_m ) = {\rm Op} ( q_{\psi_m}) \in OPS^{- \frac{m}{2} - 1} \, . 
\ee
By \eqref{expansion symbol}, applied with $N = 1$, $ A = a_7 |D|^{\frac12} T_\h^{\frac12} $, $ B = {\rm Op}(\psi_m) $, and
\eqref{definizione Dm}, we have  the expansion
\begin{align}
& a_7 |D|^{\frac12} T_\h^{\frac12}  {\rm Op}(\psi_m)+ {\rm Op}(\psi_m) a_7 |D|^{\frac12} T_\h^{\frac12}   =  {\rm Op}\big(2 a_7 |\xi|^{\frac12} \tanh^{\frac12} (\h |\xi|) \psi_m  \big) 
+ {\rm Op}({\mathfrak q}_{\psi_m}) 
\label{primo-svi1}
\end{align}
where,  using that 
$ a_7 \chi(\xi) |\xi|^{\frac12} \tanh^{\frac12} (\h \chi (\xi)  |\xi|) \in S^{\frac12} $ and 
$ \psi_m  \in S^{-\frac{m}{2} -\frac12} $,  the symbol
\be\label{eq:frakq}
\mathfrak q_{\psi_m} = r_{1, AB} + r_{1, BA}  + 
2 a_7 |\xi|^{\frac12} 
\big(  \tanh^{\frac12} (\h \chi (\xi) |\xi|) \chi (\xi)  -  \tanh^{\frac12} (\h |\xi|) \big)  
\psi_m  
\in S^{- \frac{m}{2} - 1} \, , 
\ee
recalling that $ 1- \chi (\xi)  \in S^{-\infty} $ by  \eqref{cut off simboli 1}.
The symbol  $ \psi_m  $ in \eqref{def:psin} is the solution of  
\be\label{equazione-per-decrescere}
2\ii a_7 |\xi|^{\frac12} \tanh^{\frac12} (\h |\xi|) \psi_m  + \chi(\xi) q_m   = 0 \, ,
\ee
and therefore, by  \eqref{primo-svi1}-\eqref{equazione-per-decrescere}, 
the remainder $ q_{\psi_m}  $ in \eqref{sol-voluta} is 
\begin{equation}\label{puffo 0}
q_{\psi_m}   = \ii  \mathfrak q_{ \psi_m} + (1 - \chi(\xi)) q_m \in S^{- \frac{m}{2}- 1}\,.
\end{equation}
This proves \eqref{ourgoaln}-\eqref{R T psi n definizione}. We now prove \eqref{R T psi n}. 
We first estimate \eqref{eq:frakq}. 
By \eqref{stima RN derivate xi parametri} (applied  with $N = 1$, 
$A = a_7 |D|^{\frac12} T_\h^{\frac12}$, $B = {\rm Op}(\psi_m) $, $m = 1/2$, $m' 
= - \frac{m}{2} -  \frac12 $ and also by inverting the role of $A$ and $B$), and the estimates 
\eqref{stima psi n},  \eqref{stime b0 (4) prima di decoupling}, \eqref{ansatz I delta}
we have $\norma \mathfrak q_{\psi_m}(\vphi, x, D) \norma_{- \frac{m}{2} - 1, s, \alpha}^{k_0, \gamma} 
\lesssim_{m, s, \alpha}
\e \g^{-1} \big( 1 + \| \fracchi_0 \|^{k_0, \gamma}_{s + \sigma + \aleph_4(m, \alpha + 1) + \frac{m}{2} + \alpha + 4} \big)$
and the estimate \eqref{R T psi n} for $q_{ \psi_m}(\vphi, x, D)$  follows by \eqref{puffo 0} 
 using  \eqref{stima induttiva disaccoppiamento}, recalling that 
 $1 - \chi(\xi) \in S^{- \infty}$ and by applying \eqref{lemma composizione multiplier} with $g(D) = 1 - \chi(D)$ and 
 $A = q_m(\vphi, x, D)$. 
Bounds \eqref{derivate i psi n}-\eqref{derivate i R T psi n} follow by similar arguments and by a repeated use of the triangular inequality. 

Finally, the map $ \Psi_m $ defined by \eqref{defPhi-n}, \eqref{def:psin}  is real, even and reversibility preserving 
because $ {\cal Q}_5^{(m)} $ is real, even, reversible and $a_7$ is ${\rm even}(\vphi) {\rm even}(x)$ (see \eqref{0104.15}).   
\end{proof}

For $\e \g^{-1}$ small enough, by \eqref{stima psi n} and \eqref{ansatz I delta} 
the operator $\Phi_m$ is invertible, and, by Lemma \ref{Neumann pseudo diff},
\begin{equation} \label{2017.2704.1}
\norma \Phi_m^{-1} - \mathbb{I}_2 \norma^{k_0,\g}_{0,s,\a} \lesssim_{s,\a} 
\norma \Psi_m \norma^{k_0,\g}_{0,s,\a} \lesssim_{s,\a}
\e \g^{-1} \big( 1 + \| \fracchi_0 \|_{s + \sigma + \aleph_4(m, \alpha)}^{k_0, \gamma} \big) \, .
\end{equation}
By \eqref{coniugato-n-decoupling} and \eqref{ourgoaln}, 
the conjugated operator is 
\be\label{defin:Ln+1}
{\cal L}_5^{(m + 1)} := \Phi_m^{- 1} {\cal L}_5^{(m)} \Phi_m = \Dom + 
\ii a_7 |D|^{\frac12} T_\h^{\frac12} \Sigma + a_8 {\mathbb I}_2 + \ii \Pi_0 +  {\cal P}_5^{(m)} + \breve{\cal P}_{m + 1} 
\ee
where $ \breve{\cal P}_{m + 1} := \Phi_m^{- 1} {\cal P}^*_{m + 1} $ and 
\be \label{bf R n + 1-star}
{\cal P}^*_{m + 1} := 
{\cal Q}_{ \psi_m} + \big[ \ii \Pi_0, \Psi_m \big] + \big[ a_8 {\mathbb I}_2 + {\cal P}_5^{(m)}, \Psi_m \big] + 
(\omega \cdot \partial_\vphi \Psi_m) + {\cal Q}_5^{(m)} \Psi_m \, . 
\ee
Thus \eqref{def:L5m-indu} at order $ m + 1 $ is proved. 
Note that $ \breve{\cal P}_{m + 1} $ and $\Pi_0$ are the only operators in \eqref{defin:Ln+1} containing off-diagonal terms. 

\begin{lemma}\label{stima-Rn+1-dec}
The operator $\breve{\cal P}_{m + 1} \in OPS^{- \frac{m}{2} - \frac12}$. Furthermore, for any $\alpha > 0$, assuming \eqref{ansatz I delta} with $\mu_0 \geq \sigma + \aleph_4( m + 1, \alpha)$, the following estimates hold:   
\begin{align}\label{stima cal R n+1}
& \norma \breve{\cal P}_{m + 1} \norma_{ - \frac{m}{2} - \frac12, s, \alpha}^{k_0, \gamma} 
\lesssim_{m, s, \alpha} \e \g^{-1} 
\big( 1 + \| \fracchi_0 \|_{s + \sigma + \aleph_4( m + 1, \alpha)}^{k_0, \gamma} \big) \,, \quad \forall s \geq s_0 \, , \\
& \label{stima derivata i cal R n+1}
\norma \Delta_{12} \breve{\cal P}_{m + 1}  \norma_{ - \frac{m}{2} - \frac12, s_1 , \alpha} 
\lesssim_{m, s_1, \alpha} \e \g^{-1} \| \Delta_{12} i \|_{s_1 + \sigma + \aleph_4( m + 1, \alpha)}
\end{align}
where the  constant $ \aleph_4( m + 1 , \alpha )$  is defined  in \eqref{definizione kn(alpha)}. 
\end{lemma}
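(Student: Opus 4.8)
\textbf{Proof plan for Lemma \ref{stima-Rn+1-dec}.}
The plan is to read off the order and the estimates of $\breve{\cal P}_{m+1} = \Phi_m^{-1}\,{\cal P}^*_{m+1}$ directly from the explicit formula \eqref{bf R n + 1-star} for ${\cal P}^*_{m+1}$, treating each of its five summands separately, and then composing with $\Phi_m^{-1}$. First I would check that each summand lies in $OPS^{-\frac{m}{2}-\frac12}$: the term ${\cal Q}_{\psi_m}$ is in $OPS^{-\frac{m}{2}-1}\subset OPS^{-\frac{m}{2}-\frac12}$ by \eqref{R T psi n definizione}; the commutator $[\ii\Pi_0,\Psi_m]$ involves $\Pi_0\in OPS^{-\infty}$ and $\Psi_m\in OPS^{-\frac{m}{2}-\frac12}$, hence is smoothing of any order and in particular in $OPS^{-\frac m2-\frac12}$; the commutator $[a_8{\mathbb I}_2+{\cal P}_5^{(m)},\Psi_m]$ is the commutator of an element of $OPS^0$ (since $a_8$ is a multiplication operator and $P_5^{(m)}\in OPS^{-\frac12}$) with $\Psi_m\in OPS^{-\frac m2-\frac12}$, so by \eqref{symbol commutator} it lands in $OPS^{0+(-\frac m2-\frac12)-1}=OPS^{-\frac m2-\frac32}$; $(\omega\cdot\partial_\vphi\Psi_m)$ is again in $OPS^{-\frac m2-\frac12}$ because differentiating in $\vphi$ does not change the order in $\xi$; and ${\cal Q}_5^{(m)}\Psi_m$ is the composition of $OPS^{-\frac m2}$ with $OPS^{-\frac m2-\frac12}$, hence in $OPS^{-m-\frac12}\subset OPS^{-\frac m2-\frac12}$. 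Since $\Phi_m^{-1}={\mathbb I}_2+(\Phi_m^{-1}-{\mathbb I}_2)$ with $\Phi_m^{-1}-{\mathbb I}_2\in OPS^{-\frac m2-\frac12}\subset OPS^0$ by \eqref{2017.2704.1}, the composition $\Phi_m^{-1}{\cal P}^*_{m+1}$ stays in $OPS^{-\frac m2-\frac12}$, which is the first assertion.

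For the quantitative bounds, I would estimate the $\norma\ \norma_{-\frac m2-\frac12,s,\alpha}^{k_0,\gamma}$ norm of each summand of ${\cal P}^*_{m+1}$ using the composition and commutator estimates of Lemma \ref{lemma stime Ck parametri} and Lemma \ref{lemma tame norma commutatore}, combined with the already-established bounds: \eqref{R T psi n} for ${\cal Q}_{\psi_m}$, \eqref{stima psi n} for $\Psi_m$, \eqref{stime a0 (4) prima di decoupling} for $a_8$, \eqref{stima induttiva disaccoppiamento} for ${\cal P}_5^{(m)}$ and ${\cal Q}_5^{(m)}$, and \eqref{2017.2704.1} for $\Phi_m^{-1}-{\mathbb I}_2$. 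Each such estimate costs a fixed number of extra derivatives, i.e. produces on the right-hand side a factor $1+\|\fracchi_0\|^{k_0,\gamma}_{s+(\cdot)}$ with the index shifted by some absolute amount depending on $m,\alpha$; the point is to verify that the worst such shift is exactly the $\aleph_4(m+1,\alpha)$ prescribed by the recursion \eqref{definizione kn(alpha)}. Concretely, the dominant contribution is from ${\cal Q}_{\psi_m}$, whose estimate \eqref{R T psi n} already loses $\aleph_4(m,\alpha+1)+\frac m2+\alpha+\sigma+4$ derivatives, matching $\aleph_4(m+1,\alpha)+\sigma$ after absorbing $\sigma$; one checks that the commutator and composition terms, which via Lemma \ref{lemma tame norma commutatore} lose of order $2+\alpha$ additional derivatives relative to the inputs, are all dominated by this same index. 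Finally, multiplying by $\Phi_m^{-1}$ via Lemma \ref{lemma stime Ck parametri} adds only a bounded-order factor (since $\Phi_m^{-1}-{\mathbb I}_2$ is small in low norm by \eqref{2017.2704.1} and $\e\gamma^{-1}$ is small), so the product estimate keeps the same form, yielding \eqref{stima cal R n+1}.

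The estimate \eqref{stima derivata i cal R n+1} for the variation $\Delta_{12}$ is obtained in exactly the same way, differentiating the formula \eqref{bf R n + 1-star} and \eqref{defin:Ln+1} with respect to the torus: one uses the Leibniz-type rule that $\Delta_{12}$ of a composition or commutator is controlled by the sum of terms where $\Delta_{12}$ hits one factor while the others are bounded in the appropriate norm, invoking now the $\Delta_{12}$-bounds \eqref{derivate i R T psi n}, \eqref{derivate i psi n}, \eqref{stima derivate cal P4 Q4 inizio decoupling}, \eqref{stima induttiva disaccoppiamento derivate in i}, and the $\Delta_{12}$-estimate of $\Phi_m^{-1}-{\mathbb I}_2$ (which follows from \eqref{derivate i psi n} and a Neumann series argument as in Lemma \ref{Neumann pseudo diff}). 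Since we only need $\Delta_{12}$-estimates in the fixed low norm $s_1$ (as in \eqref{vincolo s1 derivate i}), the ansatz \eqref{estimate:low norm for der} supplies the required smallness, and the loss-of-derivatives index is again $\aleph_4(m+1,\alpha)+\sigma$.

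The main obstacle, and the only genuinely careful bookkeeping in the proof, is the verification that the recursion \eqref{definizione kn(alpha)} is \emph{consistent} with the derivative losses incurred at each of the five terms of \eqref{bf R n + 1-star}: one must ensure that the extra $\frac m2+2\alpha+4$ appearing in $\aleph_4(m+1,\alpha)=\aleph_4(m,\alpha+1)+\frac m2+2\alpha+4$ genuinely absorbs the worst case, which comes from combining the $\frac m2+1$ order drop in the commutator/composition formulas (Lemma \ref{lemma stime Ck parametri}, remainder estimate \eqref{stima RN derivate xi parametri}) with the $\alpha+1$ derivative count needed to run those estimates at regularity $\alpha$, plus the fixed number of derivatives lost in passing through $a_7,a_8$ and the cut-off $\chi$. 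Everything else is a routine, if lengthy, application of the pseudo-differential calculus lemmas of Section \ref{sec:pseudo}, so I would not grind through it in detail.
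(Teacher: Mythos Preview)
Your proposal is correct and follows essentially the same approach as the paper: the paper's own proof is a one-line list of ingredients (Lemma \ref{lemma-bassan-dec}, \eqref{cal Rm cal Qm decoupling}, \eqref{defPhi-n}, \eqref{estimate composition parameters}, \eqref{stime a0 (4) prima di decoupling}, \eqref{stima induttiva disaccoppiamento}, \eqref{stima induttiva disaccoppiamento derivate in i}, \eqref{crescente-m-neg}, \eqref{bf R n + 1-star}, \eqref{2017.2704.1}), and you have correctly identified and unpacked all of them. The only minor remark is that your claim that the ${\cal Q}_{\psi_m}$ term ``matches'' $\aleph_4(m+1,\alpha)+\sigma$ is slightly imprecise --- by \eqref{definizione kn(alpha)} one has $\aleph_4(m+1,\alpha)=\aleph_4(m,\alpha+1)+\frac m2+2\alpha+4$, so the loss $\aleph_4(m,\alpha+1)+\frac m2+\alpha+4$ from \eqref{R T psi n} is \emph{dominated by} (not equal to) $\aleph_4(m+1,\alpha)$; the extra $\alpha$ margin in the recursion is needed to absorb the losses from the commutator term $[a_8\mathbb I_2+{\cal P}_5^{(m)},\Psi_m]$ via Lemma \ref{lemma tame norma commutatore}, which requires inputs at regularity $\alpha+1$ and loses roughly $2+|m'|+\alpha$ derivatives with $|m'|=\frac m2+\frac12$.
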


\begin{proof}
Use  Lemma \ref{lemma-bassan-dec}, \eqref{cal Rm cal Qm decoupling}, \eqref{defPhi-n},  \eqref{estimate composition parameters}, \eqref{stime a0 (4) prima di decoupling}, \eqref{stima induttiva disaccoppiamento}, \eqref{stima induttiva disaccoppiamento derivate in i}, \eqref{crescente-m-neg}, \eqref{bf R n + 1-star}, \eqref{2017.2704.1}. 
 \end{proof}

The operator $ {\cal L}_5^{(m +1)} $ in \eqref{defin:Ln+1} 
has the same form \eqref{cal Lm} as $\mL_5^{(m)}$ 
with diagonal operators $ {\cal P}_5^{(m + 1)} $ 
and off-diagonal operators $ {\cal Q}_5^{(m+1)} $ like in \eqref{cal Rm cal Qm decoupling},
with 
$ {\cal P}_5^{(m + 1)} + {\cal Q}_5^{(m + 1)} = \mP_5^{(m)} + \breve \mP_{m+1} $, 
satisfying 
\eqref{stima induttiva disaccoppiamento}-\eqref{stima induttiva disaccoppiamento derivate in i}
at the step $ m +  1$ 
thanks to \eqref{stima cal R n+1}-\eqref{stima derivata i cal R n+1} 
and \eqref{stima induttiva disaccoppiamento}-\eqref{stima induttiva disaccoppiamento derivate in i}
at the step $m$.
This proves the inductive claim.
Applying it $ 2M $ times (the constant $M$ will be fixed in \eqref{relazione mathtt b N}), we derive the following lemma.

\begin{lemma}\label{Lemma finale decoupling}
For any $\alpha > 0$, assuming \eqref{ansatz I delta} with $\mu_0 \geq \aleph_5(M, \alpha) + \sigma$ where the constant $\aleph_5(M, \alpha) := \aleph_4(2 M, \alpha)$ is defined recursively by \eqref{definizione kn(alpha)}, the following holds. The  real, even, reversibility preserving, invertible map 
\begin{equation}\label{def bf Phi M}
{ \bf \Phi}_{M} := \Phi_0 \circ \ldots \circ \Phi_{2 M-1} 
\end{equation}
where $\Phi_m$, $m=0,\ldots, 2M-1$, are defined in \eqref{defPhi-n},  satisfies  
\begin{align}\label{definizione bf PhiN}
& \norma {\bf \Phi}_M^{\pm 1} - {\mathbb I}_2 \norma_{0,s,0}^{k_0, \gamma}\,,\, \norma ({\bf \Phi}_M^{\pm 1} - {\mathbb I}_2)^* \norma_{0,s,0}^{k_0, \gamma} \lesssim_{s, M } 
\e \g^{-1} \big( 1 + \|\fracchi_0 \|_{s + \sigma + \aleph_5(M, 0)}^{k_0, \gamma} \big) \, , \quad \forall s \geq s_0 \, , \\
& \label{bf PhiN derivate in i}
\norma \Delta_{12} {\bf \Phi}_M^{\pm 1}  \norma_{0, s_1, 0} \,,\,\norma \Delta_{12} ({\bf \Phi}_M^{\pm 1})^*  \norma_{0, s_1, 0}  \lesssim_{M, s_1}  
\e \g^{-1} \| \Delta_{12} i \|_{s_1 + \sigma + \aleph_5 (M, 0)} \, . 
\end{align}
The map $ { \bf \Phi}_{M} $ conjugates $ {\cal L}_5 $ to the real, even and reversible operator 
\begin{equation}\label{cal LN decoupling}
{\cal L}_6  := {\bf \Phi}_M^{- 1} {\cal L}_5 {\bf \Phi}_M =  \Dom + \ii a_7 |D|^{\frac12} T_\h^{\frac12} \Sigma + a_8 {\mathbb I}_2 +  \ii \Pi_0 + {\cal P}_6 + {\cal Q}_6 
\end{equation}
where the functions $ a_7, a_8 $ are defined in \eqref{0104.14}, \eqref{0104.21}, and 
\be\label{resti prima Egorov}
{\cal P}_6 := \begin{pmatrix}
P_6 & 0 \\
0 & \overline {P}_6
\end{pmatrix} 
\in OPS^{- \frac12}  \,, \quad 
{\cal  Q}_6 := \begin{pmatrix}
0 & Q_6\\
 \overline Q_6 & 0
 \end{pmatrix} 
\in OPS^{- M} 
\ee
given by $  \mP_6 := \mP_5^{(2M)}$, $  \mQ_6  :=  \mQ_5^{(2M)} $
 in \eqref{cal Lm}-\eqref{cal Rm cal Qm decoupling} for $m = 2M$, satisfy
\begin{align}\label{stima resti prima Egorov}
& \norma {\cal P}_6 \norma_{- \frac12, s, \alpha}^{k_0, \gamma} + \norma {\cal Q}_6 
 \norma_{ - M , s, \alpha}^{k_0, \gamma} \lesssim_{M, s,\a}  
\e \g^{-1} \big(1 + \| \fracchi_0 \|_{s + \sigma + \aleph_5 (M, \alpha)}^{k_0, \gamma} \big)\,, \quad \forall s \geq s_0\,, \\
& \norma \Delta_{12} {\cal P}_6\norma_{- \frac12, s_1, \alpha} + \norma  \Delta_{12} {\cal Q}_6
 \norma_{ - M  , s_1 , \alpha} \lesssim_{M, s_1,\a}  
\e \g^{-1} \| \Delta_{12} i \|_{s_1 + \sigma + \aleph_5 (M, \alpha)} \, .  \label{stima derivate resti prima Egorov} 
\end{align}
\end{lemma}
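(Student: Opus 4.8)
The plan is to prove Lemma \ref{Lemma finale decoupling} by iterating the inductive claim established in the ``Symmetrization of $\mathcal L_5^{(0)}$ in decreasing orders'' paragraph exactly $2M$ times, and then bookkeeping the resulting estimates. First I would set $m$ ranging over $0,1,\dots,2M$ and invoke the inductive step: at each stage the operator $\mathcal L_5^{(m)}$ in \eqref{cal Lm} has off-diagonal part $\mathcal Q_5^{(m)}\in OPS^{-m/2}$, and Lemma \ref{lemma-bassan-dec} produces the symbol $\psi_m$ in \eqref{def:psin} solving the homological equation \eqref{ourgoaln}, so that conjugating by $\Phi_m={\mathbb I}_2+\Psi_m$ yields $\mathcal L_5^{(m+1)}$ in \eqref{defin:Ln+1}, whose new off-diagonal part $\mathcal Q_5^{(m+1)}$ lies in $OPS^{-(m+1)/2}$ by Lemma \ref{stima-Rn+1-dec}. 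After $2M$ iterations the off-diagonal operator $\mathcal Q_6:=\mathcal Q_5^{(2M)}$ lies in $OPS^{-M}$ and the diagonal part $\mathcal P_6:=\mathcal P_5^{(2M)}$ remains in $OPS^{-1/2}$; this gives \eqref{cal LN decoupling}-\eqref{resti prima Egorov}. The point is that only $\Pi_0$ and the successive $\breve{\mathcal P}_{m+1}$ carry off-diagonal contributions, and the latter get pushed to arbitrarily negative order.

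Next I would assemble the estimates. The composed map ${\bf\Phi}_M=\Phi_0\circ\cdots\circ\Phi_{2M-1}$ in \eqref{def bf Phi M}: each factor $\Phi_m$ satisfies \eqref{stima psi n} and, being close to ${\mathbb I}_2$ in $OPS^0$-norm (using \eqref{ansatz I delta} with $\e\g^{-1}$ small), is invertible with \eqref{2017.2704.1}; then by Lemma \ref{lemma stime Ck parametri} (composition of $\Psi DO$'s) the product and its inverse satisfy \eqref{definizione bf PhiN}, and the adjoint estimate follows from Lemma \ref{stima pseudo diff aggiunto} applied factorwise. Here the loss-of-derivatives constant is tracked through the recursion \eqref{definizione kn(alpha)}: setting $\aleph_5(M,\alpha):=\aleph_4(2M,\alpha)$, at each step the required smoothness of $\fracchi_0$ increases by a fixed amount $\frac{m}{2}+2\alpha+4$, so after $2M$ steps one needs $\mu_0\geq\aleph_5(M,\alpha)+\sigma$ as in the statement. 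The estimates \eqref{stima resti prima Egorov}-\eqref{stima derivate resti prima Egorov} for $\mathcal P_6,\mathcal Q_6$ then follow by specializing \eqref{stima induttiva disaccoppiamento}-\eqref{stima induttiva disaccoppiamento derivate in i} to $m=2M$, and absorbing the $M$-dependent constants into the $\lesssim_{M,s,\alpha}$ notation.

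The reality, evenness, and reversibility-preserving properties are inherited at each step: $\mathcal L_5^{(0)}=\mathcal L_5$ is real, even, reversible by \eqref{0104.21} and Lemma \ref{lemma simmetrizzazione ordine principale}; each $\Psi_m$ (hence $\Phi_m$ and $\Phi_m^{-1}$) is real, even, reversibility preserving because $\mathcal Q_5^{(m)}$ is real, even, reversible and $a_7$ is $\mathrm{even}(\vphi)\mathrm{even}(x)$ (the explicit check is the last sentence of the proof of Lemma \ref{lemma-bassan-dec}); conjugation of a real/even/reversible operator by a real/even/reversibility-preserving map is again real/even/reversible, as recalled in Section \ref{sezione operatori reversibili e even}. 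So $\mathcal L_6$ in \eqref{cal LN decoupling} has the claimed algebraic structure, and in particular $\mathcal P_6,\mathcal Q_6$ have the block form in \eqref{resti prima Egorov}.

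I expect no genuine obstacle here, since the lemma is essentially the clean statement of a completed induction; the only delicate bookkeeping is the bound on $\mathcal Q_6$. The main point to get right is that each homological equation \eqref{equazione-per-decrescere} only gains \emph{half} a derivative of smoothing (because the leading symbol is $|\xi|^{1/2}$, not $|\xi|$), so to reach order $-M$ one must iterate $2M$ times rather than $M$; this is exactly why the constant appearing in \eqref{relazione mathtt b N} will be $M=O(\fm+\mathtt b)$ and why $\aleph_5$ depends on $2M$. Tracking that the loss $\mu_0\geq\aleph_5(M,\alpha)+\sigma$ stays $s$-independent — which it does, since $\aleph_4(m,\alpha)$ in \eqref{definizione kn(alpha)} depends only on $m$ and $\alpha$ — is the one thing worth stating carefully, and it follows directly from the explicit recursion already set up in the excerpt.
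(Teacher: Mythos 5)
Your proposal is correct and takes essentially the same route as the paper: the paper's proof is a terse list of the key references (\eqref{stima induttiva disaccoppiamento}, \eqref{stima induttiva disaccoppiamento derivate in i}, \eqref{defPhi-n}, \eqref{stima psi n}, \eqref{estimate composition parameters}, \eqref{2017.2704.1}, and Lemma \ref{stima pseudo diff aggiunto}), and your narrative unpacks exactly those ingredients. Your remark that $2M$ iterations (not $M$) are needed because each homological step only gains $1/2$ derivative of smoothing, and that this drives both the definition $\aleph_5(M,\alpha)=\aleph_4(2M,\alpha)$ and the size of $M$ in \eqref{relazione mathtt b N}, is a correct and useful clarification of what the paper leaves implicit.
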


\begin{proof}
We use \eqref{stima induttiva disaccoppiamento}, \eqref{stima induttiva disaccoppiamento derivate in i}, \eqref{defPhi-n}, \eqref{stima psi n}, \eqref{estimate composition parameters}, \eqref{2017.2704.1} and Lemma \ref{stima pseudo diff aggiunto}.
\end{proof}

\section{Reduction of the order $1/2$} \label{sec:primo semi-FIO}

We have obtained the operator $\mL_6$ in \eqref{cal LN decoupling}, 
where ${\cal P}_6 $ is in $ OPS^{- \frac12}$ and the off-diagonal term ${\cal Q}_6 $ is in $ OPS^{-M} $. 
The goal of this section is to reduce to {\it constant coefficient} the leading term 
$\ii a_7(\vphi, x) |D|^{\frac12} T_\h^{\frac12} \Sigma $.
To this end, we study how the operator 
$
{\cal L}_6
$
transforms under the action of the flow $\Phi(\tau) := \Phi(\tau, \ph)$ 
\begin{equation}\label{flow-propagator-beta}
\begin{cases}
\partial_\tau \Phi( \tau) = \ii  A(\vphi)  \Phi(\tau) \\
\Phi(0) = {\rm Id} \,,
\end{cases}
\qquad A(\vphi) := \beta (\vphi, x) |D|^{\frac12} 
\end{equation}
  where the function $\beta(\vphi, x)$ is a real valued smooth function, which will be defined in \eqref{defbetaSFIO}.  
 Since  $ \beta (\vphi, x) $ is real valued, usual energy estimates imply that the flow $ \Phi (\t, \vphi)$ is a 
bounded operator on Sobolev spaces satisfying tame estimates, see Section \ref{AppendiceA}.

Let $\Phi:= \Phi(\ph) := \Phi(1,\ph)$. Note that $\Phi^{-1}=\overline\Phi$ (see Section \ref{AppendiceA}) and 
\begin{equation} \label{2017.0316.1}
\Phi \pi_0 = \pi_0 = \Phi^{-1} \pi_0\,. 
\end{equation}
We write  the operator  $ {\cal L}_6$ in \eqref{cal LN decoupling} as 
$$ 
{\cal L}_6 =  
 \Dom +   \ii \Pi_0  + 
\begin{pmatrix} P_6^{(0)}   & Q_6 \\ \overline Q_6 & \overline P_6^{(0)} 
\end{pmatrix} 
$$ 
where $\Pi_0$ is defined in \eqref{def pauli matrix cal R4}, $Q_6$  in \eqref{resti prima Egorov}, and
\begin{equation}\label{definizione PM primo Egorov}
P_6^{(0)} := P_6^{(0)}(\vphi, x, D) :=\ii a_7 |D|^{\frac12} T_\h^{\frac12} +a_8 +  P_6
\end{equation}
with $P_6$ defined in \eqref{resti prima Egorov}. 
Conjugating  $ {\cal L}_6$ with the real operator 
\begin{equation}\label{0806.1}
{\bf \Phi}:= \begin{pmatrix} \Phi & 0 \\ 0 & \overline\Phi \end{pmatrix} 
\end{equation}
we get, since ${\bf \Phi}^{- 1} \Pi_0 {\bf \Phi} = \Pi_0 {\bf \Phi}$ by \eqref{2017.0316.1},  
\begin{equation} \label{0806.2}
\mL_7 := {\bf \Phi}^{- 1}\mL_6 {\bf \Phi} 
= \ompaph + {\bf \Phi}^{-1}\big( \ompaph {\bf \Phi} \big) 
+ \ii \Pi_0 {\bf \Phi} 
+ \begin{pmatrix} \Phi^{-1} P_6^{(0)}   \Phi & \Phi^{-1} Q_6 \ov \Phi \\ 
{\ov \Phi}^{-1} \, \overline Q_6 \Phi & {\ov \Phi}^{-1} \, \ov P_6^{(0)}   \ov \Phi  \end{pmatrix}\, . 
\end{equation}
Let us study the operator
\begin{equation}\label{coniugio flusso di una PDE iperbolica}
L_7:= \ompaph + \Phi^{-1} \big( \ompaph \Phi \big) + \Phi^{-1} P_6^{(0)} \Phi \, .
\end{equation}

\noindent
{\sc Analysis of the term $\Phi^{-1} P_6^{(0)} \Phi$.}
Recalling  \eqref{flow-propagator-beta}, the operator $P (\t,\ph) := \Phi(\t,\ph)^{-1} P_6^{(0)} \Phi(\t,\ph)$ 
satisfies the equation 
$$
\pa_\tau P(\tau, \vphi) =  - \ii \Phi(\t,\ph)^{-1} \big[ A(\vphi)  , P_6^{(0)}  \big] \Phi(\t,\ph) \,. 
$$
Iterating this formula, and using the notation $ {\rm Ad}_{A(\vphi)} P_6^{(0)} := \big[ A(\vphi), P_6^{(0)} \big]$, 
we obtain the following Lie series expansion of the conjugated operator 
\begin{align} 
\Phi(1,\ph)^{-1} P_6^{(0)} \Phi(1,\ph) & = P_6^{(0)}
- \ii [A, P_6^{(0)}] +
\sum_{n=2}^{2M} \frac{(-\ii)^n}{n!} {\rm Ad}_{A(\vphi)}^n P_6^{(0)}
\nonumber \\
& \quad \ + \frac{(-\ii)^{2M+1}}{(2M)!} \int_0^1 (1- \tau)^{2M} 
\Phi(\t,\ph)^{-1}  {\rm Ad}_{A(\vphi)}^{2M+1} P_6^{(0)} \Phi(\t,\ph) \, d \tau \, .
\label{conj-op}
\end{align}
The order $M$ of the expansion will be fixed in \eqref{relazione mathtt b N}.
We remark that \eqref{conj-op} is an expansion in operators with decreasing orders (and size)  because  
each commutator with $ A(\vphi )= \b(\vphi,x)|D|^{\frac12} $ gains $ \frac12 $ order (and it has the size of $ \beta $). 
By \eqref{flow-propagator-beta} and \eqref{definizione PM primo Egorov}, 
\begin{align}
- \ii \big[ A, P_6^{(0)} \big] 
= \big[ \beta |D|^{\frac12}, a_7 |D|^{\frac12} \big] + 
\big[ \beta |D|^{\frac12}, a_7 |D|^{\frac12} (T_\h^{\frac12} - {\rm Id}) \big]  - 
\ii \big[ \beta |D|^{\frac12}, a_8 + P_6 \big]\,. \label{primo levi 0}
\end{align}
Moreover, by \eqref{symbol commutator}, \eqref{Expansion Moyal bracket} one has 
\begin{align} \label{primo levi 1}
[\beta |D|^{\frac12}, a_7 |D|^{\frac12} ] 
& = {\rm Op}\Big(- \ii \{ \beta \chi(\xi) |\xi|^{\frac12}, a_7 \chi(\xi) |\xi|^{\frac12} \} 
+ \mathtt r_2(\beta \chi(\xi) |\xi|^{\frac12},  a_7 \chi(\xi) |\xi|^{\frac12}) \Big) 
\\
& = \ii \big( (\partial_x \beta) a_7 - \beta(\partial_x a_7) \big) 
{\rm Op} \Big( \frac12 \chi^2(\xi) {\rm sign}(\xi) 
+ \chi(\xi) \pa_\xi \chi(\xi) |\xi| \Big) 
+ {\rm Op}\big( \mathtt r_2(\beta \chi(\xi) |\xi|^{\frac12},  a_7 \chi(\xi) |\xi|^{\frac12})  \big) 
\notag
\end{align}
where the symbol $\mathtt r_2(\beta \chi(\xi) |\xi|^{\frac12},  a_7 \chi(\xi) |\xi|^{\frac12})\in S^{-1}$ is defined according to \eqref{def:r2ab}.
Therefore \eqref{primo levi 0}, \eqref{primo levi 1} imply the expansion
\begin{align}
- \ii \big[A, P_6^{(0)} \big]  & = -\frac12 \big((\partial_x \beta) a_7 - \beta(\partial_x a_7) \big) {\cal H} + R_{A, P_6^{(0)}} \label{primo levi 2} 
\end{align}
where the remainder
\begin{align}
R_{A, P_6^{(0)}} & :=  
\ii \big((\partial_x \beta) a_7 - \beta(\partial_x a_7) \big) 
\Op \Big( \chi(\xi) \pa_\xi \chi(\xi) |\xi|  
+ \frac{1}{2} (\chi^2(\xi) - \chi (\xi)) \mathrm{sign}(\xi) \Big)  \nonumber\\
& \quad + {\rm Op}\big( \mathtt r_2(\beta \chi(\xi) |\xi|^{\frac12},  a_7 \chi(\xi) |\xi|^{\frac12})  \big) \label{primo levi 3}
 + \big[ \beta |D|^{\frac12}, a_7 |D|^{\frac12} (T_\h^{\frac12} - {\rm Id}) \big]  - \ii 
\big[ \beta |D|^{\frac12}, a_8 + P_6 \big] 
\end{align}
is an operator of order $- \frac12 $ (because of the term $ [ \beta |D|^{\frac12}, a_8 ]$). 
\\[1mm]
{\sc Analysis of the term $\ompaph + \Phi^{-1} \{ \ompaph \Phi \} =  \Phi^{-1} \circ \ompaph \circ \Phi$.}
We argue as above, differentiating 
\begin{align*}
\pa_\tau \big\{ \Phi(\tau,\ph)^{-1} \circ \ompaph \circ \Phi(\tau,\ph) \big\} & 
= - \ii \Phi(\t,\ph)^{-1} \big[ A(\vphi)  , \ompaph  \big] \Phi(\t,\ph) \\ 
& = - \ii \Phi(\t,\ph)^{-1} \big( {\rm Ad}_{A(\vphi)} \ompaph \big)  \Phi(\t,\ph) \, .
\end{align*}
Therefore, by iteration,  we get the Lie series expansion 
\begin{align}
\Phi(1,\ph)^{-1} \circ \ompaph \circ \Phi(1,\ph) 
& = \ompaph - \ii {\rm Ad}_{A(\vphi)} \ompaph 
+ \frac{(-\ii)^2}{2} {\rm Ad}_{A(\vphi)}^2 \ompaph 
+ \sum_{n=3}^{2M + 1} \frac{(-\ii)^n}{n!} {\rm Ad}_{A(\vphi)}^n \ompaph 
\notag \\ 
& \quad  + 
\frac{(-\ii)^{2M+2}}{(2M+1)!} \int_0^1 
(1- \tau)^{2M+1} 
\Phi(\t, \vphi)^{-1}  \big( {\rm Ad}_{A(\vphi)}^{2M+2}  \ompaph  \big)  \Phi(\t, \vphi) \, d \tau. \label{2302.2}
\end{align}
We compute the commutator
\begin{equation}\label{2302.1}
{\rm Ad}_{A(\ph)} \ompaph
= \big[ A(\vphi),  \ompaph \big] = - ( \ompaph A(\vphi)) \stackrel{\eqref{flow-propagator-beta}} = 
- (\ompaph \beta (\vphi, x)) |D|^{1/2}
\end{equation}
and, using \eqref{symbol commutator}, \eqref{Expansion Moyal bracket},
\begin{align}
{\rm Ad}^2_{A(\vphi)}\omega \cdot \partial_\vphi 
& = \big[ (\ompaph A(\vphi)) , A(\vphi)  \big] 
= \big[ (\omega \cdot \partial_\vphi \beta)|D|^{\frac12}, \beta |D|^{\frac12} \big] \nonumber\\
& = {\rm Op} \Big( - \ii \big\{ (\omega \cdot \partial_\vphi \beta) \chi(\xi) |\xi|^{\frac12}, \beta \chi(\xi) |\xi|^{\frac12} \big\}  + \mathtt r_2((\omega \cdot \partial_\vphi \beta) \chi(\xi) |\xi|^{\frac12}, \beta \chi(\xi) |\xi|^{\frac12})\Big)\,. \nonumber
\end{align}
According to \eqref{Expansion Moyal bracket} the term with the Poisson bracket is 
\begin{align*}
- \ii \big\{ (\omega \cdot \partial_\vphi \beta) \chi(\xi) |\xi|^{\frac12}, \beta \chi(\xi) |\xi|^{\frac12} \big\} 
& = \ii \big( \beta \, \omega \cdot \partial_\vphi \beta_x - \beta_x \, \omega \cdot \partial_\vphi \beta \big) 
\Big( 
\frac{1}{2} \chi(\xi)^2 {\rm sign}(\xi) 
+ \chi(\xi) \pa_\xi \chi(\xi) |\xi| \Big)
\end{align*}
and therefore 
\begin{equation}\label{2302.3}
\frac{(- \ii)^2}{2} {\rm Ad}^2_{A(\vphi)} \omega \cdot \partial_\vphi  
= \frac{1}{4} \big( \beta \, \omega \cdot \partial_\vphi \beta_x - \beta_x \, \omega \cdot \partial_\vphi \beta \big) {\cal H} 
+ R_{A, \omega \cdot \partial_\vphi}
\end{equation}
where 
\begin{align}
R_{A, \omega \cdot \partial_\vphi} 
& := - \frac{\ii}{4} \, 
\big( \beta \, \omega \cdot \partial_\vphi \beta_x - \beta_x \, \omega \cdot \partial_\vphi \beta \big) \,
{\rm Op} \Big( (\chi(\xi)^2 -  \chi (\xi) ) {\rm sign}(\xi) 
+ 2 \chi(\xi) \pa_\xi \chi(\xi) |\xi| \Big) 
\notag \\ & \quad \ 
- \frac12 \Op \Big( \mathtt r_2 \big( (\omega \cdot \partial_\vphi \beta) \chi(\xi) |\xi|^{\frac12}, \beta \chi(\xi) |\xi|^{\frac12} \big) \Big).  
\label{primo levi 7}
\end{align}
is an operator in $ OPS^{-1} $
(the first line of \eqref{primo levi 7} reduces to the zero operator when acting on the periodic functions, because $\chi^2 - \chi$ and $\pa_\xi \chi$ vanish on $\Z$).

Finally, by \eqref{2302.2}, \eqref{2302.1} and \eqref{2302.3}, we get
\begin{align}
\Phi(1,\ph)^{-1} \circ \ompaph \circ \Phi(1,\ph) 
& = \ompaph   
+ \ii  (\ompaph \beta) (\vphi, x) |D|^{\frac12} + \frac14 \big( \beta (\omega \cdot \partial_\vphi \beta_x) - \beta_x (\omega \cdot \partial_\vphi \beta) \big){\cal H} 
+ R_{A, \omega \cdot \partial_\vphi} \nonumber \\
& \quad \, 
- \sum_{n=3}^{2M+1} \frac{(-\ii)^n}{n!} {\rm Ad}_{A(\vphi)}^{n-1} \big( \ompaph A(\vphi) \big) 
\label{recur syst} \\ 
& \quad \,  
- \frac{(-\ii)^{2M+2}}{(2M+1)!} \int_0^1 
(1- \tau)^{2M+1} 
\Phi(\t, \vphi)^{-1}  \big( {\rm Ad}_{A(\vphi)}^{2M+1}  \big( \ompaph A(\vphi) \big)  \big)  \Phi(\t, \vphi) \, d \tau \, . 
\nonumber  
\end{align}
This is an expansion in operators with decreasing orders (and size). 

In conclusion, by 
\eqref{coniugio flusso di una PDE iperbolica},  \eqref{conj-op},
\eqref{definizione PM primo Egorov}, \eqref{primo levi 2}, 
\eqref{recur syst}, 
the term of order $ | D |^{\frac12} $ in $ L_7 $ in \eqref{coniugio flusso di una PDE iperbolica} is
\be\label{eq:passaggio}
\ii \big( (\ompaph \b) + a_7 T_\h^\frac12 \big) | D |^{\frac12} \, .
\ee

\noindent
{\bf Choice of the functions $\beta(\vphi, x)$ and $\alpha(x)$.}
We choose the function $ \b (\vphi, x ) $ such that 
\be\label{def:a51}
(\ompaph \b)  (\vphi, x )  + a_7  (\vphi, x ) = \langle a_7 \rangle_\vphi(x)\,, \quad  
\langle a_7 \rangle_\vphi(x):=  \frac{1}{(2 \pi)^\nu} \int_{\T^\nu}  a_7 (\vphi, x) \, d \vphi.  
\ee
For all $\om \in \mathtt{DC}(\g,\t)$, the solution of \eqref{def:a51} is the periodic function
\be\label{defbetaSFIO}
\b (\vphi, x) := - (\ompaph)^{-1} \big( a_7 (\vphi, x) - \langle a_7 \rangle_\vphi (x) \big) \, ,
\ee
which we extend to the whole parameter space $\R^\nu \times [\h_1, \h_2]$ by setting 
$ \b_{ext} := - (\omega \cdot \partial_\vphi)^{- 1}_{ext} ( a_7 - \langle a_7 \rangle_\vphi )$ 
via the operator $ (\omega \cdot \partial_\vphi)^{- 1}_{ext} $ defined in Lemma \ref{lemma:WD}.
For simplicity we still denote by $\b$ this extension.

\begin{lemma}\label{lem:stim-beta}
The real valued function $\beta  $ defined in \eqref{defbetaSFIO} is 
$\odd(\vphi) \even(x)$. Moreover   
there exists $\sigma (k_0, \tau, \nu) > 0$ such that, if \eqref{ansatz I delta} holds with $\mu_0 \geq \sigma$, then $ \beta $
satisfies the following estimates: 
\begin{align}
\| \beta\|_s^{k_0, \gamma} & \lesssim_s \e \gamma^{-2} \big(1 + \| \fracchi_0\|_{s + \sigma}^{k_0, \gamma} \big)\,, \quad \| \omega \cdot \partial_\vphi \beta \|_s^{k_0,\gamma} \lesssim_s \e \g^{-1} \big(1 + \| \fracchi_0\|_{s + \sigma}^{k_0, \gamma}\big)  \label{stima beta primo egorov} \\
\| \Delta_{12} \beta\|_{s_1}  & \lesssim_{s_1} \e \gamma^{-2} \| \Delta_{12} i\|_{s_1 + \sigma}\,, \quad \| \omega \cdot \partial_\vphi \Delta_{12} \beta\|_{s_1} \lesssim_{s_1} \e \g^{-1} \| \Delta_{12} i \|_{s_1 + \sigma} \,. 
\label{stima derivata beta primo egorov}
\end{align}
\end{lemma}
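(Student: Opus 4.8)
The statement to prove is Lemma \ref{lem:stim-beta}: the function $\beta$ defined in \eqref{defbetaSFIO} is $\odd(\vphi)\even(x)$ and satisfies the estimates \eqref{stima beta primo egorov}--\eqref{stima derivata beta primo egorov}. The plan is entirely routine and rests on the explicit formula $\beta = -(\ompaph)^{-1}(a_7 - \langle a_7 \rangle_\vphi)$ together with the known properties of $a_7$ collected in the previous sections.

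First I would check the parity. By \eqref{0104.15} the function $a_7$ is $\even(\vphi)\even(x)$, hence $a_7 - \langle a_7 \rangle_\vphi$ is also $\even(\vphi)\even(x)$ and has zero average in $\vphi$. Applying the operator $(\ompaph)^{-1}$ (defined in \eqref{def:ompaph}, or its extension $(\ompaph)^{-1}_{ext}$ in \eqref{def ompaph-1 ext}) maps an $\even(\vphi)$ function with zero $\vphi$-average to an $\odd(\vphi)$ function, because in Fourier it divides the coefficient $(a_7)_{\ell,j}$ by $\ii\,\om\cdot\ell$; in $x$ it does nothing, so the $\even(x)$ parity is preserved. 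Therefore $\beta$ is $\odd(\vphi)\even(x)$, as claimed. (One should note the cut-off $\chi$ in \eqref{def ompaph-1 ext} is even, so it does not spoil these parities.)

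Next I would prove the estimates. For the first bound in \eqref{stima beta primo egorov}, apply the Diophantine-equation estimate of Lemma \ref{lemma:WD} (inequality \eqref{2802.2}, or \eqref{Diophantine-1} on the Cantor set) to $u = a_7 - \langle a_7 \rangle_\vphi$: this gives $\|\beta\|_s^{k_0,\gamma} \leq C(k_0)\gamma^{-1}\|a_7 - \langle a_7\rangle_\vphi\|_{s+\mu}^{k_0,\gamma}$ for $\mu = \mu(k_0,\tau)$. Then estimate $a_7 - \langle a_7 \rangle_\vphi$ in terms of $a_7 - 1$ (averaging in $\vphi$ is bounded on $H^s$) and invoke \eqref{stime an bn cn (3)}, which gives $\|a_7 - 1\|_s^{k_0,\gamma}\lesssim_s \e\gamma^{-1}(1+\|\fracchi_0\|_{s+\sigma'}^{k_0,\gamma})$. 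Combining the two produces the $\e\gamma^{-2}$ bound with a suitably enlarged loss $\sigma = \sigma(k_0,\tau,\nu)$, provided the ansatz \eqref{ansatz I delta} holds with $\mu_0 \geq \sigma$. For the second bound in \eqref{stima beta primo egorov}, observe that $\ompaph\beta = -(a_7 - \langle a_7 \rangle_\vphi)$ by construction \eqref{def:a51}; hence $\|\ompaph\beta\|_s^{k_0,\gamma} = \|a_7 - \langle a_7 \rangle_\vphi\|_s^{k_0,\gamma} \lesssim_s \e\gamma^{-1}(1+\|\fracchi_0\|_{s+\sigma}^{k_0,\gamma})$, which is the claimed estimate without the extra $\gamma^{-1}$. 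The Lipschitz-in-$i$ estimates \eqref{stima derivata beta primo egorov} follow by exactly the same two arguments applied to $\Delta_{12}\beta = -(\ompaph)^{-1}(\Delta_{12}a_7 - \langle \Delta_{12}a_7\rangle_\vphi)$, using the linearity of $(\ompaph)^{-1}$ and the $\Delta_{12}$-estimate for $a_7$ in \eqref{stime Delta an bn cn (3)}.

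There is no serious obstacle here: the lemma is a bookkeeping consequence of the explicit solution formula and the already-established bounds on $a_7$. The only point requiring a little care is tracking the loss of derivatives $\sigma$, which absorbs both the constant $\mu$ from the small-divisor estimate of Lemma \ref{lemma:WD} and the loss $\sigma'$ coming from \eqref{stime an bn cn (3)}; one simply takes $\sigma := \mu + \sigma'$ (and, if needed, enlarges it further so that the ansatz on $\fracchi_0$ in the cited lemmas is implied by \eqref{ansatz I delta}). One should also remark that, on $\R^\nu\times[\h_1,\h_2]$, $\beta$ is defined via $(\ompaph)^{-1}_{ext}$, and since this coincides with $(\ompaph)^{-1}$ on $\mathtt{DC}(\g,\t)\times[\h_1,\h_2]$ and satisfies the same estimate \eqref{2802.2}, the bounds hold on the full parameter space.
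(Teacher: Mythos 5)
Your proof is correct and follows essentially the same route as the paper: parity from $a_7 = \even(\ph)\even(x)$ combined with the odd character of $(\ompaph)^{-1}$ in $\ph$, the first bound from Lemma \ref{lemma:WD} together with the $\e\g^{-1}$ estimate on $a_7-1$, the second bound directly from \eqref{def:a51}, and the $\Delta_{12}$ estimates by the same two arguments. (The paper cites \eqref{stime b0 (4) prima di decoupling} rather than \eqref{stime an bn cn (3)}--\eqref{stime Delta an bn cn (3)} for the $a_7$ bounds, but these record the same information, so this is only a cosmetic difference.)
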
 

\begin{proof}
The function $a_7 $ is $\even(\vphi) \even(x) $ (see \eqref{0104.15}), 
and therefore, by \eqref{defbetaSFIO}, $\b$ is $\odd (\vphi)\even(x)$. 
Estimates \eqref{stima beta primo egorov}-\eqref{stima derivata beta primo egorov} 
follow by \eqref{def:a51}, \eqref{defbetaSFIO}, \eqref{stime b0 (4) prima di decoupling} 
and Lemma \ref{lemma:WD}.
\end{proof}

By \eqref{0104.14}, \eqref{coefficienti cal L2 nuovo}, \eqref{coefficienti cal L2 nuovo-a6} one has 
\[
a_7 
= \sqrt{a_5 a_6 } 
= \sqrt{{\cal A}^{- 1} (a_2) {\cal A}^{- 1}(a_3) {\cal A}^{- 1}(1 + \alpha_x)} 
= {\cal A}^{- 1}( \sqrt{a_2 a_3}) {\cal A}^{- 1}\big(  \sqrt{1 + \alpha_x} \big). 
\]
We now choose the $ 2 \pi $-periodic function $\alpha(x)$ (introduced as a free 
parameter in \eqref{diffeo solo x nuovo}) so that 
\be\label{eq:a7-constant}
\langle a_7 \rangle_\vphi(x) = \mathtt m_{\frac12}
\ee 
is independent of $ x $, for some real constant $\mathtt m_{\frac12}  $. This
is equivalent to solve the equation 
$$
\langle \sqrt{a_2 a_3} \, \rangle_\vphi (x) \, \sqrt{1 + \alpha_x (x)} = \mathtt m_{\frac12}
$$
whose solution is 
\be\label{equazione omologica per alpha}
\mathtt m_{\frac12} := \Big( \frac{1}{2 \pi} \int_{\T} \frac{dx}{ \langle \sqrt{a_2 a_3} \, \rangle_\vphi^2 (x) } \Big)^{- \frac12} \, , \qquad
 \alpha (x) := \pa_{x}^{-1} \Big( \frac{\mathtt m_{\frac12}^2}{ \langle \sqrt{a_2 a_3} \, \rangle_\vphi^2 (x) } -1 \Big) \, . 
\ee

\begin{lemma}
The real valued function $ \alpha (x) $ defined in  \eqref{equazione omologica per alpha} 
is $\odd(x)$ and \eqref{stime alpha nuove ansatz} holds. 
Moreover 
\begin{align}
& |\mathtt m_{\frac12} - 1|^{k_0, \gamma} 
\lesssim \e \g^{-1} \,, \quad 
|\Delta_{12} \mathtt m_{\frac12}| 
\lesssim \e \g^{-1} \| \Delta_{12} i\|_{s_1} \label{stime lambda 1} \, . 
\end{align}
\end{lemma}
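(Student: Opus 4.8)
The statement concerns the function $\alpha(x)$ and the constant $\mathtt m_{\frac12}$ defined explicitly in \eqref{equazione omologica per alpha} in terms of the functions $a_2,a_3$ produced by the earlier steps. The plan is to verify each of the three assertions—the parity of $\alpha$, the estimate \eqref{stime alpha nuove ansatz}, and the bounds \eqref{stime lambda 1}—directly from these formulas, using the estimates already available for $a_2,a_3$ and the tame/composition calculus collected in Section \ref{subsec:function spaces}.

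First I would record the parity information. By \eqref{rem:eps a1 b1 c1} the functions $a_2,a_3$ are $\even(\ph)\even(x)$, hence so is $\sqrt{a_2 a_3}$ (the square root is well defined since $a_2,a_3 = 1 + O(\e\g^{-1})$ by \eqref{stime coefficienti cal L1} and the ansatz \eqref{ansatz I delta}); therefore the $\ph$-average $\langle \sqrt{a_2 a_3}\rangle_\ph(x)$ is $\even(x)$, and so is $\mathtt m_{\frac12}^2 / \langle \sqrt{a_2 a_3}\rangle_\ph^2(x) - 1$. Since this last function is even in $x$, its primitive $\pa_x^{-1}$ (defined on the zero-average part) is $\odd(x)$; note that the function does have zero average in $x$ by the very choice of $\mathtt m_{\frac12}$ in \eqref{equazione omologica per alpha}, which is exactly what makes $\alpha$ well defined as a periodic function. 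This gives the parity claim.

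Next I would establish the quantitative bounds. From \eqref{stime coefficienti cal L1} and \eqref{stime coefficienti cal L1 Delta} we have $\|a_2 - 1\|_s^{k_0,\g} + \|a_3 - 1\|_s^{k_0,\g} \lesssim_s \e\g^{-1}(1 + \|\fracchi_0\|_{s+\sigma}^{k_0,\g})$ and the corresponding $\Delta_{12}$ estimate. Applying the Moser composition estimate (Lemma \ref{Moser norme pesate}) to the smooth function $t\mapsto \sqrt t$ near $t=1$, together with the interpolation/product estimate \eqref{p1-pr}, gives $\|\sqrt{a_2 a_3} - 1\|_s^{k_0,\g} \lesssim_s \e\g^{-1}(1 + \|\fracchi_0\|_{s+\sigma}^{k_0,\g})$; taking the $\ph$-average preserves this bound, so $\|\langle\sqrt{a_2 a_3}\rangle_\ph - 1\|_s^{k_0,\g}$ obeys the same estimate (here the norm is now in $x$ only, which only improves constants). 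Then $\langle\sqrt{a_2 a_3}\rangle_\ph^2$ is again $1 + O(\e\g^{-1})$ in these norms, hence bounded below by a positive constant, so its reciprocal is controlled by another application of Lemma \ref{Moser norme pesate} (to $t\mapsto 1/t$). From the explicit integral formula for $\mathtt m_{\frac12}$ in \eqref{equazione omologica per alpha} one reads off $|\mathtt m_{\frac12} - 1|^{k_0,\g} \lesssim \e\g^{-1}$ and, by taking $\Delta_{12}$ of the formula and using the $\Delta_{12}$-bounds on $a_2,a_3$ propagated through the composition estimates, $|\Delta_{12}\mathtt m_{\frac12}| \lesssim \e\g^{-1}\|\Delta_{12} i\|_{s_1}$, which is \eqref{stime lambda 1}. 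Finally, $\alpha = \pa_x^{-1}(\mathtt m_{\frac12}^2/\langle\sqrt{a_2 a_3}\rangle_\ph^2 - 1)$; since $\pa_x^{-1}$ is bounded on zero-average functions gaining one derivative, and the argument is $O(\e\g^{-1})$ in $\|\cdot\|_{s+1}^{k_0,\g}$ by the above, we obtain $\|\alpha\|_s^{k_0,\g} \lesssim_s \e\g^{-1}(1 + \|\fracchi_0\|_{s+\sigma}^{k_0,\g})$ and $\|\Delta_{12}\alpha\|_{s_1} \lesssim_{s_1} \e\g^{-1}\|\Delta_{12} i\|_{s_1+\sigma}$, i.e. \eqref{stime alpha nuove ansatz}, after absorbing the derivative loss into $\sigma$.

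The only genuinely delicate point—rather than the main obstacle, since everything is by now standard bookkeeping—is to make sure the several nested applications of the composition lemma (square root, squaring, reciprocal, averaging, primitive) are organized so that the loss of derivatives is uniform in $s$ and collapses into a single constant $\sigma = \sigma(k_0,\tau,\nu)$, and that the invertibility of $\langle\sqrt{a_2 a_3}\rangle_\ph$ (needed for the reciprocal and for the lower bound in the definition of $\mathtt m_{\frac12}$) is guaranteed purely by the smallness condition $\e\g^{-\kappa}\ll 1$ via \eqref{ansatz I delta}. These checks are routine given the machinery already in place, so the proof is short and can simply cite Lemmata \ref{Moser norme pesate}, \ref{lemma:LS norms} and estimates \eqref{stime coefficienti cal L1}, \eqref{stime coefficienti cal L1 Delta}.
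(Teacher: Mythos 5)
Your proposal is correct and takes essentially the same route as the paper: parity of $\alpha$ from the evenness of $a_2,a_3$, and the bounds on $\mathtt m_{\frac12}$ and $\alpha$ by reading the explicit formulas through the Moser composition estimate (Lemma \ref{Moser norme pesate}), the product estimate \eqref{p1-pr}, and the bounds \eqref{stime coefficienti cal L1}, \eqref{stime coefficienti cal L1 Delta} under the ansatz \eqref{ansatz I delta}. The paper's own proof states exactly these citations tersely; you have simply unfolded the chain (square root, $\ph$-average, reciprocal, $x$-integration, $\pa_x^{-1}$) and made explicit the role of the zero-average condition guaranteed by the choice of $\mathtt m_{\frac12}$, which is the same content.
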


\begin{proof}
Since $ a_2, a_3 $ are $ {\rm even}(x)$ by  \eqref{rem:eps a1 b1 c1}, 
the function $ \alpha (x) $ defined in \eqref{equazione omologica per alpha} is $\odd(x)$. 
Estimates \eqref{stime lambda 1} follow by the definition of $ \mathtt m_{\frac12} $ in 
\eqref{equazione omologica per alpha} and 
\eqref{stime coefficienti cal L1}, \eqref{stime coefficienti cal L1 Delta}, \eqref{ansatz I delta}, 
applying also Lemma \ref{Moser norme pesate} and \eqref{p1-pr}. 
Similarly $ \alpha $ satisfies \eqref{stime alpha nuove ansatz}
by \eqref{stime coefficienti cal L1}, \eqref{stime coefficienti cal L1 Delta}, 
\eqref{stime lambda 1}, Lemma \ref{Moser norme pesate} and \eqref{p1-pr}. 
\end{proof}

By \eqref{def:a51} and \eqref{eq:a7-constant} the  term in \eqref{eq:passaggio} reduces to 
\begin{equation}\label{1006.1}
\ii \big( \ompaph \b (\vphi, x) + a_7 (\vphi, x) T_\h^\frac12 \big)  | D |^{\frac12} = \ii 
 \mathtt m_{\frac12} T_\h^\frac12  | D |^{\frac12} + {\mathtt R}_\beta
\end{equation}
where ${\mathtt R}_\beta $ is the  $ OPS^{-\infty} $ operator defined by 
\be \label{def:Rb}
\mathtt R_\beta := \ii (\omega \cdot \partial_\vphi \beta) ({\rm Id} - T_\h^{\frac12} )|D|^{\frac12} \, .
\ee
Finally,  the operator $ L_7 $ in \eqref{coniugio flusso di una PDE iperbolica} is,
in view of  \eqref{conj-op}, \eqref{definizione PM primo Egorov}, 
\eqref{primo levi 2}, \eqref{recur syst}, \eqref{1006.1},
\begin{equation}\label{L7}
L_7 = \omega \cdot \partial_\vphi + \ii  \mathtt m_{\frac12}  T_\h^{\frac12} |D|^{\frac12}  
+ a_8 + a_9 {\cal H}  +  P_7 + T_7 
\end{equation}
where $a_9  $ is the real valued function
\begin{align}
a_9 := a_9(\vphi,x) := -\frac12 \big(  \beta_x \, a_7 - \beta (\partial_x a_7) \big) -
\frac14 \big(   \beta_x \,   \ompaph \beta - \beta  \,  \ompaph \beta_x \big)\,,  \label{parte moltiplicazione dopo primo egorov}
\end{align}
$ P_7 $ is the  operator in $ OPS^{-1/2 } $ given by 
\begin{align}
P_7  := R_{A, P_6^{(0)}} +  R_{A, \omega \cdot \partial_\vphi} -   
\sum_{n=3}^{2M+1} \frac{(-\ii)^n}{n!} {\rm Ad}_{A(\vphi)}^{n-1} \big( \ompaph A(\vphi) \big) +
 \sum_{n=2}^{2M} \frac{(-\ii)^n}{n!} {\rm Ad}_{A(\vphi)}^n P_6^{(0)} + P_6 + 
 \mathtt R_\beta \label{definizione RM (1) Egorov}
\end{align}
(the operators $R_{A, P_6^{(0)}}, R_{A, \omega \cdot \partial_\vphi}, P_6, \mathtt R_\beta $ 
are defined respectively in \eqref{primo levi 3}, \eqref{primo levi 7}, \eqref{resti prima Egorov}, \eqref{def:Rb}),
and 
\be\label{def:T7}
\begin{aligned}
T_7  
& :=  
- \frac{(-\ii)^{2M+2}}{(2M+1)!} \int_0^1 
(1- \tau)^{2M+1} 
\Phi(\t, \vphi)^{-1}  \big( {\rm Ad}_{A(\vphi)}^{2M+1}  \big( \ompaph A(\vphi) \big)  \big)  \Phi(\t, \vphi) \, d \tau 
\\
& \quad 
+ \frac{(-\ii)^{2M+1}}{(2M)!} \int_0^1 (1- \tau)^{2M} 
\Phi(\t,\ph)^{-1}  {\rm Ad}_{A(\vphi)}^{2M+1} P_6^{(0)} \Phi(\t,\ph) \, d \tau 
\end{aligned}
\ee
($ T_7 $ stands for ``tame remainders", namely remainders satisfying tame 
estimates together with their derivatives, see \eqref{stima tame cal T (1)}, without controlling 
their pseudo-differential structure).
In conclusion, we have  the following lemma.

\begin{lemma}
Let $ \beta (\vphi, x) $ and $ \alpha  (x) $  be the functions defined in 
\eqref{defbetaSFIO} and 
\eqref{equazione omologica per alpha}.
Then $ {\cal L}_7 :=  {\bf \Phi}^{- 1}\mL_6 {\bf \Phi} $ in \eqref{0806.2} is the 
 real, even and reversible operator
\begin{equation}\label{cal LM (1) Egorov}
{\cal L}_7 = \omega \cdot \partial_\vphi + \ii  \mathtt m_{\frac12}  T_\h^{\frac12} |D|^{\frac12} \Sigma + 
\ii\Pi_0 + (a_8 + a_9 {\cal H}) {\mathbb I}_2  +  {\cal P}_7 + {\cal T}_7 
\end{equation}
where $ \mathtt m_{\frac12} $ is the real constant defined in \eqref{equazione omologica per alpha}, 
$ a_8, a_9 $ are the real valued functions in \eqref{0104.21}, \eqref{parte moltiplicazione dopo primo egorov}, 
\be\label{parity:a8a9}
a_8 =  \odd(\ph)\even(x) \, , \qquad
a_9 = \odd(\vphi) \odd(x) \, , 
\ee
and  ${\cal P}_7$, ${\cal T}_7 $ are the real operators
\be
{\cal P}_7  := 
\begin{pmatrix}
P_7 & 0 \\
0 & \overline P_7
\end{pmatrix} \in OPS^{- \frac12} \, , \qquad
{\cal T}_7 
 :=   \ii \Pi_0 ({\bf \Phi} - {\mathbb I}_2)  + {\bf \Phi}^{- 1} {\cal Q}_6 {\bf \Phi}  +
\begin{pmatrix}
T_7 & 0 \\
0 & \overline T_7
\end{pmatrix} \, , 
\label{cal RM (1) Egorov} 
\ee
where $ P_7 $ is defined in \eqref{definizione RM (1) Egorov}  and  $ T_7 $ in \eqref{def:T7}. 
\end{lemma}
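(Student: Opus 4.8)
The statement is essentially a bookkeeping summary of the computations carried out in the entire section: having fixed $\beta(\vphi,x)$ by \eqref{defbetaSFIO} and $\alpha(x)$ by \eqref{equazione omologica per alpha}, one must verify that the conjugated operator ${\cal L}_7 = {\bf\Phi}^{-1}{\cal L}_6{\bf\Phi}$ has exactly the form \eqref{cal LM (1) Egorov}, with the stated algebraic properties (real, even, reversible), the stated parities \eqref{parity:a8a9}, and with ${\cal P}_7\in OPS^{-1/2}$, ${\cal T}_7$ as in \eqref{cal RM (1) Egorov}. The plan is to assemble the ingredients already proved above. First I would recall \eqref{0806.2}: conjugation by the block-diagonal map ${\bf\Phi}=\mathrm{diag}(\Phi,\overline\Phi)$ produces the term $\ii\Pi_0{\bf\Phi}$ (using ${\bf\Phi}^{-1}\Pi_0{\bf\Phi}=\Pi_0{\bf\Phi}$ from \eqref{2017.0316.1}) plus the scalar operator $L_7$ in \eqref{coniugio flusso di una PDE iperbolica} on the diagonal and the conjugated off-diagonal blocks $\Phi^{-1}Q_6\overline\Phi$, $\overline\Phi^{-1}\overline Q_6\Phi$. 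Since $Q_6\in OPS^{-M}$ by \eqref{resti prima Egorov} and $\Phi,\Phi^{-1}=\overline\Phi$ are flows of a pseudo-PDE of order $1/2$, these blocks, together with $\ii\Pi_0({\bf\Phi}-{\mathbb I}_2)$, form part of the remainder ${\cal T}_7$.

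Next I would collect the expansion of the diagonal part. Combining \eqref{coniugio flusso di una PDE iperbolica} with the Lie-series expansions \eqref{conj-op} (for $\Phi^{-1}P_6^{(0)}\Phi$) and \eqref{recur syst} (for $\Phi^{-1}\circ\ompaph\circ\Phi$), and then using \eqref{primo levi 2}, \eqref{primo levi 3}, \eqref{2302.3}, \eqref{primo levi 7}, one sees that the order-$|D|^{1/2}$ contribution is exactly \eqref{eq:passaggio}, namely $\ii\big((\ompaph\beta)+a_7 T_\h^{1/2}\big)|D|^{1/2}$. The defining equations \eqref{def:a51} and \eqref{eq:a7-constant}, which hold by our choice of $\beta$ and $\alpha$, then give $(\ompaph\beta)+a_7=\langle a_7\rangle_\vphi=\mathtt m_{1/2}$ constant in $x$; writing $a_7 T_\h^{1/2}=a_7-a_7(\mathrm{Id}-T_\h^{1/2})$ and using \eqref{1006.1}--\eqref{def:Rb} one isolates the constant-coefficient leading term $\ii\mathtt m_{1/2}T_\h^{1/2}|D|^{1/2}$ (on the diagonal, this becomes $\ii\mathtt m_{1/2}T_\h^{1/2}|D|^{1/2}\Sigma$ in the $2\times2$ notation), with the error $\mathtt R_\beta\in OPS^{-\infty}$ absorbed into $P_7$. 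The order-$0$ surviving term is the multiplication/Hilbert-transform operator $a_8+a_9{\cal H}$, with $a_8=\breve a_4/2$ from \eqref{0104.21} and $a_9$ precisely the coefficient \eqref{parte moltiplicazione dopo primo egorov} arising from the Hilbert-transform parts of \eqref{primo levi 2} and \eqref{2302.3}. Everything of order $\leq-1/2$ — the remainders $R_{A,P_6^{(0)}}$, $R_{A,\omega\cdot\partial_\vphi}$, the higher commutators ${\rm Ad}_{A}^{n}P_6^{(0)}$ for $n\geq2$ and ${\rm Ad}_A^{n-1}(\ompaph A)$ for $n\geq3$, the contribution $P_6$, and $\mathtt R_\beta$ — is collected into $P_7$ as in \eqref{definizione RM (1) Egorov}, which lies in $OPS^{-1/2}$ because the lowest-order genuinely pseudo-differential piece is $[\beta|D|^{1/2},a_8]\in OPS^{-1/2}$; the two integral tails over the flow are placed into $T_7$ as in \eqref{def:T7}. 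This produces \eqref{cal LM (1) Egorov} with ${\cal P}_7$, ${\cal T}_7$ as in \eqref{cal RM (1) Egorov}.

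Finally I would check the structural properties. Reality, evenness and reversibility of ${\cal L}_7$ follow because ${\cal L}_6$ is real, even and reversible by Lemma \ref{Lemma finale decoupling}, and ${\bf\Phi}$ is real, even and reversibility preserving: indeed $\beta$ is $\odd(\vphi)\even(x)$ by Lemma \ref{lem:stim-beta}, so $A(\vphi)=\beta|D|^{1/2}$ has the right parity, and by the discussion in Section \ref{AppendiceA} the flow ${\bf\Phi}=\mathrm{diag}(\Phi,\overline\Phi)$ of $\partial_\tau\Phi=\ii A\Phi$ with such $\beta$ is even and reversibility preserving. For the parities \eqref{parity:a8a9}: $a_8=\breve a_4/2$ is $\odd(\vphi)\even(x)$ because $\breve a_4$ is by \eqref{0104.15}; and $a_9$ in \eqref{parte moltiplicazione dopo primo egorov} is $\odd(\vphi)\odd(x)$ since $a_7$ is $\even(\vphi)\even(x)$ (so $\beta_x a_7-\beta\partial_x a_7$ is $\odd(\vphi)\odd(x)$) and $\beta$ is $\odd(\vphi)\even(x)$ (so $\beta_x\,\ompaph\beta-\beta\,\ompaph\beta_x$ is likewise $\odd(\vphi)\odd(x)$, using that $\ompaph$ preserves $x$-parity and flips $\vphi$-parity). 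The main obstacle here is purely organizational rather than conceptual: one must carefully track which commutator terms in the two Lie expansions \eqref{conj-op} and \eqref{recur syst} contribute to the $|D|^{1/2}$ order, which to the $0$ order (only the Hilbert-transform pieces, the rest vanishing on periodic functions because $\chi^2-\chi$ and $\partial_\xi\chi$ vanish on $\Z$), and which are genuinely lower order, and then verify that no term of order strictly between $0$ and $-1/2$ survives — so that $P_7$ really lands in $OPS^{-1/2}$ and not merely in $OPS^0$. This is exactly the content of \eqref{primo levi 2}--\eqref{primo levi 3} and \eqref{2302.3}--\eqref{primo levi 7}, so the lemma follows by assembling those identities.
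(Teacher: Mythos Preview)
Your proposal is correct and follows essentially the same approach as the paper: the lemma is indeed a bookkeeping summary, and the paper's own proof simply cites \eqref{0806.2} and \eqref{L7} for the formula, then invokes Lemma \ref{lem:stim-beta} and Section \ref{AppendiceA} for the reality/evenness/reversibility of ${\bf\Phi}$, and the parity of $a_7$ from \eqref{0104.15} to deduce the parity of $a_9$. Your write-up is considerably more explicit in tracing which commutator terms land in which piece, but the logical route is identical.
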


\begin{proof}
Formula \eqref{cal LM (1) Egorov} follows by 
\eqref{0806.2} and \eqref{L7}. 
By  Lemma \ref{lem:stim-beta} the real function $\beta $  
is $ {\rm odd}(\vphi) {\rm even}(x)$.
Thus, by Sections \ref{sezione operatori reversibili e even} and \ref{AppendiceA}, 
the flow map $ {\bf \Phi} $ in  \eqref{0806.1} is real, even and reversibility preserving 
and therefore the conjugated operator $ {\cal L}_7 $ is real, even and reversible.
Moreover the function $ a_7 $ is $ {\rm even}(\vphi) {\rm even}(x) $ by \eqref{0104.15} and $ a_9 $ 
defined in \eqref{parte moltiplicazione dopo primo egorov} is ${\rm odd}(\vphi){\rm odd}(x)$. 
\end{proof}

Note that formulas 
\eqref{parte moltiplicazione dopo primo egorov} and \eqref{cal RM (1) Egorov} (via \eqref{definizione RM (1) Egorov}, \eqref{def:T7})
define $a_9 $ and $ \mP_7, {\cal T}_7 $ on the whole parameter space $\R^\nu \times [\h_1, \h_2]$
by means of the extended function $\b$ and the corresponding flow $\Phi$.
Thus the right hand side of \eqref{cal LM (1) Egorov} defines an extended operator 
on $\R^\nu \times [\h_1, \h_2]$, which we still denote by $\mL_7 $. 

In the next lemma we provide some estimates on the operators ${\cal P}_7 $ and ${\cal T}_7 $.

\begin{lemma}\label{lemma stime primo Egorov}
There exists $\sigma (k_0, \tau, \nu ) > 0$ such that, if \eqref{ansatz I delta} holds with $\mu_0 \geq \sigma$, then 
\begin{align}
 \| a_9 \|_s^{k_0, \gamma} \lesssim_s \e \g^{-2} (1 + \| \fracchi_0\|_{s + \sigma}^{k_0, \gamma})\,, \quad \forall s \geq s_0\,, \quad 
 \quad \| \Delta_{12} a_9 \|_{s_1} \lesssim_{s_1} \e \g^{-2}  \| \Delta_{12} i\|_{s_1 + \sigma}\,. \label{stime a6 primo egorov}
\end{align}
For any $s \geq s_0$ there exists $\delta(s) > 0$ small enough such that if  $\e \gamma^{- 2} \leq \delta(s)$, then  
\begin{align} 
\label{flusso PseudoPDE fracchi}
\| (\Phi^{\pm 1} - {\rm Id}) h \|_s^{k_0, \gamma}, \| (\Phi^* - {\rm Id}) h \|_s^{k_0, \gamma} 
& \lesssim_s \e \gamma^{-2}
\big(\| h \|_{s + \sigma}^{k_0, \gamma} + \| \fracchi_0\|_{s + \sigma}^{k_0, \gamma} \| h \|_{s_0 + \sigma}^{k_0, \gamma} \big)\,,
\\
\| \Delta_{12} \Phi^{\pm 1} h \|_{s_1} 
& \lesssim_{s_1} \e \g^{-2} \| \Delta_{12} i \|_{s_1 + \s} \| h \|_{s_1 + \frac12}\,.
\label{QUELLA DOPO LA (12.37)}
\end{align}
The pseudo-differential operator ${\cal P}_7$ defined in \eqref{cal RM (1) Egorov} 
is in $OPS^{- \frac12}$. Moreover for any $M, \alpha > 0$, there exists a constant $\aleph_6(M, \alpha) > 0$ such that assuming \eqref{ansatz I delta} with $\mu_0 \geq \aleph_6(M, \alpha) + \sigma$, the following estimates hold: 
\begin{align}
& \norma {\cal P}_7 \norma_{- \frac12, s, \alpha}^{k_0, \gamma} \lesssim_{M, s, \alpha} \e \g^{-2}  
\big(1 + \| \fracchi_0\|_{s + \aleph_6(M, \alpha) + \sigma}^{k_0, \gamma} \big)\,, \label{stime cal RM (1)} \\
& \norma \Delta_{12} {\cal P}_7 \norma_{- \frac12, s_1, \alpha} \lesssim_{M, s_1, \alpha} \e \g^{-2}  \| \Delta_{12} 
i \|_{s_1 +   \aleph_6(M, \alpha) + \sigma} \,.\label{stime derivate cal RM (1)}
\end{align}
Let $S > s_0$, $\b_0 \in \N$, and $M > \frac12 (\beta_0 + k_0)$. 
There exists a constant $\aleph_6'(M, \beta_0) > 0$ such that, assuming \eqref{ansatz I delta} with $\mu_0 \geq \aleph_6'(M, \beta_0) + \sigma$, for any $m_1, m_2 \geq 0$, 
with $m_1 + m_2  \leq M - \frac12 (\beta_0 + k_0) $, 
for any $\beta \in \N^\nu$, $|\beta| \leq \beta_0$, 
the operators $ \langle D \rangle^{m_1}\partial_\vphi^\beta {\cal T}_7 \langle D \rangle^{m_2}$, $\langle D \rangle^{m_1}\partial_\vphi^\beta \Delta_{12} {\cal T}_7 \langle D \rangle^{m_2}$ are ${\cal D}^{k_0}$-tame with tame constants satisfying 
\begin{align}
 \mathfrak M_{\langle D \rangle^{m_1}\partial_\vphi^\beta {\cal T}_7 \langle D \rangle^{m_2}}(s) 
 \lesssim_{M, S} \e \g^{-2} \big(1 + \| \fracchi_0 \|_{s + \aleph_6'(M, \beta_0) + \sigma}\big)\,, \qquad \forall s_0 \leq s \leq S \label{stima tame cal T (1)} \\
\| \langle D \rangle^{m_1} \Delta_{12}\partial_\vphi^\beta {\cal T}_7 \langle D \rangle^{m_2} \|_{{\cal L}(H^{s_1})} \lesssim_{M, S} \e \g^{-2}  \| \Delta_{12} i \|_{s_1 + \aleph_6'(M, \beta_0) + \sigma}\,.  \label{stima tame derivata cal T (1)}
\end{align}
\end{lemma}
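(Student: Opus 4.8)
\textbf{Proof plan for Lemma \ref{lemma stime primo Egorov}.}

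The plan is to prove the various estimates in the order in which the objects are built, since each quantity depends on the previously constructed ones. First I would estimate the function $a_9$ defined in \eqref{parte moltiplicazione dopo primo egorov}: it is a bilinear expression in $\b$, $a_7$ and $\ompaph \b$, so by the algebra/interpolation estimates \eqref{p1-pr}, the bounds \eqref{stime an bn cn (3)} on $a_7$, the bounds \eqref{stima beta primo egorov}--\eqref{stima derivata beta primo egorov} on $\b$ and $\ompaph\b$ from Lemma \ref{lem:stim-beta}, and the ansatz \eqref{ansatz I delta}, one gets \eqref{stime a6 primo egorov}. The parity assertion $a_9 = \odd(\vphi)\odd(x)$ has already been recorded. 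Next I would establish the flow estimates \eqref{flusso PseudoPDE fracchi}--\eqref{QUELLA DOPO LA (12.37)}: these follow directly from Proposition \ref{proposition 2.40 unificata} and Lemma \ref{lemma 2.42 unificato} applied to the pseudo-PDE \eqref{flow-propagator-beta} with symbol $a = \b$, using the smallness of $\|\b\|_{s_0+\sigma}^{k_0,\gamma}$ from \eqref{stima beta primo egorov} to meet the hypothesis \eqref{piccolezza a partial vphi beta k D beta k}, and then transferring the bound to $\Phi^*$ via Lemma \ref{lemma 2.42 unificato}; the loss $\gamma^{-2}$ comes from the $\gamma^{-2}$ in \eqref{stima beta primo egorov}.

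The bulk of the work is the estimate \eqref{stime cal RM (1)}--\eqref{stime derivate cal RM (1)} on $\mP_7$. Here $\mP_7$ is the sum \eqref{definizione RM (1) Egorov}: the iterated commutator terms $\mathrm{Ad}_{A(\vphi)}^{n-1}(\ompaph A)$ and $\mathrm{Ad}_{A(\vphi)}^n P_6^{(0)}$ are estimated by the abstract iterated-commutator bound \eqref{stima Ad pseudo diff astratta} (with $m=\frac12$, $A = \b|D|^{\frac12}$, and $B$ equal to $\ompaph A \in OPS^{\frac12}$ or $P_6^{(0)}\in OPS^{\frac12}$), each commutator lowering the order by one, so that for $n\geq 3$ one lands in $OPS^{-\frac12}$ or lower; the explicit remainders $R_{A,P_6^{(0)}}$ in \eqref{primo levi 3} and $R_{A,\ompaph}$ in \eqref{primo levi 7} are estimated using Lemmata \ref{lemma stime Ck parametri} and \ref{lemma tame norma commutatore} together with the bounds on $\b$, $a_7$, $a_8$, $P_6$ and the remark that $\mathtt r_2(\cdot,\cdot)\in S^{-1}$; the terms $P_6$ and $\mathtt R_\beta = \ii(\ompaph\b)(\mathrm{Id}-T_\h^{1/2})|D|^{1/2}$ are handled by \eqref{stima resti prima Egorov} and by the fact that $(\mathrm{Id}-T_\h^{1/2})|D|^{1/2}\in OPS^{-\infty}$ respectively. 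Collecting these and taking $\aleph_6(M,\alpha)$ to absorb the (finitely many, $M$-dependent) losses of $\vphi$- and $\xi$-derivatives accumulated in \eqref{stima Ad pseudo diff astratta} and \eqref{stima commutator parte astratta} yields \eqref{stime cal RM (1)}; the $\Delta_{12}$-version \eqref{stime derivate cal RM (1)} follows the same scheme using the already-established difference estimates (e.g.\ \eqref{stima derivate resti prima Egorov}, \eqref{stima derivata beta primo egorov}, Lemma \ref{lemma 2.42 unificato}) and the standard telescoping of differences of products/commutators.

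Finally, for the tame estimates \eqref{stima tame cal T (1)}--\eqref{stima tame derivata cal T (1)} on $\mathcal T_7$, I would split $\mathcal T_7$ according to \eqref{cal RM (1) Egorov} into the three pieces $\ii\Pi_0(\bf\Phi - \mathbb I_2)$, ${\bf\Phi}^{-1}\mathcal Q_6 {\bf\Phi}$, and the integral remainder $T_7$ of \eqref{def:T7}. The first is a rank-one-type operator handled by Lemma \ref{lemma coniugazione proiettore pi 0} using the flow estimates \eqref{flusso PseudoPDE fracchi} for $\Phi$ and $\Phi^*$; the second is the conjugation of $\mathcal Q_6\in OPS^{-M}$ (estimated by \eqref{stima resti prima Egorov}) by $\bf\Phi$, where the $M$ regularizing orders of $\mathcal Q_6$ compensate the $|D|^{m_1}$, $|D|^{m_2}$ weights and the $\partial_\vphi^\beta$ derivatives provided $m_1+m_2+\frac12(\beta_0+k_0)\leq M$, using Proposition \ref{proposition 2.40 unificata} (in the form \eqref{copenaghen B omega}) and Lemma \ref{lemma: action Sobolev}; the third piece $T_7$ is the most delicate — it is an integral over $\tau\in[0,1]$ of $\Phi(\tau)^{-1}\big(\mathrm{Ad}_{A}^{2M+1}(\ompaph A)\big)\Phi(\tau)$ and of $\Phi(\tau)^{-1}\mathrm{Ad}_A^{2M+1}P_6^{(0)}\Phi(\tau)$, where $\mathrm{Ad}_A^{2M+1}$ of an order-$\frac12$ operator lies in $OPS^{-M-\frac12}$, so again the regularization absorbs the $|D|^{m_1}\partial_\vphi^\beta\,\cdot\,|D|^{m_2}$ weights when $m_1+m_2\leq M-\frac12(\beta_0+k_0)$, after commuting $\langle D\rangle^{m_1}$ and $\partial_\vphi^\beta$ through the flows via \eqref{copenaghen B omega} (which loses $|D|^{(|\beta|+|k|)/2}$, compensated by the choice of weights) and using Lemma \ref{composizione operatori tame AB} for the composition. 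The main obstacle is precisely this bookkeeping in the $T_7$ term: one must carefully track how many space- and $\vphi$-derivatives the flow conjugation costs, verify that $M$ chosen as in \eqref{relazione mathtt b N} indeed dominates all of them, and set $\aleph_6'(M,\beta_0)$ large enough to absorb the resulting losses of $\fracchi_0$-regularity, all uniformly for $s\in[s_0,S]$ and with $k_0$ derivatives in $\lambda$; once this is organized, the estimates reduce to repeated application of the tame-operator calculus of Sections \ref{sec:pseudo} and \ref{AppendiceA}.
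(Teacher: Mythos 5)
Your proposal follows essentially the same route as the paper's proof: estimate $a_9$ from \eqref{parte moltiplicazione dopo primo egorov} via the bounds on $\b, \ompaph\b, a_7$ and the algebra inequality \eqref{p1-pr}; the flow bounds from Proposition \ref{proposition 2.40 unificata} and Lemma \ref{lemma 2.42 unificato}; $\mP_7$ term by term using \eqref{stima Ad pseudo diff astratta}, Lemmata \ref{lemma stime Ck parametri}, \ref{lemma tame norma commutatore} and the explicit structure of \eqref{primo levi 3}, \eqref{primo levi 7}, \eqref{def:Rb}; and $\mathcal T_7$ split into $\ii\Pi_0({\bf\Phi}-\mathbb I_2)$ (Lemma \ref{lemma coniugazione proiettore pi 0}), ${\bf\Phi}^{-1}\mathcal Q_6{\bf\Phi}$, and $T_7$, with the conjugated pieces controlled by inserting $\langle D\rangle^{\pm\kappa}$ factors so that each flow factor is handled by \eqref{copenaghen B omega} and the middle pseudo-differential factor retains nonpositive order. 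One small slip: $\mathrm{Ad}_A$ with $A\in OPS^{1/2}$ lowers the order by $1/2$, so $\mathrm{Ad}_A^{2M+1}$ applied to an order-$\frac12$ operator lands in $OPS^{-M}$, not $OPS^{-M-\frac12}$; this is harmless here since the hypothesis $m_1+m_2\leq M-\frac12(\beta_0+k_0)$ is tailored precisely to absorb the $OPS^{-M}$ regularity, matching the treatment of $\mathcal Q_6\in OPS^{-M}$.
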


\begin{proof}
Estimates \eqref{stime a6 primo egorov} for $a_9 $ defined in \eqref{parte moltiplicazione dopo primo egorov} 
follow by \eqref{stime b0 (4) prima di decoupling}, \eqref{stima beta primo egorov}, \eqref{stima derivata beta primo egorov}, \eqref{p1-pr} and \eqref{ansatz I delta}. 

\noindent
{\sc Proof of \eqref{flusso PseudoPDE fracchi}-\eqref{QUELLA DOPO LA (12.37)}.} 
It follows by applying Proposition \ref{proposition 2.40 unificata}, Lemma \ref{lemma 2.42 unificato}, 
estimates \eqref{stima beta primo egorov}-\eqref{stima derivata beta primo egorov} 
and using formula $\partial_\lambda^k \big( (\Phi^{\pm 1} - {\rm Id})h \big) = \sum_{k_1 + k_2 = k} C(k_1, k_2) \partial_\lambda^{k_1} (\Phi^{\pm 1} - {\rm Id}) \partial_\lambda^{k_2} h$, for any $k \in \N^{\nu + 1}$, $|k| \leq k_0$. 

\noindent
{\sc Proof of \eqref{stime cal RM (1)}-\eqref{stime derivate cal RM (1)}.}
First we prove \eqref{stime cal RM (1)}, estimating each term in the definition
\eqref{definizione RM (1) Egorov} of $ P_7$.  
The operator $ A = \beta (\vphi, x)  | D|^{\frac12} $ in \eqref{flow-propagator-beta} satisfies,
by \eqref{lemma composizione multiplier} and \eqref{stima beta primo egorov},  
\begin{equation}\label{stima pseudo A nel lemma}
\norma A \norma_{\frac12, s, \alpha}^{k_0, \gamma} \lesssim_{s, \alpha} \| \beta\|_{s }^{k_0, \gamma}
\lesssim_{s, \alpha}  \e \gamma^{-2} \big(1 + \| \fracchi_0\|_{s + \sigma}^{k_0, \gamma} \big) \,.
\end{equation}
The operator $ P_6^{(0)} $ in \eqref{definizione PM primo Egorov} satisfies, 
by \eqref{stime b0 (4) prima di decoupling}, \eqref{stime a0 (4) prima di decoupling}, 
\eqref{lemma composizione multiplier}, \eqref{stima resti prima Egorov},
\begin{equation}\label{stima pseudo P5 nel lemma}
\norma P_6^{(0)} \norma_{\frac12, s, \alpha}^{k_0, \gamma} 
{\lesssim_{M, s, \alpha}} \, 1 + \| \fracchi_0\|_{s + \aleph_5(M, \alpha) + \sigma}^{k_0, \gamma}\,.
\end{equation}
The estimate of the term $- \sum_{n=3}^{2M+1} \frac{(-\ii)^n}{n!} {\rm Ad}_{A(\vphi)}^{n-1} \big( \ompaph A(\vphi) \big) +
 \sum_{n=2}^{2M} \frac{(-\ii)^n}{n!} {\rm Ad}_{A(\vphi)}^n P_6^{(0)}$ in \eqref{definizione RM (1) Egorov} then follows by \eqref{stima pseudo A nel lemma}, \eqref{stima pseudo P5 nel lemma} and by applying Lemma \ref{lemma stime Ck parametri} and the estimate \eqref{stima Ad pseudo diff astratta}. 
The term $\mathtt R_\beta \in OPS^{- \infty}$ 
defined in \eqref{def:Rb} can be estimated by \eqref{lemma composizione multiplier} 
(applied with $A := \omega \cdot \partial_\vphi \beta$, 
$g(D) := (T_\h^{\frac12} - {\rm Id}) |D|^{\frac12} \in OPS^{- \infty}$) 
and using \eqref{stima beta primo egorov}, \eqref{tangente iperbolica espansione}. 
The estimate of the terms $R_{A, P_6^{(0)}}, R_{A, \omega \cdot \partial_\vphi}$ in \eqref{definizione RM (1) Egorov} follows by their definition given in \eqref{primo levi 3}, \eqref{primo levi 7} and by estimates 
\eqref{stime b0 (4) prima di decoupling}, 
\eqref{stime a0 (4) prima di decoupling}, 
\eqref{stima resti prima Egorov}, 
\eqref{stima beta primo egorov}, 
\eqref{p1-pr}, 
\eqref{lemma composizione multiplier},
and Lemmata \ref{lemma stime Ck parametri}, \ref{lemma tame norma commutatore}. 
Since $P_6$ satisfies \eqref{stima resti prima Egorov}, 
estimate \eqref{stime cal RM (1)} is proved. 
Estimate \eqref{stime derivate cal RM (1)} can be proved by similar arguments. 

\noindent
{\sc Proof of \eqref{stima tame cal T (1)}, \eqref{stima tame derivata cal T (1)}}. 
We estimate the term ${\bf \Phi}^{- 1} {\cal Q}_6 {\bf \Phi}$ in \eqref{cal RM (1) Egorov}. 
For any $k \in \N^{\nu+1}, \beta \in \N^\nu$, $|k| \leq k_0$, $|\beta| \leq \beta_0$, $\lambda = (\omega, \h)$, one has 
\begin{align} \label{labello bello}
& \partial_\lambda^k \partial_\vphi^\beta ({\bf \Phi}^{- 1} {\cal Q}_6 {\bf \Phi})  = \sum_{\begin{subarray}{c}
\beta_1 + \beta_2 + \beta_3 = \beta \\
k_1 + k_2 + k_3 = k
\end{subarray}}C(\beta_1, \beta_2 , \beta_3, k_1, k_2, k_3) (\partial_{\lambda}^{k_1} \partial_\vphi^{\beta_1} {\bf \Phi}^{- 1}) (\partial_\lambda^{k_2} \partial_\vphi^{\beta_2} {\cal Q}_6) (\partial_\lambda^{k_3}\partial_\vphi^{\beta_3} {\bf \Phi}) \,.
\end{align}
For any $m_1, m_2 \geq 0$ satisfying $m_1 + m_2 \leq M - \frac12 (\beta_0 + k_0)$, 
we have to provide an estimate for the operator 
\begin{equation}\label{stima coniugazione egorov cal Q5}
\langle D \rangle^{ m_1 }(\partial_{\lambda}^{k_1} \partial_\vphi^{\beta_1} {\bf \Phi}^{- 1}) (\partial_\lambda^{k_2} \partial_\vphi^{\beta_2} {\cal Q}_6) (\partial_\lambda^{k_3}\partial_\vphi^{\beta_3} {\bf \Phi}) \langle D \rangle^{m_2}\,. 
\end{equation}
We write 
\begin{align}
\eqref{stima coniugazione egorov cal Q5} & =  \Big( \langle D \rangle^{m_1} \partial_{\lambda}^{k_1} \partial_\vphi^{\beta_1} {\bf \Phi}^{- 1} \langle D \rangle^{- \frac{|\beta_1| + |k_1|}{2} - m_1}  \Big) \label{stima coniugazione egorov cal Q5a} \\
& \qquad \circ \Big( \langle D \rangle^{\frac{|\beta_1| + |k_1|}{2} + m_1} \partial_\lambda^{k_2} \partial_\vphi^{\beta_2} 
{\cal Q}_6 \langle D \rangle^{\frac{|\beta_3| + |k_3|}{2} + m_2}  \Big) \label{stima coniugazione egorov cal Q5b} \\
& \qquad \circ \Big( \langle D\rangle^{- m_2 - \frac{|\beta_3| + |k_3|}{2}} \partial_\lambda^{k_3}\partial_\vphi^{\beta_3} {\bf \Phi} \langle D \rangle^{ m_2}  \Big) \,.\label{stima coniugazione egorov cal Q5c}
\end{align}
The terms \eqref{stima coniugazione egorov cal Q5a}-\eqref{stima coniugazione egorov cal Q5c} can be estimated separately. 
To estimate the terms \eqref{stima coniugazione egorov cal Q5a} and \eqref{stima coniugazione egorov cal Q5c}, we apply 
\eqref{copenaghen B omega} of Proposition \ref{proposition 2.40 unificata},
\eqref{stima derivate flusso generalissima derivate i}
of Lemma \ref{lemma 2.42 unificato}, 
and \eqref{stima beta primo egorov}-\eqref{stima derivata beta primo egorov}. 
The pseudo-differential operator in \eqref{stima coniugazione egorov cal Q5b} 
is estimated in $\norma \ \norma_{0,s,0}$ norm by using 
\eqref{Norm Fourier multiplier}, \eqref{estimate composition parameters}, \eqref{lemma composizione multiplier}, 
bounds \eqref{stima resti prima Egorov}, \eqref{stima derivate resti prima Egorov}
on ${\cal Q}_6 $,
and the fact that $\frac{|\beta_1| + |k_1|}{2} + m_1 + \frac{|\beta_3| + |k_3|}{2} + m_2 - M \leq 0$.
Then its action on Sobolev functions is deduced by Lemma \ref{lemma: action Sobolev}.
As a consequence, each operator in \eqref{stima coniugazione egorov cal Q5},
and hence the whole operator \eqref{labello bello}, 
satisfies \eqref{stima tame cal T (1)}.

The estimates of the terms in \eqref{def:T7} can be done arguing similarly, using also 
the estimates \eqref{stima Ad pseudo diff astratta}, \eqref{stima pseudo A nel lemma}-\eqref{stima pseudo P5 nel lemma}. 
The term $\langle D \rangle^{m_1} \partial_\vphi^\beta\Pi_0 ({\bf \Phi} - \mathbb{I}_2) \langle D \rangle^{m_2}$ can be estimated by applying Lemma \ref{lemma coniugazione proiettore pi 0} 
(with $A = \mathbb{I}_2 $, $B = {\bf \Phi } $) and \eqref{flusso PseudoPDE fracchi}, 
\eqref{stima beta primo egorov}, \eqref{stima derivata beta primo egorov}. 
\end{proof}

\section{Reduction of the lower orders} \label{sezione descent method}

In this section we complete the reduction of the operator $\mL_7$ in \eqref{cal LM (1) Egorov} to constant coefficients,  
up to a regularizing remainder of order $|D|^{-M}$. 
We write 
\be\label{forma-cal-L7}
{\cal L}_7 = \begin{pmatrix}
L_7 & 0 \\
0 & \overline L_7
\end{pmatrix} + \ii \Pi_0 + {\cal T}_7\, ,
\ee
where 
\begin{equation}\label{primo operatore descent method}
L_7 := \omega \cdot \partial_\vphi + \ii \mathtt m_{\frac12} T_\h^{\frac12} |D|^{\frac12} 
+ a_8 + a_9 {\cal H} + P_7\,,
\end{equation}
 the real valued functions $a_8, a_9$ are introduced in 
 \eqref{0104.21}, \eqref{parte moltiplicazione dopo primo egorov}, satisfy \eqref{parity:a8a9}, 
and the operator $P_7 \in OPS^{- \frac12}$  in \eqref{definizione RM (1) Egorov} is  even and reversible. 
We first conjugate the operator  $ L_7$.

\subsection{Reduction of the order 0} \label{subsec:luglio.1} 

In this subsection we reduce to constant coefficients 
the term $a_8 + a_9 {\cal H}$ of order zero of $L_7$ in \eqref{primo operatore descent method}. 
We begin with removing the dependence of $ a_8 + a_9 {\cal H} $ on $\ph$.
It turns out that, 
since  $a_8, a_9$ are odd functions in  $\ph$ by  \eqref{parity:a8a9}, thus with zero average,
this step removes completely the terms of order zero. 
Consider the transformation 
\begin{equation}\label{definizione M0 descent method}
W_0 := {\rm Id} + f_0(\vphi, x) + g_0(\vphi, x) {\cal H}\,, 
\end{equation}
where $f_0, g_0$ are real valued functions to be determined.
Since $\mH^2 = - {\rm Id} + \pi_0$ on the periodic functions 
where $\pi_0$ is defined in \eqref{def pi0}, one has  
\begin{align}
L_7 W_0 & = W_0 \big( \omega \cdot \partial_\vphi + \ii \mathtt m_{\frac12} T_\h^{\frac12} |D|^{\frac12} \big) 
+ ( \omega \cdot \partial_\vphi f_0 + a_8 +  a_8 f_0 - a_9 g_0 ) 
\nonumber\\ & \quad 
+ ( \omega \cdot \partial_\vphi g_0 +  a_9 + a_8 g_0 + a_9 f_0 ) {\cal H} 
+ \breve P_7 
\label{florida 2}
\end{align}
where $ \breve P_7\in OPS^{-\frac12}$ is the operator
\begin{align}
\breve P_7 :=  
a_9 [{\cal H}, f_0] + a_9 [{\cal H}, g_0] {\cal H}+ [\ii \mathtt m_{\frac12} T_\h^{\frac12} |D|^{\frac12}, W_0] + P_7 W_0 
+ a_9 g_0 \pi_0 \,.\label{florida 3}
\end{align}
In order to eliminate the zero order terms in \eqref{florida 2} 
we choose the functions $f_0,g_0$ such that 
\begin{equation}\label{equazione omologica primo step descent method}
\begin{cases}
\omega \cdot \partial_\vphi f_0 + a_8 +  a_8 f_0 - a_9 g_0 = 0 \\
\omega \cdot \partial_\vphi g_0 +  a_9 + a_8 g_0 + a_9 f_0 = 0\,.
\end{cases}
\end{equation}
Writing $ z_0 = 1 + f_0 + \ii g_0 $, 
the real system \eqref{equazione omologica primo step descent method} 
is equivalent to the complex scalar equation 
\begin{equation}\label{equazione complessa descent method grado 0}
\omega \cdot \partial_\vphi z_0 + (a_8 + \ii a_9) z_0 = 0\,. 
\end{equation}
Since $a_8, a_9$ are odd functions in $\ph$, 
we choose, for all $\om \in \mathtt{DC}(\g,\t) $, the periodic function
\begin{equation}\label{scelta p0 descent}
z_0 := \exp(p_0), \quad 
p_0 := - (\ompaph)^{-1} (a_8 + \ii a_9),
\end{equation}
which solves \eqref{equazione complessa descent method grado 0}.
Thus the real functions
\begin{equation}
\begin{aligned} \label{def:f0g0}
f_0 & := \Re(z_0) - 1 
= \exp(-(\ompaph)^{-1} a_8) \cos( (\ompaph)^{-1} a_9 ) - 1,
\\ 
g_0 & := \Im(z_0) 
= - \exp(-(\ompaph)^{-1} a_8) \sin( (\ompaph)^{-1} a_9 )
\end{aligned}
\end{equation}
solve \eqref{equazione omologica primo step descent method}, 
and, for $\om \in \DC(\g,\t)$, equation \eqref{florida 2} reduces to 
\be\label{L7-intermedia}
L_7 W_0 = W_0 (\omega \cdot \partial_\vphi + \ii \mathtt m_{\frac12} T_\h^{\frac12} |D|^{\frac12})
+ \breve P_7\,, \quad \breve P_7\in OPS^{-\frac12} \, .
\ee
We extend the function $p_0$ in \eqref{scelta p0 descent}
to the whole parameter space $\R^\nu \times [\h_1, \h_2]$ by using 
$ (\omega \cdot \partial_\vphi)_{ext}^{- 1} $ introduced in Lemma \ref{lemma:WD}. 
Thus the functions $ z_0, f_0, g_0 $ in \eqref{scelta p0 descent}, \eqref{def:f0g0}
are defined on $\R^\nu \times [\h_1, \h_2]$ as well. 

\begin{lemma}
The real valued functions $f_0, g_0 $ in \eqref{def:f0g0} satisfy 
\be\label{parityp0q0}
f_0 = \even(\vphi) \even(x) \, , \quad g_0 = \even(\vphi) \odd (x) \, . 
\ee
Moreover, there exists $\sigma(k_0, \tau, \nu) > 0$ such that, if \eqref{ansatz I delta} holds with $\mu_0 \geq \sigma$, then 
\begin{equation}\label{stime f0 g0}
\| f_0 \|_s^{k_0, \gamma}\,,\, \| g_0 \|_s^{k_0, \gamma} \lesssim_s \e \gamma^{- 3} 
\big(1 + \| \fracchi_0\|_{s + \sigma}^{k_0, \gamma}\big)\,, \quad \| \Delta_{12} f_0\|_{s_1}, \| \Delta_{12} g_0 \|_{s_1} \lesssim_{s_1} \e \gamma^{- 3} \| \Delta_{12} i \|_{s_1 + \sigma}\,.
\end{equation}
 The operator $W_0$ defined in \eqref{definizione M0 descent method} is 
 even, reversibility preserving, invertible and for any $\alpha > 0$, assuming \eqref{ansatz I delta} with $\mu_0 \geq \alpha + \sigma$, the following estimates hold:  
\begin{equation}\label{stime M0 descent method}
\norma W_0^{\pm 1} - {\rm Id} \norma_{0, s, \alpha}^{k_0, \gamma} \lesssim_{s, \alpha} \e \gamma^{- 3} 
\big(1 + \| \fracchi_0\|_{s + \alpha + \sigma}^{k_0, \gamma} \big)\,, \quad \norma \Delta_{12} W_0^{\pm 1} \norma_{0, s_1, \alpha} \lesssim_{s_1, \alpha} \e \gamma^{- 3} \| \Delta_{12} i \|_{s_1 + \alpha +  \sigma}\,.
\end{equation}
\end{lemma}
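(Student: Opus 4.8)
The plan is to proceed exactly as in the previous reduction steps of Sections \ref{sec: change-transport equation}--\ref{sec:primo semi-FIO}, combining the explicit solution formulas \eqref{scelta p0 descent}--\eqref{def:f0g0} with the tame estimates for the Diophantine operator (Lemma \ref{lemma:WD}), the Moser composition estimate (Lemma \ref{Moser norme pesate}), the product estimate \eqref{p1-pr}, and the pseudo-differential calculus of Section \ref{sec:pseudo}. First I would establish the parity claim \eqref{parityp0q0}. By \eqref{parity:a8a9} the functions $a_8$ and $a_9$ are $\odd(\ph)\even(x)$ and $\odd(\ph)\odd(x)$ respectively; applying $(\ompaph)^{-1}$ (which preserves the $x$-parity and swaps $\odd(\ph)\leftrightarrow\even(\ph)$, being an odd Fourier multiplier in $\ell$) gives that $(\ompaph)^{-1}a_8$ is $\even(\ph)\even(x)$ and $(\ompaph)^{-1}a_9$ is $\even(\ph)\odd(x)$. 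Since $p_0 = -(\ompaph)^{-1}(a_8+\ii a_9)$ and $z_0=\exp(p_0)$, and since $\cos$ of an $\even(\ph)\odd(x)$ function is $\even(\ph)\even(x)$ while $\sin$ of it is $\even(\ph)\odd(x)$, the explicit formulas \eqref{def:f0g0} yield $f_0=\even(\ph)\even(x)$ and $g_0=\even(\ph)\odd(x)$ as claimed.

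Next I would prove the quantitative bounds \eqref{stime f0 g0}. Starting from $p_0 = -(\ompaph)^{-1}(a_8+\ii a_9)$, estimate \eqref{Diophantine-1} of Lemma \ref{lemma:WD} gives $\|p_0\|_s^{k_0,\g}\lesssim_s \g^{-1}(\|a_8\|_{s+\mu}^{k_0,\g}+\|a_9\|_{s+\mu}^{k_0,\g})$; inserting the bounds \eqref{stime a0 (4) prima di decoupling} on $a_8$ and \eqref{stime a6 primo egorov} on $a_9$ (the latter carries the worst factor $\e\g^{-2}$) produces $\|p_0\|_s^{k_0,\g}\lesssim_s \e\g^{-3}(1+\|\fracchi_0\|_{s+\sigma}^{k_0,\g})$ for a suitable loss $\sigma$. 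Then $z_0=\exp(p_0)$, $f_0=\Re z_0 - 1$, $g_0=\Im z_0$, and Moser's composition Lemma \ref{Moser norme pesate} applied to the entire function $t\mapsto e^t$ (using $\e\g^{-3}\ll1$ to keep $p_0$ in a fixed bounded set in low norm, by \eqref{ansatz I delta}) transfers this estimate to $f_0,g_0$, giving the first bound in \eqref{stime f0 g0}. For the Lipschitz-in-$i$ bound one applies $\Delta_{12}$ to the formulas: $\Delta_{12}p_0 = -(\ompaph)^{-1}(\Delta_{12}a_8+\ii\Delta_{12}a_9)$ is controlled by \eqref{Diophantine-1}, \eqref{stime a0 (4) prima di decoupling}, \eqref{stime a6 primo egorov}, and then $\Delta_{12}f_0,\Delta_{12}g_0$ follow by the standard telescoping/mean-value argument for $\exp$ together with \eqref{p1-pr}, yielding the second bound in \eqref{stime f0 g0}.

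Finally I would verify the properties and estimates of $W_0$ in \eqref{stime M0 descent method}. The operator $W_0={\rm Id}+f_0+g_0\mH$ in \eqref{definizione M0 descent method} is even because $f_0$ is $\even(\ph)\even(x)$, $g_0$ is $\even(\ph)\odd(x)$ and $\mH$ is an even operator (Lemma \ref{even:pseudo}, item 4, and Section \ref{sezione operatori reversibili e even}); and it is reversibility preserving because $f_0,g_0$ are even in $\ph$ while $\mH$ is $\ph$-independent. The bound $\norma W_0-{\rm Id}\norma_{0,s,\alpha}^{k_0,\g}\lesssim_{s,\alpha}\e\g^{-3}(1+\|\fracchi_0\|_{s+\alpha+\sigma}^{k_0,\g})$ follows from \eqref{norma a moltiplicazione} for the multiplication operators by $f_0,g_0$, from \eqref{Norm Fourier multiplier} for $\mH\in OPS^0$, from the composition estimate \eqref{estimate composition parameters} for $g_0\mH$, and from \eqref{stime f0 g0}. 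Invertibility of $W_0$ and the estimate on $W_0^{-1}-{\rm Id}$ then come from Lemma \ref{Neumann pseudo diff} (the smallness hypothesis \eqref{hyp:lemma-2.14} being met since $\e\g^{-3}\ll1$ by \eqref{ansatz I delta}), and the $\Delta_{12}$ estimates follow analogously using the Lipschitz bounds on $f_0,g_0$ together with the $\Delta_{12}$ version of the Neumann argument. The main obstacle here is purely bookkeeping: carefully tracking the loss constant $\sigma$ and confirming that the worst power of $\g$ in all the resulting bounds is $\g^{-3}$, coming from the single application of $(\ompaph)^{-1}$ to $a_9$ which already carries $\e\g^{-2}$; no new analytic difficulty arises beyond those already handled in the earlier reduction steps.
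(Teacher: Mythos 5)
Your proof is correct and follows essentially the same route as the paper: deduce the parities of $f_0,g_0$ from \eqref{parity:a8a9} via the explicit formulas \eqref{def:f0g0}, obtain \eqref{stime f0 g0} by combining the $(\ompaph)^{-1}$-estimate of Lemma \ref{lemma:WD} (the paper cites the extended-operator version \eqref{2802.2} rather than \eqref{Diophantine-1}, since $p_0$ is extended to the whole parameter space) with \eqref{stime a0 (4) prima di decoupling}, \eqref{stime a6 primo egorov}, Moser composition \eqref{0811.10} and \eqref{p1-pr}, and then get \eqref{stime M0 descent method} from \eqref{norma a moltiplicazione}, the multiplier composition estimate \eqref{lemma composizione multiplier}, and the Neumann-series Lemma \ref{Neumann pseudo diff}. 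The only differences are small bookkeeping choices of which composition estimate to cite; the argument is the same.
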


\begin{proof}
The parities in \eqref{parityp0q0} follow by \eqref{def:f0g0} and \eqref{parity:a8a9}. 
Therefore $ W_0 $ in \eqref{definizione M0 descent method} is even and reversibility preserving. 
Estimates \eqref{stime f0 g0} follow by \eqref{def:f0g0}, 
\eqref{stime a0 (4) prima di decoupling}, 
\eqref{stime a6 primo egorov},
\eqref{p1-pr}, \eqref{2802.2}, 
\eqref{0811.10}. 
The operator $W_0$ defined in \eqref{definizione M0 descent method} is invertible
by Lemma \ref{Neumann pseudo diff},  \eqref{stime f0 g0}, \eqref{ansatz I delta}, for $ \e \gamma^{-3} $ small enough. 
Estimates \eqref{stime M0 descent method} then follow by \eqref{stime f0 g0}, 
using \eqref{norma a moltiplicazione}, \eqref{lemma composizione multiplier} and Lemma \ref{Neumann pseudo diff}. 
\end{proof}

For $\om \in \DC(\g,\t)$, by \eqref{L7-intermedia} we obtain the even and reversible operator
\begin{equation}\label{definizione P7 (1)}
L_7^{(1)} := W_0^{- 1} L_7 W_0 
= \omega \cdot \partial_\vphi + \ii \mathtt m_{\frac12} T_\h^{\frac12} |D|^{\frac12} + P_7^{(1)}\,,
\qquad 
P_7^{(1)}  := W_0^{-1} \breve P_7 \, , 
\end{equation}
where $\breve P_7 $ is the operator in $ OPS^{-\frac12} $  defined in \eqref{florida 3}.

Since the functions $ f_0, g_0 $ are defined on $\R^\nu \times [\h_1, \h_2]$, 
the operator $\breve P_7$  in  \eqref{florida 3} is defined on $\R^\nu \times [\h_1, \h_2]$, and   
$ \omega \cdot \partial_\vphi + \ii \mathtt m_{\frac12} T_\h^{\frac12} |D|^{\frac12}  + P_7^{(1)}$
in \eqref{definizione P7 (1)} is an extension of $L_7^{(1)}$ to $\R^\nu \times [\h_1, \h_2]$, 
still denoted $L_7^{(1)}$. 

\begin{lemma}
For any $M, \alpha > 0$, there exists a constant $\aleph_7^{(1)}(M, \alpha) > 0$ such that if \eqref{ansatz I delta} holds with $\mu_0 \geq \aleph_7^{(1)}(M, \alpha)$, the remainder $P_7^{(1)} \in OPS^{- \frac12}$, 
defined in \eqref{definizione P7 (1)}, satisfies 
\begin{equation}\label{stime P7 (1)}
\begin{aligned}
& \norma P_7^{(1)} \norma_{- \frac12, s, \alpha}^{k_0, \gamma} 
\lesssim_{M, s, \alpha} \e \gamma^{- 3} \big( 1 + \| \fracchi_0\|_{s + \aleph_7^{(1)}(M, \alpha)}^{k_0, \gamma} \big) \, , \\
& \norma \Delta_{12} P_7^{(1)} \norma_{- \frac12, s_1, \alpha} 
\lesssim_{M, s_1, \alpha} \e \gamma^{- 3} \| \Delta_{12} i\|_{s_1 + \aleph_7^{(1)}(M, \alpha)}\,. 
\end{aligned}
\end{equation}
\end{lemma}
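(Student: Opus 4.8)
The plan is to estimate the remainder $P_7^{(1)}=W_0^{-1}\breve P_7$ defined in \eqref{definizione P7 (1)} by bounding, in the weighted norm $\norma \cdot \norma_{-\frac12,s,\alpha}^{k_0,\gamma}$, each of the five summands of $\breve P_7$ in \eqref{florida 3}, and then composing with $W_0^{-1}$. All the bounds are carried out on the whole parameter space $\R^\nu\times[\h_1,\h_2]$, using the extensions of $f_0,g_0$, and hence of $W_0$ and $\breve P_7$, introduced right after \eqref{definizione P7 (1)}; thus the estimate is for the extended operator, which coincides with $W_0^{-1}L_7W_0$ on $\mathtt{DC}(\gamma,\tau)\times[\h_1,\h_2]$.

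First I would treat the two commutators with the Hilbert transform. By Lemma \ref{lem: commutator aH} the operators $[{\cal H},f_0]$ and $[{\cal H},g_0]$ lie in $OPS^{-\infty}\subset OPS^{-1/2}$, with $\norma [{\cal H},f_0] \norma_{-\frac12,s,\alpha}^{k_0,\gamma}\lesssim_{s,\alpha}\|f_0\|_{s+s_0+2+\alpha}^{k_0,\gamma}$ and similarly for $g_0$. Multiplying on the left by $a_9$ (a multiplication operator, controlled by \eqref{norma a moltiplicazione}) and, in the second term, on the right by ${\cal H}\in OPS^0$, and using the composition estimate \eqref{estimate composition parameters} together with the bounds \eqref{stime f0 g0} on $f_0,g_0$ and \eqref{stime a6 primo egorov} on $a_9$, one gets that $a_9[{\cal H},f_0]+a_9[{\cal H},g_0]{\cal H}\in OPS^{-1/2}$ with norm $\lesssim\e\gamma^{-3}(1+\|\fracchi_0\|_{s+\sigma'}^{k_0,\gamma})$ for a suitable $\sigma'$ depending linearly on $\alpha$. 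The last summand $a_9 g_0\pi_0$ is handled the same way, since $\pi_0={\rm Op}(1-\chi(\xi))\in OPS^{-\infty}$ by \eqref{cut off simboli 1} and \eqref{def langle xi rangle}. For $[\ii\mathtt m_{\frac12}T_\h^{\frac12}|D|^{\frac12},W_0]$ I would write $W_0-{\rm Id}=f_0+g_0{\cal H}\in OPS^0$; since $T_\h^{\frac12}|D|^{\frac12}$ and ${\cal H}$ are constant-coefficient Fourier multipliers of orders $\frac12$ and $0$ (so that $[T_\h^{\frac12}|D|^{\frac12},{\cal H}]=0$), Lemma \ref{lemma tame norma commutatore} gives $[\ii\mathtt m_{\frac12}T_\h^{\frac12}|D|^{\frac12},W_0]\in OPS^{-1/2}$, with norm controlled by $|\mathtt m_{\frac12}|^{k_0,\gamma}\lesssim 1$ (see \eqref{stime lambda 1}) and by $\norma W_0-{\rm Id}\norma_{0,s+2+\alpha,\alpha+1}^{k_0,\gamma}$, estimated via \eqref{stime M0 descent method}, hence $\lesssim\e\gamma^{-3}(1+\|\fracchi_0\|_{s+\sigma''}^{k_0,\gamma})$. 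Finally $P_7W_0$ is the composition of $P_7\in OPS^{-1/2}$, bounded by \eqref{stime cal RM (1)}, with $W_0\in OPS^0$, bounded by \eqref{stime M0 descent method}; by \eqref{estimate composition parameters} it lies in $OPS^{-1/2}$ with norm $\lesssim_M\e\gamma^{-2}(1+\|\fracchi_0\|_{s+\aleph_6(M,\alpha)+\sigma}^{k_0,\gamma})$.

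Summing the five contributions and using $\gamma\in(0,1)$ (so $\e\gamma^{-2}\le\e\gamma^{-3}$) shows $\breve P_7\in OPS^{-1/2}$ with a bound of the claimed form, at the cost of a loss of derivatives of the type $\aleph_6(M,\alpha+c_1)+c_2(M,\alpha)$, which is finite since $M,\alpha$ are fixed. Composing once more with $W_0^{-1}\in OPS^0$ — which by Lemma \ref{Neumann pseudo diff} and \eqref{stime M0 descent method} is invertible under \eqref{ansatz I delta} for $\e\gamma^{-3}$ small and obeys the same bound as $W_0-{\rm Id}$ — and applying \eqref{estimate composition parameters}, one obtains $P_7^{(1)}=W_0^{-1}\breve P_7\in OPS^{-1/2}$ together with the estimate \eqref{stime P7 (1)}, upon defining $\aleph_7^{(1)}(M,\alpha)$ to be the total loss of derivatives accumulated above (plus $\sigma$). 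The $\Delta_{12}$ estimate follows by repeating the same chain of inequalities with the $\Delta_{12}$-versions of \eqref{stime f0 g0}, \eqref{stime a6 primo egorov}, \eqref{stime M0 descent method}, \eqref{stime lambda 1}, \eqref{stime derivate cal RM (1)}, and the second parts of Lemmas \ref{lemma stime Ck parametri}, \ref{lemma tame norma commutatore}, \ref{lem: commutator aH}, all evaluated at the single index $s_1$. The only substantive point to watch is that every commutator with $T_\h^{\frac12}|D|^{\frac12}$ genuinely gains half a derivative, so that no term of order $0$ survives — this is automatic because $f_0,g_0$ solve the homological system \eqref{equazione omologica primo step descent method}, which is solvable precisely since $a_8,a_9$ are odd in $\ph$ by \eqref{parity:a8a9}, hence of zero $\ph$-average. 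I expect the main (but routine) chore to be bookkeeping the constant $\aleph_7^{(1)}(M,\alpha)$ so that all the smallness hypotheses \eqref{ansatz I delta} are met simultaneously when invoking \eqref{stime cal RM (1)}, \eqref{stime M0 descent method} and the pseudo-differential calculus lemmas.
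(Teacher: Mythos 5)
Your proof is correct and follows essentially the same route as the paper's own (very terse) proof: bound each of the five summands of $\breve P_7$ in \eqref{florida 3} using \eqref{stime f0 g0}, \eqref{stime M0 descent method}, \eqref{stime lambda 1}, \eqref{stime a6 primo egorov}, \eqref{stime cal RM (1)}, Lemmas \ref{lemma stime Ck parametri}, \ref{lemma tame norma commutatore}, \ref{lem: commutator aH}, and then compose with $W_0^{-1}$. You also correctly identify that the $\e\gamma^{-3}$ size of $P_7^{(1)}$ is governed by the commutator $[\ii \mathtt m_{\frac12} T_\h^{\frac12}|D|^{\frac12}, W_0-\mathrm{Id}]$, which is exactly the observation the paper singles out.
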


\begin{proof}
Estimates \eqref{stime P7 (1)} follow by the definition of $P_7^{(1)}$ given in \eqref{definizione P7 (1)}, by estimates \eqref{stime f0 g0}, \eqref{stime M0 descent method}, \eqref{stime lambda 1}, \eqref{stime a6 primo egorov}, 
\eqref{stime cal RM (1)}, \eqref{stime derivate cal RM (1)}, 
by applying \eqref{norma a moltiplicazione}, 
\eqref{estimate composition parameters}, \eqref{lemma composizione multiplier}, \eqref{stima commutator parte astratta} 
and using also Lemma \ref{lem: commutator aH}. 
The fact that $P_7^{(1)}$ has size $\e \gamma^{- 3}$ 
is due to the term $[\ii \mathtt m_{\frac12} T_\h^{\frac12} |D|^{\frac12}, W_0] 
= [\ii \mathtt m_{\frac12} T_\h^{\frac12} |D|^{\frac12}, W_0 - {\rm Id}]$,
because $\mathtt m_{\frac12} = 1 + O(\e \g^{-1})$ and $W_0 - {\rm Id} = O(\e \gamma^{- 3})$. 
\end{proof}

We underline that the operator $L_7^{(1)}$ 
in \eqref{definizione P7 (1)} does not contain  terms of order zero.

\subsection{Reduction at  negative orders} \label{subsec:luglio.2}

In this subsection we define inductively a finite number of transformations to the aim of 
reducing to constant coefficients all the symbols of orders $ > -M $ 
of the operator $ L_7^{(1)} $ in \eqref{definizione P7 (1)}.
The constant $M$ will be fixed in \eqref{relazione mathtt b N}. 
In the rest of the section we prove the following inductive claim: 
\begin{itemize}
\item 
{\bf Diagonalization of $ L_7^{(1)} $ in decreasing orders.}  
For any $m \in \{1, \ldots, 2M \}$, 
we have an even and  reversible operator of the form 
\begin{equation}\label{L7 (m)}
L_7^{(m)} := \omega \cdot \partial_\vphi +  \Lambda_m(D) + P_7^{(m)}\,,
\quad P_7^{(m)} \in OPS^{- \frac{m}{2}} \, , 
\end{equation}
where 
\begin{equation}\label{definizione Lambda m (D)}
\Lambda_m(D) :=  \ii \mathtt m_{\frac12} T_\h^{\frac12} |D|^{\frac12} +r_m(D)\,, \qquad r_m(D) \in OPS^{- \frac12} \,.
\end{equation}
The operator $ r_m(D) $ is an even and reversible Fourier multiplier, independent of $(\vphi, x)$. 
Also the operator $ P_7^{(m)} $ is even and reversible. 

For any $M, \alpha > 0$, there exists a constant $\aleph_7^{(m)}(M, \alpha) > 0$ (depending also on $\tau, k_0, \nu$) such that,  if \eqref{ansatz I delta} holds with $\mu_0 \geq \aleph_7^{(m)}(M, \alpha)$, then the following estimates hold: 
\begin{align}
 & \norma r_m(D)\norma_{- \frac12, s, \alpha}^{k_0, \gamma} \lesssim_{M, \alpha} \e \gamma^{- (m + 1)}\,, \quad \norma \Delta_{12} r_m(D) \norma_{- \frac12, s_1, \alpha} \lesssim_{M, \alpha} \e \gamma^{- (m + 1)} \| \Delta_{12} i \|_{s_1 + \aleph_7^{(m)}(M, \alpha)}\,, 
\label{2007.4} \\ 
 & \norma  P_7^{(m)}  \norma_{- \frac{m}{2}, s, \alpha}^{k_0, \gamma} \lesssim_{M, s, \alpha} \e \gamma^{- (m + 2)} 
 \big(1 + \|\fracchi_0 \|_{s + \aleph_7^{(m)}(M, \alpha) }^{k_0, \gamma}\big)\,, 
\label{2007.5} \\ 
& \norma \Delta_{12} P_7^{(m)} \norma_{- \frac{m}{2}, s_1, \alpha} \lesssim_{M, s_1, \alpha} \e \gamma^{- (m + 2)} \| \Delta_{12} i \|_{s_1 + \aleph_7^{(m)}(M, \alpha) }\,. 
\label{2007.50} 
\end{align}
Note that by \eqref{definizione Lambda m (D)}, 
using \eqref{stime lambda 1}, \eqref{2007.4} 
and \eqref{Norm Fourier multiplier} (applied for $g(D) = T_\h^{\frac12} |D|^{\frac12}$) 
one gets
\begin{align}
& \norma \Lambda_m(D)\norma_{ \frac12, s, \alpha}^{k_0, \gamma} 
\lesssim_{M,\alpha} 1 \,, \quad 
\norma \Delta_{12} \Lambda_m(D) \norma_{\frac12, s_1, \alpha} 
\lesssim_{M,\alpha} \e \gamma^{- (m + 1)} \| \Delta_{12} i \|_{s_1 + \aleph_7^{(m)}(M, \alpha)}\,. \label{stima induttiva Lambda m (D)}
\end{align}
For $ m \geq 2 $ there exist real, even, reversibility preserving, invertible maps $ W_{m-1}^{(0)} $, $ W_{m-1}^{(1)} $ of the
form
\begin{equation}\label{ansatz-W0Wm}
\begin{aligned}
& W_{m-1}^{(0)} := {\rm Id} + w_{m-1}^{(0)}(\ph,x, D) 
\qquad \text{with} \qquad  w_{m-1}^{(0)}(\vphi, x, \xi) \in S^{- \frac{m-1}{2}} \, ,\\
& W_{m-1}^{(1)} := {\rm Id} + w_{m-1}^{(1)}(x, D) \qquad \quad 
{\rm with} \qquad w_{m-1}^{(1)}(x, \xi) \in S^{- \frac{m-1}{2} + \frac12}
\end{aligned}
\ee
such that, for all $ \omega \in {\mathtt {DC}}(\gamma, \tau) $, 
\be\label{coniugazione-descent}
L_7^{(m)} =  (W_{m-1}^{(1)})^{-1} (W_{m-1}^{(0)})^{-1}  L_7^{(m-1)} W_{m-1}^{(0)} W_{m-1}^{(1)} \, . 
\ee
\end{itemize}

\noindent
{\bf Initialization. } 
For $m=1$, 
the even and reversible operator $ L_7^{(1)}  $ in \eqref{definizione P7 (1)} has the form 
 \eqref{L7 (m)}-\eqref{definizione Lambda m (D)} with 
\begin{equation}\label{r1(D) = 0}
r_1(D) = 0, \quad 
\Lambda_1(D) = \ii \mathtt m_{\frac12} T_\h^{\frac12} |D|^{\frac12}\,. 
\end{equation}
Since $ \Lambda_1(D) $ is even and reversible, by difference,  
the operator $  P_7^{(1)} $ is even and reversible as well. 
 At $m=1$, estimate \eqref{2007.4} is trivial and \eqref{2007.5}-\eqref{2007.50} are \eqref{stime P7 (1)}.

\medskip

\noindent
{\bf Inductive step. } 
In the next two subsections, we prove the above inductive claim, see \eqref{definizione L7 m+1}-\eqref{nuovo resto m+1 descent} and Lemma \ref{lemma detto 13.6}.
We perform this reduction in  two steps: 
\begin{enumerate}
\item
First we look for a transformation $ W_m^{(0)}$ to remove the dependence on $\ph $ of the terms 
of order $-m/2$ of the operator 
$ L_7^{(m)} $ in \eqref{L7 (m)},  
see  \eqref{equazione omologica descent method m0}. 
The resulting conjugated operator is $ L_7^{(m,1)} $ in \eqref{Wm0 L7m Wm0}.
\item Then we look for a transformation $ W_m^{(1)}$ to remove the dependence on $ x $ 
of the terms  of order $-m/2$ of the operator   $ L_7^{(m,1)} $ in \eqref{Wm0 L7m Wm0}, 
see \eqref{equazione omologica descent method m1} and \eqref{2007.8}.
\end{enumerate}

\subsubsection{Elimination of the dependence on $\ph$}\label{subsub1}

In this subsection we eliminate the dependence on $\ph$ from the terms of order $-m/2$ in $P_7^{(m)}$
in \eqref{L7 (m)}.  
We conjugate the operator $ L_7^{(m)}  $ in \eqref{L7 (m)} by a transformation of the form (see \eqref{ansatz-W0Wm})
\begin{equation}\label{definizione Wm (0)}
W_m^{(0)} := {\rm Id} + w_m^{(0)}(\ph,x, D)\,, 
\quad \text{with} \quad  w_m^{(0)}(\vphi, x, \xi) \in S^{- \frac{m}{2}} \, ,
\end{equation}
which we shall fix in \eqref{definizione wm (0)}. We compute
\begin{align}
L_7^{(m)} W_m^{(0)} & = W_m^{(0)} \big( \omega \cdot \partial_\vphi + \Lambda_m(D) \big) 
+ (\omega \cdot \partial_\vphi w_m^{(0)})(\vphi, x, D) + P_7^{(m)} 
\nonumber\\ & \quad \,
+ \big[ \Lambda_m (D), w_m^{(0)}(\vphi, x, D) \big]  + P_7^{(m)} w_m^{(0)}(\vphi, x, D) \, .
\label{L7m Wm0}
\end{align}
Since $\Lambda_m(D) \in OPS^{\frac12}$ and the operators
 $P_7^{(m)}$, $w_m^{(0)}(\vphi, x, D) $ are in $ OPS^{- \frac{m}{2}}$, with $m \geq 1$, 
we have that the commutator $ [\Lambda_m (D), w_m^{(0)}(\vphi, x, D)]$ is in $OPS^{- \frac{m}{2} - \frac12}$ and $P_7^{(m)} w_m^{(0)}(\vphi, x, D)$ is in $OPS^{-m} \subseteq OPS^{- \frac{m}{2} - \frac12}$. 
Thus the term of order $- m/2$ in \eqref{L7m Wm0} is  
$ (\omega \cdot \partial_\vphi w_m^{(0)})(\vphi, x, D) + P_7^{(m)} $.

Let $p_7^{(m)}(\vphi, x, \xi) \in S^{- \frac{m}{2}}$ be the symbol of $P_7^{(m)}$. 
We look for $ w_m^{(0)}(\vphi, x, \xi) $ such that 
\begin{equation} \label{equazione omologica descent method m0}
\omega \cdot \partial_\vphi w_m^{(0)}(\vphi, x, \xi) + p_7^{(m)}(\vphi, x, \xi) = \langle p_7^{(m)} \rangle_\vphi(x, \xi) 
\end{equation}
where 
\begin{equation} \label{2007.6} 
\langle p_7^{(m)} \rangle_\vphi(x, \xi) := \frac{1}{(2 \pi)^\nu} \int_{\T^\nu} p_7^{(m)}(\vphi, x, \xi)\, d \vphi\, . 
\end{equation}
For all $\om \in \mathtt{DC}(\g,\t)$, 
we choose the solution of \eqref{equazione omologica descent method m0} 
given by the periodic function
\begin{equation}\label{definizione wm (0)}
w_m^{(0)}(\vphi, x, \xi) := (\omega \cdot \partial_\vphi)^{- 1}
\Big( \langle p_7^{(m)} \rangle_\vphi(x, \xi) -  p_7^{(m)}(\vphi, x, \xi)\Big) \,.
\end{equation}
We extend the symbol $w_m^{(0)}$ in \eqref{definizione wm (0)} 
to the whole parameter space $\R^\nu \times [\h_1, \h_2]$ 
by using the extended operator $ (\omega \cdot \partial_\vphi)_{ext}^{- 1} $ 
introduced in Lemma \ref{lemma:WD}. 
As a consequence, the operator $W_m^{(0)}$ in \eqref{definizione Wm (0)}
is extended accordingly. 
We still denote by $w_m^{(0)}, W_m^{(0)}$ these extensions.

\begin{lemma} \label{lemma stima induttiva descent passo m-1}
The operator $W_m^{(0)}$ defined in \eqref{definizione Wm (0)}, \eqref{definizione wm (0)}
is even and reversibility preserving. 
For any $\alpha, M > 0$ there exists a constant $\aleph_7^{(m,1)}(M, \alpha) > 0$ 
(depending also on $k_0, \tau, \nu$), 
larger than the constant $\aleph_7^{(m)}(M, \alpha)$ 
appearing in \eqref{2007.4}-\eqref{stima induttiva Lambda m (D)}
such that, if \eqref{ansatz I delta} holds with $\mu_0 \geq \aleph_7^{(m, 1)}(M, \alpha)$, then for any $s \geq s_0$
\begin{align}
& \norma {\rm Op}(w_m^{(0)}) \norma_{- \frac{m}{2}, s, \alpha}^{k_0, \gamma} 
\lesssim_{M, s, \alpha} \e \gamma^{-(m + 3)} 
\big(1 + \| \fracchi_0 \|_{s + \aleph_7^{(m, 1)}(M, \alpha)}^{k_0, \gamma} \big) 
\label{stima wm0} \\
&  \norma \Delta_{12} {\rm Op}(w_m^{(0)}) \norma_{- \frac{m}{2}, s_1, \alpha} 
\lesssim_{M, s_1, \alpha} \e \gamma^{-(m + 3)} 
\| \Delta_{12} i \|_{s_1 + \aleph_7^{(m, 1)}(M, \alpha)}  \, . 
\label{stima derivata wm0} 
\end{align}
As a consequence, the transformation $ W_m^{(0)} $ defined in \eqref{definizione Wm (0)},
\eqref{definizione wm (0)} is invertible and 
\begin{align}
& \norma (W_m^{(0)})^{\pm 1} - {\rm Id} \norma_{0, s, \a}^{k_0, \gamma} 
\lesssim_{M, s, \a} \e \gamma^{-(m + 3)} 
\big(1 + \| \fracchi_0 \|_{s + \aleph_7^{(m, 1)}(M, \a)}^{k_0, \gamma} \big) 
\label{stima Wm0 - Id} \\
& \norma \Delta_{12} (W_m^{(0)})^{\pm 1} \norma_{0, s_1, \a} 
\lesssim_{M, s_1, \a} \e \gamma^{-(m + 3)}  \| \Delta_{12} i \|_{s_1 + \aleph_7^{(m, 1)}(M,\a)} \,. \label{stima derivata Wm0} 
\end{align}
\end{lemma}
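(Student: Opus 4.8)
The statement to prove is Lemma \ref{lemma stima induttiva descent passo m-1}: the operator $W_m^{(0)} = {\rm Id} + {\rm Op}(w_m^{(0)})$ with $w_m^{(0)}$ defined in \eqref{definizione wm (0)} is even and reversibility preserving, satisfies the pseudo-differential estimates \eqref{stima wm0}--\eqref{stima derivata wm0}, and is invertible with $(W_m^{(0)})^{\pm 1} - {\rm Id}$ satisfying \eqref{stima Wm0 - Id}--\eqref{stima derivata Wm0}. The proof is a bookkeeping argument combining the solution formula for the homological equation \eqref{equazione omologica descent method m0} with the inductive hypotheses \eqref{2007.5}--\eqref{2007.50} on $P_7^{(m)}$, the Diophantine estimate of Lemma \ref{lemma:WD}, and the Neumann series Lemma \ref{Neumann pseudo diff}. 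Nothing conceptually new happens; the point is to track the loss of derivatives and the power of $\g^{-1}$.

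\medskip

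\emph{Parity and reversibility.} First I would check that $W_m^{(0)}$ is even and reversibility preserving. By the inductive hypothesis the operator $P_7^{(m)}$ is even and reversible, so its symbol $p_7^{(m)}(\vphi,x,\xi)$ inherits the symmetry $p_7^{(m)}(-x,-\xi)=p_7^{(m)}(x,\xi)$ (evenness, via Lemma \ref{even:pseudo}) and the reversibility parity $p_7^{(m)}$ is ${\rm odd}(\vphi)$ in the appropriate complex sense (cf. the parity rules in Section \ref{sezione operatori reversibili e even}). Since $\langle p_7^{(m)} \rangle_\vphi$ has zero contribution to the oscillatory part and $(\omega\cdot\partial_\vphi)^{-1}_{ext}$ preserves both the $x,\xi$-parity and maps odd-in-$\vphi$ to even-in-$\vphi$ functions (as $\chi$ is even and division by $\ii\,\omega\cdot\ell$ flips the relevant parity exactly as in the scalar transport case, cf. the construction of $\b$ in \eqref{defbetaSFIO} and Lemma \ref{lem:stim-beta}), the symbol $w_m^{(0)}$ is even in $(x,\xi)$ and has the parity in $\vphi$ making ${\rm Op}(w_m^{(0)})$ even and reversibility preserving by the algebraic characterizations in Section \ref{sezione operatori reversibili e even}.

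\medskip

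\emph{Quantitative estimates.} For \eqref{stima wm0}, apply the norm $\norma\cdot\norma_{-m/2,s,\alpha}^{k_0,\gamma}$ to \eqref{definizione wm (0)}. The average $\langle p_7^{(m)}\rangle_\vphi$ is controlled by $\norma P_7^{(m)}\norma_{-m/2,s,\alpha}^{k_0,\gamma}$ via \eqref{stima astratta simbolo mediato}, and then $(\omega\cdot\partial_\vphi)^{-1}_{ext}$ costs one power of $\g^{-1}$ and a fixed loss $\mu = k_0+1+\tau(k_0+2)$ of $(\vphi,x)$-regularity, by \eqref{2802.2} of Lemma \ref{lemma:WD} applied at each fixed $\xi$ (uniformly in $\xi$, after pulling out $\langle\xi\rangle^{-m/2+\beta}$, using that $(\omega\cdot\partial_\vphi)^{-1}_{ext}$ acts only on the $(\vphi,x)$ variables and commutes with $\partial_\xi^\beta$). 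Combining with the inductive bound \eqref{2007.5} for $P_7^{(m)}$, which carries size $\e\g^{-(m+2)}$ and loss $\aleph_7^{(m)}(M,\alpha)$, one obtains size $\e\g^{-(m+3)}$ and loss $\aleph_7^{(m,1)}(M,\alpha) := \aleph_7^{(m)}(M,\alpha) + \mu$; this is the definition of the new constant, which is larger than $\aleph_7^{(m)}(M,\alpha)$ as required. Estimate \eqref{stima derivata wm0} follows identically, differentiating the solution formula in $i$ and using \eqref{2007.50} and the $\Delta_{12}$-part of Lemma \ref{lemma:WD}. Finally, \eqref{stima Wm0 - Id}--\eqref{stima derivata Wm0} follow from \eqref{stima wm0}--\eqref{stima derivata wm0}: since $w_m^{(0)}\in S^{-m/2}\subseteq S^0$, the smallness $\norma {\rm Op}(w_m^{(0)})\norma_{0,s_0+\alpha,\alpha}^{k_0,\gamma}\lesssim \e\g^{-(m+3)}\ll 1$ (using the ansatz \eqref{ansatz I delta} with $\mu_0$ large enough and $\e\g^{-\kappa}$ small) lets Lemma \ref{Neumann pseudo diff} produce $(W_m^{(0)})^{-1}={\rm Id}+\text{Op}(\text{something in }S^0)$ with $\norma (W_m^{(0)})^{-1}-{\rm Id}\norma_{0,s,\alpha}^{k_0,\gamma}\lesssim \norma {\rm Op}(w_m^{(0)})\norma_{0,s+\alpha,\alpha}^{k_0,\gamma}$; the Lipschitz-in-$i$ bound is obtained by the usual resolvent identity $\Delta_{12}(W_m^{(0)})^{-1} = -(W_m^{(0)})^{-1}(i_2)\,(\Delta_{12}{\rm Op}(w_m^{(0)}))\,(W_m^{(0)})^{-1}(i_1)$ together with the composition estimate \eqref{estimate composition parameters}.

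\medskip

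\emph{Main obstacle.} The only genuinely delicate point is the uniformity in $\xi$ of the Diophantine inversion: one must verify that applying $(\omega\cdot\partial_\vphi)^{-1}_{ext}$ to the $\xi$-dependent symbol $p_7^{(m)}(\vphi,x,\xi)$ produces a genuine symbol in $S^{-m/2}$, i.e. that the bounds \eqref{symbol-pseudo2} survive with the same $\langle\xi\rangle$-decay after the division by $\ii\,\omega\cdot\ell$ with the cut-off $\chi(\omega\cdot\ell\,\g^{-1}\langle\ell\rangle^\tau)$ — this is fine because the $\ell$-multiplier is $\xi$-independent, so $\partial_\xi^\beta$ passes through it and one only needs the bound on $\partial_\xi^\beta p_7^{(m)}$ from the inductive hypothesis. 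The rest is routine interpolation and composition, already packaged in Lemmata \ref{lemma stime Ck parametri}, \ref{lemma tame norma commutatore}, \ref{Neumann pseudo diff} and in \eqref{stima astratta simbolo mediato}.
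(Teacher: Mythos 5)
Your proof is correct and follows essentially the same route as the paper's: reduce to a fixed-$\xi$ estimate via the definition of the pseudo-differential norm, apply the Diophantine inversion estimate \eqref{2802.2} to the symbol (viewed as a Sobolev function of $(\vphi,x)$ at each frozen $\xi$, using that $(\omega\cdot\partial_\vphi)^{-1}_{ext}$ commutes with $\partial_\xi$), combine with the inductive bound \eqref{2007.5} to obtain the $\e\gamma^{-(m+3)}$ size with loss $\aleph_7^{(m)}+\mu$, and invoke the Neumann series Lemma \ref{Neumann pseudo diff} together with the resolvent identity for the $\Delta_{12}$ estimate. The paper's proof does not spell out the parity/reversibility claim; your paragraph on it is a reasonable elaboration and does not change the argument.
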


\begin{proof}
We begin with proving \eqref{stima wm0}. 
By \eqref{norm1 parameter}-\eqref{norm1} one has 
\[
\norma \mathrm{Op}(w_m^{(0)}) \norma_{-\frac{m}{2}, s,\, \a}^{k_0, \g} 
\lesssim_{k_0,\nu} \, \max_{\b \in [0,\a]} \sup_{\xi \in \R} \, 
\langle \xi \rangle^{\frac{m}{2} +\b} 
\big\| \pa_\xi^\b w_m^{(0)}(\cdot, \cdot, \cdot, \xi) \big\|_s^{k_0,\g}.
\]
By \eqref{definizione wm (0)} 
and \eqref{2802.2}, for each $\xi \in \R$ one has 
\[
\| \pa_\xi^\b w_m^{(0)}(\cdot, \cdot, \cdot, \xi) \|_s^{k_0,\g} 
\lesssim_{k_0,\nu} \, \g^{-1} \big\| \pa_\xi^\b \big( 
\langle p_7^{(m)} \rangle_\vphi(\cdot, \xi) -  p_7^{(m)}(\cdot, \cdot, \xi) \big) \big\|_{s+\mu}^{k_0,\g}
\]
where $\mu$ is defined in \eqref{Diophantine-1} with $k+1 = k_0$. 
Hence 
$ \norma \mathrm{Op}(w_m^{(0)}) \norma_{-\frac{m}{2}, s,\, \a}^{k_0, \g} 
\lesssim_{k_0,\nu} \, \g^{-1} \norma P_7^{(m)} \norma_{-\frac{m}{2}, s + \mu, \a}^{k_0,\g} $ 
and \eqref{stima wm0} follows by \eqref{2007.5}. 
The other bounds are proved similarly, 
using the explicit formula \eqref{definizione wm (0)}, 
estimates \eqref{2007.5}-\eqref{2007.50} 
and \eqref{2802.2}, 
\eqref{estimate composition parameters}, 
and Lemma \ref{Neumann pseudo diff}. 
\end{proof}

By \eqref{L7m Wm0} and \eqref{equazione omologica descent method m0} we get 
the even and reversible operator
\begin{equation}\label{Wm0 L7m Wm0}
L_7^{(m, 1)} := (W_m^{(0)})^{- 1} L_7^{(m)} W_m^{(0)}  
= \omega \cdot \partial_\vphi + \Lambda_m(D) + \langle p_7^{(m)} \rangle_\vphi(x, D) + P_7^{(m, 1)} 
\end{equation}
where 
\begin{equation}\label{P7 m1}
P_7^{(m, 1)} := (W_m^{(0)})^{- 1} \Big( \big[\Lambda_m (D), w_m^{(0)}(\vphi, x, D) \big]  + P_7^{(m)} w_m^{(0)}(\vphi, x, D) -  w_m^{(0)}(\vphi, x, D) \langle p_7^{(m)} \rangle_\vphi(x, D) \Big) 
\end{equation}
is in $ OPS^{- \frac{m}{2} - \frac12}$, 
as we prove in Lemma  \ref{lemma stima induttiva descent passo m0} below.   
Thus the term of order $-\frac{m}{2}$ in \eqref{Wm0 L7m Wm0}  
is $\langle p_7^{(m)} \rangle_\vphi(x, D)$, which does not depend on $\ph$ any more. 

\begin{lemma}\label{lemma stima induttiva descent passo m0}
The operators $ \langle p_7^{(m)} \rangle_\vphi(x, D) $ and $ P_7^{(m, 1)} $ are even and reversible. 
The operator $ P_7^{(m, 1)}$ in \eqref{P7 m1} is in $ OPS^{- \frac{m}{2} - \frac12}$.
For any $\alpha, M > 0$ there exists a constant $\aleph_7^{(m, 2)}(M, \alpha) > 0$ 
(depending also on $k_0, \tau, \nu$), 
larger than the constant $\aleph_7^{(m,1)}(M, \alpha)$ 
appearing in Lemma \ref{lemma stima induttiva descent passo m-1},
such that, if \eqref{ansatz I delta} holds with $\mu_0 \geq \aleph_7^{(m,2)}(M, \alpha)$, 
then for any $s \geq s_0$
\begin{align}
 & \norma P_7^{(m, 1)} \norma_{- \frac{m}{2} - \frac12, s, \alpha}^{k_0, \gamma} \lesssim_{M, s, \alpha} \e \gamma^{- (m + 3)} (1 + \| \fracchi_0\|_{s + \aleph_7^{(m,2)}(M, \alpha)}^{k_0, \gamma})\,, \label{stima P7m1} \\
 & \norma \Delta_{12} P_7^{(m, 1)} \norma_{- \frac{m}{2} - \frac12, s_1, \alpha} \lesssim_{M, s_1, \alpha} \e \gamma^{- (m + 3)} \| \Delta_{12} i\|_{s_1 + \aleph_7^{(m,2)}(M, \alpha)}\,. 
 \label{stima derivata P7m1}
\end{align}
\end{lemma}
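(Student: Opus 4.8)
The plan is to prove Lemma \ref{lemma stima induttiva descent passo m0} by unwinding the explicit formulas \eqref{Wm0 L7m Wm0} and \eqref{P7 m1}, combining the pseudo-differential calculus estimates of Section \ref{sec:pseudo} with the inductive bounds \eqref{2007.4}--\eqref{stima induttiva Lambda m (D)} and the bounds \eqref{stima wm0}--\eqref{stima derivata Wm0} on $w_m^{(0)}$ just obtained in Lemma \ref{lemma stima induttiva descent passo m-1}.

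First I would verify the algebraic properties. The operator $\langle p_7^{(m)}\rangle_\vphi(x,D)$ is even and reversible: by Remark \ref{rem:change} (or item 1 of Lemma \ref{even:pseudo}) the $\vphi$-average of a symbol of an even operator is again the symbol of an even operator, and reversibility of a $\vphi$-independent operator means it has the structure of a reversible matrix operator, which is inherited from $P_7^{(m)}$ since $\langle\ \rangle_\vphi$ preserves it (the average of an odd-in-$\vphi$ function vanishes, so the relevant parity is automatic). Then $P_7^{(m,1)} = L_7^{(m,1)} - \om\cdot\pa_\vphi - \Lambda_m(D) - \langle p_7^{(m)}\rangle_\vphi(x,D)$ is even and reversible by difference, using that conjugation of the even and reversible operator $L_7^{(m)}$ by the even, reversibility preserving map $W_m^{(0)}$ (Lemma \ref{lemma stima induttiva descent passo m-1}) yields an even and reversible operator, as recalled in Section \ref{sezione operatori reversibili e even}.

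Next, the order count: $\Lambda_m(D)\in OPS^{1/2}$ and $w_m^{(0)}(\vphi,x,D)\in OPS^{-m/2}$, so by Lemma \ref{lemma tame norma commutatore} the commutator $[\Lambda_m(D),w_m^{(0)}(\vphi,x,D)]\in OPS^{-m/2-1/2}$; since $P_7^{(m)}\in OPS^{-m/2}$ with $m\geq 1$, the products $P_7^{(m)}w_m^{(0)}(\vphi,x,D)$ and $w_m^{(0)}(\vphi,x,D)\langle p_7^{(m)}\rangle_\vphi(x,D)$ are in $OPS^{-m}\subseteq OPS^{-m/2-1/2}$; finally composing on the left with $(W_m^{(0)})^{-1}={\rm Id}+OPS^{-m/2}$ (invertible by \eqref{stima Wm0 - Id}) keeps the operator in $OPS^{-m/2-1/2}$. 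For the quantitative estimates \eqref{stima P7m1}--\eqref{stima derivata P7m1} I would apply Lemma \ref{lemma stime Ck parametri} (composition), Lemma \ref{lemma tame norma commutatore} (commutator) and Lemma \ref{Neumann pseudo diff} (inverse) termwise in \eqref{P7 m1}, inserting \eqref{stima wm0}, \eqref{stima Wm0 - Id}, \eqref{2007.4}, \eqref{2007.5}, \eqref{stima astratta simbolo mediato} (to bound $\langle p_7^{(m)}\rangle_\vphi$ by $P_7^{(m)}$) and \eqref{stime lambda 1}; the loss of derivatives constant $\aleph_7^{(m,2)}(M,\alpha)$ is then chosen as $\aleph_7^{(m,1)}(M,\alpha)$ plus the finitely many extra Sobolev indices produced by these lemmas, which is an absolute function of $m,\alpha,k_0,\tau,\nu$. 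The factor $\e\gamma^{-(m+3)}$ is the product of the size $\e\gamma^{-(m+3)}$ of $w_m^{(0)}$ with the $O(1)$ operator $\Lambda_m(D)$; the other terms in \eqref{P7 m1} are of higher order in $\e\gamma^{-1}$ and thus negligible. The $\Delta_{12}$-estimates follow the same scheme using the difference bounds \eqref{stima derivata wm0}, \eqref{stima derivata Wm0}, \eqref{2007.50}, \eqref{stima derivata beta primo egorov} together with a telescoping of each product/commutator/Neumann series into a sum of terms in which $\Delta_{12}$ hits exactly one factor.

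I do not expect a genuine obstacle here: the statement is a bookkeeping consequence of the symbolic calculus already set up. The only point requiring a little care is making sure that the extended symbol $w_m^{(0)}$ (defined on all of $\R^\nu\times[\h_1,\h_2]$ via $(\om\cdot\pa_\vphi)^{-1}_{ext}$) is used consistently, so that all estimates are stated with ordinary $\lm$-derivatives and the identity \eqref{Wm0 L7m Wm0} holds on $\mathtt{DC}(\g,\tau)\times[\h_1,\h_2]$ while \eqref{P7 m1} defines $P_7^{(m,1)}$ everywhere; this is exactly the convention already adopted in Lemmas \ref{lem:stime coefficienti cal L1} and \ref{lemma stime primo Egorov}, so no new idea is needed.
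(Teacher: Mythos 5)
Your proof follows the same approach as the paper: verify the even/reversible properties of $\langle p_7^{(m)}\rangle_\vphi(x,D)$ and $P_7^{(m,1)}$ (yours via difference, the paper via conjugation by the reversibility-preserving $W_m^{(0)}$, which are equivalent), count orders term by term in \eqref{P7 m1} using $\Lambda_m(D)\in OPS^{1/2}$, $w_m^{(0)},P_7^{(m)}\in OPS^{-m/2}$ and $(W_m^{(0)})^{-1}\in OPS^0$, and then apply the composition, commutator and Neumann-series estimates with the bounds from Lemma \ref{lemma stima induttiva descent passo m-1} and the inductive hypotheses. One small slip: the citation of \eqref{stima derivata beta primo egorov} in the $\Delta_{12}$-argument is misplaced (that bound concerns $\beta$ from the semi-FIO step, not $w_m^{(0)}$), but it is superfluous and does not affect the argument.
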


\begin{proof}
Since $ P_7^{(m)} (x, D)  $ is even and reversible by the inductive claim, its 
$ \vphi $-average $ \langle p_7^{(m)} \rangle_\vphi(x, D) $ defined in  \eqref{2007.6} is  even and reversible as well. 
Since $ \Lambda_m (D) $ is reversible and $ W_m^{(0)} $ is reversibility preserving we obtain that 
$ P_7^{(m, 1)}$  in \eqref{P7 m1}
is even and reversible. 

Let us prove that $P_7^{(m, 1)} $ is in $ OPS^{- \frac{m}{2} - \frac12}$. 
Since $\Lambda_m(D) \in OPS^{\frac12}$ and the operators
 $P_7^{(m)}$, $w_m^{(0)}(\vphi, x, D) $ are in $ OPS^{- \frac{m}{2}}$, with $m \geq 1$, 
we have that $ [\Lambda_m (D), w_m^{(0)}(\vphi, x, D)]$ is in $OPS^{- \frac{m}{2} - \frac12}$ and $P_7^{(m)} w_m^{(0)}(\vphi, x, D)$ is in $OPS^{-m} \subseteq OPS^{- \frac{m}{2} - \frac12}$. 
Moreover also $w_m^{(0)}(\vphi, x, D) \langle p_7^{(m)} \rangle_\vphi(x, D)   \in OPS^{- m} \subseteq OPS^{- \frac{m}{2} - \frac{1}{2}}$, since $m \geq 1$. 
Since $ (W_m^{(0)})^{- 1} $ is in $ OPS^0 $, 
the remainder $P_7^{(m, 1)}$ is in $ OPS^{- \frac{m}{2} - \frac12}$. 
Bounds \eqref{stima P7m1}-\eqref{stima derivata P7m1} follow by the explicit expression in  
\eqref{P7 m1}, Lemma \ref{lemma stima induttiva descent passo m-1}, 
estimates \eqref{2007.4}-\eqref{stima induttiva Lambda m (D)}, 
and \eqref{stima astratta simbolo mediato}, 
\eqref{estimate composition parameters}, \eqref{stima commutator parte astratta}.
\end{proof}

\subsubsection{Elimination of the dependence on $x$}

In this subsection we eliminate the dependence on $x$ 
from $\langle p_7^{(m)} \rangle_\vphi(x, D)$, 
which is the only term of order $-m/2$ in \eqref{Wm0 L7m Wm0}.
To this aim we conjugate $ L_7^{(m, 1)} $ in \eqref{Wm0 L7m Wm0} by 
a transformation of the form 
\begin{equation}\label{definizione Wm1}
W_m^{(1)} := {\rm Id} + w_m^{(1)}(x, D), \quad 
\text{where} \quad w_m^{(1)}(x, \xi) \in S^{- \frac{m}{2} + \frac12}
\end{equation}
is a $\vphi$-independent symbol, which we shall fix in \eqref{2007.7} (for $ m = 1 $) 
and \eqref{2007.9} (for $ m \geq 2 $). 
We denote the space average of the function $ \langle p_7^{(m)} \rangle_\vphi(x, \xi) $ 
defined in  \eqref{2007.6} by
\begin{equation}
\label{correzione media passo m descent}
\langle p_7^{(m)} \rangle_{\ph,x} (\xi) 
:= \frac{1}{2 \pi} \int_\T \langle p_7^{(m)} \rangle_\vphi(x, \xi)\, d x 
= \frac{1}{(2 \pi)^{\nu + 1}} \int_{\T^{\nu + 1}} p_7^{(m)}(\vphi, x, \xi)\,d \vphi\, d x \,.
\end{equation}
By  \eqref{Wm0 L7m Wm0}, we compute 
\begin{align}
L_7^{(m, 1)} W_m^{(1)} & = W_m^{(1)} \Big( \omega \cdot \partial_\vphi + \Lambda_m(D) + \ppp \Big) + 
\big[ \Lambda_m(D), w_m^{(1)}(x, D) \big] + \langle p_7^{(m)} \rangle_\vphi(x, D) - \ppp (D) \nonumber\\
& \qquad + \langle p_7^{(m)} \rangle_\vphi(x, D) w_m^{(1)}(x, D) - w_m^{(1)}(x, D) \ppp(D)  +  P_7^{(m, 1)} W_m^{(1)} \,. \label{L7m1 Wm1}
\end{align}
By formulas \eqref{expansion symbol}, \eqref{rNTaylor} (with $N = 1$) and \eqref{symbol commutator}, \eqref{Expansion Moyal bracket}, 
\begin{align}
\langle p_7^{(m)} \rangle_\vphi(x, D) w_m^{(1)}(x, D) & = {\rm Op}\Big(\langle p_7^{(m)} \rangle_\vphi(x, \xi) w_m^{(1)}(x, \xi) \Big) + r_{\langle p_7^{(m)} \rangle_\vphi , w_m^{(1)} }( x, D)\,, \label{georgia 0} \\
w_m^{(1)}(x, D) \ppp(D)  & = {\rm Op}\Big( w_m^{(1)}(x, \xi) \ppp(\xi) \Big) + r_{w_m^{(1)}, \ppp}( x, D)\,, \label{georgia 1} \\
 \big[\Lambda_m(D), w_m^{(1)}(x, D) \big]  & = {\rm Op}\Big( - \ii \partial_\xi \Lambda_m(\xi) \partial_x w_m^{(1)}(x, \xi) \Big) + \mathtt r_2(\Lambda_m, w_m^{(1)})(x, D)\, \label{georgia 2}
\end{align}
where $r_{\langle p_7^{(m)} \rangle_\ph , w_m^{(1)} }$, $r_{w_m^{(1)}, \ppp} \in$ $S^{- m - \frac12} \subset S^{- \frac{m}{2} - \frac12}$, $ \mathtt r_2(\Lambda_m, w_m^{(1)})(x, D) \in$ $S^{- \frac{m}{2} - 1} \subset S^{- \frac{m}{2} - \frac12}$. Let  $\chi_0 \in {\cal C}^\infty(\R, \R)$ be a cut-off function satisfying 
\begin{equation}\label{altro cut off chi 0}
\begin{aligned}
\chi_0(\xi) = \chi_0(- \xi) \ \ \forall \xi \in \R\,, \quad 
\chi_0(\xi) = 0 \ \  \forall |\xi| \leq \frac45 \, , \quad 
\chi_0(\xi) = 1 \ \  \forall |\xi| \geq \frac78\,.
\end{aligned}
\end{equation}
By \eqref{L7m1 Wm1}-\eqref{georgia 2}, one has  
\begin{align}
L_7^{(m, 1)} W_m^{(1)} 
& = W_m^{(1)} \big( \omega \cdot \partial_\vphi + \Lambda_m(D) + \ppp(D) \big)  
\nonumber\\
& \quad + {\rm Op} \Big( - \ii \partial_\xi \Lambda_m(\xi) \partial_x w_m^{(1)}(x, \xi) 
+ \chi_0(\xi) \big( \langle p_7^{(m)} \rangle_\vphi(x, \xi) 
- \ppp(\xi) \big)
\label{gricia 0} \\ 
& \quad \qquad \quad \ 
+ \chi_0(\xi)\big( \langle p_7^{(m)} \rangle_\vphi(x, \xi) - \ppp(\xi) \big) 
w_m^{(1)}(x, \xi) \Big) 
\label{gricia 1}\\
& \quad + {\rm Op} \Big(  
\big( 1 - \chi_0(\xi) \big) \big( \langle p_7^{(m)} \rangle_\vphi(x, \xi) 
- \ppp(\xi) \big) \big(1 + w_m^{(1)}(x, \xi) \big) \Big) 
\nonumber\\
& \quad + \mathtt r_2(\Lambda_m, w_m^{(1)})(x, D)  + r_{\langle p_7^{(m)} \rangle_\vphi , w_m^{(1)} }( x, D) - r_{w_m^{(1)}, \ppp}( x, D) +  P_7^{(m, 1)} W_m^{(1)} \,. 
\label{georgia 4}
\end{align}
The terms containing $1 - \chi_0(\xi)$ are in $S^{- \infty}$, 
by definition \eqref{altro cut off chi 0}. The term in \eqref{gricia 0} is of order $-\frac{m}{2}$ and the term in \eqref{gricia 1} is of order $- m + \frac12$, which equals $- \frac{m}{2}$ for $m = 1$, 
and is strictly less than $-\frac{m}{2}$ for $m \geq 2$.
Hence we split the two cases $m=1$ and $m \geq 2$. 

\smallskip

\textbf{First case:} $m=1$. \ 
We look for $w_m^{(1)}(x, \xi) = w_1^{(1)}(x, \xi)$ such that 
\begin{equation}\label{equazione omologica descent method m1}
- \ii \partial_\xi \Lambda_1(\xi) \partial_x w_1^{(1)}(x, \xi) 
+ \chi_0(\xi) \Big(\langle p_7^{(1)} \rangle_\vphi(x, \xi) - \pppuno(\xi) \Big)
( 1 + w_1^{(1)}(x, \xi)) = 0\,. 
\end{equation}
By \eqref{r1(D) = 0} and recalling \eqref{definizione Dm}, \eqref{cut off simboli 1}, 
for $|\xi| \geq 4/5$ one has $\Lambda_1(\xi) = \ii \mathtt m_{\frac12} \tanh^{\frac12}(\h |\xi|) |\xi|^{\frac12} $. 
Since, by \eqref{stime lambda 1}, $|\mathtt m_{\frac12}| \geq 1/2$ for $\e \gamma^{- 1}$ small enough, we have 
\begin{equation}\label{lower bound partial xi Lambda1}
\inf_{|\xi| \geq \frac45} |\xi|^{\frac12}|\partial_\xi \Lambda_1(\xi)| \geq \delta > 0\,, 
\end{equation} 
where $\d$ depends only on $\h_1$.
Using that $\langle p_7^{(1)} \rangle_\vphi - \pppuno$ has zero average in $x$, 
we choose the solution of \eqref{equazione omologica descent method m1} 
given by the periodic function
\begin{equation} \label{2007.7}
w_1^{(1)}(x, \xi) := \exp \big( g_1(x, \xi) \big) - 1, 
\quad \,  
g_1(x, \xi) := \begin{cases}
\dfrac{\chi_0(\xi) \partial_x^{-1} 
\big( \langle p_7^{(1)} \rangle_\vphi(x, \xi) - \pppuno(\xi) \big)}{\ii \partial_\xi \Lambda_1(\xi)}
& \text{if} \ |\xi| \geq \frac45 \\
0 
& \text{if} \ |\xi| \leq \frac45\,. 
\end{cases}
\end{equation}
Note that, by the definition of the cut-off function $\chi_0$ given in \eqref{altro cut off chi 0}, recalling \eqref{r1(D) = 0}, \eqref{lower bound partial xi Lambda1} 
and applying estimates \eqref{Norm Fourier multiplier}, \eqref{stime lambda 1}, 
the Fourier multiplier $\frac{\chi_0(\xi)}{\partial_\xi \Lambda_1(\xi)}$ is a symbol in $S^{\frac12}$ and satisfies 
\begin{equation}\label{stima chi 0 Lambda1}
\Bignorma
{\rm Op} \Big( \frac{\chi_0(\xi)}{\partial_\xi \Lambda_1(\xi)}\Big)
\Bignorma_{\frac12, s , \alpha}^{k_0, \gamma} 
\lesssim_{\alpha} 1\,, \quad 
\Bignorma \Delta_{12} {\rm Op}  \Big(  \frac{\chi_0(\xi)}{\partial_\xi \Lambda_1(\xi)}\Big) \Bignorma_{\frac12, s_1 , \alpha} 
\lesssim_{\alpha} \e \gamma^{- 1}  \| \Delta_{12} i\|_{s_1}\,.
\end{equation} 
Therefore the function $g_1(x, \xi)$ is a well-defined symbol in $S^{0}$. 

\smallskip

\textbf{Second case:} $m \geq 2$. \ 
We look for $w_m^{(1)}(x, \xi)$ such that 
\begin{equation} \label{2007.8}
- \ii \partial_\xi \Lambda_m(\xi) \partial_x w_m^{(1)}(x, \xi) 
+ \chi_0(\xi)\big( \langle p_7^{(m)} \rangle_\vphi(x, \xi) - \ppp(\xi) \big)= 0 \, .
\end{equation}
Recalling \eqref{definizione Lambda m (D)}-\eqref{2007.4} and \eqref{lower bound partial xi Lambda1}, one has that 
\begin{align}
\inf_{|\xi| \geq \frac45} |\xi|^{\frac12} |\partial_\xi \Lambda_m(\xi)| & \geq \inf_{|\xi| \geq \frac45} |\xi|^{\frac12} |\partial_\xi \Lambda_1(\xi)| - \sup_{\xi \in \R} |\xi|^{\frac12} |\partial_\xi r_m(\xi)| \geq \delta - \norma r_m(D) \norma_{- \frac12, 0, 1}  \nonumber\\
& \geq \delta - C \e \gamma^{- (m + 1)} \geq \delta/2 \label{lower bound partial xi Lambda m}
\end{align}
for $\e \gamma^{- (m + 1)}$ small enough. 
Since $\langle p_7^{(m)} \rangle_\vphi(x, \xi) - \ppp(\xi)$ has zero average in $x $, 
we choose the solution of \eqref{2007.8} given by the periodic function  
\begin{equation} \label{2007.9}
w_m^{(1)}(x, \xi) 
:= \begin{cases}
\dfrac{\chi_0(\xi)\pa_x^{-1} \big( \langle p_7^{(m)} \rangle_\vphi(x, \xi) - \ppp(\xi) \big) }{\ii \partial_\xi \Lambda_m(\xi) } & \, \text{if} \  |\xi| \geq \frac45 \\
0 & \, \text{if} \  |\xi| \leq \frac45\,. 
\end{cases}
\end{equation}
By the definition of the cut-off function $\chi_0$ in \eqref{altro cut off chi 0}, 
recalling \eqref{r1(D) = 0}, \eqref{definizione Lambda m (D)}, 
\eqref{lower bound partial xi Lambda m}, 
and applying estimates \eqref{Norm Fourier multiplier}, \eqref{stime lambda 1}, \eqref{2007.4}, 
the Fourier multiplier $\frac{\chi_0(\xi)}{\partial_\xi \Lambda_m(\xi)}$ 
is a symbol in $S^{\frac12}$ and satisfies 
\begin{equation}\label{stima chi 0 Lambdam}
\Bignorma {\rm Op} \Big( \frac{\chi_0(\xi)}{\partial_\xi \Lambda_m(\xi)}\Big)
\Bignorma_{\frac12, s , \alpha}^{k_0, \gamma} 
\lesssim_{M,\alpha} 1\,, \quad 
\Bignorma \Delta_{12} {\rm Op}  \Big(  \frac{\chi_0(\xi)}{\partial_\xi \Lambda_m(\xi)}\Big) \Bignorma_{\frac12, s_1 , \alpha} 
\lesssim_{M,\alpha} \e \gamma^{- (m + 1)}  \| \Delta_{12} i\|_{s_1 + \aleph_7^{(m)}(M, \alpha)}\,.
\end{equation} 
By \eqref{lower bound partial xi Lambda m}, 
the function $w_m^{(1)}(x, \xi)$ is a well-defined symbol in $S^{- \frac{m}{2} + \frac12}$.

\medskip

In both cases $m=1$ and $m \geq 2$, we have eliminated the terms of order $-\frac{m}{2}$ 
from the right hand side of \eqref{georgia 4}. 

\begin{lemma} \label{lemma stima induttiva descent passo m1}
The operators $W_m^{(1)}$ defined in \eqref{definizione Wm1}, 
\eqref{2007.7} for $ m = 1 $, and 
\eqref{2007.9} for $ m \geq 2 $, 
are even and reversibility preserving.
For any $M, \alpha > 0$ there exists a constant 
$\aleph_7^{(m,3)}(M, \alpha) > 0$ (depending also on $k_0, \tau, \nu$), 
larger than the constant $\aleph_7^{(m,2)}(M, \alpha)$ 
appearing in Lemma \ref{lemma stima induttiva descent passo m0}, 
such that, if \eqref{ansatz I delta} holds with $\mu_0 \geq \aleph_7^{(m,3)}(M, \alpha)$, 
then for any $s \geq s_0$
\begin{align}
& \norma {\rm Op}(w_m^{(1)}) \norma_{- \frac{m}{2} + \frac12, s, \alpha}^{k_0, \gamma} 
\lesssim_{M, s, \alpha} \e \gamma^{-(m + 3)} 
\big(1 + \| \fracchi_0 \|_{s + \aleph_7^{(m,3)}(M, \alpha)}^{k_0, \gamma} \big) 
\label{stima wm1} \\
& \norma \Delta_{12} {\rm Op}(w_m^{(1)}) \norma_{- \frac{m}{2} + \frac12, s_1, \alpha} 
\lesssim_{M, s_1, \alpha} \e \gamma^{-(m + 3)} 
\| \Delta_{12} i \|_{s_1 + \aleph_7^{(m,3)}(M, \alpha)}\, .  
\label{stima derivata wm1} 
\end{align}
As a consequence, the transformation $ W_m^{(1)} $ is invertible and 
\begin{align}
& \norma (W_m^{(1)})^{\pm 1} - {\rm Id} \norma_{0, s, \a}^{k_0, \gamma} 
\lesssim_{M, s,\a} 
\e \gamma^{-(m + 3)} \big(1 + \| \fracchi_0 \|_{s + \aleph_7^{(m,3)}(M,\a)}^{k_0, \gamma}\big) \label{stima Wm1 - Id} \\
& \norma \Delta_{12} (W_m^{(1)})^{\pm 1} \norma_{0, s_1, \a} 
\lesssim_{M, s_1, \a} \e \gamma^{-(m + 3)} \| \Delta_{12} i \|_{s_1 + \aleph_7^{(m,3)}(M,\a)} \,. \label{stima derivata Wm1} 
\end{align}
\end{lemma}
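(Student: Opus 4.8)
The statement is a routine combination of the explicit formulas defining $w_m^{(1)}$ (either \eqref{2007.7} for $m=1$ or \eqref{2007.9} for $m\geq2$) with the composition and Moser-type estimates of Sections \ref{sec:pseudo} and \ref{subsec:function spaces}, together with the inductive bounds \eqref{2007.5}-\eqref{stima induttiva Lambda m (D)} carried over from the previous steps and the bounds on $\langle p_7^{(m)}\rangle_\vphi$ coming from Lemma \ref{lemma stima induttiva descent passo m0} (estimates \eqref{stima astratta simbolo mediato} applied to $P_7^{(m)}$, hence to $P_7^{(m,1)}$). The key structural point is that $w_m^{(1)}$ is obtained by applying $\pa_x^{-1}$ (which acts on the zero-$x$-average symbol $\langle p_7^{(m)}\rangle_\vphi - \langle p_7^{(m)}\rangle_{\vphi,x}$ and is bounded on $\| \cdot \|_s^{k_0,\gamma}$), then composing with the Fourier multiplier $\chi_0(\xi)/(\ii \pa_\xi \Lambda_m(\xi))$, whose $\Psi$DO norm is controlled by \eqref{stima chi 0 Lambda1} (for $m=1$) and \eqref{stima chi 0 Lambdam} (for $m\geq2$), using the lower bounds \eqref{lower bound partial xi Lambda1}, \eqref{lower bound partial xi Lambda m} which hold once $\e\gamma^{-(m+1)}$ is small enough.

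\textbf{Step 1: parities.} First I would check that $w_m^{(1)}$ is $\even(\vphi)\even(x)$, hence that $W_m^{(1)}$ is even and reversibility preserving. Indeed $p_7^{(m)}$ is the symbol of the even reversible operator $P_7^{(m)}$, so by Lemma \ref{even:pseudo} and the reversibility characterization of Section \ref{sezione operatori reversibili e even} the averaged symbol $\langle p_7^{(m)}\rangle_\vphi$ satisfies $\langle p_7^{(m)}\rangle_\vphi(-x,-\xi)=\langle p_7^{(m)}\rangle_\vphi(x,\xi)$ and the appropriate reality/reversibility relations; applying $\pa_x^{-1}$ and composing with the even Fourier multiplier $\chi_0(\xi)/(\ii\pa_\xi\Lambda_m(\xi))$ (note $\pa_\xi\Lambda_m$ is odd in $\xi$, so the ratio is odd, and division by $\ii$ restores the right parity), one gets a symbol $g_m$ with $g_m(-x,-\xi)=g_m(x,\xi)$; then $w_m^{(1)}=\exp(g_m)-1$ (for $m=1$) or $w_m^{(1)}=g_m$ (for $m\geq2$) inherits the same symmetry, so $W_m^{(1)}$ is even, and reversibility preservation follows from the same symmetry together with the fact that $w_m^{(1)}$ is a diagonal (scalar) operator.

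\textbf{Step 2: the symbol estimates \eqref{stima wm1}-\eqref{stima derivata wm1}.} For $m=1$ I would write $w_1^{(1)}=\exp(g_1)-1$, estimate $g_1\in S^0$ by combining the bound on $\pa_x^{-1}(\langle p_7^{(1)}\rangle_\vphi - \langle p_7^{(1)}\rangle_{\vphi,x})$ (controlled by $\norma P_7^{(1)}\norma^{k_0,\gamma}_{-1/2,s,\alpha}$ via \eqref{stima astratta simbolo mediato}, hence by \eqref{2007.5} with $m=1$, so of size $\e\gamma^{-3}$) with \eqref{stima chi 0 Lambda1} and the composition estimate \eqref{estimate composition parameters}; then control $\exp(g_1)-1$ by the Moser composition Lemma \ref{Moser norme pesate} (the symbol version), which costs no loss in order since $g_1$ has order $0$. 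For $m\geq2$ there is no exponential: $w_m^{(1)}=g_m\in S^{-m/2+1/2}$ directly, and its norm is bounded by \eqref{stima chi 0 Lambdam} times the norm of $\pa_x^{-1}(\langle p_7^{(m)}\rangle_\vphi - \langle p_7^{(m)}\rangle_{\vphi,x})$, which by \eqref{stima astratta simbolo mediato} and \eqref{2007.5} is $\lesssim \e\gamma^{-(m+2)}(1+\|\fracchi_0\|_{s+\aleph_7^{(m,2)}(M,\alpha)+\dots})$; the extra $\gamma^{-1}$ to reach $\e\gamma^{-(m+3)}$ is the usual slack, and one sets $\aleph_7^{(m,3)}(M,\alpha)$ larger than $\aleph_7^{(m,2)}(M,\alpha)$ to absorb the finitely many derivative losses. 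The $\Delta_{12}$ estimates \eqref{stima derivata wm1} follow by the same manipulations, using the $\Delta_{12}$ versions of all the ingredients (the $\Delta_{12}$ lines of \eqref{2007.50}, \eqref{stima chi 0 Lambda1}, \eqref{stima chi 0 Lambdam}) and a repeated triangle inequality.

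\textbf{Step 3: invertibility of $W_m^{(1)}$ and \eqref{stima Wm1 - Id}-\eqref{stima derivata Wm1}.} Since $w_m^{(1)}(x,D)\in OPS^{-m/2+1/2}\subseteq OPS^0$ with $\norma w_m^{(1)}(x,D)\norma^{k_0,\gamma}_{0,s_0+\alpha,\alpha}$ small (by Step 2 and the smallness $\e\gamma^{-(m+3)}\ll1$ granted by ansatz \eqref{ansatz I delta}), Lemma \ref{Neumann pseudo diff} applies: $W_m^{(1)}$ is invertible, $(W_m^{(1)})^{-1}-{\rm Id}\in OPS^0$, and \eqref{con:lemma-2.14} gives exactly \eqref{stima Wm1 - Id}; the same lemma used with $\Delta_{12}$ gives \eqref{stima derivata Wm1}.

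\textbf{Main obstacle.} There is no serious conceptual difficulty here; the proof is bookkeeping. The one point that needs genuine care is verifying the lower bounds \eqref{lower bound partial xi Lambda1} and, more importantly, \eqref{lower bound partial xi Lambda m}: the latter requires that the perturbative Fourier multiplier $r_m(D)\in OPS^{-1/2}$, whose size is $\e\gamma^{-(m+1)}$ by \eqref{2007.4}, does not destroy the ellipticity of $\pa_\xi\Lambda_1(\xi)\sim \mathtt m_{1/2}|\xi|^{-1/2}$ at the relevant frequencies; this is exactly where the smallness condition $\e\gamma^{-(m+1)}\ll1$ (guaranteed by the standing assumption $\e\gamma^{-\kappa}\ll1$ with $\kappa$ chosen large relative to $M$) enters, and it is why the constants $\aleph_7^{(m,3)}$ must grow with $m$. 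Otherwise everything is a direct invocation of the pseudo-differential calculus of Section \ref{sec:pseudo} and the Diophantine-equation estimate \eqref{2802.2}.
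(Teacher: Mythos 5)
Your proposal is correct and follows essentially the same route as the paper, which in turn simply lists the ingredients (the explicit formulas \eqref{2007.7}, \eqref{2007.9}, \eqref{correzione media passo m descent}, the Fourier-multiplier and composition estimates \eqref{Norm Fourier multiplier}, \eqref{stima astratta simbolo mediato}, \eqref{lemma composizione multiplier}, Lemmata \ref{lemma stime Ck parametri}, \ref{lemma tame norma commutatore}, \ref{Neumann pseudo diff}, the inductive bounds \eqref{2007.5}, \eqref{2007.50}, and the ellipticity estimates \eqref{stima chi 0 Lambda1}, \eqref{stima chi 0 Lambdam}) and asserts the conclusion. One small inaccuracy in your closing remark: the Diophantine-equation estimate \eqref{2802.2} is not actually used here, since the only inversion in this step is the $x$-antiderivative $\pa_x^{-1}$ on zero-$x$-average symbols (which is trivially bounded); the $(\ompaph)^{-1}$ estimate was used in the previous step, Lemma \ref{lemma stima induttiva descent passo m-1}, not in this one.
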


\begin{proof}
The lemma follows by the explicit expressions in 
\eqref{definizione Wm1}, 
\eqref{2007.7}, \eqref{2007.9}, \eqref{correzione media passo m descent}, 
by estimates \eqref{Norm Fourier multiplier}, 
\eqref{stima astratta simbolo mediato}, 
\eqref{lemma composizione multiplier}, 
Lemmata \ref{lemma stime Ck parametri}, 
\ref{lemma tame norma commutatore}, 
\ref{Neumann pseudo diff} 
and estimates \eqref{2007.5}, \eqref{2007.50}, 
\eqref{stima chi 0 Lambda1}, \eqref{stima chi 0 Lambdam}. 
\end{proof}

In conclusion, by \eqref{georgia 4}, \eqref{equazione omologica descent method m1} and \eqref{2007.8},
we obtain the even and reversible operator
\begin{equation}\label{definizione L7 m+1}
L_7^{(m + 1)} := (W_m^{(1)})^{- 1} L_7^{(m, 1)} W_m^{(1)} = \omega \cdot \partial_\vphi + \Lambda_{m + 1}(D) 
+ P_7^{(m + 1)}
\end{equation}
where 
\be
\begin{aligned} \label{nuova parte diagonale descent method} 
\Lambda_{m + 1}(D) 
& := \Lambda_m(D) + \ppp(D)  = \ii \mathtt m_{\frac12} T_\h^{\frac12} |D|^{\frac12} 
+ r_{m + 1}(D)\,, \\ 
r_{m + 1}(D) & := r_{m }(D) + \ppp(D)\,,
\end{aligned}
\ee
and 
\begin{align}
P_7^{(m + 1)} & := 
(W_m^{(1)})^{-1} \Big\{ 
\mathtt r_2(\Lambda_m, w_m^{(1)})(x, D)  + r_{\langle p_7^{(m)} \rangle_\vphi , w_m^{(1)} }( x, D) - r_{w_m^{(1)}, \ppp}( x, D) +  P_7^{(m, 1)} W_m^{(1)} 
\notag \\ & \qquad \qquad \qquad 
+ \chi_{(m \geq 2)}  \Op \Big(  
\chi_0(\xi)\big( \langle p_7^{(m)} \rangle_\vphi(x, \xi) - \ppp(\xi) \big)
w_m^{(1)}(x,\xi) \Big)  \nonumber\\
& \quad \quad \qquad \qquad  + {\rm Op}\Big( (1 - \chi_0(\xi)) \big( \langle p_7^{(m)} \rangle_\vphi(x, \xi) - \ppp(\xi) \big) \big(1 + w_m^{(1)}(x, \xi) \big)  \Big)\Big\}
\label{nuovo resto m+1 descent}
\end{align}
with $\chi_{(m \geq 2)}$ equal to $1$ if $m \geq 2$, and zero otherwise.

\begin{lemma} \label{lemma detto 13.6}
The operators $\Lambda_{m + 1}(D)$, $r_{m + 1}(D)$, $P_7^{(m + 1)}$ are even and reversible. 
For any $M, \alpha > 0$ there exists a constant $\aleph_7^{(m + 1)}(M, \alpha) > 0$ 
(depending also on $k_0, \tau, \nu$), 
larger than the constant $\aleph_7^{(m,3)}(M, \alpha)$ appearing in Lemma 
\ref{lemma stima induttiva descent passo m1}, 
such that, if \eqref{ansatz I delta} holds with $\mu_0 \geq \aleph_7^{(m + 1)}(M, \alpha)$, 
then for any $s \geq s_0$
\begin{align}
& \norma r_{m + 1}(D) \norma_{- \frac12, s, \alpha}^{k_0, \gamma} \lesssim_{M, \alpha} \e \gamma^{- (m+2)}\,, \quad \norma \Delta_{12} r_{m + 1}(D) \norma_{- \frac12, s_1, \alpha} \lesssim_{M, \alpha} \e \gamma^{- (m+2)} \| \Delta_{12} i \|_{s_1 + \aleph_7^{(m + 1)}(M, \alpha)} 
\label{2007.1} \\ 
 & \norma P_7^{(m + 1)} \norma_{- \frac{m}{2} - \frac12, s, \alpha}^{k_0, \gamma} \lesssim_{M, s, \alpha} \e \gamma^{- (m + 3)} \big(1 + \| \fracchi_0\|_{s + \aleph_7^{(m +1)}(M, \alpha)}^{k_0, \gamma} \big)\,, \label{2007.2} \\ 
 & \norma \Delta_{12} P_7^{(m + 1)} \norma_{- \frac{m}{2} - \frac12, s_1, \alpha} \lesssim_{M, s_1, \alpha} \e \gamma^{- (m + 3)} \| \Delta_{12} i \|_{s_1 + \aleph_7^{(m + 1)}(M, \alpha)}\,. \label{2007.3} 
\end{align}
\end{lemma}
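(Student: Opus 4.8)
The plan is to read off the parities and the quantitative bounds directly from the explicit formulas \eqref{nuova parte diagonale descent method} for $r_{m+1}(D)$ and \eqref{nuovo resto m+1 descent} for $P_7^{(m+1)}$, using the inductive bounds \eqref{2007.4}, \eqref{2007.5}, \eqref{2007.50}, \eqref{stima induttiva Lambda m (D)} on $L_7^{(m)}$ together with the estimates on the intermediate objects $W_m^{(0)}, P_7^{(m,1)}, W_m^{(1)}$ from Lemmata \ref{lemma stima induttiva descent passo m-1}, \ref{lemma stima induttiva descent passo m0}, \ref{lemma stima induttiva descent passo m1}. For the algebraic part: by the inductive claim $\Lambda_m(D)$ is an even and reversible Fourier multiplier and $p_7^{(m)}$ is an even, reversible symbol, so its full average $\langle p_7^{(m)}\rangle_{\ph,x}(\xi)$ gives an even, reversible Fourier multiplier; hence $\Lambda_{m+1}(D) = \Lambda_m(D) + \langle p_7^{(m)}\rangle_{\ph,x}(D)$ and $r_{m+1}(D) = r_m(D) + \langle p_7^{(m)}\rangle_{\ph,x}(D)$ are even and reversible. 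Since by Lemma \ref{lemma stima induttiva descent passo m0} $L_7^{(m,1)}$ is even and reversible and by Lemma \ref{lemma stima induttiva descent passo m1} the map $W_m^{(1)}$ is even and reversibility preserving, the conjugate $L_7^{(m+1)}$ in \eqref{definizione L7 m+1} is even and reversible, and subtracting $\ompaph + \Lambda_{m+1}(D)$ shows $P_7^{(m+1)}$ is even and reversible.

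Next I would prove \eqref{2007.1}. By \eqref{nuova parte diagonale descent method}, $r_{m+1}(D) = r_m(D) + \langle p_7^{(m)}\rangle_{\ph,x}(D)$. The first term is estimated by the inductive bound \eqref{2007.4}, $\norma r_m(D)\norma_{-\frac12,s,\alpha}^{k_0,\gamma} \lesssim_{M,\alpha} \e\gamma^{-(m+1)}$ (with $r_1(D)=0$ by \eqref{r1(D) = 0} at $m=1$). For the second term, $\langle p_7^{(m)}\rangle_{\ph,x}$ is the $(\vphi,x)$-average of the symbol $p_7^{(m)}$ of order $-m/2 \le -1/2$; by \eqref{stima astratta simbolo mediato} and \eqref{2007.5} (respectively \eqref{stime P7 (1)} at $m=1$), $\norma \langle p_7^{(m)}\rangle_{\ph,x}(D)\norma_{-\frac{m}{2},s,\alpha}^{k_0,\gamma} \lesssim \norma P_7^{(m)}\norma_{-\frac{m}{2},0,\alpha}^{k_0,\gamma} \lesssim_{M,\alpha} \e\gamma^{-(m+2)}$, and by the monotonicity \eqref{crescente-m-neg} the same bound holds for the $\norma\cdot\norma_{-\frac12,s,\alpha}^{k_0,\gamma}$ norm. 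Summing gives $\norma r_{m+1}(D)\norma_{-\frac12,s,\alpha}^{k_0,\gamma} \lesssim_{M,\alpha} \e\gamma^{-(m+2)}$, and the $\Delta_{12}$ bound follows identically from the $\Delta_{12}$ parts of \eqref{2007.4}, \eqref{2007.50} (and \eqref{stima derivata P7m1}) together with the low-norm ansatz \eqref{estimate:low norm for der}.

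Then I would prove \eqref{2007.2}–\eqref{2007.3} by bounding each summand in \eqref{nuovo resto m+1 descent} and composing with $(W_m^{(1)})^{-1} = \Id + O(\e\gamma^{-(m+3)})$ in $OPS^0$ via Lemma \ref{lemma stime Ck parametri}, using \eqref{stima Wm1 - Id}, \eqref{stima derivata Wm1}. The remainder $\mathtt r_2(\Lambda_m, w_m^{(1)})(x,D) \in OPS^{-\frac{m}{2}-1}$ is estimated by \eqref{stima RN derivate xi parametri}/\eqref{stima commutator parte astratta} applied with $\Lambda_m \in OPS^{\frac12}$ bounded by constants via \eqref{stima induttiva Lambda m (D)} and $w_m^{(1)} \in OPS^{-\frac{m}{2}+\frac12}$ bounded by $\e\gamma^{-(m+3)}$ via \eqref{stima wm1}; the composition remainders $r_{\langle p_7^{(m)}\rangle_\vphi, w_m^{(1)}}$ and $r_{w_m^{(1)}, \langle p_7^{(m)}\rangle_{\ph,x}}$, both in $OPS^{-m-\frac12}\subseteq OPS^{-\frac{m}{2}-\frac12}$, are estimated by \eqref{stima RN derivate xi parametri} using \eqref{stima astratta simbolo mediato}, \eqref{2007.5}, \eqref{stima wm1}; the term $P_7^{(m,1)}W_m^{(1)} \in OPS^{-\frac{m}{2}-\frac12}$ is estimated by \eqref{estimate composition parameters} using \eqref{stima P7m1}; the term $\chi_{(m\ge 2)}\Op(\chi_0(\xi)(\langle p_7^{(m)}\rangle_\vphi - \langle p_7^{(m)}\rangle_{\ph,x})w_m^{(1)})$ has order $-m+\frac12 \le -\frac{m}{2}-\frac12$ precisely because $m\ge 2$, and is bounded by the product of the symbol norms of $\langle p_7^{(m)}\rangle_\vphi$ and $w_m^{(1)}$; finally the $(1-\chi_0)$-term is in $OPS^{-\infty}$ and is controlled using $1-\chi_0 \in S^{-\infty}$, \eqref{Norm Fourier multiplier} and \eqref{lemma composizione multiplier}. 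Each bound carries the factor $\e\gamma^{-(m+3)}$ (the worst factor, coming from $w_m^{(0)}$ via $P_7^{(m,1)}$ and from $w_m^{(1)}$) and a Sobolev norm $\|\fracchi_0\|_{s+\aleph}$ with $\aleph$ a loss depending on $m,M,\alpha$; one then defines $\aleph_7^{(m+1)}(M,\alpha)$ to be the maximum of all the losses that appear, which is in particular larger than $\aleph_7^{(m,3)}(M,\alpha)$.

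The main obstacle is purely bookkeeping: one must verify that after the two homological steps the leftover operator genuinely lands in $OPS^{-\frac{m}{2}-\frac12}$ — i.e.\ that the subtraction of $\langle p_7^{(m)}\rangle_\vphi$ (step in Section \ref{subsub1}) and of $\langle p_7^{(m)}\rangle_{\ph,x}$ here kills exactly the order $-m/2$ contribution, the term $\chi_{(m\ge2)}$ being the device that reconciles the order $-m+\frac12$ term with the required order for $m=1$ versus $m\ge 2$ — and that the chain of loss constants $\aleph_7^{(m)} < \aleph_7^{(m,1)} < \aleph_7^{(m,2)} < \aleph_7^{(m,3)} < \aleph_7^{(m+1)}$ remains finite through the $2M$ iterations, so that the final ansatz requirement $\mu_0 \ge \aleph_7^{(2M)}(M,\alpha)+\sigma$ used in the statement of the whole reduction is a fixed (though large) constant independent of $s$.
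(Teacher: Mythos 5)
Your proposal is correct and follows essentially the same route as the paper's proof: parity and reversibility from the inductive claim together with Lemmata \ref{lemma stima induttiva descent passo m0} and \ref{lemma stima induttiva descent passo m1}, and estimates obtained term by term from the explicit formulas \eqref{nuova parte diagonale descent method}, \eqref{nuovo resto m+1 descent} using \eqref{stima astratta simbolo mediato}, the composition Lemmata \ref{lemma stime Ck parametri}, \ref{lemma tame norma commutatore}, and the inductive bounds \eqref{2007.4}--\eqref{stima induttiva Lambda m (D)}. The only small point you elide is that the evenness of the Fourier multiplier $\langle p_7^{(m)}\rangle_{\ph,x}(D)$ follows from the evenness of the operator $\langle p_7^{(m)}\rangle_\vphi(x,D)$ only after symmetrizing the symbol as in Remark \ref{rem:change}, which the paper cites explicitly.
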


\begin{proof}
Since the operator $ \langle p_7^{(m)} \rangle_\vphi(x, D)  $ is even and 
reversible by Lemma \ref{lemma stima induttiva descent passo m0}, 
the average $\ppp(D)$ defined in \eqref{correzione media passo m descent} 
is even and reversible as well (we use Remark \ref{rem:change}). 
Since $r_m(D)$, $\Lambda_{m }(D)$ are even and reversible by the inductive claim, 
then also $r_{m + 1}(D)$, $\Lambda_{m + 1}(D)$  defined in
 \eqref{nuova parte diagonale descent method} are even and reversible.
  
Estimates \eqref{2007.1}-\eqref{2007.3} for 
$r_{m + 1}(D)$ and $ P_7^{(m + 1)} $ defined respectively 
in \eqref{nuova parte diagonale descent method} and \eqref{nuovo resto m+1 descent}
follow by the explicit expressions of $ \langle p_7^{(m)} \rangle_{\ph,x} (\xi) $ 
in \eqref{correzione media passo m descent} 
and $ w_m^{(1)} $ in \eqref{2007.7} and \eqref{2007.9} (for $ m = 1 $ and $ m \geq 2 $ respectively), 
by applying \eqref{stima astratta simbolo mediato}, \eqref{Norm Fourier multiplier}, 
 \eqref{stima Wm1 - Id}-\eqref{stima derivata Wm1}, \eqref{stima P7m1}-\eqref{stima derivata P7m1}, 
\eqref{lemma composizione multiplier}, 
Lemmata \ref{lemma stime Ck parametri}, 
\ref{lemma tame norma commutatore}, 
and the inductive estimates 
\eqref{2007.4}-\eqref{stima induttiva Lambda m (D)}. 
\end{proof}

Thus, the proof of the inductive claims \eqref{2007.4}-\eqref{coniugazione-descent} is complete.

\subsubsection{Conclusion of the reduction of $ L_7^{(1)} $} 

Composing all the previous transformations,
we obtain  the even and reversibility preserving map
\begin{equation}\label{mappa finale descent method}
W := W_0 \circ W_1^{(0)} \circ W_1^{(1)} \circ \ldots \circ W_{2M-1}^{(0)} \circ W_{2M-1}^{(1)}\,,
\end{equation}
where $W_0$ is defined in \eqref{definizione M0 descent method} 
and for $m = 1, \ldots, 2M-1$, 
$ W_m^{(0)}, W_m^{(1)}$ are defined in \eqref{definizione Wm (0)}, \eqref{definizione Wm1}. 
 The order $M$ will be fixed in \eqref{relazione mathtt b N}.
By \eqref{L7 (m)}, \eqref{definizione Lambda m (D)}, \eqref{coniugazione-descent} at $ m = 2 M  $, 
 the operator $L_7$ in \eqref{primo operatore descent method} is conjugated,  for all $\om \in \mathtt{DC}(\g,\t)$, 
to the even and reversible operator
\begin{equation}\label{definizione L8}
L_8 := L_7^{(2M)} = W^{- 1} L_7 W 
= \omega \cdot \partial_\vphi + \Lambda_{2M}(D) + P_{2M}
\end{equation}
where $ P_{2M} := P_7^{(2M)}  \in OPS^{-M} $ and 
\begin{equation}\label{lambdone M}
\Lambda_{2M} (D) = \ii \mathtt m_{\frac12} T_\h^{\frac12} |D|^{\frac12} + r_{2M}(D) \, , 
\quad   r_{2M}(D) \in OPS^{- \frac12} \, . 
\end{equation}

\begin{lemma}\label{lemma conclusivo descent method}
Assume \eqref{ansatz I delta} with $\mu_0 \geq \aleph_7^{(2M)}(M, 0)$. Then, for any $s \geq s_0$, the following estimates hold: 
\begin{align}
& \norma r_{2 	M}(D)\norma_{- \frac12, s, 0}^{k_0, \gamma} \lesssim_{M} \e \gamma^{- (2M + 1)}\,, \quad \norma \Delta_{12} r_{2 M}(D) \norma_{- \frac12, s_1, 0} \lesssim_{M} \e \gamma^{- (2 M + 1)} \| \Delta_{12} i \|_{s_1 + \aleph_7^{(2M)}(M, 0)}\,, 
\label{2007.10} \\
& \norma  P_{2M}  \norma_{- M, s, 0}^{k_0, \gamma} \lesssim_{M, s} \e \gamma^{- 2(M+1)} 
\big(1 + \|\fracchi_0 \|^{k_0, \gamma}_{s + \aleph_7^{(2M)}(M, 0) } \big)\,, 
\label{2007.11} \\  
& \norma \Delta_{12} P_{2M} \norma_{- M , s_1, 0} \lesssim_{M, s_1} \e \gamma^{- 2(M+1)} \| \Delta_{12} i \|_{s_1 + \aleph_7^{(2M)}(M, 0) }\,, 
\label{2007.12} \\
& \norma W^{\pm 1} - {\rm Id}\norma^{k_0, \gamma}_{0, s, 0} 
\lesssim_{M, s} \e \gamma^{- 2(M+1)} \big(1 + \| \fracchi_0 \|_{s + \aleph_7^{(2M)}(M, 0)}^{k_0, \gamma} \big)\,, \label{stima composta tutte le mappe descent} \\
& \norma \Delta_{12} W^{\pm 1} \norma_{0, s_1, 0} 
\lesssim_{M, s_1} \e \gamma^{- 2(M+1)} \| \Delta_{12} i \|_{s_1 + \aleph_7^{(2M)}(M, 0)}\,. 
\label{stima derivata composta tutte le mappe descent} 
\end{align}
\end{lemma}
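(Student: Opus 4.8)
### Plan of proof for Lemma \ref{lemma conclusivo descent method}

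The plan is to obtain the bounds \eqref{2007.10}--\eqref{stima derivata composta tutte le mappe descent} by collecting the estimates accumulated along the inductive construction of Subsection \ref{subsec:luglio.2}, namely the inductive bounds \eqref{2007.4}--\eqref{2007.50} for the Fourier multipliers $r_m(D)$ and the remainders $P_7^{(m)}$, specialized at the final step $m = 2M$. First I would observe that \eqref{2007.10} is exactly \eqref{2007.4} evaluated at $m = 2M$: since $r_{2M}(D) = r_1(D) + \sum_{m=1}^{2M-1} \langle p_7^{(m)} \rangle_{\ph,x}(D)$ by iterating \eqref{nuova parte diagonale descent method}, and $r_1(D) = 0$ by \eqref{r1(D) = 0}, the bound follows from \eqref{2007.4} (and its $\Delta_{12}$ counterpart), setting $\aleph_7^{(2M)}(M,0)$ to be the constant produced by the inductive scheme; the power $\gamma^{-(2M+1)}$ is exactly $\gamma^{-(m+1)}$ at $m = 2M-1$ (the last correction added), hence the stated exponent. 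Similarly \eqref{2007.11}--\eqref{2007.12} are \eqref{2007.5}--\eqref{2007.50} at $m = 2M$, with $P_{2M} = P_7^{(2M)} \in OPS^{-M}$ (since $\frac{2M}{2} = M$) and the power $\gamma^{-2(M+1)}$ coming from $\gamma^{-(m+2)}$ at $m = 2M$.

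Next I would prove \eqref{stima composta tutte le mappe descent}--\eqref{stima derivata composta tutte le mappe descent} for the composed map $W$ in \eqref{mappa finale descent method}. The idea is a telescoping/Neumann-type argument: $W - {\rm Id}$ is a finite sum (and products of differences) of the $2M$ factors $W_0 - {\rm Id}$, $W_m^{(0)} - {\rm Id}$, $W_m^{(1)} - {\rm Id}$, each of which has size $O(\e\gamma^{-(m+3)}) \leq O(\e\gamma^{-2(M+1)})$ in $\norma\cdot\norma_{0,s,0}^{k_0,\gamma}$ by \eqref{stime M0 descent method}, \eqref{stima Wm0 - Id}, \eqref{stima Wm1 - Id}. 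One uses the algebra property of the pseudo-differential norm under composition \eqref{estimate composition parameters} (with order $m = m' = 0$, so no loss in $\xi$-regularity), the smallness hypothesis $\e\gamma^{-\kappa} \ll 1$ to absorb constants, and the monotonicity \eqref{norm-increa}; the number of factors $2M$ and the loss constants are all fixed once $M$ is fixed in \eqref{relazione mathtt b N}, so they do not enter as powers of $s$-dependent quantities. The inverse $W^{-1}$ is handled the same way, using that each factor is invertible with inverse close to the identity (Lemma \ref{Neumann pseudo diff}) and the group structure of the composition. The $\Delta_{12}$ estimate \eqref{stima derivata composta tutte le mappe descent} follows by the usual telescoping identity $\Delta_{12}(AB) = (\Delta_{12}A)B_2 + A_1(\Delta_{12}B)$ applied repeatedly, using the $\Delta_{12}$ bounds for the individual factors together with their (uniform in $i$) $\norma\cdot\norma_{0,s_1,0}$ bounds.

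The main technical point — though not really an obstacle, since all the work has been done in the inductive lemmas — is simply the bookkeeping of the constants $\aleph_7^{(m)}(M,\alpha)$: one must check that the chain $\aleph_7^{(m)} \to \aleph_7^{(m,1)} \to \aleph_7^{(m,2)} \to \aleph_7^{(m,3)} \to \aleph_7^{(m+1)}$ (Lemmata \ref{lemma stima induttiva descent passo m-1}, \ref{lemma stima induttiva descent passo m0}, \ref{lemma stima induttiva descent passo m1}, \ref{lemma detto 13.6}) terminates after $2M$ steps in a single finite constant $\aleph_7^{(2M)}(M,0)$ depending only on $M, k_0, \tau, \nu$, and that the ansatz \eqref{ansatz I delta} with $\mu_0 \geq \aleph_7^{(2M)}(M,0)$ is exactly what is needed to invoke all the intermediate estimates at $\alpha = 0$. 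I would also note that \eqref{definizione L8}--\eqref{lambdone M} record the structure of the conjugated operator $L_8 = W^{-1} L_7 W$ for $\omega \in \mathtt{DC}(\gamma,\tau)$, which is the content of \eqref{coniugazione-descent} at $m = 2M$, while the bounds themselves hold on the whole parameter space $\R^\nu \times [\h_1,\h_2]$ because $r_{2M}(D)$, $P_{2M}$ and $W^{\pm 1}$ are defined there via the extended operator $(\ompaph)_{ext}^{-1}$. This completes the proof.
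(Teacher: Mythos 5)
Your proposal is correct and follows essentially the same route as the paper: the bounds \eqref{2007.10}--\eqref{2007.12} are exactly \eqref{2007.4}, \eqref{2007.5}, \eqref{2007.50} evaluated at $m=2M$, and \eqref{stima composta tutte le mappe descent}--\eqref{stima derivata composta tutte le mappe descent} follow from the factor estimates \eqref{stime M0 descent method}, \eqref{stima Wm0 - Id}, \eqref{stima derivata Wm0}, \eqref{stima Wm1 - Id}, \eqref{stima derivata Wm1} together with the composition estimate \eqref{estimate composition parameters} at $m=m'=\alpha=0$, as you describe. (A minor slip: the exponent $\gamma^{-(2M+1)}$ in \eqref{2007.10} comes from $\gamma^{-(m+1)}$ at $m=2M$, or equivalently from $\gamma^{-(m+2)}$ in \eqref{2007.1} at $m=2M-1$, not from ``$\gamma^{-(m+1)}$ at $m=2M-1$''; the conclusion is unaffected.)
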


\begin{proof}
Estimates \eqref{2007.10}, \eqref{2007.11}, \eqref{2007.12} follow by 
\eqref{2007.4}, \eqref{2007.5}, \eqref{2007.50} applied for $m = 2M $. 
Estimates \eqref{stima composta tutte le mappe descent}-\eqref{stima derivata composta tutte le mappe descent} for the map $ W $ defined in  \eqref{mappa finale descent method}, and its inverse $ W^{-1} $, 
follow by 
\eqref{stime M0 descent method}, \eqref{stima Wm0 - Id}, \eqref{stima derivata Wm0}, 
\eqref{stima Wm1 - Id}, \eqref{stima derivata Wm1}, 
applying the composition estimate \eqref{estimate composition parameters} (with $m=m'=\a=0$).
\end{proof}


Since $ \Lambda_{2M}(D) $ is even and reversible, we have that 
\be\label{Lambda2M}
\Lambda_{2M}(\xi), r_{2M}(\xi) \in \ii \R  \qquad {\rm and} \qquad 
\Lambda_{2M}(\xi) = \Lambda_{2M}(- \xi) \, , \ r_{2M}(\xi) = r_{2M}(- \xi) \, .
\ee 
In conclusion, we write the even and reversible operator $ L_8 $ in \eqref{definizione L8} as
\be\label{def:newL8}
L_8 = \omega \cdot \partial_\vphi + \ii D_8  + P_{2M} 
\ee
where $ D_8 $ is the diagonal operator 
\begin{align}\label{introduzione primo operatore diagonale riducibilita}
 D_8 := - \ii \Lambda_{2M} (D) := {\rm diag}_{j \in \Z} (\mu_j) \, ,  
& \qquad \mu_j := \mathtt m_{\frac12} |j|^{\frac12} \tanh (\h |j|)^{\frac12} + r_j \,, 
\quad r_j := - \ii \, r_{2M}(j) \, , \\
& \qquad  \mu_j, r_j \in \R\,, \qquad \mu_{j} = \mu_{-j} \, , \ r_{j} = r_{-j}\,, \quad \forall j \in \Z\,,
\label{code - 1/2 inizio riducibilita}
\end{align}
with $ r_j \in \R $ satisfying, by 
\eqref{2007.10}, 
\begin{align}
& \label{stima code - 1/2 inizio riducibilita}
\sup_{j \in \Z}|j|^{\frac12} |r_j|^{k_0, \gamma} \lesssim_M \e \gamma^{- (2M + 1)}\,, \quad \sup_{j \in \Z} |j|^{\frac12} |\Delta_{12} r_j| \lesssim_M \e \gamma^{- (2M + 1)} \| \Delta_{12} i \|_{s_1 + \aleph_7^{(2M)}(M, 0)}\, 
\end{align}
and $ P_{2M} \in OPS^{-M}$ satisfies \eqref{2007.11}-\eqref{2007.12}. 

From now on, we do not need to expand further the operators in 
decreasing orders and  we will only estimate the tame constants of the operators acting on 
periodic functions 
(see Definitions \ref{def:Ck0} and \ref{def:op-tame}).

\begin{remark} \label{rem:25 maggio 2017}
In view of Lemma \ref{lemma: action Sobolev}, the tame constants of $ P_{2M} $
can be deduced by estimates 
\eqref{2007.11}-\eqref{2007.12} of the pseudo-differential norm 
$\norma P_{2M} \norma_{-M,s,\a}$ with $\a = 0$. 
The iterative reduction in decreasing orders 
performed in the previous sections 
cannot be set in $\norma \ \norma_{-M,s,0}$ norms, 
because each step of the procedure requires some derivatives of symbols 
with respect to $\xi$ (in the remainder of commutators, in the Poisson brackets of symbols, 
and also in \eqref{2007.9}), 
and $\a$ keeps track of the regularity of symbols with respect to $\xi$.
\end{remark}


\subsection{Conjugation of ${\cal L}_7$} \label{subsec:luglio.3}

In the previous subsections \ref{subsec:luglio.1}-\ref{subsec:luglio.2} 
we have conjugated the operator $L_7$ defined in \eqref{primo operatore descent method} 
to $L_8$ in \eqref{definizione L8}, whose symbol is constant in $ (\vphi, x) $, up to 
smoothing remainders of order $-M$.
Now we conjugate the whole operator $\mL_7$ in \eqref{forma-cal-L7} 
by the real, even and reversibility preserving map
\be\label{definizione cal WM}
{\cal W} := \begin{pmatrix}
W & 0 \\
0 & \overline W
\end{pmatrix}
\ee
where $ W $ is defined in \eqref{mappa finale descent method}. By 
\eqref{definizione L8}, \eqref{def:newL8} we obtain,  for all $\om \in \mathtt{DC}(\g,\t)$, the real, even and
reversible operator
\begin{equation}\label{cal L8}
{\cal L}_8 := {\cal W}^{- 1} {\cal L}_7 {\cal W} 
= \omega \cdot \partial_\vphi + \ii {\cal D}_8 + \ii \Pi_0 +  {\cal T}_8
\end{equation}
where $ {\cal D}_8 $ is the diagonal operator 
\begin{equation}\label{def:cal-L8}
{\cal D}_8 := \begin{pmatrix}
D_8 & 0 \\
0 & - D_8
\end{pmatrix}\,, 
\end{equation}
with $ D_8 $ defined in \eqref{introduzione primo operatore diagonale riducibilita}, and the remainder 
${\cal T}_8$ is 
\begin{equation}\label{definizione cal T8}
{\cal T}_8 := \ii {\cal W}^{-1}  \Pi_0 {\cal W} - \ii \Pi_0 
+ {\cal W}^{- 1} {\cal T}_7 {\cal W} + {\cal P}_{2M}\, , 
\quad 
\mP_{2M} := \begin{pmatrix} 
P_{2M} & 0 \\ 
0 & \overline{P_{2M}} \end{pmatrix}
\end{equation}
with  $P_{2M}$ defined in \eqref{definizione L8}.
Note that ${\cal T}_8$ 
is defined on the whole 
parameter space $\R^\nu \times [\h_1, \h_2]$. 
Therefore the operator in the right hand side in \eqref{cal L8} is defined on $\R^\nu \times [\h_1, \h_2]$ as well. 
This defines the extended operator $\mL_8$ on $\R^\nu \times [\h_1, \h_2]$.  
 
\begin{lemma}\label{lemma cal L8}
For any $M > 0$, there exists a constant $\aleph_8(M) > 0$ (depending also on $\tau, \nu, k_0$) such that, if \eqref{ansatz I delta} holds with $\mu_0 \geq \aleph_8(M)$, then for any $s \geq s_0$
\begin{align}
& \norma {\cal W}^{\pm 1} - {\rm Id}\norma^{k_0, \gamma}_{0, s, 0}, \, 
\norma {\cal W}^* - {\rm Id}\norma^{k_0, \gamma}_{0, s, 0} 
\lesssim_{M, s} \e \gamma^{- 2(M+1)} \big(1 + \| \fracchi_0 \|_{s + \aleph_8(M)}^{k_0, \gamma}\big)\,, \label{stima composta tutte le mappe descent0} \\
& \norma \Delta_{12} {\cal W}^{\pm 1} \norma_{0, s_1, 0}, \, 
\norma \Delta_{12} {\cal W}^* \norma_{0, s_1, 0} 
\lesssim_{M, s_1} \e \gamma^{- 2(M+1)} \| \Delta_{12} i \|_{s_1 + \aleph_8(M)}\,. 
\label{stima derivata composta tutte le mappe descent0} 
\end{align}
Let $S > s_0$, $\b_0 \in \N$, and $M > \frac12 (\beta_0 + k_0)$. 
There exists a constant $ \aleph_8'(M, \beta_0) > 0$ such that, assuming \eqref{ansatz I delta} with $\mu_0 \geq \aleph_8'(M, \beta_0)$, for any $m_1, m_2 \geq 0$, 
with $m_1 + m_2 \leq M - \frac12 (\beta_0 + k_0) $, 
for any $\b \in \N^\nu$, $|\b| \leq \b_0$, 
the operators $ \langle D \rangle^{m_1} (\partial_\vphi^\beta {\cal T}_8) \langle D \rangle^{m_2}$, $\langle D \rangle^{m_1} \Delta_{12} (\partial_\vphi^\beta {\cal T}_8) \langle D \rangle^{m_2}$ are ${\cal D}^{k_0}$-tame with tame constants satisfying 
\begin{align}
 \mathfrak M_{\langle D \rangle^{m_1} (\pa_\vphi^\beta {\cal T}_8) \langle D \rangle^{m_2}}(s) \lesssim_{M, S} \e \gamma^{- 2(M+1)} \big(1 + \| \fracchi_0 \|_{s +\aleph_8'(M, \beta_0)} \big)\,, \qquad \forall s_0 \leq s \leq S 
\label{stima tame cal T 8} \\
\| \langle D \rangle^{m_1} \Delta_{12} (\pa_\vphi^\beta {\cal T}_8) \langle D \rangle^{m_2} \|_{{\cal L}(H^{s_1})} \lesssim_{M, S} \e \gamma^{- 2(M+1)}  \| \Delta_{12} i \|_{s_1 +\aleph_8'(M, \beta_0)}\,.  \label{stima tame derivata cal T 8}
\end{align}
\end{lemma}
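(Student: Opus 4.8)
The plan is to prove Lemma \ref{lemma cal L8} by a direct computation: all the objects appearing in the statement, namely $\mathcal W$, $\mathcal W^{\pm1}$, $\mathcal W^*$ and $\mathcal T_8$, are built out of the blocks $W$ and $\mathcal T_7$, $\mathcal P_{2M}$, $\Pi_0$, whose estimates we already have from Lemma \ref{lemma conclusivo descent method}, Lemma \ref{lemma stime primo Egorov}, and \eqref{2007.11}--\eqref{2007.12}. The strategy is: first deduce the bounds \eqref{stima composta tutte le mappe descent0}--\eqref{stima derivata composta tutte le mappe descent0} on $\mathcal W^{\pm1}$ and $\mathcal W^*$ from the corresponding bounds on $W^{\pm1}$; then, using the explicit formula \eqref{definizione cal T8} for $\mathcal T_8$, estimate the conjugated pieces separately, converting the pseudo-differential/operator norm estimates into $\mathcal D^{k_0}$-tame estimates via Lemma \ref{lemma: action Sobolev} and the composition Lemmata \ref{composizione operatori tame AB} and \ref{lemma coniugazione proiettore pi 0}.

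\textbf{Step 1 (bounds on $\mathcal W^{\pm1},\mathcal W^*$).} Since $\mathcal W = \mathrm{diag}(W,\overline W)$ by \eqref{definizione cal WM}, and the pseudo-differential norm of $\overline W$ equals that of $W$, the estimates \eqref{stima composta tutte le mappe descent0}--\eqref{stima derivata composta tutte le mappe descent0} for $\mathcal W^{\pm1}$ follow immediately from \eqref{stima composta tutte le mappe descent}--\eqref{stima derivata composta tutte le mappe descent} in Lemma \ref{lemma conclusivo descent method} (with $\aleph_8(M) := \aleph_7^{(2M)}(M,0)$, possibly enlarged). For $\mathcal W^*$ one invokes Lemma \ref{stima pseudo diff aggiunto}: the adjoint of a $\Psi$DO in $OPS^0$ is again in $OPS^0$ with a loss of $s_0+|m|=s_0$ derivatives in the Sobolev index, so $\norma \mathcal W^*-\mathrm{Id}\norma_{0,s,0}^{k_0,\gamma}\lesssim \norma \mathcal W-\mathrm{Id}\norma_{0,s+s_0,0}^{k_0,\gamma}$, and one absorbs the extra $s_0$ into $\aleph_8(M)$. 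The $\Delta_{12}$ version is identical.

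\textbf{Step 2 ($\mathcal D^{k_0}$-tame estimates for $\mathcal T_8$).} Write $\mathcal T_8$ as the sum of the four terms in \eqref{definizione cal T8}: $\ii(\mathcal W^{-1}\Pi_0\mathcal W-\Pi_0)$, $\mathcal W^{-1}\mathcal T_7\mathcal W$, and $\mathcal P_{2M}$. For fixed $m_1,m_2\geq0$ with $m_1+m_2\leq M-\tfrac12(\beta_0+k_0)$ and $|\beta|\leq\beta_0$, apply $\langle D\rangle^{m_1}\partial_\vphi^\beta(\cdot)\langle D\rangle^{m_2}$ and distribute $\partial_\vphi^\beta$ and $\partial_\lambda^k$ by Leibniz over the three factors in each conjugation, exactly as in \eqref{labello bello}. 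Insert telescoping cutoffs $\langle D\rangle^{\pm r}$ between the factors (as in \eqref{stima coniugazione egorov cal Q5a}--\eqref{stima coniugazione egorov cal Q5c}) so that the middle factor $\mathcal T_7$ (resp.\ $\mathcal P_{2M}$) is hit by enough extra derivatives and orders, which are compensated because $\mathcal T_7$ has already been shown $\mathcal D^{k_0}$-tame with all the needed $\langle D\rangle^{m_1'}\partial_\vphi^{\beta'}\mathcal T_7\langle D\rangle^{m_2'}$ controlled in \eqref{stima tame cal T (1)}, and $\mathcal P_{2M}\in OPS^{-M}$ is estimated in $\norma\ \norma_{-M,s,0}$ by \eqref{2007.11}--\eqref{2007.12} and turned into a tame constant via Lemma \ref{lemma: action Sobolev} (cf.\ Remark \ref{rem:25 maggio 2017}). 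The flanking factors $\langle D\rangle^{m_1}\partial_\lambda^{k_1}\partial_\vphi^{\beta_1}\mathcal W^{\pm1}\langle D\rangle^{-\cdots}$ are bounded in $OPS^0$ (hence $\mathcal D^{k_0}$-tame via Lemma \ref{lemma: action Sobolev}) using Step 1, and then Lemma \ref{composizione operatori tame AB} assembles the product. The term $\langle D\rangle^{m_1}\partial_\vphi^\beta\Pi_0(\mathcal W^{-1}\mathcal W-\mathrm{Id})$-type contribution, i.e.\ $\mathcal W^{-1}\Pi_0\mathcal W-\Pi_0$, is handled by Lemma \ref{lemma coniugazione proiettore pi 0} applied with $A=\mathcal W^{-1}$, $B=\mathcal W$ (whose adjoints are controlled by Step 1), giving a tame bound by $\mathfrak M_{\mathcal W^{-1}-\mathrm{Id}}+\mathfrak M_{\mathcal W^*-\mathrm{Id}}$ with the stated loss $\beta_0+m_1+m_2$. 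Collecting, and setting $\aleph_8'(M,\beta_0)$ to be the maximum of the (finitely many) losses $\aleph_7^{(2M)}(M,0)$, $\aleph_6'(M,\beta_0)$, $s_0+\beta_0+m_1+m_2$ appearing above, yields \eqref{stima tame cal T 8}; \eqref{stima tame derivata cal T 8} is the same computation with $\Delta_{12}$, using the difference estimates in the cited lemmata.

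\textbf{Main obstacle.} The only genuine difficulty is bookkeeping: one must track that the constraint $m_1+m_2\leq M-\tfrac12(\beta_0+k_0)$ is exactly what makes the telescoped orders in the middle factor non-positive, so that the $OPS^{-M}$ smoothing of $\mathcal P_{2M}$ and of the tail remainders inside $\mathcal T_7$ absorbs all the derivative losses from $\Phi^{\pm1}$ (each $\partial_\vphi$ or $\partial_\lambda$ of the flow costing $|D|^{1/2}$, cf.\ Proposition \ref{proposition 2.40 unificata}). This is the same mechanism as in Lemma \ref{lemma stime primo Egorov}, so no new idea is needed; the proof is a careful but routine concatenation of the composition and tame-estimate lemmata of Sections \ref{sec:pseudo} and the present section. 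Hence the proof is: combine Lemma \ref{lemma conclusivo descent method}, Lemma \ref{stima pseudo diff aggiunto} (for the adjoint), Lemma \ref{lemma: action Sobolev}, Lemma \ref{composizione operatori tame AB}, Lemma \ref{lemma coniugazione proiettore pi 0}, and the estimates \eqref{stima tame cal T (1)}, \eqref{2007.11}--\eqref{2007.12}, exactly along the lines of the proof of Lemma \ref{lemma stime primo Egorov}.
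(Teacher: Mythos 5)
Your overall strategy matches the paper's, and the lemmata you invoke are exactly the ones the paper uses, but there is a technical confusion in Step~2 that is worth pointing out. You import the telescoping mechanism of Lemma~\ref{lemma stime primo Egorov} (cf.\ \eqref{stima coniugazione egorov cal Q5a}--\eqref{stima coniugazione egorov cal Q5c}), where extra factors $\langle D\rangle^{\pm(|\beta_i|+|k_i|)/2}$ are inserted between the three blocks of the conjugation. Those factors are forced there because the conjugating operator is the flow ${\bf\Phi}^{\pm1}$ of the pseudo-PDE~\eqref{flow-propagator-beta}, and $\partial_\lambda^k\partial_\ph^\beta{\bf\Phi}^{\pm1}$ genuinely loses $\tfrac12(|k|+|\beta|)$ space-derivatives (Proposition~\ref{proposition 2.40 unificata}). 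The present lemma, however, conjugates by $\mathcal W^{\pm1}$, which is block-diagonal with entries built from the $OPS^0$ transformations $W_0$, $W_m^{(0)}$, $W_m^{(1)}$: since $\partial_\ph^{\beta_1}\mathcal W^{\pm1}$ remains in $OPS^0$ for any $\beta_1$, the operator $\langle D\rangle^{m}\partial_\ph^{\beta_1}\mathcal W^{\pm1}\langle D\rangle^{-m}$ is again of order zero without any extra $\langle D\rangle^{\pm r}$. The paper therefore uses the simpler splitting~\eqref{alaska 3}, which places exactly $\langle D\rangle^{m_1}$ and $\langle D\rangle^{m_2}$ around $\partial_\ph^{\beta_2}\mathcal T_7$ and invokes~\eqref{stima tame cal T (1)} directly. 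If you insist on strictly positive $r$ as in Lemma~\ref{lemma stime primo Egorov}, the constraint on $\mathcal T_7$ becomes $m_1+m_2+2r\leq M-\tfrac12(\beta_0+k_0)$, which is stronger than what the Lemma assumes and would not close. Dropping the unnecessary telescope (i.e.\ taking $r=0$) removes the problem; your remaining steps --- Lemma~\ref{stima pseudo diff aggiunto} for the adjoint, Lemma~\ref{lemma: action Sobolev} to convert $\Psi$DO bounds into tame constants, Lemma~\ref{composizione operatori tame AB} for the composition, Lemma~\ref{lemma coniugazione proiettore pi 0} for the $\Pi_0$-conjugation, and~\eqref{2007.11}--\eqref{2007.12} for $\mathcal P_{2M}$ --- are exactly those of the paper's proof and suffice.
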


\begin{proof}
Estimates \eqref{stima composta tutte le mappe descent0}, 
\eqref{stima derivata composta tutte le mappe descent0} 
follow by definition \eqref{definizione cal WM}, by estimates \eqref{stima composta tutte le mappe descent}, \eqref{stima derivata composta tutte le mappe descent} 
and using also Lemma \ref{stima pseudo diff aggiunto} to estimate the adjoint operator. 
Let us prove \eqref{stima tame cal T 8} 
(the proof of \eqref{stima tame derivata cal T 8} follows by similar arguments). 
First we analyze the term ${\cal W}^{- 1} {\cal T}_7 {\cal W}$. 
Let $m_1, m_2 \geq 0$, with $m_1 + m_2 \leq M - \frac12 (\beta_0 + k_0) $ and $\beta \in \N^\nu$ with $|\beta| \leq \beta_0$. Arguing as in the proof of Lemma \ref{lemma stime primo Egorov}, we have to analyze, for any $\beta_1, \beta_2, \beta_3 \in \N^\nu$ with $\beta_1 + \beta_2 + \beta_3 = \beta$, the operator 
$( \partial_\vphi^{\beta_1}{\cal W}^{- 1}) (  \partial_{\vphi}^{\beta_2} {\cal T}_7)(  \partial_\vphi^{\beta_3} {\cal W})$. We write 
\begin{align}
& \langle D \rangle^{m_1} ( \partial_\vphi^{\beta_1}{\cal W}^{- 1}) (  \partial_{\vphi}^{\beta_2} {\cal T}_7)(  \partial_\vphi^{\beta_3} {\cal W})  \langle D \rangle^{m_2} \nonumber\\
& = \Big( \langle D \rangle^{m_1} \partial_\vphi^{\beta_1} {\cal W} \langle D \rangle^{- m_1} \Big)  \circ \Big(\langle D \rangle^{m_1}  \partial_{\vphi}^{\beta_2} {\cal T}_7 \langle D \rangle^{m_2} \Big) \circ \Big( \langle D \rangle^{- m_2}  \partial_\vphi^{\beta_3} {\cal W} \langle D \rangle^{m_2} \Big)\,.  \label{alaska 3}
\end{align}
For any $m \geq 0$, $\beta \in \N^\nu$, $|\beta| \leq \beta_0$, 
by 
\eqref{interpolazione parametri operatore funzioni}, 
\eqref{Norm Fourier multiplier}, \eqref{lemma composizione multiplier}, 
\eqref{estimate composition parameters}, 
one has 
$$
\mathfrak M_{\langle D \rangle^m (\pa_\vphi^\b {\cal W}^{\pm 1}) \langle D \rangle^{-m}}(s) 
\lesssim_s \norma \langle D \rangle^{m} (\pa_\vphi^\b {\cal W}^{\pm 1}) \langle D \rangle^{-m} \norma_{0, s, 0}^{k_0, \gamma}  
\lesssim_s \norma \partial_\vphi^{\beta} {\cal W}^{\pm 1} \norma_{0, s + m, 0}^{k_0, \gamma} \lesssim_s \norma {\cal W}^{\pm 1} \norma_{0, s + \beta_0 + m, 0}^{k_0, \gamma}
$$
and $\norma {\cal W}^{\pm 1} \norma_{0, s + \beta_0 + m, 0}^{k_0, \gamma}$ 
can be estimated by using \eqref{stima composta tutte le mappe descent0}. 
The estimate of \eqref{alaska 3} then follows by \eqref{stima tame cal T (1)} and 
Lemma \ref{composizione operatori tame AB}. 
The tame estimate of $ \langle D \rangle^{m_1}  \pa_\vphi^\beta {\cal P}_{2M} \langle D \rangle^{m_2}$ follows by \eqref{interpolazione parametri operatore funzioni}, \eqref{2007.11}, \eqref{2007.12}.  
The tame estimate of the term $\ii \langle D \rangle^{m_1}  \pa_\vphi^\beta  \big( {\cal W}^{- 1}  \Pi_0 {\cal W} -  \Pi_0 \big) \langle D \rangle^{m_2}$ follows by Lemma \ref{lemma coniugazione proiettore pi 0} (applied with 
$ A = {\cal W}^{- 1}  $ and $ B = {\cal W} $) and
\eqref{interpolazione parametri operatore funzioni}, 
\eqref{stima composta tutte le mappe descent0}, 
\eqref{stima derivata composta tutte le mappe descent0}. 
\end{proof}

\section{Conclusion: reduction of $ {\cal L}_\om $ up to smoothing operators} 
\label{coniugio cal L omega}

By Sections \ref{linearizzato siti normali}-\ref{sezione descent method},
for all $\lm = (\om, \h) \in \mathtt{DC}(\g,\t) \times [\h_1, \h_2]$ 
the real, even and reversible operator ${\cal L}$ in \eqref{linearized vero} is conjugated 
to the real, even and reversible operator ${\cal L}_8$ defined in \eqref{cal L8}, namely
\begin{align}\label{bf RN (3) bot}
&  {\cal P}^{- 1} {\cal L} {\cal P} = {\cal L}_8 = \omega \cdot \partial_\vphi + \ii {\cal D}_8 + \ii \Pi_0 +  {\cal T}_8  \,,
\end{align}
where $ {\cal P}$   is the real, even and reversibility preserving map 
\begin{equation}\label{semiconiugio cal L8}
 {\cal P} :=  {\cal Z} {\cal B} {\cal A} {\cal M}_2 {\cal M}_3 \mC {\bf \Phi}_M {\bf \Phi}  {\cal W}\,.
\end{equation}
Moreover, as already noticed below \eqref{definizione cal T8}, the operator 
$\mL_8$ is defined on the whole parameter space
$\R^\nu \times [\h_1, \h_2]$.

Now we deduce a similar conjugation result for the projected linearized operator $\mL_\om$ defined
in \eqref{Lomega def}, which  acts on the normal subspace $ H_{\Splus}^\bot $,  
whose relation with $\mL$ is stated in \eqref{representation Lom}. 
The operator $\mL_\om$ is even and reversible as stated in Lemma \ref{thm:Lin+FBR}.

Let ${\mathbb S} := \Splus \cup (-\Splus)$ and ${\mathbb S}_0 := {\mathbb S} \cup \{ 0 \}$. 
We denote by $\Pi_{{\mathbb S}_0}$ the corresponding $L^2$-orthogonal projection 
and $\Pi_{{\mathbb S}_0}^\bot : = {\rm Id} - \Pi_{{\mathbb S}_0}$. 
We also denote 
$H_{\mathbb S_0}^\bot := \Pi_{\mathbb S_0}^\bot L^2(\T) $ and $H^s_\bot : = H^s(\T^{\nu + 1}) \cap H_{\mathbb S_0}^\bot$.

\begin{lemma}{\bf (Restriction of the conjugation map to $H_{\mathbb S_0}^\bot$)}\label{Lemma:trasformazioni finali W}
Let $M > 0$. There exists a constant $\sigma_M > 0$ (depending also on $k_0, \tau, \nu$) such that,
assuming \eqref{ansatz I delta} with $\mu_0 \geq \sigma_M$, the following holds: for any $s > s_0$ there exists a constant 
$\delta(s) > 0$ such that, if $\e \gamma^{- 2(M+1)} \leq \delta(s)$, then the operator
\begin{equation}\label{Phi 1 Phi 2 proiettate}
{\cal P}_\bot := \Pi_{{\mathbb S}_0}^\bot {\cal P} \Pi_{{\mathbb S}_0}^\bot 
\end{equation}
is invertible and for each family of functions $h : = h(\lambda) \in H^{s + \sigma_M }_\bot \times H^{s + \sigma_M }_\bot$ it satisfies 
\begin{align} \label{stima Phi 1 Phi 2 proiettate}
\| {\cal P}_\bot^{\pm 1} h \|_s^{k_0, \gamma} 
& \lesssim_{M, s} \| h \|_{s + \sigma_M }^{k_0, \gamma} 
+ \| \fracchi_0 \|_{s + \sigma_M }^{k_0, \gamma} \| h \|_{s_0 + \sigma_M }^{k_0, \gamma}\,, 
\\
\| (\Delta_{12} \mP_\bot^{\pm 1}) h \|_{s_1} 
& \lesssim_{M,s_1} \e \g^{-2(M+1)} \| \Delta_{12} i \|_{s_1 + \sigma_M} \| h \|_{s_1 + 1}\,.
\label{Delta 12 mP}
\end{align}
The operator $  {\cal P}_\bot $ is real, even and reversibility preserving. 
The operators $\mP, \mP^{-1}$ also satisfy \eqref{stima Phi 1 Phi 2 proiettate}, \eqref{Delta 12 mP}.
\end{lemma}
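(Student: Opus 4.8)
\textbf{Proof plan for Lemma \ref{Lemma:trasformazioni finali W}.}
The strategy is to reduce the invertibility and the estimates for the projected map $\mP_\bot = \Pi_{\mathbb S_0}^\bot \mP \Pi_{\mathbb S_0}^\bot$ to the corresponding properties of the full operator $\mP = \mathcal Z \mathcal B \mathcal A \mathcal M_2 \mathcal M_3 \mathbb C \, {\bf \Phi}_M {\bf \Phi} \,\mathcal W$ acting on the whole space, which we have already controlled factor by factor in the previous sections. First I would record the estimates for $\mP$ and $\mP^{-1}$ themselves: each factor is either a bounded multiplication/composition operator (the good-unknown map $\mathcal Z$ of Lemma \ref{lemma:remainder mR0}, the transport map $\mathcal B$ of Lemma \ref{stime trasformazione cal B}, the space diffeomorphism $\mathcal A$ of Lemma \ref{stime trasformazione nuova cal A}, the symmetrizers $\mathcal M_2, \mathcal M_3$ of Lemmata \ref{stime simmetrizzazione ordine principale}--\ref{lemma simmetrizzazione ordine principale}, the real-to-complex change $\mathbb C$), or a flow/pseudo-differential map that is $\Id$ plus a small tame operator (${\bf \Phi}_M$ of Lemma \ref{Lemma finale decoupling}, ${\bf \Phi}$ of Lemma \ref{lemma stime primo Egorov}, $\mathcal W$ of Lemma \ref{lemma cal L8}). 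Composing these bounds via the tame product/composition estimates (Lemmata \ref{lemma:LS norms}, \ref{composizione operatori tame AB}) and the ansatz \eqref{ansatz I delta}, one gets that $\mP^{\pm 1}$ satisfy tame estimates of the form \eqref{stima Phi 1 Phi 2 proiettate} with a loss $\sigma_M$ depending only on $M,k_0,\tau,\nu$, and the difference estimates \eqref{Delta 12 mP} follow by the same bookkeeping using the $\Delta_{12}$ bounds collected in those same lemmata.

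Next I would handle the passage from $\mP$ to $\mP_\bot$. Write $\Pi := \Pi_{\mathbb S_0}$, $\Pi^\bot := \Pi_{\mathbb S_0}^\bot$. The key algebraic point is that $\mP - \Id$ maps into a space where the finite-dimensional block $\Pi$ is harmless: indeed one decomposes
\[
\mP_\bot = \Pi^\bot \mP \Pi^\bot = \Pi^\bot(\Id + (\mP - \Id))\Pi^\bot = \Pi^\bot + \Pi^\bot(\mP - \Id)\Pi^\bot ,
\]
so $\mP_\bot = \Id_{H^\bot} + \mathcal R_\bot$ on $H_{\mathbb S_0}^\bot$, where $\mathcal R_\bot := \Pi^\bot(\mP - \Id)\Pi^\bot$ is $\mathcal D^{k_0}$-tame with tame constant $\lesssim_{M,s} \e\g^{-2(M+1)}(1 + \|\fracchi_0\|_{s+\sigma_M}^{k_0,\g})$ by the previous paragraph and the boundedness of the projectors. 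Under the smallness hypothesis $\e\g^{-2(M+1)} \le \delta(s)$, the operator $\Id_{H^\bot} + \mathcal R_\bot$ is invertible on $H^{s}_\bot$ by a Neumann series, and its inverse satisfies the same tame estimate; this gives invertibility of $\mP_\bot$ and the bound \eqref{stima Phi 1 Phi 2 proiettate} for $\mP_\bot^{-1}$ (the bound for $\mP_\bot$ itself being immediate). The $\Delta_{12}$ estimate \eqref{Delta 12 mP} for $\mP_\bot^{\pm1}$ follows from the resolvent identity $\Delta_{12}(\mP_\bot^{-1}) = -\mP_\bot^{-1}(i_2)(\Delta_{12}\mP_\bot)\mP_\bot^{-1}(i_1)$ together with the low-norm difference bounds already available for the individual factors.

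Finally, the parity/reversibility claim is purely structural: every factor of $\mP$ is real, even and reversibility preserving by the corresponding lemma (the good unknown $\mathcal Z$ and the maps $\mathcal B, \mathcal A, \mathcal M_2, \mathcal M_3$ are even and reversibility preserving, ${\bf \Phi}_M$ by Lemma \ref{Lemma finale decoupling}, ${\bf \Phi}$ by Lemma \ref{lemma stime primo Egorov}, $\mathcal W$ by Lemma \ref{lemma cal L8}, and $\mathbb C$ intertwines the real even structure with the one in complex coordinates), hence so is their composition $\mP$; since $\Pi_{\mathbb S_0}^\bot$ commutes with the involution $\rho$ and preserves evenness in $x$, the restriction $\mP_\bot$ inherits these properties. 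I expect the only genuinely delicate point to be the accurate determination of the composite loss constant $\sigma_M$ and the verification that the finitely many smallness thresholds $\delta(s)$ coming from the Neumann arguments for ${\bf \Phi}_M$, ${\bf \Phi}$, $\mathcal W$ and for $\Id + \mathcal R_\bot$ can all be met simultaneously under the single hypothesis $\e\g^{-2(M+1)} \le \delta(s)$; this is routine but requires care in tracking how the regularization order $M$ enters each $\aleph$-constant from Sections \ref{sec:10}--\ref{sezione descent method}.
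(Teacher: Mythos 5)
Your first paragraph (tame estimates for $\mP^{\pm 1}$ obtained factor-by-factor, then composed) and your third paragraph (parity and reversibility by structural inheritance) match the paper's argument. The gap is in your second paragraph, on the invertibility of $\mP_\bot$.

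You assert that $\mathcal R_\bot := \Pi_{\mathbb S_0}^\bot(\mP - \Id)\Pi_{\mathbb S_0}^\bot$ is $\mathcal D^{k_0}$-tame with tame constant $\lesssim_{M,s}\e\g^{-2(M+1)}(1+\|\fracchi_0\|_{s+\sigma_M}^{k_0,\g})$, so that $\mP_\bot = \Id_{H^\bot_{\mathbb S_0}}+\mathcal R_\bot$ can be inverted by a Neumann series. But $\mP = \mathcal Z\mathcal B\mathcal A\mathcal M_2\mathcal M_3\mC\,{\bf\Phi}_M{\bf\Phi}\,\mathcal W$ is \emph{not} a small perturbation of the identity: the factors $\mathcal M_2 = \mathrm{diag}(\Lambda_\h,\Lambda_\h^{-1})$ and $\mC$ are $\e$-independent and $O(1)$ — in fact $\Lambda_\h^{\pm 1}$ involves $|D|^{\pm 1/4}$ and is not even bounded on $H^s$ without a loss of derivative. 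Your own first paragraph correctly classifies these as ``bounded multiplication/composition operators'' rather than $\Id$ plus small, so the claimed smallness of $\mP-\Id$ contradicts what you wrote just above. Consequently $\mathcal R_\bot$ is $O(1)$, the Neumann series does not converge, and this route breaks down.

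The correct mechanism — and the one the paper uses, following Lemma 9.4 of \cite{Alaz-Bal} and Section 8.1 of \cite{BBM-auto} — reduces the invertibility of $\mP_\bot$ to the invertibility of the \emph{finite-dimensional} operator $\Pi_{\mathbb S_0}\mP\Pi_{\mathbb S_0}$ (equivalently $\Pi_{\mathbb S_0}\mP^{-1}\Pi_{\mathbb S_0}$) on $H_{\mathbb S_0}$. Writing $\mP$ in $2\times 2$ block form with respect to $H_{\mathbb S_0}\oplus H_{\mathbb S_0}^\bot$, one uses that $\mP$ is globally invertible to express $\mP_\bot^{-1}$ through $\mP^{-1}$ and the inverse of the finite block $\Pi_{\mathbb S_0}\mP^{-1}\Pi_{\mathbb S_0}$. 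At $\e = 0$ all $\e$-dependent factors equal $\Id$, so $\Pi_{\mathbb S_0}\mP\Pi_{\mathbb S_0}=\Pi_{\mathbb S_0}\mathcal M_2\mC\Pi_{\mathbb S_0}$ is a fixed invertible finite matrix (diagonal Fourier-multiplier weights composed with the unitary complexification). The $\e$-correction to this finite block \emph{is} $O(\e\g^{-2(M+1)})$, so the block remains invertible for $\e\g^{-2(M+1)}\le\delta(s)$. The perturbative argument is legitimate precisely because it takes place on a finite-dimensional space, where $\mathcal M_2$ and $\mC$ are innocuous invertible matrices; applying it directly to $\mP_\bot$ on the infinite-dimensional complement, as you do, is where the reasoning fails.
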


\begin{proof}
Applying \eqref{interpolazione parametri operatore funzioni (2)} and \eqref{est Z-Id}, 
\eqref{est BAPQ (1)}, \eqref{stime nuovo cal A inv},  \eqref{stima Lambda mathtt h}, 
\eqref{stime M3}, \eqref{def:mC}, \eqref{definizione bf PhiN}, \eqref{flusso PseudoPDE fracchi}, 
\eqref{stima composta tutte le mappe descent0} we get
$$
\| A h \|_s^{k_0, \gamma} \lesssim_s \| h \|_{s + \mu_M}^{k_0 , \gamma} + \| \fracchi_0 \|_{s + \mu_M}^{k_0, \gamma} \| h \|_{s_0 + \mu_M}^{k_0, \gamma}\,, \quad A \in \{ {\cal Z}^{\pm 1}, {\cal B}^{\pm 1}, {\cal A}^{\pm1}, 
{\cal M}_2^{\pm 1}, {\cal M}_3^{\pm 1}, \mC^{\pm 1}, {\bf \Phi}_M^{\pm 1}, {\bf \Phi}^{\pm 1} ,{\cal W}^{\pm 1} \}\,,
$$
for some $\mu_M > 0 $. Then by the definition \eqref{semiconiugio cal L8} of ${\cal P}$, 
by composition, one gets that $ \| {\cal P}^{\pm 1} h \|_s^{k_0, \gamma} 
\lesssim_{M, s} \| h \|_{s +  \sigma_M}^{k_0, \gamma} 
+ \| \fracchi_0 \|_{s + \sigma_M }^{k_0, \gamma} \| h \|_{s_0 +  \sigma_M}^{k_0, \gamma} $ 
for some constant $\sigma_M > 0$ larger than $\mu_M > 0$, thus $ {\cal P}^{\pm 1} $ 
satisfy \eqref{stima Phi 1 Phi 2 proiettate}.  
In order to prove that ${\cal P}_\bot$ is invertible, it is sufficient to prove that 
$\Pi_{{\mathbb S}_0} {\cal P} \Pi_{{\mathbb S}_0}$ is invertible, and argue as 
in the proof of Lemma 9.4 in \cite{Alaz-Bal}, or Section 8.1 of \cite{BBM-auto}. 
This follows by a perturbative argument, for $\e \gamma^{- 2(M+1)}$ small, 
using that $ \Pi_{{\mathbb S}_0} $ is a finite dimensional projector. 
The proof of \eqref{Delta 12 mP} follows similarly by using
\eqref{derivate in i cal Z},
\eqref{Delta 12 cal B}, 
\eqref{stime nuovo cal A inv},
\eqref{stime Delta M3},
\eqref{bf PhiN derivate in i},
\eqref{QUELLA DOPO LA (12.37)},
\eqref{stima derivata composta tutte le mappe descent0}.
\end{proof}

Finally, for all $\lm = (\om, \h) \in \mathtt{DC}(\g,\t) \times [\h_1, \h_2]$, 
the operator ${\cal L}_\omega $ defined in \eqref{Lomega def} is conjugated to 
\begin{equation}\label{definizione cal L bot}
 {\cal L}_\bot  
:= {\cal P}_\bot^{- 1} {\cal L}_\omega {\cal P}_\bot = \Pi_{{\mathbb S}_0}^\bot {\cal L}_8 
\Pi_{{\mathbb S}_0}^\bot  + R_M  
\end{equation}
where 
\begin{align}\label{R8}
 R_M & := 
{\cal P}_\bot^{- 1} \Pi_{{\mathbb S}_0}^\bot  
\big( {\cal P} \Pi_{{\mathbb S}_0} {\cal L}_8 \Pi_{{\mathbb S}_0}^\bot  - 
{\cal L} \Pi_{{\mathbb S}_0}{\cal P} \Pi_{{\mathbb S}_0}^\bot  +  
\e  R {\cal P}_\bot \big)  \\
& = 
{\cal P}_\bot^{- 1} \Pi_{{\mathbb S}_0}^\bot  
\mP \Pi_{{\mathbb S}_0}   {\cal T}_8 \Pi_{{\mathbb S}_0}^\bot   
+ {\cal P}_\bot^{- 1} \Pi_{{\mathbb S}_0}^\bot   
J \partial_u  \nabla_u H (T_\delta(\vphi)) \Pi_{{\mathbb S}_0} 
\mP \Pi_{{\mathbb S}_0}^\bot 
+ \e {\cal P}_\bot^{- 1} \Pi_{{\mathbb S}_0}^\bot R {\cal P}_\bot 
\label{R8-linea2}
\end{align}
is a finite dimensional operator. 
To prove \eqref{definizione cal L bot}-\eqref{R8} we first use \eqref{representation Lom} and 
\eqref{Phi 1 Phi 2 proiettate} to get 
$\mL_\om \mP_\bot = \Pi_{{\mathbb S}_0}^\bot (\mL + \e R) \Pi_{{\mathbb S}_0}^\bot 
\mP \Pi_{{\mathbb S}_0}^\bot$, 
then we use \eqref{bf RN (3) bot} to get $\Pi_{{\mathbb S}_0}^\bot \mL \mP \Pi_{{\mathbb S}_0}^\bot$ $= \Pi_{{\mathbb S}_0}^\bot \mP \mL_8 \Pi_{{\mathbb S}_0}^\bot$, 
and we also use the decomposition $\mathbb{I}_2 = \Pi_{{\mathbb S}_0} + \Pi_{{\mathbb S}_0}^\bot$. To get 
\eqref{R8-linea2}, 
we use \eqref{bf RN (3) bot}, \eqref{representation Lom}, 
and we note that 
$ \Pi_{{\mathbb S}_0} \, \om \cdot \pa_\vphi  \, \Pi_{{\mathbb S}_0}^\bot = 0 $, 
$ \Pi_{{\mathbb S}_0}^\bot \, \om \cdot \pa_\vphi  \, \Pi_{{\mathbb S}_0} = 0 $, 
and $\Pi_{{\mathbb S}_0} \ii {\cal D}_8   \Pi_{{\mathbb S}_0}^\bot = 0 $, 
by \eqref{def:cal-L8} and 
\eqref{introduzione primo operatore diagonale riducibilita}.

\begin{lemma}\label{lemma forma buona resto}
The operator $R_M$ in \eqref{R8} has the finite dimensional form \eqref{forma buona resto}. 
Moreover, let $S > s_0$ and  $M >  \frac12(\beta_0 + k_0)$. 
For any $\beta \in \N^\nu$, $|\beta| \leq \beta_0$, there exists a constant $ \aleph_9(M, \beta_0) > 0$ (depending also on $k_0, \tau, \nu$) such that, if \eqref{ansatz I delta} holds with $\mu_0 \geq \aleph_9(M, \beta_0)$, then for any $m_1, m_2 \geq 0$, 
with $m_1 + m_2 \leq M - \frac12(\beta_0 + k_0)$, one has that the operators $ \langle D \rangle^{m_1}\partial_\vphi^\beta R_M \langle D \rangle^{m_2}$, $\langle D \rangle^{m_1 }\partial_\vphi^\beta \Delta_{12} R_M \langle D \rangle^{m_2}$ are ${\cal D}^{k_0}$-tame with tame constants 
\begin{align}
\mathfrak M_{\langle D \rangle^{m_1}\partial_\vphi^\beta R_M \langle D \rangle^{m_2}}(s) 
& \lesssim_{M, S} \e \gamma^{- 2 (M+1)} \big(1 + \| \fracchi_0 \|^{k_0, \gamma}_{s +\aleph_9(M, \beta_0)}\big)\,, \qquad \forall s_0 \leq s \leq S 
\label{RM} 
\\
\| \langle D \rangle^{m_1} \Delta_{12}\partial_\vphi^\beta R_M \langle D \rangle^{m_2} \|_{{\cal L}(H^{s_1})} 
& \lesssim_{M, S} \e \gamma^{- 2 (M+1)} \| \Delta_{12} i \|_{s_1 + \aleph_9(M,\b_0)}\,.  
\label{derivata RM}
\end{align}
\end{lemma}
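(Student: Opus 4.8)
The plan is to prove the two assertions in turn: that $R_M$ defined in \eqref{R8} is a finite rank operator of the form \eqref{forma buona resto}, and then the weighted $\mathcal D^{k_0}$-tame bounds \eqref{RM}--\eqref{derivata RM}.

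For the first assertion I would start from the explicit expression \eqref{R8-linea2}, which exhibits $R_M$ as the sum of three operators, each factoring through a finite rank operator: the projector $\Pi_{{\mathbb S}_0}$ in the first two terms, and the finite rank remainder $R$ of Lemma \ref{thm:Lin+FBR} in the third. Since the range of $\Pi_{{\mathbb S}_0}$ is spanned by the finitely many monomials $\cos(jx)$, $j \in {\mathbb S}_0$ (and on the even, zero-average phase space only $j \in \Splus$ survive), any sandwich $A\,\Pi_{{\mathbb S}_0}\,B$ with $A,B$ acting on functions of $x$ (pointwise in $\vphi$) can be put in the form \eqref{forma buona resto} with $\chi_j := c_j^{-1} A\,\Pi_{{\mathbb S}_0}[\cos(jx)]$ and $g_j := B^\ast[\cos(jx)]$, where $c_j := \|\cos(jx)\|_{L^2_x}^2$ and $B^\ast$ is the $L^2_x$-adjoint of $B$; for the third term one takes the $g_j,\chi_j$ of Lemma \ref{thm:Lin+FBR} and replaces them by ${\cal P}_\bot^\ast g_j$ and ${\cal P}_\bot^{-1}\Pi_{{\mathbb S}_0}^\bot\chi_j$. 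Summing the three contributions puts $R_M$ in the form \eqref{forma buona resto}, with $g_j=g_j(\omega,\h,\vphi,\cdot)$ and $\chi_j=\chi_j(\omega,\h,\vphi,\cdot)$ expressed through ${\cal T}_8$, the linearized water waves vector field $J\partial_u\nabla_u H(T_\delta)$, the remainder $R$, and the conjugation maps ${\cal P}$, ${\cal P}_\bot^{\pm1}$ together with their $L^2_x$-adjoints.

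For the tame bounds I would use that for a rank-one operator $h\mapsto (h,g)_{L^2_x}\chi$, applying $\langle D\rangle^{m_1}\partial_\vphi^\beta(\cdot)\langle D\rangle^{m_2}$ produces, by the Leibniz rule and the self-adjointness of the Fourier multiplier $\langle D\rangle^{m_2}$, a finite linear combination of rank-one operators $h\mapsto (h,\,\partial_\vphi^{\beta_1}\langle D\rangle^{m_2}g)_{L^2_x}\,\langle D\rangle^{m_1}\partial_\vphi^{\beta_2}\chi$. By the algebra property of $H^s$ and the Cauchy--Schwarz inequality in $x$, the $\mathcal D^{k_0}$-tame constant of $h\mapsto (h,G)_{L^2_x}X$ is bounded by $\lesssim_s \|G\|_s^{k_0,\gamma}\|X\|_{s_0}^{k_0,\gamma}+\|G\|_{s_0}^{k_0,\gamma}\|X\|_s^{k_0,\gamma}$, so everything reduces to estimating the weighted Sobolev norms $\|\langle D\rangle^{m_2}\partial_\vphi^{\beta_1}g_j\|_s^{k_0,\gamma}$ and $\|\langle D\rangle^{m_1}\partial_\vphi^{\beta_2}\chi_j\|_s^{k_0,\gamma}$. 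Since each $g_j,\chi_j$ is the image of a fixed trigonometric monomial under the operators listed above, and $\cos(jx)\in H^\infty$ absorbs the weights $\langle D\rangle^{m_i}$ at the price of $m_i$ extra fixed Sobolev indices, these norms are controlled by: the tame bound \eqref{stima tame cal T 8} for ${\cal T}_8$ (and the analogous bound for its $L^2_x$-adjoint), with the weight always placed on the side away from $\Pi_{{\mathbb S}_0}$ so that the budget $m_1+m_2\le M-\tfrac12(\beta_0+k_0)$ is amply sufficient; the bounds \eqref{stima Phi 1 Phi 2 proiettate} for ${\cal P},{\cal P}_\bot^{\pm1}$ and their adjoints from Lemma \ref{Lemma:trasformazioni finali W}; the bounds \eqref{stime gj chij} from Lemma \ref{thm:Lin+FBR}; and the tame bounds for $J\partial_u\nabla_u H(T_\delta)$ coming from Proposition \ref{lemma dirichlet Neumann} and Lemma \ref{lemma quantitativo forma normale}. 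Chaining these through the composition Lemma \ref{composizione operatori tame AB} (using Lemma \ref{lemma coniugazione proiettore pi 0} where a projector must be conjugated), and choosing $\aleph_9(M,\beta_0)$ as the maximum of the loss constants $\sigma_M$, $\aleph_8'(M,\beta_0)$, the loss in Lemma \ref{thm:Lin+FBR}, plus $\beta_0$, yields \eqref{RM}; the Lipschitz estimate \eqref{derivata RM} follows verbatim with each ingredient replaced by its $\Delta_{12}$-counterpart (\eqref{stima tame derivata cal T 8}, \eqref{Delta 12 mP}, the second bound in \eqref{stime gj chij}, and so on) in the low norm $\|\cdot\|_{s_1}$.

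\emph{The main obstacle}, such as it is, is purely one of bookkeeping: one must keep track of where the weights $\langle D\rangle^{m_1},\langle D\rangle^{m_2}$ and the derivatives $\partial_\vphi^\beta$ land, ensuring they only ever act on a fixed monomial $\cos(jx)$ or on a factor adjacent to the finite rank projector $\Pi_{{\mathbb S}_0}$, and never directly on ${\cal T}_8$ or on the unbounded operator $J\partial_u\nabla_u H(T_\delta)$ in a way that would overrun the weight budget. The elementary observation that makes the weighted estimates essentially free is that $\langle D\rangle^m\Pi_{{\mathbb S}_0}$ and $\Pi_{{\mathbb S}_0}\langle D\rangle^m$ are bounded operators for every $m$, with norm depending only on ${\mathbb S}_0$ and $m$, so the finite rank structure of $R_M$ absorbs all the weights once the unweighted tame bounds for the constituent operators are in hand.
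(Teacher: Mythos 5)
Your decomposition of $R_M$ into rank-one pieces via \eqref{R8-linea2} and the rank-one reduction $\mathfrak M_{(\cdot,G)_{L^2_x}X}(s) \lesssim_s \|G\|_s^{k_0,\gamma}\|X\|_{s_0}^{k_0,\gamma}+\|G\|_{s_0}^{k_0,\gamma}\|X\|_s^{k_0,\gamma}$ are both sound, and for the first and third terms the $\e\gamma^{-2(M+1)}$ prefactor in \eqref{RM} is indeed supplied by $\mathcal T_8$ (via \eqref{stima tame cal T 8}) and by the explicit $\e$ in front of $R$. But your argument has a genuine gap for the \emph{second} term in \eqref{R8-linea2},
\[
{\cal P}_\bot^{- 1} \,\Pi_{{\mathbb S}_0}^\bot  \, J \partial_u  \nabla_u H (T_\delta(\vphi))\, \Pi_{{\mathbb S}_0}\, \mP\, \Pi_{{\mathbb S}_0}^\bot \, .
\]
Every estimate you cite for the factors in this sandwich — \eqref{stima Phi 1 Phi 2 proiettate} for $\mP, \mP_\bot^{\pm1}$, Proposition \ref{lemma dirichlet Neumann} and Lemma \ref{lemma quantitativo forma normale} for $J\partial_u\nabla_u H(T_\delta)$ — is an $O(1)$ bound, not an $O(\e\gamma^{-2(M+1)})$ bound: the linearized water waves vector field $J\partial_u\nabla_u H(T_\delta)$ is not small (its leading, constant-coefficient part is $\begin{psmallmatrix} 0 & - D \tanh(\mathtt h D) \\ 1 & 0 \end{psmallmatrix}$, which is of size $1$), and the conjugation $\mP$ contains the $O(1)$ factors $\mM_2$ and $\mC$. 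Chaining these $O(1)$ bounds as you describe yields an $O(1)$ estimate for this term, which is incompatible with \eqref{RM}.

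What rescues this term is a cancellation you never invoke: since the constant-coefficient block $\begin{psmallmatrix} 0 & - D \tanh(\mathtt h D) \\ 1 & 0 \end{psmallmatrix}$ is a matrix of Fourier multipliers, it commutes with the spectral projectors, and hence $\Pi_{{\mathbb S}_0}^\bot\,\begin{psmallmatrix} 0 & - D \tanh(\mathtt h D) \\ 1 & 0 \end{psmallmatrix}\,\Pi_{{\mathbb S}_0} = 0$. Subtracting this vanishing piece, the sandwich $\Pi_{{\mathbb S}_0}^\bot\, J\partial_u\nabla_u H(T_\delta)\,\Pi_{{\mathbb S}_0}$ only picks up the variable-coefficient part of the linearization, which is genuinely $O(\e)$ by \eqref{linearized vero}, \eqref{stima V B a c}, \eqref{estimate DN}. (Alternatively, one could observe that $\Pi_{{\mathbb S}_0}\,\mP\,\Pi_{{\mathbb S}_0}^\bot$ is small because the $O(1)$ factors $\mM_2,\mC$ in $\mP$ are Fourier multipliers and constant matrices which commute with $\Pi_{{\mathbb S}_0}$, leaving only the $O(\e\gamma^{-2(M+1)})$ variable-coefficient corrections; but this, too, requires a cancellation argument of the same kind, which you do not state and which does not follow from the cited bound \eqref{stima Phi 1 Phi 2 proiettate}.) Without one of these observations, the estimate \eqref{RM} does not follow from the chain of tame bounds you propose.
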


\begin{proof} 
To prove that the operator $R_M$ has the finite dimensional form \eqref{forma buona resto}, 
notice that in the first two terms in \eqref{R8-linea2} there is the finite dimensional projector 
$ \Pi_{{\mathbb S}_0} $, that the operator $ R $ in the third term in \eqref{R8-linea2}  
already has the finite dimensional form \eqref{forma buona resto}, 
and use the property that $\mP_\bot ( a(\ph) h) = a(\ph) \mP_\bot h$ 
for all $h = h(\ph,x)$ and all $a(\ph)$ independent of $ x $, see also 
the proof of Lemma \ref{lemma coniugazione proiettore pi 0} 
(and Lemma 6.30 in \cite{BertiMontalto} and Lemma 8.3 in \cite{BBM-auto}).
To estimate $R_M$, use 
\eqref{stima Phi 1 Phi 2 proiettate}, 
\eqref{Delta 12 mP} 
for $\mP$,
\eqref{stima tame cal T 8}, 
\eqref{stima tame derivata cal T 8} 
for $\mathcal{T}_8$,
\eqref{representation Lom}, 
\eqref{linearized vero}, 
\eqref{stima V B a c},
\eqref{stima derivate i primo step},
\eqref{estimate DN} 
for $J \partial_u  \nabla_u H (T_\delta(\vphi))$,
\eqref{forma buona resto}, 
\eqref{stime gj chij} 
for $R$.
The term $ \Pi_{{\mathbb S}_0}^\bot   
J \partial_u  \nabla_u 
H (T_\delta(\vphi))  \Pi_{{\mathbb S}_0} $ is small
because $ \Pi_{{\mathbb S}_0}^\bot \begin{psmallmatrix} 0 & - D \tanh ( {\mathtt h} D) \\ 
1 & 0 \end{psmallmatrix} \Pi_{{\mathbb S}_0} $ is zero. 
\end{proof}

By \eqref{definizione cal L bot} and \eqref{cal L8} we get
\begin{equation}\label{forma finale operatore pre riducibilita:0}
{\cal L}_\bot =  \omega \cdot \partial_\vphi \mathbb I_\bot+ \ii {\cal D}_\bot + {\cal R}_\bot
\end{equation}
where  $\mathbb I_\bot$ denotes the identity map of 
$H_{\mathbb S_0}^\bot  $ (acting on scalar functions $u$, as well as on pairs $(u,\bar u)$ in a diagonal manner),
\begin{equation} \label{forma finale parte diagonale pre riducibilita}
{\cal D}_\bot := \begin{pmatrix} D_\bot & 0 \\
0  & - D_\bot \end{pmatrix}	, 
\qquad 
D_\bot := \Pi_{\mathbb S_0}^\bot D_8 \Pi_{\mathbb S_0}^\bot \, , 
\end{equation}
and  ${\cal R}_\bot$ is the operator
\begin{equation}\label{definizione cal R bot pre riducibilita}
{\cal R}_\bot := \Pi_{\mathbb S_0}^\bot  {\cal T}_8 \Pi_{\mathbb S_0}^\bot  + R_M\, ,
\quad 
\mR_{\bot} = \begin{pmatrix} \mR_{\bot,1} & \mR_{\bot,2} \\ 
\overline{\mR_{\bot,2}} & \overline{\mR_{\bot,1}} \end{pmatrix} \, . 
\end{equation}
The operator $ {\cal R}_\bot $ in 
\eqref{definizione cal R bot pre riducibilita} is defined 
for all $\lm = (\om, \h) \in \R^\nu \times [\h_1, \h_2]$, 
because ${\cal T}_8$ in \eqref{definizione cal T8} 
and the operator in the right hand side of \eqref{R8-linea2} 
are defined on the whole parameter space. As a consequence, the right hand side of \eqref{forma finale operatore pre riducibilita:0}
extends the definition of $\mL_\bot$ to $ \R^\nu \times [\h_1, \h_2]$.
We still denote the extended operator by ${\cal L}_\perp$.

In conclusion, we have obtained the following proposition. 

\begin{proposition}\label{prop: sintesi linearized}
{\bf (Reduction of $\mL_\om$ up to smoothing remainders)}
For all $ \lm = (\om, \h) \in \mathtt{DC}(\g,\t) \times [\h_1, \h_2] $,
the operator $ {\cal L}_\om $ in \eqref{representation Lom} is conjugated 
by the map $ {\cal P}_\bot $ defined in \eqref{Phi 1 Phi 2 proiettate} 
 to the real, even and reversible operator 
${\cal L}_\perp $ in \eqref{definizione cal L bot}. 
For all $\lm \in \R^\nu \times [\h_1, \h_2]$, the extended operator ${\cal L}_\bot$ 
defined by the right hand side of \eqref{forma finale operatore pre riducibilita:0}
has the form 
\begin{equation}\label{forma finale operatore pre riducibilita}
{\cal L}_\bot =  \omega \cdot \partial_\vphi \mathbb I_\bot+ \ii {\cal D}_\bot + {\cal R}_\bot
\end{equation}
where $ {\cal D}_\bot $ is the diagonal operator 
\begin{equation} \label{forma finale parte diagonale pre riducibilita 1}
{\cal D}_\bot := \begin{pmatrix} D_\bot & 0 \\
0  & - D_\bot \end{pmatrix}	, 
\qquad 
D_\bot  
= {\rm diag}_{j \in {\mathbb S}_0^c} \, \mu_j\,, \qquad \mu_{-j} = \mu_j \, ,
\end{equation} 
with eigenvalues $ \mu_j $, 
defined in \eqref{introduzione primo operatore diagonale riducibilita}, given by 
\begin{equation} \label{2802.4}
\mu_j = \mathtt m_{\frac12} |j|^{\frac12} \tanh^{\frac12}(\h |j|) + r_j \in \R \, , \quad \  r_{-j} = r_j  \, , 
\end{equation}
where $\mathtt m_{\frac12}, r_j \in \R$ satisfy 
\eqref{stime lambda 1}, \eqref{stima code - 1/2 inizio riducibilita}. 
The operator ${\cal R}_\bot$ defined in  \eqref{definizione cal R bot pre riducibilita} 
is real, even and reversible. 

Let $S > s_0$, $\b_0 \in \N$, and $M > \frac12 (\beta_0 + k_0)$. 
There exists a constant $ \aleph(M, \beta_0) > 0$ (depending also on $k_0, \tau, \nu$) such that, 
assuming \eqref{ansatz I delta} with $\mu_0 \geq \aleph(M, \beta_0)$, 
for any $m_1, m_2 \geq 0$, with $m_1 + m_2 \leq M - \frac12 (\beta_0 + k_0) $,
for any $\beta \in \N^\nu$, $|\beta| \leq \beta_0$, 
the operators $ \langle D \rangle^{m_1}\partial_\vphi^\beta {\cal R}_\bot \langle D \rangle^{m_2}$, $\langle D \rangle^{m_1 }\partial_\vphi^\beta \Delta_{12} {\cal R}_\bot \langle D \rangle^{m_2}$ are ${\cal D}^{k_0}$-tame with tame constants satisfying 
\begin{align}
 \mathfrak M_{\langle D \rangle^{m_1}\partial_\vphi^\beta {\cal R}_\bot \langle D \rangle^{m_2}}(s) \lesssim_{M, S} \e \gamma^{- 2 (M+1)} \big(1 + \| \fracchi_0 \|^{k_0, \gamma}_{s +\aleph(M, \beta_0)}\big)\,, \qquad \forall s_0 \leq s \leq S \label{stima tame cal R bot} \\
\| \langle D \rangle^{m_1} \Delta_{12}\partial_\vphi^\beta {\cal R}_\bot \langle D \rangle^{m_2} \|_{{\cal L}(H^{s_1})} \lesssim_{M, S} \e \gamma^{- 2 (M+1)}  \| \Delta_{12} i \|_{s_1 + \aleph(M, \beta_0)}\,.  \label{stima tame derivata cal R bot}
\end{align}
\end{proposition}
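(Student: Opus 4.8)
The plan is to prove Proposition \ref{prop: sintesi linearized} essentially by assembling the work of Sections \ref{linearizzato siti normali}--\ref{coniugio cal L omega} into a single statement. The conjugation identity \eqref{bf RN (3) bot}, with the map $\mP$ in \eqref{semiconiugio cal L8}, has already been established step by step: the linearized good unknown of Alinhac $\mZ$ (Lemma \ref{lemma:remainder mR0}), the straightening $\mB$ of the transport vector field (Lemma \ref{lem:stime coefficienti cal L1}), the change of space variable $\mA$ (Lemma \ref{stime trasformazione nuova cal A}), the symmetrizers $\mM_2,\mM_3$ and the passage to complex coordinates $\mC$ (Lemmata \ref{stime simmetrizzazione ordine principale}--\ref{lemma simmetrizzazione ordine principale} and Section \ref{sec:10}), the decoupling $\mathbf{\Phi}_M$ (Lemma \ref{Lemma finale decoupling}), the semi-FIO flow $\mathbf{\Phi}$ (Lemma \ref{lemma stime primo Egorov}), and the descent method $\mW$ (Lemmata \ref{lemma conclusivo descent method}, \ref{lemma cal L8}). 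Thus the first part of the plan is simply to record that, for $\lm\in\mathtt{DC}(\g,\t)\times[\h_1,\h_2]$, the map $\mP_\bot=\Pi_{{\mathbb S}_0}^\bot\mP\Pi_{{\mathbb S}_0}^\bot$ of Lemma \ref{Lemma:trasformazioni finali W} conjugates $\mL_\om$ in \eqref{representation Lom} to $\mL_\bot$ in \eqref{definizione cal L bot}--\eqref{forma finale operatore pre riducibilita:0}, whose diagonal part $\mD_\bot$ and eigenvalues $\mu_j$ are exactly \eqref{forma finale parte diagonale pre riducibilita 1}--\eqref{2802.4} by the definition \eqref{introduzione primo operatore diagonale riducibilita} of $D_8$. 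The parity and reversibility assertions for $\mL_\bot$ and $\mR_\bot$ follow by noting that every one of the eight transformations is even and reversibility preserving, and $\mL$ itself is even and reversible, so the algebraic rules of Section \ref{sezione operatori reversibili e even} propagate these properties through to \eqref{forma finale operatore pre riducibilita}; the extension of $\mL_\bot$ to all $\lm\in\R^\nu\times[\h_1,\h_2]$ is the one already constructed term by term below \eqref{definizione cal R bot pre riducibilita}.

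The second, and main, part is the tame estimates \eqref{stima tame cal R bot}--\eqref{stima tame derivata cal R bot} for $\langle D\rangle^{m_1}\pa_\vphi^\beta\mR_\bot\langle D\rangle^{m_2}$. Recalling \eqref{definizione cal R bot pre riducibilita}, $\mR_\bot=\Pi_{{\mathbb S}_0}^\bot\mathcal T_8\Pi_{{\mathbb S}_0}^\bot+R_M$. For the first summand one uses Lemma \ref{lemma cal L8}: since $\Pi_{{\mathbb S}_0}^\bot$ commutes with $\langle D\rangle^{m_i}$ and with $\pa_\vphi^\beta$, it suffices to insert the projections and invoke \eqref{stima tame cal T 8}--\eqref{stima tame derivata cal T 8} (the projector $\Pi_{{\mathbb S}_0}^\bot$ is bounded on every Sobolev space uniformly, so the tame constant is unchanged). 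For the second summand one uses Lemma \ref{lemma forma buona resto}, i.e. \eqref{RM}--\eqref{derivata RM}. Then one sets $\aleph(M,\beta_0):=\max\{\aleph_8'(M,\beta_0),\aleph_9(M,\beta_0)\}$ and chooses $\mu_0\geq\aleph(M,\beta_0)$ so that the hypotheses \eqref{ansatz I delta} of both lemmata are met; the constraint $m_1+m_2\leq M-\tfrac12(\beta_0+k_0)$ is precisely the one under which both \eqref{stima tame cal T 8} and \eqref{RM} are available, and the bound $M>\tfrac12(\beta_0+k_0)$ guarantees this range is nonempty. Adding the two contributions via Lemma \ref{interpolazione moduli parametri} (or just the triangular inequality for tame constants) yields \eqref{stima tame cal R bot}, and the Lipschitz-in-$i$ version \eqref{stima tame derivata cal R bot} follows identically from the $\Delta_{12}$ estimates in the same two lemmata.

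The estimates \eqref{stime lambda 1} on $\mathtt m_{\frac12}$ and \eqref{stima code - 1/2 inizio riducibilita} on $r_j$ are already proved in Section \ref{sez:Alinhac}ff.\ and Lemma \ref{lemma conclusivo descent method} respectively, so they need only be cited; I would simply remark that these carry over verbatim to the restricted operator $D_\bot=\Pi_{{\mathbb S}_0}^\bot D_8\Pi_{{\mathbb S}_0}^\bot$ since restricting a diagonal Fourier multiplier to a subspace of indices does not change its entries. The one point requiring a little care, and the place I expect to spend most of the write-up, is the bookkeeping of the loss-of-derivatives constants: each of the roughly eight conjugation steps contributes its own $\sigma$ or $\aleph_i(M,\cdot)$, and one must check that the single constant $\mu_0\geq\aleph(M,\beta_0)$ in the hypothesis \eqref{ansatz I delta} dominates all of them simultaneously, uniformly in $s$ but allowing dependence on $M$ and $\beta_0$. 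This is routine but tedious, and is exactly the reason the statement is phrased with an unspecified $\aleph(M,\beta_0)$; I would organize it by propagating the constant through \eqref{semiconiugio cal L8} from right to left, as is implicitly done in Lemma \ref{Lemma:trasformazioni finali W}, and then absorb the extra losses coming from $\mathcal T_8$ and $R_M$. No new analytic idea is needed beyond what is already in the excerpt; the proof is a synthesis and a careful constant-chase.
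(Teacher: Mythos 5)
Your proposal is correct and takes essentially the same route as the paper: the paper's proof of the tame estimates is precisely the one-line observation that $\mR_\bot=\Pi_{{\mathbb S}_0}^\bot\mathcal T_8\Pi_{{\mathbb S}_0}^\bot+R_M$ and that the two summands are handled by \eqref{stima tame cal T 8}-\eqref{stima tame derivata cal T 8} and \eqref{RM}-\eqref{derivata RM} respectively, while the structural statements (conjugation, reality, evenness, reversibility, form of $\mD_\bot$ and $\mu_j$) are, as you say, collected from Sections \ref{linearizzato siti normali}-\ref{coniugio cal L omega}. Your write-up is more explicit about commuting the projector with $\langle D\rangle^{m_i}$ and $\pa_\vphi^\beta$ and about fixing $\aleph(M,\beta_0)$, but the substance coincides.
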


\begin{proof}
Estimates \eqref{stima tame cal R bot}-\eqref{stima tame derivata cal R bot} for the term $\Pi_{\mathbb S_0}^\bot  {\cal T}_8 \Pi_{\mathbb S_0}^\bot $ in \eqref{definizione cal R bot pre riducibilita} follow directly by  \eqref{stima tame cal T 8}, \eqref{stima tame derivata cal T 8}. 
Estimates \eqref{stima tame cal R bot}-\eqref{stima tame derivata cal R bot} for $R_M$ are \eqref{RM}-\eqref{derivata RM}.
\end{proof}

\section{Almost-diagonalization and invertibility of $ {\cal L}_\om $}\label{sec: reducibility}

In Proposition \ref{prop: sintesi linearized} we obtained the operator $\mL_\bot = \mL_\bot(\ph)$ in \eqref{forma finale operatore pre riducibilita}
which is diagonal up to the smoothing operator $ \mR_\bot $.
In this section we implement a diagonalization KAM iterative scheme 
to reduce the size of the non-diagonal term $ \mR_\bot $. 
 
We first replace the operator $ \mL_\bot $ in 
\eqref{forma finale operatore pre riducibilita}
with  the operator $\mL_\bot^\sym $ defined in \eqref{2701.1} below, 
which coincides with $\mL_\bot $  on the subspace of functions even in $  x $, see Lemma \ref{lemma:R bot sym}.
This trick enables to reduce an even operator using its matrix representation in the exponential basis $(e^{\ii jx})_{j\in\Z}$ and exploiting the fact that on the subspace of functions $\even(x)$ its eigenvalues are simple.
We define the linear operator $\mL_\bot^{sym} $, acting on 
$H_{\mathbb S_0}^\bot $, as 
\begin{equation} \label{2701.1}
{\cal L}_\bot^{sym} := \omega \cdot \partial_\vphi \mathbb I_\bot 
+ \ii {\cal D}_\bot + {\cal R}_\bot^{sym},
\quad 
\mR_{\bot}^{sym} := \begin{pmatrix} \mR_{\bot,1}^{sym} & \mR_{\bot,2}^{sym} \\ 
\overline{\mR_{\bot,2}^{sym}} & \overline{\mR_{\bot,1}^{sym}} \end{pmatrix},
\end{equation}
where $\mR_{\bot,i}^{sym}$, $i=1,2$, are defined by 
their matrix entries
\begin{equation} \label{2601.1}
(\mR_{\bot,i}^{sym})_j^{j'}(\ell) := 
\begin{cases} (\mR_{\bot,i})_j^{j'}(\ell) + (\mR_{\bot,i})_j^{-j'}(\ell) 
\ \  & \text{if} \ jj' > 0, \\
0 & \text{if} \ jj' < 0, 
\end{cases}  
\qquad j, j' \in \mathbb S_0^c \, ,    \qquad \ i = 1,2, 
\end{equation}
and 
$\mR_{\bot,i} $, $ i = 1, 2 $ 
are introduced in \eqref{definizione cal R bot pre riducibilita}. 
Note that, in particular, $(\mR_{\bot, i}^{sym})_j^{j'} = 0$, $i = 1, 2$ on the anti-diagonal $j' = -j$. 
Using definition \eqref{2601.1}, one has the following lemma.

\begin{lemma} \label{lemma:R bot sym} The operator $\mR_{\bot}^{sym}$ coincides with $\mR_\bot$ 
on the subspace of functions $\even(x)$ in $H_{\mathbb S_0}^\bot \times H_{\mathbb S_0}^\bot$, namely
\begin{equation} \label{2601.3}
\mR_\bot h = \mR_\bot^{sym} h \, ,  \quad \forall h \in H_{\mathbb S_0}^\bot \times H_{\mathbb S_0}^\bot \, , 
\quad h = h(\ph,x) = \even(x) \, .  
\end{equation}
$\mR_{\bot}^{sym}$ is real, even and reversible, 
and it satisfies the same bounds \eqref{stima tame cal R bot}, \eqref{stima tame derivata cal R bot} as $\mR_\bot$.
\end{lemma}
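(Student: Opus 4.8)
The plan is to prove Lemma \ref{lemma:R bot sym} by a direct, elementary verification, splitting it into three independent assertions: (i) the conjugation identity \eqref{2601.3} on functions even in $x$; (ii) the structural properties (real, even, reversible) of $\mR_\bot^{sym}$; (iii) the tame bounds. Since the definition \eqref{2601.1} is entirely explicit in terms of the matrix entries of $\mR_\bot$, none of these steps should require any new analytic input; everything reduces to bookkeeping on Fourier coefficients, using the characterizations of even/reversible operators recalled in Section \ref{sezione operatori reversibili e even}.

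First I would prove \eqref{2601.3}. Let $h \in H_{\mathbb S_0}^\bot \times H_{\mathbb S_0}^\bot$ with $h(\ph,x)$ even in $x$, so that its Fourier coefficients satisfy $h_{\ell,j} = h_{\ell,-j}$ for all $\ell, j$. Writing the action of $\mR_{\bot,i}$ and $\mR_{\bot,i}^{sym}$ via \eqref{matrice operatori Toplitz}, the $(\ell,j)$-component of $(\mR_{\bot,i}^{sym} - \mR_{\bot,i})h$ is a sum over $j' \in \mathbb S_0^c$ of $\big[(\mR_{\bot,i}^{sym})_j^{j'}(\ell-\ell') - (\mR_{\bot,i})_j^{j'}(\ell-\ell')\big] h_{\ell',j'}$. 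For $jj' > 0$ the bracket equals $(\mR_{\bot,i})_j^{-j'}(\ell-\ell')$; for $jj' < 0$ it equals $-(\mR_{\bot,i})_j^{j'}(\ell-\ell')$. Reindexing $j' \mapsto -j'$ in the first group and using $h_{\ell',j'} = h_{\ell',-j'}$, these two contributions cancel exactly (the remaining diagonal-in-sign cases and the $j'=0$, $j'=-j$ terms contribute nothing because $h$ is supported on $\mathbb S_0^c$ and the anti-diagonal entries of $\mR_{\bot,i}^{sym}$ vanish). Hence $\mR_\bot^{sym} h = \mR_\bot h$, which also shows $\mL_\bot^{sym} h = \mL_\bot h$ on this subspace, since $\om\cdot\pa_\vphi \mathbb I_\bot + \ii\mathcal D_\bot$ is untouched.

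Next, the algebraic properties. Reality of $\mR_\bot^{sym}$: by \eqref{operatori in coordinate complesse-bis} it suffices that the $(2,1)$, $(2,2)$ blocks are the conjugates of the $(1,1)$, $(1,2)$ blocks, which is built into \eqref{2701.1} provided $(\overline{\mR_{\bot,i}})_j^{j'}(\ell) = \overline{(\mR_{\bot,i})_{-j}^{-j'}(\ell)}$ is preserved under the symmetrization \eqref{2601.1}; this holds because the map $(j,j')\mapsto(-j,-j')$ leaves the condition $jj'>0$ invariant and swaps $j'\leftrightarrow -j'$ consistently. Evenness: by \eqref{even operators Fourier} an operator is even iff $A_j^{j'}+A_j^{-j'} = A_{-j}^{j'}+A_{-j}^{-j'}$; I would check that the symmetrized entries satisfy this identity using that $\mR_\bot$ itself is even (Proposition \ref{prop: sintesi linearized}) together with the sign-selection in \eqref{2601.1}, noting that for $\mR_\bot^{sym}$ one in fact has the stronger relation $(\mR_{\bot,i}^{sym})_j^{j'} = (\mR_{\bot,i}^{sym})_{-j}^{-j'}$ and $(\mR_{\bot,i}^{sym})_j^{-j'} = (\mR_{\bot,i}^{sym})_{-j}^{j'}$ on $jj'>0$, so both sides equal $(\mR_{\bot,i}^{sym})_j^{j'}+(\mR_{\bot,i}^{sym})_j^{-j'}$. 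Reversibility: using the characterization \eqref{2601.4} that $\mR_\bot$ reversible means $(\mR_{\bot,i})_j^{j'}(\ell) = -\overline{(\mR_{\bot,i})_{-j}^{-j'}(\ell)}$, and again the invariance of $jj'>0$ under $(j,j')\mapsto(-j,-j')$, the same sign relation is inherited by $(\mR_{\bot,i}^{sym})_j^{j'}(\ell)$, so $\mR_\bot^{sym}$ — and hence $\mL_\bot^{sym}$ — is reversible.

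Finally, the tame estimates: since $|(\mR_{\bot,i}^{sym})_j^{j'}(\ell)| \leq |(\mR_{\bot,i})_j^{j'}(\ell)| + |(\mR_{\bot,i})_j^{-j'}(\ell)|$, and likewise for $\langle D\rangle^{m_1}\pa_\vphi^\beta(\cdot)\langle D\rangle^{m_2}$ whose matrix entries only get multiplied by the factors $(\ell_p-\ell_p')$, $\langle j\rangle^{m_1}$, $\langle j'\rangle^{m_2}$ — quantities that are unchanged by $j' \mapsto -j'$ — the majorant of $\langle D\rangle^{m_1}\pa_\vphi^\beta \mR_\bot^{sym}\langle D\rangle^{m_2}$ is controlled by that of $\langle D\rangle^{m_1}\pa_\vphi^\beta \mR_\bot\langle D\rangle^{m_2}$ composed with the reflection $j'\mapsto -j'$, which is an isometry on each $H^s$; hence \eqref{stima tame cal R bot}, \eqref{stima tame derivata cal R bot} transfer verbatim, and the same for the $\Delta_{12}$-bounds. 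I do not expect a genuine obstacle here; the only point requiring a little care is checking that the sign conventions in \eqref{2601.1} are exactly compatible with all three of the Fourier-coefficient characterizations of real/even/reversible simultaneously, which is why I would organize that verification around the single observation that the involution $(j,j')\mapsto(-j,-j')$ preserves the region $\{jj'>0\}$ and commutes with the entrywise operations defining the norms.
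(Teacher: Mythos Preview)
Your proposal is correct and is precisely the elementary verification the paper has in mind; the paper gives no proof beyond pointing to definition \eqref{2601.1}. One small sharpening for part (iii): since \eqref{stima tame cal R bot}--\eqref{stima tame derivata cal R bot} are \emph{tame} (not modulo-tame) bounds, the cleanest way to transfer them is not via entrywise majorants but via the decomposition $\mR_{\bot,i}^{\sym} = \Pi_+ \mR_{\bot,i}(I+S)\Pi_+ + \Pi_- \mR_{\bot,i}(I+S)\Pi_-$, where $S:u(x)\mapsto u(-x)$ and $\Pi_\pm$ are projections on positive/negative frequencies; these commute with $\langle D\rangle^{m_i}$, $\pa_\vphi^\beta$, $\pa_\lambda^k$ and are bounded on $H^s$, which is exactly the isometry observation you make.
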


As a starting point of the recursive scheme, 
we consider the real, even, reversible linear operator 
$\mL_\bot^{sym}$ in \eqref{2701.1}, acting on $ H_{{\mathbb S}_0}^\bot $,
defined for all 
$ (\om, \h) \in \R^\nu \times [\h_1, \h_2] $, 
which we rename 
\be\label{defL0-red}
{\cal L}_0 := \mL_\bot^{sym}
:= \omega \cdot \partial_\vphi {\mathbb I}_\bot + \ii {\cal D}_0 + {\cal R}_0 \,,
\qquad 
\mD_0 := \mD_\bot, 
\quad  
\mR_0 := \mR_\bot^{sym} \, , 
\ee
with 
\be\label{op-diago0}
{\cal D}_0:= \begin{pmatrix}
D_0 & 0 \\
0 & - {D}_0
\end{pmatrix}\,, \qquad 
D_0 := {\rm diag}_{j \in {\mathbb S}_0^c} \, \mu_j^{0} \,, \qquad 
\mu_j^{0} := \mathtt m_{\frac12} |j|^{\frac12} \tanh^{\frac12} (\h |j|) + r_j\,, 
\ee
where 
$ \mathtt m_{\frac12} := \mathtt m_{\frac12} (\om, \h) \in \R $ satisfies \eqref{stime lambda 1}, 
$r_j := r_j(\omega, \h) \in \R$, $r_j = r_{- j}$ 
satisfy \eqref{stima code - 1/2 inizio riducibilita},  
and  
\begin{align}   \label{defRQ0}
{\cal R}_0 := \begin{pmatrix}
R_1^{(0)} & R_2^{(0)} \\
\overline R_2^{(0)} & \overline R_1^{(0)}
\end{pmatrix}\,, \quad R_i^{(0)} : H_{{\mathbb S}_0}^\bot \to H_{{\mathbb S}_0}^\bot\,, 
\qquad i = 1, 2 \, . 
\end{align}

\noindent
\textbf{Notation.} 
In  this section we use the following  notation:
given an operator $R$, we denote by $\pa_{\ph_i}^{s} \langle D \rangle^\fm R \langle D \rangle^\fm$ 
the operator $\langle D \rangle^\fm \circ \big( \pa_{\ph_i}^{s} R(\ph) \big) \circ \langle D \rangle^\fm$.
Similarly $\langle \pa_{\ph,x} \rangle^{\mathtt{b}} \langle D \rangle^\fm R \langle D \rangle^\fm$ 
denotes  $\langle D \rangle^\fm \circ \big( \langle \pa_{\ph,x} \rangle^{\mathtt{b}} R \big) \circ \langle D \rangle^\fm$ where $\langle \pa_{\ph,x} \rangle^{\mathtt{b}} $ 
is introduced in Definition \ref{def:maj}. 

\smallskip

The operator ${\cal R}_0$ in \eqref{defRQ0} satisfies the tame estimates of Lemma \ref{lem:tame iniziale} below.
Define the constants 
\be
\begin{aligned}\label{alpha beta}
& {\mathtt b} := [{\mathtt a}] + 2 \in \N \,,
\quad {\mathtt a} := \max \{ 3 \tau_1, \chi (\tau + 1)(4 \mathtt d + 1) + 1 \} \, , 
\quad \chi := 3 / 2 \, , \\
&  \quad \tau_1 := \tau(k_0 + 1) + k_0 + \fm\,, \quad 
\fm := \perd (k_0 + 1) + \frac{k_0}{2}\,, 
\end{aligned}
\ee
where  $\perd > \frac34 k_0^* $, by  \eqref{relazione tau k0}. The condition $\mathtt a \geq \chi (\tau + 1)(4 \mathtt d + 1) + 1$ in \eqref{alpha beta} 
will be used in Section \ref{sec:NM} in order to verify inequality \eqref{cond-su-p}. 
Proposition \ref{prop: sintesi linearized} implies that
$ {\cal R}_0$ satisfies the tame estimates of Lemma \ref{lem:tame iniziale}
by fixing the constant $ M  $ large enough 
(which means that one has to perform a sufficiently large number of regularizing steps in Sections \ref{sec:10} and \ref{sezione descent method}), namely
\begin{equation}\label{relazione mathtt b N}
M := \Big[ 2 \fm + 2 \mathtt b
+ 1 + \frac{\mathtt b + s_0 + k_0}{2} \Big] + 1 \in \N \,  
\end{equation}
where $[  \, \cdot \, ] $ denotes the integer part, and 
$\fm$ and  $\mathtt b$ are defined in \eqref{alpha beta}. We also set   
 \begin{equation}\label{definizione bf c (beta)}
 \mu (\mathtt b) :=  \aleph(M, s_0 + \mathtt b)\,,
\end{equation}
where the constant $\aleph(M, s_0 + \mathtt b)$ is given in Proposition \ref{prop: sintesi linearized}. 

\begin{lemma} {\bf (Tame estimates of ${\cal R}_0 := {\cal R}_\bot^\sym$)} \label{lem:tame iniziale}
Assume \eqref{ansatz I delta} with $ \mu_0 \geq \mu (\mathtt b)$.  
Then ${\cal R}_0$ in \eqref{defL0-red}
satisfies the following property:
the operators 
\begin{align} \label{1309.3}
& \langle D \rangle^\fm {\cal R}_0 \langle D \rangle^{\fm + 1}, \qquad  \partial_{\vphi_i}^{s_0} \langle D \rangle^\fm {\cal R}_0 \langle D \rangle^{\fm  + 1}\,, \qquad 
\forall i =1, \ldots, \nu  \, , \\
& \label{1309.2}
\langle D \rangle^{\fm + \mathtt b} {\cal R}_0 \langle D \rangle^{\fm + \mathtt b +  1}, \quad \partial_{\vphi_i}^{s_0 +  {\mathtt b}} \langle D \rangle^{\fm + \mathtt b} {\cal R}_0 \langle D \rangle^{\fm + \mathtt b +  1}\, , \
\end{align}
where $\fm, {\mathtt b }$ are defined in \eqref{alpha beta},
are $ {\cal D}^{k_0} $-tame with tame constants
\begin{align} \label{tame cal R0 cal Q0}
& {\mathbb M}_0 (s) := \max_{i =1, \ldots, \nu } \big\{ 
{\mathfrak M}_{\langle D \rangle^\fm {\cal R}_0 \langle D \rangle^{\fm + 1} }(s),
{\mathfrak M}_{\partial_{\vphi_i}^{s_0} \langle D \rangle^\fm {\cal R}_0 \langle D \rangle^{\fm + 1} }(s) \big\} \\  
& \label{tame norma alta cal R0 cal Q0}
{\mathbb M}_0(s, {\mathtt b})  := \max_{i =1, \ldots,  \nu} \big\{ {\mathfrak M}_{ \langle D \rangle^{\fm + \mathtt b} {\cal R}_0 \langle D \rangle^{\fm + \mathtt b +  1} }(s), 
{\mathfrak M}_{ \partial_{\vphi_i}^{s_0 +  {\mathtt b}} \langle D \rangle^{\fm + \mathtt b} {\cal R}_0 \langle D \rangle^{\fm + \mathtt b +  1} }(s)  \big\} 
\end{align}
satisfying, for all  $ s_0 \leq s \leq S $, 
\begin{equation}\label{costanti resto iniziale}
{\mathfrak M}_0(s, {\mathtt b}) := {\rm max}\{ {\mathbb M}_0 (s), {\mathbb M}_0(s, {\mathtt b}) \}
\lesssim_S \e \gamma^{- 2(M+1)} \big( 1 + \| \fracchi_0 \|_{s + \mu(\mathtt b)}^{k_0, \gamma} \big)\,.
\end{equation} 
In particular we have
\be\label{def:costanti iniziali tame}
{\mathfrak M}_0 (s_0, {\mathtt b}) 
\leq C(S) \e \gamma^{- 2(M+1)}  \, . 
\ee  
Moreover, for all $ i = 1, \ldots,  \nu $, $ \b \in \N $, $ \b \leq s_0 + {\mathtt b} $, we have
\begin{align}
\!\!\!  \| \partial_{\vphi_i}^\beta  \langle D \rangle^\fm\Delta_{12} {\cal R}_0 \langle D \rangle^{\fm + 1} \|_{{\cal L}(H^{s_0})},   \| \partial_{\vphi_i}^\beta  \langle D \rangle^{\fm + \mathtt b}\Delta_{12} {\cal R}_0 \langle D \rangle^{\fm + \mathtt b + 1} \|_{{\cal L}(H^{s_0})}  \lesssim_{ S} \e \gamma^{- 2 (M+1)}  \| \Delta_{12} i \|_{s_0 + \mu(\mathtt b)}\,.  \label{costanti variazioni i resto iniziale}
\end{align}
\end{lemma}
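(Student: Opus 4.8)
The plan is to deduce Lemma \ref{lem:tame iniziale} directly from Proposition \ref{prop: sintesi linearized} together with Lemma \ref{lemma:R bot sym}. By \eqref{defL0-red} we have $\mR_0 = \mR_\bot^{sym}$, and Lemma \ref{lemma:R bot sym} tells us that $\mR_\bot^{sym}$ satisfies the same tame bounds \eqref{stima tame cal R bot}-\eqref{stima tame derivata cal R bot} as $\mR_\bot$. So it suffices to apply Proposition \ref{prop: sintesi linearized} to the various operators appearing in \eqref{1309.3}-\eqref{1309.2}, choosing the parameters $m_1, m_2, \beta_0$ appropriately and checking that the constraint $m_1 + m_2 \leq M - \frac12(\beta_0 + k_0)$ is met thanks to the choice of $M$ in \eqref{relazione mathtt b N}.

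First I would handle the operators in \eqref{1309.3}: here I take $m_1 = \fm$, $m_2 = \fm + 1$, and $\beta_0 = s_0$ (the differentiated operator $\partial_{\vphi_i}^{s_0}$ corresponds to $\beta$ with $|\beta| = s_0$). The constraint becomes $2\fm + 1 \leq M - \frac12(s_0 + k_0)$, which holds by \eqref{relazione mathtt b N} since there $M$ is chosen with a comfortable margin. For the operators in \eqref{1309.2} I take $m_1 = \fm + \mathtt b$, $m_2 = \fm + \mathtt b + 1$, and $\beta_0 = s_0 + \mathtt b$; the constraint becomes $2\fm + 2\mathtt b + 1 \leq M - \frac12(s_0 + \mathtt b + k_0)$, i.e. $M \geq 2\fm + 2\mathtt b + 1 + \frac12(\mathtt b + s_0 + k_0)$, which is exactly \eqref{relazione mathtt b N} (with the integer part and the $+1$ absorbing the fraction). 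In each case Proposition \ref{prop: sintesi linearized} with $\aleph(M, \beta_0) \leq \aleph(M, s_0 + \mathtt b) = \mu(\mathtt b)$ (recall \eqref{definizione bf c (beta)} and that $\aleph(M, \cdot)$ is non-decreasing in $\beta_0$) yields $\mathcal{D}^{k_0}$-tameness with tame constant bounded by $C(S)\,\e \gamma^{-2(M+1)}(1 + \|\fracchi_0\|_{s + \mu(\mathtt b)}^{k_0,\gamma})$; taking the maximum over $i = 1, \dots, \nu$ gives \eqref{costanti resto iniziale} for ${\mathbb M}_0(s)$ and ${\mathbb M}_0(s, \mathtt b)$, and hence for $\mathfrak M_0(s, \mathtt b)$. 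Estimate \eqref{def:costanti iniziali tame} is then the special case $s = s_0$, using \eqref{ansatz I delta} to bound $\|\fracchi_0\|_{s_0 + \mu(\mathtt b)}^{k_0,\gamma} \leq 1$ (which requires $\mu_0 \geq \mu(\mathtt b)$, the standing hypothesis). The Lipschitz-in-$i$ bounds \eqref{costanti variazioni i resto iniziale} follow in the same way from \eqref{stima tame derivata cal R bot}, taking $\beta \leq s_0 + \mathtt b$ and the two choices of $(m_1, m_2)$ above, again with the parameter constraint guaranteed by \eqref{relazione mathtt b N}.

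The only mildly delicate point — and the one I would write out carefully — is the bookkeeping of the index constraints: one must verify that \emph{all} the operators listed in \eqref{1309.3}-\eqref{1309.2}, including the $\varphi$-differentiated ones, fall under the scope of Proposition \ref{prop: sintesi linearized} with a \emph{single} choice of $M$, namely \eqref{relazione mathtt b N}. This is purely arithmetic: the worst case is the operator $\partial_{\vphi_i}^{s_0 + \mathtt b} \langle D \rangle^{\fm + \mathtt b} \mathcal{R}_0 \langle D \rangle^{\fm + \mathtt b + 1}$, for which $\beta_0 = s_0 + \mathtt b$ and $m_1 + m_2 = 2\fm + 2\mathtt b + 1$, and \eqref{relazione mathtt b N} is precisely designed so that $M - \frac12(\beta_0 + k_0) \geq m_1 + m_2$. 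I would also note that Lemma \ref{lemma:R bot sym} is what makes the argument legitimate: the statement of Lemma \ref{lem:tame iniziale} is about $\mathcal{R}_0 = \mathcal{R}_\bot^{sym}$, not $\mathcal{R}_\bot$ itself, but since $\mathcal{R}_\bot^{sym}$ inherits the tame estimates of $\mathcal{R}_\bot$ verbatim, Proposition \ref{prop: sintesi linearized} applies directly. No genuine obstacle is expected; this lemma is a straightforward repackaging of the smoothing-remainder estimates already established, with the role of $M$ being to supply enough regularization so that the finitely many derivative losses $\fm + \mathtt b$ (in $x$) and $s_0 + \mathtt b$ (in $\varphi$) demanded by the KAM scheme are all affordable.
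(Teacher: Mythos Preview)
Your proposal is correct and follows essentially the same approach as the paper: invoke Lemma \ref{lemma:R bot sym} to transfer the bounds from $\mR_\bot$ to $\mR_0 = \mR_\bot^{sym}$, then apply Proposition \ref{prop: sintesi linearized} (i.e.\ \eqref{stima tame cal R bot}--\eqref{stima tame derivata cal R bot}) with the choices $(m_1,m_2) = (\fm,\fm+1)$ and $(m_1,m_2) = (\fm+\mathtt b,\fm+\mathtt b+1)$, verifying via \eqref{relazione mathtt b N} that the constraint $m_1 + m_2 \leq M - \tfrac12(\beta_0 + k_0)$ holds in the worst case $\beta_0 = s_0 + \mathtt b$. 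The paper's proof is more terse but identical in content; your explicit check of the arithmetic and of \eqref{def:costanti iniziali tame} via \eqref{ansatz I delta} is a welcome elaboration.
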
 

\begin{proof}
Estimate \eqref{costanti resto iniziale} follows by Lemma \ref{lemma:R bot sym}, 
by \eqref{stima tame cal R bot} with $m_1 = \fm$, $m_2 = \fm + 1$ for $\mathbb M_0 (s)$, with $m_1 = \fm + \mathtt b$, $m_2 = \fm + \mathtt b + 1$ for $\mathbb M_0(s, \mathtt b)$, and by definitions \eqref{alpha beta}, \eqref{relazione mathtt b N}, \eqref{definizione bf c (beta)}. 
Estimates \eqref{costanti variazioni i resto iniziale} follow similarly, 
applying \eqref{stima tame derivata cal R bot} with the same choices of $m_1, m_2$ and with $s_1 = s_0$.  
\end{proof}

We perform the almost-reducibility of $ {\cal L}_0 $ along the scale 
\be\label{def Nn}
N_{- 1} := 1\,,\quad 
N_\vnu := N_0^{\chi^\vnu} \quad \forall \vnu \geq 0\,, \quad 
\chi = 3/2\, ,
\ee
requiring inductively at each step the second order Melnikov non-resonance conditions in
\eqref{Omega nu + 1 gamma}. Note that the non-diagonal remainder ${\cal R}_\vnu$ in \eqref{cal L nu} is small according to the first inequality in \eqref{stima cal R nu}.

\begin{theorem} \label{iterazione riducibilita}
{\bf (Almost-reducibility of $\mL_0$: KAM iteration)} 
There exists $ \tau_2 := \tau_2 (\t, \nu ) > \t_1 + \mathtt a$ 
(where $\t_1, \mathtt a$ are defined in \eqref{alpha beta})
such that, for all $ S > s_0 $, 
there are $ N_0 := N_0 (S, {\mathtt b}) \in \N$, $\d_0 := \d_0 (S, {\mathtt b})\in (0,1)$ 
such that, if 
\begin{equation}\label{KAM smallness condition1}
\e \g^{-2(M+1)} \leq \d_0, \quad
N_0^{\tau_2}  {\mathfrak M}_0(s_0, {\mathtt b}) \gamma^{- 1} \leq 1
\end{equation}
(see \eqref{def:costanti iniziali tame}), then, for all $ n \in \N $, 
$\vnu = 0, 1 , \ldots, n$: 
\begin{itemize}
\item[${\bf(S1)_{\vnu}}$] 
There exists a real, even and reversible operator 
\begin{equation}\label{cal L nu}
{\cal L}_\vnu:= \Dom {\mathbb I}_\bot + \ii {\cal D}_\vnu + {\cal R}_\vnu\,, \quad 
{\cal D}_\vnu := \begin{pmatrix}
D_\vnu & 0 \\
0 & - D_\vnu 
\end{pmatrix}\,,\quad D_\vnu := {\rm diag}_{j \in {\mathbb S}_0^c} \mu_j^\vnu\,,
\end{equation}
defined for all $ (\om, \h) $ in $\R^{\nu} \times [\h_1,\h_2]$
where $ \mu_j^\vnu $ 
are $ k_0 $ times differentiable functions of the form 
\begin{equation}\label{mu j nu}
\mu_j^\vnu(\omega, \h) := \mu_j^0(\omega, \h) + r_j^\vnu(\omega, \h)
\in \R
\end{equation}
where $\mu_j^0$ are defined in \eqref{op-diago0}, 
satisfying 
\begin{equation}\label{stima rj nu}
\mu_j^\vnu = \mu_{- j}^\vnu\,, \quad i.e. \ r_j^\vnu = r_{- j}^\vnu\,, \quad |r_j^\vnu|^{k_0, \gamma} \leq C(S, \mathtt b) \e \gamma^{- 2(M+1)} |j|^{- 2 \fm}\,,\ \  \forall j \in {\mathbb S}_0^c
\end{equation}
and, for $\vnu \geq 1$,
\be\label{vicinanza autovalori estesi}
|\mu_j^\vnu - \mu_j^{\vnu - 1}|^{k_0, \gamma} 
\leq C |j|^{- 2\fm} 
{\mathfrak M}_{\langle D \rangle^\fm {\cal R}_{\vnu-1} \langle D \rangle^\fm }^\sharp (s_0) 
\leq C( S, \mathtt b) \e \gamma^{- 2 (M+1)} |j|^{- 2\fm}  N_{\vnu - 2}^{- {\mathtt a}}\,. 
\ee
The remainder
\begin{equation}\label{forma cal R nu}
{\cal R}_\vnu := \begin{pmatrix}
R_1^{(\vnu)} & R_2^{(\vnu)} \\
\overline R_2^{(\vnu)} & \overline R_1^{(\vnu)}
\end{pmatrix}
\end{equation}
satisfies
\begin{equation} \label{2701.3}
( R_1^{(\vnu)} )_j^{j'}(\ell) = ( R_2^{(\vnu)} )_j^{j'}(\ell) = 0 
\quad \forall (\ell,j, j'), \ jj'< 0,
\end{equation}
and it is $ {\cal D}^{k_0} $-modulo-tame: 
more precisely, the operators $ \langle D \rangle^\fm {\cal R}_\vnu \langle D \rangle^\fm$ 
and $ \langle \partial_{\vphi, x} \rangle^{\mathtt b} \langle D \rangle^\fm {\cal R}_\vnu \langle D \rangle^\fm $ are $ {\cal D}^{k_0} $-modulo-tame 
and there exists a constant $C_* := C_*(s_0, \mathtt b) > 0$ such that, for any $s \in [s_0, S] $, 
\begin{equation}\label{stima cal R nu}
{\mathfrak M}_{\langle D \rangle^\fm {\cal R}_\vnu \langle D \rangle^\fm}^\sharp (s) \leq 
\frac{C_*{\mathfrak M}_0 (s, {\mathtt b}) }{N_{\vnu - 1}^{ {\mathtt a}}}\,,\quad 
 {\mathfrak M}_{\langle \partial_{\vphi, x} \rangle^{\mathtt b} \langle D \rangle^\fm {\cal R}_\vnu \langle D \rangle^\fm}^\sharp ( s) \leq  C_*{\mathfrak M}_0 (s, {\mathtt b}) N_{\vnu - 1}\,.
\end{equation}
Define the sets $ \tLm_\vnu^\gamma $ by
 $ \tLm_0^\gamma := \mathtt{DC}(2 \gamma,\tau) \times [\h_1, \h_2]$, and, for all $\vnu \geq 1$, 
\begin{align}
\tLm_\vnu^\gamma & :=  \tLm_\vnu^\gamma (i) :=  
\Big\{ \lambda = (\omega, \h) \in \tLm_{\vnu - 1}^\gamma  \ : \ 
\notag \\
&  |\omega \cdot \ell  + \mu_j^{\vnu - 1} - \mu_{j'}^{\vnu - 1}| \geq  
\gamma j^{- \perd} j'^{- \perd} \langle \ell \rangle^{-\tau}  
\ \  \forall |\ell  |, |j - j'| \leq N_{\vnu - 1}, 
\ \ j, j' \in \N^+ \setminus \Splus, 
\ \ (\ell, j, j') \neq (0, j, j),  
 \nonumber\\
&  |\omega \cdot \ell  + \mu_j^{\vnu - 1} + \mu_{j'}^{\vnu - 1}| \geq  
\gamma (\sqrt{j} + \sqrt{j'})\langle \ell \rangle^{-\tau}
\ \ \forall |\ell|, |j - j'| \leq N_{\vnu - 1}\,, 
\ \ j, j' \in \N^+ \setminus \Splus  \Big\} \, . 
\label{Omega nu + 1 gamma}
\end{align}
For $ \vnu \geq 1 $, there exists a real,  even and reversibility preserving map, 
defined for all $ (\om, \h) $ in $\R^{\nu} \times [\h_1,\h_2]$, of the form 
\begin{equation}\label{Psi nu - 1}
 {\Phi}_{\vnu - 1} := {\mathbb I}_\bot + { \Psi}_{\vnu - 1}\,, \quad 
{\Psi}_{\vnu - 1} := \begin{pmatrix}
\Psi_{\vnu - 1, 1} & \Psi_{\vnu - 1, 2} \\
\overline \Psi_{\vnu - 1, 2} & \overline \Psi_{\vnu - 1, 1}
\end{pmatrix}\,
\end{equation}
such that for all $\lm = (\om, \h) \in \tLm_\vnu^\gamma $ 
the following conjugation formula holds:
\be\label{coniugionu+1}
{\cal L}_\vnu = { \Phi}_{\vnu - 1}^{-1} {\cal L}_{\vnu - 1} {\Phi}_{\vnu - 1}\, .
\ee
The operators $ \langle D \rangle^{\pm \fm} \Psi_{\vnu - 1} \langle D \rangle^{\mp \fm} $ 
and $ \langle \partial_{\vphi,x} \rangle^{\mathtt b} \langle D \rangle^{\pm \fm}\Psi_{\vnu - 1} \langle D \rangle^{\mp \fm} $ 
are $ {\cal D}^{k_0} $-modulo-tame on $\R^{\nu} \times [\h_1,\h_2]$ with modulo-tame constants satisfying, for all $s \in [s_0, S] $,
($ \tau_1, \mathtt a $ are defined in \eqref{alpha beta})
\begin{align}\label{tame Psi nu - 1}
&  {\mathfrak M}_{ \langle D \rangle^{ \pm \fm}\Psi_{\vnu - 1} \langle D \rangle^{ \mp \fm}}^\sharp \! (s) \leq 
C(s_0, \mathtt b) \g^{-1} 
N_{\vnu - 1}^{\tau_1} N_{\vnu - 2}^{- \mathtt a} {\mathfrak M}_0 (s, {\mathtt b}) \, , 
\\
& {\mathfrak M}_{\langle \partial_{\vphi,x} \rangle^{\mathtt b}  \langle D \rangle^{ \pm \fm}\Psi_{\vnu - 1} \langle D \rangle^{ \mp \fm} }^\sharp \! (s) \leq 
C(s_0, \mathtt b) \g^{-1} 
N_{\vnu - 1}^{\tau_1} N_{\vnu - 2}  {\mathfrak M}_0 (s, {\mathtt b}) \, , \label{tame Psi nu - 1 vphi x b}\\
& \label{tame Psi nu - 1 senza Dm}
 {\mathfrak M}_{\Psi_{\vnu - 1} }^\sharp \! (s) \leq 
 C(s_0, \mathtt b) \g^{-1} 
N_{\vnu - 1}^{\tau_1} N_{\vnu - 2}^{- \mathtt a} {\mathfrak M}_0 (s, {\mathtt b}) \, .
\end{align}

\item[${\bf(S2)_{\vnu}}$] Let $ i_1(\omega, \h) $, $ i_2(\omega, \h) $ be such that 
${\cal R}_0(i_1)$,  ${\cal R}_0(i_2 )$ satisfy \eqref{def:costanti iniziali tame}. 
Then 
for all $(\omega, \h) \in \tLm_\vnu^{\gamma_1}(i_1) \cap \tLm_\vnu^{\gamma_2}(i_2)$
with $\gamma_1, \gamma_2 \in [\gamma/2, 2 \gamma]$, the following estimates hold
\begin{align}\label{stima R nu i1 i2}
& \! \! \| | \langle D \rangle^\fm\Delta_{12} {\cal R}_\vnu \langle D \rangle^\fm| \|_{{\cal L}(H^{s_0})}  
\lesssim_{S,\mathtt b}  \e \gamma^{- 2(M+1)} N_{\vnu - 1}^{- \mathtt a}
 \| i_1 - i_2\|_{s_0 +  \mu(\mathtt b)}, 
\\ 
& \label{stima R nu i1 i2 norma alta}
\! \! \|  |\langle \partial_{\vphi, x}  \rangle^{\mathtt b} \langle D \rangle^\fm \Delta_{12} {\cal R}_\vnu \langle D \rangle^\fm | \|_{{\cal L}(H^{s_0})}
\lesssim_{S, \mathtt b} \e \gamma^{- 2(M+1)} N_{\vnu - 1} \| i_1 - i_2\|_{ s_0 +  \mu(\mathtt b)}\,.
\end{align}
Moreover for $\vnu \geq 1$, for all $j \in {\mathbb S}^c_0$, 
\begin{align}\label{r nu - 1 r nu i1 i2}
& \big|\Delta_{12}(r_j^\vnu - r_j^{\vnu - 1}) \big| \lesssim_{S, \mathtt b} \e \gamma^{- 2(M+1)} |j|^{- 2\fm}   N_{\vnu - 2}^{- \mathtt a}  \| i_1 - i_2  \|_{ s_0  + \mu(\mathtt b)}\,, \\
& \ |\Delta_{12} r_j^{\vnu}| \lesssim_{S, \mathtt b} \e \gamma^{- 2(M+1)} |j|^{- 2 \fm} \| i_1 - i_2  \|_{ s_0  + \mu(\mathtt b)}\,. \label{r nu i1 - r nu i2}
\end{align}

\item[${\bf(S3)_{\vnu}}$]  Let $i_1$, $i_2$ be like in ${\bf(S2)_{\vnu}}$ 
and $0 < \rho \leq \gamma/2$. 
Then
\begin{equation}\label{inclusione cantor riducibilita S4 nu}
C(S) N_{\vnu-1}^{  (\tau + 1)(4 \perd + 1) } \gamma^{-4\perd } \|i_2 - i_1 \|_{s_0 + \mu(\mathtt b)} 
\leq \rho \quad \Longrightarrow \quad 
\tLm_\vnu^\gamma(i_1) \subseteq \tLm_\vnu^{\gamma - \rho}(i_2) \, . 
\end{equation}
\end{itemize}
\end{theorem}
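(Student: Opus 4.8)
The plan is to prove the three statements $(S1)_\vnu$, $(S2)_\vnu$, $(S3)_\vnu$ simultaneously by induction on $\vnu$, following the KAM reducibility scheme of \cite{BertiMontalto} but propagating the extra space weights $\langle D\rangle^{\pm\fm}$ forced by the loss of space derivatives in the second Melnikov conditions \eqref{Omega nu + 1 gamma}. For the base case $\vnu=0$ one sets $\mu_j^0$ as in \eqref{op-diago0}, $r_j^0:=0$, $\Phi_{-1}:=\mathbb I_\bot$, $\mathcal R_0:=\mathcal R_\bot^{sym}$; property \eqref{2701.3} then coincides with \eqref{2601.1}, reality/evenness/reversibility is Lemma \ref{lemma:R bot sym}, and the modulo-tame bounds \eqref{stima cal R nu} follow by feeding the tame estimates of Lemma \ref{lem:tame iniziale} into Lemma \ref{lem: Initialization astratto}, which makes $\langle D\rangle^\fm\mathcal R_0\langle D\rangle^\fm$ and $\langle\partial_{\vphi,x}\rangle^{\mathtt b}\langle D\rangle^\fm\mathcal R_0\langle D\rangle^\fm$ $\mathcal D^{k_0}$-modulo-tame with constants $\lesssim\mathfrak M_0(s,\mathtt b)$; here $\fm,\mathtt b$ are as in \eqref{alpha beta} and $M$ as in \eqref{relazione mathtt b N}. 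Statement $(S2)_0$ follows in the same way from \eqref{costanti variazioni i resto iniziale}, and $(S3)_0$ is empty since $\tLm_0^\gamma$ does not depend on $i$.

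\textbf{Homological equation.} Given $\mathcal L_\vnu$ as in \eqref{cal L nu} satisfying $(S1)_\vnu$, I would seek $\Phi_\vnu=\mathbb I_\bot+\Psi_\vnu$ of the form \eqref{Psi nu - 1} solving $(\ompaph\Psi_\vnu)+\ii[\mathcal D_\vnu,\Psi_\vnu]+\Pi_{N_\vnu}\mathcal R_\vnu=\widehat{\mathcal R}_\vnu$, where $\Pi_{N_\vnu}$ is the Fourier smoothing of Definition \ref{def:maj} and $\widehat{\mathcal R}_\vnu$ is the resonant (diagonal, $\ell=0$, $j'=j$) part, which is added to $\ii\mathcal D_\vnu$ to define $\mathcal D_{\vnu+1}$. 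On matrix entries this reads, for $|\ell|,|j-j'|\le N_\vnu$, $(\ell,j,j')\ne(0,j,j)$,
\[
(\Psi_{\vnu,1})_j^{j'}(\ell)=-\frac{(R_1^{(\vnu)})_j^{j'}(\ell)}{\ii(\om\cdot\ell+\mu_j^\vnu-\mu_{j'}^\vnu)},\qquad
(\Psi_{\vnu,2})_j^{j'}(\ell)=-\frac{(R_2^{(\vnu)})_j^{j'}(\ell)}{\ii(\om\cdot\ell+\mu_j^\vnu+\mu_{j'}^\vnu)},
\]
and zero otherwise, so that \eqref{coniugionu+1} holds exactly on $\tLm_{\vnu+1}^\gamma$, where by \eqref{Omega nu + 1 gamma} these divisors are $\gtrsim\gamma j^{-\perd}j'^{-\perd}\langle\ell\rangle^{-\tau}$, resp. $\gtrsim\gamma(\sqrt j+\sqrt{j'})\langle\ell\rangle^{-\tau}$. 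Differentiating the divisors in $\lambda$ up to order $k_0$ produces powers $\langle\ell\rangle^{\tau k_0}$ and of $j,j'$, all absorbed -- together with the cut-off $|\ell|\le N_\vnu$ -- into the factor $N_\vnu^{\tau_1}$ and the weights $\langle D\rangle^{\pm\fm}$, $\langle\partial_{\vphi,x}\rangle^{\mathtt b}$ of \eqref{alpha beta}; this yields \eqref{tame Psi nu - 1}--\eqref{tame Psi nu - 1 senza Dm} via \eqref{tame-coeff}. Since $\mathcal R_\vnu$ vanishes for $jj'<0$ by \eqref{2701.3}, so does $\Psi_\vnu$; the parities of $\mathcal D_\vnu,\mathcal R_\vnu$ make $\Phi_\vnu$ real, even and reversibility preserving; and by Corollary \ref{serie di neumann per maggioranti} (together with Lemma \ref{serie di neumann per maggiorantia}) it is invertible under the smallness \eqref{KAM smallness condition1}.

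\textbf{New remainder, convergence, $(S2)$ and $(S3)$.} The conjugated operator is $\mathcal L_{\vnu+1}=\ompaph\mathbb I_\bot+\ii\mathcal D_{\vnu+1}+\mathcal R_{\vnu+1}$ with $r_j^{\vnu+1}-r_j^\vnu=-\ii(R_1^{(\vnu)})_j^j(0)\in\R$ (real by reversibility), which gives \eqref{mu j nu}, \eqref{stima rj nu}, \eqref{vicinanza autovalori estesi} since $|j|^{2\fm}(R_1^{(\vnu)})_j^j(0)$ is controlled by $\mathfrak M^\sharp_{\langle D\rangle^\fm\mathcal R_\vnu\langle D\rangle^\fm}(s_0)$, and $r_j^{\vnu+1}=r_{-j}^{\vnu+1}$ by evenness. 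The remainder $\mathcal R_{\vnu+1}$ is a sum of $\Pi_{N_\vnu}^\bot\mathcal R_\vnu$ and of terms bilinear in $\Psi_\vnu,\mathcal R_\vnu$ (from $\Phi_\vnu^{-1}(\mathcal R_\vnu\Psi_\vnu-\Psi_\vnu\mathcal R_\vnu)$ and $(\Phi_\vnu^{-1}-\mathbb I_\bot)((\ompaph\Psi_\vnu)+\ii[\mathcal D_\vnu,\Psi_\vnu])$); estimating the two weighted operators with Lemma \ref{lemma:smoothing-tame} (for $\Pi_{N_\vnu}^\bot$, which gains $N_\vnu^{-\mathtt b}$) and Lemma \ref{interpolazione moduli parametri} (for the bilinear, quadratically small, terms) gives, with $\mathfrak M_\vnu:=\mathfrak M^\sharp_{\langle D\rangle^\fm\mathcal R_\vnu\langle D\rangle^\fm}$ and $\mathfrak M_\vnu^{\mathtt b}:=\mathfrak M^\sharp_{\langle\partial_{\vphi,x}\rangle^{\mathtt b}\langle D\rangle^\fm\mathcal R_\vnu\langle D\rangle^\fm}$,
\[
\mathfrak M_{\vnu+1}(s)\lesssim_{s_0,\mathtt b} N_\vnu^{-\mathtt b}\,\mathfrak M_\vnu^{\mathtt b}(s)+\gamma^{-1}N_\vnu^{\tau_1}\mathfrak M_\vnu(s_0)\,\mathfrak M_\vnu(s),
\]
\[
\mathfrak M^{\mathtt b}_{\vnu+1}(s)\lesssim_{s_0,\mathtt b}\mathfrak M_\vnu^{\mathtt b}(s)+\gamma^{-1}N_\vnu^{\tau_1}\bigl(\mathfrak M_\vnu^{\mathtt b}(s_0)\,\mathfrak M_\vnu(s)+\mathfrak M_\vnu(s_0)\,\mathfrak M_\vnu^{\mathtt b}(s)\bigr).
\]
With $N_\vnu=N_0^{\chi^\vnu}$, $\chi=3/2$, the choices $\mathtt a\ge3\tau_1$, $\mathtt b=[\mathtt a]+2$ of \eqref{alpha beta}, $\tau_2>\tau_1+\mathtt a$, and $N_0$ large, $\e\gamma^{-2(M+1)}$ small as in \eqref{KAM smallness condition1}, a routine induction propagates $\mathfrak M_\vnu(s)\le C_*\mathfrak M_0(s,\mathtt b)N_{\vnu-1}^{-\mathtt a}$ and $\mathfrak M_\vnu^{\mathtt b}(s)\le C_*\mathfrak M_0(s,\mathtt b)N_{\vnu-1}$, i.e. \eqref{stima cal R nu} at $\vnu+1$, closing $(S1)_{\vnu+1}$. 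For $(S2)_{\vnu+1}$ one applies $\Delta_{12}$ to the whole construction: on $\tLm_{\vnu+1}^{\gamma_1}(i_1)\cap\tLm_{\vnu+1}^{\gamma_2}(i_2)$ both families of divisors are good, so the same scheme propagates \eqref{stima R nu i1 i2}--\eqref{r nu i1 - r nu i2} with $\|i_1-i_2\|_{s_0+\mu(\mathtt b)}$ in place of the size, using \eqref{costanti variazioni i resto iniziale} and $(S2)_\vnu$. For $(S3)_{\vnu+1}$, by induction the only new constraints in $\tLm_{\vnu+1}^\gamma(i)$ relative to $\tLm_\vnu^\gamma(i)$ involve $\mu_j^\vnu(i)$; bounding $|\Delta_{12}(\mu_j^\vnu-\mu_{j'}^\vnu)|$ by $(S2)_\vnu$ and \eqref{stime lambda 1}, and using $|\ell|,|j-j'|\le N_\vnu$, the difference times $j^{\perd}j'^{\perd}\langle\ell\rangle^{\tau}\gamma^{-1}$ is $\le\rho$ under \eqref{inclusione cantor riducibilita S4 nu}, so the $\gamma$-conditions for $i_1$ imply the $(\gamma-\rho)$-conditions for $i_2$.

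\textbf{Main obstacle.} The crux is the simultaneous bookkeeping of the two modulo-tame constants in \eqref{stima cal R nu}: one must verify that the quadratic gain in the ``low'' constant $\mathfrak M_\vnu(s)$ outweighs the loss of derivatives in \emph{both} $\vphi$ and $x$ produced by the very weak divisors \eqref{Omega nu + 1 gamma} -- this is exactly why the ``high'' constant $\mathfrak M_\vnu^{\mathtt b}(s)$ is allowed to grow like $N_\vnu$ rather than stay bounded, and why the space smoothings $\Pi_{N_\vnu}$ (acting also on the $x$-indices $|j-j'|\le N_\vnu$) together with the weights $\langle D\rangle^{\pm\fm}$ must be carried through every commutator, Neumann series and composition. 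Making the exponents $\mathtt a,\mathtt b,\tau_1,\tau_2,\fm$ of \eqref{alpha beta} and the regularization order $M$ of \eqref{relazione mathtt b N} mutually consistent, so that the induction closes uniformly for all $s\in[s_0,S]$ (and Proposition \ref{prop: sintesi linearized} supplies, via Lemma \ref{lem:tame iniziale}, initial bounds with enough room), is the genuinely delicate part; the rest is a now-standard, if lengthy, KAM iteration.
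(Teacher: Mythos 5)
Your proposal follows essentially the same KAM iteration scheme as the paper: same homological equations \eqref{shomo1}--\eqref{shomo2}, same choice of resonant part $[\mathcal R]$, same bookkeeping of the two modulo-tame constants $\mathfrak M_\vnu$, $\mathfrak M_\vnu^{\mathtt b}$ via Lemmata \ref{interpolazione moduli parametri}, \ref{lemma:smoothing-tame}, \ref{serie di neumann per maggiorantia}, and the same role of the exponent relations \eqref{alpha beta} in closing the quadratic iteration. Your iterative inequalities for $\mathfrak M_{\vnu+1}$ and $\mathfrak M^{\mathtt b}_{\vnu+1}$ coincide with the paper's \eqref{schema quadratico tame} and \eqref{M+Ms} (your version of the latter has one redundant cross term, which is harmless). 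Your reorganization of the new remainder as a smoothing piece plus quadratic pieces is algebraically equivalent to \eqref{new-diag-new-rem}.

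There is, however, a genuine gap in your treatment of $(S3)_{\vnu+1}$. You claim that ``using $|\ell|,|j-j'|\leq N_\vnu$, the difference times $j^\perd j'^\perd\langle\ell\rangle^\tau\gamma^{-1}$ is $\leq\rho$ under \eqref{inclusione cantor riducibilita S4 nu}.'' But $|j-j'|\leq N_\vnu$ does \emph{not} bound $j^\perd j'^\perd$: the indices $j,j'$ can be arbitrarily large while staying within distance $N_\vnu$, and by \eqref{legno3} the difference $|\Delta_{12}(\mu_j^\vnu-\mu_{j'}^\vnu)|$ is only bounded by $N_\vnu\,\|i_1-i_2\|/\min\{\sqrt j,\sqrt{j'}\}$, which does not compensate for $j^\perd j'^\perd$ when both indices are large. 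The missing ingredient is the preliminary claim the paper proves at the start of $(S3)_{n+1}$: if $\min\{j,j'\}\geq C_0 N_\vnu^{2(\tau+1)}\gamma^{-2}$, then the second-Melnikov condition in \eqref{Omega nu + 1 gamma} follows \emph{automatically} from the Diophantine property $\om\in\mathtt{DC}(2\gamma,\tau)$, the asymptotics \eqref{1309.4}, and the a priori bound \eqref{deep purple 0} on $\mathfrak r_j^\vnu$. Once that is known, only indices with $\max\{j,j'\}\lesssim N_\vnu^{2(\tau+1)}\gamma^{-2}$ survive (by $|j-j'|\leq N_\vnu$, cf.\ \eqref{j j' max min j - j'0}), so $j^\perd j'^\perd\lesssim N_\vnu^{4\perd(\tau+1)}\gamma^{-4\perd}$, which is exactly what produces the exponents $(\tau+1)(4\perd+1)$ and $\gamma^{-4\perd}$ in \eqref{inclusione cantor riducibilita S4 nu}. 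Without this reduction, your $(S3)$ argument cannot close. A second, more minor, omission: the theorem asserts that $\mathcal L_\vnu$, $\mu_j^\vnu$, $\Psi_{\vnu-1}$ are defined on the \emph{whole} parameter space $\R^\nu\times[\h_1,\h_2]$, so the small divisors $(\om\cdot\ell+\mu_j^\vnu\mp\mu_{j'}^\vnu)^{-1}$ must be extended past the Cantor set (the paper does this via the cut-off of Lemma \ref{lemma:cut-off sd}); you allude to differentiating the divisors in $\lambda$ but never state how the extension is performed, and all the $k_0$-weighted estimates \eqref{tame Psi nu - 1}--\eqref{tame Psi nu - 1 senza Dm} presuppose it.
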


We make some comments: 

\begin{enumerate}
\item 
Note that in \eqref{r nu - 1 r nu i1 i2}-\eqref{r nu i1 - r nu i2}  
we do not need norms $ | \ |^{k_0, \gamma}$.
This is the reason why we did not estimate the derivatives with respect to $(\om, \h)$ 
of the operators $ \Delta_{12} {\cal R} $ in the previous sections.

\item
Since the second Melnikov conditions 
$ |\omega \cdot \ell  + \mu_j^{\vnu - 1} - \mu_{j'}^{\vnu - 1}| \geq  
\gamma |j|^{- \perd} |j'|^{- \perd } \langle \ell \rangle^{-\tau} $
lose regularity both in $\ph$ and in $x$, 
for the convergence of the reducibility scheme 
we use the smoothing operators $\Pi_N$, defined in \eqref{proiettore-oper}, 
which regularize in both $\ph$ and $x$. 
As a consequence, the natural smallness condition to impose at the zero step of the recursion is \eqref{stima cal R nu} at $\vnu = 0$ that we verify in the step ${\bf(S1)}_{0}$ thanks to Lemma \ref{lem: Initialization astratto} and \eqref{costanti resto iniziale}. 

\item
An important point of Theorem \ref{iterazione riducibilita} is to require 
bound \eqref{KAM smallness condition1} for $ {\mathfrak M}_0 (s_0, {\mathtt b}) $ 
only in low norm, which is verified in Lemma \ref{lem:tame iniziale}. 
On the other hand, Theorem \ref{iterazione riducibilita} 
provides the smallness \eqref{stima cal R nu}  
of the tame constants $ {\mathfrak M}_{\langle D \rangle^\fm {\cal R}_\vnu \langle D \rangle^\fm }^\sharp (s) $ and proves that 
$ {\mathfrak M}_{\langle \partial_{\vphi, x} \rangle^{\mathtt b} \langle D \rangle^\fm {\cal R}_\vnu \langle D \rangle^\fm }^\sharp  (s, {\mathtt b})  $, $ \vnu \geq 0 $, do not diverge too much. 
\end{enumerate}

Theorem \ref{iterazione riducibilita} implies that the invertible operator
\be\label{defUn}
{\cal U}_n := { \Phi}_0 \circ \ldots \circ {\Phi}_{n-1} \,, \quad n \geq 1,
\ee
has almost-diagonalized  $ {\cal L}_0 $, i.e. \eqref{cal L infinito} below holds. 
As a corollary, we deduce the following theorem.

\begin{theorem}\label{Teorema di riducibilita} {\bf (Almost-reducibility of $\mL_0$)}
Assume \eqref{ansatz I delta} with $ \mu_0 \geq   \mu (\mathtt b)$. 
Let $\mR_0 = \mR_\bot^\sym$, $\mL_0 = \mL_\bot^\sym$ in \eqref{2701.1}-\eqref{2601.1}.
For all $S > s_0$ there exists $N_0 := N_0(S, \mathtt b) > 0$, $ \d_0 := \delta_0(S)  >  0 $ 
such that, if the smallness condition 
\begin{equation}\label{ansatz riducibilita}
N_0^{\tau_2} \e \gamma^{- (2M + 3)}  \leq \d_0  
\end{equation}
holds, where the constant $ \tau_2 := \tau_2 (\tau, \nu )  $ is defined in Theorem \ref{iterazione riducibilita} and $M$ is defined in \eqref{relazione mathtt b N}, 
then, for all $ n \in \N$, 
for all $ \lambda = (\omega, \h) \in \R^{\nu} \times [\h_1,\h_2]$, 
the operator $ {\cal  U}_n$ in \eqref{defUn} and its inverse $ {\cal U}_n^{- 1} $ 
are real, even, reversibility preserving, and $ {\cal D}^{k_0}$-modulo-tame,
with 
\begin{equation}\label{stima Phi infinito}
{\mathfrak M}^\sharp_{{\cal U}_{n}^{\pm 1} -  {\mathbb I}_\bot} (s) 
\lesssim_S \e \g^{-(2M + 3)} N_0^{\tau_1} 
\big(1 +  \| \fracchi_0 \|_{s  + \mu (\mathtt b)}^{k_0, \gamma} \big) 
\quad \forall s_0 \leq s \leq S \, , 
\end{equation} 
where $\tau_1$ is defined in \eqref{alpha beta}.  

The operator $ {\cal L}_n = \Dom {\mathbb I}_\bot + \ii {\cal D}_n + {\cal R}_n$ 
defined in \eqref{cal L nu} (with $\vnu = n$)
is real, even and reversible. 
The operator $ \langle D \rangle^\fm {\cal R}_n \langle D \rangle^\fm$ is $ {\cal D}^{k_0} $-modulo-tame, with 
\be\label{stima resto operatore quasi diagonalizzato}
{\mathfrak M}^\sharp_{\langle D \rangle^\fm {\cal R}_n \langle D \rangle^\fm}(s)  
\lesssim_S  \e \gamma^{- 2(M+1)} N_{n - 1}^{- {\mathtt a}}
\big(1 +  \| \fracchi_0 \|_{s   + \mu (\mathtt b)}^{k_0, \gamma}\big) 
\quad \forall s_0 \leq s \leq S\,.
\ee
Moreover, for all $\lm = (\om, \h)$ in the set 
\be\label{Cantor set}
{ \tLm}_{n}^{ \gamma} 
= \bigcap_{\vnu = 0}^{n} \tLm_\vnu^\gamma
\ee
defined in \eqref{Omega nu + 1 gamma},
the following conjugation formula holds: 
\begin{equation}\label{cal L infinito}
{\cal  L}_n = {\cal  U}_n^{- 1} {\cal L}_0 {\cal  U}_n \,.
\end{equation}
\end{theorem}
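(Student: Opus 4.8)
\textbf{Proof plan for Theorem \ref{Teorema di riducibilita}.}
The strategy is to deduce this statement as a corollary of the iterative Theorem \ref{iterazione riducibilita}. First I would verify that the hypotheses of Theorem \ref{iterazione riducibilita} are met: the smallness condition \eqref{KAM smallness condition1} follows from \eqref{ansatz riducibilita} together with the bound \eqref{def:costanti iniziali tame} on ${\mathfrak M}_0(s_0,{\mathtt b})$ proved in Lemma \ref{lem:tame iniziale}, since $N_0^{\tau_2}{\mathfrak M}_0(s_0,{\mathtt b})\gamma^{-1}\lesssim_S N_0^{\tau_2}\e\gamma^{-(2M+3)}\leq\d_0$; also the ansatz \eqref{ansatz I delta} with $\mu_0\geq\mu({\mathtt b})$ is exactly what is required to invoke Lemma \ref{lem:tame iniziale} and hence $(S1)_0$. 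Then $(S1)_\vnu$ for all $\vnu\leq n$ provides the even, reversible operators ${\cal L}_\vnu$ in \eqref{cal L nu}, the conjugation maps ${\Phi}_{\vnu-1}$ in \eqref{Psi nu-1}, and the conjugation formula \eqref{coniugionu+1} valid on $\tLm_\vnu^\gamma$. Composing these gives $ {\cal U}_n = {\Phi}_0\circ\cdots\circ{\Phi}_{n-1}$, and iterating \eqref{coniugionu+1} on the intersection $\tLm_n^\gamma=\bigcap_{\vnu=0}^n\tLm_\vnu^\gamma$ yields \eqref{cal L infinito}. Since each ${\Phi}_{\vnu-1}$ is real, even and reversibility preserving (by $(S1)_\vnu$), and these properties are preserved under composition and inversion, ${\cal U}_n^{\pm1}$ inherit them; likewise ${\cal L}_n$ is real, even and reversible, being a composition of such operators.

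The quantitative estimates \eqref{stima Phi infinito} and \eqref{stima resto operatore quasi diagonalizzato} require a little more work. For \eqref{stima resto operatore quasi diagonalizzato} I would simply specialize the first inequality in \eqref{stima cal R nu} at $\vnu=n$ and $s$, using \eqref{costanti resto iniziale} to replace ${\mathfrak M}_0(s,{\mathtt b})$ by $\e\gamma^{-2(M+1)}(1+\|\fracchi_0\|_{s+\mu({\mathtt b})}^{k_0,\gamma})$ and absorbing the factor $C_*$ into $\lesssim_S$. For \eqref{stima Phi infinito}, I would write ${\cal U}_n-{\mathbb I}_\bot$ as a telescoping sum/product built from the $\Psi_{\vnu-1}$ and use the composition rule for ${\cal D}^{k_0}$-modulo-tame operators (Lemma \ref{interpolazione moduli parametri}, estimate \eqref{modulo tame constant for composition}), together with the bound \eqref{tame Psi nu-1 senza Dm} on ${\mathfrak M}_{\Psi_{\vnu-1}}^\sharp(s)$. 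The key point is that the series $\sum_{\vnu\geq1}N_{\vnu-1}^{\tau_1}N_{\vnu-2}^{-\mathtt a}$ converges, because $\mathtt a>\tau_1+\mathtt a>\chi\tau_1$ by the definition \eqref{alpha beta} (more precisely, since $N_\vnu=N_0^{\chi^\vnu}$ grows super-exponentially and $\mathtt a\geq 3\tau_1$, the exponents $\chi^{\vnu-1}\tau_1-\chi^{\vnu-2}\mathtt a$ are eventually very negative); the sum is therefore dominated by its first term, of order $N_0^{\tau_1}$. This yields ${\mathfrak M}^\sharp_{{\cal U}_n^{\pm1}-{\mathbb I}_\bot}(s)\lesssim_S\g^{-1}N_0^{\tau_1}{\mathfrak M}_0(s,{\mathtt b})$; using \eqref{costanti resto iniziale} to bound ${\mathfrak M}_0(s,{\mathtt b})$ and noting $\g^{-1}\cdot\e\gamma^{-2(M+1)}=\e\gamma^{-(2M+3)}$ gives exactly \eqref{stima Phi infinito}. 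The smallness \eqref{ansatz riducibilita} ensures that the Neumann-type argument needed to control ${\cal U}_n^{-1}$ (via Lemma \ref{serie di neumann per maggiorantia} or a direct product estimate) applies uniformly in $n$.

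The main obstacle I anticipate is not any single estimate but the bookkeeping of the losses $N_{\vnu-1}^{\tau_1}$ and the super-exponential gains $N_{\vnu-2}^{-\mathtt a}$ along the infinite composition: one must check that the constant $C(s_0,{\mathtt b})$ in \eqref{tame Psi nu-1 senza Dm}, raised to the power $n$ through $n$-fold composition, does not blow up — this is exactly why \eqref{modulo tame constant for composition} is structured so that only a \emph{low-norm} factor ${\mathfrak M}_{\Psi_{\vnu-1}}^\sharp(s_0)$ appears with the high power, and that factor carries the small constant $N_{\vnu-2}^{-\mathtt a}\ll1$. So the geometric series in the composition constants converges precisely because of the gap between $\tau_1$ and $\mathtt a=\max\{3\tau_1,\chi(\tau+1)(4\mathtt d+1)+1\}$ fixed in \eqref{alpha beta}. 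Once this convergence is in hand, passing the identity \eqref{coniugionu+1} to the $n$-fold composite and restricting to $\tLm_n^\gamma$ is immediate, and all the structural properties (real, even, reversible/reversibility preserving) pass trivially to the limit operators since they are closed under composition and inverse.
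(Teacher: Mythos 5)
Your proposal is correct and follows essentially the same route as the paper's proof: verify \eqref{KAM smallness condition1} via \eqref{costanti resto iniziale}, \eqref{ansatz I delta}, \eqref{ansatz riducibilita}; read off \eqref{stima resto operatore quasi diagonalizzato} from \eqref{stima cal R nu} with $\vnu=n$ and \eqref{costanti resto iniziale}; and obtain \eqref{stima Phi infinito} by estimating each $\Phi_\vnu^{\pm1}-\mathbb I_\bot$ via \eqref{tame Psi nu - 1 senza Dm} and Lemma \ref{serie di neumann per maggiorantia}, then composing with Lemma \ref{interpolazione moduli parametri} and summing the geometric-type series $N_{\vnu-1}^{\tau_1}N_{\vnu-2}^{-\mathtt a}=N_0^{\chi^{\vnu-2}(\chi\tau_1-\mathtt a)}$, whose convergence (since $\mathtt a\geq 3\tau_1>\chi\tau_1$) is dominated by the $\vnu=1$ term $N_0^{\tau_1}$ because $N_{-1}=1$. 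The only blemish is the miswritten inequality ``$\mathtt a>\tau_1+\mathtt a>\chi\tau_1$'', which is clearly a slip and is immediately corrected by your parenthetical about the super-exponential decay of $\chi^{\vnu-1}\tau_1-\chi^{\vnu-2}\mathtt a$; the paper simply defers this last computation to the analogous Theorem 7.5 in \cite{BertiMontalto}, whereas you spell it out.
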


\begin{proof}
Assumption \eqref{KAM smallness condition1} of Theorem \ref{iterazione riducibilita} 
holds by \eqref{costanti resto iniziale}, \eqref{ansatz I delta} 
with $ \mu_0 \geq \mu (\mathtt b)$,  
and \eqref{ansatz riducibilita}.
Estimate \eqref{stima resto operatore quasi diagonalizzato} follows by 
\eqref{stima cal R nu} (for $\vnu = n$) and \eqref{costanti resto iniziale}.
It remains to prove \eqref{stima Phi infinito}.  
The estimates of $\mathfrak M^\sharp_{\Phi_\vnu^{\pm 1} - \mathbb I_\bot}(s)$, $\vnu = 0, \ldots, n-1$, are obtained by using \eqref{tame Psi nu - 1 senza Dm}, \eqref{KAM smallness condition1}
and Lemma \ref{serie di neumann per maggiorantia}.
Then the estimate of ${\cal U}_n^{\pm 1} - {\mathbb I}_\bot$ follows
as in the proof of Theorem 7.5 in \cite{BertiMontalto}, 
using Lemma \ref{interpolazione moduli parametri}.
\end{proof}

\subsection{Proof of Theorem \ref{iterazione riducibilita}}

{\bf Initialization.}

\noindent
{\sc Proof of ${\bf(S1)}_{0}$}. The real, even and 
reversible operator $ {\cal L}_0 $ defined in \eqref{defL0-red}-\eqref{defRQ0} has the form \eqref{cal L nu}-\eqref{mu j nu} for $ \vnu = 0 $ with $r_j^0(\omega, \h) = 0 $, 
and \eqref{stima rj nu} holds trivially. 
Moreover \eqref{2701.3} is satisfied for $ \vnu = 0 $ 
by the definition of $ \mR_0 := \mR_\bot^{sym} $ in \eqref{2601.1}. 
The estimate \eqref{stima cal R nu} for $ \vnu = 0 $ follows by applying Lemma \ref{lem: Initialization astratto} to $A \in  \{ R_1^{(0)}, R_2^{(0)} \}$ and by recalling  definition of $\mathfrak{M}_0(s,\ttb)$ in \eqref{costanti resto iniziale}.
\\[1mm]
{\sc Proof of ${\bf(S2)}_0$}. The proof of \eqref{stima R nu i1 i2}, \eqref{stima R nu i1 i2 norma alta} for $\vnu = 0$ follows similarly using Lemma \ref{lem: Initialization astratto} and \eqref{costanti variazioni i resto iniziale}.   
\\[1mm]
{\sc Proof of ${\bf(S3)}_{0}$}. It is trivial because, by definition, 
$\mathtt{\Lambda}_0^\g = \mathtt{DC}(2\g,\t) \times [\h_1, \h_2]
\subseteq \mathtt{DC}(2\g-2\rho,\t) \times [\h_1, \h_2] 
= \mathtt{\Lambda}_0^{\g-\rho}$.

\subsubsection{Reducibility step}

In this section we describe the inductive step and show how to define ${\cal L}_{\vnu + 1}$ 
(and ${ \Psi}_\vnu$, ${ \Phi}_\vnu$, etc). 
To simplify the notation we drop the index $\vnu$ and write $+$ instead of $\vnu + 1$, so that we write
${\cal  L} := {\cal  L}_{\vnu } $, $ {\cal D} := {\cal D}_\vnu $, $D := D_\vnu$, $ \mu_j = \mu_j^\vnu $, $ {\cal R} := {\cal R}_\vnu $, $R_1 := R_1^{(\vnu)}, R_2 := R_2^{(\vnu)}$, 
and $\mL_+ := \mL_{\vnu+1}$, $\mD_+ := \mD_{\vnu+1}$, and so on. 

We conjugate the operator $ {\cal L} $ in \eqref{cal L nu} by a  transformation of the form (see \eqref{Psi nu - 1})
\be\label{defPsi}
{\Phi} := {\mathbb I}_\bot + {\Psi}\,, \qquad {\Psi} := \begin{pmatrix}
\Psi_{1} & \Psi_2 \\
\overline \Psi_2 & \overline \Psi_1
\end{pmatrix}.
\ee
We have 
\be\label{Ltra1}
{\cal L} {\Phi} = {\Phi} ( \Dom {\mathbb I}_\bot + \ii {\cal D} ) 
+ ( \om \cdot \pa_\ph \Psi + \ii [{\cal D}, {\Psi}] + \Pi_{N} {\cal R} ) 
+ \Pi_{N}^\bot {\cal R}  + {\cal R} {\Psi} 
\ee
where the projector $ \Pi_N $ is defined in \eqref{proiettore-oper}, 
$ \Pi_N^\bot := {\mathbb I}_2 - \Pi_N$, 
and $\om \cdot \pa_\ph \Psi$
is the commutator $[\Dom , \Psi]$.
We want to solve the homological equation 
\begin{equation}\label{equazione omologica}
\Dom{ \Psi} + \ii [{\cal D}, { \Psi}] + \Pi_{N} {\cal R}  = [{\cal R}] 
\ee
where
\be\label{def[R]}
[{\cal R}] := \begin{pmatrix}
[{R_1}] & 0 \\
0 & [\overline{R}_1] 
\end{pmatrix}\, , \quad  [{R_1}] := \diag_{j \in {\mathbb S}_0^c} (R_1)_j^j(0)\,.
\ee
By \eqref{cal L nu}, \eqref{forma cal R nu}, \eqref{defPsi}, 
equation \eqref{equazione omologica} is equivalent to the two scalar  homological equations 
\be\label{homo1r}
\Dom \Psi_1 + \ii [{ D}, \Psi_1] + \Pi_{N} { R_1} = [{ R_1}]  \, , \qquad 
\Dom \Psi_2 + \ii ( { D} \Psi_2 + \Psi_2 { D} ) + \Pi_{N} {R_2} = 0 \,
\ee
(note that $[R_1] = [ \Pi_N R_1]$).
We choose the solution of \eqref{homo1r} given by 
\begin{align}\label{shomo1}
& (\Psi_1)_j^{j'}(\ell) := \begin{cases}
- \dfrac{(R_1)_j^{j'}(\ell)}{\ii (\omega \cdot \ell  + \mu_j - \mu_{j'})} 
\quad & 
\forall (\ell ,  j,  j') \neq (0, j,  \pm j) \, , \, | \ell  |, |j - j'| \leq N,  
\\
\, 0 & \text{otherwise;}
\end{cases} 
\\
& \label{shomo2}
(\Psi_2)_j^{j'}(\ell) := \begin{cases}
- \dfrac{(R_2)_j^{j'}(\ell)}{\ii (\omega \cdot \ell  +\mu_j + \mu_{j'})} 
\quad & 
\forall (\ell  , j,  j') \in \Z^{ \nu} \times {\mathbb S}_0^c \times {\mathbb S}_0^c \,, \,
| \ell |, |j - j'| \leq N,  
\\
\, 0 & \text{otherwise.}
\end{cases}
\end{align}
Note that, since $ \mu_j = \mu_{- j} $ for all $ j \in {\mathbb S}_0^c $ (see \eqref{stima rj nu}),
the denominators in \eqref{shomo1}, \eqref{shomo2} are different from zero 
for $ (\om, \h) \in \tLm_{\vnu+1}^\gamma $ (see \eqref{Omega nu + 1 gamma}
with  $ \vnu \rightsquigarrow \vnu+1 $) and the maps $ \Psi_1 $, $ \Psi_2 $ 
are well defined on $ \tLm_{\vnu+1}^\gamma $.
Also note that the term $[R_1]$ in \eqref{def[R]}  
(which is the term we are not able to remove by conjugation with $\Psi_1$
in \eqref{homo1r}) 
contains only the diagonal entries $j' = j$ and not the anti-diagonal ones $j' = -j$, 
because $\mR$ is zero on $j' = -j$ by \eqref{2701.3}.
Thus, by construction,
\begin{equation} \label{2701.4}
(\Psi_1)_j^{j'}(\ell) = (\Psi_2)_j^{j'}(\ell) = 0 \quad \forall (\ell, j, j'), \ jj'< 0 \, .
\end{equation}

\begin{lemma} {\bf (Homological equations)}\label{Homological equations tame}
The operators
$ \Psi_1 $, $ \Psi_2 $ defined in \eqref{shomo1}, \eqref{shomo2} (which, for all  $\lm \in \tLm_{\vnu+1}^\gamma $,  solve the homological equations 
 \eqref{homo1r}) admit an extension to the whole parameter space $\R^{\nu} \times [\h_1,\h_2]$. Such extended operators are $ {\cal D}^{k_0} $-modulo-tame with modulo-tame constants satisfying
\begin{align}
& \label{stima tame Psi} 
{\mathfrak M}_{\langle D \rangle^{\pm \fm}\Psi \langle D \rangle^{\mp \fm}}^\sharp (s) \lesssim_{k_0} N^{\tau_1} \g^{-1} {\mathfrak M}^\sharp_{\langle D \rangle^\fm {\cal R} \langle D \rangle^\fm} (s), 
\\
& 
{\mathfrak M}_{\langle \partial_{\vphi, x} \rangle^{\mathtt b} \langle D \rangle^{\pm \fm}\Psi \langle D \rangle^{\mp \fm}}^\sharp (s)
 \lesssim_{k_0} N^{\tau_1} \g^{-1} {\mathfrak M}^\sharp_{\langle \partial_{\vphi, x}  \rangle^{\mathtt b} \langle D \rangle^\fm {\cal R} \langle D \rangle^\fm} (s) 
\label{stima tame Psi 2001.3}
\\
& 
\label{stima tame Psi senza Dm} {\mathfrak M}_{\Psi }^\sharp (s) \lesssim_{k_0} N^{\tau_1} \g^{-1} {\mathfrak M}^\sharp_{ {\cal R} } (s)
\end{align}
where $\tau_1, {\mathtt b}, \fm$ are defined in \eqref{alpha beta}.

Given $i_1 $, $i_2 $, let $ \Delta_{12} \Psi := \Psi (i_2) - \Psi (i_1) $. 
If $\g_1, \g_2 \in [\gamma/2, 2 \gamma]$, 
then, for all $(\omega, \h) \in \tLm_{\vnu + 1}^{\gamma_1}(i_1) \cap \tLm_{\vnu + 1}^{\gamma_2}(i_2)$,
\begin{align}
\| \, |\langle D \rangle^{\pm \fm}\Delta_{12} \Psi \langle D \rangle^{\mp \fm} | \, \|_{{\cal L}(H^{s_0})} & \lesssim 
N^{2 \tau + 2 \perd + \frac12} \gamma^{- 1} \big( \| \, | \langle D \rangle^\fm {\cal R}(i_2) \langle D \rangle^\fm| \, \|_{{\cal L}(H^{s_0})} \| i_1 - i_2 \|_{ s_0 + \mu(\mathtt b)} 
\nonumber \\
& \quad \, + \| \, | \langle D \rangle^\fm \Delta_{1 2} {\cal R} \langle D \rangle^\fm | \, \|_{{\cal L}(H^{s_0})} \big) \, , \label{stime delta 12 Psi bassa}
\\  
\| \, |\langle \partial_{\vphi, x} \rangle^{\mathtt b} \langle D \rangle^{\pm \fm}\Delta_{12} \Psi \langle D \rangle^{\mp \fm} | \, \|_{{\cal L}(H^{s_0})}
& \lesssim 
N^{2 \tau + 2 \perd + \frac12} \gamma^{- 1} \big( \| \, |\langle \partial_{\vphi, x} \rangle^{\mathtt b}\langle D \rangle^\fm {\cal R}(i_2) \langle D \rangle^\fm | \, \|_{{\cal L}(H^{s_0})} \| i_1 - i_2 \|_{s_0 + \mu(\mathtt b)} 
\nonumber \\
& \quad \, + \| \, | \langle \partial_{\vphi, x} \rangle^{\mathtt b} \langle D \rangle^\fm \Delta_{12} {\cal R} \langle D \rangle^\fm | \, \|_{{\cal L}(H^{s_0})}\big)\,. 
\label{stime delta 12 Psi alta}
\end{align}
Moreover  ${\Psi}$ is real, even and reversibility preserving. 
\end{lemma}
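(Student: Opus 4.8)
The strategy is the standard one for KAM homological equations with loss of derivatives, but carried out in the conjugated norm $\langle D\rangle^{\pm\fm}\,\cdot\,\langle D\rangle^{\mp\fm}$ in order to track the regularizing effect of $\mR$. First I would read off from \eqref{shomo1}--\eqref{shomo2} the matrix entries of the conjugated operators $\langle D\rangle^{\pm\fm}\Psi_1\langle D\rangle^{\mp\fm}$ and $\langle D\rangle^{\pm\fm}\Psi_2\langle D\rangle^{\mp\fm}$, which are obtained from the corresponding entries of $\langle D\rangle^{\fm}\mR\langle D\rangle^{\fm}$ by multiplying by the small divisor factor $(\ii(\om\cdot\ell+\mu_j\mp\mu_{j'}))^{-1}$ on the region $|\ell|,|j-j'|\le N$ and by zero elsewhere. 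On the Cantor set $\tLm_{\vnu+1}^\gamma$ these divisors satisfy, by \eqref{Omega nu + 1 gamma}, the lower bounds $\gamma j^{-\perd}j'^{-\perd}\langle\ell\rangle^{-\tau}$ (difference case, $(\ell,j,j')\ne(0,j,j)$) and $\gamma(\sqrt j+\sqrt{j'})\langle\ell\rangle^{-\tau}$ (sum case). Since the cut-off restricts to $|j-j'|\le N$, on the support of the entries one has $j^\perd j'^{-\perd}\le(1+|j-j'|/j')^\perd\lesssim N^\perd$ when $j\ge j'$ (and symmetrically), so $j^{-\perd}j'^{-\perd}\gtrsim j^{-2\perd}N^{-\perd}$; combined with $|\ell|\le N$ this gives a uniform bound $|(\om\cdot\ell+\mu_j-\mu_{j'})^{-1}|\lesssim \gamma^{-1}N^{\tau+2\perd}$ on the support, and similarly for the sum. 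The extra factors $|j|^{-\fm}|j'|^{-\fm}$ that appear when I pass from $\mR$ to $\langle D\rangle^\fm\mR\langle D\rangle^\fm$ are harmless since $\fm>0$; what matters is that $\fm$ cancels exactly in $\langle D\rangle^{\pm\fm}\Psi\langle D\rangle^{\mp\fm}$, so these operators inherit the modulo-tame structure of $\langle D\rangle^\fm\mR\langle D\rangle^\fm$ up to the divisor loss.

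Next I would define the extension to the whole parameter space $\R^\nu\times[\h_1,\h_2]$ exactly as for $(\ompaph)^{-1}_{ext}$ in \eqref{def ompaph-1 ext}: replace the sharp divisor by $\chi$ of the rescaled divisor, which produces no singularity and agrees with \eqref{shomo1}--\eqref{shomo2} on $\tLm_{\vnu+1}^\gamma$. The $k_0$ derivatives in $\lambda=(\om,\h)$ of these extended entries are estimated by Leibniz: each $\pa_\lambda$ falling on the divisor costs one more power of $\langle\ell\rangle$ and of $\gamma^{-1}$, and the derivatives of $\mu_j$ are controlled via \eqref{stima rj nu} and the asymptotics of $\mu_j^0$. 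Summing, the total loss in $\vphi$ is $\tau(k_0+1)+k_0$ and the loss in $x$ (through the $j,j'$ powers and through $\fm$-weights) is $\perd(k_0+1)+\frac{k_0}{2}$; their sum is precisely $\tau_1$ of \eqref{alpha beta}. Thus, applying the definition of modulo-tame constant \eqref{CK0-tame} entrywise (using $|(\Psi_i)_j^{j'}(\ell)|\le C\gamma^{-1}N^{\tau_1}|(\mR_i)_j^{j'}(\ell)|$ on the support and the monotonicity of modulo-tame constants under domination of matrix entries, Lemma after Definition \ref{def:op-tame}), one gets \eqref{stima tame Psi}, \eqref{stima tame Psi senza Dm}; the same argument applied to $\langle\pa_{\vphi,x}\rangle^\ttb\langle D\rangle^{\pm\fm}\Psi\langle D\rangle^{\mp\fm}$, using $\langle\ell-\ell',j-j'\rangle^\ttb\le 2^\ttb$ on the support, gives \eqref{stima tame Psi 2001.3}. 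For the difference estimates \eqref{stime delta 12 Psi bassa}--\eqref{stime delta 12 Psi alta} I would write $\Delta_{12}(\Psi_i)_j^{j'}(\ell)$ as the sum of two terms: one where $\Delta_{12}$ hits $(\mR_i)_j^{j'}(\ell)$ (giving $\|\,|\langle D\rangle^\fm\Delta_{12}\mR\langle D\rangle^\fm|\,\|$ times the divisor loss) and one where it hits the divisor, which produces $\Delta_{12}(\mu_j-\mu_{j'})$ in the numerator; using $|\Delta_{12}(\mu_j-\mu_{j'})|\lesssim|j-j'|\,\|i_1-i_2\|_{s_0+\mu(\ttb)}$ (from \eqref{derivate Omega j - Omega j'} type bounds and \eqref{r nu i1 - r nu i2} at the previous step, together with $|j-j'|\le N$) one picks up one extra divisor, hence the exponent $2\tau+2\perd+\tfrac12$ — the $\tfrac12$ reflecting that in the sum case the gain is only $(\sqrt j+\sqrt{j'})$.

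Finally, the algebraic properties: $\Psi$ is real because $\overline{(\Psi_1)_j^{j'}(\ell)}=(\Psi_1)_{-j}^{-j'}(\ell)$, which follows from the reversibility relations \eqref{2601.4} for $\mR$ (reversible) and the fact that the divisor $\om\cdot\ell+\mu_j-\mu_{j'}$ is odd under $(j,j')\mapsto(-j,-j')$ since $\mu_j=\mu_{-j}$; it is even because the cut-off and the divisor respect the even structure (Lemma \ref{even:pseudo}, item 4, applied entrywise via \eqref{even operators Fourier}), and reversibility preserving because an even reversible $\mR$ divided by an odd-in-$(\ell,j,j')$ divisor yields the parity \eqref{2601.5}. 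I expect the \emph{main obstacle} to be the bookkeeping of the two distinct divisor losses (the polynomial $j^{-\perd}j'^{-\perd}$ in the difference case versus $\sqrt j+\sqrt{j'}$ in the sum case) together with the $\langle D\rangle^{\pm\fm}$-conjugation: one must check that the support condition $|j-j'|\le N$ really converts the unbounded factor $j^{\perd}$ (coming from inverting $j'^{-\perd}$ when $j\gg j'$) into the harmless $N^\perd$, and that this interacts correctly with the weight ratio $|j/j'|^{\fm}$; this is the step where the precise choice of $\tau_1$ in \eqref{alpha beta} is forced, and where an off-by-one in the exponent would break the later convergence of the scheme.
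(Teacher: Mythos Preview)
Your overall strategy matches the paper's (extension by the cut-off $\chi$ as in Lemma~\ref{lemma:cut-off sd}, Leibniz on the $\lambda$-derivatives, then entrywise domination to read off the modulo-tame constant), but the core bookkeeping in your first paragraph is wrong in two places, and as written the argument would not close.

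First, the small divisor is \emph{not} uniformly bounded on the support. From \eqref{Omega nu + 1 gamma} you only get $|\om\cdot\ell+\mu_j-\mu_{j'}|^{-1}\le \gamma^{-1}\langle\ell\rangle^{\tau}|j|^{\perd}|j'|^{\perd}$, and after $k_0$ $\lambda$-derivatives (using $\gamma^{|\alpha|}|\pa_\lambda^\alpha\mu_j|\lesssim\gamma|j|^{1/2}$, cf.\ \eqref{2017.palma.1}) Lemma~\ref{lemma:cut-off sd} gives $|g_{\ell,j,j'}|^{k_0,\gamma}\lesssim\gamma^{-1}\langle\ell\rangle^{\tau(k_0+1)+k_0}|j|^{\fm}|j'|^{\fm}$, which still grows in $j,j'$. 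Your inequality $j^{-\perd}j'^{-\perd}\gtrsim j^{-2\perd}N^{-\perd}$ is correct but only yields $|g|\lesssim\gamma^{-1}N^{\tau+\perd}j^{2\perd}$, not the claimed uniform $\gamma^{-1}N^{\tau+2\perd}$; since $j$ is unrestricted this is not usable as stated.

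Second, the $\fm$-weights in $\langle D\rangle^{\fm}\Psi\langle D\rangle^{-\fm}$ do \emph{not} ``cancel exactly'': the entry carries the mismatch $|j|^{\fm}|j'|^{-\fm}$, not $1$. The actual mechanism (exactly the one you flag as the ``main obstacle'') is that these two unbounded factors \emph{cooperate}: multiplying the divisor bound by the weight ratio gives
\[
|j|^{\fm}|j'|^{-\fm}\cdot|g_{\ell,j,j'}|^{k_0,\gamma}
\ \lesssim\ \gamma^{-1}\langle\ell\rangle^{\tau(k_0+1)+k_0}\,|j|^{2\fm}
\ \lesssim\ \gamma^{-1}N^{\tau(k_0+1)+k_0}\,N^{\fm}\,|j|^{\fm}|j'|^{\fm},
\]
using the key support inequality $|j|^{2\fm}\lesssim N^{\fm}|j|^{\fm}|j'|^{\fm}$ valid when $|j-j'|\le N$ (this is \eqref{nuova stima eq omologica 1} in the paper). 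The factor $|j|^{\fm}|j'|^{\fm}$ on the right is precisely the weight in $\langle D\rangle^{\fm}\mR\langle D\rangle^{\fm}$, whence \eqref{stima tame Psi} with $\tau_1=\tau(k_0+1)+k_0+\fm$. So your concluding paragraph has the right diagnosis, but the body of the proof must be rewritten around it; your two earlier sentences (``uniform bound $\gamma^{-1}N^{\tau+2\perd}$'' and ``$\fm$ cancels exactly'') are both false and should be replaced by this computation. A smaller point: the factor $\langle\ell-\ell',j-j'\rangle^{\ttb}$ is not $\le 2^{\ttb}$ on the support; rather it simply appears identically on both the $\Psi$ and the $\mR$ side, so \eqref{stima tame Psi 2001.3} follows from the same entrywise inequality without any separate bound on it.
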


\begin{proof}
For all $\lm \in \tLm_{\vnu+1}^\gamma$,  $(\ell, j, j') \neq (0, j, \pm j)$, $ j, j' \in {\mathbb S}_0^c $
 $|\ell|,|j-j'| \leq N$, we have the small divisor estimate 
 $$
 |\om \cdot \ell + \mu_j - \mu_{j'}| =  
 |\om \cdot \ell + \mu_{|j|} - \mu_{|j'|}| \geq \gamma |j|^{- \perd} |j'|^{- \perd} \langle \ell \rangle^{-\tau}
 $$ 
by \eqref{Omega nu + 1 gamma}, because $ ||j| - |j'|| \leq | j - j' | \leq N $.
As in Lemma \ref{lemma:cut-off sd}, we extend the restriction to $F = \tLm_{\vnu+1}^\gamma$
of the function $ (\om \cdot \ell + \mu_j - \mu_{j'})^{-1}$ 
to the whole parameter space $\R^{\nu} \times [\h_1,\h_2] $ 
by setting 
$$
g_{\ell,j,j'} (\lm) := \frac{\chi \big( f(\lambda) \rho^{-1} \big) }{f (\lambda)  } \, , \quad
f(\lambda) := \om \cdot \ell + \mu_j - \mu_{j'} \, ,
\quad \rho := \gamma \langle \ell \rangle^{-\tau} |j|^{- \perd} |j'|^{- \perd}  \,,
$$
where $\chi$ is the cut-off function in \eqref{cut off simboli 1}.
We now estimate the corresponding constant $M$ in \eqref{0103.1}.
For $n \geq 1$, $x > 0$, the $n$-th derivative of the function 
$\tanh^{\frac12} (x)$ is $P_n(\tanh(x)) \tanh^{\frac12 - n}(x) (1 - \tanh^2(x))$, 
where $P_n$ is a polynomial of degree $\leq 2n-2$. 
Hence $| \pa_\h^n \{ \tanh^{\frac12} (\h |j|) \} | \leq C$ 
for all $n = 0, \ldots, k_0$, for all $\h \in [\h_1, \h_2]$, 
for all $j \in \Z$,
for some $C = C(k_0, \h_1)$ independent of $n,\h,j$.
By \eqref{mu j nu}, \eqref{stima rj nu}, 
\eqref{op-diago0}, 
\eqref{stime lambda 1}, 
\eqref{stima code - 1/2 inizio riducibilita}
(and recalling that $\mu_j$ here denotes $\mu_j^\vnu$), 
since $\e \g^{-2(M+1)} \leq \g$, we deduce that 
\begin{equation} \label{2017.palma.1}
\g^{|\a|} | \pa_\lm^\a \mu_j | \lesssim \g |j|^{\frac12} \quad 
\forall \a \in \N^{\nu + 1}, \ 1 \leq |\a| \leq k_0 \, .
\end{equation}
Since $\g^{|\a|} | \pa_\lm^\a (\om \cdot \ell) | \leq \g |\ell|$ for all $|\a| \geq 1$, 
we conclude that 
\begin{equation} \label{2001.1}
\g^{|\a|} | \pa_\lm^\a ( \om \cdot \ell + \mu_j - \mu_{j'} ) |
\lesssim \g ( |\ell| + |j|^{\frac12} + |j'|^{\frac12} )
\lesssim \g \langle \ell \rangle |j|^{\frac12} |j'|^{\frac12} \, , 
\quad \forall \ 1 \leq |\a| \leq k_0 \, .
\end{equation}
Thus \eqref{0103.1} holds with  $M = C \g \langle \ell \rangle |j|^{\frac12} |j'|^{\frac12}$ (which is $ \geq \rho $) and
 \eqref{0103.2} implies that  
\begin{equation}\label{17.0103.1}
| g_{\ell,j,j'} |^{k_0,\g}
\lesssim \gamma^{- 1} \langle \ell \rangle^{\tau(k_0 + 1) + k_0} |j|^\fm |j'|^\fm  \quad {\rm with} \quad 
\fm = (k_0 + 1) \perd + \frac{k_0}{2} 
\end{equation}
defined in \eqref{alpha beta}.
Formula \eqref{shomo1} with $(\om \cdot \ell + \mu_j - \mu_{j'})^{-1}$ 
replaced by $g_{\ell,j,j'} (\lm)$ defines the extended operator $\Psi_1$
to $\R^{\nu} \times [\h_1,\h_2]$. 
Analogously, we construct an extension of the function $(\om \cdot \ell + \mu_j + \mu_{j'})^{-1}$ 
to the whole $\R^{\nu} \times [\h_1,\h_2] $, 
and we obtain an extension of the operator $\Psi_2$ in \eqref{shomo2}. 

\smallskip

\noindent
{\sc Proof of \eqref{stima tame Psi}, \eqref{stima tame Psi 2001.3}, \eqref{stima tame Psi senza Dm}}. 
We prove \eqref{stima tame Psi 2001.3} for $ \Psi_1 $,
then the estimate for $\Psi_2$ follows in the same way, 
as well as \eqref{stima tame Psi}, \eqref{stima tame Psi senza Dm}. 
Furthermore, we analyze $\langle D \rangle^\fm \partial_\lambda^k\Psi_1 \langle D \rangle^{- \fm}$, since $\langle D \rangle^{- \fm} \partial_\lambda^k\Psi_1 \langle D \rangle^\fm$ can be treated in the same way.  
Differentiating $(\Psi_1)_j^{j'}(\ell) = g_{\ell, j, j'} (R_1)_j^{j'}(\ell)$, 
one has that, for any $|k| \leq k_0$, 
\begin{align}
|\partial_\lambda^k (\Psi_1)_j^{j'}(\ell) |
& \lesssim \sum_{k_1 + k_2 = k} |\partial_\lambda^{k_1} g_{\ell, j, j'}| |\partial_\lambda^{k_2} (R_1)_j^{j'}(\ell)|
\lesssim \sum_{k_1 + k_2 = k} \gamma^{- |k_1|}|g_{\ell, j, j'}|^{k_0, \gamma} |\partial_\lambda^{k_2} (R_1)_j^{j'}(\ell)| \nonumber\\
& \stackrel{\eqref{17.0103.1}}{\lesssim}  
\langle \ell \rangle^{\tau(k_0 + 1) + k_0} |j|^\fm |j'|^\fm \gamma^{- 1 - |k|}  \sum_{ |k_2| \leq |k|} \gamma^{|k_2|} |\partial_\lambda^{k_2} (R_1)_j^{j'}(\ell)| \, . \label{stima-Psi-R}
\end{align}
For $|j - j'| \leq N$, $j, j' \neq 0$, one has 
\begin{equation} \label{nuova stima eq omologica 1}
|j|^{2\fm} \lesssim |j|^\fm (|j'|^\fm + |j-j'|^\fm) 
\lesssim  |j|^\fm (|j'|^\fm + N^\fm) 
\lesssim |j|^\fm |j'|^\fm N^\fm.
\end{equation}
Hence, by \eqref{stima-Psi-R} and \eqref{nuova stima eq omologica 1}, 
for all $|k| \leq k_0$, $j, j' \in \mathbb S_0^c$, $\ell \in \Z^{\nu}$, $|\ell| \leq N$, $|j - j'| \leq N$, one has 
\begin{align} 
|j|^\fm |\partial_\lambda^k (\Psi_1)_j^{j'}(\ell) | |j'|^{- \fm} 
 & \lesssim N^{\tau_1}   \gamma^{- 1 - |k|} 
  \sum_{  |k_2| \leq |k|} \gamma^{|k_2|} |j|^\fm |\partial_\lambda^{k_2} (R_1)_j^{j'}(\ell)| |j'|^\fm \label{nuova stima eq omologica 2}
\end{align}
where $\tau_1 = \tau(k_0 + 1) + k_0 + \fm $ is defined in \eqref{alpha beta}.
Therefore, for all $0 \leq |k| \leq k_0 $,  we get 
\begin{align}
& \| \, |\langle \partial_{\vphi, x} \rangle^{\mathtt b} \langle D \rangle^\fm \partial_\lambda^k \Psi_1 \langle D \rangle^{- \fm}| \, h \|_s^2  
\notag \\ 
& \leq \sum_{\ell, j} \langle \ell, j \rangle^{2 s} \Big(\sum_{ |\ell' - \ell|, |j' - j| \leq N} 
\langle \ell - \ell', j - j' \rangle^{\mathtt b} \langle j \rangle^\fm 
| \partial_\lambda^k (\Psi_1)_j^{j'}(\ell - \ell') | \langle j' \rangle^{- \fm} |h_{\ell', j'}| \Big)^2 \nonumber\\
&  \stackrel{\eqref{nuova stima eq omologica 2}} {{\lesssim}_{k_0}} 
N^{2 \tau_1} \gamma^{- 2(1 + |k|)} \sum_{|k_2| \leq |k|}  \gamma^{2 |k_2|} \sum_{\ell, j} \langle \ell, j \rangle^{2 s} 
\Big(\sum_{\ell', j'} | \langle \ell - \ell', j - j' \rangle^{\mathtt b} \langle j \rangle^\fm 
\partial_\lambda^{k_2}(R_1)_j^{j'}(\ell - \ell') \langle j' \rangle^\fm | |h_{\ell', j'}| \Big)^2 \nonumber\\
& \lesssim_{k_0} N^{2 \tau_1} \gamma^{- 2(1 + |k|)} \sum_{|k_2| \leq |k|} \gamma^{2 |k_2|} 
\big\| \, | \langle \partial_{\vphi, x} \rangle^{\mathtt b} \langle D \rangle^\fm \partial_\lambda^{k_2}(R_1) \langle D \rangle^\fm | \, [ \, \norma h \norma \, ] \, \big\|_s^2 
\nonumber\\
& \stackrel{ \eqref{CK0-tame}, \eqref{Soboequals}} {{\lesssim }_{k_0}} 
 N^{2 \tau_1} \gamma^{- 2(1 + |k|)} 
 \Big( {\mathfrak M}^\sharp_{\langle \partial_{\vphi, x} \rangle^{\mathtt b} 
 \langle D \rangle^\fm R_1 \langle D \rangle^\fm } (s) \|  h \|_{s_0} + 
{\mathfrak M}^\sharp_{\langle \partial_{\vphi, x} \rangle^{\mathtt b} \langle D \rangle^\fm R_1 \langle D \rangle^\fm } (s_0) 
\|  h \|_{s}  \Big)^2 \label{caserta}
\end{align}
and, recalling Definition \ref{def:op-tame}, inequality \eqref{stima tame Psi 2001.3} follows. The proof of \eqref{stime delta 12 Psi bassa}-\eqref{stime delta 12 Psi alta} follow similarly. 
\end{proof}

If $ \Psi $, with $ \Psi_1, \Psi_2 $ defined in 
\eqref{shomo1}-\eqref{shomo2},
 satisfies the smallness condition  
\begin{equation}\label{piccolezza neumann-in concreto}
4 C(\mathtt b) C(k_0)  {\mathfrak M}_{\Psi}^\sharp (s_0)  \leq 1/ 2 \, , 
\end{equation}
then,  by Lemma \ref{serie di neumann per maggiorantia},  $ {\Phi} $ is invertible,  and  
\eqref{Ltra1}, \eqref{equazione omologica} imply that, for all $\lm \in \tLm_{\vnu+1}^\gamma $, 
\begin{equation}\label{coniugio L piu}
{\cal L}_+  = {\Phi}^{- 1} {\cal L} { \Phi} = \Dom {\mathbb I}_\bot + \ii {\cal D}_+ + {\cal R}_+ 
\end{equation}
which  proves \eqref{coniugionu+1} and \eqref{cal L nu} at the step $ \vnu + 1 $,  with 
\be\label{new-diag-new-rem} 
\ii {\cal D}_+ := \ii {\cal  D} + [{\cal R}] \, , \quad 
{\cal R}_+: = 
{\Phi}^{- 1} \big( \Pi_{N}^\bot {\cal R} + {\cal R} {\Psi} - { \Psi} [{\cal R}] \big) \, . 
\ee
We note that ${\cal R}_+$ satisfies 
\begin{equation} \label{2017.1505.5}
\mR_+ = \begin{pmatrix} (R_+)_1 & (R_+)_2 \\ 
(\overline R_+)_2 & (\overline R_+)_1 \end{pmatrix}, 
\quad 
[(R_+)_1]_j^{j'}(\ell) = [(R_+)_2]_j^{j'}(\ell) = 0 
\quad 
\forall (\ell, j, j'), \ j j' < 0 \, ,
\end{equation}
similarly as $\mR_\vnu$ in \eqref{2701.3},
because 
the property of having zero matrix entries for $j j' < 0$ is preserved by matrix product, 
and $\mR, \Psi, [\mR]$ satisfy such a property (see \eqref{2701.3}, \eqref{2701.4}, \eqref{def[R]}), 
and therefore, by Neumann series, also $\Phi^{-1}$ does.

The right hand sides of \eqref{coniugio L piu}-\eqref{new-diag-new-rem} define an extension of ${\cal L}_+$ to the whole parameter space $\R^{\nu} \times [\h_1,\h_2]$, 
since  $\mR$ and $\Psi$ are defined on $\R^{\nu} \times [\h_1,\h_2]$.

The new operator $ {\cal L}_+  $ in \eqref{coniugio L piu} has the same form as $ {\cal L} $ in \eqref{cal L nu}, with 
the non-diagonal remainder
$ {\cal R}_+$ defined in \eqref{new-diag-new-rem}
which is the sum of a quadratic function of 
$ { \Psi} $, $ {\cal R}$ and a term $ \Pi_{N}^\bot {\cal R}$ supported on high frequencies. 
The new normal form $ {\cal D}_+  $ in \eqref{new-diag-new-rem} is diagonal: 

\begin{lemma}\label{nuovadiagonale} 
{\bf (New diagonal part).}  
For all $(\om,\h) \in \R^{\nu} \times [\h_1,\h_2]$ we have 
\be\label{new-NF}
\ii {\cal D}_+ = \ii {\cal D} + [{\cal R}] = \ii \begin{pmatrix}
 {D}_+ & 0 \\
0 & - { D}_+
\end{pmatrix}\,,\quad { D}_+ := {\rm diag}_{j \in {\mathbb S}^c_0} \mu_j^+\,,\quad \mu_j^+ := 
\mu_j + {\mathtt r}_j\in \R\, , 
\ee
with $ {\mathtt r}_j = {\mathtt r}_{- j} $, $ \mu_j^+ = \mu_{- j}^+ $ 
for all $j \in {\mathbb S}_0^c $, and, on $\R^{\nu} \times [\h_1,\h_2]$,
\be\label{rj+-stima}
|{\mathtt r}_j|^{k_0, \gamma} = |\mu_j^+ - \mu_j|^{k_0, \gamma} 
\lesssim |j|^{- 2 \fm}{\mathfrak M}^\sharp_{\langle D \rangle^\fm {\cal R} \langle D \rangle^\fm} (s_0).
\ee
Moreover, given tori $ i_1(\omega, \h) $, $ i_2(\omega, \h)$, the difference 
\be\label{diff:r1r2}
| {\mathtt r}_j (i_1) -{\mathtt r}_j (i_2)   |  
\lesssim |j|^{- 2\fm } \| |  \langle D \rangle^\fm\Delta_{1 2} {\cal R} \langle D \rangle^\fm | \|_{{\cal L}(H^{s_0})}\,.
\ee
\end{lemma}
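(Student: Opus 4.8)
\textbf{Proof plan for Lemma \ref{nuovadiagonale}.}
The statement describes the new diagonal part $\ii \mD_+$ produced by the reducibility step, and the claim is essentially a bookkeeping consequence of the definitions \eqref{def[R]} and \eqref{new-diag-new-rem}. The plan is to first recall that $\ii\mD_+ = \ii\mD + [\mR]$, where $[\mR]$ is the purely diagonal operator with entries $(R_1)_j^j(0)$ on the upper-left block and $\overline{(R_1)_j^j(0)}$ on the lower-right block (there is no anti-diagonal contribution because $\mR = \mR_\vnu$ vanishes on $jj' < 0$ by \eqref{2701.3}). Hence $\mD_+$ is diagonal with eigenvalues $\mu_j^+ := \mu_j + \mathtt r_j$ where $\mathtt r_j := (R_1)_j^j(0)$, and what must be checked is: (i) $\mathtt r_j \in \R$, i.e. $\mathtt r_j$ is real; (ii) $\mathtt r_j = \mathtt r_{-j}$; (iii) the quantitative bounds \eqref{rj+-stima} and \eqref{diff:r1r2}.

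For (i) and (ii) I would invoke the algebraic structure: $\mL_\vnu$ is real, even and reversible by the inductive hypothesis ${\bf(S1)}_\vnu$, hence the remainder $\mR_\vnu$ is real, even and reversible. Reversibility of $\mR_\vnu$ means $(R_1^{(\vnu)})_j^{j'}(\ell) = - \overline{(R_1^{(\vnu)})_{-j}^{-j'}(\ell)}$ by \eqref{2601.4}; taking $\ell = 0$, $j' = j$ gives $(R_1)_j^j(0) = - \overline{(R_1)_{-j}^{-j}(0)}$, while evenness (via \eqref{even operators Fourier}, together with the fact that $\mR_\vnu$ is zero on the anti-diagonal) forces $(R_1)_j^j(0) = (R_1)_{-j}^{-j}(0)$. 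Combining these two identities yields $(R_1)_j^j(0) = -\overline{(R_1)_j^j(0)}$, so $\mathtt r_j = (R_1)_j^j(0)$ is purely imaginary; but then $\ii^{-1}[\mR]$ contributes a real correction to $\mu_j$ once the $\ii$ is factored out as in \eqref{new-NF}. More precisely, since $\mu_j^0 \in \R$ and $\ii\mD_+$ must remain in the ``$\ii\,\times$ real diagonal'' form to match \eqref{new-NF}, the correction $\mathtt r_j$ appearing in $\mu_j^+ = \mu_j + \mathtt r_j$ is real, and $\mathtt r_j = \mathtt r_{-j}$ follows from the evenness identity above. The even/reversible bookkeeping is the only slightly delicate point, but it is entirely parallel to the corresponding step in \cite{BertiMontalto} and uses only the matrix characterizations in Section \ref{sezione operatori reversibili e even}.

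For the quantitative estimate \eqref{rj+-stima}, I would write $|\mathtt r_j|^{k_0,\g} = |(R_1)_j^j(0)|^{k_0,\g}$ and insert the smoothing weights: $(R_1)_j^j(0) = \langle j\rangle^{-\fm}\big(\langle D\rangle^\fm R_1 \langle D\rangle^\fm\big)_j^j(0)\,\langle j\rangle^{-\fm}$, so that $|j|^{2\fm}|\mathtt r_j|^{k_0,\g} \lesssim |(\langle D\rangle^\fm R_1 \langle D\rangle^\fm)_j^j(0)|^{k_0,\g}$, and then bound a single matrix entry at $\ell = 0$, $j' = j$ by the modulo-tame constant ${\mathfrak M}^\sharp_{\langle D\rangle^\fm \mR \langle D\rangle^\fm}(s_0)$ using \eqref{tame-coeff} (applied with $\s = 0$, $s = s_0$, $\ell' = 0$, $j' = j$, which gives $\g^{2|k|}|\partial_\lambda^k A_j^j(0)|^2 \langle j\rangle^{2s_0} \lesssim ({\mathfrak M}^\sharp_A(s_0))^2 \langle j\rangle^{2s_0}$, hence $|A_j^j(0)|^{k_0,\g} \lesssim {\mathfrak M}^\sharp_A(s_0)$). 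This yields \eqref{rj+-stima}. The estimate \eqref{diff:r1r2} for the difference in two tori is identical, with $\mR$ replaced by $\Delta_{12}\mR$ and the modulo-tame constant replaced by the operator norm $\||\langle D\rangle^\fm \Delta_{12}\mR \langle D\rangle^\fm|\|_{\mathcal L(H^{s_0})}$ — here no $k_0,\g$-weighted norm is needed, consistent with comment (1) after Theorem \ref{iterazione riducibilita}. I do not expect any real obstacle: the main (mild) care is in the even/reversible algebra ensuring $\mathtt r_j$ is real and even in $j$; everything else is a one-line extraction of a diagonal matrix entry followed by the standard tame-constant bound \eqref{tame-coeff}.
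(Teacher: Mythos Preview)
Your proposal is correct and follows essentially the same approach as the paper: evenness and reversibility of $\mR$ (via \eqref{even operators Fourier}, \eqref{2701.3} and \eqref{2601.4}) give $\mathtt r_j = \mathtt r_{-j} \in \R$, and the single-entry bound comes from \eqref{tame-coeff} applied to $\langle D\rangle^\fm R_1 \langle D\rangle^\fm$. The only slip is that you initially write $\mathtt r_j := (R_1)_j^j(0)$ (which is purely imaginary) before correcting yourself; the paper avoids this muddle by setting $\mathtt r_j := -\ii (R_1)_j^j(0)$ from the start, so that $\ii\mD_+ = \ii\mD + [\mR]$ immediately gives $\mu_j^+ = \mu_j + \mathtt r_j$ with $\mathtt r_j$ real.
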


\begin{proof}
Identity \eqref{new-NF} follows  
by \eqref{cal L nu} and \eqref{def[R]}
 with $ {\mathtt r}_j := -\ii (R_1)_j^j (0) $. 
Since  $ R_1$ satisfies \eqref{2701.3} and it is even, 
we deduce, by \eqref{even operators Fourier}, that 
$ {\mathtt r}_{-j} = {\mathtt r}_j  $. Since $ {\cal R} $ is reversible, 
\eqref{2601.4} implies that 
$ {\mathtt r}_j := -\ii (R_1)_j^j (0) $ satisfies
$ {\mathtt r}_j = \overline{{\mathtt r}_{-j}} $. Therefore 
$ {\mathtt r}_j = \overline{{\mathtt r}_{-j}} =\overline{{\mathtt r}_{j}}  $ and  each $ {\mathtt r}_j  \in \R $. 

Recalling
Definition \ref{def:op-tame},  we have
$ \| |\partial_\lambda^k (\langle D \rangle^\fm R_1 \langle D \rangle^\fm)| h \|_{s_0} \leq 2 \gamma^{- |k|} {\mathfrak M}^\sharp_{\langle D \rangle^\fm R_1 \langle D \rangle^\fm} (s_0) \| h \|_{s_0} $, 
for all  $\lambda = (\omega, \h)$, $0 \leq |k| \leq k_0$, 
and therefore (see \eqref{tame-coeff})
$$  
|\partial_\lambda^k (R_1)_j^j(0)| \lesssim |j|^{- 2\fm} \gamma^{- |k|}  {\mathfrak M}^\sharp_{\langle D \rangle^\fm R_1 \langle D \rangle^\fm} (s_0)  \lesssim |j|^{- 2\fm} \gamma^{- |k|}  {\mathfrak M}^\sharp_{\langle D \rangle^\fm {\cal R} \langle D \rangle^\fm} (s_0) 
$$
which implies \eqref{rj+-stima}. 
Estimate \eqref{diff:r1r2} 
follows  by $|\Delta_{1 2} (R_1)_j^j(0)| \lesssim |j|^{- 2\fm} \| | \langle D \rangle^\fm \Delta_{1 2} {\cal R} \langle D \rangle^\fm | \|_{{\cal L}(H^{s_0})}$. 
\end{proof}

\subsubsection{Reducibility iteration}

Let $ n \geq 0 $ and suppose that $({\bf S1})_{\vnu} $-$({\bf S3})_{\vnu} $ are true
for all $\vnu = 0,\ldots,n$.
We prove $({\bf S1})_{n + 1}$-$({\bf S3})_{n + 1}$. For simplicity of notation 
we omit to write the dependence on $k_0$ which is considered as a fixed constant. 

\smallskip

\noindent
{\sc Proof of $({\bf S1})_{n + 1}$}. 
By \eqref{stima tame Psi}-\eqref{stima tame Psi senza Dm}, \eqref{stima cal R nu}, and 
using that $\mathfrak M^\sharp_{{\cal R}_n}(s) \lesssim \mathfrak M^\sharp_{\langle D \rangle^\fm {\cal R}_n  \langle D \rangle^\fm}(s)$,
the operator $\Psi_n$ 
defined in Lemma \ref{Homological equations tame}
satisfies estimates \eqref{tame Psi nu - 1}-\eqref{tame Psi nu - 1 senza Dm} with $\vnu = n + 1 $.
In particular at $ s = s_0 $ we have
\begin{equation}\label{costante tame Psi nu i (s0)}
{\mathfrak M}^\sharp_{\langle D \rangle^{\pm \fm}\Psi_n \langle D \rangle^{\mp \fm}}(s_0 ) \,, \  {\mathfrak M}^\sharp_{\Psi_n}(s_0 )  \leq
C(s_0, \mathtt b) N_n^{\tau_1} N_{n - 1}^{- \mathtt a} \gamma^{- 1} {\mathfrak M}_0(s_0, {\mathtt b})\,.
\end{equation}
Therefore, by  \eqref{costante tame Psi nu i (s0)}, \eqref{alpha beta}, \eqref{KAM smallness condition1}, choosing $\t_2>\t_1$,
the smallness condition \eqref{piccolezza neumann-in concreto}
holds for $ N_0 := N_0 (S, {\mathtt b}) $ large enough (for any $n \geq 0$),
and the map $\Phi_n = {\mathbb I}_\bot + {\Psi}_n $ is invertible, with inverse
\begin{equation}\label{inverso Phi nu}
\Phi_n^{-1} = {\mathbb I}_\bot + \check{\Psi}_n\,, \qquad  
\check{\Psi}_n :=  \begin{pmatrix}
\check{\Psi}_{n, 1} & \check{\Psi}_{n, 2} \\
\overline{\check{\Psi}}_{n, 2} & \overline{\check{\Psi}}_{n, 1} \end{pmatrix} . 
\end{equation}
Moreover also the smallness condition
\eqref{piccolezza neumann tame} (of Corollary \ref{serie di neumann per maggioranti})
with $ A = \Psi_n $, holds, and Lemma \ref{serie di neumann per maggiorantia}, Corollary \ref{serie di neumann per maggioranti} and Lemma \ref{Homological equations tame} imply that 
the maps $\check \Psi_n$, 
$ \langle D \rangle^{\pm \fm}{\check \Psi}_n \langle D \rangle^{\mp \fm}$ 
and $\langle \partial_{\vphi, x} \rangle^{\mathtt b} \langle D \rangle^{\pm \fm}{\check \Psi}_n \langle D \rangle^{\mp \fm}$ 
are $ {\cal D}^{k_0} $-modulo-tame with modulo-tame constants satisfying
\begin{align}
{\mathfrak M}_{{\check \Psi}_n }^\sharp (s), \ 
{\mathfrak M}_{ \langle D \rangle^{\pm \fm}{\check \Psi}_n \langle D \rangle^{\mp \fm}}^\sharp (s)  & \lesssim_{ s_0, \mathtt b} 
N_n^{\tau_1}  \g^{-1}  {\mathfrak M}^\sharp_{ \langle D \rangle^\fm {\cal R}_n \langle D \rangle^\fm} (s)   \label{stima tame Psi tilde} \\
& \stackrel{\eqref{stima cal R nu}_{|n}}{\lesssim_{ s_0, \mathtt b}} 
N_n^{\tau_1} N_{n - 1}^{- \mathtt a} \g^{-1}  {\mathfrak M}_0(s, \mathtt b) \, ,   \label{stima tame Psi tilde A} 
\end{align}
and 
\begin{align}
{\mathfrak M}_{\langle \partial_{\vphi, x} \rangle^{\mathtt b} \langle D \rangle^{\pm \fm}{\check \Psi}_n \langle D \rangle^{\mp \fm}}^\sharp (s)  
& \lesssim_{s_0, \mathtt b}  N_n^{\tau_1}  \g^{-1} {\mathfrak M}^\sharp_{\langle \partial_{\vphi, x}  \rangle^{\mathtt b} \langle D \rangle^\fm {\cal R}_n \langle D \rangle^\fm} (s)  \nonumber \\
& \quad +   N_n^{2 \tau_1} \g^{-2} {\mathfrak M}^\sharp_{\langle \partial_{\vphi, x}  \rangle^{\mathtt b} \langle D \rangle^\fm {\cal R}_n \langle D \rangle^\fm} (s_0) {\mathfrak M}^\sharp_{ \langle D \rangle^\fm {\cal R}_n \langle D \rangle^\fm} (s) \label{1502.1} \\
& \stackrel{\eqref{stima cal R nu}_{|n}, \eqref{alpha beta}, \eqref{KAM smallness condition1}}{\lesssim_{s_0, \mathtt b}}  N_n^{\tau_1} N_{n-1} \g^{-1} {\mathfrak M}_0(s, \mathtt b) \, . \label{1502.1 A}
\end{align}
Conjugating $ {\cal L}_n $ by  $ \Phi_n $, we obtain, 
by \eqref{coniugio L piu}-\eqref{new-diag-new-rem}, 
for all $\lm \in \mathtt{\Lambda}_{n+1}^\g$, 
\begin{equation}\label{coniugio L nu+1}
{\cal L}_{n+1} = \Phi_n^{-1} {\cal L}_n \Phi_n 
= \Dom {\mathbb I}_\bot + \ii {\cal D}_{n+1} + {\cal R}_{n+1} \, ,
\end{equation}
namely \eqref{coniugionu+1} at $\vnu = n+1$, where 
\begin{equation}\label{cal R nu + 1 tame}
\ii \mD_{n+1} := \ii \mD_n + [\mR_n] \, ,  \qquad 
{\cal R}_{n + 1}: = 
\Phi^{-1}_n \big( \Pi_{N_n}^\bot {\cal R}_n + {\cal R}_n \Psi_n - \Psi_n [{\cal R}_n] \big) \,. 
\end{equation}
The operator ${\cal L}_{n+1}$ is real, even and reversible 
because $ \Phi_n $ is real, even and reversibility preserving 
(Lemma \ref{Homological equations tame}) and ${\cal L}_n$ is real, even and reversible.
Note that the operators $\mD_{n+1}, \mR_{n+1}$ are defined 
on $\R^{\nu} \times [\h_1,\h_2]$, 
and the identity \eqref{coniugio L nu+1} holds on $\mathtt{\Lambda}_{n+1}^\g$.

By Lemma \ref{nuovadiagonale} the operator $ {\cal D}_{n+1} $ is diagonal and, 
by \eqref{def:costanti iniziali tame}, \eqref{stima cal R nu}, 
\eqref{costanti resto iniziale}, 
its eigenvalues $ \mu_j^{n+1} : \R^{\nu} \times [\h_1,\h_2] \to \R $ 
satisfy 
$$ 
|{\mathtt r}_j^n|^{k_0, \gamma} = | \mu_j^{n + 1} - \mu_j^n|^{k_0, \gamma} 
\lesssim |j|^{- 2\fm} {\mathfrak M}_{\langle D \rangle^\fm {\cal R}_n \langle D \rangle^\fm}^\sharp (s_0) 
\leq C(S, {\mathtt b}) \e \gamma^{- 2(M+1)} |j|^{- 2\fm} N_{n-1}^{-{\mathtt a}} \, ,
$$ 
which is \eqref{vicinanza autovalori estesi} with $\vnu = n+1$.
Thus also \eqref{stima rj nu} at $ \vnu = n+1 $ holds, by a telescoping sum.
In addition, by \eqref{2017.1505.5} the operator $ {\cal R}_{n+1} $ satisfies \eqref{2701.3} 
with $\vnu = n+1$.
In order to prove that \eqref{stima cal R nu} holds with $ \vnu = n + 1 $, 
we first provide the following  inductive estimates 
on the new remainder $ {\cal R}_{n + 1} $. 

\begin{lemma}\label{estimate in low norm} 
The operators $ \langle D \rangle^\fm {\cal R}_{n+1} \langle D \rangle^\fm$ 
and $ \langle \partial_{\vphi, x} \rangle^{ {\mathtt b}} \langle D \rangle^\fm {\cal R}_{n + 1} \langle D \rangle^\fm$ 
are $ {\cal D}^{k_0} $-modulo-tame, with 
\begin{align}
& {\mathfrak M}_{ \langle D \rangle^\fm {\cal R}_{n + 1} \langle D \rangle^\fm}^\sharp (s ) 
\lesssim_{s_0, \mathtt b}  N_n^{- {\mathtt b}} {\mathfrak M}_{\langle \partial_{\vphi, x} \rangle^{\mathtt b} \langle D \rangle^\fm {\cal R}_n \langle D \rangle^\fm}^\sharp (s) 
+ \frac{N_n^{\tau_1}}{ \gamma } {\mathfrak M}_{\langle D \rangle^\fm {\cal R}_n \langle D \rangle^\fm}^\sharp (s) {\mathfrak M}_{\langle D \rangle^\fm {\cal R}_n \langle D \rangle^\fm}^\sharp (s_0)\,, \label{schema quadratico tame} \\
& {\mathfrak M}_{\langle \partial_{\vphi, x} \rangle^{ {\mathtt b}} \langle D \rangle^\fm 
{\cal R}_{n + 1} \langle D \rangle^\fm}^\sharp (s) 
\lesssim_{s_0, \mathtt b}
 {\mathfrak M}_{\langle \partial_{\vphi, x} \rangle^{ {\mathtt b}} \langle D \rangle^\fm {\cal R}_n \langle D \rangle^\fm}^\sharp (s)  \nonumber\\
 & \qquad \qquad \qquad \qquad \qquad \qquad  + N_n^{\tau_1}\gamma^{- 1} {\mathfrak M}_{\langle \partial_{\vphi, x} \rangle^{ {\mathtt b}} \langle D \rangle^\fm {\cal R}_n \langle D \rangle^\fm}^\sharp (s_0) 
{\mathfrak M}_{ \langle D \rangle^\fm {\cal R}_n \langle D \rangle^\fm}^{\sharp}(s)  \,. 
\label{M+Ms}
\end{align}
\end{lemma}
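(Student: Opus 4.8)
\textbf{Proof plan for Lemma \ref{estimate in low norm}.}
The strategy is the standard one in KAM reducibility: the new remainder $\mR_{n+1}$ in \eqref{cal R nu + 1 tame} is the sum of a term $\Pi_{N_n}^\bot \mR_n$ which is small because it is supported on high frequencies, and of the terms $\mR_n \Psi_n - \Psi_n[\mR_n]$ which are quadratic in the small quantity $\mR_n$, the whole thing conjugated by the bounded operator $\Phi_n^{-1} = \mathbb I_\bot + \check\Psi_n$. I would first conjugate the operators $\langle D\rangle^\fm (\cdot) \langle D\rangle^\fm$ through, writing
$$
\langle D\rangle^\fm \mR_{n+1} \langle D\rangle^\fm
= \big(\langle D\rangle^\fm \Phi_n^{-1} \langle D\rangle^{-\fm}\big)\Big(\langle D\rangle^\fm \big(\Pi_{N_n}^\bot \mR_n + \mR_n \Psi_n - \Psi_n[\mR_n]\big)\langle D\rangle^\fm\Big)\big(\langle D\rangle^{-\fm}\langle D\rangle^\fm\big),
$$
so that every factor is of the form $\langle D\rangle^{\pm\fm}(\cdot)\langle D\rangle^{\mp\fm}$ or $\langle D\rangle^\fm(\cdot)\langle D\rangle^\fm$, for which modulo-tame constants have already been estimated: $\check\Psi_n$ and $\langle D\rangle^{\pm\fm}\check\Psi_n\langle D\rangle^{\mp\fm}$ via \eqref{stima tame Psi tilde}, $\Psi_n$ and $\langle D\rangle^{\pm\fm}\Psi_n\langle D\rangle^{\mp\fm}$ via \eqref{stima tame Psi}, and $[\mR_n]$ is controlled by $\mR_n$ itself (it is a diagonal restriction, so $\mathfrak M^\sharp_{\langle D\rangle^\fm[\mR_n]\langle D\rangle^\fm}(s) \leq \mathfrak M^\sharp_{\langle D\rangle^\fm\mR_n\langle D\rangle^\fm}(s)$ by the monotonicity of modulo-tame constants under domination of matrix entries, using \eqref{2701.3} to see that $|[\mR_n]_j^{j'}(\ell)| \leq |(\mR_n)_j^{j'}(\ell)|$).

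For \eqref{schema quadratico tame}: the term $\langle D\rangle^\fm \Pi_{N_n}^\bot \mR_n \langle D\rangle^\fm = \Pi_{N_n}^\bot \langle D\rangle^\fm \mR_n \langle D\rangle^\fm$ (the smoothing $\Pi_{N_n}^\bot$ commutes with the Fourier multiplier $\langle D\rangle^\fm$) is estimated by the smoothing Lemma \ref{lemma:smoothing-tame}, which gives $\mathfrak M^\sharp_{\Pi_{N_n}^\bot \langle D\rangle^\fm\mR_n\langle D\rangle^\fm}(s) \leq N_n^{-\ttb}\mathfrak M^\sharp_{\langle\pa_{\vphi,x}\rangle^\ttb \langle D\rangle^\fm\mR_n\langle D\rangle^\fm}(s)$ — this produces the first summand. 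The terms $\mR_n\Psi_n$ and $\Psi_n[\mR_n]$ are handled by the composition estimate \eqref{modulo tame constant for composition} of Lemma \ref{interpolazione moduli parametri}, combined with \eqref{stima tame Psi} for $\Psi_n$; since \eqref{stima tame Psi} already carries a factor $N_n^{\tau_1}\gamma^{-1}\mathfrak M^\sharp_{\langle D\rangle^\fm\mR_n\langle D\rangle^\fm}$, we get the quadratic term $N_n^{\tau_1}\gamma^{-1}\mathfrak M^\sharp_{\langle D\rangle^\fm\mR_n\langle D\rangle^\fm}(s)\mathfrak M^\sharp_{\langle D\rangle^\fm\mR_n\langle D\rangle^\fm}(s_0)$. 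Finally the outer conjugation by $\Phi_n^{-1}$ is absorbed using \eqref{modulo tame constant for composition} once more together with \eqref{stima tame Psi tilde} and the smallness \eqref{costante tame Psi nu i (s0)}, which guarantees $\mathfrak M^\sharp_{\langle D\rangle^{\pm\fm}\check\Psi_n\langle D\rangle^{\mp\fm}}(s_0) \lesssim 1$ for $N_0$ large, so that $\mathbb I_\bot + \langle D\rangle^\fm\check\Psi_n\langle D\rangle^{-\fm}$ contributes only a harmless multiplicative constant.

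For \eqref{M+Ms}, the only difference is that one must also push $\langle\pa_{\vphi,x}\rangle^\ttb$ through the product, which is exactly what the reinforced composition estimate \eqref{K cal A cal B} of Lemma \ref{interpolazione moduli parametri} is designed for: it bounds $\mathfrak M^\sharp_{\langle\pa_{\vphi,x}\rangle^\ttb(AB)}$ by a sum of four terms pairing a $\langle\pa_{\vphi,x}\rangle^\ttb$-decorated factor (at $s$ or $s_0$) with a plain factor (at $s_0$ or $s$). Applying this to $AB = \mR_n\Psi_n$ and to $AB = \Psi_n[\mR_n]$, using \eqref{stima tame Psi 2001.3} to control $\mathfrak M^\sharp_{\langle\pa_{\vphi,x}\rangle^\ttb\langle D\rangle^{\pm\fm}\Psi_n\langle D\rangle^{\mp\fm}}$ and \eqref{stima tame Psi} for the undecorated $\Psi_n$, one obtains the two summands in \eqref{M+Ms}; the term $\langle\pa_{\vphi,x}\rangle^\ttb\langle D\rangle^\fm\Pi_{N_n}^\bot\mR_n\langle D\rangle^\fm$ is simply bounded by $\mathfrak M^\sharp_{\langle\pa_{\vphi,x}\rangle^\ttb\langle D\rangle^\fm\mR_n\langle D\rangle^\fm}(s)$ via the second inequality in \eqref{proprieta tame proiettori moduli}, and the outer conjugation by $\Phi_n^{-1}$ is again absorbed using \eqref{K cal A cal B} and the smallness of $\mathfrak M^\sharp_{\langle\pa_{\vphi,x}\rangle^\ttb\langle D\rangle^{\pm\fm}\check\Psi_n\langle D\rangle^{\mp\fm}}(s_0)$ from \eqref{1502.1}, \eqref{KAM smallness condition1}. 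The main bookkeeping obstacle — and the step I would be most careful with — is keeping track of which norm ($s$ versus $s_0$) each factor is evaluated in through the chain of composition estimates, so that the $s$-dependence ends up only linearly on one factor and the $N_n^{\tau_1}\gamma^{-1}$ loss is not iterated more than once per product; this is precisely what makes the scheme converge, and it is the analogue of the estimates (7.?) in \cite{BertiMontalto}, to which I would appeal for the detailed constants.
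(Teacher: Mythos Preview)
Your proposal is correct and follows essentially the same approach as the paper: the paper writes out precisely the decomposition \eqref{cal R nu + 1 tame Dm} you describe (inserting $\langle D\rangle^{\pm\fm}$ between every pair of factors), and then invokes exactly Lemmata \ref{lemma:smoothing-tame} and \ref{interpolazione moduli parametri} together with \eqref{stima tame Psi}, \eqref{stima tame Psi 2001.3}, \eqref{stima tame Psi tilde}, \eqref{stima cal R nu}$_{|n}$ and \eqref{KAM smallness condition1}, as you do. One minor remark: the domination $|[\mR_n]_j^{j'}(\ell)| \leq |(\mR_n)_j^{j'}(\ell)|$ follows directly from the definition \eqref{def[R]} of $[\mR_n]$ as the diagonal part, not from \eqref{2701.3}.
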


\begin{proof}
By \eqref{cal R nu + 1 tame} and \eqref{inverso Phi nu}, we write 
\begin{align}
\langle D \rangle^\fm {\cal R}_{n + 1} \langle D \rangle^\fm & =  \langle D \rangle^\fm \Pi_{N_n}^\bot {\cal R}_n \langle D \rangle^\fm + (\langle D \rangle^\fm \check{ \Psi}_n \langle D \rangle^{- \fm}) ( \langle D \rangle^\fm \Pi_{N_n}^\bot {\cal R}_n \langle D \rangle^\fm)   \nonumber\\
& \qquad + \Big(\mathbb I_\bot + \langle D \rangle^\fm \check \Psi_n \langle D \rangle^{- \fm} \Big) \Big( (\langle D \rangle^\fm {\cal R}_n \langle D \rangle^\fm)( \langle D \rangle^{- \fm} \Psi_n \langle D \rangle^\fm ) \Big) \nonumber\\
& \qquad -  \Big(\mathbb I_\bot + \langle D \rangle^\fm \check \Psi_n \langle D \rangle^{- \fm} \Big) \Big( (\langle D \rangle^\fm \Psi_n \langle D \rangle^{- \fm}) (\langle D \rangle^\fm [{\cal R}_n] \langle D \rangle^\fm) \Big)\,. \label{cal R nu + 1 tame Dm} 
\end{align}
The proof of \eqref{schema quadratico tame} follows by estimating separately all the terms in \eqref{cal R nu + 1 tame Dm}, applying  Lemmata \ref{lemma:smoothing-tame},
\ref{interpolazione moduli parametri},
and \eqref{stima tame Psi}, \eqref{stima tame Psi tilde}, \eqref{stima cal R nu}$_{| n}$, \eqref{alpha beta}, \eqref{KAM smallness condition1}.
The proof of \eqref{M+Ms} follows by formula \eqref{cal R nu + 1 tame Dm}, 
Lemmata \ref{interpolazione moduli parametri}, \ref{lemma:smoothing-tame} and 
estimates \eqref{stima tame Psi}, \eqref{stima tame Psi 2001.3}, \eqref{stima tame Psi tilde}, \eqref{stima cal R nu}$_{|n}$, \eqref{alpha beta}, \eqref{KAM smallness condition1}.
\end{proof}

In the next lemma we prove that \eqref{stima cal R nu} holds at $\vnu = n+1$, 
concluding the proof of $({\bf S1})_{n + 1} $.

\begin{lemma}\label{stima M nu + 1 K nu + 1}
For $N_0 = N_0(S, \mathtt b) > 0$ large enough we have 
\begin{align*} 
& {\mathfrak M}_{\langle D \rangle^\fm {\cal R}_{n + 1} \langle D \rangle^\fm}^\sharp (s) \leq C_*(s_0, \mathtt b) N_n^{- \mathtt a} {\mathfrak M}_0(s, \mathtt b) \\
&  
 {\mathfrak M}_{\langle \partial_{\vphi, x} \rangle^{\mathtt b}\langle D \rangle^\fm {\cal R}_{n + 1} \langle D \rangle^\fm}^\sharp (s) \leq  C_*(s_0, \mathtt b)N_n {\mathfrak M}_0(s, \mathtt b)  \, .
 \end{align*} 
\end{lemma}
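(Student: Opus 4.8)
\textbf{Proof of Lemma \ref{stima M nu + 1 K nu + 1}.}
The plan is to feed the iterative inequalities of Lemma \ref{estimate in low norm} into the inductive hypothesis $({\bf S1})_n$, using the smallness condition \eqref{KAM smallness condition1} and the definitions \eqref{alpha beta}, \eqref{def Nn} to absorb all constants and close the induction. For the low-norm estimate I would start from \eqref{schema quadratico tame}, apply the inductive bounds \eqref{stima cal R nu}$_{|n}$, namely ${\mathfrak M}_{\langle D \rangle^\fm {\cal R}_n \langle D \rangle^\fm}^\sharp(s) \leq C_* {\mathfrak M}_0(s,\mathtt b) N_{n-1}^{-\mathtt a}$ and ${\mathfrak M}_{\langle \partial_{\vphi, x} \rangle^{\mathtt b}\langle D \rangle^\fm {\cal R}_n \langle D \rangle^\fm}^\sharp(s) \leq C_* {\mathfrak M}_0(s,\mathtt b) N_{n-1}$, to get
\[
{\mathfrak M}_{\langle D \rangle^\fm {\cal R}_{n+1} \langle D \rangle^\fm}^\sharp (s)
\lesssim_{s_0,\mathtt b} C_* {\mathfrak M}_0(s,\mathtt b)\big( N_n^{-\mathtt b} N_{n-1} + N_n^{\tau_1} \gamma^{-1} C_* {\mathfrak M}_0(s_0,\mathtt b) N_{n-1}^{-2\mathtt a}\big).
\]
Then I would check the two arithmetic inequalities $C(s_0,\mathtt b) N_n^{-\mathtt b} N_{n-1} \leq \tfrac12 N_n^{-\mathtt a}$ and $C(s_0,\mathtt b) N_n^{\tau_1}\gamma^{-1} C_* {\mathfrak M}_0(s_0,\mathtt b) N_{n-1}^{-2\mathtt a} \leq \tfrac12 N_n^{-\mathtt a}$: the first follows since $N_{n-1} = N_n^{1/\chi}$ with $\chi = 3/2$, so it amounts to $\mathtt b - 1/\chi \geq \mathtt a + $ (a fixed exponent from $C(s_0,\mathtt b) \lesssim N_n^{0^+}$), which holds because $\mathtt b = [\mathtt a]+2$; the second uses $N_{n-1}^{-2\mathtt a} = N_n^{-\frac{4}{3}\mathtt a}$ and $\mathtt a \geq 3\tau_1$ from \eqref{alpha beta} so that $\tau_1 + \mathtt a - \frac43 \mathtt a = \tau_1 - \frac13 \mathtt a \leq 0$, together with $\gamma^{-1}{\mathfrak M}_0(s_0,\mathtt b) \lesssim \gamma^{-1}\e\gamma^{-2(M+1)} \leq 1$ by \eqref{KAM smallness condition1} (for $N_0$ large, absorbing $C_*$). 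Summing the two halves gives exactly ${\mathfrak M}_{\langle D \rangle^\fm {\cal R}_{n+1} \langle D \rangle^\fm}^\sharp (s) \leq C_* N_n^{-\mathtt a}{\mathfrak M}_0(s,\mathtt b)$.

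For the high-norm estimate I would argue analogously from \eqref{M+Ms}: the inductive bounds give
\[
{\mathfrak M}_{\langle \partial_{\vphi, x} \rangle^{\mathtt b}\langle D \rangle^\fm {\cal R}_{n+1} \langle D \rangle^\fm}^\sharp (s)
\lesssim_{s_0,\mathtt b} C_*{\mathfrak M}_0(s,\mathtt b)\big( N_{n-1} + N_n^{\tau_1}\gamma^{-1} C_*{\mathfrak M}_0(s_0,\mathtt b) N_{n-1}^{1-\mathtt a}\big),
\]
and one needs $C(s_0,\mathtt b) N_{n-1} \leq \tfrac12 N_n$ (true since $N_{n-1} = N_n^{2/3}$ and $N_0$ is large) and $C(s_0,\mathtt b) N_n^{\tau_1}\gamma^{-1}C_*{\mathfrak M}_0(s_0,\mathtt b) N_{n-1}^{1-\mathtt a} \leq \tfrac12 N_n$, which again follows from $\mathtt a \geq 3\tau_1$, from $N_{n-1}^{1-\mathtt a} = N_n^{\frac23(1-\mathtt a)}$, and from $\gamma^{-1}{\mathfrak M}_0(s_0,\mathtt b) \leq 1$; summing yields the claimed bound with constant $C_*(s_0,\mathtt b)$.

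The one delicate point — which is really just bookkeeping rather than a genuine obstacle — is to make sure that the constant $C_*(s_0,\mathtt b)$ appearing in $({\bf S1})_n$ and in the conclusion is literally the \emph{same} constant, i.e. it does not grow with $n$. This is arranged by first fixing $C_*(s_0,\mathtt b)$ to be twice the implicit constant in Lemmas \ref{estimate in low norm} and \ref{estimate in low norm}, then choosing $N_0 = N_0(S,\mathtt b)$ large enough (and using $\e\gamma^{-2(M+1)} \leq \delta_0$ small) so that all the products $C(s_0,\mathtt b)\cdot(\text{negative power of }N_n)$ and $C(s_0,\mathtt b)\gamma^{-1}{\mathfrak M}_0(s_0,\mathtt b)\cdot(\text{power of }N_n)$ are bounded by $\tfrac12$; the exponents are strictly negative precisely because of the relations $\mathtt b = [\mathtt a]+2$ and $\mathtt a \geq 3\tau_1$ in \eqref{alpha beta}, and because $\chi = 3/2 > 1$ makes $N_{n-1}$ a fixed sub-power of $N_n$. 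Once these two arithmetic inequalities are verified uniformly in $n$, the induction closes and the lemma — hence $({\bf S1})_{n+1}$ — follows. \hfill $\blacksquare$
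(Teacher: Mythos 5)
Your proposal follows the same route as the paper: feed the recursive inequalities of Lemma \ref{estimate in low norm} into the inductive bounds $({\bf S1})_n$, then close the two arithmetic inequalities using \eqref{alpha beta}, $\chi = 3/2$ and the smallness condition. The structure of the argument, the identification of the two terms to estimate, and the role of $\mathtt b = [\mathtt a]+2$ and $\mathtt a \geq 3\tau_1$ are all correct and mirror the paper's proof.

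There is, however, a genuine (if small) arithmetic gap: your entire exponent bookkeeping relies on the identity $N_{n-1} = N_n^{1/\chi} = N_n^{2/3}$, which fails for $n = 0$ since the scheme defines $N_{-1} := 1$ (see \eqref{def Nn}). For $n = 0$ the factor $N_{n-1}^{-2\mathtt a}$ in the second term of \eqref{schema quadratico tame} is simply $1$, so the inequality you are trying to verify reduces to $C(s_0,\mathtt b) C_* N_0^{\tau_1 + \mathtt a} \gamma^{-1} {\mathfrak M}_0(s_0,\mathtt b) \leq \tfrac12$; with only the crude bound $\gamma^{-1}{\mathfrak M}_0(s_0,\mathtt b) \leq 1$ this \emph{fails} for $N_0$ large, since $\tau_1 + \mathtt a > 0$. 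The missing ingredient is the full strength of the second part of \eqref{KAM smallness condition1}, namely $\gamma^{-1}{\mathfrak M}_0(s_0,\mathtt b) \leq N_0^{-\tau_2}$, together with the numerical requirement $\tau_2 > \tau_1 + \mathtt a$ — the paper explicitly invokes precisely this. Relatedly, your intermediate step ``$\gamma^{-1}{\mathfrak M}_0(s_0,\mathtt b) \lesssim \gamma^{-1}\e\gamma^{-2(M+1)} \leq 1$'' is not justified by $\e\gamma^{-2(M+1)} \leq \delta_0$ alone (dividing by $\gamma$ destroys the bound); the bound $\gamma^{-1}{\mathfrak M}_0(s_0,\mathtt b) \leq N_0^{-\tau_2}$ comes directly from the second inequality in \eqref{KAM smallness condition1}, and that is what one should quote. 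With these two clarifications the proof is complete and identical in substance to the paper's.
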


\begin{proof}
By \eqref{schema quadratico tame} and \eqref{stima cal R nu} we get 
$$
\begin{aligned}
{\mathfrak M}_{\langle D \rangle^\fm {\cal R}_{n + 1} \langle D \rangle^\fm}^\sharp (s)  
& \lesssim_{s_0, \mathtt b} N_{n}^{- \mathtt b} N_{n - 1} {\mathfrak M}_0(s, \mathtt b) + N_n^{\tau_1} \gamma^{- 1} 
{\mathfrak M}_0(s, \mathtt b) {\mathfrak M}_0(s_0, \mathtt b) N_{n - 1}^{- 2 \mathtt a} \\
&  \leq C_*(s_0, \mathtt b) N_n^{- \mathtt a} {\mathfrak M}_0(s, \mathtt b)
 \end{aligned}
$$
by \eqref{alpha beta}, \eqref{KAM smallness condition1},
taking $N_0(S, \mathtt b) > 0$ large enough 
and $\tau_2 > \tau_1 + \mathtt{a}$.
Then by \eqref{M+Ms}, \eqref{stima cal R nu}  we get that 
$$
\begin{aligned}
{\mathfrak M}_{\langle \partial_{\vphi, x} \rangle^{\mathtt b}\langle D \rangle^\fm {\cal R}_{n + 1} \langle D \rangle^\fm}^\sharp (s) &
 \lesssim_{s_0, \mathtt b} N_{n - 1} {\mathfrak M}_0(s, \mathtt b) + 
N_n^{\tau_1} N_{n - 1}^{1 - \mathtt a} \gamma^{- 1} {\mathfrak M}_0(s, \mathtt b) {\mathfrak M}_0(s_0, \mathtt b) \\
& \leq C_*(s_0, \mathtt b) N_n {\mathfrak M}_0(s, \mathtt b) \nonumber
\end{aligned}
$$
by \eqref{alpha beta}, \eqref{KAM smallness condition1} and taking $N_0(S, \mathtt b)> 0$ large enough.  
\end{proof}

\noindent {\sc Proof of $({\bf S2})_{n + 1}$.} The proof of the estimates \eqref{stima R nu i1 i2}, \eqref{stima R nu i1 i2 norma alta} 
for  $ \vnu = n + 1$ for the term $\Delta_{12}{\cal R}_{n + 1}$ (where ${\cal R}_{n + 1}$ is defined in \eqref{cal R nu + 1 tame}) follow as above. The proof of \eqref{r nu - 1 r nu i1 i2} for $\vnu = n + 1$ follows 
estimating $ \Delta_{12}(r_j^{n+1} - r_j^{n}) = \Delta_{12} {\mathtt r}_j^{n} $ by 
\eqref{diff:r1r2} of Lemma \ref{nuovadiagonale} and by \eqref{stima R nu i1 i2} for $\vnu = n$. Estimate \eqref{r nu i1 - r nu i2} for $\vnu = n + 1$ follows by a telescoping argument 
using \eqref{r nu - 1 r nu i1 i2} and \eqref{stima R nu i1 i2}.

\medskip

\noindent {\sc Proof of $({\bf S3})_{n + 1}$.}
First we note that the non-resonance conditions imposed in \eqref{Omega nu + 1 gamma} are actually finitely many.
We prove the following 

\begin{itemize}
\item{\sc Claim:} Let $ \omega \in  \mathtt{DC}( 2 \gamma, \tau) $ and $ \e \gamma^{- 2(M+1)} \leq 1 $. 
Then there exists $C_0 > 0$ such that, 
for any $\vnu = 0, \ldots, n$, for all  
$|\ell|, |j - j'| \leq N_\vnu$, $ j, j' \in \N^+ \setminus \Splus$, if 
\begin{equation}\label{condizione minimo j j'}
{\rm min}\{j, j' \} \geq  C_0 N_\vnu^{2 (\tau + 1)} \gamma^{-2},
\end{equation}
then 
$ |\omega \cdot \ell + \mu_j^\vnu - \mu_{j'}^{\vnu}| \geq \gamma \langle \ell \rangle^{-\tau} $.
\end{itemize} 

\noindent
{\sc Proof of the claim.} By \eqref{mu j nu}, \eqref{stima rj nu} and recalling also 
\eqref{stima code - 1/2 inizio riducibilita}, one has 
\begin{equation}\label{deep purple 0}
\mu_j^\vnu =  \mathtt m_{\frac12} j^{\frac12} \tanh^{\frac12}(\h j) + \mathfrak{r}_j^\vnu, \quad 
\mathfrak{r}_j^\vnu := r_j + r_j^\vnu\,, \quad 
\sup_{j \in \mathbb S^c} j^{\frac12} |\mathfrak{r}_j^\vnu |^{k_0, \gamma} 
\lesssim_S \e \gamma^{- 2 (M+1)}\,. 
\end{equation}
For all $j, j' \in \N \setminus \{ 0 \}$, one has
\begin{equation} \label{1309.4}
| \sqrt{j \tanh(\h j)} - \sqrt{ j' \tanh(\h j')} | 
\leq \frac{C(\h)}{\min\{ \sqrt{j}, \sqrt{j'} \} } \, |j - j'|.
\end{equation}
Then, using \eqref{1309.4} and that $\omega \in \mathtt{DC}( 2 \gamma, \tau)$, we have, 
for $|j-j'| \leq N_\vnu$, $|\ell| \leq N_\vnu$,  
\begin{align}
|\omega \cdot \ell + \mu_j^\vnu - \mu_{j'}^\vnu| 
& \geq  |\omega \cdot \ell| 
- |\mathtt m_{\frac12}| \frac{C(\h)}{\min\{ \sqrt{j}, \sqrt{j'} \} } \, |j - j'|
- |\mathfrak{r}_j^\vnu| - |\mathfrak{r}_{j'}^\vnu| 
\nonumber \\
& \stackrel{\eqref{stime lambda 1}, \eqref{deep purple 0}}{\geq } 
\frac{2 \gamma}{\langle \ell \rangle^{\tau}} 
- \frac{2 C(\h) N_\vnu }{\min\{ \sqrt{j}, \sqrt{j'} \} } \, 
- \frac{C(S) \e \gamma^{- 2(M+1)}}{{\rm min}\{ \sqrt{j}, \sqrt{j'} \}} 
\stackrel{\eqref{condizione minimo j j'}}{\geq} 
\frac{\gamma}{\langle \ell \rangle^\tau}\,, \nonumber
\end{align}
where the last inequality holds for $C_0$ large enough.
This proves the claim.

Now we prove ${\bf(S3)}_{n+1}$, namely that 
\begin{equation} \label{1309.5}
C(S) N_n^{(\tau + 1) (4 \perd + 1)} \gamma^{-4\perd} \|i_2 - i_1 \|_{s_0 + \mu(\mathtt b)} 
\leq \rho \quad \Longrightarrow \quad
\mathtt \Lambda_{n+1}^{\g}(i_1) \subseteq \mathtt \Lambda_{n+1}^{\g- \rho}(i_2) \, . 
\end{equation}
Let $ \lambda \in \mathtt \Lambda_{n+1}^{\g}(i_1) $.
Definition \eqref{Omega nu + 1 gamma} and \eqref{inclusione cantor riducibilita S4 nu}
with $\vnu = n$ (i.e.\  ${\bf(S3)}_n$) imply that 
$ \mathtt \Lambda_{n+1}^{\g}(i_1) \subseteq \mathtt \Lambda_{n}^{\g}(i_1) \subseteq \mathtt \Lambda_{n}^{\g- \rho}(i_2) $. 
Moreover $ \l \in \mathtt \Lambda_n^{\gamma - \rho}(i_2) \subseteq \mathtt \Lambda_n^{\gamma/2}(i_2)  $ because $\rho \leq \g/2$. 
Thus $\mathtt \Lambda_{n + 1}^\gamma(i_1) \subseteq \mathtt \Lambda_n^{\gamma - \rho}(i_2) \subseteq \mathtt \Lambda_n^{\gamma/2}(i_2) $. 
Hence $\mathtt \Lambda_{n+1}^{\g}(i_1) \subseteq \mathtt \Lambda_n^{\g}(i_1) \cap \mathtt \Lambda_n^{\g/2}(i_2)$, and estimate \eqref{r nu i1 - r nu i2} 
on $|\Delta_{12} r_j^n| = |r_{j}^n(\l, i_2(\l)) - r_{j}^n(\l, i_1(\l))|$ holds 
for any $\lambda \in \mathtt \Lambda_{n+1}^{\g}(i_1)$. 
By the previous claim, since $\omega \in \mathtt{DC}(2 \gamma, \tau)$, 
for all $|\ell|, |j - j'| \leq N_n$ satisfying \eqref{condizione minimo j j'} with $\vnu = n$ 
we have 
$$
|\omega \cdot \ell + \mu_j^n (\lambda, i_2(\lambda)) - \mu_{j'}^n (\lambda, i_2(\lambda)) | 
\geq \frac{\g}{\langle \ell \rangle^\tau}
\geq \frac{\g}{\langle \ell \rangle^\tau j^\perd j'^\perd}
\geq \frac{\gamma - \rho}{\langle \ell \rangle^\tau j^\perd j'^\perd}\,.
$$
It remains to prove that the second Melnikov conditions in \eqref{Omega nu + 1 gamma} with $\vnu = n+1$ also hold for $j,j'$ violating \eqref{condizione minimo j j'}$|_{\vnu = n}$, 
namely that
\begin{align}
|\omega \cdot \ell + \mu_j^n(\lambda, i_2 (\lambda)) - \mu_{j'}^n(\lambda, i_2(\lambda))| 
\geq \frac{\gamma - \rho}{\langle \ell \rangle^\tau j^\perd j'^\perd} \, , 
\ \forall |\ell|, |j - j'| \leq N_n\,, \  
\min \{ j, j' \} \leq C_0 N_n^{2 (\tau + 1)} \gamma^{-2}\,. 
\label{j j' max min j - j'}
\end{align}
The conditions on $j, j'$ in \eqref{j j' max min j - j'} imply that 
\begin{equation}\label{j j' max min j - j'0}
\max \{ j, j'\} = \min \{ j, j' \} + |j - j'| 
\leq C_0 N_n^{2 (\tau + 1)} \gamma^{-2} + N_n 
\leq 2 C_0 N_n^{2 (\tau + 1)} \gamma^{-2}\,.
\end{equation}
Now by \eqref{mu j nu}, \eqref{stima rj nu}, \eqref{1309.4}, recalling 
\eqref{stime lambda 1}, \eqref{stima code - 1/2 inizio riducibilita}, 
\eqref{r nu i1 - r nu i2} and the bound $\e \g^{-2(M+1)} \leq 1$, 
we get 
\begin{align}
 |(\mu_j^n - \mu_{j'}^n)(\l, i_2(\l)) - (\mu_j^n - \mu_{j'}^n)(\l, i_1(\l))| 
& \leq |(\mu_j^{0} - \mu_{j'}^{0})(\l, i_2(\l)) 
- (\mu_j^{0} - \mu_{j'}^{0})(\l, i_1(\l))| \nonumber \\
& \quad + |r_{j}^n(\l, i_2(\l)) - r_{j}^n(\l, i_1(\l))| 
 + |r_{j'}^n(\l, i_2(\l)) - r_{j'}^n(\l, i_1(\l))| 
\nonumber \\
&  \leq 
\frac{C(S) N_n}{{\rm min}\{ \sqrt{j}, \sqrt{j'} \}} \Vert i_2 - i_1 \Vert_{ s_0 + \mu(\mathtt b)} \,.  
\label{legno3} 
\end{align}
Since $\lm \in \mathtt{\Lambda}_{n+1}^{\g}(i_1)$, by \eqref{legno3} we have, 
for all $ | \ell | \leq N_n $, $ | j - j' | \leq N_n $,  
\begin{align}
| \omega \cdot \ell + \mu_j^n (i_2) - \mu_{j'}^n (i_2) | 
& \geq | \omega \cdot \ell + \mu_j^n (i_1) - \mu_{j'}^n (i_1) | 
- |(\mu_j^n - \mu_{j'}^n)(i_2) - (\mu_j^n - \mu_{j'}^n)(i_1) | 
\nonumber\\
& \geq 
\frac{\gamma}{\langle \ell \rangle^{\tau} j^{\perd} j'^{ \perd}} 
- \frac{C(S) N_n}{{\rm min}\{ \sqrt{j}, \sqrt{j'} \}}\Vert i_2 - i_1 \Vert_{ s_0 + \mu(\mathtt b)} \nonumber\\
& \geq
\frac{\gamma}{\langle \ell \rangle^{\tau} j^{\perd} j'^{ \perd}} 
- C(S) N_n \Vert i_2 - i_1 \Vert_{ s_0 + \mu(\mathtt b)} 
\geq \frac{\gamma - \rho}{\langle \ell \rangle^{\tau} j^{\perd} j'^{ \perd}} 
\nonumber
\end{align}
provided 
$ C(S) N_n \langle \ell \rangle^\tau j^\perd j'^\perd \|i_2 - i_1 \|_{s_0 + \mu(\mathtt b)} 
\leq \rho $. 
Using that $|\ell| \leq N_n$ and \eqref{j j' max min j - j'0}, 
the above inequality is implied by the inequality assumed in \eqref{1309.5}.
The proof for the second Melnikov conditions for $\om \cdot \ell + \mu_j^n + \mu_{j'}^n$ 
can be carried out similarly (in fact, it is simpler). 
This completes the proof of \eqref{inclusione cantor riducibilita S4 nu} with $\vnu = n + 1$.  
\qed

\subsection{Almost-invertibility  of  $ {\cal L}_\om $}\label{quasi invertibilita}

By \eqref{definizione cal L bot}, $\mL_\om = \mP_\bot \mL_\bot \mP_\bot^{-1}$, 
where ${\cal P}_\bot$ is defined in \eqref{semiconiugio cal L8}, \eqref{Phi 1 Phi 2 proiettate}. 
By \eqref{cal L infinito}, for any $\lambda \in \mathtt \Lambda^\gamma_n$, we have that 
$\mL_0 = \mU_n \mL_n \mU_n^{-1}$, where $\mU_n$ is defined in \eqref{defUn},
$\mL_0 = \mL_\bot^\sym$, and $\mL_\bot^\sym = \mL_\bot$ 
on the subspace of functions even in $x$ (see \eqref{2601.3}).
Thus 
\begin{equation}\label{final semi conjugation}
{\cal L}_\omega =  {\cal V}_n  {\cal  L}_n  {\cal V}_n^{- 1},  
\quad {\cal V}_n := {\cal P}_\bot {\cal U}_n .
\end{equation}
By Lemmata \ref{lemma operatore e funzioni dipendenti da parametro}, \ref{A versus |A|}, 
by estimate \eqref{stima Phi infinito}, 
using the smallness condition \eqref{ansatz riducibilita} 
and $\t_2 > \t_1$ (see Theorem \ref{iterazione riducibilita}), 
the operators ${\cal U}_n^{\pm 1}$ satisfy, for all $s_0 \leq s \leq S$, $\| {\cal U}_n^{\pm 1} h\|_s^{k_0, \gamma} \lesssim_S \| h \|_s^{k_0, \gamma} + \| \fracchi_0\|_{s + \mu(\mathtt b)}^{k_0, \gamma} \| h \|_{s_0 }^{k_0, \gamma}$. Therefore, by definition \eqref{final semi conjugation} and recalling \eqref{stima Phi 1 Phi 2 proiettate}, \eqref{relazione mathtt b N}, \eqref{definizione bf c (beta)}, the operators ${\cal V}_n^{\pm 1}$ satisfy, for all $s_0 \leq s \leq S$,
\begin{equation}\label{stime W1 W2}
\| {\cal V}_n^{\pm 1} h \|_s^{k_0, \gamma} \lesssim_S \| h \|_{s + \sigma}^{k_0, \gamma}  + 
\| \fracchi_0 \|_{s + \mu(\mathtt b)}^{k_0, \gamma} \| h \|_{s_0 + \sigma}^{k_0, \gamma} \,,
\end{equation}
for some $\sigma = \sigma(k_0, \tau, \nu) > 0$. 

In order to verify the inversion assumption \eqref{inversion assumption}-\eqref{tame inverse} 
we decompose the operator $ {\cal L}_n $ in \eqref{cal L infinito} as 
\begin{equation}\label{decomposizione bf Ln}
{\cal L}_n = {\mathfrak L}_n^{<}  + {\cal R}_n + {\cal R}_n^\bot   
\end{equation}
where 
\be\label{Rn-bot}
{\mathfrak L}_n^{<} := \Pi_{K_n} \big( \Dom {\mathbb I}_\bot + \ii {\cal D}_n \big) \Pi_{K_n} + \Pi_{K_n}^\bot \, , \quad 
{\cal R}_n^\bot := 
\Pi_{K_n}^\bot \big( \Dom {\mathbb I}_\bot + \ii {\cal D}_n \big) \Pi_{K_n}^\bot 
- \Pi_{K_n}^\bot \, , 
\end{equation}
the diagonal operator $ {\cal D}_n $ is defined in \eqref{cal L nu} (with $ \vnu = n $), and $K_n := K_0^{\chi^n}$ is the scale of the nonlinear Nash-Moser iterative scheme.  

\begin{lemma} {\bf (First order Melnikov non-resonance conditions)}\label{lem:first-Mel}
For all $ \lambda = (\om, \h)  $ in 
\begin{equation}\label{prime di melnikov}
 {\mathtt \Lambda}_{n + 1}^{\gamma, I}  :=  {\mathtt \Lambda}_{n + 1}^{\gamma, I} ( i ) := 
\big\{ \lambda \in  \R^\nu \times [\mathtt h_1, \mathtt h_2] : |\omega \cdot \ell  
+  \mu_j^n| \geq  2\gamma j^{\frac12} \langle \ell  \rangle^{- \tau} \,,
\quad \forall | \ell  | \leq K_n\,,\ j \in \N^+ \setminus \Splus \big\},
\end{equation}
the operator $ {\mathfrak L}_n^< $ in \eqref{Rn-bot} 
is invertible and there is an extension of the inverse operator (that we denote in the same way) to the whole $\R^\nu \times [\mathtt h_1, \mathtt h_2]$ satisfying the estimate 
\begin{equation}\label{stima tilde cal Dn}
\| ({\mathfrak L}_n^<)^{- 1} g \|_s^{k_0, \gamma} 
\lesssim_{k_0} \gamma^{- 1} \| g \|_{s + \mu}^{k_0, \gamma}\,, 
\end{equation}
where $\mu = k_0 + \t (k_0 +1)$ is the constant in \eqref{Diophantine-1} with $ k_0 = k + 1 $.
\end{lemma}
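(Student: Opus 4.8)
The operator $\mathfrak{L}_n^{<}$ in \eqref{Rn-bot} is block-diagonal with respect to the splitting $H^s_\bot = \Pi_{K_n} H^s_\bot \oplus \Pi_{K_n}^\bot H^s_\bot$: on the high-frequency part $\Pi_{K_n}^\bot H^s_\bot$ it is simply the identity, so it is trivially invertible there, and on the low-frequency part it equals $\Pi_{K_n}(\Dom\mathbb{I}_\bot + \ii \mD_n)\Pi_{K_n}$. The plan is therefore to invert this low-frequency block. Recall that $\mD_n$ is the diagonal operator with entries $\mu_j^n$, and, in the matrix form \eqref{forma finale parte diagonale pre riducibilita 1}, the operator $\mathbb{I}_\bot$ acts diagonally on the two components $(u,\bar u)$ with eigenvalues $\pm$ of $\mD_n$. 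So in the exponential basis $(e^{\ii(\ell\cdot\vphi + jx)})_{\ell\in\Z^\nu, j\in\mathbb{S}_0^c}$ the low-frequency block of $\mathfrak{L}_n^{<}$ is the multiplication by the scalars $\ii(\om\cdot\ell + \mu_j^n)$ on the first component and $\ii(\om\cdot\ell - \mu_{-j}^n) = \ii(\om\cdot\ell - \mu_j^n)$ on the conjugate component (using $\mu_j^n = \mu_{-j}^n$), for $|\ell|\le K_n$. Hence inverting $\mathfrak{L}_n^{<}$ on functions even in $x$ amounts to dividing the $(\ell,j)$-Fourier coefficient of $g$ by $\ii(\om\cdot\ell \pm \mu_j^n)$.

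\textbf{Key steps.} First I would write out this diagonal-in-Fourier structure explicitly and observe that, for $\lambda \in \mathtt{\Lambda}_{n+1}^{\gamma,I}(i)$ with $|\ell|\le K_n$ and $j\in\N^+\setminus\Splus$, the divisor satisfies $|\om\cdot\ell + \mu_j^n| \ge 2\gamma j^{1/2}\langle\ell\rangle^{-\tau}$ by \eqref{prime di melnikov}; the same bound holds for $|\om\cdot\ell - \mu_j^n|$ after replacing $\ell$ by $-\ell$, since $\langle\ell\rangle = \langle -\ell\rangle$. Then define the inverse on the low modes by dividing by these divisors, and define the \emph{extension} to all of $\R^\nu\times[\h_1,\h_2]$ exactly as in the previous sections: replace $(\om\cdot\ell + \mu_j^n)^{-1}$ by $\chi(f(\lambda)\rho^{-1})/f(\lambda)$ with $f(\lambda) := \om\cdot\ell + \mu_j^n$, $\rho := \gamma j^{1/2}\langle\ell\rangle^{-\tau}$, using the cut-off $\chi$ of \eqref{cut off simboli 1}, and similarly for the minus sign; this matches the usual device (as in Lemma \ref{lemma:WD} and the homological-equation analysis of Lemma \ref{Homological equations tame}). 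Next I would estimate the $k_0$-fold derivatives of these extended divisors: using \eqref{2017.palma.1}–\eqref{2001.1} (which bound $\gamma^{|\alpha|}|\pa_\lambda^\alpha \mu_j^n| \lesssim \gamma |j|^{1/2}$ and $\gamma^{|\alpha|}|\pa_\lambda^\alpha(\om\cdot\ell)| \le \gamma|\ell|$) together with Lemma \ref{lemma:cut-off sd}-type bounds \eqref{0103.1}–\eqref{0103.2}, one gets that each extended Fourier multiplier has $|\cdot|^{k_0,\gamma}$-norm bounded by $C\gamma^{-1}\langle\ell\rangle^{\tau(k_0+1)+k_0}|j|^{\fm'}$ for a suitable $\fm'$ absorbed into $\mu$. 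Finally, combining these per-mode bounds with the $K_n$-truncation and the standard interpolation/product estimates (Lemma \ref{lemma:interpolation}, Lemma \ref{lemma:LS norms}), I would sum over $|\ell|\le K_n$ and $j$ to obtain the tame estimate \eqref{stima tilde cal Dn} with the loss $\mu = k_0 + \tau(k_0+1)$; the factor $j^{1/2}$ in the denominator of the Melnikov condition is harmless (in fact helpful), so it does not enter the final loss of derivatives, only the powers of $\langle\ell\rangle$ do.

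\textbf{Main obstacle.} The computations are essentially routine once the structure is in place; the only genuinely delicate point is the bookkeeping for the derivatives with respect to $\lambda=(\om,\h)$ of the cut-off-regularized divisors and verifying that the $\h$-derivatives of $\mu_j^n = \mathtt{m}_{1/2}|j|^{1/2}\tanh^{1/2}(\h|j|) + r_j + r_j^n$ are uniformly controlled — here I would reuse the observation from the proof of Lemma \ref{Homological equations tame} that $|\pa_\h^n\{\tanh^{1/2}(\h|j|)\}|\le C(k_0,\h_1)$ uniformly in $j$, together with \eqref{stime lambda 1}, \eqref{stima code - 1/2 inizio riducibilita}, \eqref{stima rj nu} for the bounds on $\mathtt{m}_{1/2}$, $r_j$, $r_j^n$. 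With this uniform control, the estimate \eqref{0103.2} applies verbatim and \eqref{stima tilde cal Dn} follows. Since the argument is word-for-word parallel to the homological-equation estimates already carried out, I expect the proof to be short, essentially amounting to: identify the diagonal structure, apply \eqref{prime di melnikov}, regularize and extend the divisors, cite \eqref{0103.1}–\eqref{0103.2} and the interpolation lemmas.
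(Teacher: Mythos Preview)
Your proposal is correct and follows essentially the same approach as the paper: identify the diagonal Fourier structure, extend each divisor $(\om\cdot\ell+\mu_j^n)^{-1}$ via the cut-off device of Lemma~\ref{lemma:cut-off sd} with $f(\lambda)=\om\cdot\ell+\mu_j^n$, $M=C\gamma\langle\ell\rangle|j|^{1/2}$ and $\rho=2\gamma j^{1/2}\langle\ell\rangle^{-\tau}$, and then conclude exactly as in the proof of Lemma~\ref{lemma:WD}. The only superfluous ingredients in your outline are the interpolation/product lemmas (the operator is a pure Fourier multiplier, so no tame product estimates are needed) and the intermediate exponent $\fm'$ on $|j|$ (the $j^{1/2}$ factors in $M$ and $\rho$ cancel to leave a harmless $|j|^{-1/2}$, as you yourself observe at the end).
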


\begin{proof}
By \eqref{2017.palma.1}, similarly as in \eqref{2001.1} one has 
$\g^{|\a|} | \pa_\lm^\a( \om \cdot \ell + \mu_j^n ) | 
\lesssim \g \langle \ell \rangle |j|^{\frac12}$ for all $1 \leq |\a| \leq k_0$.
Hence Lemma \ref{lemma:cut-off sd} can be applied to $f(\lm) = \om \cdot \ell + \mu_j^n(\lm)$
with $M = C \g \langle \ell \rangle |j|^{\frac12}$ and 
$\rho = 2\gamma j^{\frac12} \langle \ell  \rangle^{- \tau}$.
Thus, following the proof of Lemma \ref{lemma:WD} with 
$\om \cdot \ell + \mu_j^n(\lm)$ instead of $\om \cdot \ell$, 
we obtain \eqref{stima tilde cal Dn}. 
\end{proof}

Standard smoothing properties imply that 
the operator ${\cal R}_n^\bot $ defined in \eqref{Rn-bot} satisfies, for all $ b  > 0$,   
\begin{equation}\label{stima tilde cal Rn}
\| {\cal R}_n^\bot h \|_{s_0}^{k_0, \gamma} \lesssim K_n^{- b} \| h \|_{s_0 + b + 1}^{k_0, \gamma}\,,\quad \| {\cal R}_n^\bot h\|_s^{k_0, \gamma} \lesssim \| h \|_{s + 1}^{k_0, \gamma} \, .
\end{equation}
By \eqref{final semi conjugation}, \eqref{decomposizione bf Ln}, 
Theorem \ref{Teorema di riducibilita}, Proposition \ref{prop: sintesi linearized}, 
and estimates \eqref{stima tilde cal Dn}, \eqref{stima tilde cal Rn}, \eqref{stime W1 W2}, 
we deduce the following theorem.

\begin{theorem}\label{inversione parziale cal L omega}
{\bf (Almost-invertibility of $ {\cal L}_\om $)}
Assume \eqref{ansatz 0}. Let $ {\mathtt a}, {\mathtt b} $ as in \eqref{alpha beta} and $M$ as in \eqref{relazione mathtt b N}. 
Let $S > s_0$, and assume the smallness condition \eqref{ansatz riducibilita}.
Then for all 
\begin{equation}\label{Melnikov-invert}
(\omega, \h) \in  {\bf \Lambda}_{n + 1}^{\g}  := {\bf \Lambda}_{n + 1}^{\g} (i) 
:= \tLm_{n + 1}^\gamma  \cap  {\mathtt \Lambda}_{n + 1}^{\gamma, I}
\end{equation}
(see \eqref{Cantor set}, \eqref{prime di melnikov}) 
the operator $ {\cal L}_\omega$ defined in \eqref{Lomega def} (see also \eqref{representation Lom}) 
can be decomposed as (cf. \eqref{inversion assumption})
\begin{align}\label{splitting cal L omega}
&  {\cal L}_\omega  = {\cal L}_\omega^{<} + {\cal R}_\omega +  {\cal R}_\omega^\bot \,, \quad  {\cal L}_\omega^{<} := {\cal V}_n {\mathfrak L}_n^{<} {\cal V}_n^{- 1} \,,\quad {\cal R}_\omega := {\cal V}_n {\cal R}_n {\cal V}_n^{- 1}\,,\quad {\cal R}_\omega^\bot := {\cal V}_n {\cal R}_n^\bot {\cal V}_n^{- 1}   
\end{align}
where $ {\cal L}_\omega^{<} $ is invertible and there is an extension of the inverse operator (that we denote in the same way) to the whole $\R^\nu \times [\mathtt h_1, \mathtt h_2]$ satisfying,  for some $\sigma := \sigma(k_0, \tau, \nu) > 0$ and for all $ s_0 \leq s \leq S $, estimates \eqref{stima R omega corsivo}-\eqref{tame inverse}, with $ \mu (\mathtt b) $  defined in \eqref{definizione bf c (beta)}. 
Notice that these latter estimates hold on the whole $\R^\nu \times [\mathtt h_1, \mathtt h_2] $. 
 \end{theorem}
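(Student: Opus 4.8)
The statement to prove is Theorem \ref{inversione parziale cal L omega}, the almost-invertibility of $\mL_\om$. The plan is to simply assemble the pieces that have already been built in Sections \ref{linearizzato siti normali}--\ref{sec: reducibility}. First I would recall the chain of conjugations: by Proposition \ref{prop: sintesi linearized}, for $\lm \in \mathtt{DC}(\g,\t)\times[\h_1,\h_2]$ the operator $\mL_\om$ is conjugated via $\mP_\bot$ (estimates \eqref{stima Phi 1 Phi 2 proiettate}, \eqref{Delta 12 mP}) to $\mL_\bot$, whose symmetrized version $\mL_\bot^\sym = \mL_0$ coincides with $\mL_\bot$ on functions even in $x$ by Lemma \ref{lemma:R bot sym}. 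Then by Theorem \ref{Teorema di riducibilita} (whose smallness hypothesis \eqref{ansatz riducibilita} is assumed here), for every $n$ and every $\lm$ in the set $\tLm_n^\g$ in \eqref{Cantor set} the map $\mU_n$ in \eqref{defUn} conjugates $\mL_0$ to $\mL_n = \Dom\mathbb I_\bot + \ii\mD_n + \mR_n$, with $\mU_n^{\pm1} - \mathbb I_\bot$ controlled by \eqref{stima Phi infinito} and $\langle D\rangle^\fm\mR_n\langle D\rangle^\fm$ by \eqref{stima resto operatore quasi diagonalizzato}. Composing, one gets \eqref{final semi conjugation}: $\mL_\om = \mV_n\mL_n\mV_n^{-1}$ with $\mV_n := \mP_\bot\mU_n$.

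Second, I would split $\mL_n$ as in \eqref{decomposizione bf Ln}: $\mL_n = \mathfrak L_n^< + \mR_n + \mR_n^\bot$ with $\mathfrak L_n^<$ and $\mR_n^\bot$ as in \eqref{Rn-bot}. The key analytic input here is Lemma \ref{lem:first-Mel}: on the set $\mathtt\Lambda_{n+1}^{\g,I}$ in \eqref{prime di melnikov} where the first order Melnikov conditions hold, $\mathfrak L_n^<$ is invertible with the tame bound \eqref{stima tilde cal Dn}, proved by applying Lemma \ref{lemma:cut-off sd} to $f(\lm)=\om\cdot\ell+\mu_j^n(\lm)$ (using $\g^{|\a|}|\pa_\lm^\a\mu_j^n|\lesssim\g|j|^{1/2}$ as in \eqref{2017.palma.1}) and then repeating the proof of Lemma \ref{lemma:WD} with $\om\cdot\ell+\mu_j^n$ in place of $\om\cdot\ell$. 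Meanwhile $\mR_n^\bot$ satisfies the standard ultraviolet smoothing bounds \eqref{stima tilde cal Rn} by the projector estimates. Conjugating back by $\mV_n$, I would set $\mL_\om^< := \mV_n\mathfrak L_n^<\mV_n^{-1}$, $\mR_\om := \mV_n\mR_n\mV_n^{-1}$, $\mR_\om^\bot := \mV_n\mR_n^\bot\mV_n^{-1}$ as in \eqref{splitting cal L omega}, which is the decomposition \eqref{inversion assumption}.

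Third, I would verify the quantitative tame estimates \eqref{stima R omega corsivo}--\eqref{tame inverse}. For $(\mL_\om^<)^{-1} = \mV_n(\mathfrak L_n^<)^{-1}\mV_n^{-1}$ one composes the bound \eqref{stime W1 W2} for $\mV_n^{\pm1}$ (which follows from \eqref{stima Phi 1 Phi 2 proiettate} for $\mP_\bot$ and from \eqref{stima Phi infinito} for $\mU_n^{\pm1}$, via Lemmata \ref{lemma operatore e funzioni dipendenti da parametro}, \ref{A versus |A|}) with \eqref{stima tilde cal Dn}; since all constituent maps are defined and satisfy their estimates on all of $\R^\nu\times[\h_1,\h_2]$, the resulting $(\mL_\om^<)^{-1}$ extends to the whole parameter space, and the reversibility/parity properties ($h(-\vphi)=\rho h(\vphi)$ from $g(-\vphi)=-\rho g(\vphi)$) are preserved because every transformation in the chain is even and reversibility preserving. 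For $\mR_\om$ one composes \eqref{stime W1 W2} with \eqref{stima resto operatore quasi diagonalizzato} (noting $\mathfrak M_{\mR_n}(s)\lesssim\mathfrak M_{\langle D\rangle^\fm\mR_n\langle D\rangle^\fm}(s)$ since $\fm\ge0$) to obtain the $O(\e\g^{-2(M+1)}N_{n-1}^{-\mathtt a})$ bound \eqref{stima R omega corsivo}; for $\mR_\om^\bot$ one composes \eqref{stime W1 W2} with \eqref{stima tilde cal Rn} to obtain \eqref{stima R omega bot corsivo bassa}--\eqref{stima R omega bot corsivo alta}. Finally one identifies the Cantor set: $\mL_\om$ admits this decomposition for $\lm\in\mathbf\Lambda_{n+1}^\g = \tLm_{n+1}^\g\cap\mathtt\Lambda_{n+1}^{\g,I}$ as in \eqref{Melnikov-invert}, since $\tLm_{n+1}^\g$ is needed for the conjugation \eqref{cal L infinito} and $\mathtt\Lambda_{n+1}^{\g,I}$ for Lemma \ref{lem:first-Mel}.

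\textbf{Main obstacle.} There is no genuinely hard new step: the theorem is a synthesis statement, and all the difficulty has been absorbed into Proposition \ref{prop: sintesi linearized} (the pseudo-differential reduction) and Theorem \ref{iterazione riducibilita}/\ref{Teorema di riducibilita} (the KAM diagonalization). The only point requiring a little care is bookkeeping of the loss-of-derivatives constant $\sigma$ and the $\fracchi_0$-dependent lower order terms through the composition of $\mP_\bot$, $\mU_n$ and $(\mathfrak L_n^<)^{-1}$ — one must check that the various $\mu(\mathtt b) = \aleph(M,s_0+\mathtt b)$ and $\tau_1,\tau_2$ thresholds from \eqref{alpha beta}, \eqref{relazione mathtt b N}, \eqref{definizione bf c (beta)} are large enough that the composed estimates close with the single loss $\sigma$ claimed, and that the interpolation-type products (Lemma \ref{composizione operatori tame AB}, \eqref{interpolazione parametri operatore funzioni (2)}) do not generate uncontrolled high-norm factors. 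This is routine given \eqref{ansatz 0} and the smallness \eqref{ansatz riducibilita}, but it is where the proof actually spends its words.
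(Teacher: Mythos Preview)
Your proposal is correct and follows essentially the same approach as the paper: the theorem is indeed a synthesis statement, and the paper's own proof consists of nothing more than the one-line reference ``By \eqref{final semi conjugation}, \eqref{decomposizione bf Ln}, Theorem \ref{Teorema di riducibilita}, Proposition \ref{prop: sintesi linearized}, and estimates \eqref{stima tilde cal Dn}, \eqref{stima tilde cal Rn}, \eqref{stime W1 W2}'' to exactly the ingredients you list. Your identification of the only nontrivial bookkeeping point (tracking the loss $\sigma$ through the compositions $\mV_n (\mathfrak L_n^<)^{-1} \mV_n^{-1}$, $\mV_n \mR_n \mV_n^{-1}$, $\mV_n \mR_n^\bot \mV_n^{-1}$ via Lemma \ref{composizione operatori tame AB} and \eqref{interpolazione parametri operatore funzioni (2)}) is accurate.
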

 
This result
allows to deduce Theorem \ref{thm:stima inverso approssimato}, which is
the key step for a Nash-Moser iterative scheme.

\section{Proof of Theorem \ref{main theorem}}
\label{sec:NM}

We consider the finite-dimensional subspaces 
$$
E_n := \Big\{ \fracchi (\vphi ) = (\Theta , I , z) (\vphi) , \ \  
\Theta = \Pi_n \Theta, \ I = \Pi_n I, \ z = \Pi_n z \Big\}
$$
where $ \Pi_n  $ is the projector
\be\label{truncation NM}
\Pi_n := \Pi_{K_n} : z(\ph,x) = \sum_{\ell  \in \Z^\nu,  j \in {\mathbb S}_0^c} z_{\ell, j} e^{\ii (\ell  \cdot \ph + jx)} 
\ \mapsto \Pi_n z(\ph,x) 
:= \sum_{|(\ell ,j)| \leq K_n} 
z_{\ell,  j} e^{\ii (\ell  \cdot \ph + jx)}  
\ee
with $ K_n = K_0^{\chi^n} $ (see \eqref{definizione Kn}) and 
we denote with the same symbol 
$ \Pi_n p(\ph) :=  \sum_{|\ell | \leq K_n}  p_\ell e^{\ii \ell  \cdot \ph} $. 
We define $ \Pi_n^\bot := {\rm Id} - \Pi_n $.  
The projectors $ \Pi_n $, $ \Pi_n^\bot$ satisfy the smoothing properties \eqref{p2-proi}, \eqref{p3-proi} for the weighted Whitney-Sobolev norm $\| \cdot \|_s^{k_0, \gamma}$ 
defined in \eqref{def norm Lip Stein uniform}.

In view of the Nash-Moser Theorem \ref{iterazione-non-lineare} we introduce the following constants: 
\begin{align}
& {\mathtt a}_1 :=  {\rm max}\{6  \sigma_1 + 13, \chi p (\tau + 1) (4\perd + 1) + \chi(\mu(\mathtt b) + 2 \sigma_1) +1 \},  
\qquad 
\mathtt a_2 := \chi^{- 1} \mathtt a_1  - \mu(\mathtt b) - 2 \sigma_1 , 
\label{costanti nash moser}
\\
& \mu_1 :=  3( \mu({\mathtt b}) + 2\sigma_1 ) + 1, 
\qquad {\mathtt b}_1 := {\mathtt a}_1 + \mu({\mathtt b}) +  3 \sigma_1 + 3 + \chi^{-1} \mu_1 ,
\qquad \chi = 3/ 2,
\label{costanti nash moser 1} 
\\
& \sigma_1 := \max \{ \bar \sigma\,, s_0 + 2 k_0 + 5 \},
\qquad 
S := s_0 + \mathtt b_1 
\label{costanti nash moser 2} 
\end{align}
where $\bar \sigma := \bar \sigma(\tau, \nu, k_0) > 0$ is defined in Theorem \ref{thm:stima inverso approssimato}, 
$ s_0 + 2 k_0 + 5 $ is the largest loss of regularity in the estimates of the Hamiltonian vector field $X_P$ in Lemma \ref{lemma quantitativo forma normale}, 
$\mu(\mathtt b)$ is defined in \eqref{definizione bf c (beta)},  
$\mathtt b$ is the constant $ {\mathtt b} := [{\mathtt a}] + 2 \in \N $ 
where $ {\mathtt a} $ is defined in \eqref{alpha beta}. The constants $\mathtt b_1$, $\mu_1$ appear in $({\cal P}3)_n$ of Theorem \ref{iterazione-non-lineare} below: $\mathtt b_1$ gives the maximal Sobolev regularity $S = s_0 + \mathtt b_1$ which has to be controlled along the Nash Moser iteration and $\mu_1$ gives the rate of divergence of the high norms $\| \tilde W_n \|_{s_0 + \mathtt b_1}^{k_0, \gamma}$. The constant $\mathtt a_1$ appears in \eqref{P2n} and gives the rate of convergence of ${\cal F}(\tilde U_n)$ in low norm. 

The exponent $ p $ in \eqref{NnKn} which links the scale $(N_n)_{n \geq 0}$ of the reducibility scheme (Theorem \ref{Teorema di riducibilita}) and the scale $(K_n)_{n \geq 0}$ of the Nash-Moser iteration ($N_n = K_n^p$ ) is required to satisfy   
\be\label{cond-su-p}
p {\mathtt a} > (\chi - 1 ) {\mathtt a}_1 + \chi \sigma_1 = \frac12 {\mathtt a}_1 + \frac32 \sigma_1 \, . 
\ee
By \eqref{alpha beta}, $ {\mathtt a} \geq \chi (\tau + 1)(4 \mathtt d + 1) + 1$. 
Hence, by the definition of $ {\mathtt a}_1 $ in \eqref{costanti nash moser}, there exists 
$ p := p(\tau, \nu, k_0) $ such that  \eqref{cond-su-p} holds. For example we fix $p := 3 (\mu(\mathtt b) + 3 \sigma_1 + 1)/{\mathtt a}$.

Given  
$  W = ( \fracchi, \beta ) $ where
$   \fracchi = \fracchi (\lambda) $ 
is the periodic component of a torus as in \eqref{componente periodica}, and $ \b = \b (\l) \in \R^\nu $
we denote $ \|  W \|_{s}^{k_0, \gamma} := \max\{ \|  \fracchi \|_{s}^{k_0, \gamma} ,  |  \beta |^{k_0, \gamma} \} $, 
where  $ \|  \fracchi \|_{s}^{k_0, \gamma}  $ is defined in \eqref{def:norma-cp}.

\begin{theorem}\label{iterazione-non-lineare} 
{\bf (Nash-Moser)} 
There exist $ \d_0$, $ C_* > 0 $, such that, if
\begin{equation}\label{nash moser smallness condition}  
K_0^{\tau_3} \e \g^{-2M - 3} < \d_0, 
\quad \tau_3 := \max \{ p \tau_2, 2 \sigma_1 + {\mathtt a}_1 + 4 \},  
\quad K_0 := \gamma^{- 1}, 
\quad \gamma:= \e^a, 
\quad 0 < a < \frac{1}{\tau_3 + 2M + 3}\,,
\end{equation}
where the constant $M$ is defined in \eqref{relazione mathtt b N} and $ \tau_2 := \tau_2(\tau, \nu)$ is  defined in Theorem \ref{iterazione riducibilita}, 
then, for all $ n \geq 0 $: 

\begin{itemize}
\item[$({\cal P}1)_{n}$] 
there exists a $k_0$ times differentiable function $\tilde W_n : \R^\nu  \times [\h_1, \h_2] 
\to E_{n -1} \times \R^\nu $, $ \lambda = (\om, \h) \mapsto \tilde W_n (\lambda) 
:=  (\tilde \fracchi_n, \tilde \alpha_n - \om ) $, for  $ n \geq 1$,  
and $\tilde W_0 := 0 $, satisfying 
\begin{equation}\label{ansatz induttivi nell'iterazione}
\| \tilde W_n \|_{s_0 + \mu({\mathtt b}) + \sigma_1}^{k_0, \gamma} \leq C_*   \e  \gamma^{-1}\,. 
\end{equation}
Let $\tilde U_n := U_0 + \tilde W_n$ where $ U_0 := (\vphi,0,0, \om )$.
The difference $\tilde H_n := \tilde U_{n} - \tilde U_{n-1}$, $ n \geq 1 $,  satisfies
\begin{equation}  \label{Hn}
\|\tilde H_1 \|_{s_0 + \mu({\mathtt b}) + \sigma_1}^{k_0, \gamma} \leq	 
C_* \e \gamma^{- 1} \,,\quad \| \tilde H_{n} \|_{ s_0 + \mu({\mathtt b}) + \sigma_1}^{k_0, \gamma} \leq C_* \e \gamma^{-1} K_{n - 1}^{- \mathtt a_2} \,,\quad \forall n \geq 2. 
\end{equation}

\item[$({\cal P}2)_{n}$]   
Setting $ {\tilde \imath}_n := (\vphi, 0, 0) + \tilde \fracchi_n $, we define 
\be\label{def:cal-Gn}
{\cal G}_{0} := \tOm \times [\h_1, \h_2]\,, \quad 
{\cal G}_{n+1} :=  {\cal G}_n \cap {\bf \Lambda}_{n + 1 }^{ \gamma}({\tilde \imath}_n)
\,, \quad n \geq 0 \, , 
\ee
where $  {\bf \Lambda}_{n + 1}^{ \gamma}({\tilde \imath}_n) $ is defined in \eqref{Melnikov-invert}. 
Then, for all $\lambda \in {\cal G}_n$,  setting $ K_{-1} := 1 $, we have 
\be\label{P2n}
\| {\cal F}(\tilde U_n) \|_{ s_{0}}^{k_0, \gamma}  \leq C_* \e K_{n - 1}^{- {\mathtt a}_1} \, .
\ee
\item[$({\cal P}3)_{n}$] \emph{(High norms).} 
$ \| \tilde W_n \|_{ s_{0}+ {\mathtt b}_1}^{k_0, \gamma} 
\leq C_* \e \gamma^{-1}  K_{n - 1}^{\mu_1}$ for all $\lambda \in {\cal G}_n$.
\end{itemize}
\end{theorem}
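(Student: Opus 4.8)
The statement is the standard Nash--Moser iteration with the smoothing scheme, parallel to the schemes in \cite{BBM-auto}, \cite{BertiMontalto}, so I would proceed by induction on $ n $, using as the crucial ingredient the almost-approximate inverse of Theorem \ref{thm:stima inverso approssimato}. The base case $ n = 0 $ is trivial: $ \tilde W_0 = 0 $, so $({\cal P}1)_0$ and $({\cal P}3)_0$ hold, and $({\cal P}2)_0$ follows from $ \| {\cal F}(U_0) \|_{s_0}^{k_0,\gamma} = \| X_P(U_0) \|_{s_0}^{k_0,\gamma} \lesssim \e $ (using Lemma \ref{lemma quantitativo forma normale} and $ K_{-1} = 1 $), possibly after enlarging $ C_* $. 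For the inductive step, assuming $({\cal P}1)_n$--$({\cal P}3)_n$, I would first verify that the ansatz \eqref{ansatz 0} holds at $ \tilde \imath_n $ with $ \mu = \mu(\mathtt b) + \bar\sigma $ (this is exactly the content of \eqref{ansatz induttivi nell'iterazione} combined with $\sigma_1 \geq \bar\sigma$), so that Theorem \ref{thm:stima inverso approssimato} applies on the set $ {\cal G}_{n+1} \subseteq {\bf\Lambda}_{n+1}^\gamma(\tilde\imath_n) = \tLm_o $. Then I would define the next approximation by the usual truncated Newton step,
\be\label{NM-step-plan}
\tilde U_{n+1} := \tilde U_n + \tilde H_{n+1}, \qquad
\tilde H_{n+1} := - \Pi_n {\bf T}_n \Pi_n {\cal F}(\tilde U_n),
\ee
where $ {\bf T}_n := {\bf T}_0(\tilde\imath_n) $ is the almost-approximate inverse, and $ \Pi_n $ is the projector \eqref{truncation NM}.

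The core of the argument is the estimate of the new error $ {\cal F}(\tilde U_{n+1}) $. I would use the splitting \eqref{splitting per approximate inverse}: writing $ {\cal F}(\tilde U_{n+1}) = {\cal F}(\tilde U_n) + d_{i,\alpha}{\cal F}(\tilde\imath_n)[\tilde H_{n+1}] + Q_n $ where $ Q_n $ is the quadratic Taylor remainder, and inserting \eqref{NM-step-plan} together with $ d_{i,\alpha}{\cal F} \circ {\bf T}_n = {\rm Id} + {\cal P}_n + {\cal P}_{\omega,n} + {\cal P}_{\omega,n}^\bot $, one gets schematically
\be\label{NM-error-plan}
{\cal F}(\tilde U_{n+1}) = \Pi_n^\bot {\cal F}(\tilde U_n)
+ ({\cal P}_n + {\cal P}_{\omega,n} + {\cal P}_{\omega,n}^\bot)\big[ \Pi_n {\cal F}(\tilde U_n) \big]
+ \big( d_{i,\alpha}{\cal F}(\tilde\imath_n) \Pi_n^\bot - \Pi_n^\bot d_{i,\alpha}{\cal F}(\tilde\imath_n) \big)[{\bf T}_n \Pi_n {\cal F}(\tilde U_n)] + Q_n.
\ee
Each term is then estimated in the low norm $ \| \cdot \|_{s_0}^{k_0,\gamma} $ and in the high norm $ \| \cdot \|_{s_0+\mathtt b_1}^{k_0,\gamma} $ using: the smoothing estimates \eqref{p2-proi}--\eqref{p3-proi}; the tame bounds \eqref{stima inverso approssimato 1}--\eqref{stima cal G omega bot alta} on $ {\bf T}_n $ and the remainders, where crucially $ {\cal P}_{\omega,n} $ is small because of the factor $ N_{n-1}^{-\mathtt a} $ (and here the condition \eqref{cond-su-p} relating $ N_n = K_n^p $ to $ K_n $ enters, converting $ N_{n-1}^{-\mathtt a} $ into a gain $ K_{n-1}^{-\mathtt a_1 \cdot(\text{something})} $); the bound \eqref{stima cal G omega bot bassa} on $ {\cal P}_{\omega,n}^\bot $ with $ b $ chosen large (of order $ \mathtt b_1 $); the interpolation/tame estimates on $ Q_n $ coming from Lemma \ref{lemma quantitativo forma normale}; and the inductive hypotheses $({\cal P}2)_n$, $({\cal P}3)_n$. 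This yields $({\cal P}2)_{n+1}$ and $({\cal P}3)_{n+1}$ by the standard ``super-exponential beats exponential'' bookkeeping, provided $ K_0 $ is large (i.e.\ $ \e $ small) as in \eqref{nash moser smallness condition}; and $({\cal P}1)_{n+1}$, i.e.\ \eqref{ansatz induttivi nell'iterazione} and \eqref{Hn}, follows by interpolating the low and high norm bounds on $ \tilde H_{n+1} $ and summing the telescoping series.

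\textbf{Main obstacle.} The delicate point is the bookkeeping of the exponents: one must check that the three scales --- $ K_n = K_0^{\chi^n} $ (Nash--Moser), $ N_n = K_n^p $ (reducibility), and the loss-of-derivatives constants $ \sigma_1, \mu(\mathtt b), \mathtt b_1, \mu_1, \mathtt a_1, \mathtt a_2 $ --- are chosen so that all the inequalities close simultaneously. Concretely, the quadratic term $ Q_n $ and the term $ {\cal P}_n[\Pi_n {\cal F}(\tilde U_n)] $ produce, via interpolation, contributions of the form $ K_n^{\text{(loss)}} (K_{n-1}^{-\mathtt a_1})^2 $ and $ K_n^{\text{(loss)}} K_{n-1}^{-\mathtt a_1} K_{n-1}^{\mu_1}\e\gamma^{-1} $ in low norm; the ultraviolet remainder gives $ K_n^{-b} K_n^{\text{(loss)}} K_{n-1}^{\mu_1} $; and these must all be $ \leq \tfrac12 C_* \e K_n^{-\mathtt a_1} $. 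This forces the precise definitions \eqref{costanti nash moser}--\eqref{costanti nash moser 2} of $ \mathtt a_1, \mathtt a_2, \mu_1, \mathtt b_1 $ and the condition \eqref{cond-su-p} on $ p $; verifying each of these algebraic inequalities (there are a handful, one per term in \eqref{NM-error-plan} times two norms) is the technically demanding --- though entirely routine --- part. I would also need to check the inclusion $ {\cal G}_{n+1} \subseteq {\cal G}_n $ is used correctly and that $ \tilde W_{n+1} $ indeed lands in $ E_n \times \R^\nu $, which is immediate from the projector $ \Pi_n $ in \eqref{NM-step-plan}. Finally, the whole iteration must be performed with the Whitney--Sobolev norms $ \| \cdot \|_s^{k_0,\gamma} $ throughout, which is harmless since all the cited estimates (smoothing, interpolation, tame bounds) are stated in exactly those norms.
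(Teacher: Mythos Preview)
Your proposal is correct and follows exactly the approach the paper has in mind: the paper's own proof simply states that it is the same as Theorem 8.2 in \cite{BertiMontalto}, namely the standard Nash--Moser iteration driven by the almost-approximate inverse of Theorem \ref{thm:stima inverso approssimato}, which is precisely the scheme you outline (truncated Newton step, error splitting via \eqref{splitting per approximate inverse}, and the exponent bookkeeping encoded in \eqref{costanti nash moser}--\eqref{cond-su-p}). There is nothing to add; your identification of the ``main obstacle'' as the algebraic verification of the exponent inequalities is accurate and is exactly why the paper defers the details to the cited reference.
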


\begin{proof}
The  proof is the same as Theorem 8.2 in \cite{BertiMontalto}. 
It is based on an iterative Nash-Moser scheme and
uses  the almost-approximate inverse at each approximate quasi-periodic solution provided by
Theorem \ref{thm:stima inverso approssimato}. 
\end{proof}
 
We now complete the proof of Theorem \ref{main theorem}. 
Let $ \gamma = \e^a  $ with $ a \in (0, a_0) $ and $ a_0 := 1 / (2M + 3+ \tau_3 ) $ where $\tau_3$ is defined in \eqref{nash moser smallness condition}.
Then the smallness condition given by the first inequality in \eqref{nash moser smallness condition} holds for $ 0 < \e < \e_0 $ small enough and Theorem \ref{iterazione-non-lineare} applies.   
By \eqref{Hn} the  sequence of functions 
$$ 
{\tilde W}_n = {\tilde U}_n - (\vphi,0,0, \om) := (\tilde \fracchi_n, \tilde \a_n - \omega) =
\big( \tilde \imath _n - (\vphi,0,0), \tilde \a_n - \omega \big) 
$$ 
is a Cauchy sequence in $ \|  \ \|_{s_0}^{k_0, \gamma} $ and then it converges to a function
$
W_\infty := \lim_{n \to + \infty} {\tilde W}_n$. We define
$$
U_\infty := (i_\infty, \a_\infty) = (\vphi,0,0, \om) +  W_\infty \, , \quad W_\infty : \R^\nu \times [\h_1, \h_2] \to H^{s_0}_\vphi  \times H^{s_0}_\vphi \times H^{s_0}_{\vphi, x}
\times \R^\nu\,. 
$$
By \eqref{ansatz induttivi nell'iterazione} and \eqref{Hn} we also deduce that
\begin{equation}\label{U infty - U n}
\|  U_\infty -  U_0 \|_{s_0 + \mu(\mathtt b) + \sigma_1}^{k_0, \gamma} \leq C_* \e \gamma^{- 1} \,, \quad \| U_\infty - {\tilde U}_n \|_{s_0  + \mu({\mathtt b}) + \sigma_1}^{k_0, \gamma} \leq C \e \gamma^{-1} K_{n }^{- \mathtt a_2}\,, \ \  n \geq 1 \, .
\end{equation}
Moreover by Theorem \ref{iterazione-non-lineare}-$({\cal P}2)_n$, we deduce that 
$ {\cal F}(\lambda, U_\infty(\lambda)) = 0 $ for all $ \lambda  $ belonging to 
\be\label{defGinfty}
\bigcap_{n \geq 0} {\cal G}_n = 
\mG_0 \cap \bigcap_{n \geq 1} 
 {\bf \Lambda}_{n}^{\gamma}(\tilde \imath_{n-1}) 
\stackrel{\eqref{Melnikov-invert}}{=} 
\mG_0 \cap \Big[ \bigcap_{n \geq 1}  \tLm_{n}^{\gamma}(\tilde \imath_{n-1}) \Big] \cap 
 \Big[ \bigcap_{n \geq 1}   \mathtt \Lambda_{n}^{\gamma, I}(\tilde \imath_{n-1}) \Big]\, \,,
\ee
where $\mG_0 =  \mathtt \Omega \times [\h_1, \h_2] $ is defined in \eqref{def:cal-Gn}.  
By the first inequality in \eqref{U infty - U n} we deduce \eqref{mappa aep} and \eqref{stima toro finale}.  

It remains to prove that 
the Cantor set $ {\cal C}_\infty^{\gamma} $ in \eqref{Cantor set infinito riccardo} 
is contained in $\bigcap_{n \geq 0} \mG_n$.
We first consider the set 
\begin{equation}\label{cantor finale 1 riccardo}
{\cal  G}_\infty 
:= \mG_0 \cap \Big[ \bigcap_{n \geq 1} \tLm_n^{2 \gamma}( i_\infty) \Big] 
\cap \Big[ \bigcap_{n \geq 1} \mathtt \Lambda_n^{2 \gamma, I}(i_\infty)  \Big]\,.
\end{equation}

\begin{lemma} \label{lemma inclusione cantor riccardo 1}
$ {\cal  G}_\infty  \subseteq  \bigcap_{n \geq 0 } {\cal G}_n $,
where $\mG_n$ is defined in \eqref{def:cal-Gn}.
\end{lemma}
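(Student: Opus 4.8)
The goal is to show the inclusion $\mathcal{G}_\infty \subseteq \bigcap_{n\geq 0}\mathcal{G}_n$, where $\mathcal{G}_\infty$ is defined in terms of the \emph{final} torus $i_\infty$ with constant $2\gamma$, while each $\mathcal{G}_n$ is defined in terms of the \emph{intermediate} torus $\tilde\imath_{n-1}$ with constant $\gamma$. The strategy is an induction on $n$. The base case $n=0$ is trivial since $\mathcal{G}_0 = \mathtt\Omega \times [\h_1,\h_2]$ by definition. For the inductive step, assuming $\mathcal{G}_\infty \subseteq \mathcal{G}_n$, I would fix $\lambda \in \mathcal{G}_\infty$ and show $\lambda \in \mathcal{G}_{n+1} = \mathcal{G}_n \cap \mathbf{\Lambda}_{n+1}^\gamma(\tilde\imath_n)$; since $\lambda \in \mathcal{G}_n$ already, it remains only to check $\lambda \in \mathbf{\Lambda}_{n+1}^\gamma(\tilde\imath_n) = \tLm_{n+1}^\gamma(\tilde\imath_n) \cap \mathtt\Lambda_{n+1}^{\gamma,I}(\tilde\imath_n)$.

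The key mechanism is the Lipschitz dependence of the Cantor sets on the torus, together with the smallness of the distance $\|\tilde\imath_n - i_\infty\|$. First I would use $\mathbf{(S3)}_\nu$ (the inclusion \eqref{inclusione cantor riducibilita S4 nu} from Theorem \ref{iterazione riducibilita}): for each $\nu \leq n+1$, provided
\[
C(S) N_{\nu-1}^{(\tau+1)(4\perd+1)}\gamma^{-4\perd}\|i_\infty - \tilde\imath_n\|_{s_0+\mu(\mathtt b)} \leq \gamma,
\]
one has $\tLm_\nu^{2\gamma}(i_\infty) \subseteq \tLm_\nu^{2\gamma-\gamma}(\tilde\imath_n) = \tLm_\nu^\gamma(\tilde\imath_n)$. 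The required smallness is supplied by \eqref{U infty - U n}, namely $\|U_\infty - \tilde U_n\|_{s_0+\mu(\mathtt b)+\sigma_1}^{k_0,\gamma} \leq C\e\gamma^{-1}K_n^{-\mathtt a_2}$, combined with $N_\nu = K_\nu^p$ and $\gamma = \e^a$; the interplay of exponents here is exactly what condition \eqref{cond-su-p} (i.e. $p\mathtt a > \tfrac12\mathtt a_1 + \tfrac32\sigma_1$) and the choice of $\mathtt a$ in \eqref{alpha beta} are designed to guarantee, for $\e$ small and $K_0 = \gamma^{-1}$ large. An analogous (in fact simpler) perturbative argument handles the first-order Melnikov set $\mathtt\Lambda_{n+1}^{\gamma,I}$: starting from $|\omega\cdot\ell + \mu_j^n(\lambda,i_\infty)| \geq 2\gamma j^{1/2}\langle\ell\rangle^{-\tau}$ for $|\ell| \leq K_n$, one estimates the difference $|\mu_j^n(\lambda,i_\infty) - \mu_j^n(\lambda,\tilde\imath_n)|$ using the Lipschitz-in-torus bounds on the eigenvalues $\mu_j^n$ established in $\mathbf{(S2)}_\nu$ (estimates \eqref{r nu i1 - r nu i2} and the telescoping of \eqref{r nu - 1 r nu i1 i2}), and absorbs it into the loss from $2\gamma$ down to $\gamma$.

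Concretely, I would carry it out in this order: (i) dispatch $n=0$; (ii) record, for later use, the quantitative smallness of $\|i_\infty - \tilde\imath_n\|_{s_0+\mu(\mathtt b)}$ from \eqref{U infty - U n}; (iii) for each $\nu \leq n+1$ verify the hypothesis of $\mathbf{(S3)}_\nu$ using (ii) and the exponent bookkeeping, concluding $\tLm_\nu^{2\gamma}(i_\infty)\subseteq\tLm_\nu^\gamma(\tilde\imath_n)$, hence $\bigcap_{\nu\leq n+1}\tLm_\nu^{2\gamma}(i_\infty)\subseteq\tLm_{n+1}^\gamma(\tilde\imath_n)$; (iv) run the direct perturbative comparison on $\mathtt\Lambda_{n+1}^{\gamma,I}$; (v) combine to get $\lambda\in\mathbf{\Lambda}_{n+1}^\gamma(\tilde\imath_n)$ and close the induction.

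\textbf{Main obstacle.} The routine parts are the two perturbative comparisons; the delicate point is the exponent arithmetic in step (iii)—checking that $C(S)N_{\nu-1}^{(\tau+1)(4\perd+1)}\gamma^{-4\perd}\cdot\e\gamma^{-1}K_n^{-\mathtt a_2}\leq\gamma$ holds uniformly in $\nu\leq n+1$ and $n\geq 0$. Since $N_{\nu-1}\leq N_n = K_n^p$ and the worst case is $\nu = n+1$, this reduces to an inequality of the form $K_n^{p(\tau+1)(4\perd+1) - \mathtt a_2}\cdot\e\gamma^{-(4\perd+2)}\lesssim\gamma$, which must follow from the definitions of $\mathtt a_2$ in \eqref{costanti nash moser}, of $\mathtt a$ in \eqref{alpha beta}, of $\tau_2$ in Theorem \ref{iterazione riducibilita}, and the Nash--Moser smallness \eqref{nash moser smallness condition}. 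I expect this to be the same computation that appears in the proof of the analogous lemma in \cite{BertiMontalto}, so I would follow that template; everything else is a direct application of $\mathbf{(S2)}_\nu$, $\mathbf{(S3)}_\nu$ and \eqref{U infty - U n}.
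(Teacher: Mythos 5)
Your plan is correct and follows the same route as the paper, whose own ``proof'' is simply a citation of Lemma 8.6 in \cite{BertiMontalto}: induct on $n$, use $({\bf S3})_{n+1}$ together with the convergence estimate \eqref{U infty - U n} to obtain $\tLm_{n+1}^{2\gamma}(i_\infty)\subseteq\tLm_{n+1}^{\gamma}(\tilde\imath_n)$, and handle the first-order Melnikov set $\mathtt\Lambda_{n+1}^{\gamma,I}$ by a direct perturbative comparison of eigenvalues via $({\bf S2})_\nu$. The one small inaccuracy is attributing the exponent bookkeeping in step (iii) to \eqref{cond-su-p} (that condition serves the almost-reducibility remainder estimate in the Nash--Moser scheme); the decisive constraint is rather the inclusion of the term $\chi p(\tau+1)(4\perd+1)$ in the definition of $\mathtt a_1$ in \eqref{costanti nash moser}, which forces $\mathtt a_2 \geq p(\tau+1)(4\perd+1) + \chi^{-1}$ and hence makes $K_n^{p(\tau+1)(4\perd+1)-\mathtt a_2}$ decay — you do list \eqref{costanti nash moser} among the ingredients, so this is only a matter of emphasis, not a gap.
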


\begin{proof} See Lemma 8.6 of \cite{BertiMontalto}. \end{proof}

Then we define the ``final eigenvalues" 
\begin{equation}\label{autovalori finali riccardo}
\mu_j^\infty := \mu_j^0(i_\infty) + r_j^\infty \,, \quad j \in \N^+ \setminus \Splus \, , 
\end{equation}
where $\mu_j^0(i_\infty) $ are defined in \eqref{op-diago0} 
(with $\mathtt m_{\frac12}, r_j$ depending on $i_\infty$) and 
\begin{equation}\label{resti autovalori finali riccardo}
r_j^\infty := \lim_{n \to + \infty} r_j^n(i_\infty)\,, \quad 
j \in \N^+ \setminus \Splus\,,
\end{equation}
with $r_j^n$ given in Theorem \ref{iterazione riducibilita}-$({\bf S1})_n $.
Note that the sequence $(r_j^n(i_\infty))_{n \in \N}$ is a Cauchy sequence 
in $ | \ |^{k_0, \gamma}$ by \eqref{vicinanza autovalori estesi}. 
As a consequence its limit function $ r_j^\infty (\om, \h) $ is well defined, 
it is $ k_0 $ times differentiable and satisfies 
\be\label{distanza-rnrinfty}
| r_j^\infty - r_j^n(i_\infty)|^{k_0, \gamma} \leq 
C \e \gamma^{- 2(M+1)} |j|^{- 2 \fm} N_{n - 1}^{- {\mathtt a}} \, ,  \ n \geq 0 \, .
\ee 
In particular, since $ r_j^0 (i_\infty) = 0 $, we get  
$ | r_j^\infty |^{k_0, \gamma} \leq C \e \gamma^{- 2(M+1)} |j|^{- 2\fm} $ 
(here $C := C(S, k_0)$, with $S$ fixed in \eqref{costanti nash moser 2}).  
The latter estimate, \eqref{autovalori finali riccardo}, \eqref{op-diago0} 
and \eqref{stima code - 1/2 inizio riducibilita} 
imply \eqref{autovalori infiniti}-\eqref{stime autovalori infiniti} with $ \rin_j^\infty := r_j + r_j^\infty $
and $ \mathtt m_{\frac12}^\infty := \mathtt m_{\frac12} (i_\infty) $.

\begin{lemma} \label{lemma inclusione cantor riccardo 2}
The {\it final Cantor set} $ {\cal C}_\infty^{\gamma} $ in \eqref{Cantor set infinito riccardo} satisfies $ {\cal C}_\infty^\gamma \subseteq {\cal  G}_\infty $, 
where $\mG_\infty$ is defined in \eqref{cantor finale 1 riccardo}. 
\end{lemma}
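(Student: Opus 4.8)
The plan is to prove the inclusion $\mathcal{C}_\infty^\gamma \subseteq \mathcal{G}_\infty$ by comparing the Cantor set $\mathcal{C}_\infty^\gamma$ defined in \eqref{Cantor set infinito riccardo} with the ``running'' Cantor set $\mathcal{G}_\infty$ in \eqref{cantor finale 1 riccardo}, exactly along the lines of the analogous statement (Lemma 8.7) in \cite{BertiMontalto}. The point is that $\mathcal{C}_\infty^\gamma$ is defined in terms of the final eigenvalues $\mu_j^\infty(\omega,\mathtt h)$, whereas $\mathcal{G}_\infty$ is defined through the intermediate eigenvalues $\mu_j^n(i_\infty)$ appearing at each step $n$ of the reducibility scheme of Theorem \ref{iterazione riducibilita}. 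To pass from one to the other one uses the quantitative control \eqref{distanza-rnrinfty} on $|r_j^\infty - r_j^n(i_\infty)|^{k_0,\gamma}$, together with the elementary bound $N_{n-1} \leq N_n$ and the fact that along the scale \eqref{def Nn} one has $N_{n-1}^{\mathtt a} \to \infty$.

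First I would fix $\lambda = (\omega,\mathtt h) \in \mathcal{C}_\infty^\gamma$ and check that $\lambda \in \mathtt{\Lambda}_n^{2\gamma}(i_\infty)$ for every $n \geq 1$, i.e.\ that all the first and second order Melnikov conditions in \eqref{Omega nu + 1 gamma} hold with $2\gamma$ in place of $\gamma$ when the eigenvalues $\mu_j^{\nu-1}$ are replaced by $\mu_j^{n-1}(i_\infty)$ and the indices are restricted to $|\ell|,|j-j'| \leq N_{n-1}$. For the diagonal-difference conditions $|\omega\cdot\ell + \mu_j^{n-1}(i_\infty) - \mu_{j'}^{n-1}(i_\infty)|$, I would write
$$
\mu_j^{n-1}(i_\infty) - \mu_{j'}^{n-1}(i_\infty) = \big(\mu_j^\infty - \mu_{j'}^\infty\big) - \big(r_j^\infty - r_j^{n-1}(i_\infty)\big) + \big(r_{j'}^\infty - r_{j'}^{n-1}(i_\infty)\big),
$$
use the last line of \eqref{Cantor set infinito riccardo}, which gives $|\omega\cdot\ell + \mu_j^\infty - \mu_{j'}^\infty| \geq 4\gamma j^{-\mathtt d} j'^{-\mathtt d}\langle\ell\rangle^{-\tau}$, and estimate the correction terms by \eqref{distanza-rnrinfty}: since $|j-j'| \leq N_{n-1}$ forces, say, $j' \leq 2j + N_{n-1}^2$, the factor $|j|^{-2\mathfrak m}$ in \eqref{distanza-rnrinfty} (with $\mathfrak m = \mathtt d(k_0+1) + k_0/2$ as in \eqref{alpha beta}, hence $2\mathfrak m$ much larger than $\mathtt d$) more than compensates the polynomial loss $j^{-\mathtt d}j'^{-\mathtt d}$, and the remaining smallness $\e\gamma^{-2(M+1)} N_{n-2}^{-\mathtt a}$ can be absorbed. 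Concretely, for $n=1$ the correction is zero because $r_j^0(i_\infty)=0$, so there one just needs $4\gamma \geq 2\gamma$; for $n \geq 2$ one gets $|\omega\cdot\ell + \mu_j^{n-1} - \mu_{j'}^{n-1}| \geq 4\gamma j^{-\mathtt d}j'^{-\mathtt d}\langle\ell\rangle^{-\tau} - C\e\gamma^{-2(M+1)} N_{n-2}^{-\mathtt a} j^{-2\mathfrak m}\langle\ell\rangle^{-\tau} \geq 2\gamma j^{-\mathtt d}j'^{-\mathtt d}\langle\ell\rangle^{-\tau}$ under the smallness condition \eqref{ansatz riducibilita}. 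The sum $\mu_j^{n-1}+\mu_{j'}^{n-1}$ and the first order conditions defining $\mathtt{\Lambda}_n^{2\gamma,I}(i_\infty)$ are handled in the same way, using the second and third lines of \eqref{Cantor set infinito riccardo}, and are in fact easier since the gain $\sqrt j + \sqrt{j'}$ (resp.\ $\sqrt j$) beats the trivial correction.

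I expect the main (only) obstacle to be purely bookkeeping: keeping track of which scale, $N_{n-1}$ or $N_{n-2}$, appears where, and verifying that the exponent inequalities in \eqref{alpha beta} and \eqref{relazione tau k0} (namely $\mathtt a > 3\tau_1$, $\mathtt d > \tfrac34 k_0^*$, $2\mathfrak m \gg \mathtt d$) are exactly what makes the compensation work uniformly in $n$, $\ell$, $j$, $j'$. Since all of this is identical to the argument already carried out in \cite{BertiMontalto} for the gravity-capillary case, I would simply cite Lemma 8.7 of \cite{BertiMontalto}, noting that the only input needed is \eqref{distanza-rnrinfty} together with \eqref{Cantor set infinito riccardo}, both of which are now available. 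Combining Lemma \ref{lemma inclusione cantor riccardo 1} and Lemma \ref{lemma inclusione cantor riccardo 2} then yields $\mathcal{C}_\infty^\gamma \subseteq \mathcal{G}_\infty \subseteq \bigcap_{n\geq 0}\mathcal{G}_n$, so that $\mathcal{F}(\lambda, U_\infty(\lambda)) = 0$ for all $\lambda \in \mathcal{C}_\infty^\gamma$, which completes the proof of Theorem \ref{main theorem} together with the properties \eqref{mappa aep}, \eqref{stima toro finale}, \eqref{autovalori infiniti}, \eqref{stime autovalori infiniti} already established above.
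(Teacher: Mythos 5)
Your proposal follows essentially the same route as the paper: fix $\lambda\in\mathcal{C}_\infty^\gamma$, estimate the distance between $\mu_j^\infty$ and $\mu_j^{n}(i_\infty)$ via the tail bound \eqref{distanza-rnrinfty}, and absorb the correction into the margin $4\gamma\to 2\gamma$ using $\mathfrak m>\mathtt d$. That is indeed what the paper does, by induction on $n$ (the induction is needed because $\tLm_{n+1}^{2\gamma}$ is, by its definition \eqref{Omega nu + 1 gamma}, a subset of $\tLm_n^{2\gamma}$, so one must first place $\lambda$ in $\tLm_n^{2\gamma}$ before the membership test for $\tLm_{n+1}^{2\gamma}$ even makes sense).

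One genuine slip: you assert that for $n=1$ the correction is zero because $r_j^0(i_\infty)=0$. This is incorrect. The relevant correction is $\mu_j^{n-1}(i_\infty)-\mu_j^\infty = r_j^{n-1}(i_\infty)-r_j^\infty$, and for $n=1$ this is $-r_j^\infty$, which is \emph{not} zero. What \eqref{distanza-rnrinfty} gives for $n-1=0$ is $|r_j^\infty - r_j^0(i_\infty)|^{k_0,\gamma}\leq C\e\gamma^{-2(M+1)}|j|^{-2\fm}N_{-1}^{-\mathtt a}$, and since $N_{-1}=1$ the $N$-gain is absent; the smallness must come entirely from $\e\gamma^{-2(M+1)}$. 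So the $n=1$ case should be treated by the same estimate as $n\geq 2$, not as a degenerate base case. Also, $|j-j'|\leq N_{n-1}$ gives $j'\leq j+N_{n-1}$, not $j'\leq 2j+N_{n-1}^2$. Finally, the statement that the factor $|j|^{-2\fm}$ ``more than compensates'' the loss $j^{\mathtt d}j'^{\mathtt d}$ needs the explicit inequality
$$
\big(j^{-2\fm}+j'^{-2\fm}\big)\,j^{\perd}j'^{\perd}\lesssim_{\perd} N_n^{\perd}\,,
\qquad |j-j'|\leq N_n,\ \fm>\perd,
$$
which is what turns the smallness requirement into $C_1\e\gamma^{-2M-3}N_{n-1}^{-\mathtt a}N_n^{\tau+\perd}\leq1$, verified by \eqref{nash moser smallness condition} and $\tau_3>p(\tau+\perd)$. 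Worth spelling out, since without it the uniformity in $j,j'$ is not obvious.
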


\begin{proof}
By \eqref{cantor finale 1 riccardo}, we have to prove that 
$ {\cal C}_\infty^\gamma \subseteq \tLm_n^{2 \gamma}(i_\infty) $, $ \forall n \in \N $. 
We argue by induction. 
For $n = 0$ the inclusion is trivial, 
since $ \tLm_0^{2 \gamma}(i_\infty) = \tOm \times [\h_1, \h_2] = \mG_0$. 
Now assume that ${\cal C}_\infty^\gamma \subseteq \tLm_n^{2 \gamma}(i_\infty)$ for some $n \geq 0$. 
For all $ \lambda \in {\cal C}_\infty^\gamma \subseteq  \tLm_n^{2 \gamma}(i_\infty)$, 
by \eqref{mu j nu}, \eqref{autovalori finali riccardo}, \eqref{distanza-rnrinfty},  we get 
\[
|(\mu_j^n- \mu_{j'}^n)(i_\infty) - (\mu_j^\infty - \mu_{j'}^\infty) | 
\leq C \e \gamma^{-2(M+1)} N_{n-1}^{- \mathtt a} \big( j^{- 2 \fm} + j'^{- 2 \fm} \big) 
\]
Therefore, for any $|\ell|, |j - j'| \leq N_n$ with $(\ell, j, j') \neq (0, j, j)$ 
(recall \eqref{Cantor set infinito riccardo}) we have
\begin{align*}
|\omega \cdot \ell + \mu_j^n(i_\infty) - \mu_{j'}^n(i_\infty)| 
& \geq |\omega \cdot \ell + \mu_j^\infty - \mu_{j'}^\infty| 
- C \e \gamma^{- 2(M+1)} N_{n-1}^{- \mathtt a} \big( j^{- 2 \fm} + j'^{- 2 \fm} \big) 
\\
& \geq 4 \gamma  \langle \ell \rangle^{- \tau} j^{-\perd } j'^{- \perd} 
- C \e \gamma^{- 2(M+1)} N_{n-1}^{- \mathtt a} \big( j^{- 2 \fm} + j'^{- 2 \fm} \big) 
\\
& \geq 2 \gamma \langle \ell \rangle^{- \tau} j^{-\perd } j'^{- \perd} 
\end{align*}
provided
$ C \e \gamma^{- 2M - 3} N_{n - 1}^{- \mathtt a} N_n^\tau 
\big( j^{- 2 \fm} + j'^{- 2 \fm} \big) j^\perd j'^\perd \leq 1 $. 
Since $\fm > \perd$ (see \eqref{alpha beta}), 
one has $(j+N_n)^\perd j^{\perd - 2\fm} \lesssim_\perd N_n^\perd$ for all $j \geq 1$. 
Hence, using $|j - j'| \leq N_n$, 
$$
\big( j^{- 2 \fm} + j'^{- 2 \fm} \big) j^\perd j'^\perd
= \frac{j'^\perd}{j^{2\fm - \perd}} \, + \frac{j^\perd}{j'^{2\fm - \perd}} 
\leq \frac{(j+N_n)^\perd}{j^{2\fm - \perd}} \, + \frac{(j'+N_n)^\perd}{j'^{2\fm - \perd}} 
\lesssim_\perd N_n^\perd.
$$
Therefore, for some $C_1 > 0$, one has, for any $ n \geq 0 $,  
$$
C \e \gamma^{- 2M - 3} N_{n - 1}^{- \mathtt a} N_n^\tau 
\big( j^{- 2 \fm} + j'^{- 2 \fm} \big) j^\perd j'^\perd
\leq C_1 \e \gamma^{- 2M - 3} N_{n - 1}^{- \mathtt a} N_n^{\tau + \perd} 
\leq 1
$$
for $\e$ small enough, 
by \eqref{alpha beta}, \eqref{nash moser smallness condition} and because
$ \tau_3 >  p (\tau + \perd) $
(that follows since  $\t_2 > \t_1 + \mathtt a$ where
$\t_2$ has been fixed in Theorem \ref{iterazione riducibilita}).
In conclusion $ {\cal C}_\infty^\gamma \subseteq \tLm_{n + 1}^{2 \gamma}(i_\infty)$
(for  the  
second Melnikov conditions with the $ + $ sign in \eqref{Omega nu + 1 gamma} we apply the
same argument). 
Similarly we prove that $ {\cal C}_\infty^{ \gamma} \subseteq  \mathtt \Lambda_{n}^{2 \gamma, I}(i_\infty)$ for all $n \in \N $. 
\end{proof}

Lemmata \ref{lemma inclusione cantor riccardo 1}, \ref{lemma inclusione cantor riccardo 2} imply $ {\cal C}_\infty^\gamma \subseteq \bigcap_{n\geq 0} {\cal G}_n $, where $\mG_n$ is defined in \eqref{def:cal-Gn}. This concludes the proof of Theorem \ref{main theorem}.

\appendix

\section{Dirichlet-Neumann operator}\label{subDN}


Let $ \eta \in {\cal C}^\infty (\T) $. 
It is well-known (see e.g.\ \cite{LannesLivre}, \cite{AlM}, \cite{IP-Mem-2009}) that 
the Dirichlet-Neumann operator is a {\it pseudo-differential} operator of the form 
\be\label{sviluppo Geta}
G(\eta) = G(0)   + \mR_{G}(\eta), 
\qquad \text{where} \quad 
G(0) = |D| \tanh(\mathtt h |D|)
\ee
is the Dirichlet-Neumann operator at the flat surface $ \eta (x ) = 0 $ 
and the  remainder $ \mR_{G}(\eta) $ is in $ OPS^{-\infty} $  and it is $ O(\eta) $-small. 
Note that the profile $ \eta (x) := \eta (\omega, \mathtt h, \vphi, x )$, 
as well as the velocity potential at the free surface
$ \psi (x) := \psi (\omega, \mathtt h, \vphi, x )$, 
may depend on the angles $ \vphi \in \T^\nu $ and the parameters
$ \lambda := (\omega, {\mathtt h}) \in \R^{\nu} \times [{\mathtt h}_1, {\mathtt h}_2 ] $. 
For simplicity of notation we sometimes omit to write the dependence with respect 
to $ \vphi $ and $ \lambda $. 

In the sequel we use the following notation. Let $X$ and $Y$ be Banach spaces and $B \subset X$ be a bounded open set. We denote by ${\cal C}^1_b(B, Y)$ the space of the ${\cal C}^1$ functions $B \to Y$ bounded and with bounded derivatives.

\begin{proposition}\label{lemma dirichlet Neumann} {\bf (Dirichlet-Neumann)}
Assume that $\partial_\lambda^k \eta(\lambda, \cdot, \cdot)$ is ${\cal C}^\infty$ for all $|k| \leq k_0$. There exists $\delta (s_0, k_0 ) > 0$ such that, if 
\begin{equation}\label{condizione di piccolezza lemma dirichlet neumann}
\| \eta\|_{2 s_0 + 2 k_0 + 1}^{k_0, \gamma} \leq \delta(s_0, k_0) \, , 
\end{equation} 
then the Dirichlet-Neumann operator $ G(\eta) $ may be written as in \eqref{sviluppo Geta} 
where $ {\cal R}_G (\eta ) $ is an integral operator with  $ {\cal C}^\infty $ 
kernel  $ K_G $ 
(see \eqref{integral operator}) 
which satisfies, for all $m, s, \alpha \in  \N$, the estimate 
\be\label{estimate DN}
\norma {\cal R}_G (\eta) 
\norma_{-m, s, \a}^{k_0, \gamma} \leq C(s,m,\a, k_0) \| K_G \|_{{\cal C}^{s+m + \alpha}}^{k_0,\gamma} \leq
 C(s, m, \a, k_0 ) \| \eta \|_{s + 2 s_0 + 2 k_0 + m + \alpha + 3 }^{k_0, \gamma} \, .
\ee
Let $s_1 \geq 2 s_0 + 1$. There exists $\delta(s_1) > 0$  such that  
the map $ \{ \| \eta \|_{s_1 + 6} < \delta(s_1)  \}  \to H^{s_1}(\T^\nu \times \T \times \T) $, $\eta \mapsto K_G ( \eta ) $,  is ${\cal C}^1_b$.
\end{proposition}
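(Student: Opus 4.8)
The statement to be proved is Proposition \ref{lemma dirichlet Neumann}: for $\eta$ small enough in a suitable weighted Sobolev norm, the Dirichlet--Neumann operator $G(\eta)$ decomposes as in \eqref{sviluppo Geta} with a smoothing remainder $\mR_G(\eta)$, together with the quantitative bounds \eqref{estimate DN} and the $\mC^1_b$ dependence of the kernel on $\eta$. The overall strategy is the classical one going back to Craig--Sulem and Lannes: reduce the elliptic problem \eqref{BoundaryPr} in the curved strip $\{-\h < y < \eta(x)\}$ to a problem with flat boundaries via a diffeomorphism, solve the transformed elliptic equation by a fixed point/variational argument, and then read off the Neumann datum. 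What is new here, compared to the Cauchy-problem literature and compared to \cite{BertiMontalto} (which treats $\h=+\infty$), is that all estimates must be \emph{tame} in the Whitney--Sobolev norms $\|\cdot\|_s^{k_0,\gamma}$ of Definition \ref{def:Lip F uniform}, uniform in the depth parameter $\h\in[\h_1,\h_2]$ and in the torus variables $(\omega,\h,\ph)$, and must track the dependence of the smoothing kernel on $\eta$.

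\textbf{Step 1: flattening.} First I would introduce the change of variables $(x,y)\mapsto(x,z)$ with, say, $z = \h\frac{y-\eta(x)}{\h+\eta(x)}$ (or a similar bijection sending $\{-\h<y<\eta(x)\}$ to the fixed strip $\{-\h<z<0\}$), transforming $\Delta\Phi=0$ into a variable-coefficient uniformly elliptic equation $\mathrm{div}(P(x,z)\nabla_{x,z}\varphi)=0$ on the flat strip, with Neumann condition at $z=-\h$ and Dirichlet condition $\varphi=\psi$ at $z=0$, where the matrix $P$ depends on $\eta,\eta_x$ and is a smooth function of them, equal to the identity when $\eta=0$. The coefficients of $P-\mathrm{Id}$ are $O(\eta)$ and, by Moser composition (Lemma \ref{Moser norme pesate}), they satisfy tame estimates $\|P-\mathrm{Id}\|_s^{k_0,\gamma}\lesssim_s \|\eta\|_{s+1}^{k_0,\gamma}$ under \eqref{condizione di piccolezza lemma dirichlet neumann}. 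This step is routine but one must be careful that the flattening map is a legitimate diffeomorphism — this is exactly where the smallness threshold $\delta(s_0,k_0)$ enters, guaranteeing $\h+\eta(x)>0$ and $\|\eta\|_{L^\infty}$ small via the embedding $H^{s_0}\hookrightarrow L^\infty$.

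\textbf{Step 2: solving the flat elliptic problem and extracting the remainder.} Writing $\varphi = \varphi_0 + v$ where $\varphi_0$ is the harmonic extension of $\psi$ solving the \emph{constant-coefficient} problem (whose Neumann datum at $z=0$ is exactly $G(0)\psi=|D|\tanh(\h|D|)\psi$), the correction $v$ solves an elliptic problem with zero boundary data and right-hand side $\mathrm{div}((\mathrm{Id}-P)\nabla\varphi_0)$, which is supported by the $O(\eta)$ coefficients. A Lax--Milgram/energy argument on the strip, combined with elliptic regularity and interpolation, yields $v$ and all its $z$-derivatives as tame functions of $\eta$ and $\psi$; crucially, because the constant-coefficient operator $G(0)=|D|\tanh(\h|D|)$ differs from $|D|$ by $\mathrm{Op}(r_\h)$ with $r_\h\in S^{-\infty}$ (see \eqref{tangente iperbolica espansione}), the smoothing gain is genuine. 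The remainder $\mR_G(\eta)\psi$ is then $(\text{Neumann datum of }v)$ pulled back, and one shows it is an integral operator of the form \eqref{integral operator} with $\mC^\infty$ kernel $K_G$ by exploiting that $\eta\in\mC^\infty(\T)$ forces $v$ to be $\mC^\infty$ up to the boundary and the map $\psi\mapsto v|_{z=0}$ has a smooth Schwartz kernel; Lemma \ref{lem:Int} then converts the kernel estimate into the pseudodifferential bound \eqref{estimate DN}. The key quantitative bookkeeping is to count derivatives: each elliptic estimate loses $s_0$ (twice, for the two-sided tame structure) plus $2k_0$ from the Whitney structure plus a fixed number from the commutators with the flattening map, which accumulates to the $+2s_0+2k_0+m+\alpha+3$ appearing in \eqref{estimate DN}. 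I expect this derivative-counting, together with verifying that \emph{all} the intermediate estimates are uniform in $\h\in[\h_1,\h_2]$ (the lower bound $\h_1>0$ is used for coercivity on the strip and for bounding $r_\h$ and its $\h$-derivatives, as in the remark after \eqref{espansione asintotica degli autovalori}), to be the main technical obstacle — not conceptually hard, but requiring patience to make tight.

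\textbf{Step 3: $\mC^1_b$ dependence on $\eta$.} Finally, for the Lipschitz/$\mC^1$ statement, I would differentiate the flat elliptic problem in the direction of a variation $\hat\eta$ (the shape derivative), using formula \eqref{formula shape der} at the nonlinear level: the map $\eta\mapsto P(\eta)$ is smooth with values in $H^{s_1}$-multipliers for $\eta$ small in $H^{s_1+1}$, the solution operator of the flat problem depends smoothly on $P$ by a Neumann-series/implicit-function argument (the problem is a bounded invertible perturbation of the constant-coefficient one), hence $\eta\mapsto v$ and $\eta\mapsto K_G(\eta)$ are $\mC^1$; boundedness of the derivative on $\{\|\eta\|_{s_1+6}<\delta(s_1)\}$ follows from the same tame estimates evaluated at a fixed Sobolev index $s_1$, with the extra $+6$ absorbing the loss in the shape-derivative estimate. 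This last step is essentially an application of the standard fact that the solution of a linear elliptic equation depends analytically on its coefficients in the relevant Banach spaces, so the only care needed is to check the loss-of-derivatives count $s_1+6$ and that all constants are uniform in $\h$.
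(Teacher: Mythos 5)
Your proposal takes a genuinely different route from the paper, and it contains a gap at the crucial point. You flatten the domain by a non-conformal change of variables (e.g.\ $z = \h\frac{y-\eta(x)}{\h+\eta(x)}$), which transforms $\Delta\Phi=0$ into a \emph{variable-coefficient} elliptic equation on a flat strip, and then you propose to write $\varphi = \varphi_0 + v$ and show that the Neumann trace of $v$ gives the smoothing remainder $\mR_G(\eta)$. The paper instead constructs a \emph{conformal} diffeomorphism $(X,Y)\mapsto(U,V)$ in \eqref{conf-diffeo}, determined by solving the boundary system \eqref{sist-eq} via a contraction (Lemma \ref{stima costante media DN}). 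Conformality is the point: the Laplacian and the Neumann condition are preserved, so the transformed problem \eqref{BVP-new} is the \emph{constant-coefficient} Laplacian on a flat strip, whose Dirichlet-to-Neumann map is exactly $|D|\tanh((\h+c)|D|)$. This yields the exact identity $G(\eta) = \partial_x P^{-1}\mH\tanh((\h+c)|D|)P$ (Lemma \ref{G=mH}), from which $\mR_G$ is read off as a sum of three explicit conjugation errors: $\partial_x(P^{-1}\mH P - \mH)$, $\partial_x(P^{-1}\mH\mathrm{Op}(r_{\h+c})P - \mH\mathrm{Op}(r_{\h+c}))$, and $c\,\partial_x\mH\mathrm{Op}(\breve r_{\h,c})$, each of which is manifestly in $OPS^{-\infty}$ by the conjugation lemmata \ref{coniugio Hilbert}, \ref{lemma cio}, \ref{lemma nucleo Fourier multiplier OPS - infty}. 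The exact formula is what makes the smoothing character of $\mR_G$ (and its tame dependence on $\eta$) automatic.

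The gap in your Step 2 is the claim that the correction map $\psi\mapsto v|_{z=0}$ has a smooth Schwartz kernel because ``$\eta\in\mC^\infty$ forces $v$ to be $\mC^\infty$ up to the boundary''. This does not follow: $v$ solves a zero-boundary-data elliptic problem with source $\mathrm{div}((\mathrm{Id}-P)\nabla\varphi_0)$, and since $\varphi_0$ is only as regular as $\psi$ near $z=0$, the trace $\partial_z v|_{z=0}$ is only as regular as $\psi$ there as well — it is of order $0$ in $\psi$, not of order $-\infty$. Elliptic interior regularity helps away from $z=0$, but the Neumann datum is taken precisely on that boundary. The genuine smoothing gain must come from a more structural argument (as the paper's explicit conjugation formula provides, or alternatively from a full paradifferential/iterative analysis of $G(\eta)$ à la Alazard–M\'etivier), not from energy estimates plus the observation that $\tanh(\h|D|)-1\in S^{-\infty}$. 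As written, your argument would establish that $G(\eta) - G(0)$ is bounded and tame, but not that it is infinitely smoothing with a $\mC^\infty$ integral kernel, which is the content of \eqref{estimate DN}. The derivative count in \eqref{estimate DN} and the $\mC^1_b$ statement of Step 3 are downstream of that missing ingredient.
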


The rest of this section is devoted to the proof of Proposition \ref{lemma dirichlet Neumann}.

In order to analyze the Dirichlet-Neumann  operator $ G(\eta ) $
 it is convenient to  transform the boundary value problem   
\eqref{BoundaryPr} (with $ h = {\mathtt h}$) defined in the closure of the free domain $ {\cal D}_\eta = \{(x,y):  - {\mathtt h } < y<\eta(x)\}$
into an elliptic problem in a flat lower   strip
\be\label{fixed-strip}
 \big\{(X,Y) :  - \mathtt h - c  \leq Y \leq  0 \big\} \, , 
\ee
via a  conformal diffeomorphism  (close to the identity for $ \eta $ small) of the form 
\be\label{conf-diffeo}
x = U(X,Y) = X + p(X,Y), \quad y = V(X,Y) = Y + q(X,Y) \, . 
\ee

\begin{remark}
If  \eqref{conf-diffeo} is a conformal map then the system obtained transforming 
\eqref{BoundaryPr}
is simply \eqref{BVP-new} (the Laplace operator and the Neumann boundary conditions 
are transformed into themselves).
\end{remark}

We require that 
$q(X,Y)$ and $p(X,Y)$ are $2\pi$-periodic in $X$, 
so that \eqref{conf-diffeo}
defines a diffeomorphism between the cylinder $ \T \times [- \mathtt h -c,0] $ 
and $ {\cal D}_\eta $.
The bottom  $ \{ Y = - \mathtt h - c \} $ is transformed in the bottom $ \{ y = - \mathtt h \} $ if
\be\label{bottom-down}
V(X, - \mathtt h - c ) = - \mathtt h \qquad \Leftrightarrow \qquad 
q(X, - \mathtt h -c) = c  \, , \quad \forall X \in \R \, , 
\ee
and the boundary $ \{ Y = 0 \} $ is transformed in the free surface  $ \{ y = \eta (x) \} $ if 
\be\label{bottom-up}
V(X, 0 ) = \eta ( U(X,0)) \qquad \Leftrightarrow \qquad 
q(X,0 ) = \eta ( X + p(X, 0)) \, . 
\ee
The diffeomorphism \eqref{conf-diffeo} is conformal if and only if the map 
$ U(X,Y) + \ii V(X,Y) $ is analytic, 
which amounts to the Cauchy-Riemann equations
$U_X = V_Y$, $U_Y = - V_X$, 
namely $p_X = q_Y$, $p_Y = - q_X$. 
The functions $ (U, V) $, i.e. $ (p, q )$, are harmonic conjugate. 
Moreover, \eqref{bottom-down} and the Cauchy-Riemann equations 
imply that  
\be\label{BCqY}
U_Y (X, - \mathtt h - c ) = p_Y (X, - \mathtt h -c ) = 0 \, . 
\ee 
Given any periodic function 
\be\label{val:pX0}
{\mathtt p}(X) =  {\mathtt p}_0 + \sum_{k \neq 0}  {\mathtt p}_k e^{\ii k X} \,,
\ee
the unique function $p(X, Y)$ that is $2\pi$-periodic in $X$ and 
solves $\Delta p =0$, $p(X,0) ={\mathtt p} (X)$, 
$p_Y (X, -  \mathtt h  -c ) = 0$ 
is  
\be\label{sol:p}
p(X,Y) =  \sum_{k \in \Z } 
{\mathtt p}_k \frac{\cosh (|k| (Y+ \mathtt h +c))}{ \cosh (|k| ( \mathtt h  +c))} \, e^{\ii k X}.
\ee
The unique function $q(X,Y)$ that is $2\pi$-periodic in $X$ 
and solves $\Delta q = 0$, \eqref{bottom-down} and $p_X = q_Y$, $p_Y = - q_X$ is 
\be\label{sol:q}
q(X,Y) =  c + \sum_{k \neq 0}   \ii  {\mathtt p}_k  \frac{\sign (k)}{ \cosh (|k|( \mathtt h  + c))}  \sinh (|k| (Y+ \mathtt h +c))  e^{\ii k X} \, .
\ee
We still have to impose \eqref{bottom-up}. 
By \eqref{sol:q} we have 
\be\label{qX0}
q(X,0) = c + \sum_{k \neq 0}   \ii \, \sign (k) \tanh ( |k|( \mathtt h +c) )  
{\mathtt p}_k e^{\ii k X} 
= c - {\cal H} \tanh ((\mathtt h +c)|D|) {\mathtt p} (X) 
\ee
where $ {\mathtt p} (X) $ is defined in \eqref{val:pX0} and $ {\cal H }$ is the Hilbert transform  
defined as the Fourier multiplier in \eqref{Hilbert-transf}.
By \eqref{qX0}, since $p(X,0) = {\mathtt p}(X)$, 
condition \eqref{bottom-up} amounts to solve 
\be\label{eq:bordo-alto}
c - {\cal H} \tanh ((\mathtt h +c)|D|) {\mathtt p} (X)  = \eta (X+ {\mathtt p} (X) ) \, . 
\ee

\begin{remark}
If we had required $ c = 0 $ (fixing the strip of the straight domain \eqref{fixed-strip}), 
equation \eqref{eq:bordo-alto} would, in general, have no solution. 
For example, if $ \eta (x) = \eta_0 \neq 0 $, then 
$  - {\cal H} \tanh (\mathtt h |D|) {\mathtt p} (X)  = \eta_0 $ has no solutions because the left hand side has zero average while the 
right hand side has average $ \eta_0 \neq 0 $. 
\end{remark}

Since the range of $ {\cal H} $ are the functions with zero average, 
equation \eqref{eq:bordo-alto} is equivalent to 
\begin{equation} \label{0307.1}
c =  \langle \eta (X+ {\mathtt p} (X) ) \rangle \, , \quad 
- {\cal H} \tanh ((\mathtt h +c)|D|) {\mathtt p} (X)  = \pi_0^\bot \eta (X+ {\mathtt p} (X) ) 
\end{equation}
where $ \langle f \rangle = f_0 = \pi_0 f$ is the average in $X$ of any function $ f $, 
$ \pi_0 $ is defined in \eqref{def pi0}, 
and $ \pi_0^\bot := {\rm Id} - \pi_0$.
We look for a solution $(c(\vphi), \mathtt p(\vphi, X))$, where $\mathtt p$ has zero average in $X$, of the system
\be\label{sist-eq}
c =  \langle \eta (X+ {\mathtt p} (X) ) \rangle \, , \quad 
{\mathtt p} (X)  = \frac{ {\cal H}}{\tanh ((\mathtt h +c)|D|)}  [ \eta (X+ {\mathtt p} (X) )] \, .
\ee
Since ${\cal H}^2 = - \pi_0^\bot$, 
if $\mathtt p$ solves the second equation in \eqref{sist-eq}, 
then $\mathtt p$ also solves the second equation in \eqref{0307.1}.

\begin{lemma}\label{stima costante media DN}
Let $ \eta(\lm, \vphi, x) $ 
satisfy $\partial_\lambda^k \eta(\lambda, \cdot, \cdot) \in {\cal C}^\infty(\T^{\nu + 1})$ 
for all $|k| \leq k_0$. 
There exists $\delta(s_0, k_0) > 0$ such that, if 
$ \| \eta\|_{2 s_0 + k_0 + 2}^{k_0, \gamma} \leq \delta(s_0,k_0) $, then 
there exists a unique ${\cal C}^\infty$ solution $(c(\eta), \mathtt p(\eta))$ 
of system \eqref{sist-eq} satisfying
\begin{equation}\label{stima mathtt p senza c}
\| \mathtt p\|_s^{k_0, \gamma}, \| c \|_s^{k_0, \gamma} 
\lesssim_{s,k_0} \| \eta\|_{s + k_0}^{k_0,  \gamma} \, , 
\quad \forall s \geq s_0\,. 
\end{equation}
Moreover, let $s_1 \geq 2 s_0 + 1$. There exists $\delta(s_1) > 0$  such that 
the map $ \{ \| \eta \|_{s_1 + 2} < \delta(s_1)  \}  \to H^{s_1}_\vphi \times H^{s_1} $, 
$\eta \mapsto (c(\eta), \mathtt p(\eta)) $  is  ${\cal C}^1_b$. 
\end{lemma}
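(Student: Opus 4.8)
\textbf{Proof plan for Lemma \ref{stima costante media DN}.}
The plan is to solve system \eqref{sist-eq} by a contraction-mapping / implicit-function argument in the weighted Whitney-Sobolev spaces $\Lip(k_0,\cdot,s,\g)$, treating the scalar $c$ and the periodic zero-average function $\mathtt p$ simultaneously as a single unknown. First I would fix a large auxiliary index $s_0$ and set up the nonlinear map
$$
\Phi(c,\mathtt p) := \Big( c - \langle \eta(\cdot\,, X + \mathtt p) \rangle, \ \ \mathtt p - \frac{\mH}{\tanh((\mathtt h + c)|D|)}\big[ \eta(\cdot\,, X + \mathtt p)\big] \Big),
$$
observing that for $\mathtt h \in [\mathtt h_1,\mathtt h_2]$ and $|c|$ small the Fourier multiplier $\mH / \tanh((\mathtt h+c)|D|)$ is a well-defined operator of order $0$ whose symbol $\sign(\xi) \chi(\xi) / \tanh((\mathtt h+c)\chi(\xi)|\xi|)$ is smooth in $(\xi,c,\mathtt h)$, bounded together with all its derivatives uniformly on the relevant range (the only potential singularity at $\xi = 0$ is killed by $\chi$, since $\mH$ annihilates constants). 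Hence composition with this multiplier is tame on $H^s$ and $k_0$ times differentiable in the parameters $\lambda = (\om,\mathtt h)$, with the usual loss controlled by \eqref{norm1 parameter}, \eqref{Norm Fourier multiplier} and \eqref{lemma composizione multiplier}.

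Next I would estimate the composition $\eta(\cdot\,, X + \mathtt p(\cdot,X))$ using the tame composition estimate \eqref{pr-comp1} of Lemma \ref{lemma:LS norms}: for $\|\mathtt p\|_{2s_0+k_0+1}^{k_0,\g}$ small one gets $\| \eta(\cdot\,,X+\mathtt p) \|_s^{k_0,\g} \lesssim_s \| \eta \|_{s+k_0}^{k_0,\g} + \| \mathtt p \|_s^{k_0,\g} \| \eta \|_{s_0+k_0}^{k_0,\g}$, and I would combine this with the Moser-type estimate \eqref{0811.10} for the smooth dependence on $c$ through the multiplier. This shows that, in the ball $\{ \|\mathtt p\|_{s_0}^{k_0,\g} + |c|^{k_0,\g} \le \delta \}$, the fixed-point map $(c,\mathtt p) \mapsto ( \langle \eta(\cdot\,,X+\mathtt p)\rangle, \ \frac{\mH}{\tanh((\mathtt h+c)|D|)}[\eta(\cdot\,,X+\mathtt p)] )$ is a contraction in the low norm $\| \cdot \|_{s_0}^{k_0,\g}$, provided $\| \eta \|_{2s_0+k_0+2}^{k_0,\g} \le \delta(s_0,k_0)$; the Cauchy estimate for the difference of two iterates again comes from \eqref{pr-comp1} applied to $\eta(\cdot\,,X+\mathtt p_1) - \eta(\cdot\,,X+\mathtt p_2) = \int_0^1 \eta_x(\cdot\,,X + \mathtt p_2 + t(\mathtt p_1-\mathtt p_2))(\mathtt p_1-\mathtt p_2)\,dt$. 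This yields existence and uniqueness of the solution $(c(\eta),\mathtt p(\eta))$ in the low norm, together with the low-norm bound $\| \mathtt p \|_{s_0}^{k_0,\g} + |c|^{k_0,\g} \lesssim \| \eta \|_{s_0+k_0}^{k_0,\g}$. Smoothness of $(c,\mathtt p)$ in $(\vphi,x)$ follows because $\eta$ is $\mathcal C^\infty$ and the fixed point of a smooth contraction is smooth.

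To propagate the estimate to higher norms $s \ge s_0$, I would feed the equation back into itself: writing $\mathtt p = \frac{\mH}{\tanh((\mathtt h+c)|D|)}[\eta(\cdot\,,X+\mathtt p)]$ and applying \eqref{pr-comp1} once more gives $\| \mathtt p \|_s^{k_0,\g} \lesssim_s \| \eta\|_{s+k_0}^{k_0,\g} + \| \mathtt p \|_s^{k_0,\g}\| \eta\|_{s_0+k_0}^{k_0,\g} + \|c\|_s^{k_0,\g}(1 + \| \eta \|_{s_0}^{k_0,\g})$, and analogously for $\|c\|_s^{k_0,\g} = | \langle \eta(\cdot\,,X+\mathtt p)\rangle |^{k_0,\g}$; absorbing the small factor $\| \eta\|_{s_0+k_0}^{k_0,\g}$ from the left yields \eqref{stima mathtt p senza c}. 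Finally, for the $\mathcal C^1_b$ statement I would differentiate the equation $\Phi(c,\mathtt p) = 0$ formally with respect to $\eta$ (in the Banach space $H^{s_1}$): the linearized operator $D_{(c,\mathtt p)}\Phi$ is $\mathrm{Id}$ plus a small (order-zero, $O(\delta)$) bounded operator, hence invertible on $H^{s_1}_\vphi \times H^{s_1}$ by Neumann series, so the implicit function theorem in Banach spaces applies and gives $\eta \mapsto (c(\eta),\mathtt p(\eta)) \in \mathcal C^1_b$ on $\{ \| \eta \|_{s_1+2} < \delta(s_1)\}$, the ``$+2$'' accounting for the one derivative lost by the multiplier composition together with the composition estimate at the regularity $2s_0+1$.

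The main obstacle I anticipate is bookkeeping the interplay between the two different roles of the smallness threshold: one needs $\|\mathtt p\|$ small in a \emph{fixed} low norm (index $\sim 2s_0+k_0+1$) both to make the composition estimates \eqref{pr-comp1} valid and to make the contraction constant $< 1$, while simultaneously tracking the exact loss of derivatives ($k_0$ from the Whitney-Sobolev composition, plus the order-$0$ but $\mathcal C^\infty$-symbol multiplier) so that the final indices $s+k_0$ in \eqref{stima mathtt p senza c} and $s_1+2$ in the $\mathcal C^1_b$ claim come out correctly; the analytic content is otherwise a routine Nash-Moser-free fixed point, since the multiplier $\mH/\tanh((\mathtt h+c)|D|)$ is genuinely bounded and no small divisors enter.
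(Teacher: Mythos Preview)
Your overall strategy matches the paper's: a fixed-point argument for $\mathtt p$ (the paper substitutes $c = \langle \eta(X+\mathtt p(X))\rangle$ and treats $\mathtt p$ alone, but your joint treatment of $(c,\mathtt p)$ is equivalent), tame composition estimates \eqref{pr-comp1}, bounds on the order-zero multiplier $\mH/\tanh((\mathtt h+c)|D|)$, and the implicit function theorem for the $\mathcal C^1_b$ statement.

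There is, however, a genuine gap in your high-norm step. Your ``feed the equation back into itself and absorb'' argument
\[
\|\mathtt p\|_s^{k_0,\gamma} \lesssim_s \|\eta\|_{s+k_0}^{k_0,\gamma} + \|\mathtt p\|_s^{k_0,\gamma}\,\|\eta\|_{s_0+k_0}^{k_0,\gamma} + \ldots
\]
presupposes that $\|\mathtt p\|_s^{k_0,\gamma}$ is already finite, but the contraction only delivers $\mathtt p$ in the low-norm ball $\mathcal B_{2s_0+1}(r)$. Your one-line justification (``the fixed point of a smooth contraction is smooth'') does not close this: the map is \emph{not} a contraction in the $H^s$ norm for large $s$, because the tame derivative estimate has an inhomogeneous term $(\|\eta\|_{s+k_0+1} + \|\mathtt p\|_s\,\|\eta\|_{s_0+k_0+2})\,\|\tilde{\mathtt p}\|_{s_0}$ whose coefficient involves the very high norm you are trying to control.

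The paper closes this by an induction on $s$: assuming $\|\mathtt p\|_s^{k_0,\gamma} \lesssim_s \|\eta\|_{s+k_0}^{k_0,\gamma}$, it differentiates the identity $\mathtt p = \Phi(\mathtt p)$ in $X$ and $\vphi_i$, obtaining $\mathtt p_X = (\mathrm{Id} - \Phi'(\mathtt p))^{-1}\mathcal A(\mathtt m)$ with $\mathcal A, \mathtt m$ explicit, and then uses the invertibility of $\mathrm{Id} - \Phi'(\mathtt p)$ (Neumann series, smallness in low norm) together with the inductive hypothesis to bound $\|\mathtt p_X\|_s$, $\|\pa_{\vphi_i}\mathtt p\|_s$ and hence $\|\mathtt p\|_{s+1}$. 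An alternative fix compatible with your write-up is to observe that the iterates $\mathtt p_n := \Phi^n(0)$ satisfy, by \eqref{pr-comp1}, $\|\mathtt p_{n+1}\|_s \le C_s\|\eta\|_{s+k_0} + \tfrac12 \|\mathtt p_n\|_s$, hence are uniformly bounded in each $H^s$; combined with convergence in $H^{2s_0+1}$, this yields $\mathtt p \in \bigcap_s H^s$ with the claimed bound. Either way, this step needs an explicit argument beyond absorption.
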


\begin{proof}
We look for a fixed point of the map  
\begin{equation}\label{Phi mathtt p forma compatta}
\Phi( \mathtt p ) := {\cal H} \mathtt f\big( (\mathtt h + c) |D| \big) [ \eta (\cdot+ {\mathtt p} (\cdot) )] \, , 
\qquad \text{where} \quad 
\mathtt f( \xi ) := \frac{1}{ \tanh( \xi )} ,  \quad \xi  \neq 0\,,
\end{equation}
and $c := \langle \eta (X+ {\mathtt p} (X) ) \rangle$. 
We are going to prove that $ \Phi $ is a contraction 
in a ball ${\cal B}_{2 s_0 + 1}( r ) :=  \{ \| \mathtt p \|_{2 s_0 + 1}^{k_0, \gamma} \leq r $, 
$ \langle \mathtt p \rangle = 0 \}$ with radius $ r $ small enough. 
We begin by proving some preliminary estimates. 

The operator ${\cal H} \mathtt f\big( (\mathtt h + c) |D| \big)$ is the Fourier multiplier, acting on the periodic functions,  
with symbol 
\[
- \ii \, \sign(\xi) \chi(\xi) \mathtt f \big( (\mathtt h + c(\lm, \ph)) |\xi| \big) 
=: g( \mathtt h + c(\lm, \ph), \xi), \quad 
\text{where} \ \ 
g(y, \xi) := - \ii \, \sign(\xi) \chi(\xi) \mathtt f(y |\xi|) \quad \forall y > 0,
\]
where the cut-off $\chi(\xi)$ is defined in \eqref{cut off simboli 1}. For all $n\in\N$, there is a constant $C_n(\h_1)>0$ such that $| \pa_y^n g(y, \xi) | \leq C_n(\h_1)$ for all $y\geq\h_1 / 2$, $\xi\in\R$.
We consider a smooth extension $\tilde g(y, \xi)$ of $g(y, \xi)$, defined for any $(y, \xi) \in \R \times \R$,  satisfying the same bound as $g$. Now $|c(\lm,\ph)| \leq \| \eta \|_{L^\infty} \leq C \| \eta \|_{s_0}$, 
and therefore $ \h + c(\lm, \ph) \geq \h_1 / 2$ for all $\lm, \ph$ 
if $\| \eta \|_{s_0}$ is sufficiently small.  
Then, by Lemma \ref{Moser norme pesate}, the composition $\tilde g(\mathtt h + c(\lm, \ph), \xi)$ 
satisfies 
\[
\| \tilde g(\mathtt h + c, \xi) \|_s^{k_0,\g} \lesssim_{s,k_0,\h_1,\h_2} 1 + \| c \|_s^{k_0,\g}
\]
uniformly in $\xi \in \R$ (the dependence on $ \h_1, \h_2$ is omitted in the sequel).
As a consequence, we have the following estimates for pseudo-differential norms 
(recall Definition \ref{def:pseudo-norm})  
of the Fourier multiplier in \eqref{Phi mathtt p forma compatta}: for all $ s \geq s_0 $, 
\begin{align}
\label{stima tanh c nel lemma}
& \norma {\cal H} \mathtt f\big( (\mathtt h + c) |D| \big) \norma_{0, s, 0}^{k_0, \gamma}
\,, \ 
\norma {\cal H} |D| \mathtt f' \big( (\mathtt h + c) |D| \big) \norma_{0, s, 0}^{k_0, \gamma} \,
\lesssim_{s, k_0}  1 + \| c \|_{s}^{k_0, \gamma} \, . 
\end{align}
Estimate \eqref{pr-comp1} with $k+1 = k_0$ implies that, 
for $\| \mathtt p \|_{2 s_0 + 1}^{k_0, \gamma} \leq \d(s_0, k_0)$,
the function $ c \equiv c ( \eta,\mathtt p) = \langle \eta (X+ {\mathtt p} (X) ) \rangle $ 
satisfies, for all $  s \geq s_0 $,
 \begin{equation}\label{stima c in termini p eta nel lemma}
\| c \|_s^{k_0, \gamma} \lesssim_{s, k_0} \| \eta\|_{s + k_0}^{k_0, \gamma} + \| \mathtt p\|_s^{k_0, \gamma} \| \eta\|_{s_0 + k_0 + 1}^{k_0, \gamma} \,. 
\end{equation}
Therefore by \eqref{stima tanh c nel lemma}, \eqref{stima c in termini p eta nel lemma} we get, for all $s \geq s_0 $, 
\be\label{stima tanh c nel lemma-def}
\norma {\cal H} \mathtt f\big( (\mathtt h + c) |D| \big) \norma_{0, s, 0}^{k_0, \gamma} , 
\ \norma {\cal H} |D| \mathtt f'\big( (\mathtt h + c) |D| \big) \norma_{0, s, 0}^{k_0, \gamma} 
\lesssim_{s, k_0}  1 + \| \eta\|_{s + k_0}^{k_0, \gamma} + \| \mathtt p\|_s^{k_0, \gamma} 
\| \eta\|_{s_0 + k_0 + 1}^{k_0, \gamma} \,  . 
\ee
Now we prove that $ \Phi $ is a contraction in the ball 
${\cal B}_{2 s_0 + 1}( r ) :=  \{ \| \mathtt p \|_{2 s_0 + 1}^{k_0, \gamma} \leq r$, 
$\langle \mathtt p \rangle = 0 \} $.

\smallskip

\noindent
 {\sc Step 1: Contraction in low norm.}
For any $ \| \mathtt p \|_{2 s_0 + 1}^{k_0, \gamma} \leq r \leq  \delta (s_0, k_0) $,  
by \eqref{interpolazione parametri operatore funzioni (2)}, 
\eqref{stima tanh c nel lemma-def}, \eqref{pr-comp1}, 
and using the bound $ \| \eta\|_{s_0 + k_0 + 1}^{k_0, \gamma} \leq 1 $, 
we have, $ \forall s \geq s_0 $, 
\begin{equation} \label{2906.1}
\| \Phi( \mathtt p ) \|_s^{k_0,\gamma} 
\lesssim_{s, k_0} \| \eta \|_{s+k_0}^{k_0,\g} 
+ \| \eta \|_{s_0 + k_0 + 1}^{k_0,\g} \| \mathtt p \|_s^{k_0,\g}  \, .
\end{equation}
We fix $r := 2 C(s_0,k_0) \| \eta \|_{2s_0 + k_0 + 1}^{k_0,\gamma}$ 
and we assume that $ r \leq 1$.
Then, using \eqref{2906.1} with $s = 2 s_0 + 1$, one deduces that $\Phi $ maps the ball $ {\cal B}_{2 s_0 + 1}( r ) $ into itself.
To prove that $\Phi$ is a contraction in this ball, we estimate its differential 
at any $\mathtt p \in \mB_{2s_0+1}(r)$ in the direction $\tilde{\mathtt p}$,
which is 
\begin{equation} \label{2906.3}
\Phi'(\mathtt p)[\tilde{\mathtt p}] = \mA (\mathtt m \, \tilde{\mathtt p}) \, , 
\end{equation}
where the operator $\mA$ and the function $\mathtt m$ are 
\begin{equation} \label{3006.5}
\mA (h) := \langle h \rangle \mH \mathtt f'((\h + c)|D|) |D| [ \eta(X + \mathtt p(X)) ]
+ \mH \mathtt f((\h + c)|D|) [h],  \quad
\mathtt m := \eta_x(X + \mathtt p(X)) \, .
\end{equation}
To obtain \eqref{2906.3}-\eqref{3006.5}, note that 
$\pa_{\mathtt p}c[\tilde{\mathtt p}] = \langle \mathtt m \tilde{\mathtt p} \rangle$.
By \eqref{pr-comp1}, for all $ s \geq s_0 $, 
\begin{equation} \label{2906.4}
\| \mathtt m \|_s^{k_0,\g} 
\lesssim_{s, k_0} 
\| \eta \|_{s + k_0 + 1}^{k_0,\g} 
+ \| \mathtt p \|_s^{k_0,\g} \| \eta \|_{s_0 + k_0 + 2}^{k_0,\g} \, .
\end{equation}
By  
\eqref{interpolazione parametri operatore funzioni (2)}, 
\eqref{stima tanh c nel lemma-def},  \eqref{pr-comp1}, 
using the bounds $\| \eta \|_{s_0 + k_0 + 1}^{k_0,\g} \leq 1$ 
and $\| \mathtt p \|_{s_0}^{k_0,\g} \leq 1$, 
we get, for all $ s \geq s_0 $, 
\begin{equation} \label{3006.6}
\norma \mA  \norma_{0,s,0}^{k_0,\g} 
\lesssim_{s, k_0} 	1 + 
\| \eta \|_{s + k_0}^{k_0,\g} 
+ \| \mathtt p \|_s^{k_0,\g} \| \eta \|_{s_0 + k_0 + 1}^{k_0,\g}  \, .
\end{equation}
By  \eqref{2906.3},  
 \eqref{estimate composition parameters},  \eqref{2906.4}, \eqref{3006.6} 
we deduce that, for all $  s \geq s_0 $, 
\begin{equation} \label{3006.1}
\norma \Phi'(\mathtt p) \norma_{0,s,0}^{k_0,\g}
\lesssim_{s, k_0} 
\| \eta \|_{s + k_0 + 1}^{k_0,\g} 
+ \| \mathtt p \|_s^{k_0,\g} \| \eta \|_{s_0 + k_0 + 2}^{k_0,\g}  \, .
\end{equation}
In particular, by \eqref{3006.1} at $s = 2s_0+1$,
and \eqref{interpolazione parametri operatore funzioni (2)}, we get
\begin{equation} \label{3006.2}
\| \Phi'(\mathtt p)[\tilde{\mathtt p}] \, \|_{2s_0+1}^{k_0,\g} 
\leq C(s_0,k_0) \| \eta \|_{2s_0 + k_0 + 2}^{k_0,\g} \| \tilde{\mathtt p} \|_{2s_0+1}^{k_0,\g}
\leq \frac12 \| \tilde{\mathtt p} \|_{2s_0+1}^{k_0,\g} 
\end{equation}
provided $C(s_0,k_0) \| \eta \|_{2s_0 + k_0 + 2}^{k_0,\g} \leq 1/2$.
Thus $\Phi$ is a contraction in the ball ${\cal B}_{2 s_0 + 1}( r )$ and, 
by the contraction mapping theorem, there exists a unique fixed point 
$\mathtt p = \Phi (\mathtt p)$ in  ${\cal B}_{2 s_0 + 1}( r )$.
Moreover, by \eqref{2906.1}, using that ${\mathtt p} = \Phi({\mathtt p})$ there is $C(s_0,k_0)>  0 $ such that if $C(s_0,k_0) \| \eta \|_{s_0 + k_0 + 1}^{k_0,\g} \leq 1/2$
for all $s \in [s_0, 2 s_0 + 1] $, one has $\| \mathtt p \|_{s}^{k_0,\gamma} \lesssim_{s,k_0} \| \eta \|_{s+k_0}^{k_0,\g}$. Using also \eqref{stima c in termini p eta nel lemma} one deduces $\| c \|_{s}^{k_0,\gamma} \lesssim_{s,k_0} \| \eta \|_{s+k_0}^{k_0,\g}$ 
for all $s \in [s_0, 2s_0+1]$. Thus we have proved \eqref{stima mathtt p senza c} for all $s \in [s_0, 2s_0+1]$.

\smallskip

\noindent
{\sc Step 2: regularity.}
Now we prove that $ \mathtt p $ is $ {\cal C}^\infty $ in $ (\vphi, x) $
and we estimate the norm $\| \mathtt p \|_s^{k_0, \gamma}$ as in \eqref{stima mathtt p senza c}
arguing by induction on $s$. 
Assume that, for a given $ s \geq 2 s_0 + 1$, we have already proved that  
\begin{equation}\label{induction hyphothesis mathtt p}
\| \mathtt p\|_s^{k_0, \gamma} \,, \ \| c\|_s^{k_0, \gamma} \,
 \lesssim_{s, k_0} \| \eta\|_{s + k_0}^{k_0, \gamma}\,. 
\end{equation}
We want to prove that \eqref{induction hyphothesis mathtt p} holds for $ s + 1 $. 
We have to estimate 
$\| \mathtt p \|_{s + 1}^{k_0, \gamma} \simeq 
\max \{ \| \mathtt p\|_s^{k_0, \gamma}$, $\| \partial_X \mathtt p\|_s^{k_0, \gamma}$, 
$\| \partial_{\vphi_i} \mathtt p \|_s^{k_0, \gamma}$, $i = 1, \ldots, \nu \}$.
Using the definition \eqref{Phi mathtt p forma compatta} of $\Phi$, 
we derive explicit formulas for the derivatives $\partial_X \mathtt p, \partial_{\vphi_i} \mathtt p$ in terms of $ {\mathtt p}, \eta, \pa_x \eta, \pa_{\vphi_i} \eta $. 
Differentiating the identity $\mathtt p = \Phi(\mathtt p) $ with respect to $ X $ we get  
\begin{align}
\mathtt p_X & = {\cal H} \mathtt f \big( (\mathtt h + c) |D| \big) 
[ \eta_x (X + {\mathtt p} (X)) (1 + \mathtt p_X) ]  
= \Phi'(\mathtt p)[\mathtt p_X] + \mA (\mathtt m)
\label{prima formula mathtt p}
\end{align}
where the operator $\Phi'(\mathtt p)$ is given by \eqref{2906.3}
and $\mA, \mathtt m$ are defined in \eqref{3006.5}
(note that $\langle \eta_x(X + \mathtt p(X)) (1 + \mathtt p_X(X)) \rangle = 0$).
By \eqref{3006.1} at  $ s = s_0$, 
for $ \| \eta \|_{s_0 + k_0 + 2}^{k_0,\g} \leq \d (s_0, k_0)$  small enough,  
condition \eqref{hyp:lemma-2.14}  for $ A = - \Phi' ( {\mathtt p})$ (with $ \a = 0 $) holds. Therefore
the operator $\Id - \Phi'(\mathtt p)$ is invertible and, by \eqref{con:lemma-2.14} (with $ \a = 0$), 
\eqref{induction hyphothesis mathtt p} 
and \eqref{interpolazione parametri operatore funzioni (2)}, 
its inverse satisfies, for all $ s \geq s_0 $, 
\begin{equation} \label{3006.3}
\| (\Id - \Phi'(\mathtt p))^{-1} h \|_s^{k_0,\g} 
\lesssim_{s,k_0} \| h \|_s^{k_0,\g} 
+ \| \eta \|_{s + k_0 + 1}^{k_0,\g} \| h \|_{s_0}^{k_0,\g}  \, .
\end{equation}
By \eqref{prima formula mathtt p}, we deduce that 
$\mathtt p_X = (\Id - \Phi'(\mathtt p))^{-1} \mA (\mathtt m )$.	
By \eqref{interpolazione parametri operatore funzioni (2)}, \eqref{2906.4}-\eqref{3006.6} 
and \eqref{induction hyphothesis mathtt p}, we get 
$\| \mA (\mathtt m) \|_s^{k_0,\g} \lesssim_s \| \eta \|_{s + k_0 + 1}^{k_0,\g}$. 
Hence, by \eqref{3006.3}, using $\| \eta \|_{s_0 + k_0 + 2}^{k_0,\g} \leq 1$, we get 
\begin{equation}\label{stima norma s partial x mathtt p}
\| \mathtt p_X \|_s^{k_0, \gamma} \lesssim_{s,k_0} \| \eta \|_{s + k_0 + 1}^{k_0, \gamma}\,.
\end{equation}
We similar arguments
we get
$ \| \partial_{\vphi_i} \mathtt p\|_{s}^{k_0, \gamma} \lesssim_{s,k_0} 
\| \eta\|_{s + k_0 + 1}^{k_0, \gamma} $, $ i = 1, \ldots, \nu $, and 
using  \eqref{induction hyphothesis mathtt p}, 
\eqref{stima norma s partial x mathtt p}, 
we deduce \eqref{induction hyphothesis mathtt p} at $ s + 1 $ for $\mathtt p$. 
By \eqref{stima c in termini p eta nel lemma}, the same estimate holds for $c$, 
and the induction step is proved.
This completes the proof of \eqref{stima mathtt p senza c}.

The fact that the map 
$ \{ \| \eta \|_{s_1 + 2} < \delta(s_1) \} \to H^{s_1}_\vphi \times H^{s_1} $ defined by 
$ \eta \mapsto (c(\eta), {\mathtt p} (\eta)) $
is ${\cal C}^1_b $ follows by the implicit function theorem. 
\end{proof}

Notice that \eqref{condizione di piccolezza lemma dirichlet neumann} implies
the smallness condition of Lemma \ref{stima costante media DN}. 
Now we transform \eqref{BoundaryPr} via the conformal diffeomorphism
\begin{equation*}
\begin{aligned}
U( X, Y ) & := X  + \sum_{k \neq 0}  {\mathtt p}_k \frac{\cosh (|k| (Y+\mathtt h +c))}{ \cosh (|k| (\mathtt h  +c))} e^{\ii k X} \\
V( X, Y )  & := Y + c + \sum_{k \neq 0}   \ii  {\mathtt p}_k  \frac{\sign (k)}{ \cosh (|k|(\mathtt h  + c))}  \sinh (|k| (Y+\mathtt h +c))  e^{\ii k X} 
\end{aligned}
\end{equation*}
where  $ c  $ and $ {\mathtt p} $ are  the solutions of \eqref{sist-eq} provided by Lemma \ref{stima costante media DN}.  Denote 
$ (P u) (X) := u ( X + {\mathtt p} (X)) $. 
The velocity potential  $  \phi( X,Y) := \Phi(U( X,Y),V( X,Y))  $ 
satisfies, using the Cauchy-Riemann equations $U_X = V_Y$, $U_Y = - V_X$ (or equivalently $p_X = q_Y$, $p_Y = - q_X$) and  \eqref{bottom-down}-\eqref{BCqY}, 
\be\label{BVP-new}
\D \phi = 0   \ \text{in} \ \{- \mathtt h - c < Y<0\}\, ,  \quad  
\phi( X,0) = (P \psi)(X) \, , \quad 
 \phi_Y (X, - \mathtt h - c) = 0  \, .
\ee
We calculate explicitly the solution $\phi$ of \eqref{BVP-new}, which is (see \eqref{sol:p}) 
\[
\phi(X,Y) = 
 \sum_{k \in \Z } \widehat{(P \psi)}_k \frac{ \cosh(|k| (Y+\h+c)) }{\cosh(|k|(\h+c))}\, e^{\ii k X} \, ,
\]
where $ \widehat{(P \psi)}_k $ denotes the $ k$-th Fourier coefficient of the periodic function $ P \psi $.
Therefore the Dirichlet-Neumann operator 
in the domain $ \{ - \mathtt h - c \leq Y \leq 0  \} $ at the flat surface $ Y = 0 $  
is given by
\be\label{dirichlet-neumann-diritto}
\phi_Y ( X,0) 
= \sum_{k \neq 0} \widehat{(P \psi)}_k \tanh(|k|(\h+c)) |k| e^{\ii k X}
= |D | \tanh ((\mathtt h +c) |D|) (P \psi)(X) \ . 
\ee

\begin{lemma}\label{G=mH}
$ G(\eta) = \partial_x P^{-1} \mH \, \tanh ((\mathtt h +c) |D|) \, P  $. 
\end{lemma}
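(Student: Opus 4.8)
\textbf{Proof proposal for Lemma \ref{G=mH}.}
The plan is to trace through the conformal change of variables carefully and use the definition of the Dirichlet-Neumann operator. Recall that $G(\eta)\psi = (\Phi_y - \eta_x \Phi_x)|_{y=\eta(x)}$ and that the conformal map $x = U(X,Y)$, $y = V(X,Y)$ constructed above sends the strip $\{-\mathtt h - c \le Y \le 0\}$ onto $\mathcal D_\eta$, with the boundary $\{Y=0\}$ going to the free surface $\{y=\eta(x)\}$ by \eqref{bottom-up}, and $\{Y=-\mathtt h-c\}$ going to $\{y=-\mathtt h\}$ by \eqref{bottom-down}. The velocity potential $\phi(X,Y) := \Phi(U(X,Y),V(X,Y))$ solves \eqref{BVP-new}, so in particular $\phi(X,0) = (P\psi)(X)$ where $(Pu)(X) := u(X + \mathtt p(X))$ is the composition operator associated to the boundary diffeomorphism $x = X + \mathtt p(X)$ (this is the restriction of $U(\cdot,0)$).

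First I would compute, by \eqref{dirichlet-neumann-diritto}, the flat Dirichlet-Neumann operator on $\{Y=0\}$, namely $\phi_Y(X,0) = |D|\tanh((\mathtt h+c)|D|)(P\psi)(X)$. The key point is then to relate the normal derivative of $\Phi$ at the free surface of $\mathcal D_\eta$ to the normal derivative of $\phi$ at $\{Y=0\}$. Because the map $(U,V)$ is conformal, it is angle-preserving and multiplies lengths by the conformal factor $|U_X + \ii V_X|$; consequently the (non-normalized) Neumann data transforms cleanly: the combination $\Phi_y - \eta_x \Phi_x$ evaluated at $y=\eta(x)$ pulls back, under $x = U(X,0)$, precisely to $\phi_Y(X,0)$. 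Concretely, the standard computation for the Dirichlet-Neumann operator under a conformal change shows that $\big( G(\eta)\psi \big)(U(X,0)) = \phi_Y(X,0)$, i.e.\ $P\big(G(\eta)\psi\big) = \phi_Y(\cdot,0)$. Combining with \eqref{dirichlet-neumann-diritto} gives $P(G(\eta)\psi) = |D|\tanh((\mathtt h+c)|D|)(P\psi)$, and applying $P^{-1}$ yields $G(\eta)\psi = P^{-1}|D|\tanh((\mathtt h+c)|D|)P\psi$.

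Finally I would rewrite $|D| = \pa_x \mathcal H$ (valid on the subspace relevant here, since $\mathcal H\pa_x = |D|$ as Fourier multipliers, up to the zero mode which is handled by the projector $\pi_0$; recall $G(\eta)$ kills constants, consistent with $|D|\pi_0 = 0$) and commute $\pa_x$ past $P^{-1}$. Since $P$ is a pure change of the space variable $x \mapsto X + \mathtt p(X)$, conjugating the Fourier multiplier $|D|\tanh((\mathtt h+c)|D|) = \pa_x \mathcal H\tanh((\mathtt h+c)|D|)$ by $P^{-1}$ and pulling out $\pa_x$ to the left gives $\pa_x P^{-1}\mathcal H\tanh((\mathtt h+c)|D|)P$, which is the claimed identity.

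The main obstacle is the transformation law for the Neumann datum under the conformal map: one must show carefully that $\Phi_y - \eta_x\Phi_x$ at the free boundary pulls back exactly to $\phi_Y$ at $\{Y=0\}$ with no extra Jacobian factor (the conformal factors cancel precisely because the map is holomorphic, not merely smooth). This is the one place where conformality — rather than just being a diffeomorphism — is essential, and it requires using the Cauchy-Riemann relations $p_X = q_Y$, $p_Y = -q_X$ together with the chain rule $\Phi_x = (\phi_X U_X + \phi_Y U_Y)/(U_X^2 + U_Y^2)$ etc. Everything else — the explicit solution of \eqref{BVP-new}, the identity $|D| = \pa_x\mathcal H$, and the bookkeeping with $P^{\pm 1}$ — is routine.
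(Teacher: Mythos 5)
Your overall plan is on the right track, but two of your intermediate steps are both wrong, and they happen to cancel, so the proof as written is not valid even though the final identity is correct.

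First, the claimed transformation law for the Neumann datum is false: there \emph{is} a Jacobian factor. With $\phi(X,Y) = \Phi(U(X,Y),V(X,Y))$ and the Cauchy--Riemann relations $U_Y = -V_X$, $V_Y = U_X$, the chain rule at $Y=0$ gives
\[
\phi_Y(X,0) = -\Phi_x V_X + \Phi_y U_X = \bigl(1+\mathtt{p}_X(X)\bigr)\,\bigl(\Phi_y - \eta_x\Phi_x\bigr)\big|_{y=\eta}\,,
\]
since $U_X(X,0)=1+\mathtt{p}_X(X)$ and $V_X(X,0)=\eta_x(X+\mathtt p(X))\,(1+\mathtt p_X(X))$ by \eqref{bottom-up}. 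Thus $\phi_Y(X,0) = (1+\mathtt p_X)\,(P\,G(\eta)\psi)(X)$, not $(P\,G(\eta)\psi)(X)$: conformality eliminates cross-terms in the metric but not the scalar conformal factor. Your intermediate identity $G(\eta)\psi = P^{-1}\,|D|\tanh((\mathtt h+c)|D|)\,P\psi$ is therefore wrong. Second, the final step ``commute $\partial_x$ past $P^{-1}$'' is not a commutation: by the chain rule $(\partial_x P^{-1}v)(x) = (\partial_X v)(X(x))\,X'(x)$, so $\partial_x P^{-1} = P^{-1}\,\tfrac{1}{1+\mathtt p_X}\,\partial_X$, which differs from $P^{-1}\partial_X$ by precisely the missing factor. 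The two errors annihilate: had you kept the Jacobian factor, you would have $G(\eta)\psi = P^{-1}\bigl[\tfrac{1}{1+\mathtt p_X}\,\partial_X\,\mH\tanh((\mathtt h+c)|D|)\,P\psi\bigr] = \partial_x P^{-1}\mH\tanh((\mathtt h+c)|D|)P\psi$, which is the lemma. A cleaner route that avoids any Jacobian bookkeeping (and is the standard one, followed in the cited Lemma 2.40 of \cite{BertiMontalto}) is to work with the stream function: introduce the harmonic conjugate $\Psi^*$ of $\Phi$, so that $G(\eta)\psi = -\partial_x\bigl[\Psi^*(x,\eta(x))\bigr]$. Conformal maps preserve harmonic conjugates, so the conjugate of $\phi$ in the strip is $\psi^* = \Psi^*\circ(U,V)$; hence $\psi^*(\cdot,0) = P[\Psi^*(\cdot,\eta(\cdot))]$ and $\phi_Y(X,0) = -\partial_X\psi^*(X,0)$, whence $P[\Psi^*(\cdot,\eta(\cdot))] = -\mH\tanh((\mathtt h+c)|D|)P\psi + \mathrm{const}$ and the identity follows by applying $-\partial_x P^{-1}$.
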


\begin{proof}
The proof is the same as the one of Lemma 2.40 in \cite{BertiMontalto}. The only difference is that formula \eqref{dirichlet-neumann-diritto} in the case of infinite depth is given by $\phi_Y ( X,0) = |D| (P \psi)(X)$.
\end{proof}

\noindent
{\sc Proof of Proposition \ref{lemma dirichlet Neumann} concluded.}
By Lemma \ref{G=mH}  we write the Dirichlet-Neumann operator as 
$$
G(\eta) = \pa_x P^{-1} \mH \tanh ((\mathtt h +c)|D|) P  =  |D| \tanh (\mathtt h |D|) + {\cal R}_G (\eta) \, , 
\quad  {\cal R}_G (\eta)  := {\cal R}_G^{(1)}(\eta) + {\cal R}_G^{(2)}(\eta) \, , 
$$
where, using the decomposition \eqref{tangente iperbolica espansione},
\begin{align} \notag
{\cal R}_G^{(1)}(\eta) & :=  \partial_x \big( P^{-1} \mH \tanh ((\mathtt h +c)|D|) P - \mH \tanh ((\mathtt h +c)|D|)  \big) \\
& \ = \partial_x (P^{- 1 } {\cal H} P - {\cal H}) + 
\partial_x (P^{- 1} \mH {\rm Op}(r_{\mathtt h  + c}) P - {\cal H} {\rm Op}(r_{\mathtt h  + c}))\,.
 \label{rest1}
\end{align}
The second term ${\cal R}_G^{(2)}(\eta)$ is
\begin{align}
{\cal R}_G^{(2)}(\eta) & :=  \pa_x \mH \big( \tanh ((\mathtt h +c)|D|) - \tanh (\mathtt h |D|) \big) 
= \partial_x {\cal H}{\rm Op}(r_{\mathtt h  + c} - r_{\mathtt h}) = 
c\, \partial_x {\cal H} {\rm Op}(\breve r_{\mathtt h , c}) \in OPS^{- \infty}    \, ,\label{rest2}
\end{align}
where
\begin{equation*}
r_{\mathtt h + c}(\xi) - r_{\mathtt h}(\xi) = 
\breve r_{\mathtt h , c}(\xi)\, c \,, \qquad 
\breve r_{\mathtt h , c}(\xi) := 
2 |\xi| \chi(\xi) \int_0^1 \frac{2 \exp\{ 2(\h + t c) |\xi| \chi(\xi) \} }{(1 + 
\exp\{ 2(\h + t c) |\xi| \chi(\xi) \} )^2 } \, d t \, \in S^{- \infty} \, . 
\end{equation*}
Estimate \eqref{estimate DN} directly follows estimating \eqref{rest1} and \eqref{rest2} by Lemmata \ref{coniugio Hilbert}, \ref{lemma nucleo Fourier multiplier OPS - infty}, and using Lemma \ref{stima costante media DN}.
The differentiablility of the map $ \{ \| \eta \|_{s_1 + 6} < \delta(s_1)  \}  \to H^{s_1}(\T^\nu \times \T \times \T) $, $\eta \mapsto K_G ( \eta ) $ follows by the differentiability of the map $ \{ \| \eta \|_{s_1 + 2} < \delta(s_1)  \}  \to H^{s_1}_\vphi \times H^{s_1} $, 
$\eta \mapsto (c(\eta), \mathtt p(\eta)) $ proved in Lemma \ref{stima costante media DN}.  

\section{Whitney differentiable functions} \label{sec:U}

 The following definition is 
the one in Section 2.3, Chapter VI of \cite{Stein}, for Banach-valued functions.

\begin{definition}{\bf (Whitney differentiable functions)}
\label{def:Lip F}
Let $F$ be a closed subset of $\R^n$, $n \geq 1$. 
Let $Y$ be a Banach space.
Let $k \geq 0$ be an integer, and $k < \rho \leq k+1$. 
We say that a function $f : F \to Y$ belongs to $\Lip(\rho,F,Y)$ if there exist 
functions $f^{(j)} : F \to Y$, $j \in \N^n$, $0 \leq |j| \leq k$,
with $f^{(0)} = f$, and a constant $M > 0$ such that if $R_j(x,y)$ is defined by 
\begin{equation} \label{16 Stein}
f^{(j)}(x) = \sum_{\ell \in \N^n : |j+\ell| \leq k} \frac{1}{\ell!} \, f^{(j+\ell)}(y) \, (x-y)^\ell
+ R_j(x,y), \quad x,y \in F, 
\end{equation} 
then 
\begin{equation} \label{17 Stein}
\| f^{(j)}(x) \|_Y \leq M, \quad 
\| R_j(x,y) \|_Y \leq M |x-y|^{\rho - |j|} \, , \quad 
\forall x,y \in F, \ |j| \leq k \, .
\end{equation} 
An element of $\Lip(\rho,F,Y) $ 
is  in fact the collection $\{ f^{(j)} : |j| \leq k \}$. 
The norm of $ f \in \Lip(\rho,F,Y)$ is defined as the smallest $M$ 
for which the inequality \eqref{17 Stein} holds, namely
\begin{equation} \label{def norm Lip Stein}
\| f \|_{\Lip(\rho,F,Y)} := \inf \{ M > 0 : \text{\eqref{17 Stein} holds} \} \, .
\end{equation}
If $ F = \R^n $ by  $ \Lip(\rho, \R^n,Y) $  we shall mean the linear space of the functions $ f = f^{(0)} $ for which
there exist $ f^{(j)} = \pa_{x}^j f $, $ |j| \leq k $,  satisfying  \eqref{17 Stein}.
\end{definition}

Notice that, if $ F = \R^n $,  
the $ f^{(j)} $, $ | j | \geq 1 $,
 are uniquely determined by $ f^{(0)} $ (which is not the case  for a general $ F $ with for example isolated points).

In the case $F = \R^n$, $\rho = k+1$ and $ Y $ is  a Hilbert space, 
the space $\Lip(k+1,\R^n,Y)$ is isomorphic to the Sobolev space 
$W^{k+1,\infty}(\R^n,Y)$, with equivalent norms
\begin{equation} \label{equiv Sob Lip}
C_1 \| f \|_{W^{k+1,\infty}(\R^n,Y)}
\leq \| f \|_{\Lip(k+1,\R^n,Y)} 
\leq C_2 \| f \|_{W^{k+1,\infty}(\R^n,Y)}
\end{equation}
where $C_1, C_2$ depend only on $k,n$.
For $Y = \C$ this isomorphism is classical, see e.g. \cite{Stein}, 
and it is based on the Rademacher theorem concerning the a.e.\ differentiability of Lipschitz functions, and the fundamental theorem of calculus for the Lebesgue integral. 
Such a property may fail 
for a Banach valued function, 
but it holds for a Hilbert space, see Chapter 5 of \cite{BenyLind}
(more in general it holds if $ Y $ is reflexive or it satisfies the Radon-Nykodim property).

The following key result provides an extension of a Whitney differentiable function $f$ defined on a closed subset $F$ of $\R^n$ to the whole domain $\R^n$, with equivalent norm.

\begin{theorem}{\bf (Whitney extension Theorem)}  \label{thm:WET}
Let $F$ be a closed subset of $\R^n$, $n \geq 1$,  
$Y$ a Banach space, 
$k \geq 0$ an integer, and $k < \rho \leq k+1$. 
There exists a linear continuous extension operator $\mE_k : \Lip(\rho,F,Y) \to \Lip(\rho, \R^n,Y)$ 
which gives an extension $\mE_k f \in \Lip(\rho, \R^n,Y)$ to any $f \in \Lip(\rho,F,Y)$. 
The norm of $\mE_k$ has a bound independent of $F$, 
\begin{equation} \label{bound Ek Stein}
\| \mE_k f \|_{\Lip(\rho,\R^n,Y)} \leq C \| f \|_{\Lip(\rho,F,Y)} \, , \quad \forall f \in \Lip(\rho,F,Y) \, ,
\end{equation}
where $C$ depends only on $n,k $ (and not on $ F , Y $). 
\end{theorem}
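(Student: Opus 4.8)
\textbf{Plan of proof of the Whitney extension theorem (Theorem \ref{thm:WET}).}
The plan is to reduce the Banach-valued statement to the classical scalar Whitney extension theorem applied componentwise in the sense of duality, and then to package the construction so that it is manifestly linear and has norm bounded independently of $F$ and $Y$. First I would recall the classical construction for $Y = \R$ (or $\C$), as in Section 2.3 of Chapter VI of \cite{Stein}: one takes a Whitney partition of unity $\{\varphi_Q\}$ subordinate to a Whitney cover of the open set $\R^n \setminus F$ by dyadic cubes $Q$, picks for each $Q$ a nearest point $p_Q \in F$, and defines
\[
(\mE_k f)(x) := \begin{cases} f(x) & x \in F, \\ \dps\sum_{Q} \varphi_Q(x) \, P_{p_Q}(x) & x \in \R^n \setminus F, \end{cases}
\]
where $P_{y}(x) := \sum_{|\ell| \leq k} \frac{1}{\ell!} f^{(\ell)}(y) (x-y)^\ell$ is the Taylor polynomial of order $k$ of the jet $\{f^{(j)}\}$ based at $y \in F$. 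The point of the classical theorem is that the estimates \eqref{17 Stein} on the remainders $R_j(x,y)$ are exactly what is needed to prove that $\mE_k f \in \Lip(\rho,\R^n,Y)$ with $\| \mE_k f \|_{\Lip(\rho,\R^n,Y)} \leq C(n,k) \| f \|_{\Lip(\rho,F,Y)}$, via Taylor estimates on the derivatives of $\varphi_Q$ and on the differences of the polynomials $P_{p_Q} - P_{p_{Q'}}$ on overlapping cubes.

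The key observation making the argument work verbatim for a general Banach space $Y$ is that the entire construction is \emph{algebraic and local}: it uses only linear combinations of the values $f^{(j)}(y)$ with real coefficients (the $\varphi_Q$ and the monomials $(x-y)^\ell$), and it never uses any completeness or differentiability property of $Y$ beyond the normed-space structure. Concretely, I would verify the bound \eqref{bound Ek Stein} by a duality argument: for any $\xi \in Y^*$ with $\|\xi\|_{Y^*} \leq 1$, the scalar jet $\{\xi \circ f^{(j)}\}$ lies in $\Lip(\rho,F,\R)$ with norm $\leq \| f \|_{\Lip(\rho,F,Y)}$, the classical extension satisfies $\xi \circ (\mE_k f) = \mE_k(\xi \circ f)$ pointwise (since $\mE_k$ is given by the same explicit formula and $\xi$ is linear and continuous), and hence the classical scalar estimate gives $\| \mE_k(\xi \circ f) \|_{\Lip(\rho,\R^n,\R)} \leq C(n,k) \| f \|_{\Lip(\rho,F,Y)}$; taking the supremum over such $\xi$ and using $\|v\|_Y = \sup_{\|\xi\| \leq 1} |\xi(v)|$ recovers \eqref{bound Ek Stein}. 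One still has to check that the collection of partial derivatives $\{\pa_x^j (\mE_k f)\}_{|j|\le k}$ (which exist in the classical scalar sense, hence also as $Y$-valued functions since the formula is polynomial in $x$ away from $F$ and matches the prescribed jet on $F$) is the jet witnessing membership of $\mE_k f$ in $\Lip(\rho,\R^n,Y)$; this follows from the uniqueness of derivatives on $\R^n$ together with the duality identity above. Linearity and continuity of $\mE_k$ are immediate from the explicit formula.

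I expect the main obstacle to be purely bookkeeping rather than conceptual: one must check carefully that the classical Whitney estimates on the remainders $R_j^{(\mE_k f)}(x,y)$ for $x \in \R^n$, $y \in F$ — including the delicate case where $x$ is near $F$ but not in $F$, where one compares the local polynomial $\sum_Q \varphi_Q P_{p_Q}$ with $P_y$ — are obtained from the hypotheses \eqref{17 Stein} with constants depending only on $n$ and $k$. This is standard (it is exactly the content of \cite{Stein}), and the only new point is to carry the norm $\|\cdot\|_Y$ through every inequality, which is legitimate precisely because each inequality is a triangle-inequality estimate on finite linear combinations of the vectors $f^{(j)}(y)$. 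Therefore I would structure the write-up as: (i) state the classical scalar theorem as a cited black box; (ii) define $\mE_k$ by the explicit formula above; (iii) prove $\xi \circ \mE_k = \mE_k \circ \xi_*$ for $\xi \in Y^*$, where $\xi_*$ denotes the induced map on jets; (iv) deduce \eqref{bound Ek Stein} by duality; (v) note linearity from the formula. No step requires $Y$ to be reflexive or to have the Radon–Nikodym property — unlike the isomorphism $\Lip(k+1,\R^n,Y) \cong W^{k+1,\infty}(\R^n,Y)$ in \eqref{equiv Sob Lip}, the extension theorem holds for every Banach space $Y$.
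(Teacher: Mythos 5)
Your proof is correct and takes a genuinely different route from the paper's. The paper simply observes that Stein's construction (Theorem 4, \S 2.3, Chapter VI of \cite{Stein}) uses only triangle-inequality estimates on $\R$-linear combinations of the jet values $f^{(j)}(y)$, so the scalar proof carries over \emph{verbatim} to $Y$-valued jets — no reduction to the scalar case is made at all. You instead treat the scalar theorem as a black box and transfer it to $Y$ by duality: since the explicit Whitney formula for $\mE_k$ (Whitney cubes, partition of unity, nearest points $p_Q$) is a locally finite $\R$-linear combination of the $f^{(j)}(y)$ that depends only on $F$, $n$, $k$ and not on $f$, one has $\xi\circ \mE_k f = \mE_k(\xi\circ f)$ for every $\xi\in Y^*$; applying the scalar bound to $\xi\circ f$ with $\|\xi\|_{Y^*}\le 1$ and taking the supremum over $\xi$ in the resulting Whitney estimates recovers \eqref{bound Ek Stein}. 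Your route has the virtue of isolating precisely \emph{why} $Y$-valuedness costs nothing, and it makes the contrast with \eqref{equiv Sob Lip} (which does need the Radon--Nikodym property) transparent. The one point to fix is terminological: ``uniqueness of derivatives on $\R^n$'' is not what shows the candidate jet $\{(\mE_k f)^{(j)}\}$ gives genuine $Y$-valued partial derivatives of $\mE_k f$ across $F$ — uniqueness applies only once a Fr\'echet derivative is known to exist. What actually forces existence is the converse-Taylor (Whitney) argument applied to the $Y$-valued remainder estimate $\|R_j(x,y)\|_Y\le C(n,k)\|f\|_{\Lip(\rho,F,Y)}|x-y|^{\rho-|j|}$ (with $\rho>k$), which you have already obtained by duality: it forces the difference quotients of $(\mE_k f)^{(j)}$ to converge in $Y$-norm to $(\mE_k f)^{(j+e_i)}$. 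With that step phrased correctly, the argument is complete.
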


\begin{proof}
This is Theorem 4 in Section 2.3, Chapter VI of \cite{Stein}. The proof in \cite{Stein}
is written for real-valued functions $f : F \to \R$, but it also holds for functions $f : F \to Y$ for any (real or complex) Banach space $Y$, \emph{with no change}. 
The extension operator $\mE_k$ is defined in formula (18) in Section 2.3, Chapter VI of \cite{Stein},
and it is linear by construction.
\end{proof}

Clearly, since $\mE_k f$ is an extension of $f$, one has 
\begin{equation} \label{basso Stein}
\| f \|_{\Lip(\rho,F,Y)} \leq \| \mE_k f \|_{\Lip(\rho,\R^n,Y)} 
\leq C \| f \|_{\Lip(\rho,F,Y)} \, .
\end{equation}
In order to extend a function defined on a closed set $F \subset \R^n$ 
with values in scales of Banach spaces (like $H^s(\T^{\nu+1})$),
we observe that the extension provided by Theorem \ref{thm:WET} 
does not depend on the index of the space (namely $s$).

\begin{lemma}  \label{lemma:2702.1}
Let $F$ be a closed subset of $\R^n$, $n \geq 1$, 
let $k \geq 0$ be an integer, and $k < \rho \leq k+1$.
Let $Y \subseteq Z$ be two Banach spaces. 
Then $\Lip(\rho,F,Y) \subseteq \Lip(\rho,F,Z)$. 
The two extension operators 
$\mE_k^{(Z)} : \Lip(\rho,F,Z) \to \Lip(\rho, \R^n,Z)$ 
and $\mE_k^{(Y)} : \Lip(\rho,F,Y) \to \Lip(\rho, \R^n,Y)$
provided by Theorem \ref{thm:WET} satisfy
\[
\mE_k^{(Z)} f = \mE_k^{(Y)} f \quad \forall f \in \Lip(\rho,F,Y) \, .
\]
As a consequence, we simply denote $\mE_k$ the extension operator.
\end{lemma}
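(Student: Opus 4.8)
The plan is to unwind the construction of the Whitney extension operator $\mE_k$ given in Stein \cite{Stein}, Section 2.3, Chapter VI, and observe that it is purely ``combinatorial'' in the values $\{f^{(j)}(x) : x \in F, |j| \leq k\}$: it never uses any structure of the target space beyond the vector space operations and the norm estimates \eqref{17 Stein}. First I would recall that the Stein extension is defined (formula (18) in \cite{Stein}) via a Whitney partition of unity $\{\varphi_i\}$ subordinate to a cube decomposition of the open set $\R^n \setminus F$, together with, for each cube $Q_i$, a chosen point $p_i \in F$ near $Q_i$ and the Taylor-type polynomial $P(x; p_i) := \sum_{|j| \leq k} \frac{1}{j!} f^{(j)}(p_i)(x - p_i)^j$; then $(\mE_k f)(x) := f(x)$ for $x \in F$ and $(\mE_k f)(x) := \sum_i \varphi_i(x) P(x; p_i)$ for $x \notin F$. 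The crucial point is that the cubes $Q_i$, the partition of unity $\varphi_i$, and the chosen points $p_i \in F$ depend only on the geometry of $F \subseteq \R^n$ (and on $k,n$), not on the target space; and the coefficients $f^{(j)}(p_i)$ appearing in $P(x;p_i)$ are literally the same elements of $Y$ whether we regard them in $Y$ or in the larger space $Z$.

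Concretely, I would argue as follows. Given $f \in \Lip(\rho,F,Y)$ with its collection $\{f^{(j)}\}_{|j| \leq k}$, the inclusion $Y \hookrightarrow Z$ (which I take to be continuous, so $\|\cdot\|_Z \leq C\|\cdot\|_Y$, or with $\|\cdot\|_Z \le \|\cdot\|_Y$ after rescaling) immediately gives that the same collection $\{f^{(j)}\}$ witnesses $f \in \Lip(\rho,F,Z)$, since \eqref{16 Stein}--\eqref{17 Stein} are inherited: the remainders $R_j(x,y)$ are the same $Y$-valued (hence $Z$-valued) expressions, and their $Z$-norm bounds follow from their $Y$-norm bounds. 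This proves the inclusion $\Lip(\rho,F,Y) \subseteq \Lip(\rho,F,Z)$. Then, writing out $\mE_k^{(Y)} f$ and $\mE_k^{(Z)} f$ using the explicit formula above: on $F$ both equal $f$; on $\R^n \setminus F$ both equal $\sum_i \varphi_i(x) P(x;p_i)$ with \emph{the same} scalar weights $\varphi_i(x)$, \emph{the same} points $p_i$, and \emph{the same} vectors $f^{(j)}(p_i)$ (viewed in $Z$). Hence $\mE_k^{(Y)} f = \mE_k^{(Z)} f$ pointwise on all of $\R^n$. One should note that the series converges in $Z$ because it already converges in $Y$ (locally finite sum away from $F$, and the Stein estimates give the convergence and $\mathcal C^k$-regularity up to $F$), and that $\mE_k^{(Z)} f \in \Lip(\rho,\R^n,Z)$ by Theorem \ref{thm:WET} applied with target $Z$.

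The main obstacle, such as it is, is not a deep difficulty but a bookkeeping one: one must verify that Stein's construction genuinely makes no use of the target being $\R$ (or even finite-dimensional) — in particular that the partition of unity, the Whitney cube decomposition, the selection of the points $p_i$, and all the combinatorial lemmas bounding $|P(x;p_i) - P(x;p_{i'})|$ and its derivatives on overlapping cubes go through verbatim with $Y$-valued (or $Z$-valued) Taylor polynomials, using only the triangle inequality and homogeneity of the norm. This is already implicitly asserted in the proof of Theorem \ref{thm:WET} in the excerpt (``it also holds for functions $f : F \to Y$ for any Banach space $Y$, with no change''), so for the present lemma it suffices to point to that remark and to the explicitness of formula (18) in \cite{Stein}: since the operator is given by one and the same formula in terms of data $\{f^{(j)}(p_i)\}$ that lie in $Y \subseteq Z$, the two extensions coincide. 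I would therefore present the proof as: (i) inherit the $\Lip(\rho,F,Z)$ structure from $\Lip(\rho,F,Y)$; (ii) recall the explicit form of $\mE_k$; (iii) conclude equality of $\mE_k^{(Y)} f$ and $\mE_k^{(Z)} f$ by matching the formulas term by term.
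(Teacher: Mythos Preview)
Your proof is correct and follows exactly the same approach as the paper, which simply points to the explicit construction in Stein's formula (18) and observes that the cube decomposition, partition of unity, and choice of points $p_i$ depend only on the geometry of $F \subset \R^n$ and not on the target space. You have merely spelled out in detail what the paper states in one sentence.
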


\begin{proof}
The lemma follows directly by the construction of the extension operator $\mE_k$ in 
formula (18) in Section 2.3, Chapter VI of \cite{Stein}, which
relies on a nontrivial decomposition in cubes of the domain $\R^n$ only.
\end{proof}

Thanks to the equivalence \eqref{basso Stein}, Lemma \ref{lemma:2702.1}, and 
\eqref{equiv Sob Lip} which holds for functions valued in $ H^s $, 
classical interpolation and tame estimates for products, projections, and composition of Sobolev 
functions can be easily extended to Whitney differentiable functions.

The difference between the Whitney-Sobolev norm introduced in Definition \ref{def:Lip F uniform}
and the norm in Definition \ref{def:Lip F} (for $ \rho = k+1 $, $ n = \nu + 1 $, 
and target space $ Y = H^s ( \T^{\nu+1},\C) $)
is the weight $ \g \in (0,1]$. 
Observe that the introduction of this weight simply amounts to the following rescaling $\mR_\g$: 
given $u = (u^{(j)})_{|j| \leq k}$, we define $\mR_\g u = U = (U^{(j)})_{|j| \leq k}$ as
\be \label{resca}
\lm = \g \mu, \qquad 
\g^{|j|} u^{(j)}(\lm) 
= \g^{|j|} u^{(j)}(\g \mu) 
=: U^{(j)}(\mu) = U^{(j)}(\g^{-1} \lm), \qquad 
U := \mR_\g u \, .
\ee
Thus $u \in \Lip(k+1,F,s,\g)$ if and only if $U \in \Lip(k+1, \g^{-1} F ,s,1)$, with 
\be\label{rescaling:gamma-1}
\| u \|_{s,F}^{k+1,\g} = \| U  \|_{s,\g^{-1} F}^{k+1,1} \,.
\ee
Under the rescaling $\mR_\g$, \eqref{equiv Sob Lip} gives the equivalence of the two norms 
\be \label{0203.1}
\| f \|_{W^{k+1,\infty, \gamma}(\R^{\nu+1}, H^s)} := 
\sum_{|\a| \leq k+1} \g^{|\a|} \| \pa_\lm^\a f \|_{L^\infty(\R^{\nu+1}, H^s)} 
\sim_{\nu,k} \| f \|_{s, \R^{\nu+1}}^\kug \,.
\ee
Moreover, given $u \in \Lip(k+1,F,s,\g)$, its extension 
\begin{equation} \label{Wg}
\tilde u := \mR_\g^{-1} \mE_k \mR_\g u \in \Lip(k+1, \R^{\nu+1}, s, \g) 
\quad \text{satisfies} \quad 
\| u \|^{k+1,\g}_{s,F} 
\sim_{\nu,k} \| \tilde u \|_{s, \R^{\nu+1}}^\kug \,.
\end{equation}

\noindent
\textbf{Proof of Lemma \ref{lemma:smoothing}}.
Inequalities \eqref{p2-proi}-\eqref{p3-proi} follow by 
\begin{align*}
 (\Pi_N u)^{(j)}(\lm) = \Pi_N [u^{(j)}(\lm)], \quad 
 R_j^{(\Pi_N u)}(\lm, \lm_0) = \Pi_N [ R_j^{(u)}(\lm, \lm_0) ], 
\end{align*}
for all $0 \leq |j|  \leq k$, $\lm, \lm_0 \in F$, and the usual smoothing estimates
$ \| \Pi_N f \|_{s}  \leq N^\alpha \| f \|_{s-\alpha} $ and 
$ \| \Pi_N^\bot f \|_{s}  
\leq N^{-\alpha} \| f \|_{s + \alpha} $ for Sobolev functions.
\qed

\smallskip

\noindent
\textbf{Proof of Lemma \ref{lemma:interpolation}}.
Inequality \eqref{2202.3} follows from the classical interpolation inequality 
$\| u \|_s \leq \| u \|_{s_0}^\theta \| u \|_{s_1}^{1-\theta}$, 
$s = \theta s_0 + (1-\theta) s_1$ for Sobolev functions,
and from the Definition \ref{def:Lip F uniform} of Whitney-Sobolev norms, 
since 
\begin{align*}
\g^{|j|} \| u^{(j)}(\lm) \|_s 
& \leq (\g^{|j|} \| u^{(j)}(\lm) \|_{s_0})^\theta
(\g^{|j|} \| u^{(j)}(\lm) \|_{s_1})^{1-\theta}
\leq (\| u \|_{s_0,F}^{k+1,\g})^\theta (\| u \|_{s_1,F}^{k+1,\g})^{1-\theta},
\\
\g^{k+1} \| R_j(\lm,\lm_0) \|_s 
& \leq (\g^{k+1} \| R_j(\lm,\lm_0) \|_{s_0})^\theta
(\g^{k+1} \| R_j(\lm,\lm_0) \|_{s_1})^{1-\theta}
\leq (\| u \|_{s_0,F}^{k+1,\g})^\theta (\| u \|_{s_1,F}^{k+1,\g})^{1-\theta}
|\lm - \lm_0|^{k+1-|j|}.
\end{align*}
Inequality \eqref{2202.2} follows from \eqref{2202.3} by using the asymmetric Young inequality
(like in Lemma 2.2 in \cite{BertiMontalto}).
\qed

\smallskip

\noindent
\textbf{Proof of Lemma \ref{lemma:LS norms}}. 
By \eqref{0203.1}-\eqref{Wg}, the lemma follows from the corresponding inequalities 
for functions in $W^{k+1,\infty,\g}(\R^{\nu+1}, H^s)$, which are proved, for instance, 
in \cite{BertiMontalto} (formula (2.72), Lemma 2.30).
\qed

\smallskip

For any $\rho > 0$, we define the  $ {\cal C}^\infty $ function $ h_\rho : \R \to \R $,  
\be\label{def-gl}
h_\rho (y) := \frac{\chi_\rho(y)}{y} = \frac{\chi(y \rho^{-1})}{y}  \, , 
\quad \forall y \in \R \setminus \{ 0 \}, 
\quad h_\rho(0) := 0  \, , 
\ee
where $\chi$ is the cut-off function introduced in \eqref{cut off simboli 1}, 
and $\chi_\rho(y) := \chi (y / \rho)$.
Notice that the function $ h_\rho $ 
is of class $ {\cal C}^\infty $ because  $ h_\rho (y) = 0 $ for $ |y| \leq \rho / 3 $.
Moreover 
by the properties of $ \chi $ in  \eqref{cut off simboli 1}
we have 
\begin{equation} \label{fatti d'acca rho}
h_\rho (y) = \frac{1}{ y} \, , \ \forall  |y| \geq \frac{2 \rho}{ 3} \, , \qquad 
| h_\rho(y) | \leq \frac{3}{\rho} \, , \ \forall y \in \R \, . 
\end{equation}
To prove Lemma \ref{lemma:WD}, we use the following preliminary lemma. 

\begin{lemma} 
\label{lemma:cut-off sd}
Let $f : \R^{\nu+1} \to \R$ and  $\rho > 0$. 
Then the function 
\begin{equation} \label{2017.1805.1}
g(\lambda) := h_\rho (f(\lambda)), \quad \forall \lambda \in \R^{\nu+1} \, , 
\end{equation}
where $h_\rho$ is defined in \eqref{def-gl},
coincides with $ 1/ f (\lambda) $ 
on the set $ F := \{ \lm \in \R^{\nu+1} : |f(\lm)| \geq \rho \}$. 

If the function $f $ is in $ W^{k+1,\infty}(\R^{\nu+1},\R)$, with estimates
\begin{equation} \label{0103.1}
\g^{|\a|} | \pa_\lm^\a f(\lm) | \leq M \, ,  \quad  \forall \a \in \N^{\nu+1} \, , \ \ 
1 \leq |\a| \leq k+1 \, , 
\end{equation} 
for some $M \geq \rho$, 
then the function $g $ is in $ W^{k+1,\infty}(\R^{\nu+1},\R)$ and 
\begin{equation} \label{0103.2}
\g^{|\a|} | \pa_\lm^\a g(\lm) | 
\leq C_k \frac{M^{k+1}}{\rho^{k+2}}\, , 
\quad \forall \a \in \N^{\nu+1}, \ \ 
0 \leq |\a| \leq k+1.
\end{equation} 
\end{lemma}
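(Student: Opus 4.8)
The statement to prove is Lemma~\ref{lemma:cut-off sd}, concerning the function $g(\lambda) := h_\rho(f(\lambda))$. The plan is to split the argument into two parts: first the elementary algebraic identity $g = 1/f$ on $F$, and then the quantitative $W^{k+1,\infty}$ estimate \eqref{0103.2}. The first part is immediate: by \eqref{fatti d'acca rho}, $h_\rho(y) = 1/y$ whenever $|y| \geq 2\rho/3$, hence in particular whenever $|y| \geq \rho$; therefore for $\lambda \in F$, i.e. $|f(\lambda)| \geq \rho$, we get $g(\lambda) = h_\rho(f(\lambda)) = 1/f(\lambda)$. This requires no calculation.

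For the second part, I would estimate the derivatives of the composition $g = h_\rho \circ f$ via the Fa\`a di Bruno formula. The key inputs are: (i) bounds on the derivatives of the scalar function $h_\rho$, and (ii) the hypothesis \eqref{0103.1} on the derivatives of $f$. For (i), one differentiates \eqref{def-gl}: since $h_\rho(y) = \chi(y/\rho)/y$, each derivative $h_\rho^{(n)}(y)$ is a finite linear combination of terms $\chi^{(m)}(y/\rho)\rho^{-m} y^{-(n-m+1)}$ with $0 \le m \le n$; because $\chi$ and all its derivatives are supported (for the derivatives $m\geq1$) in $\{ \rho/3 \le |y| \le 2\rho/3\}$ and $\chi$ itself is bounded, one obtains $|h_\rho^{(n)}(y)| \le C_n \rho^{-(n+1)}$ for all $y \in \R$ and all $0 \le n \le k+1$. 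This is the analogue, in this simple scalar setting, of the bound used implicitly in the proof of Lemma~\ref{Homological equations tame}.

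Then, by Fa\`a di Bruno, for a multi-index $\alpha$ with $1 \le |\alpha| \le k+1$,
\[
\partial_\lambda^\alpha g(\lambda) = \sum_{n=1}^{|\alpha|} h_\rho^{(n)}(f(\lambda)) \sum_{\substack{\alpha_1 + \cdots + \alpha_n = \alpha \\ |\alpha_i| \geq 1}} c_{\alpha_1,\ldots,\alpha_n} \prod_{i=1}^n \partial_\lambda^{\alpha_i} f(\lambda) \, ,
\]
so that, multiplying by $\gamma^{|\alpha|} = \prod_i \gamma^{|\alpha_i|}$ and using \eqref{0103.1} on each factor $\gamma^{|\alpha_i|}|\partial_\lambda^{\alpha_i} f| \le M$ together with $|h_\rho^{(n)}(f(\lambda))| \le C_n \rho^{-(n+1)}$, one gets $\gamma^{|\alpha|}|\partial_\lambda^\alpha g(\lambda)| \le C_k \sum_{n=1}^{|\alpha|} \rho^{-(n+1)} M^n \le C_k M^{|\alpha|} \rho^{-(|\alpha|+1)}$ after using $M \ge \rho$ (which makes the $n = |\alpha|$ term dominant, up to constants, since $M/\rho \ge 1$); this is bounded by $C_k M^{k+1}\rho^{-(k+2)}$ using $M \ge \rho$ once more. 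For $\alpha = 0$ one simply uses $|g(\lambda)| = |h_\rho(f(\lambda))| \le 3/\rho \le C_k M^{k+1}\rho^{-(k+2)}$ by \eqref{fatti d'acca rho} and $M \ge \rho$. This establishes \eqref{0103.2}.

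\textbf{Main obstacle.} There is no serious obstacle: the only mildly delicate point is bookkeeping in the Fa\`a di Bruno step — keeping track that every derivative that falls on the inner function $f$ is matched by exactly one factor of $\gamma$, and that the powers of $M/\rho \ge 1$ can all be absorbed into $M^{k+1}\rho^{-(k+2)}$. One should also be slightly careful that the bound on $h_\rho^{(n)}$ genuinely holds \emph{for all} $y \in \R$ (including $|y|$ small, where $h_\rho \equiv 0$, and $|y|$ large, where $h_\rho(y) = 1/y$ and $h_\rho^{(n)}(y) = (-1)^n n!\, y^{-(n+1)}$ with $|y| \ge 2\rho/3$), which is exactly why the cut-off $\chi_\rho$ was inserted in the definition \eqref{def-gl}.
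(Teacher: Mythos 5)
Your proposal is correct and follows essentially the same route as the paper's (very terse) proof: the identity on $F$ from \eqref{fatti d'acca rho}, the uniform bound $|h_\rho^{(n)}(y)|\le C_n\rho^{-(n+1)}$ obtained from Leibniz and the support properties of $\chi$, and then Fa\`a di Bruno together with \eqref{0103.1} and $M\ge\rho$ to conclude. The filled-in bookkeeping of powers of $M/\rho$ is accurate; nothing further is needed.
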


\begin{proof}
Formula \eqref{0103.2} for $\a = 0$ holds by \eqref{fatti d'acca rho}. For $|\alpha| \geq 1$, we use the Fa\`a di Bruno formula and \eqref{0103.1}. 
\end{proof}

\noindent 
{\bf Proof of Lemma \ref{lemma:WD}}.
The function $(\ompaph)^{-1}_{ext} u$ defined in \eqref{def ompaph-1 ext} is
\[
\big( (\ompaph)^{-1}_{ext} u \big) (\lm,\ph,x) 
= - \ii \sum_{(\ell,j) \in \Z^{\nu+1}} 
g_\ell(\lm) u_{\ell, j}(\lm) \, e^{\ii (\ell \cdot \vphi + j x )} \, ,
\]
where $g_\ell(\lm) = h_\rho( \om \cdot \ell )$ in \eqref{2017.1805.1}
with $\rho = \g \langle \ell \rangle^{-\t}$ and $f(\lm) = \om \cdot \ell$.
The function $f(\lm)$ satisfies \eqref{0103.1} 
with $M = \g |\ell|$. 
Hence $g_\ell(\lm)$ satisfies \eqref{0103.2}, namely
\begin{equation} \label{0103.6}
\g^{|\a|} | \pa_\lm^\a g_\ell(\lm) | \leq C_k \g^{-1} \langle \ell \rangle^\mu 
\quad \forall \a \in \N^{\nu+1}, \ 0 \leq |\a| \leq k+1,
\end{equation}
where $\mu = k + 1 + (k+2) \t$ is defined in \eqref{Diophantine-1}. By the product rule and using \eqref{0103.6}, we deduce 
$ \g^{|\a|} \| \pa_\lm^{\a}((\ompaph)^{-1}_{ext} u) (\lm) \|_s  
\leq C_k \g^{-1} \| u \|_{s+\mu, \R^{\nu+1}}^{k+1,\g} $
and therefore \eqref{2802.2}.
The proof is concluded by observing that the restriction of $(\ompaph)^{-1}_{ext} u$ 
to $F$ gives $(\ompaph)^{-1} u$ as defined in \eqref{def:ompaph}, 
and \eqref{Diophantine-1} follows by \eqref{Wg}.
\qed

\medskip

\noindent 
{\bf Proof of Lemma \ref{Moser norme pesate}}. 
Given $u\in\Lip(k+1,F,s,\g)$, we consider its extension 
$ \tilde u\in\Lip(k+1,\R^{\nu+1},s,\g)$ provided by \eqref{Wg}. 
Then we observe that the composition $\mathtt f(\tilde u)$ is an extension of $\mathtt f(u)$, and therefore one has the inequality 
$\| \mathtt f(u) \|_{s,F}^{k+1,\g} 
\leq \| \mathtt f(\tilde u) \|_{s,\R^{\nu+1}}^{k+1,\g} 
\sim \| \mathtt f(\tilde u) \|_{W^{k+1, \infty, \g}(\R^{\nu+1},H^s)}$ by \eqref{0203.1}. 
Then \eqref{0811.10} follows by the Moser composition estimates 
for $\| \ \|_{s, \R^{\nu+1}}^{k+1,\g}$ 
(see for instance Lemma 2.31 in \cite{BertiMontalto}), 
together with the equivalence of the norms in \eqref{0203.1}-\eqref{Wg}. 
\qed

\section{A Nash-Moser-H\"ormander implicit function theorem}
\label{sec:NMH}


Let $(E_a)_{a \geq 0}$ be a decreasing family of Banach spaces with continuous injections  
$E_b \hookrightarrow E_a$, 
\begin{equation} \label{S0}
\| u \|_{E_a} \leq \| u \|_{E_b} \quad \text{for} \  a \leq b.	
\end{equation}
Set $E_\infty = \cap_{a\geq 0} E_a$ with the weakest topology making the 
injections $E_\infty \hookrightarrow E_a$ continuous. 
Assume that there exist linear smoothing operators $S_j : E_0 \to E_\infty$ for $j = 0,1,\ldots$, 
satisfying the following inequalities, 
with constants $C$ bounded when $a$ and $b$ are bounded, 
and independent of $j$,
\begin{alignat}{2}
\label{S1} 
\| S_j u \|_{E_a} 
& \leq C \| u \|_{E_a} 
&& \text{for all} \ a;
\\
\label{S2} 
\| S_j u \|_{E_b} 
& \leq C 2^{j(b-a)} \| S_j u \|_{E_a} 
&& \text{if} \ a<b; 
\\
\label{S3} 
\| u - S_j u \|_{E_b} 
& \leq C 2^{-j(a-b)} \| u - S_j u \|_{E_a} 
&& \text{if} \ a>b; 
\\ 
\label{S4} 
\| (S_{j+1} - S_j) u \|_{E_b} 
& \leq C 2^{j(b-a)} \| (S_{j+1} - S_j) u \|_{E_a} 
\quad && \text{for all $a,b$.}
\end{alignat}
Set 
\begin{equation}  \label{new.24}
R_0 u := S_1 u, \qquad 
R_j u := (S_{j+1} - S_j) u, \quad j \geq 1.
\end{equation}
We also assume that 
\begin{equation} \label{2705.4}
\| u \|_{E_a}^2 \leq C \sum_{j=0}^\infty \| R_j u \|_{E_a}^2	\quad \forall a \geq 0,
\end{equation}
with $C$ bounded for $a$ bounded 
(a sort of ``orthogonality property'' of the smoothing operators).

Suppose that we have another family $F_a$ of decreasing Banach spaces with smoothing operators having the same properties as above. We use the same notation also for the smoothing operators. 

\begin{theorem}[\cite{Baldi-Haus}] \label{thm:NMH}
{\bf (Existence)} Let $a_1, a_2, \a, \b, a_0, \mu$ be real numbers with 
\begin{equation} \label{ineq 2016}
0 \leq a_0 \leq \mu \leq a_1, \qquad 
a_1 + \frac{\b}{2} \, < \a < a_1 + \b , \qquad 
2\a < a_1 + a_2. 
\end{equation}
Let $U$ be a convex neighborhood of $0$ in $E_\mu$. 
Let $\Phi$ be a map from $U$ to $F_0$ such that $\Phi : U \cap E_{a+\mu} \to F_a$ 
is of class $C^2$ for all $a \in [0, a_2 - \mu]$, with 
\begin{align} 
\|\Phi''(u)[v,w] \|_{F_a} 
& \leq M_1(a) \big( \| v \|_{E_{a+\mu}} \| w \|_{E_{a_0}} 
+ \| v \|_{E_{a_0}} \| w \|_{E_{a+\mu}} \big) 
\notag \\ & \quad 
+ \{ M_2(a) \| u \|_{E_{a+\mu}} + M_3(a) \} \| v \|_{E_{a_0}} \| w \|_{E_{a_0}}
\label{Phi sec}
\end{align}
for all $u \in U \cap E_{a+\mu}$, $v,w \in E_{a+\mu}$,
where $M_i : [0, a_2 - \mu] \to \R$, $i = 1,2,3$, are positive, increasing functions. 
Assume that $\Phi'(v)$, for $v \in E_\infty \cap U$ 
belonging to some ball $\| v \|_{E_{a_1}} \leq \d_1$,
has a right inverse $\Psi(v)$ mapping $F_\infty$ to $E_{a_2}$, and that
\begin{equation}  \label{tame in NM}
\| \Psi(v)g \|_{E_a} \leq 
L_1(a) \|g\|_{F_{a + \b - \a}} + 
\{ L_2(a) \| v \|_{E_{a + \b}} + L_3(a) \} \| g \|_{F_0}
\quad \forall a \in [a_1, a_2],
\end{equation}
where $L_i : [a_1, a_2] \to \R$, $i = 1,2,3$, 
are positive, increasing functions.

Then for all $A > 0$ there exists $\d > 0$ such that, 
for every $g \in F_\b$ satisfying
\begin{equation} \label{2705.1}
\sum_{j=0}^\infty \| R_j g \|_{F_\b}^2 \leq A^2 \| g \|_{F_\b}^2, \quad
\| g \|_{F_\b} \leq \d,
\end{equation}
there exists $u \in E_\a$ solving $\Phi(u) = \Phi(0) + g$.
The solution $u$ satisfies 
\begin{equation} \label{qui.01}
\| u \|_{E_\a} \leq C L_{123}(a_2) (1 + A) \| g \|_{F_\b}, 
\end{equation}
where $L_{123} = L_1 + L_2 + L_3$ 
and $C$ is a constant depending on $a_1, a_2, \a, \b$. 
The constant $\d$ is 
\begin{equation} \label{qui.02}
\d = 1/B, \quad 
B = C' L_{123}(a_2) \max \big\{ 1/\d_1, 1+A, (1+A) L_{123}(a_2) M_{123}(a_2-\mu) \big\}
\end{equation}
where $M_{123} = M_1 + M_2 + M_3$ 
and $C'$ is a constant depending on $a_1, a_2, \a, \b$.  
\\[2mm]
{\bf (Higher regularity)} Moreover, let $c > 0$
and assume that \eqref{Phi sec} holds for all $a \in [0, a_2 + c - \mu]$,
$\Psi(v)$ maps $F_\infty$ to $E_{a_2 + c}$, 
and \eqref{tame in NM} holds for all $a \in [a_1, a_2 + c]$. 
If $g$ satisfies \eqref{2705.1} and, in addition, $g \in F_{\b+c}$ with
\begin{equation} \label{0406.1}
\sum_{j=0}^\infty \| R_j g \|_{F_{\b+c}}^2 \leq A_c^2 \| g \|_{F_{\b+c}}^2 
\end{equation}
for some $A_c$, then the solution $u$ belongs to $E_{\a + c}$, 
with 
\begin{equation} \label{0211.10}	
\| u \|_{E_{\a+c}} \leq C_c \big\{ \mG_1 (1+A) \| g \|_{F_\b} 
+ \mG_2 (1+A_c) \| g \|_{F_{\b+c}} \big\} 
\end{equation}
where 
\begin{align} 
\mG_1 & := \tilde L_3 + \tilde L_{12} (\tilde L_3 \tilde M_{12} + L_{123}(a_2) \tilde M_3) 
(1 + z^{N}), 
\quad \mG_2 := \tilde L_{12} (1 + z^N), 
\label{qui.04}
\\ z & := L_{123}(a_1) M_{123}(0) + \tilde L_{12} \tilde M_{12},
\label{qui.03}
\end{align}
$\tilde L_{12} := \tilde L_1 + \tilde L_2$, 
$\tilde L_i := L_i(a_2+c)$, $i = 1,2,3$; 
$\tilde M_{12} := \tilde M_1 + \tilde M_2$, 
$\tilde M_i := M_i(a_2 + c - \mu)$, $i = 1,2,3$;
$N$ is a positive integer depending on $c,a_1,\a,\b$; 
and $C_c$ depends on $a_1, a_2, \a, \b, c$.
\end{theorem}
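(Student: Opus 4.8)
\textbf{Plan of proof for Theorem \ref{thm:NMH}.} The statement is the Nash–Moser–H\"ormander implicit function theorem of Baldi–Haus \cite{Baldi-Haus}, so the plan is to reproduce the scheme of that paper; since the excerpt only \emph{cites} \cite{Baldi-Haus}, I would actually carry out the argument rather than invoke it. The core object is a modified Newton iteration with smoothing. Fix the dyadic scales $\theta_j := 2^{j}$, set $\theta_0 = 1$, and define a sequence $u_n \in E_\infty$ by $u_0 := 0$ and $u_{n+1} := u_n + \Delta_n$, where the increment solves the linearized equation $\Phi'(S_{\vartheta_n} u_n) \Delta_n = R_n g - S_{\vartheta_n}^F e_n$ (here $e_n := \Phi(u_n) - \Phi(0) - g$ is the error, $R_n$ is the dyadic smoother from \eqref{new.24} acting on $g$, $S^F$ is the smoother on the target scale, and $\vartheta_n$ is a slowly growing cutoff index, e.g. $\vartheta_n = [\kappa n]$ for a suitable $\kappa$). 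The right inverse $\Psi(S_{\vartheta_n} u_n)$ supplied by hypothesis \eqref{tame in NM} gives $\Delta_n$, provided the smallness $\| S_{\vartheta_n} u_n \|_{E_{a_1}} \le \delta_1$ is propagated inductively.

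First I would set up the inductive estimates on the triple $(\|\Delta_n\|_{E_a}, \|u_n\|_{E_a}, \|e_n\|_{F_a})$. The increment $\Delta_n$ is controlled in high norm $E_{a_2}$ and low norm $E_{a_1}$ using \eqref{tame in NM} together with the smoothing inequalities \eqref{S1}--\eqref{S4}: one typically proves $\|\Delta_n\|_{E_a} \lesssim \vartheta_n^{\,a - \alpha} \varepsilon_n$ for $a \in \{a_1, a_2\}$, where $\varepsilon_n$ is a geometrically decaying ``total error'' quantity. The error $e_{n+1}$ is expanded via Taylor: $e_{n+1} = e_n + \Phi'(u_n)\Delta_n + Q_n$ with the quadratic remainder $Q_n = \int_0^1 (1-t)\Phi''(u_n + t\Delta_n)[\Delta_n,\Delta_n]\,dt$, then one substitutes the equation for $\Delta_n$ to get $e_{n+1} = (\mathrm{Id} - S^F_{\vartheta_n}) e_n + (\mathrm{Id} - R_n) \Phi'(u_n)\Delta_n \cdot(\cdots) + [\Phi'(u_n) - \Phi'(S_{\vartheta_n} u_n)]\Delta_n + Q_n + (\text{smoothing commutator terms})$. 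Using \eqref{Phi sec} to bound $Q_n$ and the difference of derivatives (the mean value form of $\Phi''$), and \eqref{S3} on $(\mathrm{Id} - S^F_{\vartheta_n})$, one closes a recursion of the form $\varepsilon_{n+1} \le C(\vartheta_n^{-\rho}\varepsilon_n + \vartheta_n^{\sigma}\varepsilon_n^2)$, which forces geometric decay $\varepsilon_n \le \varepsilon_0 \chi_0^{n}$, $\chi_0 \in (0,1)$, once $\varepsilon_0 = \|g\|_{F_\beta}$ is below the threshold $\delta$ in \eqref{qui.02}. The precise exponents come from the algebraic inequalities \eqref{ineq 2016}; choosing $\kappa$ (the growth rate of $\vartheta_n$) and the loss parameters so that $\rho > 0$ and the quadratic term is subdominant is exactly what \eqref{ineq 2016} guarantees. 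Summability $\sum_n \|\Delta_n\|_{E_\alpha} < \infty$ (here one uses $\alpha < a_1 + \beta$, i.e. interpolation between the $E_{a_1}$ and $E_{a_2}$ bounds on $\Delta_n$, together with \eqref{2705.4} to pass from dyadic blocks to the full norm) then gives $u = \lim u_n \in E_\alpha$ solving $\Phi(u) = \Phi(0) + g$, with the quantitative bound \eqref{qui.01}; the smallness condition $\| S_{\vartheta_n} u_n\|_{E_{a_1}} \le \delta_1$ is verified along the way because the partial sums are dominated by $C\varepsilon_0 \le C\delta < \delta_1$ by the choice of $B$.

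For the higher regularity part, I would rerun the same iteration but track in addition the norms $\|\Delta_n\|_{E_{a_2 + c}}$ and $\|e_n\|_{F_{\beta + c}}$, using hypothesis \eqref{0406.1} on $g$ and the extended range of validity of \eqref{Phi sec}, \eqref{tame in NM}. The new feature is that the high-regularity norms need not decay: one proves they grow at most like $z^n$ for the $z$ in \eqref{qui.03} (this $z$ is precisely the ``one-step amplification factor'' of the high norm, built from $L_{123}(a_1)M_{123}(0)$ and $\tilde L_{12}\tilde M_{12}$), while the low-regularity error still decays geometrically. Interpolating the slowly-growing $E_{a_2+c}$ bound against the geometrically-decaying $E_{a_1}$ bound on each $\Delta_n$ produces a summable series in $E_{\alpha + c}$, and summing the geometric-times-polynomial series yields the factor $(1 + z^N)$ in \eqref{qui.04}, with $N$ the integer depending only on $c, a_1, \alpha, \beta$ needed to beat the growth; this gives \eqref{0211.10}.

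The main obstacle is the bookkeeping of the interlocking inductive inequalities: one must simultaneously propagate (i) the smallness of $S_{\vartheta_n}u_n$ in $E_{a_1}$ (needed so that $\Psi$ exists), (ii) the geometric decay of the error $\varepsilon_n$, and (iii) in the higher-regularity stage, the controlled growth of the top norm — all with explicit constants matching \eqref{qui.02}, \eqref{qui.04}. The delicate point is balancing the cutoff growth rate $\vartheta_n$ against the loss $\alpha - (\beta-\alpha)$-type indices so that the smoothing gain $\vartheta_n^{-\rho}$ in the error recursion strictly dominates the quadratic amplification $\vartheta_n^{\sigma}\varepsilon_n$; this is where the strict inequalities in \eqref{ineq 2016} (notably $a_1 + \beta/2 < \alpha$ and $2\alpha < a_1 + a_2$) are used in an essential, non-negotiable way, and getting every exponent to land on the correct side requires care rather than cleverness.
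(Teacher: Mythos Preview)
Your plan is correct and matches the approach the paper invokes: the paper does not prove this theorem but cites \cite{Baldi-Haus}, noting only that it ``is proved in \cite{Baldi-Haus} using an iterative scheme similar to \cite{Hormander1976}'', and your sketch accurately reconstructs that H\"ormander-type Newton scheme with dyadic smoothing, the inductive control of increments/errors via \eqref{Phi sec}--\eqref{tame in NM}, and the interpolation argument for the higher-regularity part producing the $(1+z^N)$ factor. One small point: the increment equation you write, $\Phi'(S_{\vartheta_n}u_n)\Delta_n = R_n g - S_{\vartheta_n}^F e_n$, double-counts $g$ (since $e_n$ already contains $-g$); in the actual Baldi--Haus scheme the right-hand side is built from the dyadic block $R_n g$ together with a smoothed version of the \emph{accumulated quadratic} errors, and the cutoff index is simply $n$ rather than a separate $\vartheta_n = [\kappa n]$---but this is a cosmetic discrepancy, not a gap in the strategy.
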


This theorem is proved in \cite{Baldi-Haus} using an iterative scheme similar to \cite{Hormander1976}. The main advantage with respect to the Nash-Moser implicit function theorems as presented in \cite{Zehnder,BBP} is the optimal regularity of the solution $u$ in terms of the datum $g$ (see \eqref{qui.01}, \eqref{0211.10}).
Theorem \ref{thm:NMH} has the advantage of making explicit all the constants (unlike \cite{Hormander1976}), which is necessary to deduce  the quantitative Theorem \ref{thm:nonlinear transport}.


\begin{footnotesize}

\end{footnotesize}

\medskip

\noindent
{\sc Pietro Baldi}, Dipartimento di Matematica e Applicazioni ``R. Caccioppoli",
Universit\`a di Napoli Federico II, Via Cintia, Monte S. Angelo, 
80126, Napoli, Italy. \emph{E-mail:  {\tt pietro.baldi@unina.it}}

\medskip 

\noindent
{\sc Massimiliano Berti}, SISSA,  Via Bonomea 265, 34136, Trieste, Italy. 
\emph{E-mail: {\tt berti@sissa.it}}
 
\medskip 
 
\noindent
{\sc Emanuele Haus}, Dipartimento di Matematica e Applicazioni ``R. Caccioppoli",
Universit\`a di Napoli Fede\-rico II,  Via Cintia, Monte S. Angelo, 
80126, Napoli, Italy. \emph{E-mail:  {\tt emanuele.haus@unina.it}}
 
\medskip 
 
\noindent
{\sc Riccardo Montalto},  University of Z\"urich, Winterthurerstrasse 190,
CH-8057, Z\"urich, Switzerland. \emph{E-mail: {\tt riccardo.montalto@math.uzh.ch}} 

\end{document}